\patchcmd{\section}{\scshape}{\bfseries}{}{}
\renewcommand{\@secnumfont}{\bfseries}
\numberwithin{equation}{subsection}
\newtheorem{theorem}[subsection]{Theorem}
\newtheorem{lemma}[subsection]{Lemma}
\newtheorem{proposition}[subsection]{Proposition}
\newtheorem{corollary}[subsection]{Corollary}
\newtheorem{theoremss}[subsubsection]{Theoremss}
\newtheorem{propositionss}[subsubsection]{Proposition}
\newtheorem{lemmass}[subsubsection]{Lemma}
\newtheorem{corollaryss}[subsubsection]{Corollary}
\newtheorem{conjecturess}[subsubsection]{Conjecture}
\theoremstyle{definition}
\newtheorem{remark}[subsection]{Remark}
\newtheorem{example}[subsection]{Example}
\newtheorem{definitionss}[subsubsection]{Definition}
\newtheorem{remarkss}[subsubsection]{Remark}
\newtheorem{exampless}[subsubsection]{Example}
\newtheorem*{remark*}{Remark}
\newcommand{\p}{\partial}
\newcommand{\f}{{\bf f}}
\def\NN{\mathbb{N}}
\def\QQ{\mathbb{Q}}
\def\ZZ{\mathbb{Z}}
\def\RR{\mathbb{R}}
\def\CC{\mathbb{C}}
\def\PP{\mathbb{P}}
\def\HH{\mathbb{H}}
\def\ringO{\mathscr{O}}
\newcommand\myatop[2]{\genfrac{}{}{0pt}{}{#1}{#2}}
\begin{document}

\marginpar{\tiny version 12/25/2014}

\title[Mean field equations, hyperelliptic curves
and modular forms: I]{Mean field equations, hyperelliptic\\ curves
and modular forms: I}
\author{Ching-Li Chai}\thanks{\tiny C.-L.\,C.\ would like to acknowledge
the support of NSF grants
DMS-0901163 and DMS-1200271.}
\address{Department of Mathematics, University of Pennsylvania, Philadelphia}
\email{chai@math.upenn.edu}
\author{Chang-Shou Lin}
\address{Department of Mathematics and Center for Advanced Studies 
in Theoretic Sciences (CASTS), National Taiwan University, Taipei}
\email{cslin@math.ntu.edu.tw}
\author{Chin-Lung Wang}\thanks{\tiny C.-L.\,W. is supported by 
MOST grant NSC 100-2115-M-002-001-MY3}
\address{Department of Mathematics and Taida Institute of
Mathematical Sciences (TIMS), National Taiwan University, Taipei}
\email{dragon@math.ntu.edu.tw}

\begin{abstract}
We develop a theory connecting the following three areas:
(a) the mean field equation (MFE)
\begin{equation*}
\triangle u + e^u = \rho\, \delta_0, \qquad\rho \in \mathbb R_{>0}
\end{equation*}
on flat tori \(E_\tau  = \mathbb C/(\mathbb Z + \mathbb Z\tau)\), 
(b) the classical Lam\'e equations and (c) modular forms. 
A major theme in part I is a classification of 
developing maps $f$ attached to solutions $u$ of 
the mean field equation according to the
type of transformation laws (or monodromy) 
with respect to \(\Lambda\)
satisfied by \(f\).

We are especially interested in the case when the
parameter \(\rho\) in the mean field equation is an integer
multiple of \(4\pi\).
In the case when $\rho = 4\pi(2n + 1)$ 
for a non-negative integer 
\(n\), we prove that the number of solutions is
$n + 1$ 
except for a finite number 
of conformal isomorphism classes of flat tori, 
and we give a family of polynomials 
which characterizes the developing maps for
solutions of mean field equations
through the configuration of their zeros and poles.
Modular forms appear naturally already in the simplest situation 
when
$\,\rho=4\pi$

In the case when $\rho = 8\pi n$ for a positive integer \(n\),
the solvability of the MFE depends on the  
\emph{moduli} of the flat tori
$E_\tau$ and leads naturally to a hyperelliptic curve  
$\bar X_n=\bar X_{n}(\tau)$
arising from the Hermite-Halphen ansatz solutions
of Lam\'e's differential equation
\[\frac{d^2 w}{dz^2}-(n(n+1)\wp(z;\Lambda_{\tau}) + B) w=0.\]
We analyse the curve $\bar X_n$ from both the analytic and the
algebraic perspective, including its local coordinate 
near the point at infinity, which turns out to be 
a smooth point of \(\bar{X}_n\).
We also specify the role of the branch points of the
hyperelliptic projection
$\bar X_n \to \mathbb P^1$ when the parameter $\rho$ varies 
in a neighborhood of $\rho = 8\pi n$. 
In part II, we study a ``pre-modular form'' 
$Z_n(\sigma; \tau)$, a real-analytic function in two
variables associated to $\bar X_n(\tau)$, 
which has many symmetries and also 
the property that the \(\tau\)-coordinates of
zeros of $Z_n(\sigma; \tau)$
correspond exactly to those flat tori where the MFE with parameter 
\(\rho=8\pi n\) has a solution. 
\end{abstract}

\maketitle
\small
\tableofcontents
\normalsize
\setcounter{section}{-1}

\section{Introduction} \label{intro}
\setcounter{equation}{0}

\subsection{}
How to study the \emph{geometry} of a flat torus 
$E = E_\Lambda := \mathbb C/\Lambda$? 
There are at least two
seemingly different 
approaches
to this problem. In the first approach one studies
the \emph{Green's function} $G=G_E$ of \(E\),
characterized by 
\begin{equation}\label{eqn-green}
\begin{cases}
\displaystyle -\triangle G = \delta_0 - \frac{1}{|E|} & 
\textup{on } \,E, \\
\int_{E} G = 0,
\end{cases}
\end{equation}
where the Laplacian \(\triangle = \frac{\partial^2}{\partial x^2}
+\frac{\partial^2}{\partial y^2}
= 4\,\partial_z\partial_{\bar z}\) on \(E\)
is induced by the Laplacian of the covering space \(\CC\) of \(E\),
\(\,|E|=\int_E\,dx\, dy 
=\frac{\sqrt{-1}}{2} \int_{\CC/\Lambda}dz\wedge d\bar{z}\,\)
is the area of \(E\), 
$\delta_0$ is the Dirac delta measure at the zero point 
$[0]= 0\,\textup{mod}\,\Lambda \in E$,
and we have identified functions on \(E\) with measures on \(E\)
using the Haar measure \(\,dx\,dy = \frac{1}{2}\,\vert dz d\bar{z}\vert\)
on \(E\).\footnote{So the first equation in (\ref{eqn-green})
means that 
\[
-\int_E\, G\cdot \triangle(f)\,dx\,dy
= f(0) - \vert E\vert^{-1}\cdot \int_E\, f\,dx\,dy
\]
for all smooth functions \(f\in C^{\infty}(E)\).
}
In the second approach one studies the classical \emph{Lam\'e equation}
\begin{equation} \label{Lame-eq-n}
L_{\eta, B}\,w := w'' - (\eta(\eta + 1) \wp(z) + B) w = 0
\end{equation}
with parameters\footnote{We imposed the
condition \(\,\eta>0\,\)
on the real parameter \(\eta\) of the Lam\'e equation
\(L_{\eta, B}\) so that these Lam\'e equations are related to
the mean field equations.  
The parameter \(\eta\) are positive integers
in classical literature such as 
\cite{Halphen, Whittaker}.} 
\(\eta \in \mathbb R_{> 0}\) and \(B\in \CC\),
where $\wp(z)=\wp(z;\Lambda)$ 
is the Weierstrass elliptic function %of order two 
$$
\wp(z;\Lambda) = \frac{1}{z^2} + \sum_{w \in \Lambda \smallsetminus \{0\}} 
\Big( \frac{1}{(z - w)^2} - \frac{1}{w^2}\Big), \quad z \in\CC.
$$
%\marginpar{Replaced backslash by smallsetminus}
%associated to the lattice $\Lambda$.

Throughout this paper, we denote by \(\omega_1, \omega_2\) a \(\ZZ\)-basis of the lattice \(\Lambda\), $\tau = \omega_2/\omega_1$ with \(\textup{Im}(\tau) > 0\), and \(\omega_3= -\omega_1-\omega_2\).

\subsubsection{}\label{subsec:thmA}
The Green's function is closely related to the so-called 
\emph{concentration phenomenon} of some non-linear elliptic 
partial differential equations in two dimensions. 
For example, consider the following \emph{singular Liouville equation}
with parameter \(\rho\in \RR_{>0}\)\footnote{Equation
(\ref{Liouville-eq}) with parameter \(\rho<0\) is 
not very interesting---it becomes too easy.}
%\marginpar{Not sure whether we want to specify that
%\(\rho>0\) here.}
\begin{equation} \label{Liouville-eq}
\triangle u + e^u = \rho\cdot \delta_0 \quad \textup{on } \,E.
\end{equation}
It is proved in \cite{CL0} that for a sequence of 
%\marginpar{\(\NN\)= the set of all non-negative integers 
%in books/papers following Bourbaki's convention.}
blow-up solutions $u_k$ of (\ref{Liouville-eq}) 
corresponding to $\rho = \rho_k$ 
with $\rho_k \to 8\pi n$, $n \in \mathbb Z_{>0}$, the set $\{p_i, \ldots, p_n\}$ of blow-up points satisfies the following equations:
\begin{equation} \label{Green-eq}
n \frac{\partial G}{\partial z} (p_i) = 
\sum_{1\leq j \leq n, \,j\ne i} \frac{\partial G}{\partial z} (p_i - p_j) 
\qquad \forall i=1,\ldots, n.
\end{equation}
For $n = 1$, the blow-up set consists of only one point $p$ which 
by (\ref{Green-eq}) is a critical point of $G$:
\begin{equation} \label{Green-critical}
\frac{\partial G}{\partial z}(p) = 0.
\end{equation}
Such a connection of (\ref{Green-eq}) with the Green's function also
appears in many gauge field theories in physics. The well-known
examples are the Chern--Simons--Higgs equation for the abelian case,
and the ${\rm SU}(m)$ Toda system for the non-abelian case. See
\cite{LinYan, NT, NT1} and
references therein.

This connection leads to the following question: How many solutions 
might the system (\ref{Green-eq}) have? Or an even more basic
question: 
\begin{quote}
\emph{How many critical points might the Green function $G$ have?} 
\end{quote}
Surprisingly, this problem has never been answered until \cite{LW}, 
where the second and the third authors proved the following result.

\bigbreak
\noindent
{\bf Theorem A.}\enspace \emph{For any flat torus $E$, 
the Green function $G$ has either three or five critical points.}
\smallskip

The statement of Theorem A looks deceptively simple
at first sight. However its proof uses the non-linear PDE  
(\ref{Liouville-eq}) and is not elementary.

\subsubsection{}
In view of Theorem A 
it is natural to study the system (\ref{Green-eq}) 
and the degeneracy question related
to each solution of it. 
We will see in a moment that such an investigation leads naturally to
a fundamental hyperelliptic curve $\bar X_n$ (which varies with
\(n\) and \(E\)), and requires us to
study its geometry---especially the branch points for the 
hyperelliptic structural map
$\bar X_n \to \mathbb P^1(\mathbb{C})$. 
The precise definition of $\bar X_n$ will later be given in (\ref{X-n}).

In the literature there are at least two situations in which one 
encounters this hyperelliptic curve $\bar X_n$. Both of them are related 
to the Lam\'e equation (\ref{Lame-eq-n}) with $\eta \in \mathbb N_{>0}$. 
One of them is well-known, namely the spectral curve of the Lam\'e 
equation in the KdV theory. In \S \ref{hyper-ell-str} we will prove that
this spectral curve is identical to the hyperelliptic curve $\bar X_n$;
see Remark \ref{KdV}. 

The second situation has a more algebraic flavor and is perhaps less 
known to the analysis community. Let 
\[p(x)=4 x^3 - g_2(\Lambda) x - g_3(\Lambda).\]
The differential equation
\begin{equation}\label{Lame-eq-alg-form}
p(x)\frac{d^2y}{dx^2}+\frac{1}{2} p'(x)\frac{dy}{dx} -
\big(\eta(\eta+1)x+B\big)y=0 
\end{equation}
on \(\PP^{1}(\CC)\) is
related to (\ref{Lame-eq-n}) by the change of variable
\(x=\wp(z;\Lambda)\), where 
\(p'(x)=\frac{d}{dx}p(x)\).
People have been interested in 
describing those parameters $B$ so that %(\ref{Lame-eq-n}) 
the Lam\'e equation (\ref{Lame-eq-alg-form}) 
has algebraic solutions only, 
or equivalently the global monodromy group %$M$ 
of (\ref{Lame-eq-alg-form}) is \emph{finite}. 
%\marginpar{\tiny Must use the form (\ref{Lame-eq-alg-form})
%of the Lam\'e equation;
%the statements (i), (ii) below are
%false for (\ref{Lame-eq-n}).
%}
This question seems not to have been
fully solved, though significant progress had been achieved
and there are algorithms to generate all cases; 
see \cite{BW, Waall, Dahmen} and references therein.

\subsubsection{}
A related and even more
classical question %(which admits a fairly complete answer) 
is: 
\begin{quote}\emph{When is the global monodromy group %M$ 
of the Lam\'e equation} (\ref{Lame-eq-alg-form}) \emph{reducible?} 
\end{quote}
That is, there is a one 
dimensional subspace
$\mathbb C\cdot y_1(x)$ of the space of local solutions which is 
stable under the action of the fundamental group 
\[\pi_1\big(\PP^1(\CC)\smallsetminus
\{\wp(z_1;\Lambda)\mid z_1\in \tfrac{1}{2}\Lambda/\Lambda\}\big)\]
of the complement of the \(4\) singular points of
(\ref{Lame-eq-alg-form}) on \(\PP^1(\CC)\).
%\marginpar{The symbol \(M\) is already overused, 
%for a Riemann surface
%and also for the monodromy of a solution (instead of the 
%action on all solutions), therefore suppressed here: .}
This question has a fairly complete answer:
\begin{itemize}
\item[(i)] It is known that if the global monodromy group is reducible
then the parameter \(\eta\) of the Lam\'e
equation is an integer and the monodromy group is
infinite.
\item[(ii)] If $\CC\cdot y_1(x)$ is a one-dimensional space
of solutions of (\ref{Lame-eq-alg-form}), then
\(y_1(x)\) is  an algebraic function 
{in \(x\)}.
Moreover solutions which are not multiples of
\(y_1(x)\) are not algebraic in \(x\), 
and the action of the monodromy group of (\ref{Lame-eq-alg-form}) 
is not completely reducible (as a two-dimensional
linear representation of the fundamental group).
\end{itemize} 
See \cite[\S4.4]{Waall} and \cite[\S3]{BW}. A solution of
(\ref{Lame-eq-n}) of the form \(\,y_1\big(\!\wp(z;\Lambda)\!\big)\,\)
with \(\eta=n\) is traditionally known as a 
\emph{Lam\'e function}.\footnote{When
\(n\) is even, $1+n/2$ of the \(2n+1\) Lam\'e functions are
polynomials of degree \(n/2\) in \(\wp(z)\), and \(3n/2\) of the form 
\(\sqrt{(\wp(z)-e_i)(\wp(z)-e_j)}
\cdot Q(\wp(z))\)
%\cdot(\textup{a polynomial of degree } (n/2)-1 \textup{\ in }\wp(z))\)
for some polynomial \(Q(x)\) of degree \((n/2)-1\) and 
some \(i\neq j\) with \(i,j=1, 2\) or \(3\), where
\(e_i:=\wp(\omega_i/2)\) for \(i=1, 2\) or \(3\). 

When \(n\) is odd, \(3(n+1)/2\) of the \(2n+1\) Lam\'e functions are of the form
\(\sqrt{\wp(z)-e_i}
\cdot Q(\wp(z))\) for some polynomial \(Q(x)\) of
degree \((n-1)/2\) and
\(i =1, 2\) or \(3\). The rest \((n-1)/2\) 
Lam\'e functions are of the form
\(\wp'(z)\cdot %(\textup{a polynomial in }\wp(z))
Q(\wp(z))
=\sqrt{(\wp(z)-e_1)(\wp(z)-e_2)(\wp(z)-e_3)}\cdot Q(\wp(z))
%(\textup{a poly.\ in }\wp(z)})\).
\)
for some polynomial \(Q(x)\) of degree \((n-3)/2\).
} 

\subsubsection{}
For each fixed torus $E$ and each $n \in \mathbb Z_{>0}$,
those parameters $B$ such that the monodromy group of
the Lam\'e equation \(L_{n,B}\) is reducible are
characterized by the classical Theorem B below. 

\medbreak\noindent
{\bf Theorem B.} \cite{Halphen, Whittaker, Poole} \emph{Suppose that
$\eta = n \in \mathbb N$. Then there is a polynomial $\ell_n(B)$ of degree
$2n + 1$ in $B$ such that equation} (\ref{Lame-eq-n}) 
\emph{has a Lam\'e
function as its solution if and only if $\ell_n(B) = 0$.}
\bigskip 

It turns out that the %hyperelliptic 
algebraic curve 
\[\{(B, C) \mid C^2 = \ell_n(B)\}\] is identical to 
``the affine part'' $Y_n$
of a complete hyperelliptic curve $\bar X_n$, to be
%which will be noted by $Y_n$ and 
defined later in (\ref{Yn1}). 
Such an identification was only implicitly stated 
in Halphen's classic \cite{Halphen}.
%nineteenth century classic on elliptic functions.
In this paper we will give a detailed and rigorous proof of
the statement; see Theorem \ref{hyp-ell-thm}.
See also \cite[Ch.\,12]{Halphen}, \cite[Ch.\,23]{Whittaker} 
and \cite[Ch.\,9]{Poole} for traditional treatments of the Lam\'e 
equation. 

\subsubsection{}
%As we have discussed above, 
The main theme of this paper is to explore the %deep 
connection between
the Liouville equation (\ref{Liouville-eq}) and the Lam\'e equation
(\ref{Lame-eq-n}) when the parameter \(\rho\) in (\ref{Liouville-eq})
and the parameter \(\eta\) in (\ref{Lame-eq-n}) satisfies the
linear relation \(\eta =\rho/8\pi\).\footnote{In this article
\(2\eta=\rho/4\pi\in\ZZ_{n>0}\) most of the time.}
Equation (\ref{Liouville-eq}) has its origin in the
prescribed curvature problem in conformal geometry. In general, for any
compact Riemann surface $(M, g)$ we may consider the following equation
\begin{equation} \label{pkeq}
\triangle u + e^u - 2K = 4\pi \sum_{j = 1}^n \alpha_j \,\delta_{Q_j} 
\quad \mbox{on $M$},
\end{equation} 
where $K=K(x)$ is the Gaussian curvature of the given metric $g$ 
at $x \in M$, $Q_j \in M$ are distinct points, and $\alpha_j > -1$ are
constants.\footnote{The points \(Q_j\) and the
constants \(\alpha_j\) are regarded as parameters of (\ref{pkeq})).} 
For any solution $u(x)$ to (\ref{pkeq}), the Gaussian
curvature of the new metric 
%$\tilde g := e^v g$ 
\[\tilde g := \frac{1}{2} e^u\cdot g\] 
%(where $2v = u - \log 2$) 
has constant Gaussian curvature $\tilde K = 1$ outside those $Q_j$'s.
Since (\ref{pkeq}) has singular sources at the $Q_j$'s, the 
%conformal
metric $\,\frac{1}{2}e^u g\,$ 
%\textcolor{red}
{may}
degenerate at $Q_j$ for each \(j\) and is
called a metric on $M$ with \emph{conic singularities} 
at the points $Q_j$'s. 
\smallbreak

\noindent
{\small{\bf Digression.}\enspace 
There is also an application of %equation 
(\ref{Liouville-eq}) 
to the complex Monge-Amp\`{e}re equation:}
\begingroup\makeatletter\def\f@size{9}\check@mathfonts
\begin{equation} \label{MA}
\det\Big(\frac{\p^2 w}{\p z_i \p\bar z_j}\Big)_{1\leq i, j \leq d} =
e^{-w} 
\quad \mbox{on $(E \!\smallsetminus \!\{0\})^d$},
\end{equation}
\endgroup
{\small 
where \((E \!\smallsetminus \!\{0\})^d\) is
the $d$-th Cartesian product of $E \!\smallsetminus\! \{0\}$. 
Obviously, for any solution $u$ to (\ref{Liouville-eq}), the function
}
\begingroup\makeatletter\def\f@size{12}\check@mathfonts
\[
\scriptstyle w(z_1, \ldots, z_d) = -\sum_{i = 1}^d u(z_i) 
\,+\, d\cdot \log 4\] 
\endgroup
{\small
satisfies (\ref{MA}) 
with a logarithmic singularity along the normal crossing divisor
\begingroup\makeatletter\def\f@size{9}\check@mathfonts
$D = E^d \!\smallsetminus\! (E \!\smallsetminus \!\{0\})^d.\,$
\endgroup
In particular, bubbling solutions to (\ref{Liouville-eq}) will 
give examples of bubbling solutions to 
the complex Monge-Amp\`{e}re equation (\ref{MA}), 
whose bubbling behavior could be understood from our theory 
developed in this paper.  Those examples might be useful for
studying the geometry related 
to the degenerate complex Monge-Amp\`{e}re equations.
}

\subsubsection{}
Equation (\ref{pkeq}) is a special case\footnote{namely the case when
\(h\) is the constant function \(1\), \(n=1\) and
\(\alpha_1={\rho}/{4\pi}\).} of a general class of equations, 
called \emph{mean field equations}:
\begin{equation} \label{mfeq}
\triangle u + \rho \left( \frac{h e^u}{\int h e^u} 
- \frac{1}{|M|}\right) = 
4 \pi \sum_{j = 1}^n \alpha_j \left(\delta_{Q_j} - \frac{1}{|M|}\right) 
\quad \mbox{on $M$},
\end{equation}
where $h(x)$ is a \emph{positive} $C^1$-function on $M$ and $\rho$ 
is a \emph{positive} real number. Equation (\ref{mfeq}) arises not only 
from geometry, but also from many applications in physics. 
For example it appears in statistical physics as the equation for
the \emph{mean field limit} of the Euler flow in Onsager's vortex
model, hence its name. 
Recently the equation (\ref{mfeq}) was shown to be related to 
the self-dual condensation of 
the Chern--Simons--Higgs model. We refer the readers to 
\cite{CY, Choe, LinYan, LinYan2, NT, NT1} and 
references therein for recent developments on this subject. 
%\medbreak

Equation (\ref{mfeq}) has been studied extensively for %more than 
over three decades. 
%\marginpar{Unclear ? what 
%``a priori bound in $C^2_{loc}(E\backslash\{0\})$'' mean.
%? e.g. solutions of a family of (\ref{Liouville-eq}) 
%with \(\rho\) varying in a finite closed interval
%form a bounded subset of $C^2_{loc}(E\backslash\{0\})$,
%or something else? Also ``all solutions \(u\) have an a priori bound''
%general means that the bound depends on the parameter \(\rho\).}
It can be proved that 
%except for 
%\textcolor{red}
outside a countable set of 
critical parameters $\rho$, solutions $u$ of (\ref{Liouville-eq})
have 
%\textcolor{red}
\emph{uniform} 
\emph{a priori} bounds in 
$C^2_{{loc}}(M \smallsetminus \{Q_1, \ldots, Q_n\})$:
%\textcolor{red}
\begin{quote}
{For any closed interval \(I\) not containing 
any of the critical parameters and any compact subset 
\(\Phi\subset M\smallsetminus \{Q_1, \ldots, Q_n\}\),
there exists a constant \(C_{I,\Phi}\) such that
\(|u(z)|\leq C_{I,\Phi}\) for all \(z\in \Phi\) and 
every solution 
\(u(z)\) of (\ref{Liouville-eq}) with parameter \(\rho\in I\)};
\end{quote}
see  
\cite{BT, CL0, CL1, LS}.
%\bigbreak

%Thus 
%\textcolor{red}
{The existence of uniform {a priori} bounds 
for solutions of (\ref{Liouville-eq}) implies that} 
the topological Leray-Schauder degree $d_\rho$ is well-defined
when $\rho$ is a  \emph{non-critical} parameter. 
Recently, an explicit degree counting formula 
has been proved in \cite{CL, CL2}, 
%\textcolor{red}
{which has the following consequence:} 
\begin{quotation}
Suppose that 
%\textcolor{red}
{$\rho \in (\RR_{>0}\!\smallsetminus\! 8\pi \mathbb N)$},
$\alpha_j \in \mathbb N$ for all \(j\) and the genus
$g(M)$ of \(M\) is at least \(1\).
Then $d_\rho > 0$, hence the mean field equation (\ref{mfeq}) 
has a solution. \footnote{For any natural number \(m\in\NN_{\geq 0}\),
the Leray-Schauder degree \(d_\rho\) is constant
in the open interval \((8\pi, 8\pi(m+1))\)
by homotopy invariance of topological degree, 
and 
\(\,d_{\rho}=m+1\) in this open interval
according to \cite[Thm.\,1.3]{CL2}.}
\end{quotation}

%\textcolor{red}
{However} when $\rho \in 8\pi \mathbb N_{>0}$, 
\emph{a priori} bounds for solutions of (\ref{mfeq}) might not exist,
%\textcolor{red}
{and} 
the existence of solutions 
%\textcolor{red}
{becomes an intricate question}. 
The singular Liouville equation (\ref{Liouville-eq}) 
%\textcolor{red}
{on flat tori} with 
$\rho \in 8\pi \NN_{>0}$ is the simplest class of mean field equations 
where the parameter $\rho$ is \emph{critical}, and 
the existence problem for equation (\ref{Liouville-eq})
%\textcolor{red}
{is already} a delicate one 
in the 
%first 
case when $\rho = 8\pi$. 
In \cite{LW} the second and third authors proved that equation
(\ref{Liouville-eq}) has a solution if and only if the Green's 
function on the torus has \emph{five} critical points;
c.f.\ Theorem A in \ref{subsec:thmA}.
%\bigbreak

\subsubsection{}
In this paper we will consider the case
when the parameter \(\rho\) is of the form 
$\rho = 4\pi l$ for some positive integer 
$l \in \mathbb Z_{>0}$.
%\footnote{Later we will use another 
%parameter \(\eta\), related to \(\rho\) by 
%\(\eta := \frac{\rho}{8\pi}\), so the parameter \(l\) here is equal
%to \(2\eta\).
%When \(\eta\in \frac{1}{2}\ZZ_{>0}\), solutions of the Liouville 
%equation (\ref{Liouville-eq}) are related to the Lam\'e equation
%(\ref{Lame-eq-n}).}
We note that if $l$ is odd, $\rho = 4\pi l$ is \emph{not} a 
critical parameter. In this case the degree counting formula 
in \cite{CL2, ChenLW} gives the following result:
\bigbreak
\noindent
{\bf Theorem C.}\enspace {\it Suppose that $l = 2n + 1$ is a positive odd
integer. Then the Leray-Schauder degree $d_{4\pi l}$ of} of
equation (\ref{Liouville-eq}) 
\emph{is $\tfrac{1}{2}(l + 1) = n + 1$.}
\bigbreak

Theorem C will be sharpened in corollaries \ref{exact-sol} and
\ref{cor:odd-lame-nber-soln} to: 
\begin{quotation}
\emph{Let \(\rho=4\pi (2 n+1)\) for some 
\(n\in \ZZ_{\geq 0}\). Except for a finite number of tori up to isomorphism,
 equation} (\ref{Liouville-eq}) 
\emph{has exactly $n + 1$ solutions}. 
\end{quotation}

\subsection{}
The above sharpening of Theorem C will be established via 
the connection between the Liouville equation (\ref{Liouville-eq}) and
the Lam\'e equation (\ref{Lame-eq-n}). 
Indeed the equation (\ref{Liouville-eq}) is 
\emph{locally completely integrable} 
%system by 
%\marginpar{Suppressed the word ``system'' here}
according to the following theorem of Liouville: 
\begin{quotation} \emph{Any solution $u$ to 
\textup{(\ref{Liouville-eq})} 
can be expressed locally on \(\,E\smallsetminus \Lambda\,\) 
as}
\begin{equation} \label{dev-map}
u(z) = \log \frac{8|f'(z)|^2}{(1 + |f(z)|^2)^2}
\qquad \forall z \in E \smallsetminus \{0\},
\end{equation}
\emph{where $f(z)$ is a multi-valued meromorphic function on 
$\mathbb C \smallsetminus \Lambda$ (i.e.\ a meromorphic function on an
unramified covering of $\mathbb C \smallsetminus \Lambda$)
such that the right hand side of the above displayed expression
is a well defined function on $\mathbb C \smallsetminus \Lambda$.} 
\end{quotation}
Such a function $f$ is called a \emph{developing map} of 
the solution $u$ of (\ref{Liouville-eq}). 

\subsubsection{}
When $\rho = 4\pi l$, $l \in \mathbb Z_{>0}$, %we can show 
%\textcolor{red}
{it is a fact} 
that every developing map 
$f(z)$ of a solution to (\ref{Liouville-eq})
extends to a \emph{single valued} meromorphic function 
on $\mathbb C$. Such a developing map $f$ is not doubly periodic in
general; rather it is \(\textup{SU}(2)\)-automorphic for 
the period lattice \(\Lambda\):
\begin{quote}
\emph{For every \(\omega\in \Lambda\) there exists
an element \(T = \begin{pmatrix}a&b\\c&d
\end{pmatrix}\in\textup{SU}(2)\) such that
\(\,f(z+\omega) = Tf(z) = \frac{af(z)+b}{cf(z)+d}\,\) for all \(\,z\in\CC\).}
\end{quote}
Moreover 
\begin{emph}{\(f(z)\) has multiplicity \(l+1\) at points of
the lattice \(\Lambda\subset \CC\), and no critical point
elsewhere on \(\CC\).}
\end{emph}
Conversely every meromorphic function \(f(z)\) on \(\CC\)
satisfying the above two properties is the developing map
of a solution to (\ref{Liouville-eq}); see Lemma \ref{order}.

\subsubsection{}
After %some normalization it 
%\textcolor{red}
replacing $f$ by \(Tf\) for 
a suitable element \(T\in\textup{SU}(2)\) we get a \emph{normalized} developing map \(f\) satisfying one of the following conditions:  

\begin{itemize}
\item[(i)] Type I 
%\textcolor{red}
{(the monodromy of \(f\) is a Klein four)}: 
%which is not contained in any maximal torus
\begin{equation} \label{feq-I}
\begin{split}
f(z + \omega_1) &= -f(z) \\
f(z + \omega_2) &= \frac{1}{f(z)}
\qquad \forall\,z\in\CC.
\end{split}
\end{equation}

\item[(ii)] Type II 
%\textcolor{red}
{(the monodromy of \(f\) is contained in a 
maximal torus)}: There exist real numbers \(\theta_1, \theta_2\)
such that 
\begin{equation} \label{feq-II}
f(z + \omega_i) = e^{2\theta_i} f(z)\qquad \forall\,z\in\CC,\ 
\forall\,i=1,2.
\end{equation}
%for some $\theta_i \in \mathbb R$, $i = 1, 2$.
\end{itemize}
Here \(\omega_1, \omega_2\) is a \(\ZZ\)-basis of
\(\Lambda\) with ${\rm Im}(\omega_2/\omega_1) > 0$. See \S \ref{Liouville-thm} for more details.

\subsubsection{}
We have seen that when the parameter \(\rho\) of the Liouville
equation (\ref{Liouville-eq}) is \(4\pi l\) with \(l\in\NN_{>0}\),
solving (\ref{Liouville-eq}) is equivalent to finding 
normalized developing maps, i.e.\ meromorphic functions 
on \(\CC\) with multiplicity \(l+1\) at points of the lattice 
\(\Lambda\) and
no critical points on \(\CC\smallsetminus\Lambda\),
whose monodromy with respect to \(\Lambda\) is specified as one
of the two types above. 
It turns out that Liouville equation (\ref{Liouville-eq})
with \(\rho/4\pi\in\NN_{>0}\) is \emph{integrable}
in the sense that
the configuration of the zeros and poles of such a 
normalized developing map can be described by either
system of polynomial equations, or as the zero locus
of an explicitly defined \(\CC\)-valued real analytic function 
on an algebraic variety.
%of a \(\CC\)-valued real analytic function on an
%algebraic variety.
In other words the Liouville equation (\ref{Liouville-eq})
with \(\rho/4\pi\in\NN_{>0}\) is \emph{integrable}
in the sense that the problem of solving this 
partial differential equation is reduced to finding the
zero locus of some explicit system of equations on
a finite dimensional space;
see Theorem \ref{thm-type I} and
Theorem \ref{thm-type II} below.

\subsubsection{}\label{subsec_dev-lame}
Let \(f(z)\) be a developing map of a solution \(u(z)\) of 
the Liouville equation (\ref{Liouville-eq}).
Of course the formula (\ref{dev-map}) expresses \(u\) in
terms of \(f\).
There is a simple way to ``recover'' the developing map $f$ from $u$ 
for a general parameter $\rho\in \RR_{>0}$. 
Notice first that \(\frac{\partial}{\partial z}\)
of (\ref{Liouville-eq}) gives
\[
\frac{\partial}{\partial {z}} 
\left(\frac{\partial^2 u}{\partial z\partial \bar{z}}
-\frac{1}{2} \left(\frac{\partial u}{\partial z}\right)^2
\right)=\frac{\rho}{4} \, \frac{\partial}{\partial {z}}\delta_0,
\]
which implies that $\,u_{zz} - \tfrac{1}{2} u_z^2\,$ is 
a meromorphic function on \(E\). 
A simple computation using (\ref{dev-map}) 
gives the following formula of this meromorphic function in terms
of the developing map \(f\) of \(u\):
\begin{equation} \label{S-Der}
u_{zz} - \tfrac{1}{2} u_z^2 = \frac{f'''}{f'} - \frac{3}{2} 
\Big(\frac{f''}{f'}\Big)^2.
\end{equation} 
The right hand side of (\ref{S-Der}) is the 
\emph{Schwarzian derivative} $S(f)$ of the meromorphic function $f$, 
while the left hand side is
\(\Lambda\)-periodic, with only one singularity at $0$ which is a pole of
order 
%\textcolor{red}
{at most} \(2\), hence must be equal to
a \(\CC\)-linear combination of the Weierstrass function
\(\,\wp(z;\Lambda)\,\) and the constant function \(1\).
It is not difficult to determine the coefficient of 
\(\,\wp(z;\Lambda)\,\) in this linear combination:
We know from equation (\ref{Liouville-eq}) that 
\[\,u(z)= 2 l \cdot \log |z| + (\textup{a }C^{\infty}\textup{-function})
\]
for all \(\,z\,\) in a neighborhood of \(0\in E\).
%Thus, by taking into account the singularity of $u$ at $0$ in %(\ref{Liouville-eq}), 
A straightforward calculation shows that either $f(z)$ has a pole
of order \(l+1\) at \(z=0\), or \(f(z)\) is holomorphic at \(z=0\)
and its derivative \(f'(z)\) has a zero of order \(l\) at \(z=0\).
In either case the Schwarzian derivative \(S(f)\) has a double
pole at \(0\) and 
\[\,\lim_{z\to 0} z^2 S(f)= (l+2)(l+3)-\frac{3}{2}(l+2)^2
= -(l^2 + 2 l)/2.\]
In other words there exists a constant \(B\in\CC\) such that
\begin{equation} \label{S-der-f}
S(f) = -2(\eta (\eta + 1) \wp(z) + B), 
\end{equation}
where \(\eta := \rho/8\pi = l/2\).

On the other hand it is well-known that the ``potential'' of
a second order linear ODE can be recovered from the Schwarzian 
derivative of the ratio of two linear independent (local) solutions
of the ODE.  In the case of the Lam\'e equation (\ref{Lame-eq-n})
this general fact specializes as follows.
\begin{quote}
\emph{If
%for $h(z) = w_1(z)/w_2(z)$ where 
$w_1, w_2$ are two linearly independent local solutions 
to the Lam\'e equation $L_{\eta, B}\,w = 0$ in (\ref{Lame-eq-n})
with a general parameters \(\eta\) and \(B\)
and \(h(z):=w_1(z)/w_2(z)\), then
%in  theory of second order linear ordinary differential equations that 
$$
S(h) = -2(\eta (\eta + 1) \wp(z) + B).
$$}
\end{quote}

Combining the above discussions, we conclude:
\begin{quote}
\emph{If
\(\,\eta = \rho/8\pi\,\) 
%is either a positive integer or a positive half-integer
then any developing map $f(z)$ of a solution $u(z)$ to} (\ref{Liouville-eq})
\emph{can be expressed as a ratio of two \(\CC\)-linearly independent
solutions of the Lam\'e equation} (\ref{Lame-eq-n})
\emph{for some \(B\in \CC\).}\footnote{The ``constant'' \(B\in\CC\) 
depends on both the (isomorphism class of the) flat torus \(E\)
and the solution \(u(z)\) of (\ref{Liouville-eq}).}
\end{quote}
%although the complex value $B$ is not known a priori. 
We say that the Lam\'e equation $L_{\eta, B}\,w = 0$ on \(\CC/\Lambda\)
\emph{corresponds to} a solution $u$ of (\ref{Liouville-eq}) on \(\CC/\Lambda\)
with $\,\rho = 8\pi \eta \in 4\pi\cdot \ZZ_{>0}\,$ 
if there exist two linearly independent meromorphic solutions
\(w_1, w_2\) of $L_{\eta, B}\,w = 0$
on \(\CC\) such that \(w_1/w_2\) is a developing
map of \(u\).  This is a property of the parameter \(B\) of the
Lam\'e equation.
%its normalized
%\marginpar{Why ``normalized''? Any two
%developing maps are related by an element of SU(2), so the two
%Schwarzian derivative are equal.}
%Schwarzian derivative \(S(f)\) of a 
%developing map $f$ of \(u\)
%satisfies $S(f) = -2(\eta(\eta + 1) \wp + B)$.

%We summarize those striking facts on Lam\'e equations which are discussed above: For suitably chosen linearly independent solutions $y_1$ and $y_2$, the ratio $y_1/y_2$ is related to the mean field equation (\ref{Liouville-eq}), while the product $X = y_1 y_2$ is related to some hierarchy equations of the KdV equation.
%\bigbreak

\subsubsection{}
We make a simple observation about a \emph{normalized} developing map 
of a solution
to (\ref{Liouville-eq}) with \(\rho/4\pi\in \ZZ_{>0}\). 
\begin{quotation}
\emph{If $f(z)$ is normalized of type II, then $e^\lambda f(z)$ 
also satisfies the type II condition for all 
$\lambda \in \mathbb R$. Thus}
(\ref{dev-map}) 
\emph{gives rise to a} scaling \emph{family of 
solutions $u_\lambda(z)$, where}
\begin{equation} \label{u-lambda}
u_\lambda(z) = \log \frac{8 e^{2\lambda} |f'(z)|^2}{(1 + e^{2\lambda} |f(z)|^2)^2}.
\end{equation}
\emph{Consequently if} 
(\ref{Liouville-eq}) \emph{has a type II solution, 
then the same equation has infinitely many solutions.}
\end{quotation}
From (\ref{u-lambda}), it is easy to see that $u_\lambda(z)$ 
blows up as $\lambda \to \pm \infty$. As we have discussed earlier, 
if $\rho = 4\pi l$ with $l = 2n + 1$ %being
an odd positive integer, then solutions of (\ref{Liouville-eq}) have 
\emph{a priori} bound in $C^2_{{loc}}(E\smallsetminus\{0\})$. 
Thus we conclude that when $\rho = 4\pi l$ with $l$ %being odd, 
a positive odd integer, 
(\ref{Liouville-eq}) has a solution because the topological degree 
is positive. 
Moreover such a solution must be of type I 
by the 
%\textcolor{red}
{existence of uniform a priori bound
on compact subsets of \(E\smallsetminus \{0\}\).}
%uniform boundedness property.
%\bigbreak

Our first main theorem in this paper says that the converse 
%\textcolor{red}{}
to the statement in the previous paragraph
also holds. At the same time we provide a self-contained proof of 
the above implication without using the 
%\textcolor{red}
{uniform a priori bound}:
%uniform boundedness property:

\begin{theorem} [c.f.~Proposition \ref{non-exist} 
and Theorem \ref{even-II}]\label{Main-corr}
Let $\rho = 4\pi l$ with $l \in \mathbb Z_{>0}$. 
Then equation \textup{(\ref{Liouville-eq})} 
admits a type I solution if and only if $l$ is odd.
\end{theorem}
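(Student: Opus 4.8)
The plan is to obtain both implications from a single structural fact: the symmetry $z\mapsto -z$ of the torus, together with the even-ness of the potential in $(\ref{S-der-f})$, forces the developing map to satisfy $f(-z)=T\,f(z)$ for a Möbius transformation $T$, and the parity of $l$ can then be read off from the order of vanishing of $f$ at $0$. First I would record this symmetry. Since the Schwarzian $S(f)=-2(\eta(\eta+1)\wp(z)+B)$ is an even function of $z$ (because $\wp$ is even), the function $z\mapsto f(-z)$ has the same Schwarzian as $f$, so there is a $T\in \mathrm{PSL}_2(\CC)$ with $f(-z)=T\,f(z)$. Feeding this into the monodromy relations $f(z+\omega)=M_\omega f(z)$ and comparing the two ways of computing $f(-z-\omega)$ yields $T\,M_\omega=M_\omega^{-1}\,T$ in $\mathrm{PSL}_2(\CC)$ for every $\omega\in\Lambda$, where $M_\omega$ is the projective monodromy.

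Next I specialize to the type I case $(\ref{feq-I})$, where the monodromy is the Klein four group $V=\{I,M_1,M_2,M_3\}$ with $M_1\colon w\mapsto -w$, $M_2\colon w\mapsto 1/w$, $M_3\colon w\mapsto -1/w$, all involutions. Since $M_i^{-1}=M_i$, the relation above says $T$ commutes with $V$; a short direct computation shows the centralizer of $V$ in $\mathrm{PSL}_2(\CC)$ is exactly $V$, so $T\in\{I,M_1,M_2,M_3\}$. The crux is to exclude $T\neq I$. If $T=M_j$ (the involution attached to $\omega_j/2$), then combining $f(-z)=M_j f(z)$ with $f(w-\omega_j)=M_j^{-1}f(w)$ gives $f(\tfrac{\omega_j}{2}-z)=f(\tfrac{\omega_j}{2}+z)$; thus $f$ is symmetric about the half-period $\omega_j/2$, whence $f'(\omega_j/2)=0$. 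But a developing map has no critical points off the lattice and $\omega_j/2\notin\Lambda$, a contradiction. Hence $T=I$, so $f$ is even; since $f$ has ramification index $l+1$ at $0$, the local expansion of $f$ (or of $1/f$ if $f(0)=\infty$) involves only even powers, forcing $l+1$ even and $l$ odd. This proves that a type I solution forces $l$ odd.

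For the converse I would run the same analysis for type II $(\ref{feq-II})$, where the monodromy lies in the diagonal torus. Now $T$ must conjugate each nontrivial $M_i$ to its inverse, which interchanges the two fixed points $0,\infty$; hence $T$ is anti-diagonal, i.e.\ $f(-z)=A/f(z)$ for some $A\in\CC^*$. Then $f(0)$ is finite and nonzero with $f(0)^2=A$, and matching leading terms in $f(-z)=A/f(z)$ shows the ramification order $l+1$ is odd, so $l$ is even; if instead the monodromy is trivial, $f$ is elliptic and Riemann--Hurwitz gives $l=2\deg f$ directly. Either way type II forces $l$ even, so for odd $l$ no normalized developing map can be of type II. Since every normalized developing map is of type I or type II, and since for $l=2n+1$ the parameter $\rho=4\pi l$ is non-critical with positive Leray--Schauder degree $n+1$ by Theorem C, a solution exists and is necessarily of type I. This establishes that $l$ odd implies a type I solution exists, with the type now pinned down by the parity dichotomy rather than by a priori bounds.

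The main obstacle is the third step above, the exclusion of $T=M_1,M_2,M_3$ in the type I case. Its content is geometric rather than computational: one must recognize that a nontrivial even-symmetry $f(-z)=Tf(z)$ is equivalent to a reflection symmetry of $f$ about a half-period, which then produces a critical point at a non-lattice point and violates the defining property of developing maps. The care needed is in the bookkeeping — verifying that each $M_j$ is a genuine involution of the monodromy and matching it to the correct half-period $\omega_j/2$ (in particular treating $\omega_3=-\omega_1-\omega_2$ and the induced relation $f(z+\omega_3)=-1/f(z)$ correctly).
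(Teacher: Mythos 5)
Your proof of the ``only if'' direction (a type I solution forces $l$ odd) is correct and genuinely different from the paper's. The paper proves this (Theorem \ref{even-II}) by descending $g=f'/f$ to the double cover $E'$, deriving an ODE for the slope function $s(z)=\wp'(z)/(\wp(z)-e_2)$, and showing that the vanishing of the even derivatives $g^{(2j)}(0)$ imposes symmetric-polynomial constraints on the slopes $s(p_i)$ which, when $l$ is even, force $\sum p_i\equiv 0$, contradicting $\sum p_i\equiv\tfrac{1}{2}\omega_1\pmod{\Lambda}$ from Lemma \ref{half-cong}. Your route --- $f(-z)=Tf(z)$ from equality of Schwarzians, $TM_\omega T^{-1}=M_\omega^{-1}$ from the monodromy, $T$ in the Klein-four image by its self-centralizing property (Lemma \ref{lemma:mono_grpthy}\,(1c)), and exclusion of $T\neq I$ because a reflection symmetry about a half-period would create a critical point off $\Lambda$, contrary to Lemma \ref{order}\,(4) --- is shorter and delivers the evenness of $f$ and $u$ at the same time.

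The converse, however, has a genuine gap, and it sits exactly where your argument replaces the paper's Proposition \ref{non-exist} (the overall strategy --- existence from Theorem C plus non-existence of type II solutions for odd $l$ --- is the same as the paper's). You claim that in type II ``$T$ must conjugate each nontrivial $M_i$ to its inverse, which interchanges the two fixed points $0,\infty$; hence $T$ is anti-diagonal.'' This is true only if some monodromy element has order greater than $2$. Type II merely excludes a Klein-four image; it also includes monodromy images that are trivial or of order $2$. You dispose of the trivial case by Riemann--Hurwitz, but in the order-$2$ case (say $f(z+\omega_1)=-f(z)$, $f(z+\omega_2)=f(z)$) the relation $TM_\omega T^{-1}=M_\omega^{-1}=M_\omega$ permits a \emph{diagonal} $T$; since $f(0)\in\CC^{\times}$ (Lemma \ref{lemma_zero-pole_g}\,(1)) this gives $f$ even, and your parity reading of the multiplicity $l+1$ at $0$ then outputs $l$ \emph{odd} --- the opposite of the desired ``type II $\Rightarrow l$ even'' --- so no contradiction is reached. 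The hole can be closed in two ways: (i) as in Proposition \ref{non-exist}, for any normalized type II map $g=f'/f$ is elliptic on $E$ with a zero of order $l$ at $[0]$ and exactly $l$ simple poles of residue $+1$ (at zeros of $f$) or $-1$ (at poles of $f$), so the vanishing of the residue sum forces $l$ even; this three-line argument covers all type II monodromies at once and makes the case analysis of $T$ unnecessary; or (ii) extend your own critical-point trick: in the order-$2$ case the kernel of the monodromy contains some $\omega\in\Lambda\smallsetminus 2\Lambda$, and evenness of $f$ together with $f(z+\omega)=f(z)$ gives $f(\tfrac{\omega}{2}+z)=f(\tfrac{\omega}{2}-z)$, hence a critical point at $\tfrac{\omega}{2}\notin\Lambda$, a contradiction.
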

\bigbreak

We will ``classify'' type I solutions 
for an odd positive integer $l = 2n + 1$ in the 
next theorem \ref{thm-type I}.
Let \(f(z)\) be a developing map of a solution of (\ref{Liouville-eq})
satisfying the normalized tranformation formula 
of type I in \eqref{feq-I}.
%\[f(z+\omega_1)=-f(z),\quad f(z+\omega_2)= \frac{1}{f(z)}\]
%for a \(\ZZ\)-basis \(\omega_1, \omega_2\) of
%the lattice \(\Lambda\) with \(\textup{Im}(\omega_2/\omega_1)>0\).
Consider the logarithmic derivative $g = (\log f)' = f'/f$, which is an elliptic function on the double cover 
\[E' := \mathbb C/\Lambda' \to E,\]
of \(E\), where
\[\Lambda' := \mathbb Z \omega_1' + \mathbb Z \omega_2',\quad
\omega_1' := \omega_1 \ \ \textup{and} \ \
\omega_2' := 2\omega_2.
\] 
Our next goal is to find all possible type I developing maps $f$ 
for some solution \(u\) of the Liouville equation 
\textup{(\ref{Liouville-eq})} whose parameter \(\rho\) is an odd
integral multiple of \(4\pi\). 
To do so, we have to locate the position of poles of $g$, 
or equivalently the position of zeros and poles of $f$. 
%In the following Theorem \ref{thm-type I}  
%\(E':=\CC/\Lambda'\), and 
%we will use the Weierstrass elliptic function  $\wp(z)=\wp(z;\Lambda')$ and the half-periods 
%\(e_i := \wp(\tfrac{1}{2}\omega_i'; \Lambda')\)
%for \(\Lambda'\). 

\begin{theorem}[Type I evenness and algebraic integrability] 
\label{thm-type I} 
Let $u$ be a solution to \textup{(\ref{Liouville-eq})} with 
%$\rho = 4\pi l$, $l = 2n + 1$, 
$\rho = 4\pi (2n + 1)$,
$n \in \mathbb Z_{\ge 0}$. 
Let $f$ be a normalized type I developing map of $u$, \(\Lambda'=\ZZ\omega_1+ \ZZ 2\omega_2
=\ZZ\omega_1'+\ZZ\omega_2'\), and
\(e_i := \wp(\tfrac{1}{2}\omega_i'; \Lambda')\) for \(i=1,2\).
%as in the preceding paragraph. 

%Then
\begin{itemize}
\item [(1)] The solution $u(z)$ is even and 
the developing map $f(z)$ of \(u(z)\) is also even;
 i.e.~$u(-z) = u(z)$ for all \(z\in E\) and
\(f(z)=f(-z)\) for all \(z\in \CC\). 

\item[(2)] There exist $p_1, \cdots,p_n \in \mathbb C$ 
satisfying the following properties.
\begin{itemize}
\item $2p_i \not\in\Lambda'$ for $i=1,\ldots, n$, 

\item $p_i\pm p_j \not\in \Lambda'$ for all $i\neq j$,
\(1\leq i, j\leq n\), 

\item $f$ has simple zeros at $\tfrac{1}{2}\omega_1$ and $\pm p_i$
for $i = 1, \ldots, n$, 

\item \(f\) has simple poles at $\tfrac{1}{2} \omega_1 + \omega_2$ 
and $\pm p_i + \omega_2$ for $i = 1, \ldots, n$.
%\smallskip

\item 
%\textcolor{red}
{Every zero or pole of \(f\) is 
congruent modulo \(\Lambda'\)
to one of the zeros or poles listed above.
}

\end{itemize}
Note that the \textup{unordered} set 
\(\{p_i\ \textup{mod}\,\Lambda'\}\subset E'\) is uniquely
determined by the normalized developing map \(f\).

\item [(3)] Let $q_i := p_i + \omega_2$, $i = 1, \ldots, n$ 
for \(i=1,\ldots, n\) and
let 
\[z_i := \wp(p_i;\Lambda') - e_2, \qquad
\tilde z_i = \wp(q_i;\Lambda') - e_2.\] 
There exist constants $\mu$ and $C_1,\ldots,C_n$ which depend only 
on the modular constants
$e_1$, $e_3$, $g_2(\Lambda')$ and $g_3(\Lambda')$ %of $\Lambda'$
such that the following polynomial equations hold.
\begin{equation} \label{system-I}
\sum_{i = 1}^n z_i^j - \sum_{i = 1}^n \tilde
z_i^j = C_j \quad\textup{and}\quad
z_j \tilde z_j = \mu \quad \forall\,j = 1, \ldots, n.
\end{equation}

\item[(4)] Conversely let \(\mu, C_1,\ldots,C_n\) be the
constants in \textup{(3)}, and suppose that the \(2n\)-tuple
\((z_1,\ldots, z_n; \tilde{z}_1,\ldots,\tilde{z}_n)\in \CC^{2n}\)
%such that 
%\(z_i\neq z_j\) for all \(i\neq j\)
%if $\{z_i, \tilde z_i\}_{i = 1}^n$ 
is a solution of the system of polynomial equations 
\textup{(\ref{system-I})}.
There is an even type I developing map $f$ 
and \(p_1,\ldots, p_n\in \CC\)
with the following properties:
\begin{itemize}
\item $f$ has simple zeros at $\tfrac{1}{2} \omega_1$ and 
$\pm p_i$ for \(i=1,\ldots,n\).

\item \(f\) has simple poles at 
$\tfrac{1}{2}\omega_1 + \omega_2$ and $\pm p_i + \omega_2$
for \(i=1,\ldots,n\).

\item \(z_i = \wp(p_i;\Lambda') - e_2\) and
\(\tilde z_i = \wp(p_i+\omega_2;\Lambda') - e_2\)
for \(i=1,\ldots,n\).

\end{itemize}
\end{itemize}
\end{theorem}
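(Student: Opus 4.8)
The plan is to treat the four assertions in order, bootstrapping from the Schwarzian/Lamé correspondence of \S\ref{subsec_dev-lame} and the characterization of developing maps in Lemma \ref{order}.

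\emph{Part (1): evenness.} First I would observe that $g(z):=f(-z)$ is again a \emph{normalized} type I developing map: from \eqref{feq-I} one checks directly that $g(z+\omega_1)=-g(z)$ and $g(z+\omega_2)=1/g(z)$, and $g$ is a developing map of $u(-z)$, which solves \eqref{Liouville-eq} since both $\triangle$ and $\delta_0$ are invariant under $z\mapsto -z$. Next, using the cocycle $S(h\circ\phi)=(S(h)\circ\phi)(\phi')^2+S(\phi)$ with $\phi(z)=-z$ together with the evenness of $\wp$ in \eqref{S-der-f}, I get $S(g)=S(f)$; hence $g=M\!\cdot\! f$ for a single Möbius transformation $M$. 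Comparing the monodromy of $g$ and $f$ (both equal to the Klein four generated by $w\mapsto -w$ and $w\mapsto 1/w$) shows that $M$ centralizes this Klein four, and a short computation identifies the centralizer in $\textup{PGL}_2(\CC)$ with the Klein four itself, all of whose elements lie in $\textup{PSU}(2)$. Since $\textup{SU}(2)$ acts by isometries of the Fubini--Study metric appearing in \eqref{dev-map}, this already gives $u(-z)=u(z)$. Finally, to upgrade $M\in\{\,\mathrm{id},\,w\mapsto-w,\,w\mapsto 1/w,\,w\mapsto-1/w\,\}$ to $M=\mathrm{id}$, I would use that the local multiplicity $l+1=2n+2$ at $0$ is \emph{even}: the leading Laurent/Taylor coefficient of $f$ at $0$ is unchanged by $z\mapsto -z$, so $M$ fixes $f(0)$ and acts trivially on the leading deviation; checking the three nontrivial candidates against this (each forces the leading coefficient to change sign or to vanish) rules them out, so $f(-z)=f(z)$.

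\emph{Part (2): location of zeros and poles.} I would pass to $g:=f'/f=(\log f)'$, which by \eqref{feq-I} is $\Lambda'$-periodic, odd (as $f$ is even), and satisfies $g(z+\omega_2)=-g(z)$; thus $g$ is an elliptic function on $E'$, anti-invariant under the half-period $\omega_2=\tfrac12\omega_2'$. Evaluating $f(z+\omega_1)=-f(z)$ at $z=-\tfrac12\omega_1$ and using evenness forces $f(\tfrac12\omega_1)\in\{0,\infty\}$, and after possibly replacing $f$ by $1/f$ (still normalized type I and even) I arrange $f(\tfrac12\omega_1)=0$, whence $f(\tfrac12\omega_1+\omega_2)=\infty$. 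The key input is a degree count: since $f$ has a critical point of order $2n+2$ at each of the four $2$-torsion points of $E''=\CC/2\Lambda$ and no other critical points, Riemann--Hurwitz forces $\deg f=4n+2$ on $E''$; this excludes $f(0)\in\{0,\infty\}$ (which would place $4n+4$ zeros or poles at the $\Lambda$-points), so $a:=f(0)$ is finite and nonzero and $g$ has a zero of exact order $2n+1$ at $0$ and (by anti-periodicity) at $\omega_2$. Balancing these $4n+2$ zeros against the simple poles of $g$ at the zeros/poles of $f$, all of which are simple off $\Lambda$ and paired by $z\mapsto z+\omega_2$ (residues $+1$ at zeros, $-1$ at poles) and by $z\mapsto -z$, yields exactly $2n+1$ zeros of $f$ on $E'$: the fixed point $\tfrac12\omega_1$ together with $n$ genuine pairs $\pm p_i$. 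The non-degeneracy conditions $2p_i\notin\Lambda'$ and $p_i\pm p_j\notin\Lambda'$ are then exactly the statement that these poles of $g$ are distinct and simple, i.e.\ that $f$ has no critical point outside $\Lambda$.

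\emph{Part (3): the polynomial system.} Here the relation $z_i\tilde z_i=\mu$ is immediate: since $q_i=p_i+\omega_2$ is a half-period translate, the classical addition formula $\wp(z+\omega_2;\Lambda')-e_2=(e_2-e_1)(e_2-e_3)/(\wp(z;\Lambda')-e_2)$ gives $\tilde z_i=\mu/z_i$ with $\mu=(e_2-e_1)(e_2-e_3)$, depending only on the modular constants. For the equations $\sum z_i^j-\sum\tilde z_i^j=C_j$ I would write $g$ explicitly as a sum of Weierstrass $\zeta$-functions (for $\Lambda'$) with poles at $\tfrac12\omega_1,\ \pm p_i$ and at their $\omega_2$-translates, the whole function being determined by the $\{p_i\}$ because oddness kills the additive constant. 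The \emph{only} remaining constraint is that this $g$ vanish to order $2n+1$ at $0$; since $g$ is odd this amounts to the vanishing of its odd Taylor coefficients of orders $1,3,\dots,2n-1$, i.e.\ $n$ conditions (those at $\omega_2$ being identical by anti-periodicity). Taylor-expanding the $\zeta$-summands at $0$ and rewriting the resulting symmetric expressions in the coordinates $z_i=\wp(p_i)-e_2$ (using $\wp'^2=4\prod(\wp-e_k)$ to eliminate odd $\wp$-derivatives) converts these $n$ conditions into power-sum identities of exactly the shape \eqref{system-I}, with $C_j$ absorbing the universal contributions of $\tfrac12\omega_1$ and of the $\zeta$-expansion coefficients (hence depending only on $e_1,e_3,g_2,g_3$). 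This expansion-and-symmetrization is the main computational obstacle, and is where I expect the bookkeeping to be heaviest.

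\emph{Part (4): the converse.} Given a solution $(z_i;\tilde z_i)$ of \eqref{system-I}, I would recover points $p_i$ by solving $\wp(p_i;\Lambda')=z_i+e_2$ (the sign ambiguity $\pm p_i$ being harmless by the intended evenness), form the same $\zeta$-sum $g$, and note that the power-sum equations are precisely the conditions forcing $g$ to vanish to order $2n+1$ at $0$ and $\omega_2$, while $z_i\tilde z_i=\mu$ guarantees the poles sit correctly at the $\omega_2$-translates. Setting $f=\exp\!\int g$ then produces a function with the prescribed simple zeros/poles and critical structure; the delicate point, and the main obstacle in this direction, is single-valuedness together with the \emph{exact} monodromy $f(z+\omega_1)=-f(z)$, $f(z+\omega_2)=1/f(z)$, which amounts to verifying that the periods $\int_\gamma g\,dz$ take the required values in $\pi i+2\pi i\ZZ$. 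Rather than computing these period integrals directly, I would realize $f$ through the Hermite--Halphen ansatz as a ratio of two solutions of the Lam\'e equation \eqref{Lame-eq-n} with $\eta=n+\tfrac12$ and the corresponding $B$, for which the type I transformation law is built in; the normalized developing map so obtained satisfies the hypotheses of Lemma \ref{order} and hence is the developing map of a genuine solution $u$ of \eqref{Liouville-eq}, completing the equivalence.
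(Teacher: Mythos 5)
Your arguments for parts (1)--(3) are correct, and for (1) and (2) they are genuinely different from the paper's. The paper proves evenness (Theorem \ref{even-II}) by a computational analysis of the slope function \(s(z)=\wp'(z)/(\wp(z)-e_2)\): Proposition \ref{even-odd} shows that the even-order derivatives \(g^{(2j)}(0)\) generate the odd-degree symmetric polynomials in the \(s(p_i)\), and their vanishing forces the pairing \(p_{n+i}=-p_i\) (this same analysis is what proves \(l\) must be odd, which here you may take as a hypothesis). Your route --- \(f(-z)=M\cdot f(z)\) with \(M\) centralizing the Klein four, hence \(M\) lying in the Klein four by Lemma \ref{lemma:mono_grpthy}\,(1c), and then \(M=\mathrm{id}\) because the multiplicity \(2n+2\) at \(z=0\) is even --- is shorter and valid; likewise your Riemann--Hurwitz count \(\deg f=4n+2\) on \(\CC/2\Lambda\) recovers the zero/pole configuration of part (2) that the paper extracts from the slope-function analysis together with Lemma \ref{half-cong}. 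Part (3) of your proposal is essentially the paper's computation (odd-order derivatives of \(g\) at \(0\), reduction via Lemma \ref{P-der}, and the half-period formula giving \(z_i\tilde z_i=\mu\)).

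The gap is in part (4), at exactly the step you flag as the main obstacle. Note first that the multiplicative relations are automatic: since \(g\) is \(\Lambda'\)-periodic and \(g(z+\omega_2)=-g(z)\), the function \(f=\exp\int_0^z g\) satisfies \(f(z+\omega_1)=c_1 f(z)\) and \(f(z+\omega_2)f(z)=c_2\) for constants \(c_1,c_2\), and after replacing \(f\) by \(c_2^{-1/2}f\) the only genuine issue is to prove \(c_1=-1\), i.e.\ \(\int_0^{\omega_1}g\,dz\equiv\pi\sqrt{-1}\pmod{2\pi\sqrt{-1}\,\ZZ}\). Your proposed shortcut --- realizing \(f\) ``through the Hermite--Halphen ansatz as a ratio of two solutions of the Lam\'e equation with \(\eta=n+\tfrac12\), for which the type I transformation law is built in'' --- does not work as stated. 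The Hermite--Halphen ansatz (\ref{fnc-w_a}), (\ref{w_a}) exists only for integer index \(\eta=n\); there is no product-of-sigmas ansatz for half-integer index, and no type I transformation law is ``built in'' to the half-integer Lam\'e equation: a ratio of its solutions a priori satisfies only the multiplicative relations above, and proving that the multiplier \(c_1\) equals \(-1\) is precisely the content of the step you are trying to bypass.

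Two repairs are available. The paper's own argument (Proposition \ref{g-odd-u-even}\,(d)) computes the period integral by a contour symmetric under \(z\mapsto -z\): invariance of \(g\,dz\) kills the straight part, the small half-circles contribute half-residues, and the count \(m_1-m_2\) of poles with residue \(+1\) versus \(-1\) on the segment is odd, giving \((m_1-m_2)\pi\sqrt{-1}\equiv \pi\sqrt{-1}\). Alternatively, a Lam\'e-theoretic shortcut is legitimate but must be routed through the correct result: since your \(f\) is meromorphic on \(\CC\) with \(S(f)=-2\bigl((n+\tfrac12)(n+\tfrac32)\wp(z;\Lambda)+B\bigr)\), the Lam\'e equation \(L_{n+1/2,B}\) is free of logarithmic solutions, so by \cite[Thm.\,2.3]{BW} (quoted in \S\ref{monodromy-I}) its \emph{projective} monodromy group is the full Klein four; as that group is generated by \(w\mapsto c_1 w\) and \(w\mapsto c_2/w\), being Klein four rather than cyclic of order at most \(2\) forces \(c_1^2=1\) and \(c_1\neq 1\), i.e.\ \(c_1=-1\). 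Either repair completes your part (4); the appeal to the ansatz has to be removed. (Both your sketch and the paper also tacitly assume the solution \((z_i;\tilde z_i)\) is non-degenerate, i.e.\ the resulting \(2n+1\) points are distinct, so that \(g\) has exactly the prescribed simple poles; this is needed to see that \(g\) vanishes to order exactly \(2n+1\) at \(0\).)
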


We will prove Theorem \ref{thm-type I} in \S\ref{I-int}. 
In view of this result, it is interesting to know how many solutions 
the system (\ref{system-I}) has. Since the topological degree 
for (\ref{Liouville-eq}) with $\rho = 4\pi(2n + 1)$ is known 
to be $n + 1$ (by Theorem C), it is reasonable to conjecture 
that (\ref{Liouville-eq}) has $n + 1$ solutions, and then 
(\ref{system-I}) has $(n + 1)!$ solutions due to the 
permutation symmetry on $\{1, 2, \ldots, n\}$ 
(c.f.~{\cite[Conjecture 6.1]{LW2}} where a related version 
of this counting conjecture was first formulated). 
This conjecture had been verified previously 
up to $n \le 5$; see Remark \ref{KL-Lin}. 
However for higher $n$ it seems to be a non-trivial task 
to work on the affine polynomial system (\ref{system-I}) directly.
%\bigbreak

We will 
%\textcolor{red}
{affirm} this conjecture using 
the connection between the Liouville equation 
(\ref{Liouville-eq}) and the Lam\'e equation (\ref{Lame-eq-n}) 
discussed earlier. %solve 
%Recall that the Klein four group \(K_4\), isomorphic to
%\(\mathbb Z/2\mathbb Z \times \mathbb Z/2\mathbb Z\)
%is the unique non-cyclic abelian group of order \(4\)
%up to isomorphism.
%which will be denoted by 
%$K_4 \cong \mathbb Z/2\mathbb Z \times \mathbb Z/2\mathbb Z$. 

\begin{theoremss} \label{thm-K4}
For any %$\eta = n + \tfrac{1}{2}$, 
$n \in \mathbb N_{\geq 0}$, 
the 
%\textcolor{red}
{projective} 
monodromy group of
a Lam\'e equation $L_{n + (1/2),\, B}\, w = 0$ 
on \(E\)
is isomorphic to the Klein-four group
$(\mathbb Z/2\mathbb Z)^2$ if and only if 
it \emph{corresponds to} a type I solution of 
\textup{(\ref{Liouville-eq})}, in the sense that 
there exist two meromorphic functions \(w_1, w_2\) on \(\CC\)
such that \(L_{n+(1/2),\,B}\,w_1 =0= L_{n+(1/2),\,B}\,w_2\)
and the quotient \(w_1/w_2\) is a developing map of a
solution of a Liouville equation \textup{(\ref{Liouville-eq})} 
with $\rho = 4\pi(2n + 1)$ and
the monodromy group for \(w_1/w_2\) is
isomorphic to $(\mathbb Z/2\mathbb Z)^2$.
%for one pair 
%of linearly independent meromorphic solutions 
%of $L_{n + 1/2, B}\, w = 0$ on \(\CC\).
%the quotient \(w_1/w_2\) of 
%to a type I solution of 
%a Liouville equation (\ref{Liouville-eq}) 
%with $\rho = 4\pi(2n + 1)$. 
Moreover, each parameter $B$ with the above property
corresponds to exactly one type I solution
of \textup{(\ref{Liouville-eq})}.
\end{theoremss}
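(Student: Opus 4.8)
The plan is to route everything through the monodromy of the Lam\'e equation and to match it, after projectivization, with the transformation laws of the developing map $f=w_1/w_2$. Fix two independent solutions of $L_{n+(1/2),B}\,w=0$ with constant nonzero Wronskian $W$; the absence of a first-order term in \eqref{Lame-eq-n} makes $W$ constant and lets me take monodromy matrices $M_1,M_2,M_0\in\mathrm{SL}_2(\CC)$ along loops $\gamma_1,\gamma_2$ around the two periods and a small loop $\gamma_0$ around the puncture $[0]$. Since $\pi_1(E\smallsetminus\{0\})$ is free on $\gamma_1,\gamma_2$ with $\gamma_0=[\gamma_1,\gamma_2]$, one has $M_0=M_1M_2M_1^{-1}M_2^{-1}$. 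The indicial roots of $L_{n+(1/2),B}$ at $z=0$ (where $\wp\sim z^{-2}$) are $-n-\tfrac12$ and $n+\tfrac32$, with difference $2n+2$ and both local monodromy eigenvalues equal to $e^{-\pi i}=-1$. Consequently $\bar M_0=1$ in $\mathrm{PSL}_2(\CC)$ precisely when $M_0=-I$ (equivalently, $0$ is an apparent singularity with no logarithmic solution); and in that case $f=w_1/w_2$ is single-valued on $\CC$ with $f(z+\omega_i)=\bar M_i\!\cdot\! f(z)$ as Möbius transformations. I keep in mind that an involution of $\mathrm{PSL}_2$ lifts to an order-$4$ element of $\mathrm{SL}_2$, so a Klein-four projective monodromy is covered by the quaternion group $Q_8$; this is exactly what makes the individual solutions sign-ambiguous while their ratio is single-valued.

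Granting this dictionary, the equivalence becomes formal. If $f$ is a normalized type I developing map, then $\bar M_1,\bar M_2$ are the commuting involutions $w\mapsto-w$ and $w\mapsto1/w$ of \eqref{feq-I}, so $\bar M_0=[\bar M_1,\bar M_2]=1$ and the projective monodromy group is $\langle\bar M_1,\bar M_2\rangle\cong(\ZZ/2\ZZ)^2$. Conversely, if the projective monodromy is Klein-four, then, being abelian, it forces $\bar M_0=1$, so $f$ is single-valued and $\bar M_1,\bar M_2$ are two distinct commuting involutions generating the group. I would then invoke the elementary fact that every Klein-four subgroup of $\mathrm{PSL}_2(\CC)$ is conjugate to the standard one $\langle w\mapsto-w,\;w\mapsto1/w\rangle$, whose elements are realized inside $\mathrm{SU}(2)$: after replacing $f$ by $Tf$ for a suitable constant $T\in\mathrm{PGL}_2(\CC)$ (a change of solution basis), and permuting the three involutions if needed by an element of the normalizer, the function $Tf$ satisfies exactly the type I relations \eqref{feq-I} with monodromy in $\mathrm{SU}(2)$.

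To see that $Tf$ is the developing map of a genuine solution I check the two conditions of Lemma \ref{order}. The exponent difference $2n+2$ at $0$ forces $Tf$ to have multiplicity $2n+2=l+1$ there, and hence at every lattice point by $\Lambda$-translation. For the absence of further critical points I use $f'=-W/w_2^2$: away from $\Lambda$ a solution $w_2$ is holomorphic, so $f'$ has no zeros off $\Lambda$, while at a zero of $w_2$ the map $f$ acquires an unramified pole; thus $Tf$ is unramified on $\CC\smallsetminus\Lambda$. Since the monodromy now lies in $\mathrm{SU}(2)$, the right-hand side of \eqref{dev-map} is single-valued, so $u$ is a bona fide type I solution of \eqref{Liouville-eq} with $\rho=4\pi(2n+1)$ and $L_{n+(1/2),B}$ corresponds to it.

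For the ``moreover'' clause I argue that a fixed $B$ gives a unique type I solution. The solution space of $L_{n+(1/2),B}$ is fixed, so $f$ is determined up to the left action $f\mapsto Tf$ coming from the choice of basis; any two normalizations placing the Klein-four monodromy in standard type I position differ by an element normalizing that standard group. As the normalizer of the standard Klein-four group in $\mathrm{PGL}_2(\CC)$ is the finite octahedral group $S_4\subset\mathrm{PSU}(2)$ and the expression \eqref{dev-map} is invariant under $\mathrm{PSU}(2)$, the two solutions $u$ coincide. I expect the main obstacle to be the first paragraph: establishing cleanly that $\bar M_0=1$ is equivalent both to the commuting of the two period-involutions and to the apparentness of the singularity at $0$, and organizing the $Q_8$-versus-$(\ZZ/2\ZZ)^2$ lift so that the single-valued ratio $f$ is matched correctly with the projective monodromy. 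Once this correspondence is in place the remaining steps are routine.
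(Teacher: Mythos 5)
Your proposal is correct and follows essentially the same route as the paper's proof (Theorem \ref{K4-thm}): the equivalence between Klein-four projective monodromy and the absence of logarithmic solutions at the puncture, conjugation of the finite monodromy group into standard position inside $\textup{PSU}(2)$ (the paper's Lemma \ref{lemma:mono_grpthy} together with Corollary \ref{cor:comm_homo}), verification of the developing-map criteria of Lemma \ref{lemma:n-s-cond-develop} (multiplicity $2n+2$ on $\Lambda$, \'etale elsewhere, unitary monodromy), and uniqueness via the normalizer $N(K_4)\cong S_4\subset\textup{PSU}(2)$ and the $\textup{PSU}(2)$-invariance of (\ref{dev-map}). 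If anything, your explicit indicial computation (both local eigenvalues equal $-1$, so projective triviality of $M_0$ forces $M_0=-I$) and the $Q_8$-lift remark are slightly more careful than the paper's phrasing that $w_1,w_2$ are themselves meromorphic on $\CC$ — with half-integer exponents they are only defined up to sign, while their ratio is genuinely single-valued.
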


Theorem \ref{thm-K4} will be proved in 
\S\,\ref{monodromy-I}, Theorem \ref{K4-thm}. 
Its proof shows that the number of solutions to 
(\ref{Liouville-eq}) with $\rho = 4\pi(2n + 1)$ is equal to 
the number of $B$'s in \(\CC\) such that all solutions to 
$L_{n + (1/2),\, B}\, w = 0$ are without logarithmic singularity. 
A classical theorem of Brioschi, Halphen and Crawford says that 
there exists a polynomial \(p_n(B)\) of degree $n + 1$ in \(B\)
whose roots are exactly the parameter values having the
above property.
%such $B$ 
%satisfies $p_n(B) = 0$ for some polynomial of degree $n + 1$. 
Hence we have the following corollary which sharpens Theorem C:

\begin{corollaryss} \label{exact-sol}
Let $\rho = 4\pi(2n + 1)$, $n \in \mathbb Z_{\geq 0}$. 
%Except for a finite number of tori up to isomorphism, 
There exists a finite set \(\mathcal S_n\) of tori such that for
every torus not isomorphic to anyone 
in the exceptional set \(\mathcal S_n\) the 
Liouville equation \textup{(\ref{Liouville-eq})} 
possesses exactly $n + 1$ distinct solutions.
\end{corollaryss}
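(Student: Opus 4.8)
The plan is to deduce Corollary 0.1.11 (the "exactly $n+1$ solutions" statement) directly from Theorem $\ref{thm-K4}$ together with the classical Brioschi--Halphen--Crawford theorem, which provides a polynomial $p_n(B)$ of degree $n+1$ whose roots are precisely the parameters $B$ for which every solution of $L_{n+(1/2),\,B}\,w=0$ is free of logarithmic singularities. Theorem $\ref{thm-K4}$ establishes a bijection between type I solutions of (\ref{Liouville-eq}) with $\rho=4\pi(2n+1)$ and the parameters $B$ whose associated Lamé equation has projective monodromy the Klein four-group $(\ZZ/2\ZZ)^2$. The first step is to reconcile these two descriptions: I would verify that a parameter $B$ gives rise to a Klein-four projective monodromy if and only if $L_{n+(1/2),\,B}\,w=0$ has no logarithmic singularities, i.e.\ that the $B$'s counted by Theorem $\ref{thm-K4}$ are exactly the roots of $p_n(B)$.

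\emph{Next I would count with multiplicity.} Since $p_n(B)$ has degree $n+1$, it has exactly $n+1$ roots in $\CC$ counted with multiplicity. Generically these roots are distinct, and each simple root yields, via Theorem $\ref{thm-K4}$ (the "exactly one type I solution" clause), precisely one solution of the Liouville equation. Because $\rho=4\pi(2n+1)$ with $l$ odd is non-critical, Theorem $\ref{Main-corr}$ guarantees every solution is of type I, so the type I count is the total solution count. Thus on the locus where $p_n(B)$ has $n+1$ distinct roots one obtains exactly $n+1$ solutions, matching the Leray--Schauder degree of Theorem C. The coefficients of $p_n(B)$ are polynomial (indeed modular) expressions in $g_2(\Lambda)$, $g_3(\Lambda)$, hence vary holomorphically with the modulus $\tau$ of the torus.

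\emph{The main obstacle is controlling the exceptional set.} The equality "number of solutions $=n+1$" can fail only when $p_n(B)$ acquires a multiple root, or when a root $B$ fails to produce a genuine type I developing map (for instance if the two Lamé solutions become linearly dependent, or if the candidate configuration of zeros and poles degenerates so that some $p_i\pm p_j\in\Lambda'$, violating the hypotheses of Theorem $\ref{thm-type I}$). I would therefore define $\mathcal S_n$ to be the set of tori $E_\tau$ for which the discriminant of $p_n(B)$ (as a polynomial in $B$ with coefficients in $\QQ[g_2,g_3]$) vanishes, together with any tori where a root coincides with one of these degenerate configurations. The key point is that the discriminant is itself a nonzero polynomial (equivalently a nonzero modular form) in $g_2(\Lambda),g_3(\Lambda)$: I would exhibit at least one torus where $p_n(B)$ has $n+1$ distinct roots to certify that the discriminant is not identically zero. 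Since a nonzero holomorphic (modular) function has a discrete---hence, after passing to the modular curve, finite---zero set, $\mathcal S_n$ consists of finitely many isomorphism classes of tori.

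\emph{Finally I would assemble the argument.} For every torus $E_\tau\notin\mathcal S_n$, the polynomial $p_n(B)$ has $n+1$ distinct roots, none of which lies in a degenerate configuration; by the reconciliation in step one each such root corresponds bijectively to a Klein-four parameter, and by Theorem $\ref{thm-K4}$ each such parameter corresponds to exactly one type I solution of (\ref{Liouville-eq}). Since all solutions are type I in the odd case, the Liouville equation (\ref{Liouville-eq}) with $\rho=4\pi(2n+1)$ has exactly $n+1$ distinct solutions, as claimed. The subtle part throughout is ensuring that the two notions of "counting" --- the algebraic count of roots of $p_n(B)$ and the analytic count of Liouville solutions --- coincide precisely off a provably finite exceptional locus, rather than merely agreeing generically; this is where the nonvanishing of the discriminant modular form does the essential work.
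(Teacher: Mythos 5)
Your overall route is the same as the paper's: deduce the corollary from the Brioschi--Halphen--Crawford polynomial $p_n(B)$ together with Theorem \ref{thm-K4}, so that the number of solutions equals the number of distinct roots of $p_n(B;\Lambda)$, and take $\mathcal S_n$ to be the vanishing locus of $\mathrm{disc}_B(p_n)$, a modular form in $g_2(\Lambda),g_3(\Lambda)$. Two small remarks on the formal part. First, your "reconciliation" step (Klein-four parameters $=$ logarithm-free parameters $=$ roots of $p_n$) is needed and is available: one direction is in the proof of Theorem \ref{K4-thm}, and the other follows from \cite[Thm.\,2.3]{BW}, or from the observation that absence of logarithms forces the local monodromy at $0$ to be $-\mathrm{I}_2$, so the two translation monodromies anticommute and their images in $\textup{PSU}(2)$ generate a Klein four group. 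Second, your worry about roots that "fail to produce a genuine developing map" is superfluous: Theorem \ref{thm-K4} already asserts that \emph{every} Klein-four parameter $B$ corresponds to exactly one type I solution, so no further degeneracy can occur and $\mathcal S_n$ is exactly the zero set of the discriminant. (Also, the fact that all solutions are type I for odd $l$ is Proposition \ref{non-exist}, not literally Theorem \ref{Main-corr}, though this is only a citation slip.)

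The genuine gap is the nonvanishing of that discriminant. Everything else in your argument is formal, but the assertion "I would exhibit at least one torus where $p_n(B)$ has $n+1$ distinct roots" is a promissory note with no argument behind it, and it is precisely the nontrivial content of Theorem \ref{BH-poly}\,(b), which your plan is implicitly re-proving. The paper's source for it is Crawford's: his recursive construction of $p_n(B;\Lambda)$ produces a Sturm sequence when $\Lambda=\ZZ+\sqrt{-1}\,a\ZZ$, $a>0$ (the case where $e_1,e_2,e_3$ are real), and Sturm's theorem in the form of Corollary \ref{cor:sturm} then yields $n+1$ distinct \emph{real} roots for rectangular tori; hence $\mathrm{disc}_B(p_n)\not\equiv 0$ as a modular form and its zero set on the modular curve is finite. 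Without this input (or some substitute, e.g.\ an asymptotic analysis of the roots of $p_n$ near a cusp) you cannot rule out that the discriminant vanishes identically, in which case your $\mathcal S_n$ would be all of moduli space and the statement would fail. So the proof is incomplete at exactly the step you yourself identified as doing "the essential work."
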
 

\subsubsection{}
In \S \ref{monodromy-I} we will also give a new proof of the 
Brioschi-Halphen-Crawford theorem by exploring the fact that 
the ratio $f=w_1/w_2$ of two linearly independent
solutions of the Lam\'e equation \(\,L_{n+({1}/{2}), B}\, w=0\,\)
satisfies the
equation (\ref{S-der-f}) for the Schwarzian derivative
with \(\eta=n\!+\!\tfrac{1}{2}\);
see Theorem \ref{BH-poly}. The new proof has %some 
%\textcolor{red}
{several} advantages. 
It provides %an easier 
a 
%\textcolor{red}
{convenient} 
way to compute the polynomial $p_n(B)$ 
for each $n$. Moreover it is local in nature. 
Thus it can be used to treat the mean field equation
with multiple singular sources of the form 
\begin{equation} \label{mfe-many-4pi}
\triangle u + e^u = 4\pi \sum_{j = 1}^l \alpha_j\,\delta_{Q_j} 
\quad \mbox{on $E$},
\end{equation}
where $Q_1,\ldots, Q_l$ are distinct points in $E$ 
and \(\alpha_1,\ldots,\alpha_l\) are positive integers. 
In a forthcoming paper \cite{CLW2} 
we will prove that for generic $Q_1, \ldots, Q_n \in E$, 
equation (\ref{mfe-many-4pi}) has exactly 
$$
\tfrac{1}{2} \prod\nolimits_{j = 1}^l (\alpha_j + 1)
$$ 
distinct solutions provided that $\sum_{j = 1}^l \alpha_j$ 
is an odd positive integer.

An immediate consequence of Corollary \ref{exact-sol} 
is a solution of the counting conjecture 
stated in the paragraph after \ref{thm-type I}:
%we just stated, namely 
\begin{quotation}
\emph{There exists a finite set \(\mathcal S_n\) of tori such that
for every torus not isomorphic to anyone in the exceptional set
\(\mathcal S_n\)
%Except for a finite set of tori 
the polynomial system} (\ref{system-I})
\emph{has $(n + 1)!$ solutions.} 
\end{quotation}
This might be helpful when we come to study the 
\emph{excess intersection} %theory 
at $\infty$ 
for the projectivized 
%\textcolor{red}
{version} of the 
system of 
%\textcolor{red}
{equations}
(\ref{system-I}) for general $n$.

\subsubsection{}
Another important consequence of Theorem \ref{thm-type I} is the 
\emph{holomorphic dependency} 
of \(\,f(z;\tau)\,\) on
the moduli variable $\tau = \omega_2/\omega_1$ in the upper half
plane $\mathbb H$ 
%(the upper half plane)
for normalized developing maps \(\,f(z;\tau)\,\) of 
solution to \textup{(\ref{Liouville-eq})} with 
$\rho = 4\pi (2n + 1)$ 
as in (\ref{thm-type I}); 
%
%$f(z;\tau)$ %in terms of 
%\textcolor{red}{on}
we have not been able to prove this statement directly
from Liouville's equation (\ref{Liouville-eq}).
%a statement we are unable to prove
%by other methods. 
The modular dependency of the constants $\mu$, $C_j$'s 
in Theorem \ref{thm-type I} indicates that 
the \emph{normalized} developing map might be invariant 
under modular transformations 
of $\tau$ for some congruence subgroup of 
\(\textup{SL}_2(\ZZ)\). 
To illustrate this connection between (\ref{Liouville-eq}) 
and modular forms, 
we will consider in \S\ref{mod-form} 
the simplest case $\rho = 4\pi$, 
where (\ref{Liouville-eq}) has exactly one solution for any torus. 
In this situation 
%\textcolor{red}
{we can \emph{specify} a unique} developing map 
$f(z; \tau)$
%can be made unique 
on $E_\tau = \mathbb C/\Lambda_\tau$, 
where $\Lambda_\tau = \mathbb Z + \mathbb Z \tau$ and 
$\tau \in \mathbb H$;
see Proposition \ref{prop:extra-normalized} for the
definition of this function \(f(z;\tau)\)
and explicit formulas for it.
When $f(z;\tau)$ is written as a power series
% $z = 0$ reads as
%and is denoted by $f(z; \tau)$. 
%The expansion of $f(z;\tau)$ at $z = 0$ reads as
$$
f(z; \tau) = a_0(\tau) + a_2(\tau) z^2 + a_4(\tau) z^4 + \cdots,
$$
for each \(k\) the coefficient %we can prove that 
%\emph{each}
$a_k(\tau)$ of \(\,z^k\,\) is a \emph{modular form} of weight $k$ 
for the principal congruence subgroup $\Gamma(4)$ 
which is holomorphic on the upper-half plane \(\HH\)
but may have poles at the cusps of the modular curve \(X(4)\);
see Corollary \ref{modular-form-1stcase}.
%except that $a_k(\tau)$ may have poles at %some 
%the cusps of the modular curve \(X(4)\). 
%of $\Gamma(4)$.
In addition we will show 
%in \S\ref{mod-form} 
that the constant term \(a_0(\tau)\) of \(f(z;\tau)\) is a 
\(\,\QQ(\sqrt{-1})\)-rational \emph{Hauptmodul} which is also a
\emph{modular unit}; i.e.\ (a) \(a_0(\tau)\) is holomorphic and 
everywhere non-zero on \(\HH\), 
(b) \(a_0(\tau)\) defines a meromorphic
function on \(X(4)\) with \(\QQ(\sqrt{-1})\)-rational \(q\)-expansion
at all cusps of \(X(4)\),  
%\[\displaystyle a_0(\tau) \ne 0\qquad \forall\,\tau\in\mathbb H\]
%Thus $a_0(\tau)$ is a \emph{modular unit} of weight 0; 
and (c) every meromorphic function on \(X(4)\) is a rational
function of \(a_0(\tau)\).
See %\S\ref{mod-form} and especially 
Corollary \ref{cor:hauptm} and Remark \ref{rem:modunit}\,(b).
%for more details. 
%\marginpar{Needs to be more specific here. 
%For instance are we saying that we have a finite family of
%\(f(z;\tau)\)'s, and for each \(k\) the linear span
%of the coefficients \(a_k\) is Hecke invariant?
%}

Underlying the above statements is the fact that the function
\(f(z;\tau)\) satisfies a transformation law for the 
full modular group \(\textup{SL}_2(\ZZ)\); 
see Proposition \ref{prop:transf_4pi} for the precise statement. 
Modulo a question \ref{question:irred}\,(a) on the irreducibility
of certain branched covering of the upper-half plane \(\HH\),
this transformation law generalizes to the case when 
\(\rho=4\pi(2n+1)\) for any natural number \(n\in\NN_{\geq 0}\);
see Corollary \ref{cor:gen-psi}.

In a forthcoming paper we will consider equation 
(\ref{mfe-many-4pi}) with multiple singular sources and 
show that for each \(k\) the space of modular forms of weight \(k\)
arising from (\ref{mfe-many-4pi}) is invariant under 
(suitably defined) Hecke operators.

\subsection{}
Next we want to classify solutions of the Liouville equation
(\ref{Liouville-eq}) 
with $\rho = 8\pi n$ for some positive integer $n$. 
By Theorem \ref{Main-corr} any solution of (\ref{Liouville-eq}), 
if exists, 
must be of type II. Hence any solution of (\ref{Liouville-eq}) 
begets infinitely many solutions. 
We remark that not every torus admits a solution to
(\ref{Liouville-eq}). For instance 
when $\rho = 8\pi$, there are no solutions 
to (\ref{Liouville-eq}) for rectangular tori, 
while there do exist solutions for $\tau$ 
close to $e^{\pi i/3}$; see \cite[Example 2.5, 2.6]{LW}). 
Indeed %the existence criterion  
%(\ref{Liouville-eq}) in 
\cite[Theorem 1.1]{LW} asserts that 
in the case when $\rho = 8\pi$, the Liouville equation
(\ref{Liouville-eq}) has a solution
if and only the Green's function $G_E$ for the torus
%admits nontrivial (non half-period) critical points. 
has a critical point which is not a \(2\)-torsion.
Hence by Theorem A the Liouville equation
(\ref{Liouville-eq}) with $\rho = 8\pi$ has a solution for 
if and only if the Green function has five critical points. 
Let $\mathcal{M}_1 = {\rm SL}_2(\mathbb Z)\backslash \mathbb H$ 
and let
$$
\Omega_5 := \{\,\bar\tau \in \mathcal{M}_1 \mid \mbox{$G(z; \bar\tau)$ 
has five critical points}\,\}.
$$ 
By the uniqueness theorem in \cite[Theorem 4.1]{LW}, 
we know that $\Omega_5$ is open; while it is easy to see 
(using the holomorphic \((\ZZ/3\ZZ)\)-action on the torus)
that the image of \( e^{\pi i/3}\) in
\(\mathcal{M}_1\) lies in \(\Omega_5\).
It is important to further investigate the geometry of this 
moduli subset $\Omega_5$. In Part II of this series of papers, 
we shall use methods for non-linear PDE's to the 
Liouville equation (\ref{Liouville-eq}) 
and the theory of modular forms to prove that 
$\Omega_5$ is a simply connected domain 
%and $\p \Omega_5 \cong S^1$, 
and the boundary \(\p \Omega_5\) of \(\Omega\)
is 
%\textcolor{red}
%{diffeomorphic to a circle},
real-analytically isomorphic to a circle,
%\marginpar{I am assuming that the ``\(\cong\)'' symbol
%stands for homeomorphism and not diffeomorphism, but please
%confirm.}
%\textcolor{red}
{thereby} settling the conjecture on the shape of 
$\Omega_5$ raised in \cite[\S1\,p.\,915]{LW}.\footnote{This phenomenon 
was observed in computer simulations.} 

\subsubsection{}\label{subsec:ansatz}
In this paper (Part I of the series), we classify 
all type II solutions for general $n \in \mathbb Z_{>0}$ and study 
their connection with the geometry of a family of hyperelliptic
curves. This will form the foundation of an investigation on 
certain modular forms to be developed in 
Part II of this series \cite{LW-II}.

%As in the study of type I solutions, 
We will also consider the 
logarithmic derivative $g = {f'}/{f}$ of a \emph{normalized}
type II developing map
$f$. 
The type II condition \eqref{feq-II} implies that $g$ is %indeed 
an elliptic function on $E = E_\Lambda$. 
%Hence in the following discussions (till the end of the introduction) 
%all elliptic functions $\wp$, $\zeta$, $\sigma$, 
%as well as $e_i$'s and the quasi-periods $\eta_i$'s 
%are all with respect to the lattice $\Lambda$. 
%For instance \(\,\zeta(z)=\zeta(z;\Lambda)\) is the
%Weierstrass zeta function for the lattice \(\Lambda\subset \CC\).
%\bigbreak

\subsubsection{}
As was explained in \ref{subsec_dev-lame}, 
the Liouville equation (\ref{Liouville-eq}) is related
to the Lam\'e equation (\ref{Lame-eq-n}) whose
parameter \(\eta\) is equal to \(\rho/8\pi\).
In the case when \(\eta\) is a positive integer \(n\), 
there are explicit formulas for solutions,
of the Lam\'e equation (\ref{Lame-eq-n}), 
%with parameter $\eta = n \in \mathbb Z_{>0}$ 
%admit an explicit description, 
called the \emph{Hermite-Halphen ansatz}; c.f.\ 
\cite[I--VII]{Hermite} and
\cite[pp.\,495-498]{Halphen}: 

\begin{quotation}
\emph{For any $a_1, \ldots, a_n \in \mathbb C \smallsetminus \Lambda$ 
such that the images $[a_i] \in E$
of \(a_i\) under the projection 
$\mathbb C \to E = \mathbb C/\Lambda$, $i = 1, \ldots, n$, 
represent} \(n\) mutually distinct \emph{points in 
\(E\smallsetminus\{[0]\}\), the function} 
\begin{equation}\label{fnc-w_a}
w_a(z) = e^{z\cdot\sum_{i = 1}^n \zeta(a_i;\Lambda)} 
\prod_{i = 1}^n \frac{\sigma(z - a_i;\Lambda)}{\sigma(z;\Lambda)}
\end{equation}
\emph{is a solution to} (\ref{Lame-eq-n}) 
\emph{for some $B \in \mathbb C$ 
if and only if $\{[a_i]\} \in Y_n 
\subset \textup{Sym}^n(E\smallsetminus\{0\})$, where}
\begin{equation}\label{Yn1}
Y_n :=\left\{\{[a_1],\ldots,[a_n]\} \left| 
\begin{array}{lll}
[a_i]\in E\!\smallsetminus\!\{0\}\ \forall i,\ \
[a_i] \ne [a_j]\,   \ \textup{for all } i \ne j,\\
\sum_{1\leq j\leq n,\,j\neq i} 
\left(\zeta(a_i \!-\! a_j) + \zeta(a_j) - \zeta(a_i)\right) = 0\\
\textup{for}\, \ i = 1, \ldots, n.\ \ 
%\textup{and}\  
%\,[a_i] \ne [a_j]\,   \ \textup{for all } i \ne j.
\end{array}\right.\right\}.
\end{equation}
%\textcolor{red}
{
\emph{Moreover if \(\{[a_i]\}_{i=1}^n\) is a point of
\(Y_n\) and \(w_a(z)\) is a solution of 
a Lam\'e equation} (\ref{Lame-eq-n}) 
\emph{with \(\eta=n\),
then \(B=(2n - 1) \sum_{i = 1}^n \wp(a_i)\).
}}
\end{quotation}
%\bigbreak

\noindent
Note that \(\,w_{b}(z)\in \CC^{\times}\cdot w_a(z)\,\) if 
\(b=(b_1,\ldots, b_n)\) and \(b_i\equiv a_i\pmod{\Lambda}\) 
\(i=1,\ldots,n\).

\subsubsection{}\label{subsub:properties_ansatz}
The following properties are known from classical literature.
\begin{itemize}
\item[(i)] 
%From the standard tranformation formula for the Weierstrass \(\sigma\)-function
%one sees that 
Each ansatz solution \(\,w_a(z)\,\) of 
\(\,L_{n,B}\,w=0\,\) satisfies
\begin{equation*}
w_a(z+\omega)=e^{\sum_{i=1}^n\zeta(a_i;\Lambda)\omega 
-\sum_{i=1}^n a_i\eta(\omega;\Lambda)}\cdot w_a(z)
\quad \forall\,\omega\in \Lambda.
\end{equation*}
In other words \(\,w_a(z)\,\) is a common eigenvector for the global
monodromy representation of \(\Lambda =\pi_1(E)\) on the 
\(2\)-dimensional space of local solutions of the 
Lam\'e equation \(\,L_{n,B}\,w=0\).

\item[(ii)]  Every one-dimensional eigenspace of the monodromy
representation of a Lam\'e equation \(\,L_{n,B}\,w=0\,\) is of
the form \(\CC\cdot w_a(z)\) for some \(a\) such that
\(\,B=(2n - 1) \sum_{i = 1}^n \wp(a_i)\).
In other words the map \(\,\pi: Y_n\to \CC\,\) given by
\(\,\{[a_i]\}_{i=1}^n\,\mapsto\,(2n - 1) \sum_{i = 1}^n \wp(a_i)\,\)
is surjective. Note that \(\pi\circ\iota=\pi\) for the
involution \(\,\iota:\{[a_i]\}_{i=1}^n\mapsto \{[-a_i]\}_{i=1}^n\) 
on \(Y_n\).

\item[(iii)] For every \(B\in \CC\), the set
\(\pi^{-1}(B)\) is an orbit of the involution \(\iota\),
and \(\pi^{-1}(B)\) is a singleton if and only if
\(L_{n,B}\,w=0\) has a Lam\'e function as a solution.

\end{itemize}
The above properties tell us that \(\,Y_n\,\) can be regarded
as the parameter space of all one-dimensional eigenspaces
of the monodromy representations on the solutions of the
Lam\'e equation \(\,L_{n,B}\,w=0\,\) on \(E\) when the parameter
\(B\) varies over \(\CC\).
This and the fact that \(\pi:Y_n\to \CC\) is a double cover
drives home the compelling picture that \(Y_n\) can be regarded
as a ``spectral curve'' for the global 
monodromy representation.\footnote{This is more than an analogy:
\(Y_n\) is indeed a spectral curve in KdV theory. It parametrizes
one-dimensional common eigenspaces for the commutator subring
of the differential operator \(\,\frac{d^2}{dz^2}-n(n+1)\wp(z)\,\)
in the ring of linear differential operators in one variable.}
The algebraic structure on \(Y_n\) is explained in
\ref{subsub:Yn} below.

\subsubsection{}\label{subsub:Yn}
The analytic set of solutions of the system of equations
\begin{equation}\label{Yn}
%\left\{
%\begin{array}{ll}
\sum_{1\leq j\leq n,\,j\neq i} 
\left(\zeta(a_i - a_j;\Lambda) + \zeta(a_j;\Lambda) - \zeta(a_i;\Lambda)\right) = 0\quad
\forall\, i = 1, \ldots, n
%\\ %\ \textup{and}\  
%\,(a_i-a_j)\cdot b_{ij}=1   
%\quad \textup{for all } i \ne j\ \,\textup{with}\,\ 1\leq i,j\leq n
%\end{array}
%\right.
\end{equation}
%in (\ref{Yn}) 
in variables 
%\(\,\left((a_i)_{1\leq i\leq j}; (b_{ij})_{1\leq i\neq 
%j\leq n}\right)\,\)
\(\,\left(a_1,\ldots, a_n\right)\,\) under the constraint that
\[
a_i\not\in \Lambda\ \ \ \forall\,i=1,\ldots,n\ \ 
\textup{and}\ \ 
a_i-a_j\not\in \Lambda\ \ \forall\, i\neq j
\]
descends to a locally closed algebraic subvariety of
\[{\rm Sym}^n (E\smallsetminus\{0\}) = (E\smallsetminus\{0\})^n/S_n\] 
%since the equations are elliptic in each variable $a_i$ 
because \(Y_n\) is stable under the symmetric group \(S_n\),
and
%\(\,\zeta(a_i-a_j;\Lambda)+\zeta(a_j;\Lambda)-\zeta(a_i;\Lambda)
%= \frac{1}{2} \frac{\wp'(a_i;\Lambda)+\wp'(a_j;\Lambda}{
%\wp(a_i;\Lambda)-\wp(a_j;\Lambda)}\,\)
%by 
the classical addition formula (c.f.~\cite[20\(\cdot\)53 Example 2]{Whittaker}) 
\[\frac{1}{2}\,\frac{\wp'(z)+\wp'(w)}{\wp(z)-\wp(w)}
=\zeta(z-w)-\zeta(z)+\zeta(w)
\]
for elliptic functions allows us to express the definition of
\(Y_n\) algebraically:
Let \(\tilde{\Delta}\) be the divisor of \(E^n\) consisting
of all points of \(E^n\) where at least two components are equal,
and let \(\Delta\) be the image of \(\tilde{\Delta}\) in
\(\textup{Sym}^n E\). 
Denote by \(\tilde{U}\) the algebraic variety
\((E\smallsetminus\{[0]\})^n\smallsetminus \Delta\).
\begin{quote}
\emph{\(Y_n\) is the closed subvariety of 
\(\textup{Sym}^n(E\smallsetminus\{[0]\})\smallsetminus \Delta\)
whose inverse image \(\tilde{Y_n}\) in the affine algebraic variety
\(\tilde{U}\)
%\((\tilde{U}=E\smallsetminus\{[0])^n\smallsetminus \Delta\)
is defined by the system of equations}
\begin{equation} \label{pm-sys}
\sum_{1\leq j\leq n,\,j \ne i} \frac{y_i + y_j}{x_i - x_j} = 0, 
\qquad \forall\,i = 1, \ldots, n.
\end{equation}
\end{quote}
Here \(x_i, y_i\) are the pull-back via the \(i\)-th projection
of coordinates of the Weierstrass form 
\(\,y^2=4x^3-g_2(\Lambda)x-g_3(\Lambda)\,\) of
\(E=\CC/\Lambda\).
For each pair \((i,j)\) with \(i\neq j\), 
the regular function \(\,\frac{y_i + y_j}{x_i - x_j}\,\)
on the affine open subset \(\tilde{U}[1/(x_i-x_j)]\) of \(\tilde{U}\)
where \(\,x_i\neq x_j\,\) extends to a regular function on
\(\tilde{U}\), therefore the above description defines
an affine closed subvariety \(\tilde{Y}_n\) of \(\tilde{U}\).

%and the system of equations (\ref{Yn}) is stable 
%under the symmetric group. 

%For ease of notations, we will still denote a point on 
%$Y_n$ by $a = \{a_i\}$ instead of $\{[a_i]\}$ 
%when no confusion will possibly arise. 
%It is known from classical literature \cite{Halphen} that $Y_n$ is an 
%(affine) hyperelliptic curve, 
%with a hyperelliptic (generically two to one) 
%structural map $Y_n \to \mathbb C$ given by 
%$$
%\{[a_i]\}_{i=1}^n\ \mapsto\ (2n - 1) \sum_{i = 1}^n \wp(a_i).
%$$
%\bigbreak

In view of the algebraic structure of \(Y_n\), 
the classically known facts recalled in \ref{subsub:properties_ansatz}
means that \(Y_n\) is ``the affine part'' of a hyperelliptic curve
and the \(\pi:Y_n\to \CC\) is the restriction to \(Y_n\)
of the hyperelliptic projection.
On the other hand, solutions to the 
Liouville equation (\ref{Liouville-eq}) 
with $\rho = 8\pi n$ admit the following description.

\begin{theorem} [Type II evenness and Green/algebraic system] 
\label{thm-type II} 
Let $u$ be a solution to \textup{(\ref{Liouville-eq})} 
with $\rho = 8\pi n$ on \(E=\CC/\Lambda\)
and let $f$ be a \emph{normalized} developing map of \(u\)
of type II. 
%\marginpar{It seems that the existence of these points
%and that \(f\) is a local unit at \(0\) are
%\emph{consequences} of the type II condition, not an
%assumption.}
%Assume that 

%Then 
\begin{itemize}
\item[(1)] The developing map \(f\) is a local unit
at points of \(\Lambda\).

\item[(2)] There are \(2n\) elements 
\(p_1,\ldots,p_n, q_1,\ldots, q_n\in \CC\)
with the following properties.
\begin{itemize}
\item %\([p_1]:=p_1\,\textup{mod}\,\Lambda,
%\ldots, [p_n], [q_1]:=q_1\,\textup{mod}\,\Lambda,
%\ldots, [q_n]\) 
\([p_1],\ldots, [p_n], [q_1],\ldots, [q_n]\) 
are \(2n\) 
\emph{distinct} points
%\(p_1,\ldots,p_n, q_1,\ldots, q_n\) 
in \(E\), where \([p_i]:=p_i\, \textup{mod}\,\Lambda\)
for all \(i\) and similarly for the \([q_i]\)'s.
\item $f$ has simple zeros 
%\textcolor{red}
{at points above} 
$p_1, \ldots, p_n$ 
and simple poles 
%\textcolor{red}
{at points above} 
$q_1, \ldots, q_n$.
\item %There are no other zeros or poles of \(f\).
\(f\) is holomorphic and non-zero at every point of \(\CC\)
which is not congruent modulo \(\Lambda\) to
one of \(\{p_1,\ldots, p_n, q_1,\ldots, q_n\}\).

%\item $\{[q_1], \cdots, [q_n]\} = \{[-p_1], \cdots, [-p_n]\}$.
\end{itemize}

\item[(3)] The zeros and poles of the developing map \(f\)
are related by
\[\{[q_1], \ldots, [q_n]\} = \{[-p_1], \ldots, [-p_n]\}.\]

\item[(4)] There is a unique even solution in the one-parameter 
%\textcolor{red}
{scaling} family of solutions
\(\,u_{\lambda}(z)=\log\frac{8\,e^{2\lambda}\,\vert f'(z)\vert^2}{(1+
e^{2\lambda}\vert f(z)\vert^2)^2}\,\) 
of \textup{(\ref{Liouville-eq})}
with parameter \(\lambda\in\RR\).

\item[(5)] The 
%\textcolor{red}
{``zero points'' $p_1, \ldots, p_n\in E$
of \(f\)}  satisfy the following 
$n$ equations: 
\begin{equation} \label{system-II}
\sum_{i = 1}^n\wp'(p_i;\Lambda) \cdot \wp^r(p_i;\Lambda) = 0, \quad r
= 0,\ldots, n - 2, 
\end{equation}
\begin{equation} \label{G-eq}
\sum_{i = 1}^n \frac{\partial G}{\partial z}(p_i) = 0,
\end{equation}
where \(G(z)\) is the Green's function of \(E\).

\item[(6)] The meromorphic function \(\,g:=\frac{d}{dz}\log f
  =f'/f\,\) on \(E\) is 
\emph{even}
and is determined by the points  
\([p_1],\ldots,[p_n]\in E\), while
the normalized developing map \(f\) is determined up to \(\CC^{\times}\)
by \([p_1],\ldots, [p_n]\), via the following formulas:
\begin{equation} \label{g-expr}
g(z) = \sum_{i = 1}^n \frac{\wp'(p_i;\Lambda)}{\wp(z;\Lambda) -
  \wp(p_i;\Lambda)}, 
\qquad
f(z) = f(0)\cdot \exp \int^z_{{0}} g(\xi)\, d\xi.
%f(z) = f(0)\cdot \exp \int^z_{\textcolor{red}{0}} g(\xi)\, d\xi.
\end{equation}
\end{itemize}
Conversely, if $\{[p_1], \ldots, [p_n]\}$ is a set of \(n\) distinct points
of \(E\smallsetminus\{0\}\) which satisfies equations
\textup{(\ref{system-II})} and \textup{(\ref{G-eq})}, and
\begin{equation} \label{pn-p}
\{[p_1], \ldots, [p_n]\} \cap \{[-p_1], \ldots, [-p_n]\} = \emptyset,
\end{equation}
then the function $f$ defined by \textup{(\ref{g-expr})}
is a type II normalized developing map
%\textcolor{red}
{of a solution of \textup{the Liouville 
equation (\ref{Liouville-eq})}
with \(\rho=8\pi n\)}.
\end{theorem}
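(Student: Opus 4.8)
The plan is to push everything through the logarithmic derivative \(g=f'/f\) and the Hermite--Halphen description of the monodromy eigenvectors, proving the forward and converse directions as one chain of equivalences read in both directions. First I would note that the type II condition (\ref{feq-II}) makes \(g\) an elliptic function on \(E\), as in \S\ref{subsec:ansatz}. By the Schwarzian identity (\ref{S-der-f}) with \(\eta=n\) (see \S\ref{subsec_dev-lame}), \(f\) is a ratio \(w_1/w_2\) of two linearly independent solutions of \(L_{n,B}\,w=0\), and the type II hypothesis says exactly that \(w_1,w_2\) simultaneously diagonalize the \(\Lambda\)-monodromy. Two distinct eigenlines force \(B\) to be a non--Lam\'e-function value, so by \S\ref{subsub:properties_ansatz}(ii),(iii) the fibre \(\pi^{-1}(B)\) is a two--element \(\iota\)-orbit \(\{\{[a_i]\},\{[-a_i]\}\}\) with \(\{[a_i]\}\in Y_n\), and \(w_1=w_a\), \(w_2=w_{-a}\) up to scalars. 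Substituting the ansatz (\ref{fnc-w_a}) and using the oddness of \(\sigma,\zeta\) yields the closed form
\[
f(z)=C\,e^{2z\sum_i\zeta(a_i)}\prod_{i=1}^{n}\frac{\sigma(z-a_i)}{\sigma(z+a_i)},
\]
from which (1), (2), (3) are immediate with \(p_i=a_i\), \(q_i=-a_i\): the zeros and poles are simple and sit at the \(2n\) distinct points \(\pm a_i\), while \([a_i]\neq[0]\) gives \(f(0)=C(-1)^n\neq 0,\infty\), so \(f\) is a local unit at \(\Lambda\).

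Next I would compute \(g\) from this product. The addition formula recalled in \S\ref{subsub:Yn}, applied to \(\zeta(z-a_i)-\zeta(z+a_i)\), collapses the logarithmic derivative to
\[
g(z)=\sum_{i=1}^{n}\frac{\wp'(p_i)}{\wp(z)-\wp(p_i)},
\]
which is manifestly even; this gives the formulas in (6), and integrating gives \(f(-z)f(z)=f(0)^2\). Consequently \(u_\lambda\) is even precisely when \(e^{2\lambda}|f(0)|^2=1\), a single scalar equation with a unique real root, which is (4). For the algebraic part of (5) I would expand the defining property of a developing map (Lemma \ref{order}): multiplicity \(2n+1\) at \(\Lambda\) is equivalent to \(g\) vanishing to order \(2n\) at \(0\), and the expansion \(\wp'(p_i)/(\wp(z)-\wp(p_i))=\sum_{k\ge0}\wp'(p_i)\wp(p_i)^k z^{2k+2}\) shows that this vanishing is exactly the system (\ref{system-II}).

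The monodromy half of (5) is the crux. From the quasi-periodicity in \S\ref{subsub:properties_ansatz}(i) the multiplier is \(\epsilon_i=f(z+\omega_i)/f(z)=\exp\!\big(2\sum_k(\zeta(p_k)\omega_i-p_k\,\eta(\omega_i;\Lambda))\big)\), and type II, i.e.\ monodromy in a maximal torus of \(\textup{SU}(2)\), means \(|\epsilon_i|=1\), that is \(\mathrm{Re}\big(\sum_k(\zeta(p_k)\omega_i-p_k\eta(\omega_i;\Lambda))\big)=0\) for \(i=1,2\). Writing \(P=\sum_k p_k=R\omega_1+T\omega_2\) with \(R,T\in\RR\) and \(A=\sum_k\zeta(p_k)-R\,\eta(\omega_1;\Lambda)-T\,\eta(\omega_2;\Lambda)\), the Legendre relation \(\eta(\omega_1;\Lambda)\omega_2-\eta(\omega_2;\Lambda)\omega_1=2\pi\sqrt{-1}\) shows that each of the two real conditions equals \(\mathrm{Re}(A\omega_i)\); since \(\omega_1,\omega_2\) span \(\CC\) over \(\RR\), the pair is equivalent to \(A=0\). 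Feeding in the standard formula \(\partial_z G(z)=-\frac{1}{4\pi}(\zeta(z)-r\,\eta(\omega_1;\Lambda)-s\,\eta(\omega_2;\Lambda))\) for \(z=r\omega_1+s\omega_2\) identifies \(A=0\) with \(\sum_k\partial_z G(p_k)=0\), which is (\ref{G-eq}). The converse then just reverses this chain with no further Lam\'e input: given distinct \([p_i]\in E\smallsetminus\{0\}\) obeying (\ref{system-II}), (\ref{G-eq}), (\ref{pn-p}), the function \(g\) of (\ref{g-expr}) is even elliptic of degree \(2n\) with residues \(\pm1\) at the \(2n\) distinct points \(\pm p_i\) and, by (\ref{system-II}), all \(2n\) of its zeros at \(0\); hence \(f=f(0)\exp\int_0^z g\) is single valued with simple zeros at \(p_i\), simple poles at \(-p_i\), ramification \(2n+1\) at \(\Lambda\) and no other critical points (because \(f'=fg\) vanishes only on \(\Lambda\)), while (\ref{G-eq}) gives \(|\epsilon_i|=1\) so that (\ref{dev-map}) defines a genuine \(\Lambda\)-periodic solution with \(\rho=8\pi n\).

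I expect this monodromy step to be the main obstacle: it is the only place where two real equations must be shown to coalesce into the single complex equation (\ref{G-eq}), and it is precisely the bridge---via the Legendre relation and the closed form of \(\partial_z G\)---that ties the existence of type II solutions to the critical points of the Green's function and on which the converse direction rests.
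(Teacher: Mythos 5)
Your architecture is sound, and the second half of your argument (parts (4)--(6) and the converse) is essentially the paper's own proof in \S\ref{II-even}: the paper likewise writes $g=\sum_i\wp'(p_i)/(\wp(z)-\wp(p_i))$, extracts (\ref{system-II}) from the order-$2n$ vanishing of $g$ at $0$, and collapses the two real unitarity conditions into the single complex equation (\ref{G-eq}) by exactly your Legendre-relation bookkeeping (the paper phrases it via Lemma \ref{lemma:period_int} together with the congruence $\eta(z)y-z\eta(y)\equiv 0\pmod{\sqrt{-1}\,\RR}$, and then identifies the resulting sum with the Hecke form $\zeta(z)-\eta(z)=-4\pi\,\partial G/\partial z$ of (\ref{green-zeta})). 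One small inaccuracy there: your expansion ``$\wp'(p_i)/(\wp(z)-\wp(p_i))=\sum_{k\ge0}\wp'(p_i)\wp(p_i)^k z^{2k+2}$'' should be an expansion in powers of $\wp(z)^{-1}$, each of which only \emph{begins} with $z^{2k+2}$; the triangularity of that expansion is what makes the equivalence with (\ref{system-II}) work. Where you genuinely diverge from the paper is in parts (1)--(3): the paper proves $\{[q_i]\}=\{[-p_i]\}$ by an elementary self-contained argument (Theorem \ref{thm:g_even}): the vanishing of the odd derivatives $g^{(2j+1)}(0)$ forces, via Lemma \ref{lemma:symm_wp}, the multisets $\{\wp(p_i)\}$ and $\{\wp(q_i)\}$ to coincide. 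You instead invoke the Hermite--Halphen spectral facts \ref{subsub:properties_ansatz}(ii),(iii) to write $f=C\,w_a/w_{-a}$ outright. This is not circular --- within the paper those facts are established in \S\ref{X_n}--\S\ref{hyper-ell-str} (Theorem \ref{hyper}, Propositions \ref{prop:spectral_Yn} and \ref{cor:Ba_and_a}) without using the present theorem --- but it is a much heavier input, resting ultimately on the Beukers--van der Waall irreducibility theorems; the paper orders its material precisely so that this theorem needs none of that.

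The one genuine gap is your claim that (1)--(3) are ``immediate'' from the closed form, in particular that the zeros and poles sit at $2n$ \emph{distinct} points $\pm a_i$. Membership $[a]\in Y_n$ plus $[a]\neq[-a]$ does \emph{not} by itself exclude partial coincidences between the lists $\{[a_i]\}$ and $\{[-a_i]\}$ (say $[a_1]=[-a_2]$): the definition of $Y_n$ only demands $[a_i]\neq[a_j]$ and $[a_i]\neq[0]$. If such a coincidence occurred, the corresponding $\sigma$-factors in $w_a/w_{-a}$ would cancel, $f$ would have fewer than $n$ zeros modulo $\Lambda$, and statement (2) would fail. Ruling this out is exactly the content of Proposition \ref{XnYn}, whose proof in the paper is a genuine argument (an order count on the Schwarzian derivative). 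In your setting the quickest repair is the one already at your disposal: by Lemma \ref{order} and part (1), $f$ has multiplicity $2n+1$ at points of $\Lambda$ and is a local unit there, so $g=f'/f$ vanishes to order $2n$ at $[0]$; since a nonconstant elliptic function has equally many zeros and poles counted with multiplicity, $g$ must have at least $2n$ poles on $E$, while any cancellation would leave it with strictly fewer --- a contradiction. You should insert this argument (or cite Proposition \ref{XnYn}) before asserting (2); with that patch the proof goes through.
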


Theorem \ref{thm-type II}
will be proved in \S \ref{II-even}. While the type I system 
is purely algebraic, the type II system is 
%\textcolor{red}
{somewhat} 
transcendental as it involves the Green function of \(E\). 
It is natural to isolate the 
Green equation $\sum \nabla G(p_i) = 0$ and consider 
the remaining $n - 1$ algebraic equations (\ref{system-II}) first. 
%\bigbreak

\subsubsection{} 
Let $a = \{a_1, \ldots, a_n\}$ be an
unordered set of complex numbers with \emph{distinct} images 
in \(E\smallsetminus \{0\}=(\CC\smallsetminus \Lambda)/\Lambda\)
such that the
equation (\ref{system-II}) is satisfied
with \(p_i=a_i\) for \(i=1,\ldots, n\).
% where  $p_i$ being
%the image of $a_i$ in \(E\) for each \(i=1,\ldots,n\).
Let $x = \wp(z;\Lambda)$, $y = \wp'(z;\Lambda)$, 
so that the torus $E$ is given by 
the Weierstrass equation 
\[y^2 = 4x^3 - g_2(\Lambda) x - g_3(\Lambda).\]
Let $(x_i, y_i) = (\wp(a_i;\Lambda), \wp'(a_i;\Lambda))$ 
for $i = 1, \ldots, n$. 
Then the system of $n - 1$ equations (\ref{system-II}) 
takes the algebraic form
\begin{equation} \label{power-sys}
\sum_{i = 1}^n x_i^r\cdot y_i = 0, \qquad i = 0, 1, \ldots, n - 2.
\end{equation}
%On the other hand by the classical formula
%\[\frac{1}{2}\,\frac{\wp'(z)+\wp'(w)}{\wp(z)-\wp(w)}
%=\zeta(z-w)-\zeta(z)+\zeta(w)
%\]
%for elliptic functions,\footnote{c.f.\
%\cite[20\(\cdot\)53 Example 2]{Whittaker}} 
%the defining equations of $Y_n$ in (\ref{Yn}) 
%are equivalent to
%$$
%\sum_{1\leq j\leq n,\, j \ne i} 
%\frac{\wp'(a_i;\Lambda) + \wp'(a_j)}{\wp(a_i;\Lambda) - 
%\wp(a_j;\Lambda)} 
%= \sum_{1\leq j\leq n,\,j \ne i} 
%(\zeta(a_i - a_j;\Lambda) - \zeta(a_i;\Lambda) + 
%\zeta(a_j;\Lambda)) 
%= 0\qquad\forall\,i=1,\ldots,n,
%$$
%where the 
%summations runs over the index \(j\) for each fixed \(i\),
%provide that (\ref{pn-p}) holds.
Recalled that the algebraic variety \(Y_n\) is 
defined by the system of equations (\ref{pm-sys}).

\begin{theoremss} [$=$ Theorem \ref{eq-2-sys}] \label{eq2sys}
For any set of \emph{distinct} complex numbers
$x_1, \ldots, x_n$, the two 
systems of \emph{linear} equations in variables $y_1,\ldots, y_n$
\textup{(\ref{power-sys})} 
and \textup{(\ref{pm-sys})} are equivalent.
\end{theoremss}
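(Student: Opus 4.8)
The plan is to regard each system as a family of linear forms on $\CC^n$ in the unknowns $y_1,\dots,y_n$ and to prove that the two families span the \emph{same} subspace of the dual space $(\CC^n)^{*}$; two finite families of linear forms have the same common zero locus exactly when they span the same subspace, and this is precisely what the asserted equivalence means. Introduce the forms $L_r(y):=\sum_{i=1}^{n}x_i^{\,r}\,y_i$ for $r=0,\dots,n-2$, which are the left-hand sides of \eqref{power-sys}, and $E_i(y):=\sum_{j\neq i}\frac{y_i+y_j}{x_i-x_j}$ for $i=1,\dots,n$, the left-hand sides of \eqref{pm-sys}. Since the $x_i$ are distinct, the $(n-1)\times n$ coefficient matrix $(x_i^{\,r})_{0\le r\le n-2,\;1\le i\le n}$ is Vandermonde of maximal rank $n-1$, so $\dim\operatorname{span}\{L_0,\dots,L_{n-2}\}=n-1$; this is the only place where distinctness of the $x_i$ enters. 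On the other side, grouping the ordered pairs $(i,j)$ and $(j,i)$ gives the single relation $\sum_{i=1}^{n}E_i=0$, hence $\dim\operatorname{span}\{E_1,\dots,E_n\}\le n-1$.

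The key step is to express the forms $L_r$ through the $E_i$. I would compute, for each integer $m\ge 1$,
\[
\sum_{i=1}^{n}x_i^{\,m}E_i
=\sum_{\substack{i,j\\ i\neq j}}x_i^{\,m}\,\frac{y_i+y_j}{x_i-x_j}
=\sum_{i<j}(y_i+y_j)\,\frac{x_i^{\,m}-x_j^{\,m}}{x_i-x_j},
\]
where the second equality comes from pairing $(i,j)$ with $(j,i)$. Expanding $\frac{x_i^{m}-x_j^{m}}{x_i-x_j}=\sum_{a=0}^{m-1}x_i^{\,a}x_j^{\,m-1-a}$ and collecting terms with the power sums $P_k:=\sum_i x_i^{\,k}$ yields, after a short bookkeeping computation, the recursion
\[
\sum_{i=1}^{n}x_i^{\,m}E_i=(n-m)\,L_{m-1}+\sum_{a=0}^{m-2}P_{m-1-a}\,L_a ,
\]
in which the coefficient $n-m$ arises by combining the diagonal contribution $P_0L_{m-1}=nL_{m-1}$ with a contribution $-m\,L_{m-1}$ coming from the terms $L_{a+b}$ with $a+b=m-1$.

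Because $n-m\neq 0$ for every $m$ with $1\le m\le n-1$, this recursion can be solved for $L_{m-1}$, and a finite induction on $m=1,\dots,n-1$ shows that each of $L_0,\dots,L_{n-2}$ is a linear combination of the $E_i$; thus $\operatorname{span}\{L_0,\dots,L_{n-2}\}\subseteq\operatorname{span}\{E_1,\dots,E_n\}$. Combining this with the two dimension estimates, $n-1=\dim\operatorname{span}\{L_r\}\le\dim\operatorname{span}\{E_i\}\le n-1$, forces both dimensions to equal $n-1$; an inclusion of subspaces of equal finite dimension is an equality, so the two families span the same subspace and the systems are equivalent. (The case $n=1$ is trivial, since \eqref{power-sys} is empty and \eqref{pm-sys} reads $0=0$.) The one genuinely computational ingredient is the derivation of the recursion, and the main place to be careful is the pair-symmetrization bookkeeping; everything else is the dimension count above.
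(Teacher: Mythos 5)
Your proof is correct, and it takes a genuinely different route from the paper's. The paper (proof of Proposition \ref{eq-2-sys}, together with Lemma \ref{dn}) compares the \emph{kernels} of the two coefficient matrices: it proves that the matrix $A_n$ of system (\ref{pm-sys}) has rank $n-1$ by explicitly evaluating the vector $c$ of its signed maximal minors, $c_i=(-1)^{n+i}(n-1)!/\prod_{k\ne i}(x_k-x_i)$, which requires a pole-order argument over $\QQ(x_1,\ldots,x_n)$ plus the determinant evaluation $d_n=(n-1)!$ (itself given two proofs, one analytic via the limit $x_1\to\infty$ and one algebraic via specialization at roots of unity), and then checks that $c$ is proportional to the vector $b$ of Vandermonde minors of (\ref{power-sys}). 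You instead compare the \emph{row spans}: your recursion $\sum_i x_i^{m}E_i=(n-m)L_{m-1}+\sum_{a=0}^{m-2}P_{m-1-a}L_a$ is correct (it follows from the identity $\sum_{i\ne j}(y_i+y_j)x_i^{a}x_j^{b}=P_bL_a+P_aL_b-2L_{a+b}$ summed over $a+b=m-1$, and checks out directly for $n=2,3$), and since $n-m\neq 0$ for $1\le m\le n-1$ it recursively places $L_0,\ldots,L_{n-2}$ in $\operatorname{span}\{E_1,\ldots,E_n\}$, after which your two dimension bounds (rank $n-1$ for the Vandermonde family, $\le n-1$ for the $E_i$ because $\sum_i E_i=0$) force the spans, hence the solution sets, to coincide. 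Your route is more economical: it needs nothing beyond the standard Vandermonde rank fact, so Lemma \ref{dn} and its two proofs become unnecessary. What the paper's heavier computation buys is the explicit kernel vector, i.e.\ the solution formula $y_i=C/\prod_{j\ne i}(x_i-x_j)$, which is reused later (equation (\ref{(0)}) and Remark \ref{MeaningC}); but nothing essential is lost in your approach, since once the equivalence is known that formula follows from the Vandermonde side by Cramer's rule, which is exactly how the paper derives it in the proof of Theorem \ref{alg-hyper}.
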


Define $X_n \subset {\rm Sym}^n E$ by 
%(again we use $\{a_i\}$ instead of $\{[a_i]\}$)
\begin{equation} \label{X-n}
X_n = \left\{\{(x_i,y_i)\}_{i = 1}^n
%\in\textup{Sym}^n(E\smallsetminus \{0\})
\left\vert 
\begin{array}{ll}
(x_i,y_i)\in E\smallsetminus 
%{\textcolor{red}
{{E[2]}}
\ \ \forall\,i,\ 
x_i\neq x_j \ \ \forall\,i\neq j
\\
\sum_{i = 1}^n x_i^r\cdot y_i
 = 0\ \ \textup{for} \ r=0,1,\ldots,n-2
%\mbox{$0 \le r \le n - 2$; $x_i \ne x_j$ for $i \ne j$}
\end{array}
\right.
\right\},
\end{equation}
where \(E[2]=\frac{1}{2}\Lambda/\Lambda\) is the subset of
\(2\)-torsion points of \(E\).
This variety $X_n$ is an affine algebraic curve, 
%\marginpar{\small why ``quasi-affine''?}
which will be called 
the ($n$-th) \emph{Liouville curve}. 
Theorem {\ref{eq2sys}} implies that
$$
X_n = \big\{\{[a_i]\}_{i = 1}^n \in Y_n \,\big\vert\, \mbox{$\wp(a_i) 
\ne \wp(a_j)\,$ whenever $\,i \ne j$},\ 
%\textcolor{red}
{\wp'(a_i)\neq 0\ \forall\,i} \,\big\}. 
$$
%\bigbreak

The following theorem says that the Liouville curve $X_n$ is the unramified locus
of the Lam\'e curve 
%\textcolor{red}
{\(\bar X_n\)} 
for the hyperelliptic projection.

\begin{theorem}[Hyperelliptic structure on 
$X_n \subset Y_n \subset \bar X_n$] \label{hyp-ell-thm} {\ }
\begin{itemize}
\item [(1)]
Let $a = \{[a_i]\}_{i=1}^n %\in Y_n
$ 
be a point of \(Y_n\).
The corresponding $B$ in the Lam\'e equation,
in the sense of the Hermite--Halphen ansatz 
recalled in \textup{\ref{subsec:ansatz}},
is given by
\begin{equation*}
B_a = (2n - 1) \sum_{i = 1}^n \wp(a_i).
\end{equation*}

\item [(2)] The map $\pi: Y_n \to \mathbb C$ defined by 
%\marginpar{What is a ``uniform sign''?}
$a \mapsto B_a$ is a 
%\textcolor{red}
{proper} 
surjective branched double cover. 

%\marginpar{Need to check whether \(\bar X_n\) meets any of
%the diagonals of \(\textup{Sym}^n E\) outside of \(0^n\).}

\item [(3)] The map $\pi:Y_n\to \CC$ has a natural extension to a
proper morphism 
\[\bar \pi: \bar X_n \to \mathbb P^1(\mathbb{C}) =\CC\cup\{\infty\},\] 
where \(\bar X_n\)  
is the closure of $X_n$ in ${\rm Sym}^n E$, for both the
Zariski and the complex topologies.

\item[(4)] The restriction 
\[\pi\vert_{X_n}:X_n\to \pi(X_n)=:U_n\] 
of \(\pi\) to the Zariski open subset \(X_n\subset Y_n\) 
is a finite \'etale double cover of 
the Zariski open subset \(U_n\subset\CC\).
%It is 2 to 1 on $X_n$. Namely $B \in U = \pi(X_n)$ 
%determines $a \in X_n$ up to a uniform sign. $X_n$ 
%is a non-singular open algebraic curve.
Points of the finite set $\bar X_n \smallsetminus X_n$ are precisely 
the ramification points 
%\textcolor{red}
{of \(\bar\pi\)}

\item[(5)] The curve \(\bar X_n\) is a 
(possibly singular) hyperelliptic curve 
of arithmetic genus $n$ and
\(\bar\pi\) is the hyperelliptic structural morphism.
%with structural map 
%\[\bar \pi: \bar X_n \to \mathbb P^1.\] 
Moreover \(\bar X_n=Y_n\cup\{[0]^n\}\) is the union of \(Y_n\) 
with a \emph{single point}
\([0]^n :=\{[0], \ldots, [0]\}\in \textup{Sym}^n E \) 
% is the element of \(\textup{Sym}^n E\) 
%with. 

\item[(6)] 
%\textcolor{red}
{The curve \(\bar X_n\) is stable under the involution
\(\tilde\iota\)
of \(\,\textup{Sym}^n E,\) defined by 
\[
\iota\colon \{P_1,\ldots,P_n\} \mapsto
\{-P_1,\ldots, -P_n\}\qquad
\forall\,P_1,\ldots, P_n\in E.
\]
The restriction \(\bar\iota\) of \(\tilde\iota\) to \(\bar X_n\) is
the \emph{hyperelliptic involution} on \(\bar X_n\).}
The set of ramification points \(\bar X_n\!\smallsetminus\! X_n\) of 
\(\bar\pi\) coincides with the fixed point set of the 
hyperelliptic involution \(\bar\iota\) on \(\bar X_n\).

\item[(7)] The map \(\pi\) induces a bijection from
%For $B \in \mathbb C \backslash U$, 
%it corresponds to $a = -a$ 
the finite set
\(Y_n\!\smallsetminus \! X_n\) 
to the finite set \(\CC\!\smallsetminus \! U_n\).
A point \(\{[a_i]\}_{i=1}^n\) of \(Y_n\) lies in 
\(Y_n\smallsetminus X_n\) if and only if
%which satisfies equations
%(\ref{system-II}). 
%This is precisely the case that 
the function $w_a$ as defined in \textup{\ref{fnc-w_a}} is a Lam\'e function. 
Hence $\,\#(Y_n \!\smallsetminus\! X_n) = 2n + 1\,$ 
%\textcolor{red}
{when \(Y_n\!\smallsetminus\! X_n\) is counted} with 
multiplicities inherited from \(\CC\!\smallsetminus\! U_n\) 
when \(\CC\!\smallsetminus\! U_n\) is identified with the set of roots of the
polynomial $\ell_n(B) = 0$ of degree \(2n+1\) in Theorem B.

\item[(8)] 
The inverse image of the point 
\(\infty = \mathbb P^1(\mathbb{C}) \!\smallsetminus\! \CC\) under \(\bar\pi\)
consists of the single point \(0^n\).
%For $B = \infty$, it corresponds to 
%$a = 0^n$, i.e.~$a_i = 0$ for all $i$.
This point \(0^n\) ``at infinity'' is a smooth point
of \(\bar X_n\) for every torus \(E\).
%The point at infinity $0^n \in \bar X_n$ 
%is a smooth point for any torus $E$. 
 \end{itemize}
\end{theorem}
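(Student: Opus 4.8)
The plan is to ground the whole theorem in the dictionary, recalled in \ref{subsub:properties_ansatz}, between points of $Y_n$ and one-dimensional eigenspaces $\CC\cdot w_a$ of the (abelian) monodromy representation of $L_{n,B}\,w=0$. Assertion (1) is exactly the last line of the Hermite--Halphen ansatz in \ref{subsec:ansatz}. For (2) I would observe that for generic $B$ the two commuting monodromy matrices are simultaneously diagonalizable with reciprocal eigenvalues, so by properties (ii)--(iii) of \ref{subsub:properties_ansatz} the fibre $\pi^{-1}(B)$ is exactly $\{\,\{[a_i]\},\{[-a_i]\}\,\}$; hence $\pi$ is generically two-to-one and surjective and $Y_n$ is a curve. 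Since $\sigma$ and $\zeta$ are odd one checks $w_a(-z)=w_{-a}(z)$, so the symmetry $z\mapsto -z$ of $L_{n,B}$ (recall $\wp$ is even) carries $\CC\cdot w_a$ to $\CC\cdot w_{-a}$ and realizes $\iota$ as the deck involution; this is the content of (6) once the fixed locus is pinned down.

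The identification of the ramification locus is the conceptual core of (4) and (6), and the key mechanism is that the Lam\'e operator carries no first-order term, so the Wronskian $W(w_a,w_{-a})$ is constant. If $\{[a_i]\}\in Y_n\smallsetminus X_n$ then (by Theorem \ref{eq2sys} and the definition of $X_n$) either some $a_i\in\frac12\Lambda$ or $a_i\equiv -a_j\pmod{\Lambda}$ for some $i\neq j$; in either case $w_a$ and $w_{-a}$ share a zero, forcing $W\equiv 0$, so the two eigenlines coincide and the point is $\iota$-fixed. Conversely an $\iota$-fixed point manifestly has a $2$-torsion component or an antidiagonal pair, so $Y_n\smallsetminus X_n=\mathrm{Fix}(\iota)\cap Y_n$. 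On $X_n$ the two solutions share no zero, are independent, and define distinct eigenlines, so $\pi|_{X_n}$ is unramified of degree two onto $U_n$, hence finite \'etale. For (7) I would invoke property (iii) to match $\mathrm{Fix}(\iota)\cap Y_n$ with the parameters $B$ admitting a Lam\'e function, and Theorem B to see these are the $2n+1$ roots of $\ell_n(B)$, with the multiplicities inherited from $\CC\smallsetminus U_n$.

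To globalize I pass to $\bar X_n=\overline{X_n}=\overline{Y_n}$ inside the projective variety $\mathrm{Sym}^n E$; the function $B=(2n-1)\sum_i\wp(a_i)$ is a rational map $\mathrm{Sym}^n E\dashrightarrow\PP^1(\CC)$ restricting to a morphism on the complete curve $\bar X_n$, yielding the proper extension $\bar\pi$ of (3). What remains, and what I expect to be the main obstacle (as flagged in the introduction), is the analysis over $\infty$ proving (8). The plan is to use on $\mathrm{Sym}^n E$ near $0^n$ the elementary symmetric functions of $a_1,\dots,a_n$ in the uniformizer $z$ at $0$ as local coordinates, and to insert $\wp(a)=a^{-2}+O(a^2)$, $\wp'(a)=-2a^{-3}+O(a)$ into (\ref{pm-sys}). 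Writing $a_i=t\,\alpha_i+\cdots$, the leading order of (\ref{pm-sys}) becomes the finite system $\sum_{j\neq i}\big(\alpha_i^{-3}+\alpha_j^{-3}\big)\big/\big(\alpha_i^{-2}-\alpha_j^{-2}\big)=0$ for the scale-normalized configuration $\{\alpha_i\}$, while $B\sim(2n-1)\,t^{-2}\sum_i\alpha_i^{-2}$ forces ramification index two and exhibits $t$ (a branch of $\mathrm{const}\cdot B^{-1/2}$) as a local parameter on which $\iota$ acts by $t\mapsto -t$. The crux is to show that this leading system has, up to the scaling, a unique solution with distinct $\alpha_i$ and nondegenerate Jacobian, so that the expansion in $t$ is uniquely solvable order by order; this makes $t$ a genuine uniformizer and proves that $\bar\pi^{-1}(\infty)$ is the single smooth unibranch point $0^n$, with no further boundary point added (a nonzero collision $a_i\to a_j$ makes a term of (\ref{pm-sys}) blow up, so it cannot lie on $\bar X_n$). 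Feeding $\bar\pi^{-1}(\CC)=Y_n$ back then finishes the properness claimed in (2), and combined with (8) shows $\bar X_n\smallsetminus X_n=(Y_n\smallsetminus X_n)\cup\{0^n\}$ is precisely the ramification locus of $\bar\pi$ and the fixed locus of $\bar\iota$, completing (4) and (6).

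Finally, for (5) I construct an $\iota$-anti-invariant rational generator $C$ of $\CC(\bar X_n)$ over $\CC(B)$; computing its divisor against the branch locus $(Y_n\smallsetminus X_n)\cup\{0^n\}$ identifies $C^2=\mathrm{const}\cdot\ell_n(B)$ with $\deg\ell_n=2n+1$ (Theorem B). Since $\ell_n$ has odd degree it is not a square, so the extension is genuine and $\bar X_n$ is irreducible, realized as the standard hyperelliptic model attached to $\ell_n$, of arithmetic genus $n$, with $\bar\iota$ as hyperelliptic involution and with the single smooth point $0^n$ over $\infty$; when $\ell_n$ has distinct roots Riemann--Hurwitz confirms smoothness and genus $n$, and in general $\bar X_n$ is a possibly singular curve of arithmetic genus $n$. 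Combined with $\bar X_n=Y_n\cup\{0^n\}$ this completes (5).
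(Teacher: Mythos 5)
Your treatment of (1), (2), (6), (7) via the monodromy dictionary is essentially sound, and your Wronskian/shared-zero argument identifying $Y_n\smallsetminus X_n$ with the fixed locus of $\iota$ is a nice alternative to the paper's route (Proposition \ref{XnYn} together with the irreducibility results of \cite{BW, Waall}). But the proposal has two genuine gaps, and they sit exactly at the points the paper identifies as the core of the theorem. First, your argument for (5) cannot deliver the arithmetic genus. The divisor computation with an anti-invariant $C$ identifies at best the \emph{function field}, $\CC(\bar X_n)\cong \CC(B)[C]/(C^2-\textup{const}\cdot\ell_n(B))$, i.e.\ the birational class; but arithmetic genus is not a birational invariant of a singular curve. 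The identity on $X_n$ extends to a bijective morphism $\phi:\bar X_n\to\hat X_n$ onto the projective hyperelliptic model, and a bijective morphism of reduced curves need not be an isomorphism: when $\ell_n(B;\Lambda)$ has a multiple root, $\bar X_n$ could a priori be strictly less singular than $\hat X_n$ at the corresponding ramification point (a partial normalization), in which case its arithmetic genus would be strictly less than $n$. Ruling this out is precisely Proposition \ref{prop:use-purity}, which the paper proves by placing the situation in a family over $\HH$, using that $\textup{disc}_B(\ell_n(B))$ is a nonzero modular form (Theorem \ref{alg-hyper}\,(2)) so that the generic fibre is smooth, and then applying purity/Hartogs to show every symmetrized coordinate function on the family restricts to a polynomial in $B$ and $C$. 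Your proposal contains no substitute for this step; saying $\bar X_n$ is ``realized as the standard hyperelliptic model'' is the assertion to be proved, not a consequence of the function-field identification.

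Second, the statement you yourself call ``the crux'' of (8) --- uniqueness, up to scaling and permutation, of the nondegenerate solution of the leading system $\sum_i t_i^{2k+1}=0$, $k=1,\dots,n-1$ --- is asserted but not proved. This is Proposition \ref{B=infty}, whose proof is a substantive computation (passing to $u=t^2$ and $G(u)=q(t)q(-t)$, a partial-fraction identity, and a short recursion forced by the irregular singularity at $u=0$), followed by Corollary \ref{cor:smooth_at_infty} (Hensel's lemma, using $\sum_i r_i\neq 0$) to convert uniqueness into smoothness at $[0]^n$; without it, (8) and the properness/surjectivity structure in (2)--(3) remain unproven. Moreover, your parenthetical argument that no further boundary points arise because ``a nonzero collision $a_i\to a_j$ makes a term of (\ref{pm-sys}) blow up'' is not sound: along sequences in $X_n$ the \emph{sum} of those terms vanishes identically, so individual divergences can cancel, and for a collision approaching a $2$-torsion point the offending term is an indeterminate $0/0$; note that the points of $Y_n\smallsetminus X_n$, which genuinely lie in $\bar X_n$, already satisfy $x_i=x_j$. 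The paper controls the boundary instead through the recursion (\ref{recursive}): when $B_k$ stays bounded the roots of $X_{[a]_k}$ stay bounded and the ansatz functions converge to a nonzero solution $w_a$ of the limiting Lam\'e equation, so a repeated $[a_i]$ would force $w_a\equiv 0$; when $B_k\to\infty$ all $a_{k,i}\to 0$. A similar (smaller) gap occurs in (4): two-point fibres alone do not give \'etaleness; the paper gets the local isomorphism from the explicit formula $C=\frac{dX_{[a]}}{dx}(\wp(a_i))\cdot\wp'(a_i)$.
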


\subsubsection{}
A complete proof of Theorem \ref{hyp-ell-thm}
is given in \S \ref{hyper-ell-str}, 
after some preparation in \S \ref{X_n} on characterizations of $Y_n$ 
and $X_n$ related to the Lam\'e equations;
see Theorems \ref{hyper}, \ref{alg-hyper}, Corollary
\ref{cor:smooth_at_infty}
and Proposition \ref{prop:use-purity}.
In particular, the affine hyperelliptic curve $Y_n$ 
is defined by the explicit equation 
$$
C^2 = \ell_n(B; g_2, g_3) \quad \text{with}\quad \deg \ell_n(B) = 2n + 1.
$$ 
This curve, called \emph{the $n$-th Lam\'e curve}, 
is smooth for generic tori. It is an irreducible algebraic curve 
since the degree of $\ell_n$ is odd.

Due to its fundamental importance, we 
offer several 
proofs, from both the analytic and the algebraic perspectives,
for (part of) the theorem.
We must mention that the polynomial $\ell_n(B)$ 
in Theorem B has been treated 
in the literature in several different contexts,
%\textcolor{red}
{including} the investigation of 
Lam\'e equations with algebraic solutions \cite{BW, Dahmen} and 
the mathematical physics related to Lam\'e equations. 
Thus a substantial portion of Theorem \ref{hyp-ell-thm} overlaps with existing
literature. 
However there are a number of issues for which we were unable 
to locate satisfactory treatments in the literature. 
For instance, why the 
closure $\bar X_n$ of \(Y_n\) in ${\rm Sym}^n E$ 
coincides with the \emph{projective hyperelliptic model}\footnote{See 
\ref{subsubsec:compare-setup}.e for the
definition of the projective hyperelliptic model
defined by \(\ell_n(B)\).}
of the affine curve \(\,C^2=\ell_n(B)\,\)
at the infinity point,
instead of the closure in \(\PP^2\) of the latter curve.

%We emphasize that a priori there is no obvious reason why 
In this paper we attempt to provide a self-contained account 
of the hyperelliptic structure of $\bar{X}_n$, from both the analytic 
and the algebraic point of view, for the convenience of the readers. 
The readers will find in our treatment
the precise behavior of the local structure near every 
$a \in \bar X_n \!\smallsetminus\! X_n$, 
\emph{including the infinity point}, 
and also 
%the quantitative 
``the meaning'' of the coordinate $\,C\,$ of the Lam\'e curve 
in various contexts (c.f.~Theorem \ref{alg-hyper} and 
Remark \ref{MeaningC}, as well as formulas (\ref{poly=const}), 
(\ref{WronskianC}), (\ref{C2}) and (\ref{(0)})).

\subsubsection{}
Theorem \ref{thm-type II} tells us that algebraic geometric structure
\[\bar{\pi}_n: (\bar{X}_n, Y_n, X_n)\to (\PP^1(\CC), \CC, {U}_n)\]
provides a scaffold for analyzing the mean field equation
\(\,\triangle u+ e^u=8\pi n\delta_0\,\) on a flat torus:
%remaining constraint on $\{a_i\}$ to support the existence 
a necessary and sufficient condition 
%\textcolor{red}
{for a point 
%\(\{p_1,\ldots,p_n\}\) of \(Y_n\)
to be attached to a type II}
solution of the mean field equation 
(\ref{Liouville-eq}) with parameter \(\rho = 8\pi n\)
is that \(\{p_1,\ldots, p_n\}\)
satisfies the Green equation (\ref{G-eq}).  
Of course one wish to pursue the above thread 
to bring about a complete analysis of the set of all
solutions of (\ref{Liouville-eq}).
The case \(n=1\), where \(\bar{X}_1=E\), 
has been successfully treated in \cite{LW} with a 
combination of two techniques.
Naturally one would like to
extend these methods to higher values of \(n\).

\subsubsection{}
The first technique is to
use the double cover 
$E \to \mathbb P^1(\CC) \cong S^2$ and
the evenness of $u$ to transform 
the equation to another one on $S^2$ (with more singular sources). 
To extend this step to a general positive integer $n$, we believe that 
the hyperelliptic structure $\bar{\pi}:\bar X_n\to \PP^1({\CC})$ 
%will be 
%\textcolor{red}
{is}
the right replacement of $E$. 
%\bigbreak

%subsubsection{}
It will be shown in Part II of this series of articles that the map 
\[\sigma: \bar X_n \to E,\qquad 
\{p_1,\ldots,p_n\} \mapsto \sigma(\{p_1,\ldots,p_n\}) = \sum p_i
\]
%induced by 
%the addition map $E^n \to E$, $a \mapsto \mu(a) = \sum a_i$ 
is a branched covering of degree $\tfrac{1}{2} n(n + 1)$,
and the rational function 
$$
{\bf z}([a]) := \zeta(\sum a_i) - \sum \zeta(a_i)
\qquad\textup [a]=\{[a_1],\ldots,[a_n]\}\in 
\bar{X}_n
$$
on $\bar X_n$ 
is a primitive generator of the extension field 
$K(\bar X_n)$ over $K(E)$. Using this, 
a ``pre-modular form'' $Z_n(\sigma; \tau)$ for $\tau \in \mathbb H$ 
and $\sigma \in E_\tau$ will be constructed, which has the property 
that non-trivial solutions to the 
Green equation (\ref{G-eq}) on $\bar X_n$ 
correspond exactly to the zeros of 
the single function $Z_n(\sigma; \tau)$.

\subsubsection{}
The second technique employed for the case \(\rho = 8\pi\)
is to use the \emph{method of continuity} 
to connect the equation for $\rho = 8\pi$ to the known 
%one 
%\textcolor{red}
{case when}  $\rho = 4\pi$ 
by establishing the non-degeneracy of the
linearized equations of (\ref{Liouville-eq}). 
For general $\rho$, such a non-degeneracy statement is out of reach 
at this moment. However, since equation (\ref{Liouville-eq}) 
has a solutions $u_\eta$ for every 
$\rho = 8\pi \eta \not\in 8\pi\mathbb N$, 
it is natural to study the limiting behavior of $u_\eta$ 
as $\eta \to n$. If the limit does not blow up, 
it will converge to a solution $u$ for $\rho = 8\pi n$. 
For the blow-up case, we will establish a connection between 
the location of the blow-up set and the hyperelliptic geometry 
of $Y_n \to \mathbb P^1(\CC)$:

\begin{theoremss} \label{blow-up-set}
Let $S = \{p_1, \ldots, p_n\}$ be an element 
%\textcolor{red}
{of \(\textup{Sym}^n E\) such that 
\(p_i\neq p_j\) whenever \(i\neq j\).}
Suppose that \(S\) is the blow-up set 
of a sequence of solutions $u_k$ of the 
Louiville equation \textup{(\ref{Liouville-eq})} 
%\textcolor{red}
{with paratmeter \(\rho_k\) such that} 
$\rho_k \to 8\pi n$ as $k \to \infty$. Then $S\in Y_n$. Moreover, 
\begin{itemize}
\item[(1)] If $\rho_k \ne 8\pi n$ for every $k$ then $S$ 
is a branch (or ramification) point of $Y_n$.

\item[(2)] If $\rho_k = 8\pi n$ for all $k$ then $S$ is 
\emph{not} a branch point of $Y_n$.
\end{itemize}
\end{theoremss}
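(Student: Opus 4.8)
The plan is to establish $S\in Y_n$ first, and then to decide in each case whether $S$ lies in the \'etale locus $X_n$ or in the ramification locus $Y_n\smallsetminus X_n$, using the description (Theorem \ref{hyp-ell-thm}) of the latter as the fixed-point set of the hyperelliptic involution $\bar\iota\colon\{[p_i]\}\mapsto\{[-p_i]\}$, equivalently as those $B$ for which $L_{n,B}$ admits a Lam\'e function. For the membership $S\in Y_n$: by \cite{CL0}, recalled in \ref{subsec:thmA}, the blow-up set satisfies the system (\ref{Green-eq}), and I would insert the standard real-analytic identity $-4\pi\,\partial_z G(z)=\zeta(z)-r\,z-s\,\bar z$, where $r$ and $s=\pi/|E|$ are fixed so that the right-hand side is $\Lambda$-periodic (they cancel the quasi-periods of $\zeta$). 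Using $\sum_{j\ne i}(p_i-p_j)=n\,p_i-\sum_\ell p_\ell$ and the analogous identity for the conjugates, the terms linear in $z$ and in $\bar z$ telescope, leaving $n\,\zeta(p_i)-\sum_{j\ne i}\zeta(p_i-p_j)=c$ for a constant $c$ independent of $i$. Summing over $i$ and using that $\zeta$ is odd, so $\sum_i\sum_{j\ne i}\zeta(p_i-p_j)=0$, forces $c=\sum_\ell\zeta(p_\ell)$, which is exactly the defining system (\ref{Yn}) of $Y_n$; hence $S\in Y_n$. The same computation shows (\ref{Green-eq}) is equivalent to ``$S\in Y_n$ together with the Green relation (\ref{G-eq})''.

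\emph{Case (2): $\rho_k=8\pi n$.} Here $8\pi n=4\pi\cdot(2n)$ with $2n$ even, so by Theorem \ref{Main-corr} every $u_k$ is of type II, and Theorem \ref{thm-type II} gives a developing map whose zero set $\{[p_i]\}$ is \emph{disjoint} from its negative $\{[-p_i]\}$ (the pole set). After extracting a subsequence, the $u_k$ arise as scaling limits within type II families, and their concentration set (as the scaling parameter tends to $\pm\infty$) is exactly this zero set; since $\{[p_i]\}\cap\{[-p_i]\}=\emptyset$ it is not $\bar\iota$-invariant, so $S\in X_n$ and $S$ is not a branch point.

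\emph{Case (1): $\rho_k\ne 8\pi n$.} I would argue by contradiction, supposing $S\in X_n$. Then $B_S=(2n-1)\sum\wp(p_i)$ is a \emph{regular} value of $\bar\pi$, so the fibre $\bar\pi^{-1}(B_S)=\{S,\bar\iota(S)\}$ consists of two distinct points; the corresponding Hermite--Halphen ansatz solutions $w_S,w_{\bar\iota(S)}$ of $L_{n,B_S}$ are linearly independent with diagonalizable (maximal-torus) monodromy, and their ratio is a type II developing map of a genuine solution of (\ref{Liouville-eq}) with $\rho=8\pi n$, the whole one-parameter scaling family built from it having $\rho\equiv 8\pi n$ identically. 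The key is that at a regular value $B_S$ this configuration is \emph{rigid}: the two distinct eigenvectors depend analytically on $(\eta,B)$ near $(n,B_S)$, and an implicit-function/continuity argument should produce, for $\eta_k$ near $n$, developing maps whose associated solutions stay locally bounded near $\{p_i\}$, contradicting that $u_k$ blows up there. Hence $S\notin X_n$, i.e.\ $S$ is a branch point.

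The main obstacle is precisely this rigidity in case (1): one must show that at a non-branch configuration the \emph{only} source of non-compactness is the $\RR$-scaling, which keeps $\rho$ pinned to $8\pi n$, so that a blow-up with $\rho_k\ne 8\pi n$ cannot accumulate at a point of $X_n$. I expect this to require the quantitative blow-up analysis --- a local Pohozaev identity at each $p_i$ relating the deviation $\rho_k-8\pi n$ to the second-order data of the bubbling profile, coupled with the non-degeneracy furnished by the two distinct ansatz solutions over a regular value $B_S$. Marrying these PDE blow-up estimates to the hyperelliptic geometry of $\bar\pi$ is the technical crux of the proof.
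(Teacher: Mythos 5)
Your reduction of the blow-up system (\ref{Green-eq}) to ``$S\in Y_n$ together with the Green relation (\ref{G-eq})'' is correct and is essentially the paper's own computation (via $-4\pi\,\partial_z G=\zeta(z;\Lambda)-\eta(z;\Lambda)$, Lemma \ref{lemmass:green-zeta} and the lemma following Definition \ref{bp-pt}). But both of your case analyses have gaps. In case (2), the disjointness $\{[p_i]\}\cap\{[-p_i]\}=\emptyset$ holds for the developing map of each \emph{fixed} $u_k$, whereas $S$ is a \emph{limit}: each $u_k$ determines a point $[a^{(k)}]\in X_n$ and a scaling parameter $\lambda_k$, and a priori the sequence $[a^{(k)}]$ could converge in the compact curve $\bar X_n$ to a ramification point, where zeros and poles of the developing maps collide, so disjointness is not automatically inherited by $S$. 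The paper closes exactly this gap: after passing to a subsequence $[a^{(k)}]\to[x_0]\in\bar X_n$, if $[x_0]$ were a ramification point then $f_{[a^{(k)}]}\to 1$ uniformly on compact sets (by Theorem \ref{poly-eqn}\,(0), $f_{[a]}$ is the constant $1$ when $[a]=[-a]$), so $(u_k)$ could not blow up; hence $[x_0]\in X_n$ and $S=[x_0]$ is not a branch point. This gap is fixable, but the compactness-and-degeneration step is the actual content of part (2).

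Case (1) is the fatal gap, and you name it yourself: the ``rigidity'' step is not proved, and it is the crux of the theorem. Worse, the route you propose --- an implicit-function/continuity argument resting on non-degeneracy at the regular value $B_S$ --- runs into precisely the obstruction the paper flags in its introduction: non-degeneracy of the linearized equation at $\rho=8\pi n$ is out of reach, which is why the authors avoid it. The paper's actual proof of (1) uses a different mechanism altogether. Since $\eta_k:=\rho_k/8\pi\neq n$ and $|\eta_k-n|<\tfrac12$, the exponent difference $2\eta_k+1$ of the Lam\'e equation at the puncture is \emph{not} an integer, so the local monodromy at $z=0$ is nontrivial. Working on the universal cover of $E\smallsetminus\{0\}$ and normalizing the global monodromy of the developing maps $\mathbf{f}_k$, this forces $\mathbf{f}_k(\mathfrak{c}_0)\in\CC^{\times}$ (Lemma \ref{lemma:fk-nonzero}); then a limit analysis of the convergent Lam\'e equations shows (Theorem \ref{blow-up thm}) that, along a subsequence, $\mathbf{f}_k\to A\in\CC^{\times}$ uniformly on compact subsets away from $\{[0],P_1,\ldots,P_n\}$. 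Finally, because $\wp$ is even, $\mathbf{f}_k(-z)$ is again a normalized developing map of the same $u_k$, so the identical argument applies to it; comparing the two limits forces the blow-up set to be invariant under $z\mapsto-z$, i.e.\ $\{P_1,\ldots,P_n\}=\{-P_1,\ldots,-P_n\}$. This says directly that $S$ is a fixed point of the hyperelliptic involution, hence a branch point --- no Pohozaev identity, no non-degeneracy statement, and no contradiction hypothesis $S\in X_n$ is needed.
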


The proof 
of Theorem \ref{blow-up-set} will be given in \S\ref{deformation}. 
Theorems \ref{blow-up-set} and \ref{hyp-ell-thm} provide rather precise 
information on the blow-up set of sequences of
solutions of (\ref{Liouville-eq}), 
which we believe will play a
fundamental role in future research on the mean field equations.
\bigbreak

It is a pleasure to thank the referee for detailed comments
on the literature on integrable systems and for suggestions
on directions for future research.
C.-L.\ C.\ would like to thank Frans Oort for discussions 
on equation (\ref{power-sys}) during lunch in November 2011. 
He would also like to thank the Institute of Mathematics of Academia Sinica
for support during the academic year 2012--2013, and
also the Taida Institute for Mathematical Sciences (TIMS) 
and the Department of Mathematics 
of National Taiwan University for hospitality.

\section{Liouville equations with singular source} \label{Liouville-thm}
\setcounter{equation}{0}

\subsection{A Theorem of Liouville}

\subsubsection{}
We begin with a quick review of a
classical theorem of Liouville.
%\begin{quote}
%\emph{
\begin{propositionss}\label{prop:liouville}
Every \(\RR\)-valued \(C^2\) solution $u$ of the differential equation 
\begin{equation} {\label{Liouville}}
\triangle u + e^u = 0
\end{equation}
in a %(connected and) 
simply connected domain $D \subset \mathbb{C}$ 
can be expressed in the form
\begin{equation}{\label{3-2}}
u = \log \frac{8|f'|^2}{(1 + |f|^2)^2}
\end{equation}
where $f$ is a holomorphic
function on $D$ whose derivative \(f'\) 
does not vanish on \(D\).
%(i.e.\ the map \(D\to \mathbb P^1(\CC)\) is \'etale).
%} 
%\end{quote}
Conversely for every meromorphic function \(f\) on an open
subset \(V\subset \CC\) with at most simple poles whose
derivative does not vanish on \(V\), the function
\(\,\log \frac{8|f'|^2}{(1 + |f|^2)^2}\,\) is 
a smooth function which satisfies equation
\textup{(\ref{3-2})}.
\end{propositionss}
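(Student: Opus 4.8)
The plan is to dispose of the (easier) converse first and then obtain the representation in the forward direction by exposing a linear ODE hidden in the equation.

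\emph{Converse.} Given a meromorphic $f$ with at most simple poles and nowhere-vanishing $f'$, set $u=\log\frac{8|f'|^2}{(1+|f|^2)^2}$. Away from the poles the argument of the logarithm is smooth and strictly positive, so $u$ is smooth there; at a simple pole $z_0$ of $f$ I would substitute $\psi=1/f$, which is holomorphic with a simple zero and $\psi'\ne0$ near $z_0$, and check the identity $\frac{8|f'|^2}{(1+|f|^2)^2}=\frac{8|\psi'|^2}{(1+|\psi|^2)^2}$, so that $u$ extends smoothly across $z_0$. For the equation I would compute directly, using $\triangle=4\partial_z\partial_{\bar z}$, that $u_z=\frac{f''}{f'}-\frac{2\bar f f'}{1+|f|^2}$ and hence $u_{z\bar z}=-\frac{2|f'|^2}{(1+|f|^2)^2}=-\tfrac14 e^u$, i.e. $\triangle u+e^u=0$.

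\emph{Forward direction.} Writing the equation as $u_{z\bar z}=-\tfrac14 e^u$, the first step is to verify that $\phi:=u_{zz}-\tfrac12 u_z^2$ is holomorphic: differentiating and using the equation, $\partial_{\bar z}\phi=u_{z\bar z}u_z-u_z u_{z\bar z}=0$. The key observation is that the positive function $\sigma:=e^{-u/2}$ then satisfies the \emph{linear} ODE $\sigma_{zz}+\tfrac12\phi\,\sigma=0$ in $z$ (indeed $\sigma_{zz}=-\tfrac12(u_{zz}-\tfrac12 u_z^2)\sigma=-\tfrac12\phi\sigma$). Because $\phi$ is holomorphic and $D$ is simply connected, this ODE has two $\CC$-linearly independent holomorphic solutions $w_1,w_2$ on all of $D$, with constant nonzero Wronskian $W$. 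Since $\sigma$ solves the $z$-ODE for each fixed $\bar z$ and, by conjugation, also the $\bar z$-ODE $\sigma_{\bar z\bar z}+\tfrac12\bar\phi\,\sigma=0$ whose solution space is spanned by $\overline{w_1},\overline{w_2}$, a Cramer/separation-of-variables argument shows $\sigma=\sum_{i,j}a_{ij}\,w_i\,\overline{w_j}=\mathbf w^{*}A\,\mathbf w$ for a \emph{constant} matrix $A=(a_{ij})$, where $\mathbf w=(w_1,w_2)^{\mathsf T}$; reality of $\sigma$ forces $A$ to be Hermitian.

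\emph{Definiteness and conclusion.} It remains to factor $A$. I would evaluate $\det\left(\begin{smallmatrix}\sigma&\sigma_z\\ \sigma_{\bar z}&\sigma_{z\bar z}\end{smallmatrix}\right)$ in two ways: from $\sigma=\mathbf w^{*}A\mathbf w$ the $2\times2$ product rule gives $|W|^2\det A$, while from $\sigma=e^{-u/2}$ together with $u_{z\bar z}=-\tfrac14 e^u$ the same determinant collapses to $\tfrac18$. Hence $\det A=\frac1{8|W|^2}>0$, and since $\sigma=\mathbf w^{*}A\mathbf w>0$ somewhere, the Hermitian form $A$ is positive definite. Taking a Cholesky factorization $A=M^{*}M$ and replacing $(w_1,w_2)$ by $\tilde{\mathbf w}=M\mathbf w$ yields $\sigma=|\tilde w_1|^2+|\tilde w_2|^2$ and $|\tilde W|^2=|\det M|^2|W|^2=\det A\,|W|^2=\tfrac18$. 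Setting $f=\tilde w_1/\tilde w_2$ then gives $\frac{8|f'|^2}{(1+|f|^2)^2}=\frac{8|\tilde W|^2}{(|\tilde w_1|^2+|\tilde w_2|^2)^2}=\frac1{\sigma^2}=e^{u}$, with $f'=\tilde W/\tilde w_2^{\,2}$ nowhere vanishing (where $\tilde w_2$ vanishes one uses the local holomorphic representative $-1/f\in\mathrm{SU}(2)\!\cdot\! f$, which leaves $u$ unchanged). The main obstacle is the middle step: producing the separated Hermitian representation $\sigma=\mathbf w^{*}A\mathbf w$ with constant coefficients — where simple-connectivity (to remove monodromy and obtain global $w_1,w_2$) and the reality of $u$ are both essential — and then upgrading ``positive on the image curve'' to genuine positive-definiteness of $A$. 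The determinant identity $\det A=1/(8|W|^2)$ is precisely what forces both the definiteness and the normalizing constant $8$ to appear automatically, so I would make sure that computation is airtight.
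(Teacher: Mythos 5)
Your converse argument is correct, and the bulk of your forward construction is also correct — and it is a genuinely different route from the paper's. The paper argues geometrically: since $K_g=-e^{-u}\triangle u=1$ for the metric $g=\tfrac12 e^u(dx^2+dy^2)$, local orientation-preserving isometries into the Fubini--Study sphere exist; these local developing maps form a locally constant sheaf which is a $\textup{PSU}(2)$-torsor, and simple connectivity yields a global section. You instead integrate the equation: $\phi:=u_{zz}-\tfrac12u_z^2$ is holomorphic, $\sigma:=e^{-u/2}$ solves $\sigma_{zz}+\tfrac12\phi\,\sigma=0$, hence $\sigma=\mathbf w^{*}A\mathbf w$ for a constant Hermitian $A$ (your Cramer step is rigorous if you define $b_1=(\sigma w_2'-\sigma_z w_2)/W$, $b_2=(\sigma_z w_1-\sigma w_1')/W$ and check $\partial_z b_i=0$, which avoids treating $z,\bar z$ as independent variables), and the two evaluations of $\det\bigl(\begin{smallmatrix}\sigma&\sigma_z\\ \sigma_{\bar z}&\sigma_{z\bar z}\end{smallmatrix}\bigr)$ — namely $|W|^2\det A$ and $\tfrac18$ — are both correct, so $A$ is positive definite and the constant $8$ comes out automatically. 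Two routine points should be stated: elliptic regularity upgrading $u$ from $C^2$ to $C^\infty$ (you need third derivatives to see that $\phi$ is holomorphic), and simplicity of the zeros of $\tilde w_2$ (so $f$ has only simple poles). Your approach buys an explicit ODE construction of the developing map; the paper's buys the monodromy formalism it uses later.

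The genuine gap is at the very end. Your construction produces a \emph{meromorphic}, locally univalent $f=\tilde w_1/\tilde w_2$ with $e^u=8|f'|^2/(1+|f|^2)^2$, whereas the statement demands $f$ \emph{holomorphic} with nonvanishing derivative on all of $D$. Your parenthetical remedy — passing to $-1/f$ near a zero of $\tilde w_2$ — is purely local: the local representatives do not glue to a single holomorphic function on $D$. What is missing is a global step: show that $f$ omits some value $Q\in\PP^1(\CC)$, then replace $f$ by $Tf$ with $T\in\textup{PSU}(2)$ sending $Q$ to $\infty$; equivalently, in your setup, show that some nontrivial combination $\alpha\tilde w_1+\beta\tilde w_2$ is zero-free on $D$. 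This is exactly where the paper's own proof does its (one-line) work, by asserting that the \'etale map $D\to\PP^1(\CC)$ misses a point, and it is not a formality: local univalence does not imply an omitted value. Indeed, take $D=\CC$ and $\phi=-2z$, so that the ODE is Airy's equation $w''=zw$; every nontrivial solution is entire of non-integral order $3/2$, hence by Hadamard factorization has infinitely many zeros, so the quotient of two independent solutions is an \'etale map $\CC\to\PP^1(\CC)$ omitting no value, and the associated solution $u$ admits no pole-free developing map whatsoever (any two developing maps differ by an element of $\textup{PSU}(2)$ by Lemma \ref{lemma:psu2}). So what your argument actually establishes is the meromorphic (locally univalent) form of Liouville's theorem — the form used throughout the rest of the paper via Definition \ref{defnss_develop} — while the final upgrade to a globally holomorphic $f$ is a missing step that cannot be supplied by your construction, nor by \'etaleness alone.
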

A proof of Liouville's theorem \ref{prop:liouville} 
is given in \ref{pf_liouville}
for the convenience of the readers. 

\begin{definitionss}\label{defnss_develop}
Let \(u\) be a real-valued \(C^2\)-function
on a domain \(D\subset \CC\) 
%which is \(C^2\) outside
%a discrete subset \(S\subset D\)
and satisfies
equation (\ref{Liouville}) on \(D\)
%\(D\smallsetminus S\).
%a domain \(D\subset \CC\).
A meromorphic function \(f\) on a 
(not necessarily connected) covering space
\(\pi:\tilde D\to D\) of 
\(D\) is a
%Such a function $f$ is called a
\emph{developing map} of $u$ if 
\begin{equation}{\label{def_devel}}
u(z) = \log \frac{8|f'(\tilde z)|^2}{(1 + |f(\tilde z)|^2)^2}
\quad
\textup{for every } z\in D\ 
\textup{and every } \tilde z\in \tilde D
\ \textup{above } z.
\end{equation}
For any pole \(\tilde{z}_0\in \tilde D\) of \(f\), 
the equality (\ref{def_devel}) for \(z=z_0\) means that
the right hand side of (\ref{def_devel}) has a finite
limit as \(z\to \tilde{z}_0\), and this limit is equal to 
\(u(\pi(\tilde{z}_0))\).
\end{definitionss}
% and can be
%selected to be holomorphic .
%\bigbreak
\begin{remarkss}\label{rmk_etale}
(a) It is easy to see that every developing map
\(f:\tilde D\to \mathbb P^1(\CC)\) 
of a \(C^2\) solution \(u\) of 
(\ref{Liouville}) on \(D\) has no critical point on \(\tilde D\).
In other words the holomorphic map 
\(f:\tilde D\to \mathbb P^1(\CC)\) is \'etale.
More explicitly this means that the derivative 
\(f'\) of the meromorphic function \(f\) does not vanish 
at every point where \(f\) is holomorphic, and
\(f\) has at most simple poles.
\smallbreak

\noindent
(b) The proof of Liouville's theorem \ref{prop:liouville} 
in \ref{pf_liouville} 
provides another interpretation of developing maps:
a developing map \(f\) for a solution
\(u\) of (\ref{Liouville}) is an orientation-preserving
local isometry, from a covering space of 
\(D\) with the Riemannian metric \(\frac{1}{2}\,e^u\,(dx^2+dy^2)\), 
to \(\mathbb{P}^1(\CC)\) with the  
Fubini-Study metric
(or equivalently the unit sphere \(S^2\) with the standard metric) 
which has constant Gaussian curvature \(1\).
\end{remarkss}

%\subsubsection{}
Developing maps are not unique. In Lemma \ref{lemma:psu2}
below we show that different developing maps of a solution
\(u\) are related by special \emph{unitary} M\"obius transformations. 

%\begin{lemmass}\label{lemmass:schwarzian}
%Suppose that \(f\) is a developing map of \(u\) on a covering
%space \(\pi:\tilde{D}\to D\) as in definition
%\textup{\ref{defnss_develop}}.
%Then 
%\begin{equation}{\label{3-3}}
%u_{zz} - \frac{1}{2} u_z^2 = \frac{f'''}{f'} - \frac{3}{2}
%\left(\frac{f''}{f'}\right)^2.
%\end{equation}
%\end{lemmass}
%easy computation reveals that $u$ and $f$ are related by
%\begin{equation}{\label{3-3}}
%u_{zz} - \frac{1}{2} u_z^2 = \frac{f'''}{f'} - \frac{3}{2}
%\left(\frac{f''}{f'}\right)^2.
%\end{equation}
%Note that the right hand side of (\ref{3-3}) is the 
%\emph{Schwarzian derivative}
%$S(f)$ of $f$.  

\begin{lemmass}\label{lemma:psu2}
Let \(u\) be a \(C^2\) solution of equation 
\textup{(\ref{Liouville})} 
on a domain \(D\subset \CC\).
Let \(f\) be a developing map for \(u\) on a covering space \(\tilde D\)
of \(D\) as in as in Definition
\textup{\ref{defnss_develop}}.
\begin{itemize}
\item[(1)] The solution \(u\) of \textup{(\ref{Liouville})} 
and its developing map \(f\) are related by\footnote{The right hand
  side 
of equation (\ref{3-3}) is the 
\emph{Schwarzian derivative}
$S(f)$ of $f$; the equality here means that the Schwarzian
derivative of \(f\) descends to the function
\(u_{zz} - \frac{1}{2} u_z^2\) on \(D\).}
\begin{equation}{\label{3-3}}
u_{zz} - \frac{1}{2} u_z^2 = \frac{f'''}{f'} - \frac{3}{2}
\left(\frac{f''}{f'}\right)^2.
\end{equation}

\item[(2)] Let \(U\) be an element of 
\(\textup{PSU}(2)\) represented by a \(2\times 2\)
special unitary matrix \(\begin{pmatrix}a&b\\c&d
\end{pmatrix}\) with
\(a,b,c,d\in\CC\).
%be an element of \(\textup{SU}(2)\)
The function \(Uf:=\frac{af+b}{cf+d}\) 
is also a developing map of
\(u\) on \(\tilde D\).

\item[(3)] Assume that the covering space
\(\tilde D\) of \(D\) is connected.
Suppose that \(\tilde f\) is another developing of
\(u\) on \(\tilde D\).
%for a solution 
%\(u\) of equation (\ref{Liouville}) 
%on a connected domain 
%\(D\subset \CC\). 
There exists an element \(T\in \textup{PSU}(2)\)
%\(\begin{pmatrix}p&-\bar q\\q&\bar p
%\end{pmatrix}\in \textup{SU}(2)\)
such that \(\tilde{f}= Tf\).
%\begin{equation}{\label{3-4}}
%\tilde{f}= Tf
%=\frac{pf-\bar q}{qf -\bar{p}}
%\end{equation}
%for the M\"obius transformation \(U\) defined by the above matrix.

\end{itemize}
\end{lemmass}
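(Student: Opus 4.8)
The plan is to handle the three parts in order, the genuine content being concentrated in part (3). Part (1) is a direct computation: I would write $u=\log 8+\log f'+\log\overline{f'}-2\log(1+f\bar f)$ and differentiate, using that $f$ is holomorphic so $\partial_{\bar z}f=0$ and $\partial_z\overline{f'}=0$. This gives
\[
u_z=\frac{f''}{f'}-\frac{2f'\bar f}{1+|f|^2},
\]
and I would then compute $u_{zz}$ and $\tfrac12 u_z^2$ separately. The key observation is that every term containing $\bar f$ occurs in both $u_{zz}$ and $\tfrac12 u_z^2$ with matching coefficient, so these all cancel in the difference, leaving exactly $\tfrac{f'''}{f'}-\tfrac32\bigl(\tfrac{f''}{f'}\bigr)^2$; thus (1) is pure bookkeeping. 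For part (2) I would use the invariance of the round spherical metric under $\textup{SU}(2)$. Writing $U=\begin{pmatrix}a&b\\c&d\end{pmatrix}$ with $c=-\bar b$, $d=\bar a$ and $|a|^2+|b|^2=1$, a short computation gives $(Uf)'=f'/(cf+d)^2$, hence $|(Uf)'|^2=|f'|^2/|cf+d|^4$, while expanding $|af+b|^2+|cf+d|^2$ and using the unitarity relations yields $1+|Uf|^2=(1+|f|^2)/|cf+d|^2$. Dividing, the factors $|cf+d|$ cancel, so $\tfrac{|(Uf)'|^2}{(1+|Uf|^2)^2}=\tfrac{|f'|^2}{(1+|f|^2)^2}$; hence $Uf$ produces the same $u$, and since it is again meromorphic and \'etale it is a developing map.

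Part (3) is the crux. By part (1), $f$ and $\tilde f$ have the same Schwarzian derivative on $\tilde D$ (both equal to the pullback of $u_{zz}-\tfrac12 u_z^2$). I would then invoke the classical fact that on a \emph{connected} domain two meromorphic functions with equal Schwarzian derivative differ by a M\"obius transformation; the quickest justification is that any function with Schwarzian $q$ is a ratio of two linearly independent solutions of $w''+\tfrac12 q\,w=0$, and two such ratios are related by the $\textup{GL}_2$-action on the two-dimensional solution space. This yields $\tilde f=Mf$ for a unique $M=\begin{pmatrix}a&b\\c&d\end{pmatrix}\in\textup{PSL}(2,\CC)$, normalized by $ad-bc=1$. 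The essential remaining step is to upgrade $M$ into $\textup{PSU}(2)$: since $f$ and $\tilde f$ define the \emph{same} real function $u$, the two quantities $\tfrac{|f'|^2}{(1+|f|^2)^2}$ and $\tfrac{|\tilde f'|^2}{(1+|\tilde f|^2)^2}$ coincide. Substituting $\tilde f=Mf$, cancelling $|f'|^2\ne 0$ by \'etaleness, and using that the non-constant $f$ has open image, I obtain the pointwise identity $|aw+b|^2+|cw+d|^2=1+|w|^2$ for all $w\in\CC$. Comparing the coefficients of $|w|^2$, of $w$, and of the constant term forces $|a|^2+|c|^2=1$, $|b|^2+|d|^2=1$ and $a\bar b+c\bar d=0$, which are precisely the relations expressing $M^*M=I$. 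Hence $M\in\textup{SU}(2)$ and $T:=[M]\in\textup{PSU}(2)$ is the required element.

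I expect the main obstacle to lie in part (3), on two fronts: invoking the classical ``equal Schwarzian implies M\"obius-equivalent'' theorem cleanly for functions that carry poles and live on a (possibly disconnected, but here connected) covering space, and then pinning down \emph{unitarity} from the real normalization rather than merely holomorphic equivalence. As a more geometric alternative to (3) that I would keep in reserve, one can argue directly from Remark \ref{rmk_etale}(b): a developing map is an orientation-preserving local isometry from $\tilde D$, equipped with $\tfrac12 e^u(dx^2+dy^2)$, to $S^2\cong\mathbb P^1(\CC)$ with its round metric; two such local isometries on a connected surface differ by a single global orientation-preserving isometry of $S^2$, that is, by an element of $\textup{SO}(3)\cong\textup{PSU}(2)$, which gives $\tilde f=Tf$ without passing through the Schwarzian.
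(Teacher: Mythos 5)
Your proposal is correct and follows essentially the same route as the paper: parts (1) and (2) by direct computation, and part (3) by equating Schwarzian derivatives to produce a M\"obius transformation $M$ with $\tilde f = Mf$, then using the equality of the two expressions for $u$ together with the openness of the image of $f$ to obtain the identity $\vert aw+b\vert^2+\vert cw+d\vert^2 = 1+\vert w\vert^2$ on all of $\CC$. The only divergence is the final step: where you extract unitarity by comparing the coefficients of $\vert w\vert^2$, $w$, and the constant term to get $M^*M=I$ directly, the paper applies $\partial\bar{\partial}\log$ to the same identity and invokes the fact that $\textup{PSU}(2)$ is the group of orientation-preserving isometries of the Fubini--Study metric on $\mathbb{P}^1(\CC)$ --- your coefficient comparison is the more elementary of the two, while your ``reserve'' local-isometry argument is precisely the geometric picture the paper records in Remark \ref{rmk_etale}\,(b).
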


\begin{proof}
The statements (1) and (2) are easily verified by direct
calculations. It remains to prove (3). Notice first that
%Thus for any two developing maps $f$ and $\tilde f$
%of a solution $u$ of \ref{Liouville}, 
the Schwarzian derivatives
of \(f\) and \(\tilde f\) are equal by (1).
%\marginpar{Changed \(S\) to \(T\)---there were too many \(S\)'s.}
Hence there exists a M\"obious transformation \(T\), 
say represented by an element
\(\begin{pmatrix}a&b\\c&d\end{pmatrix}\in \textup{SL}_2(\CC)\), such that
\(\tilde f = Tf = \frac{af+b}{cf+d}\).\footnote{Here we have used
the assumption that \(\tilde D\) is connected and 
a basic property of Schwarzian derivatives: if \(S(g_1)=S(g_2)\) 
for two locally non-constant meromorphic functions \(g_1\) and
\(g_2\), then \(g_1\) and \(g_2\) differ by a M\"obius transformation.
This is consequence of the special case that \(S(g)=0\)
if and only if \(g\) is a linear fractional transformation and the
cocycle property of Schwarzian derivatives: 
\(S(g\circ h)(z)= S(g)(h(z))\cdot h'(z)^2 + S(h)(z)\).}
%$S(f) = S(\tilde f)$ implies that $\tilde f = Sf$ for some
%M\"obius transformation $S$. 
%Suppose that \(T\) is represented by an element
%\(\begin{pmatrix}a&b\\c&d\end{pmatrix}\in \textup{SL}(2;\CC)\).
From 
\[\log\frac{8\vert f'\vert^2}{(1+\vert f\vert^2)^2}
=\log \frac{8\vert (Tf)'\vert^2}{(1+\vert Tf\vert^2)^2}\,,\qquad
(Tf)'=\frac{f'}{(cf+d)^2}\,,\]
we deduce that 
\(\vert af+b\vert^2+ \vert cf+d\vert^2 = 1+ \vert f\vert^2\) on
\(\tilde D\).
Hence the quality
\[
\vert az+b \vert^2 + \vert cz+d\vert^2 = 1+ \vert z\vert^2
\]
holds on \(\CC\) because meromorphic maps are open.
Applying \(\,\partial\bar{\partial} \log\,\) 
to both sides of the last displayed
equality, we see that the M\"obius transformation \(T\)
%Moreover (\ref{3-2}) shows that the M\"obius transformation $T$
%on \(\mathbb P^1(\CC)\)
preserves the Fubini-Study metric on 
\(\mathbb P^1(\CC)\), or equivalently 
the spherical metric on the \(2\)-sphere \(S^2\). 
So \(T\) is an element of \(\textup{PSU}(2)\),
because \(\textup{PSU}(2)\) 
is the group of all orientation preserving
isometries of \(\mathbb P^1(\CC)\).
\end{proof}
%\bigbreak

%An easy calculation shows that following converse 
%to Lemma \ref{lemma:psu2} holds.
%
%\begin{lemmass}\label{transf_devel}
%Let \(T\) be the M\"obius transformation by an element
%\(U =\begin{pmatrix}p&-\bar q\\q&\bar p
%\end{pmatrix}\in \textup{SU}(2)\).
%Let \(u\) be a \(C^2\) solution of equation \textup{(\ref{Liouville})} 
%on a domain \(D\subset \CC\). 
%Let \(f\) be a developing map on a covering space \(\tilde D\)
%of \(D\).
%The function \(\tilde f:=T f\) is also a developing map of
%\(u\) on \(\tilde D\).
%\end{lemmass}
% \begin{pmatrix} p
%  & -\bar q\\ q & \bar p\end{pmatrix}\in {\rm PSU}(2)$ (i.e. $p$,
%  $q\in \mathbb C$ and
%$|p|^2 + |q|^2 = 1$) such that
%\begin{equation} {\label{3-4}}
%\tilde f = Sf := \frac{pf - \bar q}{qf + \bar p}.
%\end{equation}
%Notice that by conjugating a matrix 

\begin{remarkss}
In the notation of %\ref{transf_devel}, 
Lemma \ref{lemma:psu2}, 
let 
%Choosing a element 
$V$ be an element of  ${\rm SU(2)}$ 
such that 
%to $\tilde f$ and $f$
$VUV^{-1} = \begin{pmatrix} e^{i\theta} & 0\\
0 & e^{-i\theta}\end{pmatrix}$ for some $\theta\in \mathbb R$.
Then  
the two developing maps \(V \tilde f\) and \(Vf\) of
\(u\) are related by
%we may achieve that
\begin{equation*}
V\tilde f = e^{2 i \theta} Vf.
\end{equation*}
\end{remarkss}

\subsubsection{}\label{reformulation1}
Lemmas \ref{lemma:psu2} %and \ref{transf_devel} 
can be reformulated as follows. See also \ref{pf_liouville}.
%\begin{quotation}
\medbreak

\emph{Let \(u\) be a  \(C^2\) solution of equation (\ref{Liouville}) 
on a domain \(D\subset \CC\).
There exists a (not necessarily connected)
covering space \(\pi_u:\mathscr{D}_{u,\textup{univ}}\to D\),
a left action of \(\textup{PSU}(2)\) on \(\mathscr{D}_u\),
and a meromorphic function \(f_{u,\textup{univ}}:
\mathscr{D}_{u,\textup{univ}}\to \mathbb{P}^1(\CC)\) on 
\(\mathscr{D}_{u,\textup{univ}}\)
satisfying the following properties.}
\begin{itemize}
\item[(a)] \emph{\(\pi:\mathscr{D}_{u,\textup{univ}}\to D\) 
is a left principle homogeneous
space for \(\textup{PSU}(2)\).}
\item[(b)] 
\emph{\(f_{u,\textup{univ}}\) is a developing map for \(u\).}

\item[(c)] \emph{For any open subset \(U\subset D\) and any
developing map \(f\) for \(u\vert_U\) on a 
covering space \(\pi:\tilde{U}\to U\) of \(U\), 
there exists a unique holomorphic map
\(g:\tilde{U}\to \mathscr{D}_{u,\textup{univ}}\) such that
\(f(\tilde{z})=f_{{u,\textup{univ}}}(g(\tilde{z}))\)
for all \(\tilde{z}\in \tilde{U}\)}.

\end{itemize}

%\end{quotation}

\subsubsection{}\label{pf_liouville}
%\marginpar{Riemann's mapping theorem shows that there is a
%holomorphic function \(f\) which gives a solution of 
%(\ref{Liouville}), but does not seem to give a proof of
%\ref{prop:liouville}.
%}
%We give a proof of Liouville's theorem \ref{prop:liouville}
%from the Riemann mapping theorem,
%and the fact that locally on a smooth
%surface, every Riemannian metric is conformal to a flat metric,
%for the convenience of th readers. 
%We also note that suffice to prove the theorem locally.
{\scshape A proof of Proposition \ref{prop:liouville}.}
\enspace
From the perspective of differential geometry, %the Liouville 
equation (\ref{Liouville}) is simply
the 
%\textcolor{red}
{prescribed} 
Gaussian curvature equation for the metric 
%\textcolor{red}{
\[
g=\frac{1}{2}\,e^{u}\,(dx^2+dy^2)
\]
%}
on the domain \(D\) to have Gaussian curvature \(K_g=1\).
Given that \(u\) is a solution of the equation (\ref{Liouville}),
a simple calculation shows that
\begin{equation} \label{Gauss}
K_g = -e^{-u}\triangle u =1.
\end{equation}
So at any given point \(z_1\in D\), there exists a 
meromorphic function \(f_1(z)\) on a neighborhood of \(V(z_1)\) of
\(z_1\) such that
\(u(z)=\log\frac{8 |f_1'(z)|^2}{(1+|f_1(z)|^2)^2}\)
for all \(z\in V(z_1)\),
because the Fubini-Study metric on 
\(\CC=\mathbb{P}^1(\CC)\smallsetminus \{\infty\}\) is
\[\frac{4\,dz\,d\bar{z}}{(1+z\bar{z})^2}.\]
The collection of germs of all such local developing maps \(f_1\)
form a locally constant sheaf \(\mathscr{D}ev(u)\) over \(D\) 
with monodromy group \(\textup{PSU}(2)\) according to
Lemma \ref{lemma:psu2}. 
Note that \(\mathscr{D}ev(u)\) is the locally constant sheaf
attached to the covering space \(\mathscr{D}_{u,\textup{univ}}\)
in \ref{reformulation1}.
The locally constant sheaf \(\mathscr{D}ev(u)\) has a global section
\(g\) because the domain \(D\) is simply connected.
This global section \(g\) is a developing map of \(u\) on \(D\).
The holomorphic map \(\tilde g\)
from \(D\) to \(\mathbb P^1(\CC)\) defined by 
the meromorphic function \(g\) is \'etale as we have
remarked in \ref{rmk_etale}\,(b), hence the image \(\tilde g\) 
misses some point \(Q\) of \(\mathbb P^1(\CC)\).
Pick an element \(U\in \textup{PSU}(2)\) which sends \(Q\) to
\(\infty\). Then \(f:=Ug\) is a holomorphic function on \(U\)
which is also a developing map of \(u\).
\qed

%The] new metric $g =
%e^v g_0$ over $D$, where $g_0$ is the Euclidean flat metric on
%$\mathbb{C}$ and 
%\textcolor{red}{\begin{equation}v := u - \log 2.\end{equation}
%The Gaussian curvature of the new metric \(g\) is
%\begin{equation} \label{Gauss}
%K_g = -\frac{1}{2}e^{-v} \triangle v = -e^{-u}\triangle u =1.
%\end{equation}}
%It is then clear the inverse stereographic projection 
%from $\mathbb{C}$ to the complement 
%\(S^2 \smallsetminus N\) of the north pole \(N\)
%inside the unit sphere \(S^2\subset \RR^3\) is
%given by
%\begin{equation*}
%(X, Y, Z) = \Big( \frac{2x}{1 + x^2 + y^2},
%\frac{2y}{1 + x^2 + y^2}, \frac{-1 + x^2 + y^2}{1 + x^2 +
%y^2}\Big)
%\end{equation*}
%provides solutions to (\ref{Gauss}). 
%In this case the conformal
%factor is
%\textcolor{red}{
%\begin{equation*}
%e^v = \frac{4}{(1 + |z|^2)^2}.
%\end{equation*}
%}
%Starting from this special solution when $D$ is the unit disk
%$\Delta$, 
%general solutions on simply connected domain $D$ can be
%obtained by using the Riemann mapping theorem 
%via a holomorphic
%map $f: D \to \Delta$. 
%The \emph{volume conformal factor} is then the one as
%expected in (\ref{3-2}):
%\begin{equation*}
%e^u = 2\, \textcolor{red}{e^{v}} = \frac{8|f'|^2}{(1 + |f|^2)^2}.
%\end{equation*}

\subsection{Liouville theory on tori with isolated singular data} 
\label{0-constraint}
It is a challenge to extend the Liouville theory recalled in
the previous section to
oriented Riemann surfaces and with singular sources.
In this paper we will consider the genus one case, so the 
Riemann surface will be a flat torus \(E\).  
Moreover we put just one singular source on \(E\), and we will make
%the holomorphic group law so that 
this singular source the
additive unity \(0\)
for a holomorphic group law on \(E\).

\subsubsection{}
We choose and fix a non-zero global holomorphic one-form \(\beta\)
on \(E\), so that integrating \(\beta\) along paths starting from
\(0\) gives an isomorphism
\(\int_?\,\beta: E\xrightarrow{\sim} \CC/\Lambda\)
for a lattice \(\Lambda\subset \CC\), so that 
\(\beta\) is the pull-back of the one-form 
\(\,dz=dx+\sqrt{-1}dy\,\) descended to 
\(\CC/\Lambda\). 
The flat torus \(E\) will be identified with \(\CC/\Lambda\) in
the rest of this paper.
%We will use the following notation scheme in the rest of this paper.
%Let $E = \mathbb{C}/\Lambda$ be a flat torus,
%where  $\Lambda$ is a lattice in \(\CC\).
%\mathbb Z \omega_1 + \mathbb
%Z\omega_2$ is a lattice in \(\CC\) with 
Let \(\omega_1, \omega_2\) be a 
\(\ZZ\)-basis of \(\Lambda\) such that 
\(\tau:=\omega_2/\omega_1\) satisfies
\(\textup{Im}(\tau)>0\).
Let \(\omega_3:= -\omega_1-\omega_2\),
so that \(\omega_1+\omega_2+\omega_3=0\).
\medbreak

%In this paper we consider the
%simplest genus one case with an isolated singularity, namely 
We will consider the 
\emph{mean field equation} 
\begin{equation}{\label{3-1}}
\triangle u + e^u = \rho\cdot\delta_0,\quad \rho\in \mathbb{R}_{>0}
\end{equation}
on $E=\CC/\Lambda$, where 
\(\,\triangle = \frac{\partial^2}{\partial x^2}+
\frac{\partial^2}{\partial y^2}\), 
$\delta_0$ is the Dirac measure at
$0$ and we have identified
\(L^1\)-functions with (signed) measures using the Lebesgue
measure \(\,dx\,dy\,\) on \(\CC/\Lambda\), so 
the equation (\ref{3-1}) means that
\[
\int_E (u\cdot \triangle h + e^u\cdot h)\,dx\,dy =
\rho\cdot h(0)
\]
for every smooth function \(h\) on \(E\).
The corresponding geometric problem is the equation
\begin{equation*}
K_g = -e^{-u} \triangle u = 1 - \rho e^{-u} \delta_0
\end{equation*}
for the Gaussian curvature \(K_g\) of the metric
\(\,g=\frac{1}{2} e^u(dx^2+dy^2)\,\) on \(E\),
which has a highly non-classical character.
\medbreak

We will be mostly interested in the case when the parameter 
\(\rho\) of the equation (\ref{3-1}) is an
integer multiple of \(\,4\pi\).  This integrality condition
on \(\rho\) implies that every developing map 
of a solution of (\ref{3-1}) is meromorphic
locally on \(E\) (and not just on \(E\smallsetminus\{0\}\)).

\begin{lemmass}\label{lemma:mero}
Let \(u\) be a solution of \textup{(\ref{3-1})} 
on \(E\), where the parameter
\(\rho=4\pi l\) for a positive integer \(l\).
Let \(f_1\) be a developing map of the restriction to 
\(E\smallsetminus \{0\}\) of \(u\), so that 
\(f_1\) is a holomorphic function on a universal covering 
\(U\)
of \(E\smallsetminus \{0\}\) whose derivative
does not vanish on \(U\).
Then \(f_1\) extends to a meromorphic function on
a covering space of \(E\) in the following sense:
There exists a covering space \(\gamma:\tilde E\to E\) of \(E\) 
%and a meromorphic function \(f\) on \(\tilde E\) 
such that the following
statements hold.
\begin{itemize}
\item[(a)] The holomorphic function
\(f_1\) on \(U\)
descends to a function \(f_2\) on the covering space
\(\gamma^{-1}(E\smallsetminus\{0\})\) of \(E\smallsetminus\{0\}\)

\item[(b)] The holomorphic function \(f_2\)
on the open subset 
\(\gamma^{-1}(E\smallsetminus\{0\})\) of \(\tilde E\) 
extends to a meromorphic function on \(\tilde E\).
\end{itemize}
Equivalently, \(f_2\) defines a holomorphic map from \(\tilde E\) to 
\(\mathbb P^1(\CC)\).
\end{lemmass}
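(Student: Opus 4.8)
The plan is to analyze the local behavior of the developing map $f_1$ near the singular point $0 \in E$ and show that the integrality condition $\rho = 4\pi l$ forces the monodromy of $f_1$ around $0$ to be trivial, so that $f_1$ descends to a single-valued meromorphic function on a suitable covering of $E$. First I would recall from the Liouville equation \eqref{3-1} that near $0$ the solution $u$ behaves like $u(z) = 2l \cdot \log|z| + (\textup{smooth})$, since the Dirac source $\rho\,\delta_0 = 4\pi l\,\delta_0$ prescribes a conic singularity with cone angle determined by $l$. This is the local information that ultimately controls the analytic continuation of $f_1$.

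Next I would examine the monodromy of $f_1$ around a small loop encircling $0$. By Lemma \ref{lemma:psu2}, any two developing maps of $u$ differ by an element of $\textup{PSU}(2)$; in particular, analytically continuing $f_1$ once around $0$ replaces it by $T f_1$ for some $T \in \textup{PSU}(2)$, the \emph{local monodromy} at $0$. The key computation is to match the local expansion of $u$ with the developing map formula \eqref{3-2}: writing $u = \log \frac{8|f_1'|^2}{(1+|f_1|^2)^2}$ and comparing with $u(z) \sim 2l\log|z|$, one finds that near $0$ the map $f_1$ must behave like $z^{l+1}$ up to a $\textup{PSU}(2)$-transformation — that is, after normalizing so that $f_1(0) = 0$, the leading term is $c\, z^{l+1}$ with $c \neq 0$ (the case where $f_1$ has a pole of order $l+1$ at $0$ is the $\textup{PSU}(2)$-translate sending $0 \mapsto \infty$). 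Since $l+1$ is an integer, the function $z^{l+1}$ is single-valued, so the local monodromy $T$ is trivial as a germ; the integrality of $\rho/4\pi$ is precisely what guarantees this integer exponent.

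With trivial local monodromy at $0$ established, I would then address the global monodromy around the nontrivial cycles of $E \smallsetminus \{0\}$, which generate together with the loop around $0$ the fundamental group. These global monodromies give a representation $\pi_1(E \smallsetminus \{0\}) \to \textup{PSU}(2)$, and I would define $\tilde E \to E$ to be the covering space corresponding to the kernel (or to the preimage of a suitable subgroup) of this representation, so that $f_1$ descends to a single-valued function $f_2$ on $\gamma^{-1}(E \smallsetminus \{0\})$. This establishes part (a). For part (b), the triviality of the local monodromy at $0$, combined with the leading behavior $f_2 \sim c\,z^{l+1}$ (or a pole of order $l+1$), shows that $f_2$ has at worst a pole at the points of $\gamma^{-1}(\{0\})$ and hence extends holomorphically as a map to $\mathbb{P}^1(\CC)$ across these points by the Riemann removable singularity theorem applied to $1/f_2$ where appropriate.

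The main obstacle I anticipate is making the matching of local expansions fully rigorous: one must pass from the asymptotic $u(z) \sim 2l\log|z| + O(1)$ to precise control on the order of vanishing (or the pole order) of $f_2$, ruling out fractional or logarithmic terms in the local normal form of $f_1$. The cleanest route is to work with the Schwarzian derivative: by \eqref{3-3} the quantity $u_{zz} - \frac12 u_z^2 = S(f_1)$ is meromorphic near $0$ with a double pole, and computing its leading coefficient $\lim_{z\to 0} z^2 S(f_1) = -(l^2+2l)/2$ (as indicated in the discussion around \eqref{S-der-f}) pins down the local exponent to be exactly $l+1 \in \ZZ$. Once that integer exponent is secured, single-valuedness of the local germ — and hence the whole descent and meromorphic extension — follows formally, so the integrality hypothesis on $\rho$ is doing all the essential work.
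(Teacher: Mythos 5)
Your overall strategy (local monodromy in $\textup{PSU}(2)$, Schwarzian derivative, indicial exponents, then descent) is a legitimate route, and it differs from the paper's own proof, which either invokes the finite spherical area bound $\int_{\Delta^\times_\epsilon} e^u\,dA = \int_{\Delta^\times_\epsilon}\tfrac{8|f'|^2}{(1+|f|^2)^2}\,dA < \infty$ (citing \cite{CW, LW2, PT}), or observes that any holomorphic map with multiplicity $l+1$ at $0$ is a developing map of \emph{some} local solution and then appeals to the $\textup{PSU}(2)$-uniqueness of local developing maps. However, your argument has a genuine gap at its crucial step. You claim that once the Schwarzian computation pins the local exponents down to $-l/2$ and $l/2+1$ (difference $l+1 \in \ZZ$), ``single-valuedness of the local germ \dots follows formally,'' so that ``the integrality hypothesis on $\rho$ is doing all the essential work.'' This implication is false. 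Writing $f_1$ as a ratio of solutions of the second order ODE $w'' = -\tfrac12 S(f_1)\, w$, the point $z=0$ is a regular singular point whose exponent difference is the positive integer $l+1$, and this is precisely the \emph{resonant} case in which Frobenius theory permits a logarithmic solution $w_2 = \xi\, w_1 \log z + z^{-l/2}h_2(z)$ with $\xi \neq 0$; when $\xi\neq 0$ the ratio $w_1/w_2$ is multi-valued with nontrivial \emph{parabolic} projective monodromy around $0$, so an integer exponent difference by itself secures nothing. The paper itself furnishes the counterexample to your claimed implication: for the Lam\'e equation $L_{n+1/2,B}$ the exponent difference at $z=0$ is the integer $2n+2$, yet its solutions are free of logarithms only for the $n+1$ roots of the Brioschi--Halphen--Crawford polynomial $p_n(B)$ of Theorem \ref{BH-poly}; for every other value of $B$ the logarithm genuinely occurs.

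What actually kills the logarithmic case here is a fact you state early on but never deploy at this point: the local monodromy $T$ of $f_1$ around $0$ lies in $\textup{PSU}(2)$, because $f_1$ and its analytic continuation are developing maps of the same single-valued solution $u$ (Lemma \ref{lemma:psu2}). Every element of the compact group $\textup{PSU}(2)$ is diagonalizable with unit-modulus eigenvalues, so $\textup{PSU}(2)$ contains no nontrivial parabolic element; hence $\xi = 0$, and then the integer exponent difference forces $T$ to be trivial and exhibits $f_1$ as a M\"obius transform of $z^{l+1}h_1(z)/h_2(z)$, i.e.\ meromorphic across $0$. With that repair the rest of your outline (descent to the covering of $E$ associated to the monodromy representation of $\pi_1(E\smallsetminus\{0\})$, which factors through $\Lambda$ once the local monodromy is trivial, and meromorphic extension over the punctures) goes through. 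The correct summary is that integrality of $\rho/4\pi$ and unitarity of the monodromy are \emph{both} essential: without integrality the local monodromy is a nontrivial rotation and $f_1$ behaves like a non-integral power $z^{\alpha}$; without unitarity the logarithm cannot be excluded.
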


%and $\rho = 4\pi l$, $l \in \mathbb{N}$. 
\begin{proof}
%By gluing the
%$f$'s among simply connected domains, (\ref{3-2}) holds on the
%whole $\mathbb{C}$ with $f$ being a meromorphic function. 
This statement is local at \(0\in E\).  A proof can be found in
\cite{CW, LW2, PT}, based on the following inequality:
For a punctured disk \(\Delta^{\times}_{\epsilon}\) with a small
radius \(\epsilon\)
we have % use the fact that
\begin{equation*}
\infty > \int_{\Delta^\times_{\epsilon}} e^u \,dA =
\int_{\Delta^\times_{\epsilon}} \frac{8|f'|^2}{(1 +
|f|^2)^2}\,dA,
\end{equation*}
where the right hand side is the spherical area under the
inverse stereographic projections covered by
$f(\Delta^\times_{\epsilon})$. 
%it was proved in \cite{CW, LW2, PT} that $f$ extends to 
%a meromorphic fun%ction over the disk $\Delta_\epsilon$. 

Alternatively, from the well-known formula
\[
\triangle \log \sqrt{x^2+y^2} = 2\pi\cdot \delta_{(0,0)}
\]
on \(\RR^2\), one sees that every 
holomorphic map from a neighborhood \(V\) of \(0\in E\) to 
\(\mathbb P^1(\CC)\) with multiplicity \(l+1\) at \(0\)
is a developing map of a solution of 
(\ref{3-1}) in \(V\). Since any two local developing maps
of any local solution of (\ref{3-1}) differ by an element
of \(\textup{PSU}(2)\), we conclude that every
developing map of every solution of (\ref{3-1}) in a neighborhood
of \(0\in E\) ``is'' a meromorphic function in a neighborhood
of \(0\in E\).
\end{proof}

\begin{remarkss}
As an immediate consequence of the fact that
\(\triangle \log \vert z\vert = 2\pi\cdot \delta_{0}\), one
sees that if a meromorphic function \(f\) on an open neighborhood \(U\)
of \(0\in \CC\) such that
the locally \(L^1\) function
\(u = \log \frac{8|f'|^2}{(1 + |f|^2)^2}\) 
satisfies
\(\triangle u + e^u = \rho\cdot\delta_0\) on \(U\)
for some real number \(\rho\), then
\(\rho=4\pi\cdot l\), where \(l+1\in \NN_{>0} \) is the multiplicity
of \(f\) at \(0\). 
So the parameter \(\rho\) in the equation (\ref{3-1}) must be
in \(4\pi\cdot \NN_{\geq 0}\) if a developing map of a solution \(u\) 
is a meromorphic function on \(\CC\). 
Note also that the equation (\ref{3-1}) has no solution when 
\(\rho=0\), for otherwise the elliptic curve has a metric
with constant Gaussian curvature \(1\), contradicting the
Gauss--Bonnet theorem.
\end{remarkss}
\medbreak

% \begin{remarkss}\label{remss_mero}
%Lemma \ref{lemma:mero} tells us that every solution \(u\)
%of equation (\ref{3-1}) has a developing map \(f\)
%which is a meromorphic function on the universal 
%covering \(\CC\) of \(E\).
%By Lemma \ref{transf_devel} for every \(T\in \textup{PSU}(2)\),
%\(Tf\) is also a developing map of the same solution \(u\).
%Moreover by Lemmas \ref{lemma:mero} and
%\ref{lemma:psu2} every developing map of \(u\) ``is''
%a meromorphic function on \(\CC\), equal to 
%\(Tf \) for a suitable element \(T\in \textup{PSU}(2)\).
%\end{remarkss}

%\proof
%The statement (1) is a restatement of Lemma \ref{lemma:mero}.
%The statement (2) follows from Lemma \ref{lemma:psu2}.
%\qed

\begin{lemmass}\label{order}
Let \(u\) be a solution of the
of equation \textup{(\ref{3-1})} on \(E\).
Assume that the parameter \(\rho\) is of the
form \(\rho=4\pi l\) where \(l\) is a positive integer.
\begin{itemize}
\item[(1)] There exists a meromorphic function \(f\)
on the universal covering \(\CC\) of 
\(E\) which is a developing map of \(u\). 
%which is a meromorphic function on the universal 
%covering \(\CC\) of \(E\). 
Let \(\tilde{f}:\CC\to \mathbb{P}^1(\CC)\) be the holomorphic
map corresponding to the meromorphic function \(f\) on \(\CC\).

\item[(2)] For every \(T\in \textup{PSU}(2)\), 
the meromorphic function \(Tf\) is also a developing map 
of \(u\).  Moreover every developing map of \(u\) is equal
to \(Tf\) for some element \(T\in \textup{PSU}(2)\).

\item[(3)] Suppose that \(z_0\) is an element of the lattice \(\Lambda\).
The holomorphic map \(\tilde f:\CC\to \mathbb P^1(\CC)\) has
multiplicity \(l+1\) at \(z_0\). In other words either 
\(f\) is holomorphic at \(z_0\) and \(f'\) has a zero of order
\(l\) at \(z_0\), or 
\(f\) has a pole of order \(l+1\) at \(z_0\).

\item[(4)] The holomorphic map \(\tilde f:\CC\to \mathbb P^1(\CC)\)
has no critical point outside \(\Lambda\). 
In other words if \(z_1\in \CC\smallsetminus \Lambda\), then
either \(f\) is holomorphic at \(z_1\) and \(f'(z_1)\neq 0\),
or \(f\) has a simple pole at \(z_1\).
\end{itemize}
\end{lemmass}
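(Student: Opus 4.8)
The plan is to construct the developing map on all of $\CC$ out of local data, the decisive input being that the integrality $\rho = 4\pi l$ forces the monodromy of the developing map around the singular source to be trivial; once this is known, parts (2)--(4) follow almost formally from results already in hand for the smooth equation \eqref{Liouville}.

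First I would carry out the local analysis at a lattice point, say $0$. From \eqref{3-1} together with the identity $\triangle \log|z| = 2\pi\,\delta_0$ recalled in Lemma \ref{lemma:mero}, one writes $u(z) = 2l\log|z| + v(z)$ with $v$ smooth near $0$, and by Lemma \ref{lemma:mero} the restriction of $u$ to a punctured neighborhood of $0$ admits a developing map that is an honest single-valued meromorphic function $f$ there. Substituting into $u = \log\frac{8|f'|^2}{(1+|f|^2)^2}$ and matching logarithmic singularities: if $f$ is holomorphic at $0$ with lowest-order term $c_k z^k$, then $|f'|^2 \sim |z|^{2(k-1)}$ while $(1+|f|^2)^2$ is bounded and non-zero, forcing $2(k-1) = 2l$, i.e.\ multiplicity $k = l+1$; the polar case reduces to this one by composing with $z \mapsto 1/z \in \textup{PSU}(2)$. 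By the $\Lambda$-periodicity of $u$ the same holds at every point of $\Lambda$, which is exactly statement (3). The crucial by-product is that this local developing map is single-valued, so the monodromy of $f$ around the puncture is trivial.

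The step I expect to be the genuine obstacle is converting this analytic local triviality into the global statement (1). Here I would argue as follows. On $E\smallsetminus\{0\}$ the function $u$ solves the smooth equation \eqref{Liouville}, so its developing maps form the local system $\mathscr{D}ev(u)$ of the proof of Proposition \ref{prop:liouville}, with monodromy a representation $\bar\rho : \pi_1(E\smallsetminus\{0\}) \to \textup{PSU}(2)$ by Lemma \ref{lemma:psu2}. Now $\pi_1(E\smallsetminus\{0\})$ is free of rank two, and the loop around $0$ is the commutator of the two torus generators; since $\bar\rho$ kills this loop by the previous paragraph and $\ker\bar\rho$ is normal, $\bar\rho$ kills the entire normal closure of that loop, which is precisely $\ker\bigl(\pi_1(E\smallsetminus\{0\}) \to \pi_1(E) = \Lambda\bigr)$. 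Hence $\bar\rho$ factors through $\Lambda$, so pulling $f$ back along the universal covering $\CC \to E$ makes it single-valued on $\CC\smallsetminus\Lambda$ (with $f(z+\omega) = \bar\rho(\omega)f(z)$, recovering the $\textup{SU}(2)$-automorphy of the introduction); the meromorphic extension across $\Lambda$ from the first paragraph then produces a meromorphic developing map $f$ on all of $\CC$, proving (1).

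With $f$ in hand on the connected cover $\CC$, statement (2) is immediate from Lemma \ref{lemma:psu2}: part (2) of that lemma gives that $Tf$ is again a developing map for every $T \in \textup{PSU}(2)$, and part (3), applied with the connected covering space taken to be $\CC$, gives that any two developing maps of $u$ on $\CC$ differ by an element of $\textup{PSU}(2)$. Statement (3) was already settled in the second paragraph. Finally, (4) is just the \'etale property of developing maps for the smooth equation: for $z_1 \notin \Lambda$ the function $u$ solves \eqref{Liouville} near $z_1$, so by Remark \ref{rmk_etale}(a) the holomorphic map $\tilde f$ has no critical point at $z_1$, meaning $f'(z_1) \neq 0$ where $f$ is holomorphic and $f$ has at most a simple pole otherwise.
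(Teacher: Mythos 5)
Your proof is correct and takes essentially the same route as the paper's, which deduces (1) and (3) from Lemma \ref{lemma:mero}, (2) from the fact that \(\textup{PSU}(2)\) is the isometry group of \(\mathbb{P}^1(\CC)\) (equivalently Lemma \ref{lemma:psu2}\,(2),(3)), and (4) from Remark \ref{rmk_etale}\,(a). The only difference is one of explicitness: you spell out the local matching of logarithmic singularities and the group-theoretic descent step (the normal closure of the puncture loop equals \(\ker\big(\pi_1(E\smallsetminus\{0\})\to\Lambda\big)\), so trivial local monodromy forces the monodromy to factor through \(\Lambda\)), both of which the paper leaves implicit in the statement and proof of Lemma \ref{lemma:mero}.
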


\proof
The statement (1) is a corollary of Lemma \ref{lemma:mero}.
The statement (2) is a consequence of the interpretation of
developing maps as local isometries from 
the conformal metric \(\,\frac{1}{2}\,e^u\, dz\,d\bar{z}\,\)
to the Fubini-Study metric 
\(\frac{4\,dz\,d\bar{z}}{(1+z\bar{z})^2}\)
and the fact that \(\textup{PSU}(2)\) is the group of all
orientation preserving isometries of \(\mathbb{P}^2(\CC)\)
with the Fubini-Study metric; c.f.\ \ref{pf_liouville}.
The statement (3) is a consequence of the last paragraph of 
the proof of Lemma \ref{lemma:mero}.
%Remark \ref{rmk_etale}.
The statement (4) follows from \ref{rmk_etale}\,(a). \qed
%\medbreak

\begin{remarkss} 
We discuss how to relate solutions of (\ref{3-1}) on 
\(\CC/\Lambda\) and \(\,\CC/(t\!\cdot\!\Lambda)\) for
\(t\in \CC^{\times}\).
Suppose that 
\(u(z;\Lambda)\) is a solution of the singular Liouville equation
(\ref{3-1}) on \(\CC/\Lambda\) and
\(\,f(z;\Lambda)\) is a developing map on \(\CC\) for
\(u(z;\Lambda)\).
It is easy to check that \(\,u(w;t\Lambda):=
u(t^{-1}w;\Lambda)-\log(t\bar{t})\,\) is a 
solution of (\ref{3-1}) on the elliptic curve \(\,\CC/t\Lambda\)
whose universal covering is the complex plane \(\CC_w\) with coordinate
\(\,w=tz\).
Moreover 
\(\,f(t^{-1} w;\Lambda)\,\) is a developing map for the
solution \(\,u(w;t\Lambda)\) on \(\,\CC_w\).

Of course the above ``gauge transformation rules'' reflects the
fact that the three terms of equation (\ref{3-1}) scale differently
when the coordinate of \(\CC\) changes from \(z\) to \(w=tz\) for
a non-zero constant \(t\): the equation (\ref{3-1})
is better written as
\[
2\sqrt{-1}\,\partial\bar{\partial}u 
+ \tfrac{\sqrt{-1}}{2} e^u\, dz\!\wedge\! d\bar{z}
=\rho\cdot \delta_0,
\]
where the last term \(\delta_0\) is the \(\delta\)-\emph{measure}
at \([0]\in\CC/\Lambda\). The second term 
\(\,\tfrac{\sqrt{-1}}{2} e^u\, dz\!\wedge\! d\bar{z}\,\) in the above 
equation depends on the choice of a global holomorphic \(1\)-form
on the elliptic curve, while the other two terms do not.

\end{remarkss}

\begin{lemmass} \label{lemma:n-s-cond-develop}
Let \(f\) be a meromorphic function on the universal covering
\(\CC\) of \(E=\CC/\Lambda\).
This function \(f\) is a developing map of a solution 
\(u\) of \textup{(\ref{3-1})} with parameter 
\(\rho=4\pi l\in 4\pi \NN_{> 0}\)
if and only if the following conditions hold.
\begin{itemize}

\item[(1)] The holomorphic map \(\tilde f:\CC\to \mathbb{P}^1(\CC)\)
corresponding to \(f\) has multiplicity \(l+1\)
at every point above \(\Lambda\), and it has no critical
point outside \(\Lambda\).

\item[(2)] For every \(\omega\in \Lambda\), there exists a
unique element \(T\in\textup{PSU}(2)\) such that
\[f(z+\omega)=(Tf)(z)\quad \forall z\in \CC.\]

\end{itemize}

\end{lemmass}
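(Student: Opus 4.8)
The plan is to prove the two implications separately. For the forward implication I would lean almost entirely on Lemma \ref{order}, while the converse requires combining the converse half of Proposition \ref{prop:liouville} with a local distributional analysis at the lattice points, which is where the value $\rho=4\pi l$ actually enters.

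Suppose first that $f$ is a developing map on $\CC$ of a solution $u$ of (\ref{3-1}) with $\rho = 4\pi l$. Condition (1) is then immediate from Lemma \ref{order}: part (3) gives multiplicity $l+1$ at the points of $\Lambda$, and part (4) gives the absence of critical points outside $\Lambda$. For condition (2) the key point is that $u$, being a solution on $E = \CC/\Lambda$, is $\Lambda$-periodic; hence for each $\omega \in \Lambda$ the translate $z \mapsto f(z+\omega)$ satisfies $\log \frac{8|f'(z+\omega)|^2}{(1+|f(z+\omega)|^2)^2} = u(z+\omega) = u(z)$, so it is again a developing map of $u$ on the connected cover $\CC$. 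By Lemma \ref{order}(2) any two developing maps of $u$ on $\CC$ differ by an element of $\textup{PSU}(2)$, which produces the required $T$ with $f(z+\omega) = (Tf)(z)$. Uniqueness of $T$ holds because $f$ is nonconstant meromorphic, so its image is open in $\PP^1(\CC)$, and two elements of $\textup{PSU}(2)$ inducing the same M\"obius transformation on an open subset of $\PP^1(\CC)$ must coincide.

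For the converse, set $u := \log \frac{8|f'|^2}{(1+|f|^2)^2}$ on $\CC$; I must check that $u$ descends to $E$ and solves (\ref{3-1}) with $\rho = 4\pi l$. Descent follows from condition (2): by Lemma \ref{lemma:psu2}(2) each $T \in \textup{PSU}(2)$ leaves the expression $\log \frac{8|f'|^2}{(1+|f|^2)^2}$ invariant under $f \mapsto Tf$ (reflecting that $T$ is an isometry of the Fubini--Study metric), so $u(z+\omega) = u(z)$ for every $\omega \in \Lambda$. Off the lattice, condition (1) says that $f$ has no critical points and at most simple poles, so the converse part of Proposition \ref{prop:liouville}, applied on $V = \CC \smallsetminus \Lambda$, yields $\triangle u + e^u = 0$ there, equivalently on $E \smallsetminus \{[0]\}$.

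The remaining and genuinely decisive step is to identify the singular contribution at $0$ (periodicity already guarantees that $[0]$ is the only singular point on $E$). After replacing $f$ by $Tf$ for a suitable $T \in \textup{PSU}(2)$, which alters neither $u$ nor condition (1), I may assume $f$ is holomorphic at $0$; condition (1) then forces $f(z) = f(0) + c\,z^{l+1} + \cdots$ with $c \neq 0$, so $|f'|^2 \sim |(l+1)c|^2\,|z|^{2l}$ and $u(z) = 2l \log|z| + v(z)$ with $v$ smooth near $0$. Using $\triangle \log|z| = 2\pi\,\delta_0$ together with the fact that $e^u = |z|^{2l} e^{v}$ extends continuously across $0$, the identity $\triangle u + e^u = 0$ valid on the punctured disk upgrades, as an equation of distributions on the whole disk, to $\triangle u + e^u = 4\pi l\,\delta_0$. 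I expect the distributional bookkeeping here to be the main obstacle: one must verify that the smooth remainder $v$ and the term $e^u$ contribute no additional point mass at $0$, and that the pole case reduces cleanly to the holomorphic one via $\textup{PSU}(2)$. Granting this, $u$ solves (\ref{3-1}) with $\rho = 4\pi l$ and has $f$ as developing map, completing the converse.
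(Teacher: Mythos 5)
Your proposal is correct and takes essentially the same approach as the paper, whose own proof is just a two-line compression of your argument: it asserts that condition (1) is equivalent to the equation \(\triangle u + e^u = 4\pi l\sum_{\omega\in\Lambda}\delta_\omega\) holding on \(\CC\) for \(u=\log\frac{8|f'|^2}{(1+|f|^2)^2}\), and that condition (2) is equivalent to \(u\) descending to \(\CC/\Lambda\). Your use of Lemma \ref{order} for the forward direction, the \(\textup{PSU}(2)\)-invariance for descent, Proposition \ref{prop:liouville} off the lattice, and the local computation \(u = 2l\log|z| + (\textup{smooth})\) with \(\triangle\log|z| = 2\pi\delta_0\) at lattice points are exactly the details the paper leaves implicit (they appear earlier, in the proof of Lemma \ref{lemma:mero} and the remark following it).
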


\proof
The condition (1) means that the equality
\(\triangle u + e^u 
=4\pi\,l\cdot \sum_{\omega\in \Lambda}\delta_{\omega}
\) holds 
for the function
\(u:=\frac{8\vert f'\vert}{(1+\vert f\vert^2)2}\) on \(\CC\).
The condition (2) means that 
\(u\) descends to a function on \(\CC/\Lambda\).
\qed

\begin{definitionss}
Let \(u\) be a  solution \(u\) of the equation
\textup{(\ref{3-1})} on \(E\), where the parmeter
\(\rho\in 4\pi\cdot \NN_{\geq 0}\).
Let \(f\) be a meromorphic function on \(\CC\)
which is a developing map of \(u\).
\begin{itemize}

\item[(1)] The \emph{monodromy representation} \(\rho_f\) 
of the fundamental group \(\Lambda\) of \(E\)
attached to the developing map \(f\) 
is the
group homomorphism
\(\rho_f: \Lambda\to \textup{PSU}(2)\) such that
\[f(z+\omega)=(\rho(\omega)f)(z)\quad
\forall\omega\in \Lambda,\ \forall z\in \CC .\]

\item[(2)] The \emph{monodromy} of the solution 
\(u\) of equation \textup{(\ref{3-1})}
is the
\(\textup{PSU}(2)\)-conjugacy class
of the homomorphism \(\rho_f:\Lambda\to \textup{PSU}(2)\),
which depends only on \(u\) and not on 
the choice of developing map \(f\).

\end{itemize}
\end{definitionss}
 
\subsection{Monodromy constraints}
Next we review the monodromy constraints on a developing map
\(f\) of a solution of (\ref{3-1}) on \(E\), resulting from
the fact that the fundamental group of \(E\) is a free abelian
group of rank two.
%from the double periodicity of the equation. 
%Applying (\ref{3-4}) to 
%$f(z +\omega_1)$ and $f(z + \omega_2)$, 
By Lemma \ref{lemma:n-s-cond-develop}, there exist
$T_1=\rho_f(\omega_1),T_2=\rho_f(\omega_2)\in {\rm PSU}(2)\) 
with the following 
properties:
\begin{equation}{\label{can}}
\begin{split}
f(z + \omega_1) &= T_1 f,\\
f(z + \omega_2) &= T_2 f.
\end{split}
\end{equation}
In addition \(T_1 T_2=T_2 T_1\) in \(\textup{PSU}(2)\) because 
%\(T_1, T_2\) are the images of \(\omega_1, \omega_2\) under
the source of
the monodromy representation 
\(\rho_{f}: \pi_1(E)\to \textup{PSU}(2)\) is
commutative.
%These relations also force that $S_1S_2 = \pm S_2 S_1$ since $A
%\equiv -A$ in ${\rm PSU}(2)$.

%It is easy to classify all commutative subgroups of
%\(\textup{PSU}(2)\):

\begin{lemmass}\label{lemma:mono_grpthy}
Let \(\Gamma\) be a commutative subgroup of \(\textup{PSU}(2)\).
%Then exactly one of the following two possibilities happen.
\begin{itemize}
\item[(1)] Suppose that \(\Gamma\) is isomorphic to the Klein-four group
\((\ZZ/2\ZZ)^2\). 

\begin{itemize}
\item[(1a)] \(\Gamma\) is conjugate to \(\Gamma_0\), where
\(\Gamma_0\) is the image
in \(\textup{PSU}(2)\) of
\[\left\{\begin{pmatrix}1&0\\0&1\end{pmatrix},
\begin{pmatrix}\sqrt{-1}&0\\0&-\sqrt{-1}\end{pmatrix},
\begin{pmatrix}0&\sqrt{-1}\\\sqrt{-1}&0\end{pmatrix},
\begin{pmatrix}0&1\\-1&0\end{pmatrix}\right\}
\]

\item[(1b)] The centralizer subgroup of \(\Gamma\) in \(\textup{PSU}(2)\)
is equal to \(\Gamma\).
 
\item[(1c)] The centralizer subgroup of \(\Gamma\) in 
\(\textup{PSL}_2(\CC)\) is also equal to \(\Gamma\).

\item[(1d)] The normalizer subgroup
\(\textup{N}_{\textup{PSU}(2)}(\Gamma)\) is
isomorphic to the symmetric group \(S_4\). 
In other words
\(\textup{N}_{\textup{PSU}(2)}(\Gamma)/\Gamma\)
is a semi-direct product of 
\(\Gamma\), and the conjugation action of 
\(\textup{N}_{\textup{PSU}(2)}(\Gamma)\) on \(\Gamma\)
induces an isomorphism 
\[
\textup{N}_{\textup{PSU}(2)}(\Gamma)/\Gamma
\xrightarrow{\sim}
\textup{Aut}_{\textup{grp}}(\Gamma)\cong
\textup{Perm}(\Gamma\smallsetminus\{\textup{Id}\}),
\]
% from 
%\(\textup{N}_{\textup{PSU}(2)}(\Gamma)/\Gamma\)
%to the group of all group automorphisms of \(\Gamma\),
%or equivalent the group of all permutations of the set
where \(\textup{Perm}(\Gamma\smallsetminus\{0\})\) is the 
set of all permutations of the set
\(\Gamma\smallsetminus\{\textup{Id}\}\).

\item[(1e)] The subset \(\,\{x\in \textup{PSL}(2,\CC)\,\mid\, 
x\cdot \Gamma\cdot x^{-1}\subset \textup{PSU}(2)\,\}\)
of \(\,\textup{PSL}_2(\CC)\,\) is equal to \(\textup{PSU}(2)\).
\end{itemize}

\item[(2)] If \(\Gamma\) is not isomorphic to \((\ZZ/2\ZZ)^2\),
then \(\Gamma\) is contained in a maximal torus of 
\(\textup{PSU}(2)\); i.e.\ there exists an element
\(T_0\in \textup{PSU}(2)\) such that 
\(T_0\cdot \Gamma\cdot T_0^{-1}\) is contained in the image 
in \(\textup{PSU}(2)\)
the diagonal maximal torus 
\[\left\{\begin{pmatrix}e^{\sqrt{-1}\theta}&0\\
0&e^{-\sqrt{-1}\theta}\end{pmatrix}\colon \theta\in \RR/\pi\ZZ\right\}
\subset\ \textup{SU}(2)\]

\end{itemize}
\end{lemmass}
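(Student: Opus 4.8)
Lemma \ref{lemma:mono_grpthy} concerns the classification of commutative subgroups of $\textup{PSU}(2)$. Let me plan a proof.

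The plan is to exploit the well-known fact that $\textup{PSU}(2) \cong \textup{SO}(3)$ acts on the $2$-sphere $S^2 \cong \mathbb{P}^1(\CC)$ as the group of orientation-preserving isometries, so every non-identity element is a rotation about some axis through an angle, and two elements commute precisely when they share a common axis or one of them is a half-turn (rotation by $\pi$) whose axis is perpendicular to the other's. Equivalently, I would work at the level of $\textup{SU}(2)$: a non-central element $g \in \textup{SU}(2)$ lies in a unique maximal torus, and two elements of $\textup{PSU}(2)$ commute iff they lie in a common maximal torus \emph{or} they are both images of order-$4$ elements of $\textup{SU}(2)$ squaring to $-I$ with perpendicular axes. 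This dichotomy is exactly what separates case (1) (the Klein-four group) from case (2) (contained in a maximal torus).

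\textbf{Proof of (2).} Suppose $\Gamma$ is commutative but \emph{not} isomorphic to $(\ZZ/2\ZZ)^2$. First I would dispose of the case where $\Gamma$ is trivial or every non-identity element has its $\textup{SU}(2)$-lift of order $\neq 4$; then pick any non-identity $T_0 \in \Gamma$, which lies in a unique maximal torus $\mathscr{T}$ (its centralizer). Since every element of $\Gamma$ commutes with $T_0$, and the centralizer of a \emph{regular} element of $\textup{PSU}(2)$ is precisely the maximal torus containing it, all of $\Gamma$ lies in $\mathscr{T}$. The only subtlety is the half-turn elements: if $T_0$ is the image of an order-$4$ element (so $T_0$ itself has order $2$ in $\textup{PSU}(2)$ with axis $\ell$), its centralizer is strictly larger than the torus—it also contains all half-turns about axes perpendicular to $\ell$. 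Here I would use the hypothesis $\Gamma \not\cong (\ZZ/2\ZZ)^2$ to rule out a second independent half-turn, forcing $\Gamma$ to again sit inside the single torus $\mathscr{T}$; conjugating $\mathscr{T}$ to the diagonal maximal torus gives the claim.

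\textbf{Proof of (1).} For the Klein-four case, all three non-identity elements have order $2$ in $\textup{PSU}(2)$, hence are half-turns about three pairwise-perpendicular axes (perpendicularity is forced by commutativity of distinct half-turns in $\textup{SO}(3)$). Choosing these axes as the coordinate axes exhibits $\Gamma$ as the explicit group $\Gamma_0$, giving (1a); the three listed non-identity matrices are exactly the order-$4$ elements of $\textup{SU}(2)$ diagonal or anti-diagonal with respect to the standard frame, yielding (1a). For (1b) and (1c), I would compute the centralizer in $\textup{PSL}_2(\CC)$ directly: an element commuting with the diagonal half-turn must be diagonal or anti-diagonal, and further commuting with the anti-diagonal half-turn pins it down to lie in $\Gamma_0$ itself; this simultaneously yields (1c), and (1b) follows since $\textup{PSU}(2) \subset \textup{PSL}_2(\CC)$. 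For (1d), the normalizer permutes the three coordinate axes, giving a homomorphism $\textup{N}_{\textup{PSU}(2)}(\Gamma) \to \textup{Perm}(\Gamma \smallsetminus \{\textup{Id}\}) \cong S_3$; I would check surjectivity by exhibiting explicit rotations realizing each axis-permutation, and combine with the kernel computation $=\Gamma$ from (1b) to get the semidirect-product structure $\cong S_4$ (noting the isomorphism $\textup{Aut}_{\textup{grp}}((\ZZ/2\ZZ)^2) \cong S_3$). Finally (1e): if $x \in \textup{PSL}_2(\CC)$ satisfies $x\Gamma x^{-1} \subset \textup{PSU}(2)$, then $x\Gamma x^{-1}$ is again a Klein-four subgroup of $\textup{PSU}(2)$, hence conjugate to $\Gamma_0$ by (1a); I would then show that the conjugating element can be absorbed so that $x$ itself normalizes $\Gamma$ up to right-multiplication by $\textup{PSU}(2)$, and conclude using that an element of $\textup{PSL}_2(\CC)$ preserving three perpendicular real axes of $S^2$ must be an isometry, i.e.\ lie in $\textup{PSU}(2)$.

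\textbf{The main obstacle} I anticipate is part (1e): unlike the purely group-theoretic statements (1a)--(1d), it mixes the real form $\textup{PSU}(2)$ with the complex group $\textup{PSL}_2(\CC)$, and one must genuinely use that preserving a \emph{rigid configuration of three mutually perpendicular axes on $S^2$} forces membership in $\textup{PSU}(2)$—a rational element of $\textup{PSL}_2(\CC)$ permuting these axes need not a priori be unitary. The clean way I would handle it is to show $x$ conjugates the invariant Hermitian form defining $\textup{PSU}(2)$ to a scalar multiple of itself: since $x\Gamma_0 x^{-1}$ stabilizes the standard Fubini--Study metric (being a Klein-four subgroup of $\textup{PSU}(2)$) and simultaneously $\Gamma_0$ stabilizes it, the form $x^*(\text{FS})$ is $\Gamma_0$-invariant; but by (1b) the space of $\Gamma_0$-invariant Hermitian forms is one-dimensional, so $x^*(\text{FS})$ is proportional to $\text{FS}$, whence $x \in \textup{PSU}(2)$. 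This invariant-form argument is the crux and is where the commutant computation (1b) earns its keep.
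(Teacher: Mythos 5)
Your proposal is correct, and for parts (1a)--(1d) and (2) it follows essentially the same route as the paper: the paper's proof rests on the same centralizer dichotomy (the centralizer of a non-involution in $\textup{PSU}(2)$ is a maximal torus, that of an involution is the normalizer of a torus and contains a unique Klein-four subgroup), stated there via the spectral theorem rather than in your $\textup{SO}(3)$ rotations-and-axes language, and its proof of (1d) likewise exhibits an order-$2$ and an order-$3$ element generating an $S_3$ complement to $\Gamma_0$. Two small slips in your write-up, neither fatal: the commutation criterion in your preamble (``one of them is a half-turn whose axis is perpendicular to the other's'') is wrong as stated, since conjugating a rotation $S$ about an axis $b$ by a half-turn about $a\perp b$ gives $S^{-1}$, so one needs \emph{both} elements to be half-turns with perpendicular axes --- which is exactly your second, correct formulation and the one your arguments actually use; and the one-dimensionality of the space of invariant Hermitian forms is Schur's lemma for the irreducible quaternion-group representation, i.e.\ it is the commutant computation underlying (1c), not (1b). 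The genuine difference is (1e). The paper argues group-theoretically: given $x\Gamma_0 x^{-1}\subset \textup{PSU}(2)$, it uses (1a) and (1d) to find $y\in\textup{PSU}(2)$ such that $yx$ centralizes $\Gamma_0$, then (1c) forces $yx\in\Gamma_0$, hence $x\in\textup{PSU}(2)$. Your argument instead pulls back the invariant Hermitian form: $x^{\ast}H_0$ is invariant under the (irreducible) quaternion group, hence proportional to $H_0$ by Schur's lemma, and the determinant-one normalization kills the scalar, giving $x\in\textup{PSU}(2)$. Your route is more conceptual and does not need the normalizer computation (1d) at all, only irreducibility; the paper's route stays inside elementary group theory once (1a)--(1d) are in hand. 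Both are valid proofs of (1e).
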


\begin{proof}
%This easy exercise in algebra is left as an exercise for the
%readers.
The spectral theorem tells us that every element of 
\(\textup{U}(2)\) is conjugate in \(\textup{U}(2)\) to a
diagonal matrix.  Using this it is easy to verify the
following assertion, whose proof is omitted here.
\begin{quotation}
\emph{Suppose that \(u\) is a non-trivial element of
\(\textup{PSU}(2)\).}
\begin{itemize}

\item \emph{If \(u^2\neq 1\)
in \(\textup{PSU}(2)\), then the centralizer %subgroup
\(\textup{Z}_{\textup{PSU}(2)}(u)\) of \(u\) is a maximal torus
in \(\textup{PSU}(2)\), i.e.\ a conjugate of the image of the
diagonal maximal torus}
\[\left\{\begin{pmatrix}e^{\sqrt{-1}\theta}&0\\
0&e^{-\sqrt{-1}\theta}\end{pmatrix}\colon \theta\in
\RR/\pi\ZZ\right\}\]

\item \emph{If \(u\) is an element of order two in 
\(\textup{PSU}(2)\), then the centralizer subgroup
\(\textup{Z}_{\textup{PSU}(2)}(u)\) of \(u\) 
in \(\textup{PSU}(2)\) is a semi-direct product
of a maximal torus of \(\textup{PSU}(2)\) with a group
of order two, equal to the normalizer of a maximal torus.
Moreover \(\textup{Z}_{\textup{PSU}(2)}(u)\) contains a unique
subgroup which is isomorphic to 
\((\ZZ/2\ZZ)^2\).}
\end{itemize}
\end{quotation}
The statement (2) follows, so do (1a), (1b) and (1c). 
\smallbreak

To prove (1d),
by (1a) and (1b) it suffices to show that 
the normalizer subgroup
\(\textup{N}_{\textup{PSU}(2)}(\Gamma_0)\) on 
\(\Gamma_0\) contains a subgroup \(S\) of order \(6\)
which intersect \(\Gamma\) trivially.
%an element of order \(3\) and
%an element of order \(2\). 
Let \(\delta\) be the image of 
\(\begin{pmatrix}0&-e^{-\pi\sqrt{-1}/4}\\
e^{\pi\sqrt{-1}/4}&0\end{pmatrix}\) in \(\textup{PSU}(2)\)
and let \(\gamma\) be the image of 
\(\frac{1}{\sqrt{2}}\!\begin{pmatrix}-1&1\\ \sqrt{-1}&\sqrt{-1}\end{pmatrix}
\) in \(\textup{PSU}(2)\).
It is straightforward to check that 
\(\delta\) has order \(2\) and induces a transposition
on \(\Gamma\smallsetminus\{\textup{Id}\}\),
\(\gamma\) has order \(3\) and
%We have proved (1c).
%We can say a little more:
\(\delta\cdot \gamma\cdot \delta^{-1}=\gamma^{-1}\).
It follows that 
\(\textup{N}_{\textup{PSU}(2)}(\Gamma_0)\) is a semi-direct
product of \(\Gamma_0\rtimes S_3\), so
\(\textup{N}_{\textup{PSU}(2)}(\Gamma_0)\) is isomorphic to \(S_4\).
We have proved (1d).  
Alternatively, it is well known that \(\textup{PSU}_2\) contains 
a finite subgroup isomorphic to \(S_4\). The statement (1d) follows
from this fact, (1a) and (1b).

\smallbreak
Finally the statement (1e) follows from (1a), (1d) and (1c):
Suppose that \(x\in \textup{PSL}_2(\CC)\) and 
\(\,\textup{Ad}(x)(\Gamma_0)
=x\cdot \Gamma_0\cdot x^{-1}\subset \textup{PSU}(2)\).
By (1a) and (1d), there exists \(y\in \textup{PSU}(2)\) such that 
\(\,y\cdot x\,\) commutes with every element of \(\Gamma_0\).
By (1c) \(y\cdot x\in \Gamma_0\), hence 
\(x\in y^{-1}\cdot \Gamma_0\subset \textup{PSU}(2)\).
\end{proof}

\begin{remark*} The 
group \(\textup{N}_{\textup{PSU}(2)}(\Gamma)\) is also isomorphic to
\(\textup{SL}_2(\ZZ/4\ZZ)/\{\pm \textup{I}_2\}\), the quotient
of \(\textup{SL}_2(\ZZ)\) by the subgroup generated by the
principal congruence subgroup of level \(4\) and 
\(\{\pm\textup{I}_2\}\).
\end{remark*}

\begin{corollaryss} \label{cor:comm_homo}
%Let \(\omega_1, \omega_2\) be the oriented
%\(\ZZ\)-basis of \(\Lambda\) which has been chosen and fixed.
%\smallbreak
%\noindent{\bf 1}. 
Let \(\rho:\Lambda\to \textup{PSU}(2)\) be a group
homomorphism. 
\begin{itemize}
\item[(i)] If the image of \(\rho\) is isomorphic to 
\((\ZZ/2\ZZ)^2\), then \(\rho\) is conjugate to the homomorphism
which sends \(\omega_1\) to the image of 
\(\begin{pmatrix}\sqrt{-1}&0\\0&-\sqrt{-1}
\end{pmatrix}\)
and \(\omega_2\) to the image of 
\(\begin{pmatrix}0&\sqrt{-1}\\\sqrt{-1}&0
\end{pmatrix}\).

\item[(ii)] If the image of \(\rho\) is not isomorphic to 
\((\ZZ/2\ZZ)^2\), then there exists real numbers \(\theta_1,\theta_2\)
such that \(\rho\) is conjugate to the homomorphism which
sends \(\omega_i\) to
\(\begin{pmatrix}e^{\sqrt{-1}\theta_1}&0\\0&e^{\sqrt{-1}\theta_2}
\end{pmatrix}\) for \(i=1,2\).

\end{itemize}
\end{corollaryss}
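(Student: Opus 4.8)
The plan is to read off both statements directly from the group-theoretic Lemma \ref{lemma:mono_grpthy}, the essential input being that the source \(\Lambda=\ZZ\omega_1+\ZZ\omega_2\) is free abelian of rank two. Consequently the image \(\Gamma:=\rho(\Lambda)\) is a \emph{commutative} subgroup of \(\textup{PSU}(2)\), and Lemma \ref{lemma:mono_grpthy} offers exactly two alternatives: either \(\Gamma\cong(\ZZ/2\ZZ)^2\), or \(\Gamma\) is contained in a maximal torus. These correspond precisely to cases (i) and (ii) of the corollary, so I would organize the proof accordingly.

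Case (ii) is the quick one. By Lemma \ref{lemma:mono_grpthy}\,(2) some \(T_0\in\textup{PSU}(2)\) conjugates \(\Gamma\) into the diagonal maximal torus; after replacing \(\rho\) by \(T_0\rho T_0^{-1}\) each \(\rho(\omega_i)\) is the image of a diagonal unitary matrix \(\begin{pmatrix}e^{\sqrt{-1}\theta_i}&0\\0&e^{-\sqrt{-1}\theta_i}\end{pmatrix}\) for a suitable \(\theta_i\in\RR\), which is the asserted diagonal form.

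For case (i) I would first apply Lemma \ref{lemma:mono_grpthy}\,(1a) to conjugate \(\Gamma\) onto the standard Klein-four group \(\Gamma_0\), so that \(\rho\colon\Lambda\twoheadrightarrow\Gamma_0\) becomes surjective. Since \(\Gamma_0\) has exponent two, \(\rho\) annihilates \(2\Lambda\) and factors through a surjection \(\Lambda/2\Lambda\to\Gamma_0\) of groups of order four, hence an isomorphism; thus \(\rho(\omega_1),\rho(\omega_2)\) form an ordered basis of \(\Gamma_0\) viewed as a two-dimensional \(\ZZ/2\ZZ\)-vector space. Writing \(A,B\) for the images in \(\textup{PSU}(2)\) of \(\begin{pmatrix}\sqrt{-1}&0\\0&-\sqrt{-1}\end{pmatrix}\) and \(\begin{pmatrix}0&\sqrt{-1}\\\sqrt{-1}&0\end{pmatrix}\), which likewise form an ordered basis, there is a unique automorphism \(\phi\in\textup{Aut}(\Gamma_0)\cong\textup{GL}_2(\ZZ/2\ZZ)\) with \(\phi(\rho(\omega_1))=A\) and \(\phi(\rho(\omega_2))=B\), because \(\textup{GL}_2(\ZZ/2\ZZ)\) acts simply transitively on ordered bases.

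The decisive point, and the step I expect to demand the most care, is that this abstract automorphism \(\phi\) is realized by honest conjugation \emph{within} \(\textup{PSU}(2)\). This is exactly the content of Lemma \ref{lemma:mono_grpthy}\,(1d), which identifies \(\textup{N}_{\textup{PSU}(2)}(\Gamma_0)/\Gamma_0\) with \(\textup{Aut}(\Gamma_0)\) via the conjugation action. Choosing any \(T\) in the normalizer inducing \(\phi\), the conjugate \(T\rho T^{-1}\) still takes values in \(\Gamma_0\subset\textup{PSU}(2)\) and sends \(\omega_1\mapsto A\) and \(\omega_2\mapsto B\), which is the normalized form claimed in (i). Since the two cases are mutually exclusive and exhaustive by Lemma \ref{lemma:mono_grpthy}, this completes the argument.
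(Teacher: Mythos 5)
Your proof is correct and follows exactly the route the paper intends: the corollary is stated as an immediate consequence of Lemma \ref{lemma:mono_grpthy}, with case (ii) read off from part (2) and case (i) from parts (1a) and (1d), which is precisely your argument. Your careful intermediate step---that \(\rho\) factors through an isomorphism \(\Lambda/2\Lambda\xrightarrow{\sim}\Gamma_0\), so \(\rho(\omega_1),\rho(\omega_2)\) form an ordered basis that the normalizer (acting as \(\textup{Aut}(\Gamma_0)\) by (1d)) can move to the standard pair---is the small piece of bookkeeping the paper leaves implicit, and you have supplied it correctly.
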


\noindent
Lemma \ref{cor:types} below follows from 
Corollary \ref{cor:comm_homo} and Lemma \ref{order}\,(1),\,(2).

\begin{lemmass}\label{cor:types}
%\noindent{\bf 2}. 
Let \(u\) be solution of equation 
\textup{(\ref{3-1})}
where the parameter \(\rho>0\) is an integer multiple of 
\(4\pi\). 
\begin{itemize}
\item[Type I.] If the image in \(\textup{PSU}(2)\) of 
the monodromy of \(u\) is isomorphic to \((\ZZ/2\ZZ)^2\),
then there exists a developing map \(f\) of \(u\) such that 
\begin{equation}{\label{case-type-I}}
\begin{split}
f(z + \omega_1) &= -f(z)\quad \forall z,\\
f(z + \omega_2) &= \frac{1}{f(z)}\quad \forall z.
\end{split}
\end{equation}
Moreover the set \(\{f, -f, f^{-1}, -f^{-1}\}\) is uniquely
determined by the solution \(u\).

\item[Type II.] Suppose that the image in \(\textup{PSU}(2)\) of 
the monodromy of \(u\) is not isomorphic to \((\ZZ/2\ZZ)^2\).
There exists a developing map \(f\) of \(u\)
and two real numbers \(\theta_1, \theta_2\) such that 
\begin{equation}{\label{case-type-II}}
\begin{split}
f(z + \omega_1) &= e^{2i\theta_1}f(z)\quad \forall z,
\\f(z + \omega_2) &= e^{2i\theta_2}f(z)\quad \forall z.
\end{split}
\end{equation}
If moreover \(\{\theta_1,\theta_2\}\not\subseteq
\frac{1}{2}\ZZ\), then the set
\(\CC^{\times}_1\cdot f \cup \CC^{\times}_1\cdot f^{-1}\)
%\(\{\theta_1\textup{mod}\,\pi\ZZ, -\theta_1\textup{mod}\,\pi\ZZ\}\)
%and  
%\(\{\theta_1\textup{mod}\,\pi\ZZ, -\theta_1\textup{mod}\,\pi\ZZ\}\)
is uniquely determined by \(u\),
where \(\CC^{\times}_1:=\{w\in\CC:\vert w\vert =1\}\).
%denotes the set of all complex numbers
%with absolute value \(1\).

\end{itemize}

\end{lemmass}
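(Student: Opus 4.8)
The plan is to reduce everything to the group-theoretic facts already assembled in Lemma \ref{lemma:mono_grpthy} and Corollary \ref{cor:comm_homo}, using Lemma \ref{order} to move freely between developing maps. First I would invoke Lemma \ref{order}\,(1) to fix one developing map $f_0$ of $u$ on all of $\CC$, and form its monodromy homomorphism $\rho_{f_0}\colon\Lambda\to\textup{PSU}(2)$ determined by $f_0(z+\omega)=(\rho_{f_0}(\omega)f_0)(z)$. Since $\Lambda$ is abelian, $\rho_{f_0}$ has commutative image, so Corollary \ref{cor:comm_homo} applies. In the Type~I case (image $\cong(\ZZ/2\ZZ)^2$), part~(i) gives an $S\in\textup{PSU}(2)$ conjugating $\rho_{f_0}$ to the standard homomorphism $\rho_0$ sending $\omega_1,\omega_2$ to the two matrices displayed there; in the Type~II case, part~(ii) gives an $S$ conjugating $\rho_{f_0}$ into the diagonal maximal torus $\mathbb{T}\subset\textup{PSU}(2)$. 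By Lemma \ref{order}\,(2) the function $f:=Sf_0$ is again a developing map of $u$, now with monodromy in normal form. Finally I would read off each Möbius action: $\mathrm{diag}(\sqrt{-1},-\sqrt{-1})$ acts as $w\mapsto-w$ and $\bigl(\begin{smallmatrix}0&\sqrt{-1}\\\sqrt{-1}&0\end{smallmatrix}\bigr)$ as $w\mapsto 1/w$, giving \eqref{case-type-I}; while $\mathrm{diag}(e^{\sqrt{-1}\theta},e^{-\sqrt{-1}\theta})$ acts as $w\mapsto e^{2\sqrt{-1}\theta}w$, giving \eqref{case-type-II}. This settles the two existence statements.

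For Type~I uniqueness I would note that $\{f,-f,f^{-1},-f^{-1}\}$ is exactly the orbit $\Gamma_0\cdot f$ under the Möbius action of the standard Klein four group $\Gamma_0=\mathrm{im}(\rho_0)$, and that each of these four functions again satisfies \eqref{case-type-I} (a one-line check). Conversely, if $\tilde f$ is any developing map satisfying \eqref{case-type-I}, then $\tilde f=Tf$ for some $T\in\textup{PSU}(2)$ by Lemma \ref{order}\,(2), and comparing monodromies forces $T\rho_0 T^{-1}=\rho_0$, i.e.\ $T$ centralizes $\Gamma_0$. By Lemma \ref{lemma:mono_grpthy}\,(1b) the centralizer of $\Gamma_0$ in $\textup{PSU}(2)$ is $\Gamma_0$ itself, so $T\in\Gamma_0$ and $\tilde f\in\Gamma_0\cdot f$. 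Hence the developing maps obeying \eqref{case-type-I} are precisely $\{f,-f,f^{-1},-f^{-1}\}$, a set intrinsic to $u$.

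The Type~II uniqueness is where the real work lies. I would first identify the orbit of $f$ under the normalizer $\textup{N}_{\textup{PSU}(2)}(\mathbb{T})=\mathbb{T}\sqcup\mathbb{T}w$, where the Weyl element $w=\bigl(\begin{smallmatrix}0&1\\-1&0\end{smallmatrix}\bigr)$ acts as $w\mapsto -1/w$: the torus $\mathbb{T}$ sends $f$ to $e^{2\sqrt{-1}\theta}f$ and so sweeps out $\CC^{\times}_1\cdot f$, while the coset $\mathbb{T}w$ sends $f$ to $-f^{-1}$ and sweeps out $\CC^{\times}_1\cdot f^{-1}$; thus $\textup{N}_{\textup{PSU}(2)}(\mathbb{T})\cdot f=\CC^{\times}_1\cdot f\cup\CC^{\times}_1\cdot f^{-1}$. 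Now let $\tilde f=Tf$ be any other developing map of $u$ with monodromy inside $\mathbb{T}$. The stated hypothesis $\{\theta_1,\theta_2\}\not\subseteq\tfrac12\ZZ$ prevents both monodromy generators from having order $\le 2$, so at least one generator $g:=\rho_f(\omega_k)$ satisfies $g^2\neq\mathrm{Id}$. By the structural assertion established inside the proof of Lemma \ref{lemma:mono_grpthy} (if $g^2\neq\mathrm{Id}$ then $\textup{Z}_{\textup{PSU}(2)}(g)=\mathbb{T}$, and any maximal torus through $g$ lies in $\textup{Z}(g)$, hence equals $\mathbb{T}$), the conjugate $TgT^{-1}\in\mathbb{T}$ again has order $>2$, so $\mathbb{T}=\textup{Z}(TgT^{-1})=T\mathbb{T}T^{-1}$, i.e.\ $T\in\textup{N}_{\textup{PSU}(2)}(\mathbb{T})$. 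Therefore $\tilde f\in\CC^{\times}_1\cdot f\cup\CC^{\times}_1\cdot f^{-1}$, which gives the asserted uniqueness.

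The main obstacle is exactly this last step. Unlike Type~I, where the monodromy group is rigid and self-centralizing, in Type~II one must rule out a second normalization that conjugates the a~priori small monodromy image into $\mathbb{T}$ by some $T$ outside the normalizer. The non-degeneracy hypothesis is precisely what forces the maximal torus carrying the monodromy to be \emph{unique} (via the regular-element computation in Lemma \ref{lemma:mono_grpthy}), thereby confining $T$ to $\textup{N}_{\textup{PSU}(2)}(\mathbb{T})$; without it, both generators would have order $\le 2$, the monodromy image would lie in infinitely many maximal tori, and the orbit $\CC^{\times}_1\cdot f\cup\CC^{\times}_1\cdot f^{-1}$ would genuinely depend on the choice of normalization.
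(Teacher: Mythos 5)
Your proof is correct and takes essentially the same route as the paper: the paper disposes of this lemma in one line, stating that it ``follows from Corollary \ref{cor:comm_homo} and Lemma \ref{order}\,(1),\,(2)'', which is exactly the skeleton you flesh out. Your uniqueness arguments --- the self-centralizing property of the Klein four group (Lemma \ref{lemma:mono_grpthy}\,(1b)) for Type~I, and the regular-element computation forcing the conjugator into $\textup{N}_{\textup{PSU}(2)}(\mathbb{T})$ for Type~II --- supply precisely the group-theoretic details the paper leaves implicit in that citation.
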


\begin{definitionss} (a) Let \(f\) be a solution of equation 
\textup{(\ref{3-1})} where the parameter \(\rho>0\) is an integer multiple of 
\(4\pi\).  If the image
of the monodromy representation \(\rho_f\) of \(f\) is 
isomorphic to \((\ZZ/2\ZZ)^2\), then
say that \(f\) is \emph{of type I};
otherwise we say that \(f\) is \emph{of type II}.
\smallbreak

\noindent
(b) A developing map \(f\) which satisfies equation \textup{(\ref{case-type-I})}
(respectively \textup{(\ref{case-type-II})}) will be said to be
\emph{normalized} of type I (respectively type II).
\end{definitionss}

\begin{lemmass}\label{lemma:non-triv-fam}
Let \(f\) be a developing map of a solution of a solution 
\(u\) of equation \textup{(\ref{3-1})}
where the parameter \(\rho>0\) is an integer multiple of \(4\pi\). 
\begin{itemize}
\item[(1)] If \(f\) is of type I and \(T\in \textup{PGL}_2(\CC)\)
is a linear fractional transformation such that
\(T\cdot f\) is again a developing map of a solution of 
equation \textup{(\ref{3-1})} with the same parameter \(\rho\),
then \(T\in \textup{PSU}(2)\) and \(Tf\) is a developing map
of the same solution \(u\) of (\ref{3-1}).

\item[(2)] Suppose that \(f\) is of type II.  
There exists a closed subgroup \(A\) of \(\textup{PGL}_2(\CC)\),
conjugate to
the image in \(\textup{PGL}_2(\CC)\) of the diagonal non-compact real torus
\(\,A_0 :=\left\{\begin{pmatrix}a&0\\0&1\end{pmatrix}\,
:\,a\in \RR^{\times}_{>0}\right\}\), such that the following 
statements hold.
\begin{itemize}
\item[(2a)] \(T\cdot f\) is a developing map of a solution
\(u_{Tf}\) of equation \textup{(\ref{3-1})} with the same parameter 
\(\rho\).

\item[(2b)] \(u_{_{T_1 f}}\neq u_{_{T_2 f}}\) for any
two distinct elements \(T_1, T_2\) in \(A\).
%where \(u_{_{Tf}}\) denote the
%solution of \textup{(\ref{3-1})} given by \(Tf\)
%for any \(T\in A\).

\end{itemize}

\end{itemize}

\end{lemmass}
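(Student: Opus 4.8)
The plan is to handle the two parts in parallel, feeding off the group theory for commutative subgroups of $\textup{PSU}(2)$ in Lemma \ref{lemma:mono_grpthy} and the developing-map criterion in Lemma \ref{lemma:n-s-cond-develop}. The observation I would make first, common to both parts, is that for \emph{any} $T\in\textup{PGL}_2(\CC)$ the function $Tf$ inherits from $f$ precisely the same local ramification data, namely multiplicity $l+1$ at the points of $\Lambda$ and no critical point elsewhere, simply because $T$ is a global automorphism of $\PP^1(\CC)$. Hence, by Lemma \ref{lemma:n-s-cond-develop}, the only condition that can obstruct $Tf$ from being a developing map of a solution of (\ref{3-1}) with the \emph{same} $\rho=4\pi l$ is that its monodromy fail to lie in $\textup{PSU}(2)$. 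A one-line computation gives $(Tf)(z+\omega)=\big(T\rho_f(\omega)T^{-1}\big)(Tf)(z)$, so the monodromy group of $Tf$ is $T\Gamma T^{-1}$, where $\Gamma:=\rho_f(\Lambda)\subset\textup{PSU}(2)$.

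For part (1), $f$ is of type I, so $\Gamma\cong(\ZZ/2\ZZ)^2$ by Lemma \ref{cor:types}. If $Tf$ is a developing map, the preceding remark forces $T\Gamma T^{-1}\subset\textup{PSU}(2)$; identifying $\textup{PGL}_2(\CC)$ with $\textup{PSL}_2(\CC)$ and applying Lemma \ref{lemma:mono_grpthy}(1e) to the Klein-four group $\Gamma$, I conclude $T\in\textup{PSU}(2)$. Since an element of $\textup{PSU}(2)$ acts as an isometry of the Fubini--Study metric, Lemma \ref{lemma:psu2}(2) (equivalently Lemma \ref{order}(2)) then shows that $Tf$ develops the \emph{same} solution $u$, completing (1). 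This part is essentially a rigidity statement packaged into (1e), so it requires no real work once that lemma is in hand.

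For part (2), $f$ is of type II, so by Lemma \ref{lemma:mono_grpthy}(2) there is $g\in\textup{PSU}(2)$ conjugating $\Gamma$ into the standard diagonal torus. Setting $h:=gf$ (which develops the same $u$), its monodromy takes the diagonal form $h(z+\omega)=e^{2i\theta_\omega}h(z)$. I would then define $A:=g^{-1}A_0\,g$, a closed subgroup of $\textup{PGL}_2(\CC)$ conjugate to $A_0$. For $T=g^{-1}\,\textup{diag}(a,1)\,g\in A$ one has $Tf=g^{-1}(ah)$, and from $(ah)(z+\omega)=e^{2i\theta_\omega}(ah)(z)$ the monodromy of $ah$ again lies in the diagonal torus of $\textup{PSU}(2)$; together with the preserved ramification data this makes $ah$, hence $g^{-1}(ah)=Tf$, a developing map of a solution of (\ref{3-1}) with the same $\rho$, proving (2a).

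The heart of the matter is the injectivity claim (2b), and this is the step I expect to carry the real content. Since $g^{-1}\in\textup{PSU}(2)$ does not alter the associated solution, it suffices to prove that $a\mapsto u_{ah}$ is injective on $\RR_{>0}$, where $u_{ah}=\log\frac{8a^2|h'|^2}{(1+a^2|h|^2)^2}$. Assuming $u_{a_1h}=u_{a_2h}$, the factor $|h'|^2$ cancels on the dense open set where $h'\neq 0$, and after clearing denominators one is left with $(a_1-a_2)(1-a_1a_2|h(z)|^2)=0$ there. If $a_1\neq a_2$ this forces $|h|^2\equiv(a_1a_2)^{-1}$, i.e.\ $h$ has locally constant modulus; but a non-constant meromorphic function is an open map and cannot have image contained in a circle, so $h$ would be constant, contradicting that $h$ is a developing map. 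Hence $a_1=a_2$, which establishes (2b). The only genuinely non-formal point is this final constant-modulus observation; everything else is bookkeeping with the normal forms supplied by Lemma \ref{lemma:mono_grpthy}.
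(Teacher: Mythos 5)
Your proof is correct. Parts (1) and (2a) follow the paper's own route essentially verbatim: the paper also deduces (1) from Lemma \ref{lemma:n-s-cond-develop} together with Lemma \ref{lemma:mono_grpthy}\,(1e), and proves (2a) by normalizing $f$ and observing that scaling preserves both the ramification data and the diagonal monodromy. Where you genuinely diverge is (2b). The paper argues structurally: if $u_{T_1f}=u_{T_2f}$, then by Lemma \ref{lemma:psu2}\,(3) the developing maps $T_1f$ and $T_2f$ differ by an element of $\textup{PSU}(2)$, so $T_2T_1^{-1}\in A$ is conjugate in $\textup{PGL}_2(\CC)$ to an element of $\textup{PSU}(2)$; since a nontrivial element of $A_0$ has real eigenvalue ratio $a\neq 1$ and hence is never elliptic, this forces $T_1=T_2$. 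You instead compute directly with the scaling formula: from $u_{a_1h}=u_{a_2h}$ you cancel $|h'|^2$, extract $(a_1-a_2)(1-a_1a_2|h(z)|^2)=0$, and rule out the constant-modulus alternative by the open mapping theorem. Both are valid; the paper's argument buys generality (it is the same mechanism that classifies all developing maps of a given solution, with no computation) while yours buys self-containedness -- it avoids Lemma \ref{lemma:psu2}\,(3), whose proof rests on Schwarzian derivative theory, and needs only the explicit formula for $u_\lambda$ plus elementary complex analysis. One small remark: your $A=g^{-1}A_0\,g$ with $g\in\textup{PSU}(2)$ and the paper's choice $A=A_0$ (after normalizing $f$) are interchangeable, since the lemma only asserts existence of some conjugate of $A_0$.
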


\begin{proof}
The statement (1) follows from Lemma \ref{lemma:n-s-cond-develop}
and Lemma \ref{lemma:mono_grpthy}\,(1e).
To show (2), we may assume that \(f\) is normalized of type II
and take \(A\) to be the image in \(\textup{PGL}(2,\CC)\) of
\(A_0\).  Then the statement (2a) follows from
Lemma \ref{lemma:n-s-cond-develop}.
The statement (2b) follows from Lemma \ref{lemma:psu2}\,(3)
because the only element of
\(A\) which is conjugate in \(\textup{PGL}_2(\CC)\) to 
an element in \(\textup{PSU}(2)\) is the unity element of \(A\).
\end{proof}

\subsubsection{Logarithmic derivatives of normalized developing maps}
\enspace
In this article we approach the 
mean field equations (\ref{3-1}) 
with $\rho = 4\pi l$, $l \in
\mathbb{N}_{>0}$ 
%the overall important object to be considered is the
through the 
logarithmic derivative 
%$g:=f'/f$ 
%of normalized developing maps $f$
%of solutions \(u\) of (\ref{3-1}).
%The basic approach to 
\begin{equation*}
g := (\log f)' = \frac{f'}{f}.
\end{equation*}
of a normalized developing map \(f\) of a solution of (\ref{3-1}).
Recall that such developing maps are meromorphic functions \(f\) on \(\CC\)
satisfying \ref{lemma:n-s-cond-develop}\,(1) and 
either of equations (\ref{case-type-I}), (\ref{case-type-II}).
%Clearly the meromorphic function \(g\) on \(\CC\)
%has at most simple poles.

\begin{lemmass}\label{lemma_zero-pole_g}
Suppose that \(f\) is a normalized developing map of
a solution of \textup{(\ref{3-1})}, and 
\(l:=\rho/4\pi\) is a positive integer.
Let \(g:= f'/f\).
%Clearly \(g\) has at most simple poles on \(\CC\).
\begin{itemize}
\item[(1)] The developing map
\(f\) on \(\CC\) is holomorphic and non-zero 
at every point of
\(\Lambda\); i.e.\ 
\(f(\Lambda)\subset \mathbb P^1(\CC)\smallsetminus\{0,\infty\}\).

\item[(2)] The meromorphic function \(g\) on \(\CC\)
has a zero of order \(l\) at every point of \(\Lambda\),
no zeros and at most simple poles 
on \(\CC\smallsetminus \Lambda\).

\item[(3)] If \(f\) is of type II, then 
\(g\) descends to a meromorphic function on \(E=\CC/\Lambda\).

\item[(4)] If \(f\) is of type I, then
\(g\) descends to a meromorphic function on
the double cover \(E'=\CC/\Lambda'\), where
\(\Lambda'=\ZZ\cdot \omega_1+\ZZ\cdot 2\omega_2\).

\end{itemize}

\end{lemmass}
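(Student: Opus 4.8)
The plan is to prove the four assertions in the logical order (3), (4), (1), (2), because the periodicity of $g$ is exactly what is needed to run the argument for (1). First I would simply logarithmically differentiate the normalization relations. In the type~II case \eqref{case-type-II}, differentiating $f(z+\omega_i)=e^{2i\theta_i}f(z)$ gives $f'(z+\omega_i)=e^{2i\theta_i}f'(z)$, so the unimodular factor cancels in the quotient and $g(z+\omega_i)=g(z)$ for $i=1,2$; hence $g$ is $\Lambda$-periodic and descends to a meromorphic function on $E$, which is (3). In the type~I case \eqref{case-type-I}, the relation $f(z+\omega_1)=-f(z)$ gives $g(z+\omega_1)=g(z)$, while $f(z+\omega_2)=1/f(z)$ gives $f'(z+\omega_2)=-f'(z)/f(z)^2$ and therefore $g(z+\omega_2)=-g(z)$. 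Thus $g$ is invariant under $\omega_1$ and under $2\omega_2$, i.e.\ $\Lambda'$-periodic, and descends to $E'=\mathbb{C}/\Lambda'$, which is (4).

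The crux of the whole lemma is (1), which I would prove by contradiction exploiting that a non-constant elliptic function has equally many zeros and poles. Suppose $f(z_0)\in\{0,\infty\}$ for some $z_0\in\Lambda$. By Lemma~\ref{order}(3) the map $\tilde f$ has multiplicity $l+1$ at $z_0$, so $f$ has a zero or pole of order $l+1$ there, and consequently $g=f'/f$ acquires a simple pole (of residue $\pm(l+1)$) at the image of $z_0$ in $E$ (type~II) or in $E'$ (type~I); thus $g$ descends to a \emph{non-constant} elliptic function with at least one pole. I then locate its zeros: a zero of $g$ must be a point where $f'$ vanishes while $f$ is finite and non-zero. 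By Lemma~\ref{order}(4) there are no critical points of $\tilde f$ off $\Lambda$, so $f'$ is non-vanishing wherever $f$ is holomorphic outside $\Lambda$; and at the lattice points the transformation laws propagate $f(z_0)\in\{0,\infty\}$ to \emph{every} lattice point (for instance in type~I, $f(z_0)=0$ forces $f=\infty$ at the $\omega_2$-translate of $z_0$), so $g$ has poles, not zeros, there. Hence $g$ would be a non-constant elliptic function possessing a pole but no zero at all, which is impossible. Therefore $f$ is holomorphic and non-vanishing at every point of $\Lambda$, proving (1).

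With (1) established, (2) is a routine local bookkeeping of $g=f'/f$. At a lattice point $\omega$, part (1) places us in the first alternative of Lemma~\ref{order}(3): $f$ is holomorphic with $f(\omega)\neq0$ and $f'$ vanishing to order exactly $l$, so $g$ is holomorphic at $\omega$ with a zero of order $l$. Off $\Lambda$, Lemma~\ref{order}(4) says $\tilde f$ is \'etale to $\mathbb{P}^1$: where $f$ is finite and non-zero one has $f'\neq0$, so $g$ is holomorphic and non-zero; at a zero (resp.\ pole) of $f$, which is necessarily simple, $g$ has a simple pole of residue $+1$ (resp.\ $-1$). This gives both the order-$l$ zeros along $\Lambda$ and the ``no zeros, at most simple poles'' statement off $\Lambda$. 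I expect the genuine difficulty to lie entirely in (1); the other three parts follow formally from the normalization relations and Lemma~\ref{order}. The reason (1) is the obstacle is that the local singularity $u(z)=2l\log|z|+O(1)$ at $0$ is produced equally by all three possibilities $f(0)=0$, $f(0)=\infty$, and $f(0)\notin\{0,\infty\}$, so no purely local analysis can distinguish them; one is forced to invoke the global constraint that the descended $g$ is elliptic.
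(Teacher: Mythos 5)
Your proposal is correct and follows essentially the same route as the paper's own proof: (3) and (4) by logarithmic differentiation of the normalization laws, then (1) by contradiction via the fact that a zero or pole of $f$ at one lattice point propagates to all of $\Lambda$, so the descended $g$ would be an elliptic function with poles but no zeros, and finally (2) from Lemma \ref{order}\,(3)--(4). The paper's write-up is terser (it runs the contradiction on $E'$ uniformly in both types rather than splitting into $E$ and $E'$), but the logical content, including the identification of (1) as the only non-formal step, is identical to yours.
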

\proof
%The statement (1) follows from lemma \ref{order}\,(3),\,(4).
The statements (3) and (4) are immediate from the equations
(\ref{case-type-I}) and (\ref{case-type-II}) for
normalized developing maps.

Clearly \(g\) has at most simple poles on \(\CC\).
Lemma \ref{order}\,(4) implies 
that \(g\) has no zeros on \(\CC\setminus \Lambda\). 
For any point \(z_0\in \Lambda\), if \(f\) has either 
a zero or a pole at \(z_0\), then \(g\) will have a simple
pole at every point of \(\Lambda\), and the meromorphic function
\(\bar g\) on \(E'=\CC/\Lambda'\) defined by \(g\) will have 
no zero but at least one pole, a contradiction.
Therefore \(f\) has values in \(\CC^{\times}\) in a neighborhood
of \(z_0\); we have proved the statement (1).
%proved that 
%\(f\) is holomorphic at \(z_0\)
%and \(f(z_0)\neq 0\) for every \(z_0\in \Lambda\).
Lemma \ref{order}\,(3) then implies that
\(g\) has a zero of order \(l\) at every point of \(\Lambda\).
We have proved statement (2).
\qed

%It is clear that any zero or pole of $f$ contributes a simple pole
%of $g$ and the order $k \in \mathbb{Z}$ is simply the residue.
%But the zeros of $g$ must occur at a regular point $z_0$ of $f$
%with $f(z_0) \ne 0$, thus if $z_0$ is a zero of $g$ 
%then $z_0 \equiv 0 \pmod \Lambda$. Indeed, by Lemma \ref{order}, 
%if $z_0 \not\equiv 0$
%then $f'(z_0) \ne 0$ and $z_0$ is not a zero of $g$. 
%
%Since $g$ must have a zero $z_0$ ($g$ is a non-constant 
%elliptic function), we conclude that $z_0 \equiv 0 \pmod \Lambda$ 
%and $f(z)$ must be regular at $0$ with $f(0) \ne 0$. 
%It contributes a zero of $g$ of order $l$.

\subsection{Type I solutions}
In this subsection we will 
%examine more closely the logarithmic
%derivative \(g\) of the normalized developing map of
show that the existence of
solution of \textup{(\ref{3-1})} 
such that the image of the 
monodromy representation is \((\ZZ/2\ZZ)^2\)
implies that the parameter \(l=\rho/4\pi\) is
an \emph{odd} positive integer.
%with parameter \(\rho=4\pi l\) for a
%positive integer \(l\) such that the image of the 
%monodromy representation is \((\ZZ/2\ZZ)^2\)
%implies that \(l\) is an odd positive integer.

\subsubsection{Notation for type I}
\begin{itemize}
\item Let \(\omega_1'=\omega_1\), \(\omega_2'=2\omega_2\)
and let 
\(\Lambda':= \ZZ\cdot \omega_1'+ \ZZ\cdot \omega_2'\).

\item Let \(\wp(z)=\wp(z;\Lambda')\) %\(\zeta(z):=\zeta(z;\Lambda')\)
be the Weierstrass \(\wp\)-function for the lattice \(\Lambda'\subset \CC\).
%\(E':=\CC/\Lambda'\).

\item Let $\zeta(z)=\zeta(z;\Lambda')
= -\int^z \wp(u)du = z^{-1} + \cdots$ be the
Weierstrass \(\zeta\)-function 
and 
let $\sigma (z) = \sigma(z;\Lambda')= \exp \int^z \zeta(u)du
= z + \cdots$ be the 
Weierstrass \(\sigma\)-functions for 
\(\Lambda'\subset \CC\).
%It is known that $\sigma$ is an entire function,
%odd with a simple zero on lattice points of 
%\(\Lambda'\) and no zeros outside \(\Lambda'\).

\item Let \(g\) be the logarithmic
derivative of the normalized developing map \(f\) of
a type I solution \(u\) of \textup{(\ref{3-1})}.
Let \(\bar{g}\) be the function on \(E'\) defined by \(g\).

\end{itemize}
The standard references for elliptic functions are
\cite[Ch.\,20]{Whittaker}, \cite[Ch.\,7]{Ahlfors} and
\cite[Ch.\,18\,\S1]{Lang-Ell};
we have followed the notation in 
\cite[Ch.\,7]{Ahlfors}:\footnote{The notation in \cite{Lang-Ell} is the same as in
\cite{Ahlfors} except that \(\textup{Im}(\omega_1/\omega_2)>0\). 
The notation in \cite{Whittaker}
is: \(2\omega_1', 2\omega_2'\) form a \(\ZZ\)-basis of 
\(\Lambda'\) with \(\textup{Im}(\omega_2'/\omega_1')>0\), and 
\(\zeta(z+2\omega_i';\Lambda')= \zeta(z)+2\eta_i(\Lambda')\)
for \(i=1,2\).} 
\begin{itemize}
\item \(\omega_1', \omega_2'\) form a \(\ZZ\)-basis of the lattice
\(\Lambda'\) with \(\textup{Im}(\omega_2'/\omega_1')>0\).
Note that the latter condition means that 
\((\omega_1', \omega_2)\) is an oriented basis for the standard orientation of the complex plane.

\item \(\eta_i=\eta(\omega_i';\Lambda')\) for \(i=1, 2\), where
\(\omega\mapsto \eta(\omega;\Lambda')\) is the 
\(\ZZ\)-linear function from \(\Lambda'\) to \(\CC\) such
that 
\[\zeta(z+\omega;\Lambda')= \zeta(z)+\eta(\omega;\Lambda')
\qquad\forall\,z\in\CC,\ \forall\,\omega\in\Lambda'.\]

\item The classical Legendre relation 
\[
\eta_1\cdot \omega_2'- \eta_2\cdot \omega_1' =2\pi\sqrt{-1}
\]
means that 
\[\eta(\alpha)\beta - \eta(\beta)\alpha = 2\pi \sqrt{-1}\,\psi(\alpha,\beta)
\quad \forall\, \alpha, \beta\in \Lambda',
\]
where \(\psi:\Lambda'\times\Lambda'\to \ZZ\) is the alternating
pairing on \(\Lambda'\) which sends an oriented \(\ZZ\)-basis
\((\omega_1',\omega_2')\) of \(\Lambda'\) to \(1\).
\end{itemize}

\subsubsection{}
Recall that the type I condition implies that
\begin{equation}{\label{g:I}}
\begin{split}
g(z + \omega_1) &= g(z)\quad \forall z,\\
g(z + \omega_2) &= -g(z)\quad \forall z.
\end{split}
\end{equation}
According to Lemma \ref{lemma_zero-pole_g}, 
the meromorphic function \(\bar{g}\) on \(E'\)
has zeros of order \(l\) at the two points of \(\Lambda/\Lambda'\),
no zeros and at most simple poles elsewhere on \(E'\).

From \ref{lemma_zero-pole_g}\,(1), the principal divisor
\((\bar g)\) of the meromorphic function \(\bar g\) on \(E'\)
has the form
\[
(\bar g)= \ell\cdot 0_{E'} + \ell \cdot [\omega_2]_{E'} 
- \sum_{P\in \bar{g}^{-1}(\infty)} P,
\]
where \([\omega_2]_{E'}= \omega_2\,\textup{mod}\,\Lambda'\)
is the image of \(\omega_2\) in \(E'\),
and \(\sum_{P\in \bar{g}^{-1}(\infty)}P\) is 
the polar divisor \((\bar g)_{\infty}\) of 
\(\bar{g}\), an effective divisor of degree \(2 l\) which is 
a sum of \(2\ell\) distinct points of \(E'\).
Clearly the sum of the polar divisor under the group law of \(E'\)
is equal to \(\ell\) times the \(2\)-torsion point
\([\omega_2]_{E'}\).
We know from the condition \(g(z+\omega_2)=-g(z)\) that
the polar divisor \((\bar g)\) of \(\bar{g}\) is stable under the
translation by the \(2\)-torsion point \([\omega_2]_{E'}\).
Let \(P_1,\ldots, P_{l}\) be a set of representatives of the quotient of
\(\bar{g}^{-1}(\infty)\) under the translation action by 
\([\omega_2]_{E'}\).
The sum \(\mu_{E'}(P_1,\ldots, P_l)=P_1+_{E'}\cdots+_{E'} P_l\) 
of this set of representatives under the group law of \(E'\)
is a \(2\)-torsion point because the sum of the polar divisor 
\((\bar g)_{\infty}\) is \([2]_{E'}([\omega_2]_{E'})\).
Moreover it is clear that the image of
\(\mu_{E'}(P_1,\ldots,P_l)\) in the quotient group
\(E'[2]/\{0_{E'},[\omega_2]_{E'}\}\) 
is independent of the choice of representatives \(P_1,\ldots, P_l\).
The following lemma says that this image is equal to the
\emph{non-trivial} element of \(E'[2]/\{0_{E'},[\omega_2]_{E'}\}\).

%Hence $g$ is an elliptic function on $E' = \mathbb{C}/\Lambda'$,
%$\Lambda' := \mathbb{Z}\omega_1' + \mathbb{Z}\omega_2'$ with
%$\omega_1' = \omega_1$ and $\omega_2' = 2\omega_2$. By lemma
%\ref{order} and (\ref{g:I}), 

\begin{lemmass}\label{lemmass:nontriv}
Notation as above.
The sum \(P_1+_{E'}\cdots+_{E'} P_l\) in \(E'\) of any set
of representatives of the quotient 
\(\,\bar{g}^{-1}(\infty)/\{0_{E'},[\omega_2]_{E'}\}\,\) is congruent to
the non-zero \(2\)-torsion point 
\(\frac{\omega_1}{2}\,\textup{mod}\,\Lambda'\)
modulo the subgroup 
\(\{0_{E'},[\omega_2]_{E'}\}\) of the group \(E'[2]\)
of all \(2\)-torsion points of \(E'\).
\end{lemmass}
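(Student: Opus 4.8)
The plan is to realize the elliptic function $\bar g$ on $E'=\CC/\Lambda'$ explicitly through the Weierstrass $\sigma$-function and then read off the lemma from its behaviour under the half-period translation $z\mapsto z+\omega_2$, which by (\ref{g:I}) must satisfy $\bar g(z+\omega_2)=-\bar g(z)$. From the divisor computation preceding the statement, the polar divisor of $\bar g$ is stable under translation by the $2$-torsion point $[\omega_2]_{E'}$ and is a sum of $2\ell$ distinct points; choosing representatives $p_1,\dots,p_\ell$ of $\bar g^{-1}(\infty)/\{0_{E'},[\omega_2]_{E'}\}$ and setting $q_i:=p_i+\omega_2$, the poles are exactly the $[p_i]$ and the $[q_i]$, so that $P_1+\cdots+P_\ell=\sum_i[p_i]$ is the quantity to be located. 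Using the divisor $(\bar g)=\ell\,[0]+\ell\,[\omega_2]-\sum_i([p_i]+[q_i])$ I would write
\[
\bar g(z)=A\,e^{\lambda z}\,\frac{\sigma(z)^{\ell}\,\sigma(z-\omega_2)^{\ell}}{\prod_{i=1}^{\ell}\sigma(z-p_i)\,\sigma(z-q_i)}
\]
for suitable constants $A\in\CC^\times$ and $\lambda\in\CC$.

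First I would determine $\lambda$. From $\sigma(z+\omega_j')=-e^{\eta_j(z+\omega_j'/2)}\sigma(z)$ a direct computation shows that the $\sigma$-quotient above is multiplied by $e^{2\eta(\omega)\sum_i p_i}$ under $z\mapsto z+\omega$ for every $\omega\in\Lambda'$; hence the ellipticity of $\bar g$ forces
\[
\lambda\omega_1'+2\eta_1\sum_i p_i=2\pi\sqrt{-1}\,n_1,\qquad \lambda\omega_2'+2\eta_2\sum_i p_i=2\pi\sqrt{-1}\,n_2
\]
for integers $n_1,n_2$. Eliminating $\lambda$ and using the Legendre relation $\eta_1\omega_2'-\eta_2\omega_1'=2\pi\sqrt{-1}$ yields $2\sum_i p_i=-n_2\,\omega_1'+n_1\,\omega_2'$; this re-proves $\sum_i p_i\in E'[2]$ and, crucially, exhibits $-n_2$ as the $\omega_1'$-coefficient of $2\sum_i p_i$.

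The decisive computation is the half-period shift. Because $\omega_2=\tfrac12\omega_2'$, one has $\sigma(w+\omega_2)/\sigma(w-\omega_2)=-e^{\eta_2 w}$; substituting $z\mapsto z+\omega_2$ and using $z+\omega_2-q_i=z-p_i$ together with $z+\omega_2-p_i=(z-p_i)+\omega_2$, all the $\sigma$-factors collapse and I expect to obtain
\[
\frac{\bar g(z+\omega_2)}{\bar g(z)}=e^{\lambda\omega_2}\,e^{\eta_2\sum_i p_i}=(-1)^{n_2},
\]
the last equality because $\lambda\omega_2=\tfrac12\lambda\omega_2'=\pi\sqrt{-1}\,n_2-\eta_2\sum_i p_i$. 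Imposing $\bar g(z+\omega_2)=-\bar g(z)$ then forces $n_2$ to be odd. Feeding this into $2\sum_i p_i=-n_2\omega_1'+n_1\omega_2'$, the $\omega_1'$-coefficient of $2\sum_i p_i$ is odd, so $\sum_i p_i\equiv\tfrac12\omega_1'=\tfrac12\omega_1$ modulo $\Lambda'$ and modulo $\omega_2=\tfrac12\omega_2'$. Thus $P_1+\cdots+P_\ell$ lies in the nontrivial coset of $E'[2]/\{0_{E'},[\omega_2]_{E'}\}$, which is the assertion.

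I expect the only real obstacle to be the honest bookkeeping of the linear exponential factor $e^{\lambda z}$: the content of the lemma is a single parity bit $n_2\bmod 2$, and it becomes visible only after $e^{\lambda\omega_2}$ is pinned down to $(-1)^{n_2}e^{-\eta_2\sum_i p_i}$ via the Legendre relation. The $\sigma$-cancellations and the fact that the representatives $P_i$ may be taken to be the $[p_i]$ are routine; the care lies entirely in the half-period sign $-e^{\eta_2 w}$ and in tracking the integer $n_2$.
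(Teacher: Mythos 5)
Your proof is correct, and it verifies: the representation $\bar g = A\,e^{\lambda z}\,h_0(z)$ with $h_0$ the pure $\sigma$-quotient is legitimate (the ratio $\bar g/h_0$ is entire and nowhere zero, and its quasi-periodicity forces it to be $\exp$ of an affine function), your automorphy factor $e^{2\eta(\omega)\sum_i p_i}$ for $h_0$ is what the transformation law (\ref{sigma-law}) gives, the elimination of $\lambda$ via Legendre does yield $2\sum_i p_i=-n_2\omega_1'+n_1\omega_2'$, and the half-period computation does collapse to $\bar g(z+\omega_2)/\bar g(z)=(-1)^{n_2}$, so that (\ref{g:I}) forces $n_2$ odd and hence $\sum_i p_i\equiv\tfrac12\omega_1$ modulo $\Lambda'$ and $\omega_2$.

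Your route differs from the paper's in how the bookkeeping is organized. The paper proves the lemma as a corollary of the more precise Lemma \ref{half-cong}: there one \emph{normalizes} the representatives so that $\sum_i p_i+\sum_i q_i=l\omega_2$ holds exactly, which by \ref{subsubsec:properties_sigma}\,(ii) makes the $\sigma$-quotient honestly $\Lambda'$-periodic with no exponential factor; the congruence class of $\sum_i p_i$ is then pinned down to one of four cosets $m\tfrac{\omega_1}{2}+n\omega_2$, $m,n\in\{0,1\}$, and the sign of the automorphy factor under $z\mapsto z+\omega_2$ is computed case by case (it is $+1$ iff $m=0$, $-1$ iff $m=1$), never invoking the Legendre relation explicitly. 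You instead keep arbitrary representatives with $q_i=p_i+\omega_2$ exactly, carry the factor $e^{\lambda z}$, and use Legendre to convert periodicity into the single identity $2\sum_i p_i=-n_2\omega_1'+n_1\omega_2'$ together with $e^{\lambda\omega_2}=(-1)^{n_2}e^{-\eta_2\sum_i p_i}$; the half-period sign is then literally the parity bit $(-1)^{n_2}$, with no case analysis, and the fact that $\sum_i p_i$ is $2$-torsion falls out for free rather than being assumed from the preceding divisor argument. What your version gives up is exactly what the paper's extra precision is for: parts (b) and (c) of Lemma \ref{half-cong} produce explicit normalized representatives $p_i',q_i'$ (with prescribed exact sums and $q_i'=p_i'+\omega_2$ up to one corrected index), and those normal forms are reused later, e.g.\ in Proposition \ref{g-odd-u-even} and in the proof of Theorem \ref{even-II}. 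So your argument is a leaner proof of the lemma itself, while the paper's is structured to output data needed downstream.
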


\subsubsection{}\label{subsubsec:properties_sigma}
Lemma \ref{lemmass:nontriv} is a consequence of a more precise
statment Lemma \ref{half-cong}; the latter uses
Weierstrass \(\sigma\)-function. 
The \(\sigma\)-function is essentially the 
odd theta function \(\theta_{11}(z)\) with half-integer characteristics, 
up to rescaling of the \(z\)-variable, a harmless factor
\(-\pi\cdot e^{-\eta_2 z^2/2}\) and the product 
\[\theta_{00}(0)\cdot \theta_{01}(0)\cdot \theta_{10}(0)
\]
of three even theta constants.
We recall some of the basic properties of the \(\sigma\)-function
below.

\begin{itemize}
\item[(i)] The function
\[\sigma(z)=\sigma(z;\Lambda')
=z\cdot \prod_{\omega\in \Lambda'\smallsetminus\{0\}}
\left[\left(1-\frac{z}{\omega}\right)\cdot \exp\left(
\frac{z}{\omega}+\frac{z^2}{2\omega^2}\right)
\right]
\]
is an entire odd function on
\(\CC\), with simples zeros on points of \(\Lambda'\)
and non-zero elsewhere.
In addition \(\zeta(z)\) satisfies the following transformation law
for translation by elements of \(\Lambda'\).
\begin{equation} \label{sigma-law}
\sigma(z+\alpha)= \epsilon(\alpha)\cdot e^{\eta(\alpha)(z+\frac{\alpha}{2})}
\cdot \sigma(z)
\qquad \forall\,z\in\CC,\ \forall\,\alpha\in \Lambda',
\end{equation}
where \(\epsilon: \Lambda'\to \{\pm1\}\) is the quadratic character 
on \(\Lambda'\) given by 
\[\epsilon(\alpha)=\left\{
\begin{array}{rl}
1&\textup{if}\ \ \alpha\in 2\Lambda'\\
-1& \textup{if}\ \ \alpha\not\in 2\Lambda'
\end{array}
\right.\]

%\begin{equation} \label{sigma-law-old}
%\sigma(z \pm \omega_i') = -e^{\pm \eta_i\cdot (z \pm
%\frac{1}{2}\omega_i')} \sigma(z)\quad \forall\,z\in\CC,
%\ \ i=1\ \textup{or}\ 2,
%\end{equation}
%where $\eta_1, \eta_2$ are the quasi-period of $\zeta(z)$ for
%the lattice \(\Lambda'\), i.e.~$\zeta(z +
%\omega_i') = \zeta(z) + \eta_i$ for \(i=1,\ 2\).

\item[(ii)] %(consequence of the above transformation law) 
Suppose that \(m\) is a positive integer 
and \(a_1,\ldots, a_m; b_1,\ldots, b_m\) are elements of \(\CC\). 
The meromorphic function
\[
h(z; a_1,\ldots,a_m;b_1,\ldots,b_m) :=
\frac{\prod_{i = 1}^m\sigma(z - a_i)}{\prod_{i = 1}^m \sigma(z - b_i)}
\]
on \(\CC\) is \(\Lambda'\)-periodic if and only if 
\(\sum_{i=1}^m a_i = \sum_{i=1}^m b_i\).
Moreover if \(\sum_{i=1}^m a_i = \sum_{i=1}^m b_i\), then
the principle divisor of the meromorphic function on
\(E'=\CC/\Lambda'\) defined by 
\({\prod_{i = 1}^m\sigma(z - a_i)}/{\prod_{i = 1}^m \sigma(z - b_i)}\)
is \[\sum_{i=1}^m [a_i]_{E'} - \sum_{i=1}^m [b_i]_{E'},\]
where \([a_i]_{E'}\) (respectively \([b_i]_{E'}\))
is the image of \(a_i\) (respectively \(b_i\)) in 
\(E'\) for \(i=1,\ldots, m\).

\item[(iii)] 
Suppose that \(\,(a_1,\ldots,a_m;b_1,\ldots,b_m)\,\) and
\(\,(a_1',\ldots, a_m'; b_1',\ldots, b_m')\,\) are two 
\(2m\)-tuples of complex numbers such that 
\(\sum_{i=1}^m a_i =\sum_{i=1}^m b_k\),
\(\sum_{i=1}^m a_i' = \sum_{i=1}^m b_i'\),
\(a_i'\equiv a_i\ (\textup{mod}\,\Lambda')\) and
\(b_i'\equiv b_i\ (\textup{mod}\,\Lambda')\) for 
\(i=1,\ldots, m\).
Then 
\[
\frac{\prod_{i = 1}^m\sigma(z - a_i)}{\prod_{i = 1}^m \sigma(z - b_i)}
=\frac{\prod_{i = 1}^m\sigma(z - a_i')}{\prod_{i = 1}^m \sigma(z - b_i')}
\]
%By the transformation law (\ref{sigma-law}), it is well known and
%easy to see that a function of the form $\prod_{i = 1}^m
%\sigma(z - a_i)/\prod_{i = 1}^m \sigma(z - b_i)$ is elliptic if
%and only if $\sum a_i = \sum b_i$. Moreover, $a_i$'s and $b_i$'s
%may be adjusted by elements in $\Lambda'$ to define the same
%function as long as the constraint $\sum a_i = \sum b_i$ is
%preserved.

\item[(iv)] Let \(h\) be a non-constant meromorphic function on 
\(E'\).  Let 
\[a_1,\ldots, a_m, b_1,\ldots,b_m\] be elements of
\(\CC\) such that 
the principle divisor of \(h\) is equal to 
\(\sum_{i=1}^m [a_i]_{E'} - \sum_{i=1}^m [b_i]_{E'}\).
Then there exists a constant \(A\in \CC^{\times}\)
such that 
\[
h([z]_{E'})= A\cdot
\frac{\prod_{i = 1}^m\sigma(z - a_i)}{\prod_{i = 1}^m \sigma(z - b_i)}
\qquad \forall\,z\in\CC.
\]
\end{itemize}
Note that (ii) and (iii) are consequences of the transformation law
(\ref{sigma-law}) in (i), and (iv) follows from (ii).
%\bigbreak

%Let $p_i$ be the  $i = 1, \ldots, l$
%$g$ has zeros only at $0$ and
%$\omega_2$ $\pmod {\Lambda'}$ of order $l$ and has simple poles at
%$p_i$, $q_i$, $i = 1, \ldots, l$, where $p_i$ (resp.~$q_i$) is a
%simple zero (resp.~pole) of $f$. Hence we have $\sum p_i + \sum
%q_i \equiv l\omega_2 \pmod{\Lambda'}$. In fact $p_i$, $q_i$ can be
%chosen so that

%Let $\wp$ be the Weierstrass elliptic functions on $E'$, 
%$\zeta(z)
%= -\int^z \wp = z^{-1} + \cdots$ and $\sigma (z) = \exp \int \zeta
%= z + \cdots$. It is known that $\sigma$ is an entire function,
%odd with a simple zero on lattice points. 
%Moreover,
%\begin{equation} \label{sigma-law}
%\sigma(z \pm \omega_i') = -e^{\pm \eta_i (z \pm
%\frac{1}{2}\omega_i')} \sigma(z),
%\end{equation}
%where $\eta_i$ is the quasi-period of $\zeta$, i.e.~$\zeta(z +
% \omega_i') = \zeta(z) + \eta_i$.

%Thus
%\begin{equation} \label{unshift}
%g(z) = A \frac{\sigma^l(z) \sigma^l(z - \omega_2)}{\prod_{i =
%1}^{l} \sigma(z - p_i) \prod_{i = 1}^l \sigma(z - q_i)}
%\end{equation}
%for some constant $A$.

%Since $g(z + \omega_2) = -g(z)$, after some reordering we may
%assume that
%\begin{equation*}
%q_i \equiv p_i + \omega_2 \pmod {\Lambda'}.
%\end{equation*}
%Combining with (\ref{sum-constraint}) we find
%\begin{equation} \label{sum-4}
%p_1 + \cdots + p_l \equiv \frac{m}{2}\omega_1 + n\omega_2 \pmod
%{\Lambda'}
%\end{equation}
%for $m, n \in \{0, 1\}$. In fact more is true:

\begin{lemmass} \cite{LW2} \label{half-cong}
Let \(l\) be a positive integer.
Let \(p_1, \ldots, p_l; q_1,\ldots, q_l\) be
elements in \(\,\CC\smallsetminus \Lambda\) satisfying
\begin{equation}  \label{sum-constraint}
\sum p_i + \sum q_i = l \omega_2
\end{equation}
and \(p_i+ \omega_2\equiv q_i\pmod{\Lambda'}\) for
\(i=1,\ldots,l\).
Let 
\begin{equation} \label{unshift}
h(z) 
= \frac{\sigma^l(z) \sigma^l(z - \omega_2)}{\prod_{i =
1}^{l} \sigma(z - p_i) \prod_{i = 1}^l \sigma(z - q_i)}
\end{equation}
be the meromorphic function on \(\CC\) attached to
the \(4l\)-tuple 
\[(0,\ldots, 0, \omega_2,\ldots,\omega_2; p_1,\ldots,p_l,
q_1,\ldots,q_l)\]
as in \textup{\ref{subsubsec:properties_sigma}\,(ii)}.
Note that \(h(z)\) descends to
a meromorphic function on \(E'\) whose principle 
divisor is 
\[l\cdot 0_{E'}+l\cdot [\omega_2]_E'-\sum_{i=1}^l [p_i]_{E'}
-\sum_{i=1}^l [q_i]_{E'}
\]
according to \textup{\ref{subsubsec:properties_sigma}\,(ii)}.
%Suppose that 
%\[
%h(z+\omega_2)= - h(z) \quad\forall\,z\in\CC.
%\]
\begin{itemize}
\item[(a)] The function \(h(z)\) satisfies
\(h(z+\omega_2)= - h(z)\) for all \(z\in \CC\)
%$g(z + \omega_2) = -g(z)$ 
if and only if 
%$p_1 + \cdots + p_l
%\equiv \frac{1}{2}\omega_1\pmod{\Lambda'}$
% or $\frac{1}{2}\omega_1 + \omega_2
%\pmod{\Lambda'}$, or equivalently 
\[\sum_{i=1}^l p_i \equiv
\tfrac{1}{2}\omega_1 \pmod{\Lambda}.\]
Note that the above displayed formula means that
\(\sum_{i=1}^l p_i\) is congruent to either
\(\frac{1}{2}\omega_1\) or \(\frac{1}{2}\omega_1 + \omega_2\) 
modulo \(\Lambda'\).

\item[(b)] Suppose that
\(\,p_1+\cdots+p_l\equiv \frac{1}{2}\omega_1\pmod{\Lambda'}\).
There exist elements
\(p_1',\ldots,p_l';\,q_1',\ldots,q_l'\) in \(\CC\)
satisfying the following conditions.
\begin{itemize}

\item[(b1)] \(p_i'\equiv p_i \pmod{\Lambda'}\) and 
\(q_i'\equiv q_i\pmod{\Lambda'}\) for \(i=1,\ldots, l\),

\item[(b2)] \(\sum_{i=1}^l p_i' =\frac{1}{2}\omega_1\),

\item[(b3)] \(q_i' = p_i' + \omega_2\) for \(i=1,\ldots, l-1\)
and \(q_l' = p_l' + \omega_2 - \omega_1\),

\item[(b4)] \(\displaystyle{h(z) 
= \frac{\sigma^l(z) \sigma^l(z - \omega_2)}{\prod_{i =
1}^{l} \sigma(z - p_i') \prod_{i = 1}^l \sigma(z - q_i')}
}\).
\end{itemize}

\item[(c)] Suppose that 
\(\,p_1+\cdots+p_l\equiv 
\frac{1}{2}\omega_1 + \omega_2\pmod{\Lambda'}\).
There exist elements
\(p_1',\ldots,p_l';\,q_1',\ldots,q_l'\) in \(\CC\)
satisfying the following conditions.
\begin{itemize}

\item[(c1)] \(p_i'\equiv p_i \pmod{\Lambda'}\) and 
\(q_i'\equiv q_i\pmod{\Lambda'}\) for \(i=1,\ldots, l\),

\item[(c2)] \(\sum_{i=1}^l p_i' 
=\frac{1}{2}\omega_1+\omega_2\)

\item[(c3)] \(q_i' = p_i' + \omega_2\) for \(i=1,\ldots, l-1\),
and \(q_l' = p_l' - \omega_2 - \omega_1\),

\item[(c4)] \(\displaystyle{h(z) 
= \frac{\sigma^l(z) \sigma^l(z - \omega_2)}{\prod_{i =
1}^{l} \sigma(z - p_i') \prod_{i = 1}^l \sigma(z - q_i')}
}\).

\end{itemize}
%can be chosen so that 
%$\sum p_i = \frac{1}{2}\omega_1$, $q_i = p_i + \omega_2$ 
%for $i = 1, \ldots, l - 1$ and $q_l = p_l + \omega_2 - \omega_1$; 
%while in the second case 
%$\sum p_i = \frac{1}{2}\omega_1 + \omega_2$, $q_i = p_i + \omega_2$ 
%for $i = 1, \ldots, l - 1$ and $q_l = p_l - \omega_2 - \omega_1$.
\end{itemize}
\end{lemmass}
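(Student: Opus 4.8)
The plan is to compute the transformation factor of $h$ under the half-period translation $z\mapsto z+\omega_2$ directly from the transformation law \eqref{sigma-law}, read off its sign, and then express that sign in terms of $\sum_i p_i$. Writing $\nu_i:=q_i-p_i-\omega_2$, which lies in $\Lambda'$ by hypothesis, I would first observe that in the quotient $\sigma^l(z+\omega_2)/\sigma^l(z-\omega_2)$ the two arguments differ by $2\omega_2=\omega_2'\in\Lambda'$, so \eqref{sigma-law} turns this factor into $\bigl(\epsilon(\omega_2')\bigr)^{l}e^{l\eta_2 z}$. The crucial point is that $2\omega_2=\omega_2'$ is a \emph{primitive} vector of $\Lambda'$, hence $\omega_2'\notin 2\Lambda'$ and $\epsilon(\omega_2')=-1$, contributing a factor $(-1)^l$. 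In the remaining product over $i$, the relations $z+\omega_2-p_i\equiv z-q_i$ and $z+\omega_2-q_i\equiv z-p_i\pmod{\Lambda'}$ let me apply \eqref{sigma-law} to each denominator factor of $h(z+\omega_2)$, after which all the $\sigma$'s cancel against the corresponding factors of $h(z)$ and only the $\epsilon$- and exponential prefactors survive.

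Next I would show that the surviving factor is a constant equal to $\pm1$ and pin down its sign. The exponent collected from all the exponential terms is linear in $z$ with coefficient $l\eta_2-\sum_i\eta(\beta_i+\gamma_i)=l\eta_2-l\,\eta(2\omega_2)=0$, using that $\eta$ is $\ZZ$-linear and $\eta(2\omega_2)=\eta_2$; so the answer is indeed a constant, and its constant part collapses to $-\tfrac{\eta_2}{2}\bigl(l\omega_2-\sum_i p_i-\sum_i q_i\bigr)$, which vanishes by the constraint \eqref{sum-constraint}. Thus the only surviving contribution is the product of the quadratic-character factors. Because $\epsilon$ is \emph{not} a homomorphism, a short and careful computation with $\beta_i=2\omega_2+\nu_i$ and $\gamma_i=-\nu_i$ gives $\epsilon(\beta_i)\epsilon(\gamma_i)=(-1)^{a_i+1}$, where $\nu_i=a_i\omega_1'+b_i\omega_2'$, so that together with the factor $(-1)^l$ one obtains
\[
\frac{h(z+\omega_2)}{h(z)}=(-1)^{\,l}\,(-1)^{\,l+\sum_i a_i}=(-1)^{\sum_i a_i}.
\]
I regard this $\epsilon$-bookkeeping, and in particular getting $\epsilon(\omega_2')=-1$ and the non-multiplicativity correction right, as the main obstacle, since every sign in the final answer passes through it.

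To finish (a) I would translate the parity of $\sum_i a_i$ into a statement about $\sum_i p_i$. Summing the definition of $\nu_i$ and using \eqref{sum-constraint} gives $\sum_i\nu_i=-2\sum_i p_i$, i.e.\ $\sum_i p_i=-\tfrac12(\sum_i a_i)\,\omega_1-(\sum_i b_i)\,\omega_2$ exactly; reducing modulo $\Lambda$ shows $\sum_i p_i\equiv\tfrac12\omega_1\pmod{\Lambda}$ if and only if $\sum_i a_i$ is odd, which by the displayed formula is exactly the condition $h(z+\omega_2)=-h(z)$. Finally, for (b) and (c) I would recompute nothing: under the stated congruence I set $p_i'=p_i$ and $q_i'=p_i'+\omega_2$ for $i<l$, then force $\sum_i p_i'=\tfrac12\omega_1$ (resp.\ $\tfrac12\omega_1+\omega_2$) by taking $p_l'=\tfrac12\omega_1-\sum_{i<l}p_i'$ (resp.\ $p_l'=\tfrac12\omega_1+\omega_2-\sum_{i<l}p_i'$) together with $q_l'=p_l'+\omega_2-\omega_1$ (resp.\ $q_l'=p_l'-\omega_2-\omega_1$). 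A direct check shows each $p_i',q_i'$ is congruent to $p_i,q_i$ modulo $\Lambda'$ and that $\sum_i p_i'+\sum_i q_i'=l\omega_2$, so property (iii) of \S\ref{subsubsec:properties_sigma} yields the identity \textup{(b4)} (resp.\ \textup{(c4)}) for $h$.
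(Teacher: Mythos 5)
Your proof is correct, and its skeleton---compute the factor of automorphy of \(h\) under \(z\mapsto z+\omega_2\) from the transformation law \eqref{sigma-law}, then deduce (b) and (c) from \ref{subsubsec:properties_sigma}\,(iii)---is the same as the paper's; the difference lies in how the sign is extracted. The paper first replaces \(p_i,q_i\) by normalized representatives (essentially your \(p_i',q_i'\), constructed \emph{before} rather than after the sign computation), which reduces \(\sum_i p_i\) modulo \(\Lambda'\) to one of four cases \((m,n)\in\{0,1\}^2\), and then evaluates the automorphy factor case by case; with normalized representatives all the \(\epsilon\)-factors collapse to explicit signs, so each computation is short, at the cost of a small case analysis. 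You instead keep arbitrary representatives, encode the discrepancies in the lattice vectors \(\nu_i=q_i-p_i-\omega_2=a_i\omega_1'+b_i\omega_2'\), and carry the quadratic character through, arriving at the uniform formula \(h(z+\omega_2)/h(z)=(-1)^{\sum_i a_i}\); combined with \(\sum_i\nu_i=-2\sum_i p_i\) (a consequence of \eqref{sum-constraint}) this makes the ``if and only if'' in (a) immediate, with no cases. Your \(\epsilon\)-bookkeeping checks out: since \(\epsilon(a\omega_1'+b\omega_2')=(-1)^{a+b+ab}\), one indeed gets \(\epsilon(\omega_2')=-1\) and \(\epsilon(\beta_i)\epsilon(\gamma_i)=(-1)^{a_i+1}\) for \(\beta_i=\nu_i+\omega_2'\), \(\gamma_i=-\nu_i\), and the exponential factors cancel exactly as you say (\(\ZZ\)-linearity of \(\eta\) kills the coefficient of \(z\), and \eqref{sum-constraint} kills the constant term). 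Your constructions for (b) and (c) coincide with the paper's and are justified the same way, so nothing is missing.
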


\begin{proof}
Clearly (b1) and (b3) implies that 
\[\sum_{i=1}^m p_i'+ \sum_{i=1}^m q_i' = l\cdot \omega_2
= \sum_{i=1}^m p_i'+ \sum_{i=1}^m q_i',\]
therefore (b4) follows from (b1)--(b3).
by \ref{subsubsec:properties_sigma}\,(iii).
Similarly (c1)--(c3) implies (c4).

Because \(p_i+\omega_2\equiv q_i \pmod{\Lambda'}\) for each \(i\),
the condition (\ref{sum-constraint}) implies that
\(\sum_{i=1}^l p_i \equiv m\cdot \frac{\omega_1}{2} + n \cdot
\omega_2\) for integers \(m,n\in \{0,1\}\).
By \ref{subsubsec:properties_sigma}\,(iii).
Let 
\begin{itemize}
\item \(p_1'=p_1,\ldots, p_{l-1}'=p_{l-1}, p_{l}'=p_{l}\),
\item \(q_1'=p_1+\omega_2, \ldots, q_{l-1}'=p_{l-1}+\omega_2\) and
\item \(q_l=p_l-m\omega_1+(1-2n)\omega_2\).
\end{itemize}
By \ref{subsubsec:properties_sigma}\,(iii) the equality
\(\displaystyle{h(z) 
= \frac{\sigma^l(z) \sigma^l(z - \omega_2)}{\prod_{i =
1}^{l} \sigma(z - p_i') \prod_{i = 1}^l \sigma(z - q_i')}
}\) holds. All that remains is use the transformation law
(\ref{sigma-law}) to see whether 
\(h(z+\omega_2)=-h(z)\).
\medbreak

There are only four possibilities for the pair $(m, n)$, %in (\ref{sum-4}) 
%which needs
%to be considered. 
namely
\begin{equation*}
\mbox{$(m, n) =$\quad (i) (0, 0),\quad (ii) (1, 0),\quad 
(iii) (0,1), \quad (iv) (1, 1)}.
\end{equation*}
%In all the four cases, $p_i$'s and $q_i$'s can be modified by
%elements in $\Lambda'$ so that $p_1 + \cdots + p_l =
%\frac{m}{2}\omega_1 + n\omega_2$. During the proof we may assume
%that $q_i = p_i + \omega_2$ for $i = 1, \ldots, l - 1$.
One verifies by direction calculations 
that \(h(z+\omega_2)=h(z)\) for all \(z\) if \(m=0\),
while \(h(z+\omega_2)=-h(z)\) for all \(z\) if \(m=1\).
%For (i), we may further set $q_l = p_l + \omega_2$. Then
%(\ref{sum-constraint}) holds. Clearly,
%\begin{equation} \label{shift}
%g(z + \omega_2) = A \frac{\sigma^l(z + \omega_2)
%\sigma^l(z)}{\prod_{i = 1}^{l} \sigma(z + \omega_2 - p_i) \prod_{i
%= 1}^l \sigma(z + \omega_2 -q_i)}.
%\end{equation}
%By transforming $g(z + \omega_2)$ into $g(z)$ using
%(\ref{sigma-law}), we get an automorphic factor
For instance when \((m,n)=(0,0)\), \(h(z+\omega_2)\)
and \(h(z)\) differ by 
the factor of automorphy
\begin{equation*}
\frac{(-1)^l \exp ({l \eta_2 z})}{(-1)^l \exp \left[{\eta_2 \sum_{i = 1}^l
(z - p_i)}\right]} = 1,
\end{equation*}
%That is $g(z + \omega_2) = g(z)$, which is not the case we want.
Meaning that \(h(z+\omega_2)=h(z)\).
When \((m,n)=(1,0)\),
%For (ii), we may set
%\begin{equation*}
%q_l = p_l + \omega_2 - \omega_1
%\end{equation*}
%to match the sum constraint (\ref{sum-constraint}). Again by
%transforming $g(z + \omega_2)$ as in (\ref{shift}) into $g(z)$ in
%(\ref{unshift}), 
%we get the automorphic factor
\(h(z+\omega_2)\) and \(h(z)\) differ by the factor
\begin{equation*}
\frac{(-1)^{l - (l + 1)} \exp (l \eta_2 z)}{\exp \left[\eta_2 \sum_{i =
1}^{l - 1} (z - p_i) + (\eta_2 - \eta_1)(z - p_l -
\frac{1}{2}\omega_1) + \eta_1(z - p_l - \frac{1}{2}\omega_1)\right]}
\end{equation*}
which is $-1$ since $\sum p_i = \frac{1}{2}\omega_1$. 
%Hence $g(z +\omega_2) = -g(z)$ as desired.
The other two cases are checked similarly. 
We have proved lemmas \ref{half-cong} and \ref{lemmass:nontriv}.
\end{proof}

%For (iii), we may set
%\begin{equation*}
%q_l = p_l - \omega_2.
%\end{equation*}
%By changing the role of $p_l$ and $q_l$ this is reduced to case
%(i).
%
%Similarly for (iv) we may set
%\begin{equation*}
%q_l = p_l - \omega_2 - \omega_1.
%\end{equation*}
%By changing the role of $p_l$ and $q_l$ this is reduced to case
%(ii).
%\end{proof}

\subsubsection{}
In order to construct type I solutions from the elliptic function
$g$, we need to find all the other constraints imposed on its poles.
\medbreak

Let \(p_1,\ldots,p_l\) be points of \(\CC\) such that
%\[
%P_i:=p_i\ \textup{mod}\,\Lambda',\ i=1,\ldots,l
%\]
\(\bigcup_{i=1}^{l} p_i+\Lambda'\) are the simple zeroes of the
developing map \(f\) and
\(\bigcup_{i=1}^{l} p_i+\omega_2+\Lambda'\) 
are the simple poles of \(f\).
Let \(P_i:= p_i\ \textup{mod}\,\Lambda'\) and let 
\(Q_i:=p_i+\omega_2\ \textup{mod}\,\Lambda'\) for \(i=1,\ldots, l\).
% \ldots, P_l:=p_l\ \textup{mod}\,\Lambda',
%Q_i:=p_i+\omega_2\ \textup{mod}\,\Lambda',\ i=1,\ldots,l
%Q_1:=p_1+\omega_2\ \textup{mod}\,\Lambda'\,
%\]
%are the \(2l\) poles of \(\bar{g}\).
%points $p_i$'s. 
We know that 
\[P_1,\ldots, P_l, Q_1,\ldots, Q_l\] are \(2l\) distinct 
points of \((\CC\smallsetminus \Lambda)/\Lambda'
=E'\smallsetminus\{0_{E'},[\omega_2]_{E'}\}\); equivalently
\[
p_i - p_j \not\in \Lambda \quad \forall\, i\neq j,\ 1\leq i,j\leq l.
\]
We also know that 
\[\sum_{i=1}^{\ell} p_i \equiv \frac{\omega_1}{2}
\pmod{\Lambda}\] according to Lemma \ref{lemmass:nontriv}.
By \ref{subsubsec:properties_sigma}\,(iv) we know that there
exists a constant \(A\in \CC^{\times}\) such that
\begin{equation} \label{unshift2}
g(z) 
= A\cdot 
\frac{\sigma^l(z)\cdot \sigma^l(z - \omega_2)}{\prod_{i =
1}^{l} \sigma(z - p_i) \cdot \prod_{i = 1}^l \sigma(z - q_i)},
\end{equation}
where \(q_1,\ldots,q_l\) are elements of \(\CC\) such that
\begin{equation}\label{conditions-qi}
q_i\equiv p_i+\omega_2\pmod{\Lambda'}\ \ \forall\,i,
\quad\textup{and}\quad
\sum_{i=1}^l p_i + \sum_{i=1}^l q_i = l\omega_2.
\end{equation}
Notice that the residue of $g(z)$ at $z = p_j$ is
given by $Ar_j$ for \(j=1,\ldots, l\), where
\begin{equation}\label{expr-rj}
r_j = \frac{\sigma^l(p_j)\cdot \sigma^l(p_j - \omega_2)}{\prod_{i =
1,\ne j}^{l} \sigma(p_j - p_i)\cdot \prod_{i = 1}^l \sigma(p_j - q_i)}
\qquad \textup{for}\ j=1,\ldots,l.
\end{equation}
It is immediate from \ref{subsubsec:properties_sigma}\,(ii) that
the formula (\ref{expr-rj}) for \(r_j\) is independent of the
choice of \(q_1,\ldots, q_l\) satisfying 
(\ref{conditions-qi}), with \(p_1,\ldots, p_l\) fixed,
and also independent of the choice of 
\(p_1, \ldots, p_{j-1}, p_{j+1},\ldots, p_{l}\) in their respective
congruence classes modulo \(\Lambda'\) when 
the \(q_i\)'s and \(p_1 + \cdots+ p_{j-1}+ p_{j+1}+\cdots +p_{l}\)
are fixed. 
One checke by a routine calculation that the right hand side of the
formula (\ref{expr-rj}) remains the same when 
\(p_j\) is replaced by \(p_j+\alpha\) and \(q_j\) is replaced by
\(\alpha\) for any element \(\alpha\in \Lambda'\).
So the right hand side of the formula (\ref{conditions-qi}) 
is a meromorphic function of 
\((P_1,\ldots, P_l)\in E'\times\cdots\times E'\).

\begin{lemmass}
Let \(p_1,\ldots, p_l\) be elements of \(\CC\) such that
\(\bigcup_{i=1}^{l} p_i+\Lambda'\) are
the zeroes of the developing map \(f\) 
and \(\bigcup_{i=1}^{l} p_i+\omega_2+\Lambda'\) are the poles of \(f\).
%\([p_i]_{E'}, i=1,\ldots,l, [p_i+\omega_2]_{E'}\) are the zeroes
%of the meromorphic function \(\bar{g}\) on \(E'\).
Let \(q_1,\ldots, q_l\) be elements of \(\CC\) satisfying the conditions
in \textup{(\ref{conditions-qi})}.
Then 
%\[
%r_1=r_2=\cdots=r_l,
%\]
\begin{equation} \label{residue-I}
r_1 = r_2 = \cdots =  r_l.
\end{equation}
where the non-zero complex numbers \(r_1,\ldots,r_l\) 
are defined by \textup{(\ref{expr-rj})} and 
the elements \(q_1,\ldots,q_l\in\CC\) appearing in the formula
\textup{(\ref{expr-rj})} satisfies the conditions in
\textup{(\ref{conditions-qi})}.
\end{lemmass}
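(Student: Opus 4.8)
The plan is to exploit the fact that $g$ is literally a logarithmic derivative, which rigidly pins down its residues at the simple zeros of $f$, independently of which zero we look at.

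First I would record the elementary residue computation for logarithmic derivatives. Since $f$ has a simple zero at each $p_j$, writing $f(z) = c_j(z-p_j) + O((z-p_j)^2)$ with $c_j \neq 0$ near $z=p_j$ gives
\[
g(z) = \frac{f'(z)}{f(z)} = \frac{1}{z - p_j} + (\textup{holomorphic near } p_j),
\]
so that $\mathrm{Res}_{z=p_j}\, g = 1$ for every $j = 1, \ldots, l$. (The same computation at the simple poles $q_i = p_i+\omega_2$ of $f$ yields residue $-1$, which we will not need here.) The decisive point is that this value $+1$ is the same for all $j$, with no dependence on the location of the zero.

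Next I would compute the same residue from the closed-form expression (\ref{unshift2}). Using $\sigma(w) = w + O(w^3)$ near $w = 0$, so that $\sigma'(0) = 1$ and $\sigma(z - p_j)$ has a simple zero with unit leading coefficient at $z = p_j$, the residue of the right-hand side of (\ref{unshift2}) at $z = p_j$ is
\[
\mathrm{Res}_{z=p_j}\, g = A \cdot \frac{\sigma^l(p_j)\,\sigma^l(p_j - \omega_2)}{\prod_{i \neq j} \sigma(p_j - p_i)\cdot \prod_{i=1}^l \sigma(p_j - q_i)} = A\, r_j,
\]
with $r_j$ as defined in (\ref{expr-rj}). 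Here I use that the $q_i$ are chosen as in (\ref{conditions-qi}) so that (\ref{unshift2}) holds with a single constant $A$ (by \ref{subsubsec:properties_sigma}\,(iv)), and that none of the differences $p_j - p_i$ ($i \neq j$) or $p_j - q_i$ lies in $\Lambda'$, so that the displayed quotient is finite and nonzero.

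Combining the two computations yields $A\, r_j = 1$ for every $j$, hence $r_1 = r_2 = \cdots = r_l = A^{-1}$, which is exactly (\ref{residue-I}). I do not anticipate a genuine obstacle: the entire content is the recognition that $g = f'/f$ forces every residue at a simple zero of $f$ to equal $+1$. The only points requiring care are bookkeeping ones — confirming the normalization $\sigma'(0) = 1$ so that the residue of (\ref{unshift2}) is exactly $A\,r_j$ rather than a $j$-dependent rescaling of it, and confirming that the constant $A$ in (\ref{unshift2}) is genuinely independent of $j$, which is already guaranteed by the divisor-theoretic characterization in \ref{subsubsec:properties_sigma}\,(iv).
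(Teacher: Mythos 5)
Your proposal is correct and is essentially the paper's own argument: the paper likewise writes $g = f'/f$ in the form (\ref{unshift2}) with a single constant $A$, notes that the residue of $g$ at each simple zero $p_j$ of $f$ equals $1$ while the residue of the right-hand side of (\ref{unshift2}) at $p_j$ is $A\,r_j$, and concludes $A\,r_j = 1$ for all $j$, hence $r_1 = \cdots = r_l$. The only difference is that you spell out the bookkeeping (the normalization $\sigma'(0)=1$ and the $j$-independence of $A$) that the paper leaves implicit.
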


\begin{proof}
Write $g = f'/f$ as in (\ref{unshift2}) for a suitable
constant \(A\in \CC^{\times}\).
Then $Ar_j = 1$ for $j = 1, \ldots, l$, hence
\(r_1=r_2=\cdots=r_l\).
%Thus we have the $l - 1$ constraints on $p_1, \ldots, p_l$
%arising from the residue equations:
%\begin{equation} \label{residue-I}
%r_1 = r_2 = \cdots =  r_l,
%\end{equation}
%where the \(r_j\)'s are defined by (\ref{expr-rj}).
\end{proof}

\begin{propositionss} \label{g-odd-u-even}
Let \(p_1,\ldots, p_l\) be elements of \(\CC\) with
the following properties.
\begin{itemize}
\item[(i)] \(\sum_{i=1}^l p_i\equiv \omega_1/2 \pmod{\Lambda}\),
\item[(ii)] \(p_i-p_j\not\equiv 0 \pmod{\Lambda}\) whenever \(i\neq j\),
and
\item[(iii)] the residue equalities \textup{(\ref{residue-I})} hold, where
\(r_1,\ldots, r_l\) are defined by \textup{(\ref{expr-rj})} and the
elements
\(q_1,\ldots, q_l\in \CC\) satisfy the conditions in
\textup{(\ref{conditions-qi})}.
\end{itemize}
Let $h$ be an elliptic function on $E'$ defined by
\textup{(\ref{unshift})}.
Let \(A:=r_1^{-1}\) and let \(g_1:=A\cdot h\).
%where \(q_1,\ldots,q_l\) are elements of \(\CC\) satisfying 
%condition \textup{(\ref{conditions-qi})}.
%with $p_i$'s satisfying $\sum p_i \equiv \frac{1}{2}\omega_1
%\pmod{\Lambda}$ and the residue equations (\ref{residue-I}). 
\begin{itemize}
\item[(a)] 
\(s_1=\cdots =s_l = -r_1 =\cdots = - r_l\), where 
\begin{equation}
s_j=\frac{\sigma^l(q_j)\cdot\sigma^l(q_j-\omega_2)}
{\prod_{i=1}^{l}\sigma(q_j-p_i)\cdot \prod_{i=1,\,\neq
    j}^l\sigma(q_j-q_i) } \quad\textup{for}\ \ j=1,\ldots,l.
\end{equation}
Consequently the residue of the simple pole \(P_i\)
(respectively \(Q_i\)) of the meromorphic function 
\(A\cdot h\) on \(E'\) is equal to \(1\)
(respectively \(-1\)).
Here \(P_i:=p_i\ \textup{mod}\,\Lambda'\in E'\) 
and \(Q_i:=q_i\ \textup{mod}\,\Lambda'\in E'\) for \(i=1,\ldots, l\).

\item[(b)]
If \(h\) is an odd function, then the following statements hold.
\begin{itemize}

\item[(b1)] The subset \(\,\{P_1,\ldots, P_l\}\subset E'\smallsetminus
\{0_{E'}, [\omega_2]_E'\}\,\) is stable under the involution of
\(E'\) induced by ``multiplication by \(-1\)''.

\item[(b2)] Exactly one of \(P_1, \ldots, P_l\) is a two-torsion
point of \(E'\); this point is either 
\(\frac{\omega_1}{2}\ \textup{mod}\,\Lambda'\) or
\(\frac{\omega_1}{2}+\omega_2\ \textup{mod}\,\Lambda'\).

\item[(b3)] $l$ is an odd integer. 

\end{itemize}

\item[(c)] Conversely suppose that the condition \textup{(b1)}
is satisfied, or equivalently conditions all \textup{(b1)--(b3)} hold.
Then \(h\) is an odd function; i.e.\ \(h(-z)=-h(z)\,\) for all 
\(z\in\CC\).

\item[(d)] Assume \(h\) is an odd function, or equivalently
that conditions \textup{(b1)--(b3)} hold.
\begin{itemize}
\item[(d1)] There exists a normalized type I developing map \(f_1\) of a solution
of \textup{(\ref{3-1})} with parameter \(\rho = 4\pi l\) such that
\({f_1'}/{f_1}= g_1\).
\item[(d2)] \(f_1\) and \(-f_1\) are the
only normalized  type I developing map whose logarithmic
derivative is \(g_1\).  
\item[(d3)] \(f_1\) is an even function, i.e.\ \(f_1(-z)=f_1(z)\)
for all \(z\in \CC\).
\end{itemize}
% $g$ gives rise to a type
%I solution $u$ which is an even solution.
\end{itemize}
\end{propositionss}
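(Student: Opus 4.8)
The plan is to establish (a)--(d) in sequence, since each part feeds the next; the backbone throughout is the identity $h(z+\omega_2)=-h(z)$, which holds by Lemma~\ref{half-cong}(a) thanks to hypothesis (i), together with $\Lambda'$-periodicity of $h$. For (a) I would first note that $\Lambda'$-periodicity lets me read $r_j$ and $s_j$ as the residues of $h$ at $[p_j]_{E'}$ and $[q_j]_{E'}$. Since $q_j\equiv p_j+\omega_2\pmod{\Lambda'}$, substituting $z=w+\omega_2$ into $h(z+\omega_2)=-h(z)$ gives $s_j=\operatorname{Res}_{p_j+\omega_2}h=-\operatorname{Res}_{p_j}h=-r_j$ for each $j$; combined with hypothesis (iii), this yields $s_1=\cdots=s_l=-r_1=\cdots=-r_l$. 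The residue assertion is then immediate, as $g_1=r_1^{-1}h$ has residue $r_1^{-1}r_i=1$ at $P_i$ and $r_1^{-1}s_i=-1$ at $Q_i$.

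For (b) I would use that an odd meromorphic function has equal residues at $z_0$ and $-z_0$: applied to $g_1$, whose simple poles carry residue $+1$ at the $P_i$ and $-1$ at the $Q_i$ by (a), oddness forces each $-P_i$ to be another residue-$(+1)$ pole, hence $-P_i\in\{P_1,\dots,P_l\}$, which is (b1). The only $2$-torsion points available in $E'\smallsetminus\{0,[\omega_2]\}$ are $\tfrac12\omega_1$ and $\tfrac12\omega_1+\omega_2$, which differ by $\omega_2$; since $\{Q_i\}=\{P_i\}+\omega_2$ is disjoint from $\{P_i\}$, at most one of the two can lie in $\{P_i\}$. Computing $\sum_iP_i$ in $E'$—where negation pairs off the non-$2$-torsion points to cancel—shows it equals the sum of the $2$-torsion $P_i$, while hypothesis (i) forces $\sum_iP_i\in\{\tfrac12\omega_1,\tfrac12\omega_1+\omega_2\}\neq 0$, excluding the possibility of no $2$-torsion point; hence exactly one $P_i$ is $2$-torsion (b2), and parity then gives $l$ odd (b3). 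For the converse (c) I would rerun this count from (b1) and (i) alone to produce the distinguished $2$-torsion point $P_{j_0}$ (so (b1) already entails (b2),(b3)), then compare the principal divisors of $h(z)$ and $h(-z)$ on $E'$—identical by (b1) and $-\omega_2\equiv\omega_2$—to get $h(-z)=c\,h(z)$ with $c^2=1$, and evaluate residues at the fixed point $P_{j_0}$ to pin $c=-1$, i.e.\ $h$ is odd.

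For (d) I would set $f_1(z)=\exp\int_0^z g_1$. Integrality of the residues ($\pm1$) makes $f_1$ single-valued and meromorphic on $\CC$, with simple zeros at the $P_i$, simple poles at the $Q_i$, multiplicity $l+1$ at lattice points (from the order-$l$ zeros of $g_1$ on $\Lambda$), and no other critical points—so $f_1$ meets Lemma~\ref{lemma:n-s-cond-develop}(1). Oddness of $g_1$ makes $\int_0^z g_1$ even modulo $2\pi\sqrt{-1}\,\ZZ$, so $f_1$ is even, which is (d3). Since $g_1$ is $\omega_1$-periodic and $g_1(z+\omega_2)=-g_1(z)$, integration yields $f_1(z+\omega_1)=c_1f_1(z)$ and $f_1(z+\omega_2)=c_2/f_1(z)$ for constants $c_1,c_2$; iterating the $\omega_2$-law gives $2\omega_2$-periodicity of $f_1$, and rescaling $f_1\mapsto\lambda f_1$ (which leaves $g_1$ and $c_1$ untouched while sending $c_2\mapsto\lambda^2 c_2$) normalizes $c_2=1$.

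The step I expect to be the main obstacle is fixing $c_1=-1$, which is exactly what distinguishes the Klein-four monodromy $\langle w\mapsto-w,\ w\mapsto 1/w\rangle\subset\textup{PSU}(2)$ from a merely cyclic one. I would expand $f_1$ near the distinguished $2$-torsion zero $P_{j_0}$: using $-P_{j_0}\equiv P_{j_0}\pmod{\Lambda'}$ together with the $\omega_1$-law and $2\omega_2$-periodicity reduces the evenness relation to $f_1(z_0+\epsilon)=c_1^{-1}f_1(z_0-\epsilon)$, and matching the leading term of the simple-zero expansion forces $c_1^{-1}=-1$. This makes $f_1$ a normalized type I developing map and completes (d1). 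Finally (d2) is routine: any $\tilde f$ with $\tilde f'/\tilde f=g_1$ equals $\lambda f_1$ for a constant $\lambda$, and imposing \eqref{case-type-I} forces $\lambda^2=1$, so $\tilde f=\pm f_1$.
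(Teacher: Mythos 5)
Your proof is correct, but it departs from the paper's at the two places where real work is needed, so a comparison is worthwhile. Parts (a) and (b) run essentially as in the paper: you read $r_j$ and $s_j$ as residues of $h$ and use $h(z+\omega_2)=-h(z)$ from Lemma \ref{half-cong}\,(a), then get (b1)--(b3) from invariance of $h\,dz$ under negation together with the count based on hypothesis (i) and the disjointness of $\{P_i\}$ and $\{Q_i\}$ (your write-up of (b2)--(b3) is in fact more detailed than the paper's one-line assertion). In (c) the paper renumbers the $p_i$, writes $h$ as an explicit product of $\sigma$-functions adapted to the paired pole structure (its equation (\ref{hz-odd})), and computes $h(-z)/h(z)=-1$ from the transformation law of $\sigma$; you instead compare the divisors of $h(z)$ and $h(-z)$ on $E'$ to get $h(-z)=c\,h(z)$ with $c^2=1$, and pin $c=-1$ by the residue at the negation-fixed pole $P_{j_0}$ furnished by (b2). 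In (d) the crux is that the $\omega_1$-multiplier $c_1=\exp\int_0^{\omega_1}g_1(w)\,dw$ equals $-1$: the paper proves this by indenting the segment from $0$ to $\omega_1$ around the poles, using invariance of $g_1(w)\,dw$ under $w\mapsto -w$ to kill the straight part, and counting the half-circle contributions $(m_1-m_2)\pi\sqrt{-1}$ with $m_1-m_2$ odd by (b1)--(b2); you instead combine the evenness of $f_1=\exp\int_0^z g_1$ with the leading-term expansion at its simple zero at the two-torsion point, where $-z_0\equiv z_0\pmod{\Lambda'}$ turns evenness into $f_1(z_0+\epsilon)=c_1^{-1}f_1(z_0-\epsilon)$ and forces $c_1=-1$ (your two cases $z_0\equiv\tfrac{1}{2}\omega_1$ and $z_0\equiv\tfrac{1}{2}\omega_1+\omega_2$ both reduce to this via the $2\omega_2$-periodicity, as you note). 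Both of your substitutes are valid and more elementary---no explicit $\sigma$-computation and no contour indentation---and they make transparent that the single two-torsion point among the $P_i$ is exactly what produces the sign $-1$, i.e.\ the Klein-four rather than toral monodromy; what the paper's contour argument buys instead is an explicit evaluation of the period $\int_0^{\omega_1}g_1 \bmod 2\pi\sqrt{-1}\,\ZZ$ as a signed count of poles on the segment, a formulation closer in spirit to the period-integral computations used for type II solutions later in the paper.
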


\begin{proof}
We know that \(h(z+\omega_2)= - h(z)\) for all \(z\in \CC\) by Lemma
\ref{half-cong}\,(a). So 
the statement (a) follows from the assumption that
\(r_1=\cdots=r_l\).
\smallbreak

The set \(\{P_1,\ldots, P_l\}\) 
is the set of all (simple) poles with residue \(r_1\) of the
meromorphic differential \(h dz\) on \(E'\).  The 
assumption that \(h\) is odd means that \(h dz\) is invariant
under ``multiplication by \(-1\)'', so the statement (b1) follows.
The statement (b2) follows because of assumption (i).
The statement (b3) follows from (a) and (b2).
\smallbreak

Suppose that (b1)--(b3) hold.  
Let \(n=(2l-1)/2\). After renumbering the \(p_i\)'s 
we may assume that \(P_{n+1}=-P_1, P_{n+2}=-P_2,\ldots,
P_{2n}=-P_n\) and \(P_{l}=[\omega_2]_{E'}\).
According to \ref{subsubsec:properties_sigma}\,(iii), we have
\begingroup\makeatletter\def\f@size{8.5}\check@mathfonts
\begin{equation}\label{hz-odd}
h(z)=\frac{\sigma^{2n+1}(z)\cdot \sigma^{n+1}(z-\omega_2)\cdot
\sigma^n(z+\omega_2)}{\left[\prod_{i=1}^n\sigma(z\!-\!p_i)\cdot
  \sigma(z\!+\!p_i)\right]
\cdot \left[\prod_{i=1}^n\sigma(z\!-\!p_i\!-\!\omega_2)\cdot
\sigma(z\!+\!p_i\!+\!\omega_2)\right]
\cdot
\sigma(z\!-\!\frac{\omega_1}{2})\cdot\sigma(z\!+\!\frac{\omega_1}{2}
\!-\!\omega_2)
}
\end{equation}\endgroup
Using the fact that \(\sigma(z)\) is an odd function, we get
\begin{equation*}
\begin{split}
\frac{h(-z)}{h(z)}&=
\frac{\sigma(z+\omega_2)\cdot \sigma(z-\frac{\omega_1}{2})
\cdot \sigma(z+\frac{\omega_1}{2}-\omega_2)}{\sigma(z-\omega_2)
\cdot \sigma(z+\frac{\omega_1}{2}) \cdot 
\sigma(z-\frac{\omega_1}{2}+\omega_2)}\\
&=(-1)\cdot e^{\eta_2\cdot z}\cdot e^{\eta_1\cdot z}\cdot
e^{(\eta_1-\eta_2)\cdot z} = -1
\end{split}
\end{equation*}
by the transformation law for the \(\sigma\)-function.
We have proved (c).
%\bigbreak

Assume again that (b1)--(b3) hold, so that 
\(l=2n+1\) is odd and \(h(z)\) is an odd function.
Then %\(g_1(z):=A\cdot h(z)\) 
\(g_1(z)\) is an odd meromorphic function on
\(E'\) which has simple poles with residue \(1\) at
\(P_1,\ldots, P_{2n+1}\) and has simple poles with residue \(-1\)
at \(Q_1,\ldots, Q_{2n+1}\).
From the proof of 
\ref{g-odd-u-even}\,(d) may and so assume that 
\(p_1=p_{n+i}\) for \(i=1,\ldots, n\),
and \(p_n=\frac{\omega_1}{2}\),
\(q_n =\frac{\omega_1}{2}+\omega_2\), so that
\(h(z)\) is given by equation (\ref{hz-odd}).
For each of the \(4n+2\) poles of \(g_1(z)\, dz\), the
integral along a sufficiently small circle around the
pole is \(\pm 2\pi\).  Hence the line integral
\(\,\displaystyle{\int_0^z\,g_1(w)\,dw}\,\) is
well-defined as an element of \(\CC/2\pi\sqrt{-1}\ZZ\)
and the function
\[
f_2(z)=\exp {\int_0^z g_1(w)\, dw}
\]
is a well-defined meromorphic function on \(\CC\)
with simple poles at points in the union
\(\,\bigcup_{i=1}^{2n+1}q_i+\Lambda'\,\) of \(\Lambda'\)-cosets,
simple zeros at points in \(\,\bigcup_{i=1}^{2n+1}p_i+\Lambda'\), 
neither zero nor pole elsewhere on \(\CC\).
In particular \(f_2(z)\) is holomorphic and non-zero
at points of \(\Lambda\). Notice that \(f_2(z)=f_2(-z)\) for all 
\(z\in \CC\) because \(g_1\) is odd.

The fact that \(\,g_1(w) dw\) is invariant under translation by
\(\omega_1\) 
implies that 
\[
f_2(z+\omega_1)= \int_{0}^{\omega_1}g_1(w)\,dw\cdot f_2(z)
\qquad\forall\,z\in\CC\,.
\]
Similarly the fact that \(\,g_1(w+\omega_2) =-g_1(w)\) implies
that 
\[
f_2(z+\omega_2)\cdot f_2(z)= \int_0^{\omega_2} g_1(w)\, dw.
\]
To prove (d) it suffice to show that
\begin{equation}\label{period=piI}
\int_0^{\omega_1}g_1(w)\,dw \equiv \pi\sqrt{-1}\pmod{2\pi\sqrt{-1}\ZZ},
\end{equation}
for then \(f_1(z)= \sqrt{f_2(\omega_2)^{-1}}\cdot f_2(z)\) 
will be a normalized developing map of type I (for a solution
of equation (\ref{3-1}) with \(\rho=2n+1\)),
for either of the two square roots of 
\(f_2(\omega_2)^{-1}\). Clearly these are the only two
normalized developing maps of type I whose logarithmic derivatives
are equal to \(g_1\).

To compute the integral \(\,\int_0^{\omega_1}g_1(w)\,dw\) 
modulo \(2\pi\sqrt{-1}\ZZ\), let 
\(C_{\epsilon}\) be the path from \(0\) to \(\omega_2\), obtained
from the oriented line segment
\(\overrightarrow{0\omega_2}\)  from \(0\) to \(\omega_2\) near by replacing
the \(\epsilon\)-neighborhood of each pole of \(g_1(w)dw\) 
by the half circle of radius \(\epsilon\) to the right of 
\(\overrightarrow{0\omega_2}\), for all sufficiently small 
\(\epsilon>0\). 
Clearly the integral \(\,\int_{C_\epsilon}g_1(w)\,dw\,\)
is independent of \(\epsilon\).
Write \(C_{\epsilon}\) the union
of the small half circles and the ``straight part''
\(C_{\epsilon}'\) of \(C_{\epsilon}\).
Let \(m_1\) (respectively \(m_2)\) be the number of poles 
of \(g_1\) with residue
\(1\) (respectively \(-1\)) on 
the line segment \(\overrightarrow{0\omega_2}\).

The fact that \(g_1(w)\,dw\) is invariant under
multiplication by \(-1\) implies that
the integral of \(g_1(w)\) over \(C_{\epsilon}'\) is \(0\),
%\(\,\int_{C_\epsilon}g_1(w)dw\,\)
%is independent of \(\epsilon\) 
so \(\,\int_0^{\omega_1}g_1(w)\,dw\,\) converges to 
\((m_1-m_2)\cdot \pi\sqrt{-1}\) as \(\epsilon\to 0^{+}\).
In other words 
\[\,\int_{C_\epsilon}g_1(w)\,dw =(m_1-m_2)\cdot \pi\sqrt{-1}\,\]
for all (sufficiently small) \(\epsilon>0\).
%\(\overrightarrow{0\omega_2}\).
On the other hand the assumptions (b1) and (b2) tells us that
\(m_1-m_2\) is an odd integer.  We have proved the statements 
(d1)--(d3).
\end{proof}

\subsection{Type II scaling families and blow-up points}

In type II, it follows from (\ref{case-type-II}) that $g = f'/f$
is an elliptic function on $E$. From \S \ref{0-constraint}, 
$g$ has zero only at $z = 0$. Thus by Lemma \ref{order},
\begin{equation} \label{II-g}
g(z) = A\,\frac{\sigma^l(z)}{\prod_{i = 1}^l \sigma(z - p_i)}
\end{equation}
for $p_i$'s being simple zeros/poles of $f$ with $\sum p_i = 0$.
Now the Weierstrass function $\sigma$ is with respect to $E$. Also
the points $p_i$'s are unique up to elements in $\Lambda$ as long
as the constraint $\sum p_i = 0$ is satisfied.

\begin{propositionss} \label{non-exist}
For $\rho = 4\pi l$ with $l$ being odd, there are no type II,
i.e.~blow-up, solutions to the mean field equation
\begin{equation*}
\triangle u + e^u = \rho\delta_0 \quad \mbox{on $E$}.
\end{equation*}
\end{propositionss}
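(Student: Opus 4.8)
The plan is to exploit the residue theorem for the elliptic function $g = f'/f$ attached to a \emph{normalized} type~II developing map, which is available precisely because the type~II condition (\ref{case-type-II}) forces $g$ to descend to $E = \CC/\Lambda$. First I would recall from Lemma~\ref{lemma_zero-pole_g} that this elliptic $g$ has a zero of order $l$ at the origin, no other zeros, and at most simple poles away from $\Lambda$; equivalently $g$ has the shape (\ref{II-g}), so that its polar set on $E$ consists of exactly $l$ simple poles $p_1,\ldots,p_l$, subject to $\sum_i p_i \equiv 0 \pmod{\Lambda}$. These $p_i$ are precisely the zeros and poles of the developing map $f$ on $E$, and by Lemma~\ref{order}\,(4) each such zero and pole of $f$ is \emph{simple}. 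Since the origin is a zero of $g$ of order $l \ge 1$, it contributes no residue, and the only contributions to the residue sum come from the $p_i$.

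Next I would classify each $p_i$ according to whether $f$ vanishes or has a pole there. Because the zeros and poles of $f$ are all simple, the residue of $g = f'/f$ at such a point is $+1$ when $f$ vanishes and $-1$ when $f$ has a pole. Writing $k$ for the number of zeros of $f$ among the $p_i$ and $l-k$ for the number of poles, the residue theorem on the compact Riemann surface $E$ gives
\[
\sum_{i=1}^{l}\mathrm{Res}_{p_i}\,g = k - (l-k) = 2k - l = 0,
\]
whence $k = l/2$. The conclusion is then immediate: an integer $k = l/2$ can exist only when $l$ is even, so for $l$ odd there is no type~II normalized developing map, and hence no type~II (blow-up) solution of (\ref{3-1}) with $\rho = 4\pi l$.

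I do not anticipate any serious obstacle in this argument; the two points requiring care are both already secured by the earlier analysis. One must check that the $p_i$ are genuinely \emph{distinct} simple poles, so that the residues are exactly $\pm 1$ and no cancellation is concealed inside a higher-order pole---this follows because Lemma~\ref{lemma_zero-pole_g}\,(2) forbids $g$ from having any pole of order $\ge 2$ off $\Lambda$. One must also confirm that the order-$l$ zero of $g$ at the origin is invisible to the residue count, which holds as noted since a zero carries residue $0$. With these two observations in place, the residue identity $2k = l$ is the whole content of the proof.
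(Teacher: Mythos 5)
Your proof is correct and is essentially the paper's own argument: both rest on the fact that the type~II condition makes $g = f'/f$ elliptic on $E$ with $l$ simple poles of residue $\pm 1$, so the vanishing of the residue sum forces $l$ to be even. Your version merely spells out the parity step as $2k = l$ rather than observing directly that a sum of $l$ terms equal to $\pm 1$ is odd, hence nonzero, when $l$ is odd.
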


\begin{proof}
If there is a solution $u$ with developing map $f$, then $g =
f'/f$ is elliptic on $E$ with residues at $p_i$, $i = 1, \ldots,
l$, being $\pm 1$. Since $l$ is odd, the sum of residues of $g$ is
non-zero, which contradicts to the classical fact that the sum of
residues of an elliptic function must be zero.
\end{proof}

Therefore we may set $l = 2n$. Let $p_1, \ldots, p_n$ be zeros and
$p_{n + 1}, \ldots, p_{2n}$ be poles of $f$. The residue of $g$ at $z
= p_j$ is given by $Ar_j$ with
\begin{equation} \label{residue-eq1}
r_j = \frac{\sigma^l(p_j)}{\prod_{i = 1, \ne j}^l \sigma(p_j - p_i)}.
\end{equation}
Then we have equations
\begin{equation} \label{residue-eq2}
r_1 = \cdots = r_n = -r_{n + 1} = \cdots = -r_{2n}.
\end{equation}

Recall that
\begin{equation*}
f(z) = f(0) \exp \int_0^z g(w)\,dw.
\end{equation*}
\begin{lemmass}
In order for $f$ to verify (\ref{case-type-II}), it is equivalent
to require that the periods integrals are purely imaginary:
\begin{equation*}
\int_{L_i} g(z)\,dz \in i\,\mathbb{R}, \quad i = 1, 2.
\end{equation*}
\end{lemmass}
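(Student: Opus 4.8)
The plan is to translate the multiplicative transformation law (\ref{case-type-II}) for $f$ into an additive statement about the periods of the meromorphic differential $g\,dz$, using the explicit formula $f(z) = f(0)\exp\int_0^z g(w)\,dw$. First I would record that, after the normalization of $A$ dictated by (\ref{residue-eq2}), the elliptic function $g$ of (\ref{II-g}) has only simple poles, with residues $+1$ at the zeros $p_1,\dots,p_n$ of $f$ and $-1$ at the poles $p_{n+1},\dots,p_{2n}$. Since these residues are integers, a small loop around any pole contributes an element of $2\pi i\,\ZZ$ to $\int g\,dz$, so $f(0)\exp\int_0^z g$ is a single-valued meromorphic function on $\CC$, and the period $\Pi_i := \int_{L_i} g(w)\,dw$ along the cycle $L_i$ representing $\omega_i$ is well defined modulo $2\pi i\,\ZZ$. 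In particular $\operatorname{Re}\Pi_i$ is independent of the chosen path $L_i$, so the condition $\Pi_i\in i\RR$ is unambiguous.

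The key step is the computation of the multiplier of $f$ across a period. For fixed $i$,
\[
\frac{f(z+\omega_i)}{f(z)} = \exp\int_z^{z+\omega_i} g(w)\,dw,
\]
and I would show the exponent is independent of $z$ by differentiating: its $z$-derivative is $g(z+\omega_i)-g(z)=0$ because $g$ is $\Lambda$-elliptic. Hence the integral equals the period $\Pi_i$ (modulo $2\pi i\,\ZZ$, which $\exp$ annihilates), giving $f(z+\omega_i)=e^{\Pi_i}f(z)$ for $i=1,2$.

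Finally I would read off the equivalence. The transformation law (\ref{case-type-II}) asserts precisely that each multiplier $e^{\Pi_i}$ equals $e^{2i\theta_i}$ for some real $\theta_i$, i.e.\ that $|e^{\Pi_i}|=1$, which is the same as $\operatorname{Re}\Pi_i=0$, i.e.\ $\Pi_i\in i\RR$; conversely, given $\Pi_i\in i\RR$ one sets $\theta_i:=\Pi_i/(2i)\in\RR$ and recovers (\ref{case-type-II}). I do not anticipate a genuine obstacle here: the only point requiring care is the well-definedness discussed above---that the residues are integers, so that $f$ is single-valued and $\operatorname{Re}\Pi_i$ does not depend on the representative path $L_i$---after which the argument reduces to the elementary observation that a nonzero complex number has modulus one exactly when its logarithm is purely imaginary.
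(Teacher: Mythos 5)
Your proof is correct and follows exactly the reasoning the paper leaves implicit: the lemma is stated without proof, as an immediate consequence of the displayed formula $f(z)=f(0)\exp\int_0^z g(w)\,dw$, and your argument (multiplier $f(z+\omega_i)/f(z)=e^{\Pi_i}$ by ellipticity of $g$, with $|e^{\Pi_i}|=1$ iff $\Pi_i\in i\,\mathbb{R}$) is precisely that implicit computation. Your added care about integrality of the residues, single-valuedness of $f$, and path-independence of $\operatorname{Re}\Pi_i$ correctly fills in the details the paper takes for granted.
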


Another characteristic feature for type II is that any solution
must exist in an one parameter scaling family of solutions. To see
this, notice that if $f$ is a developing map of solution $u$ then
$e^{\lambda}f$ also satisfies (\ref{case-type-II}) for any
$\lambda \in \mathbb{R}$. In fact $e^\lambda f$ is a developing
map of $u_\lambda$ defined by (\ref{u-lambda}):
\begin{equation*}\label{3-11}
u_\lambda(z) = \log
\frac{8 e^{2\lambda}|f'(z)|^2}{(1+e^{2\lambda}|f(z)|^2)^2}
\end{equation*}
and it is clear that $u_\lambda$ is a scaling family of solutions
of (\ref{3-1}).

Let $z_0$ be a zero of $f$. We know that $z_0 \not\equiv 0$ and
$f'(z_0) \ne 0$. Thus
\begin{equation*}
u_\lambda(z_0) \sim 2\lambda \to +\infty \quad \mbox{as} \quad
\lambda \to +\infty
\end{equation*}
while if $f(z) \ne 0$ then
\begin{equation*}
u_\lambda(z) \sim -2\lambda \to -\infty \quad \mbox{as} \quad
\lambda \to +\infty.
\end{equation*}
Points like $z_0$ are referred as \emph{blow-up} points. 

Thus as $\lambda \to +\infty$, the blow-up set of $u_\lambda$ 
consists of the zeros of $f$. Similarly, as $\lambda \to -\infty$, 
the blow-up sets of $u_\lambda$ consists of the poles of $f$.\smallskip

\begin{remarkss}
In general it is very hard to solve the residue equations 
(\ref{residue-I}) (for type I) and (\ref{residue-eq2}) (for type II) directly, 
though some simplest cases had been treated in \cite{LW, LW2} 
for $\rho = 4\pi$, $8\pi$ and $12\pi$. 
\end{remarkss}

\section{Type I solutions: Evenness and algebraic integrability} \label{I-int}
\setcounter{equation}{0}

Let $\rho = 4\pi l$, $l \in \mathbb N$. 
Let $u$ be a type I solution and $f$ be a developing map of $u$. 
In this section we will prove Theorem \ref{thm-type I} 
stated in the introduction. 
Proposition \ref{non-exist} proves that 
if $l$ is odd then the solution is of type I. 
We will start by proving the converse in Theorem \ref{even-II}, 
i.e., if the solution is of type I then $l$ must be odd. 
At the same time the evenness of $u$ is deduced.

\subsection{The evenness of solutions}

Recall the logarithmic derivative
\begin{equation*}
g = (\log f)' = \frac{f'}{f}
\end{equation*}
which is elliptic on $E' = \mathbb{C}/\Lambda'$ with
$\Lambda' = \mathbb{Z}\omega_1 + \mathbb{Z}2\omega_2$. For the
ease of notations we will use $\omega_1' = \omega_1$ and
$\omega_2' = 2\omega_2$. In the following all the elliptic
functions are with respect to the torus $E'$.

Since $g$ has zero at $z = 0$ of order $l$, it also has zero of
order $l$ at $z = \omega_2$. There are no other zeros hence it has
simple poles at $p_1, \ldots, p_l$ and $q_1, \ldots, q_l$ where
$p_i$'s are simple zeros of $f$ and $q_i$'s are simple poles of
$f$ modulo $\Lambda'$. Thus we may assume that
\begin{equation*}
q_i = p_i + \omega_2, \quad i = 1, \ldots, l.
\end{equation*}

From
\begin{equation*}
f(z) = f(0) \exp \int_0^z g(w)\, dw,
\end{equation*}
the residues of $g$ are $1$ at $p_i$'s and $-1$ at $q_i$'s. Thus
we may write $g$ as
\begin{equation} \label{g-zeta}
g(z) = \sum_{i = 1}^l (\zeta(z - p_i) - \zeta(z - p_i - \omega_2))
+ c
\end{equation}
By (\ref{g:I}), it is easily seen that $c = l\eta_2/2$.

There are also other useful equivalent forms of $g$:
\begin{equation*}
\begin{split}
g(z) &= \frac{1}{2} \sum_{i = 1}^l (2\zeta(z - p_i) - \zeta(z - p_i -
\omega_2) - \zeta(z - p_i + \omega_2)) \\
 &= -\frac{1}{2} \sum_{i = 1}^l \frac{\wp'(z - p_i)}{\wp(z - p_i) - e_2}\\
 &= -\frac{1}{2} \sum_{i = 1}^l \frac{d}{dz} \log (\wp(z - p_i) - e_2)
\end{split}
\end{equation*}
by the addition formula.

\begin{remarkss}
The middle formula says that up to a constant $g(z)$ is the sum of
slopes of the \(l\) lines from the point 
$(\wp(\omega_2), \wp'(\omega_2)) = (e_2, 0)$ to the points
$(\wp(z - p_i), \wp'(z - p_i))$ of the torus $E'$ under the
standard cubic embedding into $\mathbb{C}^2 \cup \{\infty\}$,
for \(i=1,\ldots, l\).
\end{remarkss}

The only constraint remained is the zero order of $g$ at $z = 0$. Namely
\begin{equation*}
0 = g(0) = g'(0) = \cdots = g^{(l - 1)}(0).
\end{equation*}

The proof starts by noticing that
\begin{equation*}
2g(0) = \sum \frac{\wp'(p_i)}{\wp(p_i) - e_2} =: \sum s(p_i)
\end{equation*}
is the first (degree one) symmetric polynomial of the slops
$s(p_i)$. It is reasonable to expect that some of the higher
derivatives $g^{(m)}(0)$ are also higher degree symmetric
polynomials of slops. The expectation turns out to be true only
for $m$ even and for odd degree polynomials:

\begin{propositionss} \label{even-odd}
The even order differentiation $g^{(2j)}(0)$, $j = 0, \ldots,
[\frac{l - 1}{2}]$ from a basis of the odd degree symmetric
polynomials in $s_i$'s up to degree $l$ for $l$ being odd and up
to degree $l - 1$ for $l$ being even.
\end{propositionss}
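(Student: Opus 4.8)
The plan is to reduce the whole statement to a structural fact about the single auxiliary function
\[
t(z) := \frac{\wp'(z)}{\wp(z) - e_2},
\]
whose value $t(p_i)$ is exactly the slope $s_i = s(p_i)$. Writing $g(z) = -\tfrac12\sum_{i=1}^l t(z-p_i)$ and differentiating $m$ times at $z=0$ gives $g^{(m)}(0) = -\tfrac12\sum_{i=1}^l t^{(m)}(-p_i)$. Since $t$ is odd we have $t^{(m)}(-p) = (-1)^{m+1}t^{(m)}(p)$, so the even-order derivatives collapse to
\[
g^{(2j)}(0) = \tfrac12\sum_{i=1}^l t^{(2j)}(p_i),
\]
while the odd-order derivatives pick up $t^{(2j+1)}(p_i)$, which are \emph{even} in $p_i$ and hence not expressible through the slopes alone. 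This parity dichotomy is the structural reason that only even derivatives enter the proposition.

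The heart of the argument is to show that $t$ satisfies an \emph{autonomous} second-order ODE. On $E' = \CC/\Lambda'$ the function $t$ is odd and elliptic with simple poles only at $[0]$ and $[\omega_2]$, of residues $-2$ and $+2$. Comparing Laurent expansions at these two points, one checks that $t'' - \tfrac12 t^3$ acquires no pole of order three at either point. Because $z\mapsto -z$ fixes both $[0]$ and $[\omega_2]$, an odd elliptic function has only odd-order poles there; hence $t'' - \tfrac12 t^3$ is an odd elliptic function with at most simple poles at $[0],[\omega_2]$. That space is one-dimensional, spanned by $t$ itself, so
\[
t'' = \tfrac12 t^3 + \alpha\, t
\]
for a constant $\alpha$, and matching the $z^{-1}$ coefficients at $[0]$ yields $\alpha = -6e_2$. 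Multiplying by $t'$ and integrating gives the first integral $(t')^2 = \tfrac14 t^4 - 6e_2 t^2 + C$.

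With these two relations a routine induction shows that every even derivative satisfies $t^{(2j)} = A_{2j}(t)$ for an \emph{odd} polynomial $A_{2j}$ of degree exactly $2j+1$ with nonzero leading coefficient: indeed $\tfrac{d}{dz}A(t) = A'(t)\,t'$ and $\tfrac{d}{dz}\bigl(B(t)\,t'\bigr) = B'(t)(t')^2 + B(t)\,t''$, and both $(t')^2$ and $t''$ are polynomials in $t$ by the previous step, so two differentiations return a polynomial in $t$; tracking top degrees gives the degree and non-vanishing leading coefficient. Substituting $p_i$ and summing,
\[
g^{(2j)}(0) = \tfrac12\sum_{i=1}^l A_{2j}(s_i),
\]
which is a linear combination of the odd power sums $P_1, P_3, \ldots, P_{2j+1}$, with $P_k := \sum_i s_i^k$, in which the coefficient of $P_{2j+1}$ is nonzero. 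Thus, writing $J = \bigl[\tfrac{l-1}{2}\bigr]$, the matrix expressing $\{g^{(0)}(0), g^{(2)}(0),\ldots,g^{(2J)}(0)\}$ in terms of $\{P_1, P_3, \ldots, P_{2J+1}\}$ is triangular with nonzero diagonal; since $2J+1 = l$ for $l$ odd and $2J+1 = l-1$ for $l$ even, these even derivatives form a basis of the span of the odd power sums up to degree $l$ (resp.\ $l-1$), which is the assertion.

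The step I expect to be most delicate is establishing the autonomous ODE cleanly — in particular the parity bookkeeping forcing odd-order poles at the two-torsion point $[\omega_2]$ (so that cancelling the order-three pole of $t'' - \tfrac12 t^3$ leaves only a simple pole), together with the Laurent computation pinning down $\alpha = -6e_2$. Once the relations $t'' = \tfrac12 t^3 - 6e_2 t$ and $(t')^2 = \tfrac14 t^4 - 6e_2 t^2 + C$ are secured, the induction and the triangularity argument are purely formal. It is also worth recording explicitly that $t'(p_i)$, and hence the odd-order derivatives $g^{(2j+1)}(0)$, genuinely fall outside the algebra generated by the slopes, which confirms that the even derivatives are precisely the right quantities to isolate.
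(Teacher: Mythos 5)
Your proof is correct, and its skeleton coincides with the paper's: both work with the slope function $s(z)=\wp'(z)/(\wp(z)-e_2)$, both rest on the autonomous ODE $s''=\tfrac12 s^3-6e_2 s$ together with the fact that $(s')^2$ is an even quartic in $s$, both run the same induction to show that $s^{(2j)}$ is an odd polynomial in $s$ of degree exactly $2j+1$ with nonzero leading coefficient, and both finish with triangularity against the odd power sums. The genuine difference lies in how the key ODE (Lemma \ref{lemma-s''} of the paper) is established. The paper computes: it uses the half-period formula to write $\tfrac12 s'=(\wp-e_2)-\mu/(\wp-e_2)$ with $\mu=(e_1-e_2)(e_3-e_2)$, computes $s'$ a second way as $\wp''/(\wp-e_2)-s^2$, and eliminates $\wp$ between the two expressions. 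You argue structurally: $s$ is an odd elliptic function on $E'$ with exactly two simple poles, of residues $-2$ and $+2$ at the two-torsion points $[0]$ and $[\omega_2]$; because the residues are $\pm 2$, the order-three parts of $s''$ and $\tfrac12 s^3$ cancel, parity at the fixed points kills order-two terms, and the space of odd elliptic functions with at most simple poles at those two points is one-dimensional, spanned by $s$ itself; the constant $-6e_2$ then falls out of the coefficient $-2e_2$ of $z$ in the expansion of $s$ at $0$ (this Laurent coefficient is the one detail you should write out, but it is routine, and your value is correct). Your route is more conceptual: it makes visible \emph{why} a cubic autonomous ODE must exist (the argument hinges on the residues being exactly $\pm 2$), and the first-integral trick spares you the paper's separate computation of $(s')^2$. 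What the paper's computational route buys in exchange is the intermediate identity (\ref{s'}), which is reused downstream — for instance to locate the four branch points of the double cover $s\colon E'\to\PP^1(\CC)$ in the proof of Theorem \ref{even-II}, and in the derivation of the polynomial system for Theorem \ref{thm-type I} — whereas your argument does not produce it.
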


\begin{proof}
Consider the slop function
\begin{equation} \label{slope}
\begin{split}
s(z) &= \frac{d}{dz} \log (\wp(z) - e_2) = \frac{\wp'(z)}{\wp(z) - e_2}\\
&= -2\zeta(z) + \zeta(z + \omega_2) + \zeta(z - \omega_2) \\
&= -2(\zeta(z) - \zeta(z - \omega_2) - \eta_2/2).
\end{split}
\end{equation}
By differentiating the last equation, we get
\begin{equation} \label{s'}
\begin{split}
\frac{1}{2}s'(z) &= \wp(z) - \wp(z - \omega_2) \\
&= \wp(z) - e_2 - \frac{\mu}{\wp(z) - e_2}
\end{split}
\end{equation}
where we have used the half period formula with
\begin{equation*}
\mu = (e_1 - e_2)(e_3 - e_2) = e_1 e_3 - (e_1 + e_3)e_2 + e_2^2
= 2e_2^2 + e_1 e_3.
\end{equation*}
Also
\begin{equation*}
\begin{split}
\frac{1}{2}s'' &= \wp' + \frac{\mu\wp'}{(\wp - e_2)^2} \\
&= s \Big(\wp - e_2 + \frac{\mu}{\wp - e_2}\Big).
\end{split}
\end{equation*}
(Notice the variations on signs with (\ref{s'}).) Then we have

\begin{lemmass} \label{lemma-s''}
The slope satisfies the ODE:
\begin{equation} \label{s''}
s'' = \frac{1}{2} s^3 - 6e_2 s.
\end{equation}
\end{lemmass}

\begin{proof}
We will compute $s''$ in a different way, namely
\begin{equation} \label{s'-2}
s' = \frac{\wp''}{\wp - e_2} - \frac{\wp' \wp'}{(\wp - e_2)^2}
= \frac{6\wp^2 - \frac{1}{2}g_2}{\wp - e_2} - s^2.
\end{equation}
It is elementary to see
\begin{equation*}
6\wp^2 - \frac{g_2}{2} = 6(\wp - e_2)^2 + 12 e_2(\wp - e_2) + 6e_2^2
- \frac{g_2}{2}
\end{equation*}
and
\begin{equation*}
6e_2^2 - \frac{g_2}{2} = 6e_2^2 + 2(e_1 e_2 + e_3 e_2 + e_1 e_3)
= 2(2e_2^2 + e_1 e_3) = 2\mu.
\end{equation*}
Thus (\ref{s'-2}) becomes
\begin{equation*}
s' = 12 e_2 - s^2 + 6(\wp - e_2) + \frac{2\mu}{\wp - e_2}.
\end{equation*}
Then
\begin{equation*}
\begin{split}
s'' &= -2s s' + 6\wp' - \frac{2\mu s}{\wp - e_2} \\
&= -24 e_2 s + 2s^3 - 12s (\wp - e_2) - \frac{4\mu s}{\wp - e_2}
+ 6s(\wp - e_2) - \frac{2\mu s}{\wp - e_2} \\
&= -24e_2 s + 2s^3 - 6s \Big(\wp - e_2 + \frac{\mu}{\wp - e_2}\Big) \\
&= -24e_2 s + 2s^3 - 3s'',
\end{split}
\end{equation*}
where the last equality is by (\ref{s''}). The lemma follows.
\end{proof}

To proceed to higher even derivatives, we notice that
\begin{equation} \label{s^k}
(s^k)'' = (ks^{k - 1}s')' = k(k - 1)s^{k - 2} (s')^2 + k s^{k - 1}s''.
\end{equation}
By (\ref{s'}) and (\ref{s''}),
\begin{equation*}
\begin{split}
(s')^2 &= 4 \Big( \wp - e - \frac{\mu}{\wp - e}\Big)^2 \\
&= 4 \Big( \wp - e + \frac{\mu}{\wp - e}\Big)^2 - 16\mu
= \Big(\frac{s''}{s}\Big)^2 - 16\mu
\end{split}
\end{equation*}
which is an even degree polynomial in $s$ of degree 4 by Lemma
\ref{lemma-s''}. Thus $(s^k)''$ is odd in $s$ of degree $k + 2$ if
$k$ is odd. By induction we then have that $s^{(2j)}$ is a degree
$2j + 1$ odd polynomial in $s$.

The proposition now follows easily from
\begin{equation}
2g^{(2j)}(0) = \sum_{i = 1}^l s^{(2j)}(p_i)
\end{equation}
and general facts on symmetric polynomials.
\end{proof}

Now we are ready to prove
\begin{theorem} \label{even-II}
Let $\rho = 4\pi l$. If the developing map $f$ 
satisfies the type I relation \textup{(\ref{case-type-I})}, 
then $l$ is odd. Furthermore $g(-z) = -g(z)$ and $u(-z) = u(z)$. 
\end{theorem}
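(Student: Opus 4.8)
The plan is to extract everything from the logarithmic derivative $g=f'/f$ assembled above, the one real point being to upgrade the vanishing of the even-order derivatives $g^{(2j)}(0)$ into a genuine negation-symmetry of the poles of $g$. I would first isolate the chain of equivalences that makes the statement manageable. Writing $g(z)=-\tfrac12\sum_{i=1}^{l}s(z-p_i)$ with the slope function $s(z)=\wp'(z)/(\wp(z)-e_2)$, oddness of $g$ is equivalent to evenness of $f$ (integrate $g=(\log f)'$) and \emph{implies} evenness of $u$ through the formula $u=\log\bigl(8|f'|^2/(1+|f|^2)^2\bigr)$. Since $s$ is odd, $g(-z)=\tfrac12\sum_i s(z+p_i)$, and comparing principal parts shows that $g$ is odd precisely when the residue-$(+1)$ pole set $\{[p_i]\}\subset E'$ is stable under $z\mapsto-z$; call this condition (A). Thus the whole theorem reduces to establishing (A), after which oddness of $g$, evenness of $u$, and (via the sum constraint) the parity of $l$ all follow.

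Next I would convert the order-of-vanishing condition into symmetric-function data. As $g$ has a zero of order exactly $l$ at $0$, all $g^{(k)}(0)$ with $k<l$ vanish; applying Proposition \ref{even-odd} to the even-order ones shows that their vanishing is equivalent to the vanishing of the odd power sums $P_{2m+1}=\sum_{i=1}^{l}s(p_i)^{2m+1}$, and a careful count confirms that this covers \emph{every} odd power sum of degree $\le l$ (precisely $\lceil l/2\rceil$ relations, for either parity of $l$). Feeding $P_1=P_3=\cdots=0$ into Newton's identities, a short induction forces every odd elementary symmetric function of the slopes $s_i:=s(p_i)$ to vanish, so the multiset $\{s_1,\dots,s_l\}$ is invariant under $s\mapsto-s$.

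The crux is to promote this slope-symmetry to the geometric condition (A). I would exploit that $s\colon E'\to\mathbb{P}^1$ has degree two with deck involution $z\mapsto-z-\omega_2$, equivalently $s(z+\omega_2)=-s(z)$ and $s(-z-\omega_2)=s(z)$ (a direct half-period computation using Lemma \ref{lemma-s''}). Given $s_j=-s_i$, the equality $s(-p_i)=-s_i=s(p_j)$ puts $p_j$ in the fibre $s^{-1}(s(-p_i))=\{-p_i,\;p_i-\omega_2\}$; but $p_i-\omega_2\equiv p_i+\omega_2=q_i\pmod{\Lambda'}$ is a residue-$(-1)$ pole, whereas $p_j$ is a residue-$(+1)$ pole, and the $2l$ points $p_1,\dots,p_l,q_1,\dots,q_l$ are distinct, so this alternative is excluded. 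Hence $-p_i\equiv p_j$, i.e.\ $\{-p_i\}=\{p_i\}$, which is (A). I expect exactly this deck-transformation bookkeeping — discarding the spurious fibre point, and so distinguishing the true symmetry $\{-p_i\}=\{p_i\}$ from the decoy pattern $\{-p_i\}=\{q_i\}$ — to be the main obstacle, since the power sums only remember the slopes and are blind to the points themselves.

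With (A) in hand $g$ is odd, hence $f$ is even and $u(-z)=u(z)$, and I would finish by invoking Proposition \ref{g-odd-u-even}: condition (A) is its hypothesis (b1), so parts (b2)--(b3) yield that exactly one of the $p_i$ is a two-torsion point — forced to be $\equiv\tfrac12\omega_1$ by the constraint $\sum p_i\equiv\tfrac12\omega_1\pmod{\Lambda}$ of Lemma \ref{lemmass:nontriv} — whence the remaining poles pair off as $\{P,-P\}$ and $l=2n+1$ is odd. This completes the proof.
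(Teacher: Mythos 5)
Your proposal is correct and follows essentially the same route as the paper: Proposition \ref{even-odd} to convert the vanishing of $g^{(2j)}(0)$ into symmetric-function data on the slopes, the degree-two slope map $s\colon E'\to\mathbb{P}^1$ with deck involution $z\mapsto -z-\omega_2$ to turn slope-symmetry into point-symmetry (excluding the $q_i$ alternative by distinctness of zeros and poles of $f$), and the constraint $\sum p_i\equiv\tfrac12\omega_1\pmod{\Lambda}$ to force $l$ odd. The only differences are cosmetic packaging: you pass through power sums and Newton's identities where the paper reads off the odd elementary symmetric functions directly, and you establish the negation-stability (A) uniformly and then quote Proposition \ref{g-odd-u-even}(b) for the parity, whereas the paper splits into parity cases and derives the contradiction for even $l$ from the sum constraint explicitly — but that is exactly the argument inside the proof of (b2)--(b3), so the mechanism is the same.
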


\begin{proof}
Consider the polynomial
\begin{equation*}
S(x) = \prod_{i = 1}^l (x - s(p_i)).
\end{equation*}
By Proposition \ref{even-odd}, the relations
\begin{equation*}
0 = g(0) = g''(0) = \cdots = g^{(2[\frac{l - 1}{2}])}(0)
\end{equation*}
lead to the vanishing of all odd symmetric polynomials of
$s(p_i)$'s in the expansion of $S(x)$.

If $l = 2n$, then $S(x)$ consists of only even degrees and its
roots $s(p_i)$ must appears in pairs. Without loss of generality
we may assume that
\begin{equation} \label{even-pm}
s(p_1) = -s(p_{n + 1}), s(p_2) = -s(p_{n + 2}), \ldots, s(p_n) =
-s(p_{2n}).
\end{equation}

Notice that the slope equation
$$
\frac{\wp'(a)}{\wp(a) - e_2} = s(a) = -s(b) = -\frac{\wp'(b)}{\wp(b) - e_2}
$$
leads to $b = -a$ or $b = a + \omega_2$. To see this, notice that
under the cubic embedding $z \mapsto (\wp(z), \wp'(z))$, $s(a)$ is
slope of the line $\ell_a$ connecting the images of $z = \omega_2$
and $z = a$, with the unique third intersection point being $z =
-a  - \omega_2$ and $s(-a - \omega_2) = s(a)$. Thus the slope
function defines a branched double cover
$$
s: E' \to \mathbb{P}^1(\mathbb{C}).
$$
(From (\ref{s'}), it has 4 branch points given by $\wp(z) = e_2
\pm \sqrt{\mu}$.)

In particular the line with slope $-s(a) = s(-a)$ and passing
through $(e_2, 0)$ must be  $\ell_{-a} \equiv \ell_{a +
\omega_2}$. That is, $b = -a$ or $b = a + \omega_2$ as claimed.

In our case (\ref{even-pm}), we must conclude $p_{n + 1} = -p_1$
since $p_1 + \omega_2 = q_1$ can not appear in $p_i$'s. In the
same way we conclude that
\begin{equation} \label{pm}
p_i = -p_{i + n}, \quad i = 1, \ldots, n.
\end{equation}
In particular $\sum p_i = 0$. But this violates $\sum p_i \equiv
\frac{1}{2}\omega_1$ modulo $\Lambda'$ (which follows from $g(z +
\omega_2) = -g(z)$ in Lemma \ref{half-cong}), hence $l$ is odd.

For $l = 2n + 1$, $S(x)$ is a polynomial in odd degrees only. In
particular there is a root $x = 0$ of $S(x)$ and we may assume
that $s(p_{2n + 1}) = 0$ (namely $p_{2n + 1} =
\frac{1}{2}\omega_1$ or $\frac{1}{2}(\omega_1' + \omega_2') =
\frac{1}{2}\omega_1 + \omega_2$). 

Consider the polynomial $S(x)/x$ in pure even degrees, then in
exactly the same manner as above we conclude that (\ref{pm}) still
holds and
$$
S(x) = x \prod_{i = 1}^n (x - s(p_i))(x + s(p_i)).
$$

It is clear that now $g(-z) = -g(z)$. 
Then $f(-z) = f(z)$, which implies that $u$ is an even function. 
\end{proof}

\subsection{The polynomial system}

The remaining statements in Theorem \ref{thm-type I} 
which have not been proved yet are that these $p_1, \ldots, p_n$ 
are determined by polynomial equations in
$\wp(p_i)$'s.

Philosophically this follows easily from (\ref{g-zeta}) and
(\ref{s'}). Indeed it is clear that the odd order derivatives of
$g$ at $z = 0$ will involve only rational expressions with
denominator being powers of $\wp(p_i) - e_2$ and with at most even
derivatives $\wp(z)^{(2j)}(p_i)$ in the numerator (all expressions
in $-p_i$ are transformed into expressions in $p_i$). The latter
can be written into polynomials in $\wp(p_i)$ 
and thus the polynomial system is obtained.

\begin{proof}[Proof of Theorem \ref{thm-type I}]

To write down the complete set of polynomial equations explicitly, recall
\begin{equation} \label{g-der}
\begin{split}
g(z) &= \sum_{i = 1}^l (\zeta(z - p_i) - \zeta(z - p_i - \omega_2) - \eta_2/2), \\
-g'(z) &= \sum_{i = 1}^l (\wp(z - p_i) - \wp(z - p_i - \omega_2)),\\
-g^{(m + 1)}(z) &= \sum_{i = 1}^l (\wp^{(m)}(z - p_i) 
- \wp^{(m)}(z - p_i - \omega_2)) \quad \forall m \in \mathbb{Z}_{\ge 0}, 
\end{split}
\end{equation}
and the half period formula (let $\tilde \wp(p) = \wp(p + \omega_2)$) 
\begin{equation*}
\tilde \wp = e_2 + \frac{\mu}{\wp - e_2}
\end{equation*}
where $\mu = (e_1 - e_2)(e_3 - e_2)$. 
Equivalently $(\wp - e_2)(\tilde \wp - e_2) = \mu$.

In the proof of Theorem \ref{even-II}, 
the even order derivatives $g^{(2j)}(0) = 0$, $j = 0, \ldots, n$, 
leads to the evenness of solutions. 
We will show that the remaining odd order differentiations 
$g^{(2j + 1)}(0) = 0$, $j = 0, \ldots, n - 1$, leads to the desired polynomial system. 

To calculate $g^{(2j + 1)}(0)$, we first notice that
\begin{lemmass} \label{P-der}
For $k \in \mathbb{N}$, $(\wp^k)''$ is a degree $k + 1$ polynomial 
in $\wp$. Indeed
$$
(\wp^k)'' = 2k(2k + 1) \wp^{k + 1} - \frac{g_2}{2} 
k(2k - 1) \wp^{k - 1} - k(k - 1)g_3 \wp^{k - 2}. 
$$
\end{lemmass}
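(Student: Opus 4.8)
The plan is to derive the identity by brute differentiation, using only the two classical differential equations satisfied by the Weierstrass function, namely
\begin{equation*}
(\wp')^2 = 4\wp^3 - g_2 \wp - g_3
\qquad\textup{and}\qquad
\wp'' = 6\wp^2 - \tfrac{1}{2}g_2,
\end{equation*}
the second being the $z$-derivative of the first divided by $2\wp'$. (These are the same relations already invoked through \eqref{s'-2} in the proof of Lemma \ref{lemma-s''}.) No further elliptic-function theory is needed; the statement is purely a differential-algebraic consequence of these two formulas.

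First I would apply the Leibniz rule twice to the power $\wp^k$. From $(\wp^k)' = k\,\wp^{k-1}\wp'$ one gets
\begin{equation*}
(\wp^k)'' = k(k-1)\,\wp^{k-2}(\wp')^2 + k\,\wp^{k-1}\wp''.
\end{equation*}
Next I would substitute the two displayed ODEs into this expression, replacing $(\wp')^2$ and $\wp''$ by their polynomial expressions in $\wp$. This immediately turns the right-hand side into a polynomial in $\wp$ of degree $k+1$, since each substituted factor raises the power of $\wp$ by at most three while the prefactors $\wp^{k-2}$ and $\wp^{k-1}$ are already present.

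The final step is to collect like powers of $\wp$ and match coefficients with the claimed formula. The coefficient of $\wp^{k+1}$ comes out as $4k(k-1) + 6k = 4k^2 + 2k = 2k(2k+1)$; the coefficient of $\wp^{k-1}$ is $-g_2\bigl(k(k-1) + \tfrac{1}{2}k\bigr) = -\tfrac{1}{2}g_2\,k(2k-1)$; and the coefficient of $\wp^{k-2}$ is $-k(k-1)\,g_3$, exactly as asserted.

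There is no real obstacle here, as the computation is entirely mechanical; the only point requiring a word of care is the interpretation of the degenerate terms for small $k$. When $k=1$ the nominal monomial $\wp^{k-2} = \wp^{-1}$ would appear, but its coefficient $k(k-1)$ vanishes, so the formula correctly reduces to $\wp'' = 6\wp^2 - \tfrac{1}{2}g_2$; similarly the $\wp^{k-1}$ term behaves correctly at $k=1$. Thus the formula is valid for all $k \in \mathbb{N}$ with the standing convention that a monomial with a vanishing coefficient is simply absent, and this observation is all that is needed to close the argument.
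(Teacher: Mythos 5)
Your proposal is correct and follows exactly the paper's own argument: differentiate $(\wp^k)' = k\wp^{k-1}\wp'$ once more to get $(\wp^k)'' = k(k-1)\wp^{k-2}(\wp')^2 + k\wp^{k-1}\wp''$, then substitute the cubic relations $(\wp')^2 = 4\wp^3 - g_2\wp - g_3$ and $\wp'' = 6\wp^2 - \tfrac{1}{2}g_2$ and collect coefficients. The paper leaves the coefficient check and the small-$k$ degeneracies implicit ("the lemma follows from the cubic relations"), whereas you carry them out explicitly, but the route is the same.
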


\begin{proof}
Since $(\wp^k)' = k \wp^{k - 1}\wp'$, we get
\begin{equation*}
(\wp^k)'' = k(k - 1)\wp^{k - 2}(\wp')^2 + k\wp^{k - 1}\wp''.
\end{equation*}
The lemma follows from the cubic relations.
\end{proof}

Now we set $x_i = \wp(p_i)$, $\tilde x_i = \tilde \wp(p_i) 
= \wp(p_i + \omega_2)$ for $i = 1, \ldots, n$. 
It is clear that $(x_i - e_2)(\tilde x_i - e_2) = \mu$ for all $i = 1, \ldots, n$.

During the following computations, we assume that 
$p_{2n + 1} = \tfrac{1}{2} \omega_1$ and $p_{n + i} = -p_i$ 
for $i = 1, \ldots, n$. 
For the other case $p_{2n + 1} = \tfrac{1}{2} \omega_1 + \omega_2$, 
we could replace $f$ by $1/f$ to reduce to the former case, 
since $f$ and $1/f$ give rise to the same solution $u$.

For $j = 0$ we have from (\ref{g-der}) that
\begin{equation*}
-g'(0) = 2 \sum_{i = 1}^n x_i + e_1 - 2\sum_{i = 1}^n \tilde x_i - e_3 = 0.
\end{equation*}
This is the degree one equation ($m = 1$) with $c_1 = -\tfrac{1}{2}(e_1 - e_3) \ne 0$.

For $j = 1$, since 
\begin{equation*}
-g''' = \sum_1^l \wp'' - \sum_1^l \tilde \wp'' 
= 6\sum_1^l \wp^2 - 6\sum_1^l\tilde \wp^2,
\end{equation*}
the equation $g'''(0) = 0$ becomes
\begin{equation*}
\sum_{i = 1}^n x_i^2 - \sum_{i = 1}^n \tilde x_i^2 = -\tfrac{1}{2}(e_1^2 - e_3^2).
\end{equation*}
This is the degree two equation ($m = 2$) with $c_2 = -\tfrac{1}{2}(e_1^2 - e_3^2)$.

The general case follows from Lemma \ref{P-der}. 
Suppose that $g^{(2j + 1)}(0) = 0$ gives rise to a new polynomial relation 
$\sum_{i = 1}^n x_i^j - \sum_{i = 1}^n \tilde x_i^j = c_j$. 
A further double differentiation increases the degree of the
polynomial in $\wp$ by one, hence it gives rise to 
a new relation 
$\sum_{i = 1}^n x_i^{j + 1} - \sum_{i = 1}^n \tilde x_i^{j + 1} = c_{j  + 1}$, 
with the universal constant $c_{j + 1}$ 
being determined by $c_1, c_2, g_2, g_3$ recursively.

Therefore, we conclude that 
$x_i = \wp(p_i)$, $\tilde x_i = \wp(p_i + \omega_2)$, $i = 1, \ldots, n$, 
satisfy the polynomial system:
\begin{equation*}
\begin{split}
\sum_{i = 1}^n x_i^j - \sum_{i = 1}^n \tilde x_i^j = c_j, \quad j = 1, \ldots, n,\\
(x_i - e_2)(\tilde x_i - e_2) = \mu, \quad i = 1, \ldots, n,
\end{split}
\end{equation*}
which is easily seen to be equivalent to the system (\ref{system-I}).

Conversely, any solution of the polynomial system gives rise to 
a function $g$ which satisfies
$$
g^{(j)}(0) = 0, \quad j = 0, 1, \ldots, 2n.
$$
From $g$, the developing map $f$ is then constructed 
by Proposition \ref{g-odd-u-even}.
\end{proof}

\begin{remarkss}
In the next section we will prove that except for a finite set of tori, 
the mean field equation (\ref{Liouville-eq}) has exactly $n + 1$ 
solutions for $\rho = 4\pi l$ with $l = 2n + 1$. 
This implies that, except for those tori, 
the above polynomial system has exactly $n + 1$ solutions 
up to permutation symmetry by $S_n$. Equivalently it has $(n + 1)!$ solutions.

Since $c_j(\tau)$'s are all holomorphic in $\tau$, 
solutions $(x_i(\tau), \tilde x_i(\tau))$ of the polynomial system, 
hence the developing map $f(z; \tau)$, 
should then depend on $\tau$ holomorphically. 
It is not so obvious how to prove the holomorphic dependence 
of $f(z; \tau)$ in the moduli space of tori by other methods.
\end{remarkss}

\begin{example}
For $\rho = 4\pi$, $l = 1$ and $n = 0$. 
Then $p_1 = \tfrac{1}{2}\omega_1$. 
The polynomial system is empty and the solution $u$ is unique. 
This was first proved in \cite{LW}.
\end{example}

\begin{example} \label{12pi}
Consider the case $\rho = 12\pi$, i.e.~$l = 3$ and $n = 1$. Let
$p_1 = a$. $p_2 = -a$ and $p_3 = \frac{1}{2}\omega_1$. Then the
equation $g'(0) = 0$ becomes
\begin{equation*}
2\Big((\wp(a) - e_2) - \frac{\mu}{\wp(a) - e_2}\Big) + (e_1 - e_3) = 0.
\end{equation*}
That is, we get a degree 2 polynomial in $\wp(a)$:
\begin{equation*}
(\wp(a) - e_2)^2 + \tfrac{1}{2}(e_1 - e_3) (\wp(a) - e_2) - \mu = 0
\end{equation*}
and then
\begin{equation*}
\wp(a) = e_2 + \tfrac{1}{4}(e_3 - e_1) \pm \tfrac{1}{4} \sqrt{(e_3 -
e_1)^2 + 16(e_1 - e_2)(e_3 - e_2)}.
\end{equation*}

These are exactly the solutions obtained in \cite{LW2} via a
different method. In particular there are precisely two solutions
of the mean field equation on any torus $E$ with 
non-zero discriminant $(e_3 - e_1)^2 + 16(e_1 - e_2)(e_3 - e_2) \ne 0$ 
for the double cover $E'$, and with $\rho = 12\pi$. 
The case with zero discriminant will be discussed in Example \ref{ex-pn}.
\end{example}

\begin{example} \label{20pi}
Consider the case $\rho = 20\pi$, i.e.~$l = 5$ and $n = 2$. 
The full set of polynomial equations in $x_i$'s and $\tilde x_i$'s is given by 
\begin{equation*}
\begin{split}
x_1 + x_2 - \tilde x_1 - \tilde x_2 &= c_1  = -\tfrac{1}{2}(e_1 - e_3),\\
x_1^2  + x_2^2 - \tilde x_1^2 - \tilde x_2^2 &= c_2  = -\tfrac{1}{2}(e_1^2 - e_3^2),\\
(x_1 - e_2)(\tilde x_1 - e_2) &= \mu,\\
(x_2 - e_2)(\tilde x_2 - e_2) &= \mu.
\end{split}
\end{equation*}

Now the number of solutions $N'_n$ (here $n = 2$) 
for $x_1. x_2, \tilde x_1, \tilde x_2$ can be calculated by 
the Bezout theorem to be 
$N'_2 = 1 \times 2 \times 2 \times 2 - r_2^\infty = 8 - r_2^\infty$ 
where $r_2^\infty$ is the number of solutions at $\infty$, 
counted with multiplicity, of the projectivized 
system of polynomial equations. The projective system is 
\begin{equation*}
\begin{split}
X_1  + X_2 - \tilde X_1 - \tilde X_2 &= c_1 X_0,\\
X_1^2  + X_2^2 - \tilde X_1^2 - \tilde X_2^2 &= c_2 X_0^2,\\
(X_1 - e_2 X_0)(\tilde X_1 - e_2 X_0) &= \mu X_0^2,\\
(X_2 - e_2 X_0)(\tilde X_2 - e_2 X_0) &= \mu X_0^2.
\end{split}
\end{equation*}
And the infinity solutions are given by setting $X_0 = 0$:
\begin{equation*}
\begin{split}
X_1 + X_2 = \tilde X_1 + \tilde X_2, 
&\qquad X_1^2 + X_2^2 = \tilde X_1^2 + \tilde X_2^2,\\
X_1 \tilde X_1 = 0, &\qquad X_2 \tilde X_2 = 0.
\end{split}
\end{equation*}
This shows that $\{X_1, X_2\} = \{\tilde X_1, \tilde X_2\}$. 
Since these four variables are not all zero, it is easy to see 
that there are precisely two solutions given by
\begin{equation*}
\begin{split}
P_1:\quad X_1 = 0 = \tilde X_2, \quad X_2 = \tilde X_1 \ne 0,\\
P_2:\quad X_2 = 0 = \tilde X_1, \quad X_1 = \tilde X_2 \ne 0.
\end{split}
\end{equation*}

It remains to compute the multiplicity of $P_1$ and $P_2$. 
Consider $P_1$ first. Since it is in the chart 
$\tilde U_1 := \{ \tilde X_1 \ne 0\}$, in terms of 
$y_i = X_i/\tilde X_1$, $i = 1, 2$, $\tilde y_2 = \tilde X_2/\tilde X_1$ 
and $y_0 = X_0/\tilde X_1$, $P_1$ has coordinates 
$(y_0, y_1, y_2, \tilde y_2) = (0, 0, 1, 0)$ 
and the system at point $P_1$ reads as $f_i = 0$, $i = 1, \ldots, 4$, where
\begin{equation*}
\begin{split}
f_1 &= y_1 + y_2 - 1 - \tilde y_2 - c_1 y_0,\\
f_2 &= y_1^2 + (y_2 - 1)^2 +2(y_2 - 1) - \tilde y_2^2 - c_2 y_0^2,\\
f_3 &= (y_1 - e_2 y_0)(1 - e_2 y_0) - \mu y_0^2 = y_1 + \cdots,\\
f_4 &= (y_2 - e_2 y_0)(\tilde y_2 - e_2 y_0) - \mu y_0^2 
= (y_2 - 1)\tilde y_2 + \tilde y_2 + \cdots.
\end{split}
\end{equation*}
From these expressions, the appearance of degree one monomial 
in each $f_i$ shows that the local analytic coordinates 
$(y_0, y_1, y_2 - 1, \tilde y_2)$ at the point 
$P_1$ can be replaced by $f_1, f_3, f_2, f_4$ accordingly, 
and thus the multiplicity is one. 
Indeed $P_1 = (0, 0, 1, 0)$ is a simple point of $\{ f_i = 0 \}$ 
by computing the Jacobian
\begin{equation*}
\det \frac{\p (f_1, f_2, f_3, f_4)}{\p (y_0, y_1, y_2, \tilde y_2)}
(0, 0, 1, 0) = e_1 - e_3 \ne 0.
\end{equation*}  
Similarly the multiplicity at $P_2$ is one. 
Thus $r_2^\infty = 2$ and $N_2' = 8 - 2 = 6$. 

Since any reordering of $p_i$'s leads to the same solution, 
also it is easy to see that for generic tori 
we do not have any solution with $x_1 = x_2$, so finally
\begin{equation*}
N_2 = N_2'/2! = 3 = 2 + 1.
\end{equation*} 
\end{example}

\begin{remark} \label{KL-Lin}
The above method can be extended to the case $n = 3$, $\rho = 28\pi$ 
to show that $N_3 = 4$ since in this case the infinity solutions 
are still zero dimensional. It fails for $n \ge 4$ 
since positive dimensional intersections at infinity do occur 
and excess intersection theory is needed. 
The cases $n = 4$ and $n = 5$ were recently settled in \cite{KL} 
where the infinity solutions are one dimensional.
\end{remark}

%\subsection{Solving the system using field theory}
%
%By a linear change of variables, the conjecture follows from  
%
%\begin{theorem}
%Let $f_m = \sum_{i = 1}^n (x_i^m - x_i^{-m}) \in 
%\mathbb Q(x_1, \ldots, x_n)^{S_n}$ for $m = 1, \ldots, n$. Then
%$$
%[\mathbb Q(x_1, \ldots, x_n)^{S_n}: \mathbb Q(f_1, \ldots, f_n)] = n + 1.
%$$
%\end{theorem}
%
%The proof relies on the following easy lemma:
%
%\begin{lemma}
%Let $0 \le r, s \le n$ with $r + s = n$. Let $F$ 
%be a field containing $(n!)^{-1}$. 
%Assume that $a_1, \ldots, a_r, b_1, \ldots, b_s \in F$ 
%are mutually distinct and $a_1', \ldots, a_r', b_1', \ldots, b_s' \in F$ such that 
%$$
%(a_1^m + \cdots a_r^m) - (b_1^m + \cdots + b_s^m) 
%= (a_1'^m + \cdots a_r'^m) - (b_1'^m + \cdots + b_s'^m)
%$$
%for all $m = 1, \ldots, n$. Then $a_i' 
%= a_{\sigma(i)}$ and $b_j' = b_{\tau(j)}$ for some $\sigma \in S_r$, $\tau \in S_s$.
%\end{lemma}
%
%We consider an integral model of the field extension 
%$$
%\tilde \pi: \tilde X = {\rm Spec}\,
%\mathbb Q[x_1, \ldots, x_n, \prod x_i^{-1}] \to Z 
%= {\rm Spec}\,\mathbb Q[z_1, \ldots, z_n]
%$$
%under $z_m \mapsto f_m$. Choose a prime $p > n$ and let
%$$
%Y = {\rm Spec}\,\mathbb F_p [y_1, \ldots, y_n, 
%\prod\nolimits_{i < j} (y_i - y_j)^{-1}\prod y_i^{-1}].
%$$
%For all $r \in \mathbb N$, $0 \le r \le n$ let 
%$\alpha_r: Y \to \mathbb A^n = {\rm Spec}\, \mathbb F_p [t_1, \ldots, t_n]$ by
%$$
%t_m \mapsto y_1^m + \cdots + y_r^m - (y_{r + 1}^m + \cdots + y_n^m).
%$$
%It is clear that $\alpha_r$ is finite \'etale.
%
%Define an open dense set $V \subset \mathbb A^n$ by $V 
%= \bigcap_{r = 0}^n {\rm Im}(\alpha_r)$.

\section{Lam\'e for type I: Finite monodromies} \label{monodromy-I}
\setcounter{equation}{0}

\noindent
In this section we prove Theorem \ref{thm-K4} (c.f.~Theorem \ref{K4-thm}).

\subsection{From mean field equations to Lam\'e}

The second order equation
\begin{equation} \label{Lame-2}
L_{\eta, B}\, w := w''(z) - (\eta(\eta + 1) \wp(z) + B) w(z) = 0
\end{equation}
is known as the Lam\'e equation with two parameters \(\eta\) and
\(B\); the parameter $\eta$ is called the \emph{index} and $B$ is the 
called the ``\emph{accessary parameter}''.

\subsubsection{}
Recall that for any two linearly independent solutions $w_1$ and $w_2$
of a general second order ODE $w'' = I w$,
the Schwarzian derivative 
$$S(h)= \frac{h'''}{h'}-\tfrac{3}{2}
\left(\frac{h''}{h'}\right)^2$$
of $h = w_1/w_2$ satisfies $S(h) = -2I$, hence for any two
linear independent local solutions \(w_1, w_2\) of the Lam\'e equation 
(\ref{Lame-2}) we have
$$
S(\tfrac{w_1}{w_2}) = -2(\eta(\eta + 1) \wp(z) + B).
$$
Conversely if \(h_1\) is meromorphic function with
\(\,S(h_1)=-2(\eta(\eta + 1) \wp(z) + B\),
then \(\,S(h_1)=S(\tfrac{w_1}{w_2})\) for a chosen pair of linearly
independent solutions \(w_1, w_2\) of (\ref{Lame-2}), 
therefore \(\,h_1\,\) is equal to a linear fractional transformation
of \(\tfrac{w_1}{w_2}\), or equivalently there exists a pair of
linearly independent solutions \(w_3, w_4\) of (\ref{Lame-2})
such that \(h_1={w_3}/{w_4}\).

\subsubsection{}
%We calculate the Schwarzian derivative of the
%developing map $f$ of a solution $u$, if it exists, 
Suppose that \(u\) is a solution o f the mean field equation
\begin{equation} \label{MFE-rho}
\triangle u + e^u = \rho\delta_0
\end{equation}
on a flat torus $E = \mathbb C/\Lambda$, $\Lambda 
= \mathbb Z \omega_1 + \mathbb Z \omega_2$,
and \(f\) is a developing map of \(u\) on a covering space
of the punctured torus \(\,E\smallsetminus\{0\}\).
Locally $u$ is expressed in $f$ via
\begin{equation*}
u(z) = \log \frac{8|f'(z)|^2}{(1 + |f(z)|^2)^2}.
\end{equation*} 
%For convenience, we write $\rho = 8\pi \eta \in \mathbb R$. 
Let \(\,\eta:={\rho}/{8\pi}\).
By (\ref{3-3}), we have
\begin{equation} \label{S-der}
\begin{split}
S(f) &:= \frac{f'''}{f'} - \frac{3}{2} \Big( \frac{f''}{f'} \Big)^2 \\
&= u_{zz} - \frac{1}{2} u_z^2 = -2\eta(\eta + 1) \frac{1}{z^2} + O(1),
\end{split}
\end{equation}
where the last equality follows from the asymptotic expansion 
\[u \sim 4\eta \log |z|\] 
at $z = 0$ and that $u$ is smooth outside $z = 0$ in $E$.
The expression of $S(f)$ in $u$ shows that it is a meromorphic
function on \(E\), which is holomorphic outside \(\{0\}\) and
its polar part at \(z=0\) is given by the last
expression in (\ref{S-der}).
%with its only principal part at $z = 0$, 
%with  being a pole of pure order two as above, hence 
Therefore there exists a constant \(B = B(E, \eta, u)\) such that
\begin{equation} \label{S(f)}
S(f)=u_{zz} - \tfrac{1}{2} u_z^2 = -2(\eta(\eta + 1) \wp(z) + B).
\end{equation} 
%for some constant $B = B(E, \eta, u)$. 
It follows that there exists two linearly independents solutions
$w_1$ and $w_2$ of the Lam\'e equation (\ref{Lame-2})
with accessary parameter \(B(E,\eta,u)\) such that 
\(\,f = w_1/w_2\).
%developing map $f$ of a $u$ can be written as a ratio $f = w_1/w_2$ 
%of two independent solutions $w_1$ and $w_2$ of (\ref{Lame-2}).

\subsubsection{}
%In the following sections we investigate the relation between 
%Lam\'e equations and the mean field equations. 
%The essential ingredient to be considered 
%is the monodromy group of the Lam\'e equation. 
%
%Classically 
The Lam\'e equation (\ref{Lame-2})
had been studied in the classical literature
in two special cases, very extensively in case when
the index $\eta$ is a positive integer, and
somewhat less so in the case when the index \(\eta\)
is a  half-integer, i.e.\ $2\eta = 2n + 1$ is an odd positive integer.
We have seen in the previous sections that the
%, we know that the 
former case corresponds to type II solutions 
while the latter case is for type I solutions. 
%In this section we treat the type I case, where
%$\frac{\rho}{4\pi} = 2\eta = 2n + 1$ is an odd positive integer. 
The main objective of this section is to prove that for any
odd positive integer \(2n+1\), on all but a finite
number of isomorphism classes of elliptic curves,
%except 
%for a finite number of isomorphism classes of elliptic curves 
there are precisely $n + 1$ solutions to the mean field equation
\(\,\triangle u+ e^u = 4\pi(2n+1)\delta_0\).

%\subsection{Logarithmic free condition at $z = 0$} \label{log-free}
%\bigbreak
The following theorem is due to 
Brioschi \cite{Brioschi}, Halphen \cite[pp.\,471--473]{Halphen} 
and Crawford \cite{Crawford} in the late nineteenth century;
see for \cite{Crawford} for a complete proof.
See also \cite[pp.\,162--164]{Poole} for a succinct presentation of
Halphen's transformation and Crawford's procedure for analyzing 
Brioschi's solution; c.f.\ \cite[p.\,570]{Whittaker}.

\begin{theorem} %[c.f.~{\cite[p.164]{Poole}}] 
\label{BH-poly}
%For $\eta = n + \tfrac{1}{2}$, $n \ge 0$, 
Let \(n\) be a non-negative integer.
\begin{itemize}
\item[\textup{(a)}] There exists a 
monic polynomial 
$\,p_n(B; \Lambda)=p_n(B, g_2(\Lambda), g_3(\Lambda))$ 
of degree $n + 1$ in $B$ with coefficients
in \(\,\ZZ[\tfrac{g_2(\Lambda)}{4}; \
\tfrac{g_3(\Lambda)}{4}]\,\) such that 
the Lam\'e equation \(\,\displaystyle{L_{n + 1/2, B}\, w = 0}\,\) 
on \(\CC/\Lambda\)
has all solutions free from logarithm at $z = 0$ if and only if 
$p_n(B) = 0$.
This polynomial \(\,p_n(B, g_2, g_3)
\in \ZZ[\tfrac{1}{2}][B, g_2, g_3]\,\) is homogeneous
of weight \(n\) if \(B, g_2, g_3\) are given weights
\(1, 2, 3\) respectively.

\item[\textup{(b)}] For any lattice \(\Lambda\)
outside a finite subset \(S_n\) of homothety classes of
lattices in \(\CC\), 
the polynomial \(p_n(B;\Lambda)\) has \(n + 1\) distinct 
roots.
\end{itemize}
\end{theorem}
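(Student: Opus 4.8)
The plan is to prove part (a) by a Frobenius analysis of the Lam\'e equation \eqref{Lame-2} with index $\eta=n+\tfrac12$ at its regular singular point $z=0$, extracting $p_n(B)$ as the resonant compatibility condition, and to prove part (b) by showing that the $B$-discriminant of $p_n$ is a nonzero weighted-homogeneous function of $(g_2,g_3)$, via a degeneration to the trigonometric (nodal) limit.

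For part (a), I would first record that at $z=0$ the equation $L_{n+1/2,B}\,w=0$ is Fuchsian with indicial equation $r(r-1)=\eta(\eta+1)$, whose roots $r_+=n+\tfrac32$ and $r_-=-n-\tfrac12$ differ by the positive integer $2n+2$, so a logarithm is a priori possible. Writing a Frobenius solution at the smaller root as $w(z)=z^{-n-1/2}\sum_{k\ge0}a_{2k}z^{2k}$ (only even powers survive, because $\wp$ is even and the $m=1$ indicial coefficient $-2n-1$ is nonzero), substitution into \eqref{Lame-2} together with the Laurent expansion $\wp(z)=z^{-2}+\sum_{j\ge1}b_{2j}z^{2j}$ yields the recursion
\begin{equation*}
4k(k-n-1)\,a_{2k}=B\,a_{2k-2}+\eta(\eta+1)\sum_{j\ge1}b_{2j}\,a_{2k-2j-2},\qquad a_0=1.
\end{equation*}
The left coefficient $4k(k-n-1)$ is nonzero for $1\le k\le n$, so $a_2,\dots,a_{2n}$ are uniquely determined, and the dominant $B$-term makes $a_{2k}$ a polynomial in $B$ of degree exactly $k$ with nonzero leading coefficient $\prod_{m=1}^{k}(4m(m-n-1))^{-1}$. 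At the resonant index $k=n+1$ the left coefficient vanishes, so the recursion degenerates to the compatibility condition
\begin{equation*}
p_n(B):=B\,a_{2n}+\eta(\eta+1)\sum_{j\ge1}b_{2j}\,a_{2n-2j}=0,
\end{equation*}
a polynomial of degree $n+1$ in $B$, made monic after rescaling by the nonzero constant $\prod_{k=1}^{n}4k(k-n-1)$. Precisely when $p_n(B)=0$ the coefficient $a_{2n+2}$ is free and a second genuine power-series solution exists, so all solutions are free from logarithm; otherwise a logarithm is forced. This is the asserted characterization, and it dovetails with the developing-map viewpoint used in this section: by Lemma \ref{order} and Theorem \ref{even-II} the log-free condition is equivalent to the Schwarzian ratio $f=w_1/w_2$ satisfying \eqref{S(f)} being single-valued and meromorphic at $0$ with multiplicity $2n+2$, which is exactly the local behaviour of a normalized even type I developing map and the link to Theorem \ref{thm-K4}. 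The integrality and homogeneity refinements then follow by bookkeeping the recursion: the denominators are $4k(k-n-1)$, $\eta(\eta+1)=\tfrac14(2n+1)(2n+3)$, and each $b_{2j}$ is weighted-homogeneous of weight $j+1$ in $(g_2,g_3)$, so by induction $a_{2k}$ is weighted-homogeneous of weight $k$ when $B,g_2,g_3$ carry weights $1,2,3$ and all $2$-adic denominators cancel; this is the origin of the homogeneity and of the coefficient ring $\ZZ[\tfrac12][B,g_2,g_3]$ asserted in (a).

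For part (b) I would pass to the $B$-discriminant $\Delta_n:=\textup{disc}_B\,p_n$. By the homogeneity established in (a), $\Delta_n$ is a nonconstant weighted-homogeneous element of $\ZZ[\tfrac12][g_2,g_3]$, so its zero locus inside $\{\,g_2^3-27g_3^2\ne0\,\}$ is invariant under the homothety action $(g_2,g_3)\mapsto(t^{-4}g_2,t^{-6}g_3)$ and descends to a closed subset of the one-dimensional space of homothety classes (the $j$-line). That subset is finite unless $\Delta_n\equiv0$, so it suffices to exhibit a single lattice for which $p_n$ has $n+1$ distinct roots. The plan is to do this in the limit $\textup{Im}\,\tau\to\infty$, where $\wp(z)\to\pi^2/\sin^2(\pi z)-\pi^2/3$ and \eqref{Lame-2} degenerates to a P\"oschl--Teller equation whose logarithm-free accessory parameters can be computed explicitly and seen to be $n+1$ pairwise distinct values; continuity of the roots of $p_n$ in $\tau$ then gives distinct roots for all $\tau$ near the cusp, whence $\Delta_n\not\equiv0$, and the exceptional set $S_n$ of homothety classes is finite.

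The main obstacle I anticipate is exactly this last effective input in (b): the vanishing of $\Delta_n$ on a proper subset is automatic from homogeneity, but producing one lattice with $n+1$ distinct roots is the substantive step. The trigonometric degeneration is the cleanest candidate, but it must be executed with care to confirm that the $n+1$ limiting accessory parameters are genuinely distinct and that no root escapes to infinity as $q\to0$; should the limit prove delicate, an alternative is to regard $\Delta_n$ as a modular form in $\tau$ and compute the leading term of its $q$-expansion. The arithmetic refinements in (a), in particular the precise claim that the coefficients lie in $\ZZ[\tfrac{g_2}{4},\tfrac{g_3}{4}]$, are routine but tedious and would be handled by the induction on $k$ in the recursion above.
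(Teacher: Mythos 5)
Your part~(a) is correct, and it takes a route different from both proofs in the paper. You run Frobenius directly on $L_{n+1/2,B}\,w=0$ at $z=0$: the exponents $n+\tfrac{3}{2}$ and $-n-\tfrac{1}{2}$ differ by $2n+2$, odd coefficients vanish by parity, the recursion $4k(k-n-1)\,a_{2k}=B\,a_{2k-2}+\eta(\eta+1)\sum_{j\ge1}b_{2j}\,a_{2k-2j-2}$ determines $a_2,\dots,a_{2n}$ as polynomials of exact degree $k$ in $B$, and the resonant compatibility condition at $k=n+1$ becomes, after rescaling by $\prod_{k=1}^{n}4k(k-n-1)$, the monic degree-$(n+1)$ polynomial $p_n$; standard Frobenius theory then gives that all solutions are logarithm-free iff $p_n(B)=0$. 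The paper instead (i) quotes Crawford's proof, which passes through Halphen's transformation of the equation, and (ii) gives in \S\ref{alt-proof-BH} its own proof working not with the linear ODE but with $v=\log f'$ for a ratio $f=w_1/w_2$ of solutions, i.e.\ with the Schwarzian equation; there the recursion (\ref{BH}) is quadratic in the unknowns $E_k$, but the vanishing of the coefficient $2(k-n)$ at the resonant index $k=n$ plays exactly the same role as your vanishing factor $4k(k-n-1)$ at $k=n+1$. The two arguments are comparable in length and both are local at $z=0$. One caveat: your closing claim that $p_n\in\ZZ[\tfrac12][B,g_2,g_3]$ is ``routine but tedious'' bookkeeping is too optimistic, since your recursion divides by $4k(k-n-1)$, which introduces odd primes; the required cancellation is not visible from the recursion (the paper's own Schwarzian recursion has the same feature, and the paper obtains the integrality from Crawford's construction rather than from its alternative proof).

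Part~(b) contains a genuine gap. Your reduction is sound: $\textup{disc}_B(p_n)$ is weighted-homogeneous in $(g_2,g_3)$, hence either vanishes identically or vanishes at only finitely many homothety classes, so it suffices to exhibit one point where $p_n$ has $n+1$ distinct roots. But that single effective input is precisely what you do not supply. The assertion that the trigonometric degeneration $\wp\to\pi^2/\sin^2(\pi z)-\pi^2/3$ yields a P\"oschl--Teller equation with exactly $n+1$ pairwise distinct logarithm-free accessory parameters is stated, not proved, and you yourself flag it as the substantive step; as written, (b) is a plan rather than a proof. (Your secondary worry that roots might escape to infinity as $q\to0$ is vacuous: $p_n$ is monic with coefficients holomorphic in $q$ up to $q=0$, so its roots move continuously, and distinctness at $q=0$ is the only thing to check. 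It does hold for small $n$ --- e.g.\ $p_1=B^2-\tfrac34 g_2$ and $p_2=B^3-7g_2B+20g_3$ have nonvanishing $B$-discriminant on the locus $g_2^3=27g_3^2$, $g_2\neq0$ --- but the general case is an honest hypergeometric computation you have not done.) The paper closes the gap by a different mechanism: Crawford's recursion, specialized to rectangular lattices $\ZZ+\sqrt{-1}\,a\,\ZZ$, is a Sturm sequence in the sense of \S\ref{rem:sturm-seq} (Corollary \ref{cor:sturm}), and this yields $n+1$ distinct \emph{real} roots of $p_n(B;\Lambda)$ for every rectangular torus, whence the discriminant, a modular form for $\textup{SL}_2(\ZZ)$, is not identically zero. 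To complete your version you would either have to carry out the cusp computation in full or substitute the rectangular-torus Sturm argument.
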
 

\begin{proof}
The logarithm-free solutions of the Lam\'e equation 
\(\,\displaystyle{L_{n + 1/2, B}\, w = 0}\,\) were first 
discovered by Brioschi \cite[p.\,314]{Brioschi}, but the 
underlying structure are more transparently exhibited
using Halphen's transformation \cite[p.\,471]{Halphen}
as carried out in detail by Crawford \cite{Crawford}.
The statement (a) is proved in \cite{Crawford};
see also \cite[p.\,164]{Poole} for a presentation of
Crawford's proof.
A slightly different proof of (a) following the same train of
ideas can be found in \cite[p.\,26--28]{Baldassari}.
%makes heavy use of the elliptic function theory 
%and is global in nature. 
\smallbreak

Crawford's proof provides a recursive formula for
\(\,p_n(B;\Lambda)\).  
When \(\Lambda\) is of the form 
\(\ZZ+\sqrt{-1}a\ZZ\) with \(a\in \RR_{>0}\),
this recursive formula also produces a 
\emph{Sturm sequence} starting with 
\(\,p_n(B)\), therefore
\(\,p_n(B;\Lambda)\,\) has \(n+1\) distinct
real roots; see
\cite[p.\,94]{Crawford}.% 
%that for 
%\(\Lambda\) of the form 
%\(\ZZ+\sqrt{-1}a\ZZ\) with \(a\in \RR_{>0}\),
%the polynomial 
%\(\,p_n(B;\Lambda)\,\) has \(n+1\) distinct
%real roots.
\footnote{The statement that 
\(p_n(B)\) has \(n+1\) distinct real roots was proved in 
\cite[p.\,94]{Crawford} under the condition that the 
\(x\)-coordinates 
of the three non-trivial two-torsion points,
\(e_i=\wp(\omega_i/2;\Lambda)\) for \(i=1, 2, 3\),
are real numbers.  This is the case when 
the lattice \(\Lambda\) is of the form 
\(\Lambda_{\tau}\) with \(\tau\in\sqrt{-1}\RR_{>0}\).}
This implies that the discriminant of the polynomial
\(\,p_n(B;\Lambda)\,\), which is a modular form 
for \(\,\textup{SL}_2(\ZZ)\), is not identically \(0\).
The statement (b) follows. 
See \S \ref{rem:sturm-seq} for remarks on Sturm's theorem
used in Crawford's proof.
\end{proof}

\begin{remarkss}
We will give an alternative proof 
of part (a) of Theorem \ref{BH-poly} in \S \ref{alt-proof-BH},
which is essentially local near $z = 0$. 
Our proof not only provides a new construction of the polynomial $p_n(B)$, 
it also generalizes to the case with multiple singular sources. 
This generalization will be presented in a later work; c.f.~\cite{CLW2}.
\end{remarkss}

\subsection{\bf Remark on Sturm's theorem.}\enspace
\label{rem:sturm-seq}
Crawford's proof in \cite[p.\,94]{Crawford} that 
the polynomial
\(\,p_n(B;\Lambda)\,\) has \(n+1\) distinct real roots 
for rectangular tori
%when \(\Lambda = \ZZ+\sqrt{-1}a\ZZ\) for some 
%\(a\in\RR_{>0}\) 
uses a fact closely related to Sturm's theorem on
real roots of polynomials over \(\RR\),
not found in standard treatment
of this topic, such as \cite[11.3]{vdWaerden} 
and \cite[5.2]{Jacobson}.\footnote{This fact must 
be familiar to all educated 
scientists in the late nineteenth and early twentieth
century, often used freely without comments in 
mathematical writings at the time.
This is the case for the proofs 
in \cite[p.\,557]{Whittaker}
and \cite[p.\,163]{Poole} for the existence of
\(2m+1\) distinct real roots of the polynomial \(\,l_m(B)\) 
corresponding to \(2m+1\) Lam\'e functions for 
the equation \[\,\displaystyle{\frac{d^2w}{dz^2}
- (m(m+1)\wp(z;\Lambda)+B)w = 0}\,\]
when \(\Lambda = \ZZ+\sqrt{-1}a\ZZ\,\) for some 
\(a\in\RR_{>0}\) and \(m\in \NN_{>0}\).
However this then-well-known fact is no longer part of the general 
education for mathematicians today.}
We have been able to find only one reference of this fact,
as a ``starred exercise'' in \cite[p.\,149
ex.\,30]{Uspenski}. 
In Proposition \ref{prop:sturm_thm} below
we provide a mild generalization of the usual form of
Sturm's theorem for the convenience of the readers.
Its corollary \ref{cor:sturm}
is equivalent to 
\cite[p.\,149 ex.\,30]{Uspenski}.
%It is easily 
%proved by the argument in the standard version of
%Sturm's theorem. 

\begin{definitionss}\label{def:sturm_seq}
A sequence of non-zero polynomials 
\[f_0(x), f_1(x), \ldots, f_m(x)\in\RR[x]\]
is a \emph{Sturm sequence} on \((a,b]\) if the following two properties
hold.
\smallbreak
\noindent
(i) \(f_m(x)\) is either positive definite or negative definite on
\((a,b]\).
\smallbreak
\noindent
(ii) Suppose that \(\xi\in (a,b]\) and \(f_i(\xi)=0\) for some \(i\)
with \(1\leq i\leq m-1\).
Then \(f_{i-1}(\xi)\) and \(f_{i+1}(\xi)\) have opposite signs
(in the sense that either they are both non-zero with opposite signs,
or are both zero.\footnote{The latter possibility is ruled out by 
condition (i).})
\end{definitionss}

\begin{remark*} There is an extra
condition in the conventional definition of a Sturm sequence: 
\emph{\(\,f_1(\xi)\,\) and \(\,f_0(\xi)\,\) have the same sign 
for every root \(\xi\) of \(f_0(x)\) in \((a,b]\)}.
This condition has been dropped in 
Definition \ref{def:sturm_seq} above.
\end{remark*}

\subsubsection{\bf Definition.}
Let \(f_0(x), f_1(x), \ldots, f_m(x)\) be a Sturm sequence.
\smallbreak
\noindent
(1) For every real number \(\xi\), define \(\,\sigma(\xi)\,\) to be the total number
of changes of signs in the sequence 
\((f_0(\xi^+), f_1(\xi^+),\ldots, f_{m-1}(\xi^+), f_m(\xi))\).\footnote{Here we used
\(f_i(\xi^{+})\) to make sure that each term has a well-defined sign.
In view of condition (ii), we could have used
the sequence \((f_0(\xi^{+}), f_1(\xi),\ldots, f_{m-1}(\xi), f_m(\xi))\) 
in the definition, suppress
zeros when counting the number of variations of signs in it.}
\smallbreak
\noindent
(2) Define a \(\{-1, 0, 1\}\)-valued
``local index'' function
\(\epsilon_{f_0(x)}\) on \(\RR\) attached to
a real polynomial \(f_0(x)\in \RR[x]\) as follows.  
\begin{itemize}
\item Suppose that \(f_0(\xi)=0\)\footnote{\(f_1(\xi)\neq 0\)
if \(f_0(\xi)=0\), by (i) and (ii).} and
\(\textup{mult}_{x=\xi}\,f_0(x)\) is \emph{odd}.\footnote{For a zero 
\(\xi\) of \(f_0(x)\), the sign of
\(f_0(x)\) changes when \(x\) moves across \(\xi\) if and only if
\(\textup{mult}_{x=\xi}\,f_0(x)\)
is odd.} 
Define
\[
\epsilon_{f_0(x)}(\xi):=\left\{
\begin{array}{ll}1 &\textup{if}\ \  f_0(\xi^+)\  \textup{and}  \ f_1(\xi) \
\textup{have the same sign}
%\ \ \textup{mult}_{x=\xi}f_0(x)\ \textup{is odd}
\\  
-1 &\textup{if}\ \ f_0(\xi^+)\ \textup{and}  \ f_1(\xi) \
\textup{have opposite signs}
\end{array}
\right.
\]

\item \(\epsilon_{f_0(x)}(\xi)=0\)
if \(\textup{mult}_{x=\xi}\,f_0(x)\) is even.
In particular \(\epsilon_{f_0(x)}(\xi)=0\) if \(f_0(\xi)\neq 0\).

\end{itemize}

\noindent
(3) Define \(Z_{f_0(x)}((a,b])\in \ZZ\) by
\[
Z_{f_0(x)}((a,b]):=\sum_{\xi\in (a,b]}\,\epsilon_{f_0(x)}(\xi).
\]
This number \(Z_{f_0(x)}((a,b])\) counts the number of zeros of \(f_0(x)\) 
with odd multiplicity with a signed weight given by \(\epsilon_{f_0(x)}\).
It can be thought of as some sort of ``total Lefschetz number'' 
for \(f_0(x)\vert_{(a,b]}\).

\begin{propositionss}
\label{prop:sturm_thm}
Let \(f_0(x),f_1(x),\ldots,f_m(x)\) be a Sturm sequence
on \((a,b]\).  Then 
\[Z_{f_0(x)}((a,b])=\sigma(a)-\sigma(b),\] i.e.\
\(\sigma(a)-\sigma(b)\) is the number of zeros of \(f_0(x)\) in the half-open
interval \((a,b]\) with \emph{odd} multiplicity, counted with the 
sign \(\epsilon_{f_0(x)}\).
\end{propositionss}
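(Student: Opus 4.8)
The plan is to revisit the monovariance argument underlying the classical Sturm theorem, tracking the sign-variation count $\sigma(x)$ as $x$ sweeps across the half-open interval $(a,b]$. Two features force a refinement of the textbook proof: we have dropped the usual hypothesis that $f_1(\xi)$ and $f_0(\xi)$ share a sign at each zero $\xi$ of $f_0$, and we must accommodate zeros of $f_0$ of arbitrary (in particular even) multiplicity. Both are absorbed into the local index $\epsilon_{f_0(x)}$. First I would record that $\sigma$ is a step function: between two consecutive zeros of the product $f_0 f_1 \cdots f_m$ every $f_i$ has constant sign, so $\sigma(\xi)$ (which reads the signs of $f_0,\dots,f_{m-1}$ at $\xi^+$ and of $f_m$ at $\xi$, the latter well-defined by condition~(i)) is constant on each half-open interval $[c,c')$ bounded by consecutive critical points. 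Writing $c_1 < \cdots < c_k$ for the critical points in $(a,b]$ and setting $c_0 := a$, the value of $\sigma$ on $[c_{j-1}, c_j)$ equals $\sigma(c_{j-1})$, and $\sigma(b) = \sigma(c_k)$; telescoping then gives $\sigma(a) - \sigma(b) = \sum_{c}(\sigma(c^-) - \sigma(c))$, the sum over critical $c \in (a,b]$, where $\sigma(c^-)$ denotes the value of $\sigma$ on the interval immediately to the left of $c$. It therefore suffices to compute the jump $\sigma(c^-) - \sigma(c)$ at each critical point.

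Next I would treat a critical point $c$ at which an \emph{intermediate} member $f_i$ ($1 \le i \le m-1$) vanishes. Condition~(ii) guarantees that $f_{i-1}(c)$ and $f_{i+1}(c)$ are nonzero with opposite signs; since they do not vanish at $c$ their signs are unchanged across $c$, while the triple $(f_{i-1}, f_i, f_{i+1})$ contributes exactly one sign change on either side of $c$ regardless of the sign of $f_i$ (inserting any sign between two opposite signs produces exactly one variation). Hence such an $f_i$ makes no contribution to the jump; this holds simultaneously for all intermediate indices vanishing at $c$, since condition~(ii) forbids two consecutive $f$'s from vanishing together, so the relevant triples are separated by an index where $f$ is nonzero. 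In particular, if $f_0(c) \neq 0$ then $\sigma(c^-) - \sigma(c) = 0 = \epsilon_{f_0(x)}(c)$.

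It remains to analyze a critical point $c$ with $f_0(c) = 0$, where by conditions~(i),~(ii) one has $f_1(c) \neq 0$, so only the pair $(f_0, f_1)$ is in play. If $\textup{mult}_{x=c}\,f_0$ is even, $f_0$ keeps its sign across $c$, the variation in $(f_0, f_1)$ is the same on both sides, the jump vanishes, and indeed $\epsilon_{f_0(x)}(c) = 0$. If $\textup{mult}_{x=c}\,f_0$ is odd, $f_0$ reverses sign across $c$ while $f_1$ keeps its sign, so the pair $(f_0, f_1)$ carries one variation on exactly one of the two sides. A direct check of the two possibilities shows $\sigma(c^-) - \sigma(c) = +1$ precisely when $f_0(c^+)$ and $f_1(c)$ have the same sign and $-1$ when they have opposite signs, which is exactly the definition of $\epsilon_{f_0(x)}(c)$. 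Summing the jumps over all critical $c \in (a,b]$ and invoking Definition~\ref{def:sturm_seq}(3) then yields $\sigma(a) - \sigma(b) = \sum_{c \in (a,b]} \epsilon_{f_0(x)}(c) = Z_{f_0(x)}((a,b])$, as asserted.

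The computations above are elementary once the set-up is fixed; the genuine obstacle is the combinatorial bookkeeping at critical points where several of the $f_i$ vanish at once (for instance $f_0$ and $f_2$, or $f_1$ and $f_3$). The crux is to argue that the total jump localizes as a sum of independent contributions from disjoint ``windows'' of the sequence, which is exactly what condition~(ii) secures by preventing consecutive vanishing; keeping track of the $\xi^+$ convention at these coincidences, and at the right endpoint $c = b$ where $f_0(b^+)$ must be read as a genuine polynomial value just outside $(a,b]$, is where care is most needed.
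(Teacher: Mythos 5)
Your proposal is correct and takes essentially the same approach as the paper's proof: track the jumps of $\sigma$ across critical points, use condition (ii) to show that vanishing of intermediate members $f_1,\ldots,f_{m-1}$ contributes nothing, and check that a zero of $f_0$ produces the jump $\epsilon_{f_0(x)}$ exactly when its multiplicity is odd and no jump when it is even. The only difference is one of detail --- your telescoping decomposition over critical points and the explicit treatment of simultaneous vanishing (justified by non-consecutive vanishing, which the paper relegates to a footnote) spell out what the paper's three-sentence argument leaves implicit.
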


\begin{proof}
Condition (ii) ensures that crossing a zero in \([a,b)\)
of any of the internal members \(f_1(x),\ldots, f_{m-1}(x)\) of the Sturm chain
makes no contribution to changes of \(\sigma(\xi)\).  
Each time a zero \(\xi_0\) of \(f_0(x)\) with odd multiplicity
is crossed, \(\sigma(\xi)\) decreases by \(\epsilon_{f_0(x)}(\xi)\) as 
\(\xi\) moves from the left of \(\xi_0\) to its right.
On the other hand, moving across a zero of \(f_0(x)\) with even multiplicity
does not change the value of \(\sigma\). 
So the \(\sigma(b)-\sigma(a)\) is equal to the total number of zeros of \(f_0(x)\) in
\((a,b]\) with odd multiplicity, counted with the sign \(\epsilon_{f_0(x)}\).
\end{proof}

\begin{corollaryss} \label{cor:sturm}
Let \(f_0(x),\ldots, f_m(x)\) be a Sturm sequence on
\((a,b]\). Let \(n\in \NN\) be a non-negative integer. 
If \(\sigma(a)-\sigma(b)= \pm n \) and 
\(f_0(x)\) has at most \(n\) distinct real roots in \((a,b]\),
then \(f_0(x)\) has exactly \(n\) distinct real roots in the
half-open interval \((a,b]\).
In particular if \(a=-\infty, b=\infty\), \(\textup{deg}(f_0(x))=n\)
and \(\sigma(-\infty)-\sigma(\infty)= \pm n \), then 
\(f_0(x)\) has \(n\) distinct real roots.
\end{corollaryss}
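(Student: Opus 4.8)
The plan is to deduce Corollary \ref{cor:sturm} directly from Proposition \ref{prop:sturm_thm} by a squeezing argument, the genuine content already residing in that proposition. First I would unwind the definition of $Z_{f_0(x)}((a,b])$. By construction the local index $\epsilon_{f_0(x)}(\xi)$ vanishes unless $\xi$ is a root of $f_0$ of \emph{odd} multiplicity, in which case it equals $+1$ or $-1$. Hence, writing $N_{\mathrm{odd}}$ for the number of distinct real roots of $f_0$ in $(a,b]$ of odd multiplicity and $N$ for the total number of distinct real roots of $f_0$ in $(a,b]$, the triangle inequality together with Proposition \ref{prop:sturm_thm} gives
\[
\bigl|\sigma(a)-\sigma(b)\bigr|
= \bigl|Z_{f_0(x)}((a,b])\bigr|
= \Bigl|\sum_{\xi\in(a,b]}\epsilon_{f_0(x)}(\xi)\Bigr|
\le N_{\mathrm{odd}} \le N.
\]

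The main point is then a squeeze. Under the hypotheses $|\sigma(a)-\sigma(b)|=n$ and $N\le n$, the displayed chain of inequalities reads $n\le N_{\mathrm{odd}}\le N\le n$, forcing $N_{\mathrm{odd}}=N=n$. In particular $f_0$ has exactly $n$ distinct real roots in $(a,b]$ (all of them, incidentally, of odd multiplicity), which is the first assertion. I do not expect any real obstacle here: once Proposition \ref{prop:sturm_thm} is in hand the argument is a purely formal counting inequality, and the only elementary fact used is that $|Z_{f_0(x)}((a,b])|$ cannot exceed the number of odd-multiplicity roots, which is immediate from $\epsilon_{f_0(x)}\in\{-1,0,1\}$.

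For the final \emph{in particular} statement I would reduce the case $a=-\infty$, $b=+\infty$ to the finite case just treated. Choose $M>0$ so large that every real root of each of $f_0,f_1,\ldots,f_m$ lies in the open interval $(-M,M)$; then no sign change of the Sturm sequence occurs outside $[-M,M]$, so $\sigma(-M)=\sigma(-\infty)$ and $\sigma(M)=\sigma(+\infty)$, and every real root of $f_0$ lies in $(-M,M]$. Since $\deg f_0 = n$, the polynomial $f_0$ has at most $n$ roots counted with multiplicity, hence at most $n$ distinct real roots in $(-M,M]$. Applying the first part of the corollary on the half-open interval $(-M,M]$, with $\sigma(-M)-\sigma(M)=\sigma(-\infty)-\sigma(+\infty)=\pm n$, yields that $f_0$ has exactly $n$ distinct real roots there, hence exactly $n$ distinct real roots on all of $\RR$. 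The only mild subtlety to record is the interpretation of $\sigma$ at $\pm\infty$ through the leading coefficients and the parity of degrees, which the choice of $M$ makes rigorous since each $f_i$ keeps a constant sign on $(-\infty,-M]$ and on $[M,+\infty)$.
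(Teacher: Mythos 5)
Your proof is correct and is exactly the intended deduction: the paper states Corollary \ref{cor:sturm} without proof, as an immediate consequence of Proposition \ref{prop:sturm_thm}, and your squeeze $n=|Z_{f_0(x)}((a,b])|\le N_{\mathrm{odd}}\le N\le n$ is precisely that consequence. The reduction of the $(-\infty,\infty)$ case to a finite interval via a large $M$ containing all roots of all the $f_i$ is also the standard way to make sense of $\sigma(\pm\infty)$, so nothing is missing.
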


\subsection{\bf A proof of  Theorem \ref{BH-poly}\,(a).}\enspace
\label{alt-proof-BH}
Let's start with any $f$ as the quotient of two independent solutions 
of Lam\'e equation $L_{n + 1/2, B}\,w = 0$ at $z = 0$ 
and consider $v(z) = \log f'(z)$. It is readily seen that
\begin{equation*}
v'' - \frac{1}{2} (v')^2 = \Big(\frac{f''}{f'}\Big)' 
- \frac{1}{2} \Big(\frac{f''}{f'}\Big)^2 = S(f).
\end{equation*}
We remark that the function $v$ satisfies the similar equation 
as $u$ in (\ref{S-der}), but $v$ is analytic in nature 
while $u$ is only a real function.

The indicial equation at $z = 0$ is given by 
$\lambda^2 - \lambda - \eta(\eta + 1) 
= (\lambda - (\eta + 1))(\lambda + \eta) = 0$. 
If there are logarithmic solutions, the fundamental solutions are given as
\begin{equation} \label{fund-sol}
w_1(z) = z^{\eta + 1} h_1(z), \quad w_2(z) = \xi w_1(z) \log z + z^{-\eta} h_2(z),
\end{equation}
where $\xi \ne 0$ and $h_1, h_2$ are holomorphic 
and non-zero at $z = 0$. But then 
$$
f = \frac{a w_1 + b w_2}{c w_1 + d w_2}
$$
is easily seen to be logarithmic as well if $ad - bc \ne 0$, 
thus the Lam\'e equation has no logarithmic solutions at $z = 0$ 
if and only if we have one nontrivial solution quotient $f$ 
to be logarithmic free at $z = 0$. 

Now suppose that the Lam\'e equation has no solutions 
with logarithmic term. 
Let $f$ be a ratio of two independent solutions. 
Without lose of generality, we may assume that $f$ is 
regular at $0$. Since $\eta = n + \tfrac{1}{2}$ and
\begin{equation} \label{S-eqn}
S(f) = -2((n + \tfrac{1}{2})(n + \tfrac{3}{2}) \wp(z) + B),
\end{equation}
to require that $f $ is logarithmic free at $z = 0$ 
is equivalent to that 
$f(z) = c_0 + c_{2n + 2} z^{2n + 2} + \cdots$ near $z = 0$ with $c_0 \ne 0$. 

Recall that
$$
\wp(z) = \frac{1}{z^2} + \sum_{k \ge 1} (2k + 1) G_{k + 1} z^{2k}
$$
where $G_{k} = \sum_{\omega \in \Lambda^*} 1/\omega^{2k}$ is 
the standard Eisenstein series of weight \(2k\) for
\(\textup{SL}_2(\ZZ)\). 
It is customary to write $g_2 = 60 G_2$ and $g_3 = 140 G_3$. 
It is also well known that all $G_k$'s are 
expressible as polynomials in $g_2$, $g_3$.

We will show that the solvability of the Schwarzian equation 
(\ref{S-eqn}) for $f$ being of the proposed form is equivalent to 
that $B$ satisfies $p_n(B) = 0$ for some universal polynomial 
$p_n(B, g_2, g_3)$ of degree $n + 1$. Indeed, 
$$
v = \log f' = \log c_{2n + 2} (2n + 2) + (2n + 1) \log z + \sum_{j \ge 1} d_j z^j.
$$
For convenience we set $e_j = (j + 1) d_{j + 1}$ for $j \ge 0$ and then
$$
v' = \frac{2n + 1}{z} + \sum_{j \ge 0} e_j z^j
$$

The degree $z^{-1}$ terms in
\begin{equation*} 
v'' - \frac{1}{2} (v')^2 =  -2((n + \tfrac{1}{2})(n + \tfrac{3}{2}) \wp(z) + B)
\end{equation*}
match by our choice. There is no $z^{-1}$ term in the RHS 
shows that $e_0 = 0$. Then the constant terms give 
$e_1 - \tfrac{1}{2} 2 (2n + 1) e_1 = -2B$, 
i.e.~$n e_1 = B$. For $n = 0$, we must conclude $B = 0$. 
Thus we set $p_0(B) = B$.

Similarly, for $j \ge 1$, the degree $j$ terms in the LHS give
$$
(j + 1) e_{j + 1} - \tfrac{1}{2} 2 (2n + 1) e_{j + 1} 
- \tfrac{1}{2} \sum_{i = 1}^{j - 1} e_i e_{j - i}.
$$ 
Since there is no odd degree terms in the RHS, 
by considering $j = 1, 3, 5, \ldots$ 
we first conclude inductively that $e_i = 0$ for $i$ even.

Next we consider degree $j = 2, 4, 6, \ldots$ terms inductively. 
Write $E_k = e_{2k - 1}$ for $k \ge 1$. Then $j = 2k$ leads to 
\begin{equation} \label{BH}
2(k - n) E_{k + 1} -\tfrac{1}{2} \sum_{i = 1}^k E_i E_{k + 1 - i} 
= -2(n + \tfrac{1}{2})(n + \tfrac{3}{2}) (2k + 1) G_{k + 1}. 
\end{equation}
We have just seen that $n E_1 = B$. If we assign degree $k$ to $G_k$, 
then (\ref{BH}) shows inductively that $E_k = E_k(B, g_2, g_3)$ 
is a degree $k$ polynomial in $B$ which is homogeneous 
in $B, g_2, g_3$ of degree $k$ up to $k \le n$.    

Now put $k = n$ in (\ref{BH}), the first term vanishes and we must have
$$
\tilde p_n(B, g_2, g_3) 
:= \sum_{i = 1}^n E_i E_{n + 1 - i} - 
8(n + \tfrac{1}{2})^2(n + \tfrac{3}{2}) G_{n + 1}
$$ 
vanishes too. Up to a multiplicative constant, 
this $\tilde p_n(B)$ is the degree $n + 1$ polynomial in $B$ 
we search for. Indeed, by our inductive construction through (\ref{BH}), 
the leading coefficients $c_n$ of $\tilde p_n(B)$ 
depends only on $n$. 
Hence $p_n(B, g_2, g_3) := c_n^{-1} \tilde p_n(B, g_2, g_3)$ 
is monic in $B$ and homogeneous of degree $n + 1$ in $B$, $g_2$, $g_3$. 

Conversely, if $\tilde p_n(B) = 0$, then $E_1, \ldots, E_n$ 
can be solved by (\ref{BH}) up to $k = n - 1$. 
For $k = n - 1$, $\tilde p_n(B) = 0$ is equivalent to (\ref{BH}) 
at $k = n$. By assigning any value to $E_{n + 1}$, 
we can use (\ref{BH}) for $k \ge n + 1$ to find $E_j$, $j \ge n + 2$. 
Thus this $f$ is a solution to the Schwarzian equation (\ref{S-eqn}) 
and is free from logarithmic terms. The proof is complete.
\qed

\begin{remarkss}
Notice that $E_{n + 1} = e_{2n + 1}= (2n + 2) d_{2n + 2}$ is a free parameter. 
All $E_k$'s are determined by $B$ and $E_{n + 1}$. 
For any $B$ with $p_n(B) = 0$, the three constants $c_0$, $c_{2n + 2}$ 
and $E_{n + 1}$ provide the three dimensional freedom for $f$ 
due to the freedom of ${\rm SL}_2(\mathbb C)$ action on $f$.
\end{remarkss}

\begin{remarkss}
We have seen that the type I developing map $f(z)$ is even. 
This also follows form our proof of Theorem \ref{BH-poly} 
since we do not assume the a priori evenness during the proof.
\end{remarkss}
\bigbreak

%\subsection{Klein's four-group $K_4$ and type I solutions}
%
%Recall that the Klein's four-group 
%$K_4 \cong \mathbb Z/2\mathbb Z \times \mathbb Z/2\mathbb Z$ 
%is the unique non-cyclic group of order four. 
To apply Theorem \ref{BH-poly} to study mean field equations 
for $\rho = 4\pi(2n + 1)$, the essential point is the following theorem.

\begin{theorem} [$=$ Theorem \ref{thm-K4}] \label{K4-thm}
%For $\eta = n + \tfrac{1}{2}$, $n \in \mathbb N \cup \{0\}$, 
Let \(n\) be a non-negative integer.
The projective monodromy group of the Lam\'e equation 
$L_{n + (1/2), B}\, w = 0$ is isomorphic to 
Klein's four-group $(\ZZ/2\ZZ)^2$ if and only if there
exists two meromorphic solutions \(w_1, w_2\) on \(\CC\) of
the above Lam\'e equations such that
\(\,\tfrac{w_1}{w_2}\,\) is a type I developing map of
%it corresponds to a type I solution of
a solution of the mean field equation 
\(\,\triangle u+ e^u=4\pi(2n+1) \delta_0\).
%with $\rho=4\pi(2n + 1)$. 
Moreover, each such value of the
accessary parameter $B$ with the above property
gives rise to exactly one type I solution.
\end{theorem}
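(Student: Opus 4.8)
The plan is to move back and forth between the analytic language of developing maps and the monodromy of the Lam\'e operator, the bridge being the Schwarzian identity
\[
S(f)=-2\big((n+\tfrac12)(n+\tfrac32)\,\wp(z)+B\big),
\]
satisfied by $f=w_1/w_2$ for any two linearly independent local solutions $w_1,w_2$ of $L_{n+1/2,B}\,w=0$. First I would analyze the local monodromy at $z=0$. The indicial equation there has roots $-(n+\tfrac12)$ and $n+\tfrac32$, differing by the integer $2n+2$, so the local monodromy $M_0$ around $0$ is either non-scalar unipotent (when logarithmic terms occur) or the scalar $-\mathrm{I}$ (when the solutions are logarithm-free), because $e^{2\pi\sqrt{-1}(n+3/2)}=e^{2\pi\sqrt{-1}(-(n+1/2))}=-1$. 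In $\textup{PGL}_2(\CC)$ the first case has infinite order and the second is trivial; hence a logarithm-free $B$ is exactly one for which $f=w_1/w_2$ descends to a single-valued meromorphic function on all of $\CC$, and by Theorem \ref{BH-poly} this is the condition $p_n(B)=0$.

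Granting this local dichotomy, the backward implication is immediate: if $f=w_1/w_2$ is a type I developing map, then by definition its $\Lambda$-monodromy group is $(\ZZ/2\ZZ)^2$, and since $f$ is single-valued on $\CC$ the loop around $0$ acts trivially in $\textup{PGL}_2(\CC)$, so the full projective monodromy group of $L_{n+1/2,B}\,w=0$ coincides with $\langle \bar T_1,\bar T_2\rangle\cong(\ZZ/2\ZZ)^2$. For the forward implication I would assume the projective monodromy group $\Gamma$ is the Klein four group. Finiteness of $\Gamma$ rules out infinite-order elements, hence $M_0$ cannot be non-scalar unipotent; thus $M_0=-\mathrm{I}$, the solutions are logarithm-free, $f$ is single-valued meromorphic on $\CC$, and $\Gamma=\langle\bar T_1,\bar T_2\rangle$. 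The relation $\gamma_1\gamma_2\gamma_1^{-1}\gamma_2^{-1}=\gamma_0$ in $\pi_1(E\smallsetminus\{0\})$ yields $T_1T_2T_1^{-1}T_2^{-1}=M_0=-\mathrm{I}$ for $\textup{SL}_2(\CC)$-lifts; conjugation by $T_1$ then negates $T_2$, forcing the eigenvalues of $T_2$ to be $\{\mu,-\mu\}$, so $\bar T_2$ (and likewise $\bar T_1$) is an involution and the lifts generate the quaternion group $Q_8\subset\textup{SL}_2(\CC)$.

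Next I would pass from monodromy to genuine developing maps. Because $Q_8$ is finite it preserves an averaged positive-definite Hermitian form, hence is conjugate into $\textup{SU}(2)$; equivalently there is $M\in\textup{PGL}_2(\CC)$ with $M\Gamma M^{-1}\subset\textup{PSU}(2)$. Replacing $f$ by $Mf=(aw_1+bw_2)/(cw_1+dw_2)$ leaves the Schwarzian unchanged, so $Mf$ is again a ratio of two solutions of the same Lam\'e equation; it is single-valued on $\CC$, has multiplicity $2n+2=l+1$ at each point of $\Lambda$ by periodicity of the potential, and has no other critical point, since $f'=-W/w_2^2$ with constant nonzero Wronskian $W$ has no zero off $\Lambda$ while the zeros of $w_2$ (poles of $f$) are simple. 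By Lemma \ref{lemma:n-s-cond-develop}, $Mf$ is therefore a developing map of a solution $u$ of $\triangle u+e^u=4\pi(2n+1)\delta_0$, of type I because its monodromy group is Klein four; this completes the equivalence. For the uniqueness clause I would fix such a $B$ and take any two type I developing maps $f,f'$ built from $L_{n+1/2,B}\,w=0$: both satisfy the same Schwarzian, so $f'=Nf$ for some $N\in\textup{PGL}_2(\CC)$, and as both have monodromy in $\textup{PSU}(2)$, Lemma \ref{lemma:non-triv-fam}(1) forces $N\in\textup{PSU}(2)$ and $Nf$ to develop the same solution $u$. Hence $B$ determines exactly one type I solution.

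I expect the principal difficulty to lie in the forward implication, in two linked places: extracting from the abstract hypothesis $\Gamma\cong(\ZZ/2\ZZ)^2$ both the local conclusion $M_0=-\mathrm{I}$ (finiteness excluding the unipotent/logarithmic case) and the commutator identity $[T_1,T_2]=-\mathrm{I}$ that pins down the Klein four structure of the lifts; and then upgrading ``$\Gamma$ is a finite subgroup of $\textup{PGL}_2(\CC)$'' to ``$\Gamma$ is conjugate into $\textup{PSU}(2)$,'' so that $f$ can be converted into an honest developing map rather than merely a ratio of Lam\'e solutions with the correct projective monodromy. The rest of the argument is routine bookkeeping about multiplicities and critical points of the associated map to $\PP^1(\CC)$, all of which are Möbius-invariant and hence survive the conjugation by $M$.
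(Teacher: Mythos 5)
Your proposal is correct and follows essentially the same route as the paper's proof: the Schwarzian identity as the bridge between developing maps and ratios of Lam\'e solutions, the observation that Klein-four (hence finite) projective monodromy forces logarithm-free solutions and a single-valued ratio of multiplicity \(2n+2\) along \(\Lambda\), conjugation of the finite monodromy group into \(\textup{PSU}(2)\) as a maximal compact subgroup, and Lemma \ref{lemma:n-s-cond-develop} to convert the conjugated ratio into a type I developing map. Your explicit analysis of the local monodromy at \(z=0\) (ruling out the non-scalar unipotent case) and your uniqueness argument via Lemma \ref{lemma:non-triv-fam}(1) simply fill in steps that the paper states tersely.
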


\begin{proof}
Let $u$ be a type I solution of the mean field equation
\(\,\triangle u+e^u=\rho\,\delta_0\) on \(\CC/\Lambda\) and
let \(f\) be a normalized developing map of \(u\) 
satisfying the type I transformation rules
(\ref{case-type-I}). We know from Theorem \ref{even-II}
that there exists a non-negative integer \(n\) such that
\(\,\rho=4\pi(2n+1)\), and we have seen that there exists
a complex number \(B\) such that
the Schwarzian derivative \(S(f)\)
of \(f\) is equal to 
\(\,-2\big(n+\tfrac{1}{2})(n+\tfrac{3}{2})\wp(z;\Lambda)+B\big)\).
Then local solutions of the Lam\'e equation
\(L_{n + 1/2, B}\, w = 0\) are free of logarithmic solutions,
and there exists two solutions \(w_1, w_2\) over \(\CC\)
such that \(\,f=\tfrac{w_1}{w_2}\).  
The projective monodromy group of the equation
\(L_{n + 1/2, B}\, w = 0\) is canonically isomorphic to 
the monodromy group of the meromorphic function
\(\tfrac{w_1}{w_2}\), which is a Klein four group.
We have proved the ``only if'' part
of Theorem \ref{thm-K4}.
%It is clear that the monodromy group 
%of (\ref{case-type-I}), which is the projective monodromy group 
%$PM$ of the associated Lam\'e equation, is Klein's four-group $PM
%\cong K_4$. 
\medbreak

Conversely, suppose that %$PM \cong K_4$. 
the projective monodromy group of a Lam\'e equation
\(L_{n + 1/2, B}\, w = 0\) is a Klein-four group.
%To prove the existence 
%of solution to (\ref{Liouville-eq}), we start with any $h = w_1/w_2$ 
Then all local solutions of this Lam\'e equation are free of
logarithmic singularities, and there are
for two linearly independent solutions $w_1, w_2$ of this equation
which are meromorphic functions over \(\CC\). It is easy to
check from basic theory of linear ODE's with regular singularities
that the holomorphic
map \(\,\frac{w_1}{w_2}: \CC\to \PP^1(\CC)\,\) has no critical 
point outside \(\Lambda\), and has multiplicity \(2n+2\) at
points of \(\Lambda\).

Let \(\rho:\Lambda\to \textup{GL}_2(\CC)\) be the monodromy
representation of the differential equation \(L_{n + 1/2, B}\, w = 0\)
attached to the basis \(w_1, w_2\) of solutions of
\(L_{n + 1/2, B}\, w = 0\).
Let \(\bar{\rho}:\Lambda\to \textup{PSL}_2(\CC)\)
be the composition of \(\rho\) with the canonical projection
\(\textup{GL}_2(\CC)\to \textup{PSL}_2(\CC)\).
Because \(\textup{PSU}(2)\) is a maximal compact subgroup of
\(\textup{PSL}_2(\CC)\), the finite subgroup
\(\,\textup{Im}(\bar{\rho})\,\)
of \(\textup{PSL}_2(\CC)\) is a conjugate of a subgroup
of \(\textup{PSU}(2)\),
i.e.\ there exists an element
\(S_1\in\textup{GL}_2(\CC)\) such that 
\(S_1\!\cdot\! \textup{Im}(\bar\rho)\!\cdot\!
S_1^{-1}\subset \textup{PSU}(2)\).
By Corollary \ref{cor:comm_homo}, there exists an element
\(S_2\in\textup{PSU}(2)\) such that 
\(
S_2\!\cdot \!S_1\!\cdot\! \bar{\rho}(\omega_1)
\!\cdot\! S_1^{-1}\!\cdot\! S_2^{-1}
\) and
\(
S_2\!\cdot\! S_1\!\cdot\! \bar{\rho}(\omega_2)
\!\cdot\! S_1^{-1}\!\cdot\! S_2^{-1}
\) are the image in \(\textup{PSU}(2)\) of
\scalebox{0.8}{\(\begin{pmatrix}\sqrt{-1}&0\\0&-\sqrt{-1}
\end{pmatrix}
\)}
and \scalebox{0.8}{\(\begin{pmatrix}0&\sqrt{-1}\\ \sqrt{-1}&0
\end{pmatrix}
\)} respectively.
Write \(\,S_2\cdot S_1=\begin{pmatrix}a&b\\c&d\end{pmatrix}\).
Then \(f_1:=\displaystyle{\frac{aw_1+ cw_2}{bw_1+dw_2}}\) is a developing
map of a solution of \(\,\triangle u+ e^u= 4\pi(2n+1)\delta_0\),
by Lemma \ref{lemma:n-s-cond-develop},
and it is normalized of type I by construction. We have proved the
``if'' part of Theorem \ref{thm-K4}.
The uniqueness assertion in the last sentence of Theorem \ref{thm-K4}
is clear from the correspondence we have established,
between solutions of 
the mean field equation \(\,\triangle u+ e^u= 4\pi(2n+1)\delta_0\)
and Lam\'e equations
\(L_{n + 1/2, B}\, w = 0\) such that no solution has logarithmic
singularity.
\end{proof}

\begin{corollaryss}\label{cor:odd-lame-nber-soln}
On any flat torus \(\CC/\Lambda\), the mean field equation 
\(\,\triangle u+ e^u = 4\pi (2n+1)\,\delta_0\)
at most $n + 1$ solutions. 
It has exactly $n + 1$ solutions 
except for a finite number of conformal isomorphism classes 
of flat tori.
\end{corollaryss}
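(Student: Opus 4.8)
The plan is to count the solutions by transporting the problem first to the accessory parameter $B$ of the associated Lam\'e equation $L_{n+1/2,B}$ and then to the zero set of the Brioschi--Halphen--Crawford polynomial $p_n(B;\Lambda)$ of Theorem \ref{BH-poly}. First I would record that, since $l=2n+1$ is odd, Proposition \ref{non-exist} forbids type II solutions; hence \emph{every} solution of $\triangle u+e^u=4\pi(2n+1)\delta_0$ is of type I, and by Theorem \ref{even-II} it is even. By the discussion in \S\ref{subsec_dev-lame}, a type I solution $u$ determines a developing map $f$ uniquely up to $\mathrm{PSU}(2)$, and the Schwarzian identity \eqref{S(f)} attaches to $u$ a \emph{single} value $B=B(E,n+\tfrac12,u)\in\CC$; since $\mathrm{PSU}(2)$ acts by M\"obius transformations, which leave the Schwarzian derivative unchanged, the map $u\mapsto B$ is well defined and injective. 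Theorem \ref{K4-thm} identifies its image with the set of $B$ for which the projective monodromy of $L_{n+1/2,B}$ is the Klein four group, and asserts that each such $B$ comes from exactly one solution. Thus the number of solutions on $E$ equals the number of such $B$.

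The key step is the identification
\[
\{\,B : \text{proj.\ monodromy of } L_{n+1/2,B}\cong(\ZZ/2\ZZ)^2\,\}
=\{\,B : L_{n+1/2,B}\text{ is logarithm-free at }z=0\,\},
\]
the right-hand set being, by Theorem \ref{BH-poly}(a), exactly the zero set of the monic degree-$(n+1)$ polynomial $p_n(B;\Lambda)$. The inclusion ``$\subseteq$'' is contained in the proof of Theorem \ref{K4-thm}. For ``$\supseteq$'' I would argue with the monodromy on $E=\CC/\Lambda$, whose only singular point of $L_{n+1/2,B}$ is $z=0$, with exponents $\eta+1=n+\tfrac32$ and $-\eta=-(n+\tfrac12)$. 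In the absence of a logarithm the two local multipliers are $e^{2\pi i(n+3/2)}=-1$ and $e^{-2\pi i(n+1/2)}=-1$, so the loop around $z=0$ acts by $-\mathrm{Id}$. Writing $\pi_1(E\smallsetminus\{0\})=\langle a,b\mid [a,b]=c\rangle$ with $c$ the boundary loop and letting $M_1,M_2\in\mathrm{SL}_2(\CC)$ be the monodromies of $a,b$, this forces $M_1M_2M_1^{-1}M_2^{-1}=-\mathrm{Id}$, i.e.\ $M_1M_2=-M_2M_1$. Two anticommuting elements of $\mathrm{SL}_2(\CC)$ are conjugate to their own negatives, hence each has eigenvalues $\{\pm i\}$, squares to $-\mathrm{Id}$, and projects to an involution in $\mathrm{PSL}_2(\CC)$; moreover their images are distinct and commute, so the projective monodromy is exactly a Klein four group. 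The ``if'' direction of Theorem \ref{K4-thm} (together with the maximal-compactness of $\mathrm{PSU}(2)$ in $\mathrm{PSL}_2(\CC)$ and Corollary \ref{cor:comm_homo}) then returns a genuine type I solution, closing the equivalence.

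Granting this identification, the number of type I solutions on $E$ equals the number of \emph{distinct} roots of $p_n(B;\Lambda)$ (distinct solutions give distinct $B$ by injectivity), which is at most $\deg p_n=n+1$; this establishes the bound ``at most $n+1$'' on \emph{every} torus. For the second assertion I invoke Theorem \ref{BH-poly}(b): outside a finite set $S_n$ of homothety classes of lattices, $p_n(B;\Lambda)$ has $n+1$ distinct roots, so the equation has exactly $n+1$ solutions there. Because $p_n$ is homogeneous in $(B,g_2,g_3)$, its root pattern depends only on the homothety class of $\Lambda$, equivalently on the conformal isomorphism class of $E$; hence $S_n$ accounts for only finitely many conformal classes of flat tori, as required.

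The main obstacle is precisely the bridge in the second paragraph. The degree of $p_n$ only controls the number of \emph{logarithm-free} parameters, and a priori such a $B$ might carry projective monodromy lying in a maximal torus rather than a Klein four group, in which case it would \emph{not} produce an $\mathrm{SU}(2)$-automorphic developing map and would break the equality with the mean field count. The anticommutation relation $M_1M_2=-M_2M_1$ is exactly what rules out the torus case (there $[M_1,M_2]=\mathrm{Id}\ne-\mathrm{Id}$) and pins the projective monodromy to a Klein four group; once this local-to-global input is secured, the counting is a formal consequence of Theorems \ref{K4-thm} and \ref{BH-poly}.
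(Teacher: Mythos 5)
Your proposal is correct and follows essentially the same route as the paper: the corollary is obtained by combining Proposition \ref{non-exist} (no type II solutions when $l$ is odd), Theorem \ref{K4-thm} (the bijection between type I solutions and parameters $B$ whose projective monodromy is Klein four), and Theorem \ref{BH-poly} (the degree-$(n+1)$ polynomial $p_n(B;\Lambda)$, with $n+1$ distinct roots outside a finite set of homothety classes). The only place you go beyond the paper's terse argument is your explicit derivation of ``logarithm-free $\Rightarrow$ Klein four'' from $[M_1,M_2]=-\mathrm{Id}$ and the resulting anticommutation; the paper treats this bridge as immediate (it is the content of \cite[Thm.\,2.3]{BW}, invoked in the corollary that follows), so your argument is a correctly filled-in detail within the same approach rather than a different proof.
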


\begin{proof}
This is an immediate consequence of theorems \ref{BH-poly}, 
\ref{K4-thm} and \ref{thm-K4}.
\end{proof}

\begin{corollaryss}
For $\eta = n + \tfrac{1}{2}$, the monodromy group $M$ 
of $L_{n + 1/2, B}\, w = 0$ on an elliptic curve \(\CC/\Lambda\)
is finite if and only if it corresponds to 
a type I solution of the mean field equation 
\(\,\triangle u+ e^u = 4\pi (2n+1)\delta_0\,\) on \(\CC/\Lambda\)
as in Theorem \textup{\ref{K4-thm}}.
\end{corollaryss}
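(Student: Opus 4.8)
The plan is to reduce the statement to Theorem \ref{K4-thm}, which already identifies ``corresponds to a type I solution'' with ``projective monodromy group $\cong (\ZZ/2\ZZ)^2$''. So it suffices to prove that $M$ is finite if and only if its image $\bar M$ in $\textup{PSL}_2(\CC)$ is Klein's four-group. First I would record two structural facts used throughout: since $L_{n+1/2,B}$ has no first-order term its Wronskian is constant, whence $M\subset \textup{SL}_2(\CC)$ and $\bar M = M/(M\cap\{\pm I\})$; and the only singular point of $L_{n+1/2,B}$ on $E$ is $[0]$, with local exponents $-(n+\tfrac12)$ and $n+\tfrac32$ from the indicial equation computed in \S\ref{alt-proof-BH}. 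The easy direction ($\Leftarrow$) is then immediate: if the equation corresponds to a type I solution, Theorem \ref{K4-thm} gives $\bar M\cong(\ZZ/2\ZZ)^2$, so $|M|\le 2|\bar M| = 8$ and $M$ is finite.

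For ($\Rightarrow$) I would argue as follows. If $M$ is finite, every element has finite order and is therefore diagonalizable in $\textup{SL}_2(\CC)$; in particular the local monodromy matrix $M_0$ around a small loop about $[0]$ is semisimple. But formula \eqref{fund-sol} shows that a logarithmic solution at $z=0$ would force $M_0$ to be a nontrivial Jordan block, contradicting semisimplicity. Hence there are no logarithmic solutions, equivalently $p_n(B)=0$ by Theorem \ref{BH-poly}. Feeding the exponents $-(n+\tfrac12),\,n+\tfrac32$ into the monodromy of $z\mapsto e^{2\pi i}z$ then yields $M_0 = -I$, which is trivial in $\textup{PSL}_2(\CC)$. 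Consequently $\bar M$ is the image of the abelian group $\pi_1(E) = \ZZ\omega_1\oplus\ZZ\omega_2$, i.e.\ a \emph{finite abelian} subgroup of $\textup{PSL}_2(\CC)$.

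To finish I would manufacture a genuine mean field solution and invoke Proposition \ref{non-exist}. Since $\bar M$ is finite it is conjugate into the maximal compact subgroup $\textup{PSU}(2)$; choosing meromorphic solutions $w_1,w_2$ (possible because there are no logarithms) and replacing $f=w_1/w_2$ by $Tf$ for a suitable $T\in\textup{PSL}_2(\CC)$, the resulting $f_1$ has projective monodromy inside $\textup{PSU}(2)$. By the local analysis at the lattice points ($f_1$ has multiplicity $2n+2$ there and no critical points elsewhere, exactly as in the proof of Theorem \ref{K4-thm}) together with Lemma \ref{lemma:n-s-cond-develop}, $f_1$ is a developing map of an honest solution $u$ of $\triangle u + e^u = 4\pi(2n+1)\delta_0$. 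Because $l=2n+1$ is odd, Proposition \ref{non-exist} forbids type II solutions, so $u$ must be of type I; equivalently $\bar M\cong(\ZZ/2\ZZ)^2$, and Theorem \ref{K4-thm} supplies the asserted correspondence.

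The step I expect to be the main obstacle is the ($\Rightarrow$) direction: one must genuinely exclude the ``maximal-torus'' (cyclic) shape of the finite abelian group $\bar M$. Rather than classifying finite abelian subgroups of $\textup{PSL}_2(\CC)$ as cyclic or $(\ZZ/2\ZZ)^2$ and ruling out the cyclic case by a direct group-theoretic computation, the cleaner route above converts the finite monodromy into a bona fide real solution $u$ and lets the parity obstruction of Proposition \ref{non-exist} do the work. The only care needed is to verify that the conjugation into $\textup{PSU}(2)$ and the passage from $f$ to $u$ respect the hypotheses of Lemma \ref{lemma:n-s-cond-develop}; all the required local facts are already established in the proofs of Theorems \ref{BH-poly} and \ref{K4-thm}.
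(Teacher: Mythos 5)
Your proof is correct, but it takes a genuinely different route from the paper's. The paper disposes of this corollary in one line by citing Beukers--van der Waall \cite[Thm.\,2.3]{BW}: the monodromy of $L_{n+(1/2),B}\,w=0$ is finite if and only if no solution has a logarithmic singularity, in which case the projective monodromy is automatically isomorphic to $(\ZZ/2\ZZ)^2$; combining this with Theorem \ref{K4-thm} finishes the argument. You instead re-derive the needed part of that external result inside the paper's own machinery: finiteness forces every monodromy matrix to be semisimple, hence there are no logarithmic solutions; the half-odd-integer exponents $n+\tfrac{3}{2}$ and $-(n+\tfrac{1}{2})$ then make the local monodromy at $[0]$ equal to $-I$, so the projective monodromy factors through $\pi_1(E)=\Lambda$ and is a finite abelian subgroup of $\textup{PSL}_2(\CC)$; conjugating it into $\textup{PSU}(2)$ and invoking Lemma \ref{lemma:n-s-cond-develop} produces an honest solution $u$ of $\triangle u+e^u=4\pi(2n+1)\delta_0$, and the residue-parity obstruction of Proposition \ref{non-exist} forces $u$ to be of type I, i.e.\ the projective monodromy to be Klein-four. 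What the paper's route buys is brevity; what yours buys is self-containedness and a nice illustration of the paper's central dictionary, since the PDE fact (no type II solutions for odd $l$) replaces the group-theoretic/ODE analysis in \cite{BW} that would otherwise be needed to rule out the cyclic alternative for a finite abelian projective monodromy group. One small imprecision, which you share with the paper's own phrasing of Theorem \ref{thm-K4}: for half-integer index the individual solutions $w_1,w_2$ are \emph{not} single-valued meromorphic functions on $\CC$ (each acquires a sign under continuation around a lattice point, precisely because the exponents are half-odd-integers), so ``choosing meromorphic solutions'' should be read as choosing a basis whose \emph{ratio} $f=w_1/w_2$ is meromorphic on $\CC$; since your argument only ever manipulates $f$ and $Tf$, this does not affect its validity.
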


\begin{proof}
It was shown in \cite[Thm.\,2.3]{BW} that %$PM \cong K_4$ 
the monodromy group of the Lam\'e equation $L_{n + (1/2), B}\, w = 0$
is finite if and only if no solution of
$L_{n + (1/2), B}\, w = 0$ has logarithmic singularity,
and if so 
the projective monodromy group $L_{n + (1/2), B}\, w = 0$ is 
isomorphic to \((\ZZ/2\ZZ)^2\).
\end{proof}

\begin{example} \label{ex-pn}
By (\ref{BH}), it is easy to determine $p_n(B)$. For example, 
\begin{equation*}
\begin{split}
p_1(B) &= B^2 - \tfrac{3}{4} g_2, \\
p_2(B) &= B^3 - 7g_2 B + 20 g_3.
\end{split}
\end{equation*}

For $\rho = 12\pi$, the two solutions to the mean field equation collapse 
to the same one precisely when $p_1(B)$ has multiple roots. 
This is the case if and only if $g_2 = 0$, which means that $\tau = e^{\pi i/3}$.

To see this from Example \ref{12pi} is a little bit trickier. 
We may solve 
\[(e_3 - e_1)^2 +
16(e_1 - e_2)(e_3 - e_2) = 0
\] in terms of the modular function
\begin{equation*}
\lambda(\tau') = \frac{e_3 - e_2}{e_1 - e_2}
\end{equation*}
where $\tau' = \omega_2'/\omega_1' =
2\omega_2/\omega_1 = 2\tau$. A simple calculation leads to
\begin{equation*}
\frac{(\lambda - 1)^2}{\lambda} = -16, \quad \mbox{i.e.} \quad
\lambda^2 + 14 \lambda + 1 = 0.
\end{equation*}
Then the corresponding $j$ invariant is
\begin{equation*}
j(\tau') := 2^8 \frac{(\lambda^2 - \lambda + 1)^3}{\lambda^2
(\lambda - 1)^2} = -2^8 15^3 \frac{\lambda}{(\lambda - 1)^2} = 2^4
3^3 5^3.
\end{equation*}

%Recall that the $j$ invariant establishes an one to one
%correspondence between isomorphism classes of tori (here $T'$ is
%represented by $\tau'$) and the complex line $\mathbb{C}$. Thus
%$T'$ is uniquely determined.

In general it would be difficult to determine $\tau'$ from $j$. 
Fortunately the value $j = 2^4 3^3
5^3$ appears in the famous list of elliptic curves with complex
multiplications (see e.g.~\cite{Husemoller})
and it is known that
\begin{equation*}
\tau' \equiv \sqrt{-3} \pmod{{\rm SL}_2(\mathbb{Z})}.
\end{equation*}
Take $\tau$ in the fundamental region, then there is a unique choice of $\tau$, 
namely $\tau =
\frac{1}{2}(1 + \sqrt{-3}) = e^{\pi \sqrt{-1}/3}$, which gives rise to
\[2\tau = 1 + \sqrt{-3} \equiv \sqrt{-3} = \tau'
\pmod{\textup{SL}_2(\ZZ)}.\]
\end{example}

\section{Singular Liouville equations with 
$\rho=4\pi$ and modular forms} \label{mod-form}
\setcounter{equation}{0}

\noindent
In \S \ref{I-int}, we discussed how to find all type I solutions
by solving a system of polynomial equations which depends holomorphically on
the moduli parameter \(\tau\) of the torus
\(E_{\tau}=\CC/\Lambda_{\tau}=\CC/(\ZZ\!+\!\ZZ\tau)\),
where \(\tau\) varies in the upper-half plane \(\mathbb H\).
%This seems to indicate the modular dependence of  $f$ with respect to
%$E_\tau$, 
%where ${\rm Im}\,\tau > 0$ and $E_\tau$ denotes the torus 
%$\mathbb C/(\mathbb Z + \mathbb Z \tau)$. 
In this section, we consider the simplest case 
\begin{equation} \label{4pi}
\triangle u + e^u = 4\pi \delta_0 \quad \mbox{in $E_\tau$},
\end{equation}
and show that certain modular forms of level \(4\) are naturally 
to the solutions of (\ref{4pi}) as \(\tau\) varies.
The general case with multiple singular sources will be considered 
in a subsequent work.
%\medbreak

%\noindent
\subsection{\bf Notation.}\enspace \label{notation-K4}
\begin{itemize}
\item Let \(\HH\) be the upper-half plane.  The group \(\textup{SL}_2(\RR)\)
  operates transitively on \(\HH\) through the usual formula
 \(\displaystyle{\begin{pmatrix}a&b\\c&d\end{pmatrix}\cdot \tau 
= \frac{a\tau+ b}{c\tau+d}}\).

\item Let \(\,j(\gamma,\tau)\,\) be the \(1\)-cocycle of
  \(\textup{SL}_2(\RR)\) for its action on \(\HH\), defined by
\(\,\displaystyle{j(\gamma;\tau)=c\tau+d
}\) for any \(\,\gamma=\begin{pmatrix}a&b\\c&d\end{pmatrix}\in
\textup{SL}_2(\RR)\,\) and any \(\tau\in \HH\).

\item Denote by \(K_4\) the subgroup of 
\(\textup{PSU}(2)\subset \textup{PSL}_2(\CC)\) 
isomorphic to \(\,(\ZZ/2\ZZ)^2\),
consisting of the image in \(\textup{PSU}(2)\) of the four
matrices
\[\begin{pmatrix}1&0\\0&1\end{pmatrix},
\begin{pmatrix}\sqrt{-1}&0\\0&-\sqrt{-1}\end{pmatrix},
\begin{pmatrix}0&\sqrt{-1}\\\sqrt{-1}&0\end{pmatrix}
\ \textup{and}\ 
\begin{pmatrix}0&1\\-1&0\end{pmatrix}.
\]
We know from Lemma \ref{lemma:mono_grpthy}\,(1b) that the
centralizer subgroup of \(K_4\) in \(\textup{PSL}_2(\CC)\)
is equal to itself.

\item Let \(\textup{N}(K_4)\) be the normalizer subgroup of \(K_4\)
in \(\textup{PSU}(2)\), which is also equal to the
normalizer subgroup of \(K_4\) in \(\textup{PSL}_2(\CC)\).
We know from Lemma \ref{lemma:mono_grpthy}\,(1d) that
\(\textup{N}(K_4)\) is a semi-direct product of \(K_4\) with \(S_3\)
and \(K_4\) is isomorphic to \(S_4\).
Moreover the conjugation action induces an isomorphism
from \(\textup{N}(K_4)/K_4\) to the permutation group of the three
non-trivial elements of \(K_4\).

\end{itemize}

% \subsection{A direct approach based on $K_4$ monodromy}
\begin{proposition}\label{prop:extra-normalized}
 \ \textup{(a)}\, For any \(\tau\in \HH\), there exists a 
unique normalized developing map 
\(\,f(z;\tau)\,\) for the unique solution 
\(u(z)\) of the equation \textup{(\ref{4pi})} which 
has the following properties.
\begin{equation}\label{I:zero}
\textup{ord}_{z=a}\,f(z,\tau)=0\quad \forall a\not\equiv
\tfrac{1}{2}\pmod{\Lambda}
\end{equation}
\begin{equation}\label{I:mult_0}
\tfrac{d}{dz}f(z;\tau)\big\vert_{z=0}=0,\quad
\tfrac{d^2}{dz^2}f(z;\tau)\big\vert_{z=0}\in \CC^{\times}.
%\quad\forall \tau\in\HH.
\end{equation}
\begin{equation}\label{I:simple_else}
%\tfrac{d}{dz}f(z,\tau)\big\vert_{z=a}\in\CC^{\times}\quad
%\forall a\not\in \Lambda_{\tau}\cup \pm\tfrac{1}{2}+2\Lambda_{\tau}
%\cup \pm\tfrac{1}{2}+\tau+2\Lambda_{\tau}
{The\ holomorphic\ map\ }\,
f(z;\tau):\CC\to \PP^1(\CC)\ \
{is\  etale\ outside\ } \Lambda_{\tau}.
\end{equation}
\begin{equation} \label{I:tau}
f(z + 1; \tau) = -f(z), \quad f(z + \tau; \tau) = 1/f(z; \tau)
\quad\forall\, z\in\CC.
%\forall\,\tau\in \HH.
\end{equation}
\begin{equation} \label{I:even}
f(-z; \tau) = f(z; \tau) \quad 
\forall\, z\in\CC.
%\ \forall\,\tau\in \HH.
\end{equation}
\begin{equation} \label{I:-1}
%f(\tfrac{1}{2};\tau)=0,\quad
f(\tfrac{1}{2}\tau; \tau) = 1 \quad
\forall\, z\in\CC.
%\ \forall\,\tau\in \HH.
\end{equation}
\begin{equation}\label{I:order-1half}
\textup{ord}_{z=1/2}\,f(z;\tau)=1\quad 
%\forall a\in \pm\tfrac{1}{2}+2\Lambda_{\tau}.
\end{equation}
\begin{equation}\label{I:pole-tau}
\textup{ord}_{z=(1/2)+\tau}\,f(z;\tau)=-1\quad
%\forall a\in \pm\tfrac{1}{2}+\tau+2\Lambda_{\tau}.
\end{equation}
%\begin{equation}\label{I-local-unit}
%
%\end{equation}
% In other words 
%as a function of \(z\), \(f(z;\tau)\) has
%simple zeroes at \(\pm\tfrac{1}{2}+2\Lambda_z\),
%simple poles at \(\pm\tfrac{1}{2}+\tau+2\Lambda_z\),
%is holomorphic and non-zero elsewhere on \(\CC\),
%and \(\tfrac{d}{dz}f(z;\tau)\) has a simple zero
%for all \(z\not\in\Lambda\).
\medbreak

\noindent
\textup{(b)}\, The function \(f(z;\tau)\) in 
\textup{(a)} is characterized by properties 
\textup{(\ref{I:zero})},
%\textup{(\ref{I:mult_0})}, 
\textup{(\ref{I:tau})}, \textup{(\ref{I:-1})}  
and \textup{(\ref{I:order-1half})}, i.e.\
if \(h(z)\) is a meromorphic function on
\(\CC\) which satisfies \textup{(\ref{I:zero})},
%\textup{(\ref{I:mult_0})}, 
\textup{(\ref{I:tau})}, \textup{(\ref{I:-1})}  
and \textup{(\ref{I:order-1half})}
for an element \(\tau\in\HH\), then
\(\,h(z)=f(z;\tau)\) for all \(z\in\CC\).
\medbreak

\noindent
\textup{(c)} The function \(f(z;\tau)\) can be expressed
in terms of Weierstrass elliptic functions:
\begin{equation}\label{explicit-formula-1}
\begin{split}
f(z;\tau)&=-e^{\scalebox{0.7}{\(\tfrac{1}{4}\)}\eta(\tau;\,\Lambda_{\tau})\cdot(1+\tau)}
\cdot\frac{\sigma(\tfrac{z}{2}\!-\!\tfrac{1}{4};\Lambda_{\tau})
\cdot \sigma(\tfrac{z}{2}\!+\!\tfrac{1}{4};\,\Lambda_{\tau})}{
\sigma(\tfrac{z}{2}\!-\!\tfrac{1}{4}\!-\!\tfrac{\tau}{2};\Lambda_{\tau})
\cdot \sigma(\tfrac{z}{2}\!+\!\tfrac{1}{4}\!+\!\tfrac{\tau}{2};\Lambda_{\tau})}
\\
&= -
e^{\scalebox{0.7}{\(\tfrac{1}{4}\)}\eta(\tau;\,\Lambda_{\tau})\cdot(1+\tau)}
\!\cdot\!
\frac{\sigma^2(\tfrac{1}{4};\Lambda_{\tau})}{
\sigma^2(\tfrac{1}{4}\!+\!\tfrac{\tau}{2};\Lambda_{\tau})}
\!\cdot\! \frac{\wp(\tfrac{z}{2};\Lambda_{\tau})
-\wp(\tfrac{1}{4};\Lambda_{\tau})}{\wp(\tfrac{z}{2};\Lambda_{\tau})
-\wp(\tfrac{1}{4}\!+\!\tfrac{\tau}{2};\Lambda_{\tau})}
\\
&= \frac{\wp(\scalebox{0.9}{\(\tfrac{\tau}{4}\)};\,\Lambda_{\tau}) -
\wp(\scalebox{0.9}{\(\tfrac{1}{4}+\tfrac{\tau}{2}\)};\,\Lambda_{\tau})}{
\wp(\scalebox{0.9}{\(\tfrac{\tau}{4}\)};\,\Lambda_{\tau}) -
\wp(\scalebox{0.9}{\(\tfrac{1}{4}\)};\,\Lambda_{\tau})}
\cdot
\frac{\wp(\tfrac{z}{2};\Lambda_{\tau})-
\wp(\tfrac{1}{4};\Lambda_{\tau})}{
\wp(\tfrac{z}{2};\Lambda_{\tau})-
\wp(\tfrac{1}{4}+\tfrac{\tau}{2};\Lambda_{\tau})
}
\end{split}
\end{equation}

\end{proposition}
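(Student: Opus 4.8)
The plan is to deduce existence and uniqueness (part (a)) from the structural results of \S\ref{I-int} and \S\ref{monodromy-I}, to prove the characterization (part (b)) by a logarithmic-derivative argument, and then to establish the explicit formulas (part (c)) by checking that the right-hand side satisfies the conditions singled out in (b).

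First I would settle (a). By Proposition \ref{non-exist} every solution of (\ref{4pi}) is of type I (here $l=1$ is odd), and by Theorem \ref{K4-thm} together with the fact that $p_0(B)=B$ has the single simple root $B=0$ for every lattice, equation (\ref{4pi}) has exactly one solution $u$ on each $E_\tau$; moreover $u$ is even by Theorem \ref{even-II}. Lemma \ref{cor:types} produces a normalized type I developing map $f_0$, determined up to the Klein four-orbit $\{f_0,-f_0,1/f_0,-1/f_0\}$, and Theorem \ref{thm-type I} (with $n=0$) shows that $f_0$ has a simple zero at $\tfrac12$, a simple pole at $\tfrac12+\tau$, and is even; conditions (\ref{I:mult_0}) and (\ref{I:simple_else}) then follow from Lemma \ref{order}(3),(4) and Lemma \ref{lemma_zero-pole_g}(1). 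To pick out one member of the orbit I would use that, since $f_0$ is even, the relation $f_0(z+\tau)=1/f_0(z)$ evaluated at $z=-\tfrac{\tau}2$ forces $f_0(\tfrac{\tau}2)^2=1$; hence the four functions take the values $\pm1$ at $\tfrac{\tau}2$, and exactly one of them has both a zero (not a pole) at $\tfrac12$ and value $+1$ at $\tfrac{\tau}2$. This is the required $f(z;\tau)$.

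For (b) I would argue that (\ref{I:zero}), (\ref{I:order-1half}) and (\ref{I:tau}) already fix the divisor of any admissible $h$: a simple zero along $\tfrac12+\Lambda'$ and a simple pole along $\tfrac12+\tau+\Lambda'$, where $\Lambda'=\ZZ+2\ZZ\tau$, with $h$ holomorphic and nonvanishing elsewhere off $\Lambda_\tau$. Thus $g:=h'/h$ is an elliptic function on $E'=\CC/\Lambda'$ with prescribed simple zeros at $0$ and $\tau$ and prescribed simple poles of residues $+1$ at $\tfrac12$ and $-1$ at $\tfrac12+\tau$; such a function is unique, so $g$ does not depend on $h$. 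Consequently any two functions meeting these three conditions differ by a multiplicative constant, and (\ref{I:-1}) pins that constant down, giving $h=f$.

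Finally, for (c) I would write $F(z)$ for the first (sigma) expression and verify that it satisfies the four conditions of (b). The zero/pole structure, hence (\ref{I:zero}) and (\ref{I:order-1half}), is read off from the zeros of $\sigma(\tfrac z2\mp\tfrac14;\Lambda_\tau)$; evenness and the normalization (\ref{I:-1}) are transparent from the third ($\wp$-)expression, whose prefactor is by design the reciprocal of its $z$-dependent factor at $z=\tfrac\tau2$. The substantive step is (\ref{I:tau}): applying the transformation law (\ref{sigma-law}) for translation by $1$ and by $\tau$ in the half-argument $\tfrac z2$, one finds $F(z+1)/(-F(z))=-e^{(\eta_2-\eta_1\tau)/2}$ and $F(z+\tau)F(z)=e^{2c-\eta_2(1+\tau)/2}$, where $\eta_1=\eta(1;\Lambda_\tau)$, $\eta_2=\eta(\tau;\Lambda_\tau)$, and $c=\tfrac14\eta_2(1+\tau)$ is the prefactor exponent. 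The Legendre relation $\eta_1\tau-\eta_2=2\pi\sqrt{-1}$ makes the first ratio equal $1$, while the choice of $c$ makes the second product equal $1$; so $F$ is a normalized type I map with $F(\tfrac\tau2)=1$ and a simple zero at $\tfrac12$, whence $F=f$ by (b). The equalities among the three displayed expressions then follow from the identity $\wp(w)-\wp(a)=-\sigma(w-a)\sigma(w+a)/(\sigma^2(w)\sigma^2(a))$ with $w=\tfrac z2$. I expect the main obstacle to be precisely the bookkeeping in this verification of (\ref{I:tau}): because of the half-argument $\tfrac z2$, translations by the basis of $\Lambda_\tau$ act as half-period shifts, so one must carefully combine the quasi-period factors across the several $\sigma$-factors and invoke the Legendre relation at exactly the right place.
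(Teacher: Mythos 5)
Your parts (a) and (c) run along exactly the same lines as the paper's own proof: in (a) you select the right member of the $K_4$-orbit $\{f_0,-f_0,1/f_0,-1/f_0\}$ of normalized type I developing maps by imposing the zero (rather than pole) at $\tfrac12$ and the value $+1$ (rather than $-1$) at $\tfrac\tau2$, and in (c) you verify that the sigma-expression satisfies the four conditions of (b) (with the Legendre relation entering at the translation-by-$\tau$ step) and then pass between the three displayed expressions via $\wp(u)-\wp(v)=-\sigma(u+v)\sigma(u-v)/\bigl(\sigma^2(u)\sigma^2(v)\bigr)$, i.e.\ (\ref{wp-sigma-formula}). Those two parts are fine.

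The gap is in (b). From the four hypotheses (\ref{I:zero}), (\ref{I:tau}), (\ref{I:-1}), (\ref{I:order-1half}) you correctly pin down the divisor of $h$: simple zeros along $\tfrac12+\Lambda'$, simple poles along $\tfrac12+\tau+\Lambda'$, and nothing else. But this only prescribes the \emph{poles} of $g=h'/h$ (simple, with residues $+1$ and $-1$); it says nothing about where the \emph{zeros} of $g$ sit, because those are the critical points of $h$, and the hypothesis that $h'(0)=0$ — condition (\ref{I:mult_0}) — is deliberately \emph{not} among the four conditions of (b). So the assertion ``thus $g$ has prescribed simple zeros at $0$ and $\tau$,'' on which your uniqueness claim rests, does not follow from what precedes it; without it, two admissible functions only satisfy $g_{h_1}-g_{h_2}=\text{const}$, i.e.\ $h_1=Ae^{cz}h_2$, and the proof is not finished. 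The claim is in fact true, and the gap can be closed in several ways: (i) use $g(z+\tau)=-g(z)$ and Abel's theorem — $g$ has degree two on $E'$, its two zeros form an orbit $\{z_0,z_0+\tau\}$ whose sum is $\equiv\tau$, so $2z_0\equiv 0\pmod{\Lambda'}$, and excluding the two poles this forces the zero set to be exactly $\{0,\tau\}$; or (ii) skip the zeros entirely and kill the constants by feeding $h_1=Ae^{cz}h_2$ back into (\ref{I:tau}) (which forces $e^c=1$ from the translation by $1$ and then $c=0$, $A^2=1$ from the translation by $\tau$) and into (\ref{I:-1}) (which gives $A=1$); or (iii) argue as the paper does: both $h$ and $f$ descend to $\CC/2\Lambda_\tau$ with the \emph{same} divisor, so $h/f$ is a holomorphic, nowhere-vanishing elliptic function, hence a constant, which (\ref{I:-1}) pins to $1$. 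Any of these completes your argument; as written, (b) is incomplete.
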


%Let $\mathbb H = \{\tau\mid {\rm Im}\,\tau > 0 \}$. 
\begin{proof}
{\bf (a)} For any $\tau \in \mathbb H$, we have proved that in \S\ref{I-int}
that equation (\ref{4pi}) 
has a unique solution $u(z; \tau)$, $z \in E_\tau$ and there
exists a normalized type I developing map \(f(z;\tau)\) for \(u(z;\tau)\).
Because the centralizer subgroup of \(K_4\) in \(\textup{PSU}(2)\)
is \(K_4\) itself, normalized type I developing maps consists are
of the form \(\gamma\cdot f\) with \(\gamma\in K_4\).
Properties \textup{(\ref{I:zero})}--(\ref{I:even})
%\textup{(\ref{I:mult_0})}, (\ref{I:simple_else}) 
%\textup{(\ref{I:tau})}, 
are satisfied by all 4 normalized developing maps.
The first part of (\ref{I:tau}) and (\ref{I:-1}) implies that
\(f(z;\tau)\) has either a zero or a pole at \(z=\frac{1}{2}\).
Changing \(f_1\) to 
\(\,\scalebox{0.7}{\(\begin{pmatrix}0&\sqrt{-1}\\
    \sqrt{-1}&0\end{pmatrix}\)}\!\cdot\! f\,\) 
if necessary, we may assume that \(f_1(\tfrac{1}{2};\tau)=0\).
Then \(f(z;\tau)\) has a simple zero at \(z=\tfrac{1}{2}\) by
(\ref{I:zero}), and properties
(\ref{I:order-1half})--(\ref{I:pole-tau})
hold for \(f\).
Similarly properties (\ref{I:zero}), (\ref{I:tau}) and (\ref{I:even} 
for \(f\) imply that
\(f(\tfrac{\tau}{2};\tau)=\pm 1\).
Changing \(f\) to \(\,\scalebox{0.7}{\(\begin{pmatrix}\sqrt{-1}&0\\
    0&-\sqrt{-1}\end{pmatrix}\)}\!\cdot\! f\,\)
if necessary, we have produced a normalized 
developing map satisfying (\ref{I:zero})--(\ref{I:pole-tau}).
\medbreak

\noindent
{\bf (b)} Suppose that \(h(z)\) is a meromorphic function which satisfies
properties \textup{(\ref{I:zero})},
\textup{(\ref{I:tau})}, \textup{(\ref{I:-1})}  
and \textup{(\ref{I:order-1half})}.
Then \(h(z)\) descends to a meromorphic function on
\(\CC/2\Lambda_{\tau}\) which has simple zeros at
\(\pm\tfrac{1}{2}\, \textup{mod}\,2\Lambda_{\tau}\),
simples poles at
\(\pm\tfrac{1}{2}+\tau\,\textup{mod}\,2\Lambda_{\tau}\)
and no zeros or poles elsewhere
just like \(f(z,\tau)\).
Therefore \(h(z)= c \cdot f(z;\tau)\) for some \(c\in\CC^{\times}\).
This constant \(c\) is equal to \(1\) by (\ref{I:order-1half}).
\medbreak

\noindent
{\bf (c)} For the first equality 
in (\ref{explicit-formula-1}),
it suffices to show that the function
\[
e^{\scalebox{0.7}{\(\tfrac{1}{4}\)}\eta(\tau;\,\Lambda_{\tau})\cdot(1+\tau)}
\!\cdot\!
\frac{\sigma(\tfrac{z}{2}\!-\!\tfrac{1}{4};\Lambda_{\tau})
\cdot \sigma(\tfrac{z}{2}\!+\!\tfrac{1}{4};\,\Lambda_{\tau})}{
\sigma(\tfrac{z}{2}\!-\!\tfrac{1}{4}\!-\!\tfrac{\tau}{2};\Lambda_{\tau})
\cdot \sigma(\tfrac{z}{2}\!+\!\tfrac{1}{4}\!+\!\tfrac{\tau}{2};\Lambda_{\tau})}
\]
satisfies conditions \textup{(\ref{I:zero})},
\textup{(\ref{I:tau})}, \textup{(\ref{I:-1})}  
and \textup{(\ref{I:order-1half})}
according to (b).
The properties \textup{(\ref{I:zero})},
\textup{(\ref{I:tau})} and \textup{(\ref{I:order-1half})} 
follows quickly from the tranformation law for the
Weierstrass \(\sigma\)-function
\(\sigma(z;\Lambda_{\tau})\) and the
fact that the entire function
\(\sigma(z;\Lambda_{\tau})\) has simple zeros at points of 
\(\Lambda_{\tau}\) does not vanish elsewhere. 
The condition (\ref{I:-1}) is equivalent to %the statement that
\[\frac{\sigma(\tfrac{\tau}{4}\!-\!\tfrac{1}{4};\Lambda_{\tau})
%\cdot \sigma(\tfrac{\tau}{4}\!+\!\tfrac{1}{4};\,\Lambda_{\tau})
}{
%\sigma(\tfrac{\tau}{4}\!+\!\tfrac{1}{4};\Lambda_{\tau})\cdot 
\sigma(-\tfrac{3\tau}{4}\!-\!\tfrac{1}{4};\Lambda_{\tau})}
= - e^{-\scalebox{0.7}{\(\tfrac{1}{4}\)}\eta(\tau;\,\Lambda_{\tau})\cdot(1+\tau)}
,\]
which follows from the transformation law of 
\(\sigma(z;\Lambda_{\tau})\) with respect to the element
\(\tau\in\Lambda_{\tau}\).
We have proved that the first equality 
\[
f(z;\,\tau)=
e^{\scalebox{0.7}{\(\tfrac{1}{4}\)}\eta(\tau;\,\Lambda_{\tau})\cdot(1+\tau)}
\!\cdot\!
\frac{\sigma(\tfrac{z}{2}\!-\!\tfrac{1}{4};\Lambda_{\tau})
\cdot \sigma(\tfrac{z}{2}\!+\!\tfrac{1}{4};\,\Lambda_{\tau})}{
\sigma(\tfrac{z}{2}\!-\!\tfrac{1}{4}\!-\!\tfrac{\tau}{2};\Lambda_{\tau})
\cdot \sigma(\tfrac{z}{2}\!+\!\tfrac{1}{4}\!+\!\tfrac{\tau}{2};\Lambda_{\tau})}
\]
in (\ref{explicit-formula-1}).  The second equality in
(\ref{explicit-formula-1}) follows from the classical formula
\begin{equation}\label{wp-sigma-formula}
\wp(u;\Lambda)-\wp(v;\Lambda)
=-\frac{\sigma(u+v;\Lambda)\cdot \sigma(u-v;\Lambda)}{
\sigma^2(u;\Lambda)\cdot\sigma^2(v;\Lambda)}\,.
\end{equation}
The last equality in (\ref{explicit-formula-1}) is equivalent to
\begin{equation}
\frac{\wp(\scalebox{0.9}{\(\tfrac{\tau}{4}\)};\,\Lambda_{\tau}) -
\wp(\scalebox{0.9}{\(\tfrac{1}{4}+\tfrac{\tau}{2}\)};\,\Lambda_{\tau})}{
\wp(\scalebox{0.9}{\(\tfrac{\tau}{4}\)};\,\Lambda_{\tau}) -
\wp(\scalebox{0.9}{\(\tfrac{1}{4}\)};\,\Lambda_{\tau})}
= -
e^{\scalebox{0.7}{\(\tfrac{1}{4}\)}\eta(\tau;\,\Lambda_{\tau})\cdot(1+\tau)}
\!\cdot\!
\frac{\sigma^2(\tfrac{1}{4};\Lambda_{\tau})}{
\sigma^2(\tfrac{1}{4}\!+\!\tfrac{\tau}{2};\Lambda_{\tau})}
\,,
\end{equation}
which is easily verified using (\ref{explicit-formula-1})
and the transformation law of the Weierstrass \(\sigma\)-function
\(\sigma(z;\Lambda_{\tau})\) with respect to the lattice
\(\Lambda_{\tau}\).
We have proved part (c) of Proposition \ref{prop:extra-normalized}.
\end{proof}

\begin{proposition}\label{prop:transf_4pi}
Let \(\,f(z;\tau)\) be the developing map specified
in Proposition \textup{\ref{prop:extra-normalized}}.
\smallbreak

\noindent \textup{(a)} There exists a unique group homomorphism
\(\,\psi:\textup{SL}_2(\ZZ)\to \textup{N}(K_4)\,\) 
such that
\begin{equation}\label{transf-law-N}
f\big(j(\gamma,\tau)^{-1}\!\cdot\! z;\,\gamma\cdot \tau\big)
= \psi(\gamma)\!\cdot\! f(z;\tau)\qquad
\forall\,z\in\CC,\ \forall\,\tau\in\HH.
\end{equation}
Here \(\,\psi(\gamma) \!\cdot\! f(z;\tau)
=\frac{a f(z;\tau) +b}{-\bar{b}f(z;\tau)+\bar{a}}\)
if \(\,\psi(\gamma)\,\) is %equal to 
the image of
\scalebox{0.8}{\(\begin{pmatrix}a&b\\-\bar{a}&\bar{b}\end{pmatrix}\)}
in \(\,\textup{PSU}(2)\).
\medbreak

\noindent
\textup{(b)} The homomorphism \(\psi\) is \emph{surjective}.
The kernel \(\,\textup{Ker}(\psi)\,\) of 
\(\psi\) is equal to the subgroup of
\(\textup{SL}_2(\ZZ)\) generated by \(\pm \textup{I}_2\)
and the principal congruence subgroup \(\,\Gamma(4)\,\)
of level \(4\), consisting of all \(\,\gamma\in\textup{SL}_2(\ZZ)\)
with \(\,\gamma\equiv \textup{I}_2\pmod{4}\).
The inverse image \(\,\psi^{-1}(K_4)\,\) of \(K_4\) under \(\psi\)
is the principal congruence subgroup \(\,\Gamma(2)\).
(In other words \(\,\psi\,\) induces an isomorphism
\(\,\textup{SL}_2(\ZZ/4\ZZ)/\{\pm \textup{I}_2\}
\xrightarrow{\sim} \textup{N}(K_4)\), and
also an isomorphism 
\(\,\textup{SL}_2(\ZZ/2\ZZ)
\xrightarrow{\sim}\textup{N}(K_4)/K_4\cong S_3\).)
\medbreak

\noindent
\textup{(c)} \(\,\displaystyle{\psi\big(\scalebox{0.8}{$\begin{pmatrix}1&1
\\0&1\end{pmatrix}$}\big)
= the\ image\ in\ \textup{PSU}(2)\ 
of\ the\ unitary\ matrix \scalebox{0.7}{$\begin{pmatrix}e^{\pi\sqrt{-1}/4}&0\\
0&e^{-\pi\sqrt{-1}/4}
\end{pmatrix}$}}%\in \textup{SU}(2)
\), and
\(\,\displaystyle{\psi\big(
\scalebox{0.8}{$\begin{pmatrix}0&1
\\-1&0\end{pmatrix}$}\big)
= the\ image\ in\ \textup{PSU}(2)\ of\ \tfrac{\sqrt{-1}}{\sqrt{2}}
\scalebox{0.9}{$\begin{pmatrix}-1&1\\
1&1
\end{pmatrix}$}}\).

\end{proposition}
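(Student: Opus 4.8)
The plan is to derive the transformation law from the uniqueness of the solution of (\ref{4pi}) together with the gauge-transformation rule relating solutions on $\mathbb{C}/\Lambda_\tau$ and $\mathbb{C}/\Lambda_{\gamma\tau}$, and then to pin down $\psi$ on the standard generators of $\mathrm{SL}_2(\mathbb{Z})$ using the explicit formula (\ref{explicit-formula-1}). For part (a) I would first record that for $\gamma=\begin{pmatrix}a&b\\c&d\end{pmatrix}\in\mathrm{SL}_2(\mathbb{Z})$ one has $j(\gamma,\tau)^{-1}\Lambda_\tau=\Lambda_{\gamma\tau}$, so multiplication by $j(\gamma,\tau)^{-1}$ is an isomorphism $E_\tau\xrightarrow{\sim}E_{\gamma\tau}$. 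By the gauge-transformation rule (the remark following Lemma \ref{order}) the function $z\mapsto f(j(\gamma,\tau)^{-1}z;\gamma\tau)$ is again a developing map of the unique solution $u(\cdot;\tau)$ of (\ref{4pi}) on $E_\tau$; by (\ref{I:tau}) its monodromy with respect to $\Lambda_\tau$ is again the Klein four-group $K_4$, merely with the roles of $\omega_1,\omega_2$ permuted according to $\gamma$. By Lemma \ref{order}(2) there is a unique $T(\gamma,\tau)\in\mathrm{PSU}(2)$ with $f(j(\gamma,\tau)^{-1}z;\gamma\tau)=T(\gamma,\tau)\cdot f(z;\tau)$, uniqueness holding because $f(\cdot;\tau)$ is non-constant. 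Conjugating the monodromy group $K_4$ of $f$ by $T(\gamma,\tau)$ must again equal $K_4$, so $T(\gamma,\tau)\in\mathrm{N}(K_4)$; since $\mathrm{N}(K_4)$ is finite, $\mathbb{H}$ is connected, and $T(\gamma,\tau)$ varies continuously in $\tau$, it is constant, and we set $\psi(\gamma):=T(\gamma,\tau)$. The homomorphism property follows by applying the relation twice and invoking the cocycle identity $j(\gamma_1\gamma_2,\tau)=j(\gamma_1,\gamma_2\tau)\,j(\gamma_2,\tau)$.

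For part (c) I would compute $\psi(T)$ and $\psi(S)$ for $T=\begin{pmatrix}1&1\\0&1\end{pmatrix}$ and $S=\begin{pmatrix}0&1\\-1&0\end{pmatrix}$. For $T$ the lattice is unchanged ($\Lambda_{\tau+1}=\Lambda_\tau$, $j(T,\tau)=1$); writing $\psi(T)f=\lambda f$ and comparing the $(\tau+1)$-monodromy $f(z+\tau+1;\tau+1)=1/f(z;\tau+1)$ with the computed $f(z+\tau+1;\tau)=-1/f(z;\tau)$ forces $\lambda^2=-1$, and the normalization (\ref{I:-1}) together with the value $f(\tfrac{1+\tau}{2};\tau)=-\sqrt{-1}$ (read off from (\ref{explicit-formula-1})) selects $\lambda=\sqrt{-1}$, i.e. $\psi(T)$ is the image of $\begin{pmatrix}e^{\pi\sqrt{-1}/4}&0\\0&e^{-\pi\sqrt{-1}/4}\end{pmatrix}$. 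For $S$ one has $S\tau=-1/\tau$ and $j(S,\tau)=-\tau$; tracking translations shows $h(z):=f(-z/\tau;-1/\tau)$ satisfies $h(z+1)=1/h(z)$ and $h(z+\tau)=-h(z)$, so $\psi(S)$ interchanges the two generators $w\mapsto -w$ and $w\mapsto 1/w$ of $K_4$, placing $\psi(S)$ in the correct $K_4$-coset. To obtain the precise element I would apply the homogeneity relations $\sigma(tz;t\Lambda)=t\,\sigma(z;\Lambda)$, $\wp(tz;t\Lambda)=t^{-2}\wp(z;\Lambda)$ and $\eta(t\omega;t\Lambda)=t^{-1}\eta(\omega;\Lambda)$ with $t=-\tau$ to reduce the claimed identity $f(-z/\tau;-1/\tau)=\tfrac{1-f(z;\tau)}{1+f(z;\tau)}$ to an equality of $\sigma$-quotients for the single lattice $\Lambda_\tau$, which is then checked via (\ref{sigma-law}) and (\ref{wp-sigma-formula}).

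Part (b) becomes pure group theory once (c) is in hand. On $\mathbb{P}^1$ the element $\psi(T):w\mapsto\sqrt{-1}\,w$ has order $4$ with $\psi(T)^2:w\mapsto -w\in K_4$, inducing the transposition of $\mathrm{N}(K_4)/K_4\cong S_3$ that fixes $w\mapsto -w$ and swaps $w\mapsto 1/w$, $w\mapsto -1/w$; while $\psi(S):w\mapsto\tfrac{1-w}{1+w}$ has order $2$ and induces the transposition swapping $w\mapsto -w$ and $w\mapsto 1/w$. These two transpositions generate $S_3$, and $\psi(T)^2\in K_4$ together with its conjugates generates $K_4$, so $\psi$ is surjective onto $\mathrm{N}(K_4)$. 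The evenness (\ref{I:even}) gives $f(-z;\tau)=f(z;\tau)$, so $\psi(-I_2)=\mathrm{id}$ and $-I_2\in\ker\psi$. Matching $\psi(T),\psi(S)$ with the images of $T,S$ under the reduction isomorphism $\mathrm{SL}_2(\mathbb{Z}/4\mathbb{Z})/\{\pm I_2\}\xrightarrow{\sim}\mathrm{N}(K_4)$ of the Remark following Lemma \ref{lemma:mono_grpthy} identifies $\psi$ with the mod-$4$ reduction followed by that isomorphism; since $[\mathrm{SL}_2(\mathbb{Z}):\{\pm I_2\}\Gamma(4)]=24=|\mathrm{N}(K_4)|$, surjectivity forces $\ker\psi=\{\pm I_2\}\Gamma(4)$ and $\psi^{-1}(K_4)=\Gamma(2)$.

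I expect the main obstacle to be the explicit verification in (c), and in particular the $S$-transformation $\tau\mapsto -1/\tau$: here the lattice rescaling by $-\tau$ must be absorbed through the homogeneity of $\sigma$ and the nontrivial behavior of the quasi-period $\eta(\tau;\Lambda_\tau)$ before the $\sigma$-functional equation can be applied, and one must be careful that the sign normalizations (the values $f(\tfrac{1+\tau}{2};\tau)$ and $f(\tfrac{\tau}{2};\tau)=1$) select the intended representatives in $\mathrm{N}(K_4)$ rather than their $K_4$-translates. Everything else is either conceptual (uniqueness plus connectedness in (a)) or a finite group computation (the identification of $\ker\psi$ in (b)).
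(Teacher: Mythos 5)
Your parts (a) and (c) follow essentially the paper's own route. In (a) you use uniqueness of the solution together with the gauge rule to produce $\psi(\gamma)$, conjugation of monodromies to place it in $\mathrm{N}(K_4)$, and the cocycle identity for multiplicativity; your continuity-plus-finiteness argument for the $\tau$-independence of $\psi(\gamma)$ is a point the paper leaves implicit, so this is a welcome addition. In (c) your treatment of $T$ is the paper's (note you should say a word on why $\psi(T)$ is a rotation $w\mapsto\lambda w$: both $f(\cdot;\tau)$ and $f(\cdot;\tau+1)$ vanish at $z=\tfrac12$, so $\psi(T)$ fixes $0\in\mathbb{P}^1(\CC)$, hence also $\infty$ since it lies in $\mathrm{PSU}(2)$). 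For $S$ you narrow to a $K_4$-coset by monodromy exactly as the paper does, but then finish by a direct $\sigma$-function verification of $f(-z/\tau;-1/\tau)=\tfrac{1-f(z;\tau)}{1+f(z;\tau)}$; the paper instead finishes by evaluating at $z=\tfrac12$ and $z=-\tfrac{\tau}{2}$, where \eqref{I:order-1half}, \eqref{I:even} and \eqref{I:-1} give $\psi(S)\cdot 0=1$ and $\psi(S)\cdot 1=0$, which already singles out $w\mapsto\tfrac{1-w}{1+w}$ among the four candidates — much cheaper, and available to you since you have the same special values. Part (b) is where you genuinely diverge: the paper never argues through generators, but computes $\psi$ on $\Gamma(2)$ and $\Gamma(4)$ directly by applying the transformation law \eqref{transf-torsion} to $2$- and $4$-torsion points, obtaining $\psi(\Gamma(2))\subseteq K_4$, $\Gamma(4)\subseteq\mathrm{Ker}(\psi)$, and $\mathrm{Ker}(\psi)\subseteq\{\pm I_2\}\Gamma(4)$, with surjectivity then falling out of the index count. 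Your derivation of surjectivity from the explicit images $\psi(T),\psi(S)$ is complete and clean, and is arguably a nicer way to get that half of (b).

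The gap is in your identification of the kernel. Surjectivity of $\psi$, the fact $-I_2\in\mathrm{Ker}(\psi)$, and the abstract isomorphism $\mathrm{SL}_2(\ZZ/4\ZZ)/\{\pm I_2\}\cong\mathrm{N}(K_4)$ of the Remark do \emph{not} by themselves force $\mathrm{Ker}(\psi)=\{\pm I_2\}\Gamma(4)$: the Remark supplies \emph{some} isomorphism, not one carrying the mod-$4$ images of $T,S$ to $\psi(T),\psi(S)$, so the ``matching'' is precisely what must be proved, and the index count only yields equality after you know the inclusion $\{\pm I_2\}\Gamma(4)\subseteq\mathrm{Ker}(\psi)$. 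That inclusion is true but requires an input your sketch omits. Possible repairs: (i) note that $\psi(T)^4=\mathrm{id}$, $\psi(S)^2=\mathrm{id}$, $(\psi(S)\psi(T))^3=\mathrm{id}$ (all immediate from (c) and $\psi(-I_2)=\mathrm{id}$), so $\psi$ factors through $\mathrm{PSL}_2(\ZZ)$ modulo the normal closure of $\bar T^4$, and invoke the classical fact that this normal closure equals the image of $\Gamma(4)$ (equivalently, the $(2,3,4)$ von Dyck group has order $24$); (ii) prove the finite-group fact that all generating pairs of $S_4$ consisting of an involution and an element of order $3$ are conjugate, so that every surjection $\mathrm{PSL}_2(\ZZ)\twoheadrightarrow S_4$ has the same kernel; or (iii) bypass group theory entirely and, as the paper does, evaluate the transformation law at torsion points for $\gamma\in\Gamma(4)$ and $\gamma\in\Gamma(2)$. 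Whichever you choose, it should be stated explicitly; as written, the step ``surjectivity forces $\ker\psi=\{\pm I_2\}\Gamma(4)$'' is not a valid deduction.
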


\begin{proof}
{\bf (a)} It is easily checked that for each
\(\gamma\in\textup{SL}_2(\ZZ)\), 
\(f\big(j(\gamma,\tau)^{-1}\!\cdot\! z;\,\gamma\cdot \tau\big)\)
is a developing map of the unique solution
of (\ref{4pi}), and that for each \(\omega\in\Lambda_{\tau}\) 
we have
\(f\big(j(\gamma,\tau)^{-1}\!\cdot\! z+\omega;\,\gamma\cdot \tau\big)
=\pm f(z;\tau)^{\pm 1}\). 
Since \(f\big(j(\gamma,\tau)^{-1}\!\cdot\! z+\omega;
\,\gamma\cdot \tau\big)\) and \(f(z;\tau)\) are developing
maps for the same solution of (\ref{4pi}), there exists an unique
element \(\psi(\gamma)\in \textup{PSU}(2)\) such that
the equality (\ref{transf-law-N}) holds. 
The fact that \(f\big(j(\gamma,\tau)^{-1}\!\cdot\! z+\omega;\,\gamma\cdot \tau\big)
=\pm f(z;\tau)^{\pm 1}\) for each \(\omega\in \Lambda_{\tau}\) 
means that \(\psi(\gamma)\in \textup{N}(K_4)\).  
\smallbreak

For all \(\gamma_1, \gamma_2\in \textup{SL}_2(\ZZ)\), we have
\begin{equation*}
\begin{split}
f(j(\gamma_1\gamma_2, \tau)^{-1}z;\,\gamma_1\gamma_2 \!\cdot\!\tau)
&= f(j(\gamma_1, \gamma_2\!\cdot\! \tau)^{-1}\cdot
j(\gamma_2, \tau)^{-1}\!\cdot\! z;\,
\gamma_1\!\cdot\! (\gamma_2\!\cdot\! \tau))\\
&=\psi(\gamma_1)\!\cdot\!
f(j(\gamma_2, \tau)^{-1}\!\cdot\! z;\, \tau)
\\
&=\psi(\gamma_1)\!\cdot\!\psi(\gamma_2)\cdot
f(z;\,\tau),
\end{split}
\end{equation*}
therefore
\(\psi(\gamma_1\gamma_2)=\psi(\gamma_1)\!\cdot\psi(\gamma_2)\).
We have proved statement (a).
\medbreak

\noindent
{\bf (b)} We get from (a) that for any 
\(\gamma = \scalebox{0.8}{$\begin{pmatrix}a&b\\c&d\end{pmatrix}$}\)
in \(\textup{SL}_2(\ZZ)\) we have
\begin{equation}\label{transf-torsion}
f(j(\gamma,\tau)^{-1}z \!+\!u\,\gamma\!\cdot\! \tau
\!+\!v;\,\gamma\!\cdot\!\tau)
= \psi(\gamma)\!\cdot\!f(z\!+\!(ua\!+\!vc)\tau \!+\! 
(ub\!+\!vd);\,\tau)
%\quad\forall u,v\in\QQ.
\end{equation}
for all \(u,v\in\QQ\).
For any given \(\gamma\in \Gamma(2)\),  we have
\[\,(ua\!+\!vc)\tau \!+\! 
(ub\!+\!vd)\equiv u\tau+v \ (\textup{mod}\, 2\Lambda_{\tau})\quad 
\forall\, (u,v)\in\ZZ^2,
\]
so the equality (\ref{transf-torsion}) for all \((u,v)\in\ZZ^2\)
implies that
\(\,\psi(\gamma)\,\) commutes with every element of
\(K_4\).  Hence \(\psi(\gamma)\in K_4\) for any \(\gamma\in
\Gamma(2)\).
\medbreak

Suppose that \(\gamma\in \Gamma(4)\).  Then 
\[\,(ua\!+\!vc)\tau \!+\! 
(ub\!+\!vd)\equiv u\tau+v \ (\textup{mod}\,2\Lambda_{\tau})\quad
\forall\, (u,v)\in\tfrac{1}{2}\ZZ^2,
\] and
the equality (\ref{transf-torsion}) with \(z=0\) implies that
\[
(\psi(\gamma)\!\cdot\!f)(u\tau+v;\,\tau)
= f(u\gamma\!\cdot\!\tau+v;\,\gamma\!\cdot\!\tau)
\quad\forall\, (u,v)\in\tfrac{1}{2}\ZZ^2.
\]
Because we already know that \(\psi(\gamma)\in K_4\), the last
equality implies that \(\psi(\gamma)=\textup{I}_2\).
We have proved that \(\Gamma(4)\subset \textup{Ker}(\psi)\).
\medbreak

Suppose that \(\gamma=\scalebox{0.8}{$\begin{pmatrix}a&b\\c&d
  \end{pmatrix}$}\in \textup{Ker}(\psi)\).
%  \(\,\psi(\gamma)=\textup{I}_2\). 
As before we have
\begin{equation}\label{ker-psi}
f(j(\gamma,\tau)^{-1}z \!+\!u\,\gamma\!\cdot\! \tau
\!+\!v;\,\gamma\!\cdot\!\tau)
= f(z\!+\!(ua\!+\!vc)\tau \!+\! 
(ub\!+\!vd);\,\tau)
\end{equation}
for all \((u,v)\in \QQ^2\).
The transformation law (\ref{I:tau}) for \(f(z;\tau)\)
and the above equality for \((u,v)\in\ZZ^2\) imply that
\[
(ua+vc)\tau+(ub+vd)\equiv u\tau+v\ (\textup{mod}\,2\Lambda_{\tau})
\quad \forall\,(u,v)\in \ZZ^2,
\]
therefore \(\gamma\in \Gamma(2)\).
The equation (\ref{ker-psi}) with \(z=0\) and \(u,v\in
\tfrac{1}{2}\ZZ\) tells us that
\[
f(u\,\gamma\!\cdot\! \tau
\!+\!v;\,\gamma\!\cdot\!\tau)
= f((ua\!+\!vc)\tau \!+\! 
(ub\!+\!vd);\,\tau)\qquad
\forall\, (u,v)\in \tfrac{1}{2}\ZZ^2.
\]
The properties (\ref{I:tau}), {(\ref{I:-1})}  
and (\ref{I:order-1half}) imply that
\[
(ua+vc)\tau+(ub+vd)\equiv \pm(u\tau+v)\ (\textup{mod}\,2\Lambda_{\tau})
\quad \forall\,(u,v)\in \tfrac{1}{2}\ZZ^2,
\]
therefore \(\,\gamma\in \{\pm\textup{I}_2\}\!\cdot\!\Gamma(4)\).
We have proved the statement (b).
\medbreak

\noindent
{\bf (c)} We want to compute \(\,T:=\psi\Big(
\scalebox{0.8}{$\begin{matrix}1&1\\0&1\end{matrix}$}\Big)\)
and 
\(S:=\psi\Big(
\scalebox{0.8}{$\begin{matrix}0&1\\-1&0\end{matrix}$}\Big)\).
The defining relation for \(T\) is
\begin{equation}\label{rel-S}
f(z;\,\tau+1)=T\cdot f(z;\tau) \quad \forall\,z\in\CC,\ \forall\,\tau\in\HH.
\end{equation}
Substituting \(z\) by \(z+\omega\) in (\ref{rel-S})
with \(\omega\in \Lambda_{\tau}\) gives us two equalities
\[
T\cdot \scalebox{0.7}{$\begin{pmatrix}\sqrt{-1}&0\\ 0&-\sqrt{-1}\end{pmatrix}$}
= \scalebox{0.7}{$\begin{pmatrix}\sqrt{-1}&0\\ 0&-\sqrt{-1}\end{pmatrix}$}
\cdot T
\ \ \ \textup{and}\ \ \
\scalebox{0.7}{$\begin{pmatrix}0&\sqrt{-1}\\ \sqrt{-1}&0\end{pmatrix}$}
\cdot T
= T\cdot \scalebox{0.7}{$\begin{pmatrix}0&-1\\ 1&0\end{pmatrix}$}
\]
in \(\textup{PSU}(2)\).
A easy computation with the above equalities reveals that
\(\,\gamma\in 
\scalebox{0.7}{$\begin{pmatrix}e^{\pi\sqrt{-1}/4}&0\\ 
0&e^{-\pi\sqrt{-1}/4}
\end{pmatrix}$}\cdot K_4\), i.e.
\(\displaystyle
f(z;\,\tau+1)=\pm\sqrt{-1}\cdot f(z;\,\tau)^{\pm 1}\).
Since \(f(\tfrac{1}{2};\,\tau)=f(\tfrac{1}{2};\,\tau+1)=0\), 
the possibilities narrow down to
\(\displaystyle
f(z;\,\tau+1)=\pm\sqrt{-1}\cdot f(z;\,\tau)\).
It remains to determine the sign, which amounts to
computing \(\,f(\tfrac{\tau+1}{2};\,\tau)\) 
From the first equality in (\ref{explicit-formula-1}) 
we get
\begin{equation*}
\begin{split}
f(\tfrac{\tau+1}{2};\tau)
&=-e^{\scalebox{0.7}{\(\tfrac{1}{4}\)}\eta(\tau;\,\Lambda_{\tau})\cdot(1+\tau)}
\cdot\frac{\sigma(\tfrac{\tau}{4};\Lambda_{\tau})
\cdot \sigma(\tfrac{\tau+2}{4};\,\Lambda_{\tau})}{
\sigma(-\tfrac{\tau}{4};\Lambda_{\tau})
\cdot
\sigma(\tfrac{3\tau+2}{4};\Lambda_{\tau})}
\\
&=e^{\scalebox{0.7}{$\tfrac{1}{4}$}[\eta(\tau;\,\Lambda_{\tau})-
  \eta(1;\,\Lambda_{\tau})]}
\\
&= -\sqrt{-1}
\end{split}
\end{equation*}
The second equality in the displayed equation above follows from the 
transformation law of the Weierstrass \(\sigma\)-function,
while the last equality follows from the Legendre relation
\(\,\eta(1;,\Lambda_{\tau})\tau - \eta(\tau;\,\Lambda_{\tau})
= 2\pi\sqrt{-1}\).
We conclude that 
\[\,f(z;\,\tau +1)=\sqrt{-1}\!\cdot\! f(z;\,\tau),\]
which gives the formula for 
\(T=\psi\big(\scalebox{0.7}{$\begin{pmatrix}
1&1\\0&1\end{pmatrix}$}\big)\).
\bigbreak

Finally let's compute \(S\). The defining
relation for \(S\) is
\begin{equation*}
f\big(-\tfrac{1}{\tau}\,z;\, -\tfrac{1}{\tau})
= S\cdot f(z;\,\tau)
\end{equation*}
The functional equation for \(\,z \to z+\omega\,\) 
with \(\,\omega\in\Lambda_{\tau}\,\) gives us two equalities
\begin{equation*}
S\cdot \scalebox{0.8}{$\begin{pmatrix}
1&0\\0&-1\end{pmatrix}$}
= \scalebox{0.8}{$\begin{pmatrix}
0&1\\1&0\end{pmatrix}$}\cdot S
\quad \textup{and}\quad
\scalebox{0.8}{$\begin{pmatrix}
1&0\\0&-1\end{pmatrix}$}\cdot S
=S\cdot \scalebox{0.8}{$\begin{pmatrix}
0&1\\1&0\end{pmatrix}$}
\end{equation*}
in \(\textup{PSU}(2)\).
A straight-forward computation gives four possible solutions
of \(S\), which translates into
\begin{equation*}
f(-\tfrac{1}{\tau}\,z;\, -\tfrac{1}{\tau})
= \pm \frac{f(z;\,\tau)+1}{f(z;\,\tau)-1}
\ \ \ \textup{or}\ \
\pm \frac{f(z;\,\tau)-1}{f(z;\,\tau)+1}
\end{equation*}
The requirement that \(\,f(-\tfrac{1}{2\tau};\,-\tfrac{1}{\tau})=1\)
eliminates two possibilities: the two \(\pm\) signs above
are both \(-1\).
The requirement that \(\,f(\tfrac{1}{2};\,-\tfrac{1}{\tau})=0\,\)
shows that
\[\,f\big(-\tfrac{1}{\tau}\,z;\, -\tfrac{1}{\tau})
=-\frac{f(z;\,\tau)-1}{f(z;\,\tau)+1}.
\]
We have proved the statement (c).
\end{proof}

Recall that the quotient \(\Gamma(4)\backslash\HH\) has a natural structure
as (the \(\CC\)-points of) a smooth affine algebraic curve \(Y(4)\).
The compactified modular curve \(X(4)\)
is the smooth compactification \(X(4)\) of \(Y(4)\).
As a topological space \(X(4)\) is naturally identified as
the quotient by \(\Gamma(4)\) of \(\HH\sqcup \PP^1(\QQ)\); the topology of 
the latter is described in \cite[p.\,10]{Shimura-redbook}.
The complement \(\,X(4)\smallsetminus Y(4)\,\), called the cusps
of \(X(4)\), is a set with \(6\) elements naturally identified with 
\(\Gamma(4)\backslash \PP^1(\QQ)\), or equivalently
the set \(\PP^1(\ZZ/4\ZZ)\) of \(\ZZ/4\ZZ\)-valued points
of the scheme \(\PP^1\) over \(\ZZ\).
It is well-known that
\(X(4)\) has genus zero; c.f.\ \cite[(1.6.4),\,p.\,23]{Shimura-redbook}.

The general discussion in \S\ref{I-int}, of which the present
situation is the special case \(\rho=4\pi\), implies that
the function \(\,\tau\mapsto f(0;\tau)\,\) is 
holomorphic on \(\HH\).
Proposition \ref{prop:transf_4pi} implies that
the holomorphic function \(\,\tau\mapsto f(0;\tau)\,\) on
\(\HH\) descends to a holomorphic function \(h_{X(4)}\) on the open
modular curve \(Y(4)\). 
The next corollary says that \(h_{X(4)}\) is a 
\emph{Hauptmodul} for \(X(4)\).

\begin{corollary}\label{cor:hauptm}
\textup{(a)} The holomorphic function \(h_{X(4)}\) on \(Y(4)\)
is a meromorphic function
on \(X(4)\) which defines a biholomorphic isomorphism \(h_{X(4)}^{\ast}\)
from \(X(4)\) to \(\PP^1(\CC)\).
This isomorphism \(h_{X(4)}^{\ast}\) is equivariant with
respect to \(\psi\), for the action of
\(\textup{SL}_2(\ZZ)/\{\pm\,\textup{Id}_2\}\!\cdot\! \Gamma(4)\)
on \(X(4)\) and the action of 
\(\textup{N}(K_4)\) on \(\PP^1(\CC)\). 
\smallbreak

\noindent
\textup{(b)} We have explicit formulas for \(h_{X(4)}(\tau)=f(0,\tau)\):
\begin{equation}\label{explicit-haupm}
%\begin{split}
h(\tau)
%&=-e^{\scalebox{0.7}{\(\tfrac{1}{4}\)}\eta(\tau;\,\Lambda_{\tau})\cdot(1+\tau)}
%\cdot\frac{\sigma(\tfrac{z}{2}\!-\!\tfrac{1}{4};\Lambda_{\tau})
%\cdot \sigma(\tfrac{z}{2}\!+\!\tfrac{1}{4};\,\Lambda_{\tau})}{
%\sigma(\tfrac{z}{2}\!-\!\tfrac{1}{4}\!-\!\tfrac{\tau}{2};\Lambda_{\tau})
%\cdot \sigma(\tfrac{z}{2}\!+\!\tfrac{1}{4}\!+\!\tfrac{\tau}{2};\Lambda_{\tau})}
%\\
%&
=
 -e^{\scalebox{0.7}{\(\tfrac{1}{4}\)}\eta(\tau;\,\Lambda_{\tau})\cdot(1+\tau)}
\!\cdot\!
%\scalebox{0.95}{$
\frac{\sigma^2(\tfrac{1}{4};\Lambda_{\tau})}{
\sigma^2(\tfrac{1}{4}\!+\!\tfrac{\tau}{2};\Lambda_{\tau})}
%$}
%\!\cdot\! \frac{\wp(\tfrac{z}{2};\Lambda_{\tau})
%-\wp(\tfrac{1}{4};\Lambda_{\tau})}{\wp(\tfrac{z}{2};\Lambda_{\tau})
%-\wp(\tfrac{1}{4}\!+\!\tfrac{\tau}{2};\Lambda_{\tau})}
%\\
%&
= \frac{\wp(\scalebox{0.9}{\(\tfrac{\tau}{4}\)};\,\Lambda_{\tau}) -
\wp(\scalebox{0.9}{\(\tfrac{1}{4}+\tfrac{\tau}{2}\)};\,\Lambda_{\tau})}{
\wp(\scalebox{0.9}{\(\tfrac{\tau}{4}\)};\,\Lambda_{\tau}) -
\wp(\scalebox{0.9}{\(\tfrac{1}{4}\)};\,\Lambda_{\tau})}
%\cdot
%\frac{\wp(\tfrac{z}{2};\Lambda_{\tau})-
%\wp(\tfrac{1}{4};\Lambda_{\tau})}{
%\wp(\tfrac{z}{2};\Lambda_{\tau})-
%\wp(\tfrac{1}{4}+\tfrac{\tau}{2};\Lambda_{\tau})
%}
%\end{split}
\end{equation}

\noindent
\textup{(c)} The isomorphism \(\,h_{X(4)}^{\ast}\,\) sends (the image
of) the standard cusp ``\(\infty\cdot \sqrt{-1}\)'', that is the 
point \(\infty\in \PP^1(\QQ)\), to the point
\(0\in \PP^1(\CC)\).

\end{corollary}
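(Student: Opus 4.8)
The plan is to handle the three assertions in turn, reducing the biholomorphism claim in (a) to a degree count that rests on the cusp expansion computed in (c); I will establish (b) first since it is purely formal and feeds into everything else.

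For (b) I would simply set $z=0$ in the three expressions for $f(z;\tau)$ displayed in (\ref{explicit-formula-1}). In the first expression the factors $\sigma(\tfrac{z}{2}\mp\tfrac14;\Lambda_\tau)$ become $\sigma(\mp\tfrac14)=\mp\sigma(\tfrac14)$ and $\sigma(\tfrac{z}{2}\mp\tfrac14\mp\tfrac{\tau}{2})$ become $\mp\sigma(\tfrac14\pm\tfrac{\tau}{2})$; the two sign changes cancel, yielding the $\sigma$-quotient of (\ref{explicit-haupm}) once the oddness of $\sigma$ from \ref{subsubsec:properties_sigma}(i) is invoked. In the third expression, $\wp(\tfrac{z}{2};\Lambda_\tau)\to\infty$ as $z\to0$, so the factor $\bigl(\wp(\tfrac{z}{2})-\wp(\tfrac14)\bigr)/\bigl(\wp(\tfrac{z}{2})-\wp(\tfrac14+\tfrac{\tau}{2})\bigr)\to1$ and the remaining prefactor is exactly the $\wp$-quotient of (\ref{explicit-haupm}). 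Equality of the two displayed formulas for $h(\tau)=f(0;\tau)$ is inherited from (\ref{explicit-formula-1}).

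Next I record the structural inputs for (a) and (c). Putting $z=0$ in Proposition \ref{prop:transf_4pi}(a) gives $h(\gamma\cdot\tau)=\psi(\gamma)\cdot h(\tau)$ for all $\gamma\in\textup{SL}_2(\ZZ)$; this is precisely the equivariance of $h_{X(4)}^\ast$, and since $\Gamma(4)\subset\textup{Ker}(\psi)$ by \ref{prop:transf_4pi}(b) it also gives the descent of $h$ to $Y(4)$ already noted. From the $\sigma$-quotient in (b), $h$ has neither a zero nor a pole at any $\tau\in\HH$: a zero (resp.\ pole) would force $\tfrac14\in\Lambda_\tau$ (resp.\ $\tfrac14+\tfrac{\tau}{2}\in\Lambda_\tau$), impossible since $\{1,\tau\}$ is an $\RR$-basis of $\CC$. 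Hence every zero and pole of $h$ on $X(4)$ sits at a cusp. The heart of the proof, and the step I expect to be most delicate, is the local behaviour at the cusp $\infty$: its $\Gamma(4)$-width is $4$, so the local parameter is $q_4=e^{\pi\sqrt{-1}\tau/2}$. Using the standard $q$-expansion of $\wp(z;\Lambda_\tau)$ with $\zeta=e^{2\pi\sqrt{-1}z}$, the three arguments give $\zeta=q_4$ for $\wp(\tfrac{\tau}{4})$, $\zeta=\sqrt{-1}\,q_4^2$ for $\wp(\tfrac14+\tfrac{\tau}{2})$, and $\zeta=\sqrt{-1}$ for $\wp(\tfrac14)$. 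Only $\wp(\tfrac{\tau}{4})$ produces an odd power of $q_4$, contributing $q_4$ with coefficient $1$ (from $\zeta/(1-\zeta)^2=\zeta+\cdots$), while the other two values involve only even powers of $q_4$ and contribute no $q_4^1$-term. Thus the numerator of $h$ equals $(2\pi\sqrt{-1})^2\bigl(q_4+O(q_4^2)\bigr)$, whereas the denominator tends to the nonzero value $(2\pi\sqrt{-1})^2\bigl(\tfrac1{12}-(\tfrac1{12}-\tfrac12)\bigr)=\tfrac12(2\pi\sqrt{-1})^2$, using $\tfrac{\sqrt{-1}}{(1-\sqrt{-1})^2}=-\tfrac12$ for the constant term of $\wp(\tfrac14)$. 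Therefore $h(\tau)=2q_4+O(q_4^2)$, which proves $h(\infty)=0$, i.e.\ (c), and shows $h$ is meromorphic at $\infty$ with a \emph{simple} zero. The care required is exactly in tracking which terms of these moving-argument expansions survive at order $q_4^1$ and in confirming that the zero is simple rather than of higher order.

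Finally I assemble (a). The cusps of $X(4)$ form a single $\textup{SL}_2(\ZZ)$-orbit, since $\textup{SL}_2(\ZZ)$ acts transitively on $\PP^1(\QQ)$; by equivariance $h$ carries them onto the orbit $\textup{N}(K_4)\cdot 0=\{0,\infty,1,-1,\sqrt{-1},-\sqrt{-1}\}$, the six fixed points of the nontrivial elements of $K_4$. Comparing stabilizers of order $4$ on each side and using that $\psi$ induces the isomorphism $\textup{SL}_2(\ZZ)/\{\pm\textup{I}_2\}\Gamma(4)\xrightarrow{\sim}\textup{N}(K_4)$ of \ref{prop:transf_4pi}(b), this map of six cusps onto six points is a bijection. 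By equivariance the expansion at an arbitrary cusp $\gamma_0\cdot\infty$ is $\psi(\gamma_0)$ applied to $2q_4+\cdots$, a Möbius transform of a meromorphic germ, so $h$ is meromorphic at every cusp, holomorphic wherever its value is finite and with a single simple pole at the unique cusp mapping to $\infty$. A holomorphic map $X(4)\to\PP^1(\CC)$ with exactly one simple pole has degree $1$, hence $h_{X(4)}^\ast$ is a biholomorphism; its equivariance with respect to $\psi$ is the relation $h(\gamma\cdot\tau)=\psi(\gamma)\cdot h(\tau)$ established above.
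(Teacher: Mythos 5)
Your proposal is correct, and while (b) and (c) run along the paper's lines, your proof of (a) takes a genuinely different route. For (b) you do exactly what the paper does (set \(z=0\) in (\ref{explicit-formula-1}); the sign bookkeeping with the odd \(\sigma\)-function and the limit \(\wp(\tfrac{z}{2})\to\infty\) in the third expression are both right). For (c) the paper merely asserts "an easy computation" with the \(q\)-expansion of \(\wp\); you carry it out, and you correctly observe that \(\wp(\tfrac14+\tfrac{\tau}{2})\) and \(\wp(\tfrac14)\) contribute only even powers of the width-four parameter \(q_4=e^{\pi\sqrt{-1}\tau/2}\), that the constant terms cancel in the numerator, and that the denominator's constant is \(\tfrac12(2\pi\sqrt{-1})^2\) -- a computation that is in fact immune to the typo in the paper's displayed \(\wp\)-expansion (its constant term should be \(\tfrac1{12}\), not \(\tfrac12\)), since the ambient constant cancels in both differences. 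The real divergence is in (a): the paper first rules out essential singularities at the cusps by Picard's theorem (on \(\HH\) the function omits the six-point \(\textup{N}(K_4)\)-orbit \(\{0,\infty,\pm1,\pm\sqrt{-1}\}\)), then notes that \(h\) must be totally ramified over those six values because their fibers sit inside the six cusps, and invokes the Riemann--Hurwitz formula together with the previously quoted fact that \(X(4)\) has genus zero to force degree one. You instead strengthen (c) to the statement that the zero at the infinite cusp is \emph{simple} (\(h=2q_4+O(q_4^2)\)), propagate this germ to every cusp by the \(\psi\)-equivariance \(h(\gamma_0\tau)=\psi(\gamma_0)\cdot h(\tau)\) (each \(\psi(\gamma_0)\) being a M\"obius map, meromorphy and order are preserved), and use your stabilizer comparison -- both stabilizers have order \(4\), so the equivariant surjection of the six cusps onto the six-point orbit is a bijection -- to conclude there is exactly one simple pole, whence degree one by counting poles. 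This buys you a proof that needs neither Picard nor Hurwitz, and in particular does not presuppose \(g(X(4))=0\) (you recover it as a consequence), at the cost of the explicit leading-term computation; the paper's route is computation-free past \(h\neq0,\infty\) on \(\HH\) but leans on the big Picard theorem and the known genus of \(X(4)\).
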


\begin{proof}
The formula (\ref{explicit-haupm}) in (b) follows immediately from 
the formulas (\ref{explicit-formula-1}) for \(\,f(z;\tau)\).
%proposition \ref{prop:extra-normalized}\,(c).
\smallbreak

There are two ways to see that \(h_{X(4)}\) is a meromorphic function
on \(X(4)\). One can use either of the two formulas in (b) and 
classical results on Weierstrass elliptic functions.
The other way is to use Picard's theorem: we know that 
\(\,f(0,\tau)\neq 0\,\) for all \(\tau\in \HH\).  
The \(\psi\)-equivariance of \(\,f(0,\tau)\,\) implies that
\[\,f(0,\tau)\not\in\{\pm1, \pm\sqrt{-1}, 0,\infty\}\quad
\forall\,\tau\in \HH,
\]
for the set \(\,\{\pm1, \pm\sqrt{-1}, 0,\infty\}\,\) is the orbit
of \(\,\textup{N}(K_4)\,\) on \(\PP^1(\CC)\). 
So \(\,h_{X(4)}\,\) cannot have essential singularities at any of 
the cusps.
\smallbreak

From the meager information in the previous paragraph we can already
conclude that the holomorphic map \(h_{X(4)}\) from \(X(4)\) to
\(\PP^1(\CC)\) has degree \(1\): Because \(X(4)\) has exactly \(6\)
cusps, the map \(h_{X(4)}\) is totally ramified over the six
points of \(\{\pm1, \pm\sqrt{-1}, 0,\infty\}\subset \PP^1\),
and the Hurwitz formula forces the degree of \(h_{X(4)}\) to be \(1\).
\smallbreak

The fact that \(h_{X(4)}\) sends the standard
cusp \(\infty\cdot\sqrt{-1}\) to \(0\) can be seen by an easy
computation, using the formula (\ref{explicit-haupm})
and the \(q\)-expansion of the Weierstrass \(\wp\)-function
\begin{equation*}
%\begin{split}
\tfrac{1}{(2\pi\sqrt{-1})^2}\,
\wp(z;\Lambda_{\tau})
=\frac{1}{2}+ \frac{q_z}{(1-q_z)^2}
+\sum_{m,n\geq 1} n\,q_{\tau}^{mn}(q_z^n+q_z^{-n})
-\sum_{m,n\geq 1} n\,q_{\tau}^{mn}
%\end{split}
\end{equation*}
in the range \(\,\vert q_{\tau}\vert < \vert q_{z}\vert
< \vert q_{\tau}\vert^{-1}\), where
\(\,q_{\tau}=e^{2\pi\sqrt{-1}\tau}\) and 
\(\,q_{z}=e^{2\pi\sqrt{-1}z}\) for \(\tau\in\HH\) and \(z\in \CC\).
\end{proof}

\begin{remarkss} \label{rem:modunit}
(a) The fact that \(\,f(0;\,\tau)\,\) is a Hauptmodul for the principal
congruence subgroup \(\Gamma(4)\) is classical; 
see \cite[p.\,176]{Schoeneberg}.
We have not been able to locate in the literature the transformation formula 
in Proposition \ref{prop:transf_4pi}, but formula is not difficult to
prove starting from the formula
(\ref{explicit-formula-1}) for \(\,f(z;\tau)\). 
Perhaps the only new thing here is the phenomenon that 
type I solutions of the Liouville equation 
\(\,\triangle u+ e^{u}= 4\pi(2n+1)\delta_0\,\) on elliptic curves
produce modular forms for \(\,\Gamma(4)\) in an organized way.
\smallbreak

\noindent
(b) Clearly the function \(\,f(0;\,\tau)\,\) is a \emph{modular unit}
in the sense that it is a unit in the integral closure of 
\(\CC[\,j\,]\) in \(\CC(X(4))\), where \(\,j\,\) is the \(j\)-invariant
and \(\CC(X(4))\) is the function field of the modular curve
\(X(4)\) over \(\CC\). 
It turns out that \(\,f(0;\,\tau)\,\) is actually a unit of the 
integral closure of \(\QQ[\,j\,]\) in the function field of the
%\(\QQ(\sqrt{-1})(X(4)_{_{\QQ(\sqrt{-1})}})\),
modular curve \(\,X(4)_{_{\QQ(\sqrt{-1})}}\,\) over \(\,\QQ(\sqrt{-1})\);
see \cite[Thm.\,1, p.\,189]{K-Lang}.
\end{remarkss}

%This turns out to be a \emph{modular unit} in the sense of
%\cite{K-Lang}, 
%namely $f(0; \tau)$ is a unit in the ring $\mathbb Q R_4$. 
%Here $\mathbb Q R_N$ is the integral closure of $\mathbb Q[j]$ 
%in the modular function field $F_N$ of level $N$ (function field of $X(N)$). 

\begin{corollary}\label{modular-form-1stcase}
For \(k=0,1,2,\ldots\in \NN\), let \(\,a_k(\tau)\) be the holomorphic
function on \(\HH\) defined by 
\begin{equation}\label{dev-modular-forms}
f(z;\tau)=\sum_{k=0}^{\infty}\,a_k(\tau)\,z^k,\qquad
z\in\CC,\ \tau\in \HH,
\end{equation}
where \(\,f(z;\tau)\,\) is the developing map in 
Proposition \textup{\ref{prop:extra-normalized}}.
For each \(k\geq 0\), \(a_k(\tau)\) is a holomorphic function
on \(\HH\) and defines a modular form of weight
\(k\) for the congruence subgroup \(\Gamma(4)\) in the sense that
\[
a_k(\gamma\cdot \tau)= j(\gamma,\tau)^k\cdot a_k(\tau)
\qquad \forall \gamma\in \Gamma(4).
\]
Moreover \(a_k(\tau)\) is meromorphic at the cusps of 
\(X(4)\) for every \(k\in \NN\).
\end{corollary}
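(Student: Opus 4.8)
The plan is to extract all three assertions from the explicit formula (\ref{explicit-formula-1}) for $f(z;\tau)$ together with the transformation law (\ref{transf-law-N}) of Proposition \ref{prop:transf_4pi}. First I would settle holomorphy on $\HH$. Fix a compact set $\Phi\subset\HH$. For $\rho=4\pi$ the zeros and poles of $f(\,\cdot\,;\tau)$ lie on translates of $\tfrac12$ and $\tfrac12+\tau$ by $2\Lambda_\tau$, none of which approach $z=0$; hence there is an $\epsilon>0$ so that $f(z;\tau)$ is holomorphic in $z$ on $\{|z|\le\epsilon\}$ for all $\tau\in\Phi$. For each such fixed $z$ the formula (\ref{explicit-formula-1}) exhibits $f(z;\tau)$ as a holomorphic function of $\tau$, being built from $\wp(\,\cdot\,;\Lambda_\tau)$, $\sigma(\,\cdot\,;\Lambda_\tau)$ and $\eta(\,\cdot\,;\Lambda_\tau)$. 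Thus $f$ is separately holomorphic and jointly continuous on $\{|z|\le\epsilon\}\times\Phi$, and Cauchy's formula
\[
a_k(\tau)=\frac{1}{2\pi\sqrt{-1}}\oint_{|z|=\epsilon}\frac{f(z;\tau)}{z^{k+1}}\,dz
\]
displays each $a_k$ as holomorphic in $\tau$ on $\Phi$; since $\Phi$ was arbitrary, $a_k$ is holomorphic on $\HH$.

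Second, the weight-$k$ transformation under $\Gamma(4)$ is formal. For $\gamma\in\Gamma(4)$ we have $\gamma\in\textup{Ker}(\psi)$ by part (b) of Proposition \ref{prop:transf_4pi}, so (\ref{transf-law-N}) reduces to $f\big(j(\gamma,\tau)^{-1}z;\gamma\cdot\tau\big)=f(z;\tau)$. Setting $w=j(\gamma,\tau)^{-1}z$ this reads $f(w;\gamma\cdot\tau)=f\big(j(\gamma,\tau)\,w;\tau\big)$; expanding both sides in powers of $w$ via (\ref{dev-modular-forms}) and matching the coefficient of $w^k$ yields exactly $a_k(\gamma\cdot\tau)=j(\gamma,\tau)^k\,a_k(\tau)$.

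The main obstacle is the behavior at the cusps, which I would treat first at $\infty$ and then propagate. Since $\left(\begin{smallmatrix}1&4\\0&1\end{smallmatrix}\right)\in\Gamma(4)$, the second step gives $a_k(\tau+4)=a_k(\tau)$, so $a_k$ admits a Fourier expansion in $q_4=e^{\pi\sqrt{-1}\tau/2}$, and I must show only finitely many negative powers occur. For this I would substitute the $q$-expansion of $\wp(\,\cdot\,;\Lambda_\tau)$ recalled at the end of the proof of Corollary \ref{cor:hauptm} into the last line of (\ref{explicit-formula-1}), expand the result as a power series in $z$ whose coefficients are Laurent series in $q_4$, and verify that each $a_k(\tau)$ has $q_4$-order bounded below; equivalently, and with less bookkeeping, it suffices to bound $\sup_{|z|=\epsilon}|f(z;\tau)|=O(e^{c\,\textup{Im}\,\tau})$ as $\textup{Im}\,\tau\to\infty$ and feed this into the Cauchy integral, a $4$-periodic holomorphic function of at most exponential growth being automatically meromorphic at $\infty$. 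To reach an arbitrary cusp $s=\gamma_s\cdot\infty$ with $\gamma_s\in\textup{SL}_2(\ZZ)$, I would expand (\ref{transf-law-N}) for $\gamma_s$ as in the second step: the relation $f(w;\gamma_s\cdot\tau)=\psi(\gamma_s)\cdot f\big(j(\gamma_s,\tau)\,w;\tau\big)$ expresses $a_k(\gamma_s\cdot\tau)$ as a universal rational function of $a_0(\tau),\dots,a_k(\tau)$, of $j(\gamma_s,\tau)$, and of the entries $\left(\begin{smallmatrix}a&b\\c&d\end{smallmatrix}\right)$ of $\psi(\gamma_s)$, with denominator a power of $c\,a_0(\tau)+d$. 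Because $a_0=h_{X(4)}$ avoids the $\textup{N}(K_4)$-orbit $\{0,\pm1,\pm\sqrt{-1},\infty\}$ on $\HH$ by Corollary \ref{cor:hauptm}, this denominator never vanishes on $\HH$, and as $a_0$ is already meromorphic at every cusp, so is $1/(c\,a_0+d)$. Hence $a_k|_k\gamma_s=j(\gamma_s,\tau)^{-k}\,a_k(\gamma_s\cdot\tau)$ is a rational expression in functions meromorphic at $\infty$, so it is meromorphic at $\infty$, which is precisely meromorphy of $a_k$ at the cusp $s$.

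I expect the genuine effort to lie in the order/growth estimate at $\infty$; everything else is a formal consequence of the transformation law and the Hauptmodul property of $a_0$. A clean alternative to the hands-on $q$-expansion would be to exploit $f=w_1/w_2$ for Lam\'e solutions of $S(f)=-2\big(\tfrac34\wp(z;\Lambda_\tau)+B(\tau)\big)$ and run the Taylor recursion in $z$: this presents each $a_k$ as a polynomial in $a_0$, the accessory parameter $B(\tau)$, and the Eisenstein coefficients of $\wp$, all of which are meromorphic at the cusps, yielding meromorphy of $a_k$ directly.
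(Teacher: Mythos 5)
Your proposal is correct, and its first two steps coincide with the paper's own (very terse) proof: the paper likewise obtains the weight-\(k\) law by feeding \(\Gamma(4)\subseteq\textup{Ker}(\psi)\) into the transformation law (\ref{transf-law-N}), and gets holomorphy of the \(a_k\) from holomorphy of \(f(z;\tau)\) jointly in \((z,\tau)\) near \(z=0\). Where you genuinely diverge is the cusp behavior. The paper disposes of all cusps in one sentence --- ``most easily seen from the explicit formula (\ref{explicit-formula-1})'' --- i.e.\ it intends one to substitute the meromorphic \(q\)-expansions of the Weierstrass data appearing in (\ref{explicit-formula-1}) at every cusp of \(X(4)\). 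You instead do this only at the cusp \(\infty\) (or replace it by a growth estimate fed into the Cauchy integral, which is also adequate: a \(4\)-periodic holomorphic function of at most exponential growth in \(\textup{Im}\,\tau\) is meromorphic at \(\infty\)), and then propagate to an arbitrary cusp \(\gamma_s\cdot\infty\) structurally: expanding \(f(w;\gamma_s\cdot\tau)=\psi(\gamma_s)\cdot f\big(j(\gamma_s,\tau)\,w;\tau\big)\) in \(w\) writes \(j(\gamma_s,\tau)^{-k}a_k(\gamma_s\cdot\tau)\) as a rational expression in \(a_0(\tau),\dots,a_k(\tau)\) with denominator a power of \(c\,a_0+d\); the needed homogeneity holds because every monomial \(a_{m_1}j^{m_1}\cdots a_{m_l}j^{m_l}\) contributing to \(w^k\) has \(m_1+\cdots+m_l=k\), and the denominator is harmless because \(a_0\) avoids the \(\textup{N}(K_4)\)-orbit \(\{0,\pm1,\pm\sqrt{-1},\infty\}\) on \(\HH\) and is itself meromorphic at the cusps (Corollary \ref{cor:hauptm}). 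This buys you a single \(q\)-expansion computation instead of one at each of the six cusps, at the price of invoking the full \(\textup{SL}_2(\ZZ)\)-law of Proposition \ref{prop:transf_4pi} and the Hauptmodul property --- both established in the paper, so your argument is sound; your alternative via the Schwarzian/Lam\'e recursion would work as well and is close in spirit to what the paper's one-line justification presupposes.
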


\begin{proof}
The transformation formula (\ref{dev-modular-forms}) follows
immediately
from Proposition \ref{prop:transf_4pi}.
The fact that \(\,f(z;\tau)\,\) is holomorphic on \(\CC\times\HH\)
implies that \(\,a_k(\tau)\,\) is a holomorphic function on \(\HH\).
The last assertion that \(\,a_k(\tau)\,\) is meromorphic at the cusps
is most easily seen from the explicit formula (\ref{explicit-formula-1})
for \(\,f(z;\tau)\).
\end{proof}

\subsection{Generalization to \(\rho=(2n+1)4\pi\).}
\label{subsec:gen_higher_odd}
The considerations leading to the 
transformation formula (\ref{transf-law-N}) with respect to
\(\textup{SL}_2(\ZZ)\) for the normalized developing map
\(f(z;\tau)\) for the unique solution of 
\(\,\triangle u+e^{u}=4\pi\cdot \delta_0\,\) on \(\CC/\Lambda_{\tau}\)
specified in Proposition \ref{prop:extra-normalized}
can be extended for all type I cases.
In \ref{def:ram_covers}--\ref{cor:gen-psi} below 
we formulate the basic geometric structures which lead to
a generalization of (\ref{transf-law-N}),
and ends with an unsolved irreducibility and monodromy question 
in \ref{question:irred}.

\begin{definitionss}\label{def:ram_covers}
Let \(n\) be a non-negative integer.
\begin{itemize}
\item[(1)] Let \(\,\mathbb{M}_n\,\) be the set of all pairs
\((u(z), \tau)\), where \(\tau\in \HH\) and \(u(z)\) is a solution
of the mean field equation
\(\,\triangle u + e^u = 4(2n+1)\pi\cdot \delta_0\,\) on
the elliptic curve \(\CC/\Lambda_{\tau}\).
\smallbreak

%Recall the \(\Lambda_{\tau}:=\ZZ+\ZZ\tau\subset \CC\).
\noindent
Let \(\,\pi_n:\mathbb{M}_n\to \HH\,\) be the map which sends
a typical element \((u(z),\tau)\) in \(\mathbb{M}_n\) to 
the point \(\tau\) of the upper-half plane \(\HH\).
Note that \(\,\pi_n\,\) is surjective according to
Theorem \ref{thm-type I}.
%Note that \(\,\pi_n\,\) is surjective by Theorem \ref{thm-type I}.
%
%By theorems \ref{thm-type II} and \ref{BH-poly}, 
%\(\,\mathbb{M}_n\,\) has a natural structure as a smooth
%one-dimensional complex manifold such that 
%\(\,\pi_n\,\) is a finite surjective holomorphic map,
%which is unramified and \((n+1)\)-to-one 
%over the complement of a finite number of orbits of
%\(\textup{SL}_2(\ZZ)\) on \(\HH\).
\smallbreak

\item[\textup{(2)}] Let \(\,\mathbb{D}_n\,\) be the set of all 
\(\,(f(z),\tau)\), where \(\tau\) is an element of \(\HH\)
and \(f(z)\) is a developing map of a solution of
\(\,\triangle u + e^u = 4(2n+1)\pi\cdot \delta_0\,\) on
the elliptic curve \(\CC/\Lambda_{\tau}\) whose monodromy group
is equal to the standard Klein's four subgroup 
\(K_4\subset\textup{PSU}(2)\) in the notation of \ref{notation-K4}.
\smallbreak

\noindent
Let \(\,p_n:\mathbb{D}_n\to \mathbb{M}_n\,\) be the map
which sends a typical element \((f(z),\tau)\in \mathbb{D}_n\)
to the element
\(\,\big(\log\tfrac{8 \vert f'(z)\vert^2}{(1+\vert f(z)\vert^2)^2},
\tau\big)\) of \(\mathbb{M}_n\),
and let \(\tilde{\pi}_n=\pi_n\circ p_n:\mathbb{D}_n\to \HH_n\)
be the natural projection map which sends
each element \((f(z),\tau)\in \mathbb{D}_n\) to \(\tau\).
\smallbreak
%Theorems \ref{thm-type II} and \ref{BH-poly}
%and the group-theoretic lemmas \ref{lemma:mono-trpthy} and
%\ref{comm_homo}

\item[\textup{(3)}] Let \(\,\mathbb{D}'_n\,\) be the subset
of \(\mathbb{D}_n\) consisting of all pairs 
\(\,(f(z),\tau)\in \mathbb{D}'\) 
such that 
and \(f(z)\) is a \emph{normalized} type I developing map
for an element \((u(z),\tau)\in \mathbb{M}_n\) satisfying
the monodromy condition that 
\(\,f(z+1)=-f(z)\) and \(\,f(z+\tau)=f(z)^{-1}\) for all 
\(z\in \CC\).
Let \(\,p'_n:\mathbb{D}'\to \mathbb{M}_n\,\) be the restriction 
to \(\mathbb{D}'_n\) of \(p_n\), and
let \(\tilde{\pi}_n'=\pi\circ p'_n: \mathbb{D}'\to \HH_n\) 
be the restriction to \(\mathbb{D}'_n\) of \(\pi_n\).
\smallbreak

\item[\textup{(4)}] Define \(\,\phi_n: \mathbb{M}_n\to 
\mathbb{D}'_n\,\) be the map which sends a typical element
\((u(z), \tau)\in \mathbb{M}_n\) to the element
\((f(z),\tau)\in \mathbb{D}'_n\) such that
\(f(\tfrac{1}{2})=0\) and \(f(\tfrac{\tau}{2})=1\).

This map \(\,\phi_n\,\) is well-defined because for each 
normalized type I developing map \((f(z),\tau)\in \mathbb{D}'_n\),
we have 
\[f(\tfrac{1}{2})= 0\,\ \textup{or}\,\ \infty,\qquad
f(\tfrac{\tau}{2})=\pm 1;\]
these four possibilities are permuted simply 
transitively by the action of \(K_4\) through fractional
linear transformations.

\end{itemize}

\end{definitionss}

\begin{lemmass}\label{lemma:ram_covers}
Let \(\mathbb{M}_n, \mathbb{D}_n, \mathbb{D}'_n\) be as in Definition
\textup{\ref{def:ram_covers}} above.
\begin{itemize}
\item[\textup{(1)}] Each of the three sets 
\(\mathbb{M}_n, \mathbb{D}_n\)  and \(\mathbb{D}'_n\) has
a natural structure as a one-dimensional complex manifold 
such that the maps
\(\pi_n:\mathbb{M}_n\to \HH\),
\(\tilde{\pi}_n:\mathbb{D}_n\to \HH\) and
\(\tilde{\pi}'_n:\mathbb{D}'_n\to \HH\) are 
finite surjective holomorphic maps.
Moverover there exists a discrete subset 
\(R_n\subset \HH\) which stable
under the natural action of the modular group \(\textup{SL}_2(\ZZ)\) on \(\HH\) 
with \(\,\vert \textup{SL}_2(\ZZ)\backslash R_n\vert <\infty\),
such that \(\pi_n, \tilde{\pi}_n\) and \(\tilde{\pi}'_n\)
are unramified over the complement \(\HH\smallsetminus R_n\)
of \(R_n\). 

\item[\textup{(2)}] The action of the finite group \(\,\textup{N}(K_4)\,\) 
on \(\mathbb{D}_n\) via linear fraction transformations is
holomorphic, making \(\,p_n:\mathbb{D}_n\to \mathbb{M}_n\,\) an
unramified Galois cover with group \(\textup{N}(K_4)\).
Similarly the map \(\,p_n':\mathbb{D}'_n\to \mathbb{M}_n\,\) is a 
holomorphic unramified Galois cover for the action of the
standard Klein's four group \(K_4\) in \(\textup{PSU}(2)\).

\item[\textup{(3)}] The map \(\phi_n: \mathbb{M}_n\to \mathbb{D}'_n\)
is a holomorphic section of \(\,p'_n:\mathbb{D}'_n\to \mathbb{M}_n\).
%and satisfies 
%\(\,p'_n\circ \phi_n = \textup{id}_{\mathbb{M}_n}\).
Consequently \(\,p_n: \mathbb{D}_n\to \mathbb{M}_n\,\) is a
trivial Galois cover with group \(\,\textup{N}(K_4)\) and
\(\,p'_n: \mathbb{D}'_n\to \mathbb{M}_n\,\) is a
trivial Galois cover with group with group \(K_4\).

\end{itemize}
\end{lemmass}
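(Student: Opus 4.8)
The plan is to parametrize $\mathbb{M}_n$ by the accessory parameter of the associated Lam\'e equation, to transport the resulting analytic structure to $\mathbb{D}_n$ and $\mathbb{D}'_n$ through the group actions of part (2), and finally to recognize $\phi_n$ as a trivializing section. First I would invoke Theorem \ref{K4-thm} ($=$ Theorem \ref{thm-K4}): the rule $(u,\tau)\mapsto (B_u,\tau)$, where $B_u$ is the accessory parameter determined by $S(f)=-2((n+\tfrac12)(n+\tfrac32)\wp+B_u)$, identifies $\mathbb{M}_n$ bijectively with the reduced analytic set
\[
\mathcal{Z}_n:=\{(B,\tau)\in\CC\times\HH : p_n(B;\Lambda_\tau)=0\},
\]
because a value $B$ admits a logarithm-free (equivalently $(\ZZ/2\ZZ)^2$-monodromy) Lam\'e equation exactly when $p_n(B;\Lambda_\tau)=0$, and each such $B$ yields exactly one type I solution $u$. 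Since $P_n(B,\tau):=p_n(B;\Lambda_\tau)$ is by Theorem \ref{BH-poly}(a) a \emph{monic} polynomial of degree $n+1$ in $B$ whose coefficients are polynomials in $g_2(\tau),g_3(\tau)$, hence holomorphic on $\HH$, the projection $\pi_n$ is the restriction to $\mathcal{Z}_n$ of the first projection $\CC\times\HH\to\HH$; monicity makes this proper with finite fibers, so $\pi_n$ is a finite surjective branched cover of degree $n+1$.

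Next I would locate the ramification. Set $\Delta_n(\tau):=\mathrm{disc}_B P_n(\cdot,\tau)$. Using the weight-homogeneity of $p_n$ from Theorem \ref{BH-poly}(a), $\Delta_n$ is a modular form for $\textup{SL}_2(\ZZ)$, and it is \emph{not} identically zero by Theorem \ref{BH-poly}(b) (rectangular tori already furnish $n+1$ distinct roots). Hence $R_n:=\{\tau\in\HH:\Delta_n(\tau)=0\}$ is a discrete, $\textup{SL}_2(\ZZ)$-stable subset with only finitely many $\textup{SL}_2(\ZZ)$-orbits. Over $\HH\smallsetminus R_n$ the roots of $P_n(\cdot,\tau)$ are simple, so the implicit function theorem exhibits $\mathcal{Z}_n$ as $n+1$ disjoint holomorphic graphs and shows that $\pi_n$ is there an unramified $(n+1)$-sheeted covering, in particular smooth.

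With the structure on $\mathbb{M}_n\cong\mathcal{Z}_n$ in hand, parts (2) and (3) become essentially formal. The fibre of $p_n$ over $(u,\tau)$ is the set of developing maps of $u$ with monodromy the standard $K_4$; by Lemma \ref{order}(2) these form a single $\textup{PSU}(2)$-orbit, and requiring the monodromy to equal $K_4$ cuts this down to the orbit of $\textup{N}(K_4)$, on which $\textup{N}(K_4)$ acts simply transitively, because a non-trivial M\"obius transformation cannot fix a non-constant $f$ and because the centralizer of $K_4$ equals $K_4$ (Lemma \ref{lemma:mono_grpthy}(1b,1c)). The action is by fixed linear-fractional transformations in the $f$-variable, and $f$ varies holomorphically over the base (the holomorphic dependence of $f(z;\tau)$ recorded after Theorem \ref{thm-type I}), so $p_n$ is a holomorphic principal $\textup{N}(K_4)$-cover; imposing the normalization $f(z+1)=-f(z)$, $f(z+\tau)=1/f(z)$ singles out one $K_4$-orbit and makes $p'_n$ a holomorphic principal $K_4$-cover. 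For (3), $\phi_n$ is well defined since, as in \ref{def:ram_covers}(4), $K_4$ permutes the four value-pairs $(f(\tfrac{1}{2}),f(\tfrac{\tau}{2}))\in\{0,\infty\}\times\{1,-1\}$ simply transitively, so exactly one normalized map has $(0,1)$; these values depend holomorphically on the base, whence $\phi_n$ is a holomorphic section of $p'_n$. A holomorphic section trivializes a principal cover, giving $\mathbb{D}'_n\cong\mathbb{M}_n\times K_4$ and then $\mathbb{D}_n\cong\mathbb{M}_n\times\textup{N}(K_4)$; in particular $\mathbb{D}_n$ and $\mathbb{D}'_n$ inherit both the manifold structure and the unramifiedness of $\tilde\pi_n,\tilde\pi'_n$ over $\HH\smallsetminus R_n$ from $\pi_n$.

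The hard part will be the smoothness of $\mathbb{M}_n$ (equivalently of the reduced curve $\mathcal{Z}_n$) \emph{over} $R_n$: because Theorem \ref{thm-K4} makes $B\mapsto u$ bijective, $\mathbb{M}_n$ genuinely \emph{is} the reduced zero set $\mathcal{Z}_n$, so one cannot cheat by passing to a normalization. I would prove that the differential $dP_n$ never vanishes on $\mathcal{Z}_n$; since $\partial_B P_n=0$ exactly at multiple roots, this reduces to the transversality statement that $\partial_\tau P_n(B_0,\tau_0)\neq 0$ whenever $B_0$ is a multiple root at $\tau_0$, i.e. that colliding accessory parameters separate to first order as $\tau$ moves. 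For $n=1$ this is immediate ($P_1=B^2-\tfrac34 g_2$ and $g_2$ has simple zeros on $\HH$, so $(0,e^{\pi i/3})$ is a square-root branch point with smooth total space, matching Example \ref{ex-pn}); in general I expect to establish it by an isomonodromic-deformation computation that expresses $\partial_\tau P_n$ along $\mathcal{Z}_n$ through the Lam\'e solution, or by the recursion of \S\ref{alt-proof-BH}, possibly combined with the properness of $\pi_n$ afforded by the uniform \emph{a priori} bounds available at the non-critical parameter $\rho=4\pi(2n+1)$. Verifying this non-degeneracy at every point of $R_n$ is the one genuinely non-formal ingredient.
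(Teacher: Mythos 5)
Your proposal takes essentially the same route as the paper, and in places is more explicit. Parts (2) and (3) coincide with the paper's (largely formal) arguments: transitivity of \(\textup{N}(K_4)\) on the fibers of \(p_n\) follows from Lemma \ref{lemma:psu2}\,(3) together with the observation that \(\rho_{Tf}=T\rho_f T^{-1}\), so that preserving the standard \(K_4\) forces \(T\in\textup{N}(K_4)\); freeness holds because a M\"obius transformation fixing a non-constant meromorphic function is the identity; and the normalization \(f(\tfrac{1}{2})=0\), \(f(\tfrac{\tau}{2})=1\) defines the section \(\phi_n\), which trivializes both covers. For part (1), the paper's entire proof is a citation of Theorems \ref{thm-type I} and \ref{BH-poly}, and unpacking that citation yields exactly your construction: \(\mathbb{M}_n\) is identified via Theorem \ref{thm-K4} with the zero set of \(p_n(B;\Lambda_\tau)\) in \(\CC\times\HH\); \(\textup{disc}_B\,p_n\) is a modular form not identically zero (this is already inside the paper's proof of Theorem \ref{BH-poly}\,(b)), so \(R_n\) is discrete, \(\textup{SL}_2(\ZZ)\)-stable with finitely many orbits; and the implicit function theorem gives the manifold structure and unramifiedness over \(\HH\smallsetminus R_n\). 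Your reorganization, obtaining the structures on \(\mathbb{D}_n\) and \(\mathbb{D}'_n\) by transport along the trivialization rather than directly from the group-theoretic lemmas, is harmless.

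The ``hard part'' you flag --- the structure of \(\mathbb{M}_n\) at points lying over \(R_n\) --- is a genuine subtlety, but you should know that the paper does not supply it either: its proof of (1) contains no transversality argument at multiple roots of \(p_n\), so on this point your proposal is exactly as complete as the paper's own proof. Two clarifications are worth recording. First, since Theorem \ref{thm-K4} identifies \(\mathbb{M}_n\) with the reduced zero set itself, what would actually contradict the statement is a point through which two \emph{distinct} local root-branches pass (a node): there the set, in its natural topology, is locally a wedge of two disks and admits no complex-manifold structure at all. A unibranch multiple point (a cusp, e.g.\ locally \(B^2=\tau^3\)) is harmless: the normalization map is then a homeomorphism, one transports the structure along it, and \(\pi_n\) stays holomorphic, merely ramified --- which the lemma permits over \(R_n\). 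Consequently the non-vanishing \(dp_n\neq 0\) along the zero set that you propose to prove is strictly stronger than what the lemma requires; the needed statement is only that the roots colliding at each point of \(R_n\) form a single monodromy cycle. Second, neither this weaker statement nor your stronger one is established anywhere in the paper, so your isolating it is a genuine observation about the limits of the paper's argument rather than a missing idea you failed to reproduce.
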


\begin{proof}
The statement (1) for \(\,\pi_n:\mathbb{M}_n\to \HH\,\) follows from
theorems \ref{thm-type I} and \ref{BH-poly}.
The part of statement (1) for \(\tilde{\pi}_n:\mathbb{D}_n\to \HH\) and
\(\tilde{\pi}'_n:\mathbb{D}'_n\to \HH\) is a consequence of
theorems \ref{thm-type I} and \ref{BH-poly} and the 
group-theoretic lemmas \ref{lemma:mono_grpthy} and
\ref{cor:comm_homo}.

The action of \(\,\textup{N}(K_4)\,\) on \(\mathbb{D}_n\) is easily
seen to be continous and is simply transitive on every fiber
of \(\,p_n:\mathbb{D}_n\to \mathbb{M}_n\). The first part of statement
(2) follows.  The second part of (2) is proved similarly.

The fact that \(p'_n\circ \phi_n = \textup{id}_{\mathbb{M}_n}\) is
immediate from the definition.  It is not difficult to see that
\(\phi_n\) is continuous, which implies that \(\phi_n\) is
holomorphic.
The first statement in (3) is proved; the rest of (3) follows.
\end{proof}

\begin{definitionss}\label{typeII-SL2action}
Define compatible actions of the modular group \(\,\textup{SL}_2(\ZZ)\,\) on 
\(\,\mathbb{M}_n\,\) and \(\,\mathbb{D}_n\,\) as follows.
For any element \(\,\gamma\in \textup{SL}_2(\ZZ)\),
any element \((u(z),\tau)\in \mathbb{M}_n\), 
and any element \((f(z),\tau)\in \mathbb{D}_n\)
such that \(\,p_n((f(z),\tau))=(u(z),\tau)\), 
%\(\gamma\) sends
\begin{itemize}

\item  \(\gamma\) sends 
\((u(z),\tau)\in \mathbb{M}_n\) to the element
\[\big(u(j(\gamma,\tau)\!\cdot\!z)+\log(\vert j(\gamma,\tau)\vert^2),\
\gamma\!\cdot\tau\big) \in \mathbb{M}_n,
\]

\item and \(\gamma\) sends 
\((f(z),\tau)\in \mathbb{D}_n\) to the element
\[
\big(f(j(\gamma,\tau)\!\cdot\!z),\ \gamma\!\cdot\!\tau\big)
\in \mathbb{D}_n.
\]

\end{itemize}
It is easy to check that \(\,p_n:\mathbb{D}_n\to\mathbb{M}_n\,\) is
\(\textup{SL}_2(\ZZ)\)-equivariant, i.e.\
\[\,p_n\big(\gamma\cdot (f(z),\tau)\big)
=\gamma\cdot p_n\big((f(z),\tau)\big)\,\] for every 
\(\,\gamma\in \textup{SL}_2(\ZZ)\),
and every element \((f(z),\tau)\in \mathbb{D}_n\).

\end{definitionss}

\begin{lemmass}\label{lemma:modular-action-genTeich}
\textup{(1)} The actions of \(\,\textup{SL}_2(\ZZ)\,\) on
\(\mathbb{D}_n\) and \(\mathbb{M}_n\) defined in
\textup{\ref{typeII-SL2action}} are holomorphic.
\smallbreak

%\noindent
\textup{(2)} The holomorphic maps
\[p_n:\mathbb{D}_n\to \mathbb{M}_n,\ \
\pi_n:\mathbb{M}_n\to \HH\ \ \ \textup{and}\ \ \
\tilde{\pi}_n=\pi_n\circ p_n: \mathbb{D}_n\to\HH\]
are 
equivariant for the \(\textup{SL}_2(\ZZ)\)-actions on
\(\,\mathbb{D}_n, \mathbb{M}_n\) and \(\HH\).
\smallbreak 

%\noindent
\textup{(3)} The actions of \(\,\textup{SL}_2(\ZZ)\,\)
and \(\,\textup{N}(K_4)\,\) on \(\mathbb{D}_n\) commute.
\smallbreak

%\noindent
\textup{(4)} The submanifold \(\,\mathbb{D}'_n\subset
\mathbb{D}_n\) is stable under the action of 
the principal congruence subgroup 
\(\Gamma(2)\) of level \(2\) in \(\,\textup{SL}_2(\ZZ)\).

\end{lemmass}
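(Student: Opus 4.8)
The plan is to reduce all four statements to the single algebraic mechanism underlying Definition~\ref{typeII-SL2action}: for $\gamma=\begin{pmatrix}a&b\\c&d\end{pmatrix}\in\textup{SL}_2(\ZZ)$ one has the lattice identity $\Lambda_\tau=j(\gamma,\tau)\cdot\Lambda_{\gamma\cdot\tau}$, so that rescaling the variable by $t=j(\gamma,\tau)^{-1}$ is precisely the ``gauge transformation'' of the Remark following Lemma~\ref{order} carrying solutions on $\CC/\Lambda_\tau$ to solutions on $\CC/\Lambda_{\gamma\cdot\tau}$. First I would record well-definedness: given $(f,\tau)\in\mathbb{D}_n$, the rescaled map $\tilde f(w):=f(j(\gamma,\tau)w)$ satisfies $\tilde f(w+\omega')=\rho_f\big(j(\gamma,\tau)\omega'\big)\tilde f(w)$ for $\omega'\in\Lambda_{\gamma\cdot\tau}$; since $\omega'\mapsto j(\gamma,\tau)\omega'$ is an isomorphism $\Lambda_{\gamma\cdot\tau}\xrightarrow{\sim}\Lambda_\tau$, the image of this monodromy is again the \emph{standard} $K_4$, so $(\tilde f,\gamma\cdot\tau)\in\mathbb{D}_n$. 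The cocycle identity $j(\gamma_1\gamma_2,\tau)=j(\gamma_1,\gamma_2\cdot\tau)\,j(\gamma_2,\tau)$ then upgrades both formulas to genuine left actions, exactly as in the computation already displayed in Definition~\ref{typeII-SL2action}.

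For the holomorphicity in (1) I would treat $\mathbb{M}_n$ first, in the accessory-parameter coordinate. By Theorems~\ref{K4-thm} and~\ref{BH-poly} the assignment $(u,\tau)\mapsto B$ identifies $\mathbb{M}_n$, as a complex manifold over $\HH$, with the branched cover $\{(B,\tau)\in\CC\times\HH : p_n(B;g_2(\tau),g_3(\tau))=0\}$, whose defining polynomial has coefficients holomorphic in $\tau$. Using the cocycle property of the Schwarzian derivative (and $S(\text{affine})=0$) together with the homogeneity $\wp(tz;t\Lambda)=t^{-2}\wp(z;\Lambda)$, one gets the one-line computation
\[
S(\tilde f)(w)=j(\gamma,\tau)^2\,S(f)\big(j(\gamma,\tau)w\big)=-2\big(\eta(\eta+1)\wp(w;\Lambda_{\gamma\cdot\tau})+j(\gamma,\tau)^2 B\big),
\]
so the action reads $(B,\tau)\mapsto\big(j(\gamma,\tau)^2 B,\ \gamma\cdot\tau\big)$, which is manifestly holomorphic; this is consistent with the variety since $p_n$ is weight-$(n+1)$ homogeneous for weights $(1,2,3)$ on $(B,g_2,g_3)$ while $g_2,g_3$ have modular weights $4,6$, giving $p_n\big(j(\gamma,\tau)^2 B;\Lambda_{\gamma\cdot\tau}\big)=j(\gamma,\tau)^{2(n+1)}p_n(B;\Lambda_\tau)$. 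For $\mathbb{D}_n$ I would invoke that $p_n:\mathbb{D}_n\to\mathbb{M}_n$ is an unramified cover (Lemma~\ref{lemma:ram_covers}(2)), hence a local biholomorphism; granting continuity of the action — which follows from the holomorphic dependence of normalized developing maps on $\tau$ noted after Theorem~\ref{thm-type I} and from $(w,\tau)\mapsto j(\gamma,\tau)w$ being holomorphic — the relation $p_n\circ\gamma_*=\gamma_*\circ p_n$ from part~(2) expresses $\gamma_*$ locally as $p_n^{-1}\circ\gamma_*^{\mathbb{M}_n}\circ p_n$, a composite of holomorphic maps.

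Parts (2) and (3) are then formal. For (2), $p_n$-equivariance is the identity displayed at the end of Definition~\ref{typeII-SL2action}, and $\pi_n$-equivariance is immediate because both actions send the $\tau$-coordinate to $\gamma\cdot\tau$; composing yields $\tilde\pi_n$-equivariance. For (3), the $\textup{SL}_2(\ZZ)$-action operates on the \emph{argument}, $f\mapsto f(j(\gamma,\tau)\,\cdot\,)$ (and moves $\tau$), whereas $\textup{N}(K_4)\subset\textup{PSU}(2)$ operates on the \emph{values} by post-composition $f\mapsto T\circ f$; since $T$ is a fixed fractional-linear transformation independent of $\tau$, the two operations commute on the nose, $(T\circ f)\big(j(\gamma,\tau)w\big)=T\big(f(j(\gamma,\tau)w)\big)$.

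For part (4) I would check that the defining relations of $\mathbb{D}'_n$ are preserved by $\Gamma(2)$. Writing $\rho_f(1)=\sigma_1$ ($f\mapsto -f$) and $\rho_f(\tau)=\sigma_2$ ($f\mapsto 1/f$), and taking $\gamma\in\Gamma(2)$ so that $a\equiv d\equiv 1$ and $b\equiv c\equiv 0\pmod 2$: since $j(\gamma,\tau)=c\tau+d=d\cdot 1+c\cdot\tau$ in the $\Lambda_\tau$-basis, $\tilde f(w+1)=\sigma_1^{\,d}\sigma_2^{\,c}\,\tilde f(w)=-\tilde f(w)$; and since $j(\gamma,\tau)\cdot(\gamma\cdot\tau)=a\tau+b=b\cdot 1+a\cdot\tau$, $\tilde f(w+\gamma\cdot\tau)=\sigma_1^{\,b}\sigma_2^{\,a}\,\tilde f(w)=\tilde f(w)^{-1}$, so $(\tilde f,\gamma\cdot\tau)\in\mathbb{D}'_n$. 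The only non-formal point I anticipate is the holomorphicity of the action on $\mathbb{D}_n$ in part~(1): one must pin down continuity relative to the complex-analytic structure of Lemma~\ref{lemma:ram_covers}, after which the covering argument closes the gap. Everything else is gauge/homogeneity bookkeeping (well-definedness and part~(1) for $\mathbb{M}_n$) or the elementary $\pmod 2$ exponent count (part~(4)).
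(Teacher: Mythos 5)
Your proposal is correct, and it is worth saying how it sits relative to the paper's own treatment, which consists of exactly two sentences: continuity of the two actions ``can be verified without difficulty, from which (1) follows,'' and ``the proofs of statements (2)--(4) are easy and omitted.'' Thus the authors' intended argument for (1) is the soft one you use for \(\mathbb{D}_n\): continuity, equivariance over the holomorphic structural maps, hence holomorphicity off a discrete set, hence everywhere. Your treatment of \(\mathbb{M}_n\) is genuinely different and more explicit: you transport the action to the accessory-parameter model \(\{(B,\tau): p_n(B;g_2(\tau),g_3(\tau))=0\}\) and compute it as \((B,\tau)\mapsto (j(\gamma,\tau)^2B,\ \gamma\cdot\tau)\) via the Schwarzian cocycle and the homogeneity \(\wp(tz;t\Lambda)=t^{-2}\wp(z;\Lambda)\). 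This buys two things the soft argument does not: holomorphicity on \(\mathbb{M}_n\) becomes manifest even over the finitely many \(\tau\) where \(p_n\) has multiple roots, since your action is the restriction of a global holomorphic self-map of \(\CC\times\HH\) preserving the analytic set \(\{p_n=0\}\) (and hence also induces a holomorphic map on its normalization, whichever of the two Lemma \ref{lemma:ram_covers} takes as the complex structure); and it records the explicit transformation law of the accessory parameter, which is exactly the kind of input Corollary \ref{cor:gen-psi} builds on. Your verifications of (2)--(4), in particular the \(\bmod\ 2\) exponent count \(\tilde f(w+1)=\sigma_1^{\,d}\sigma_2^{\,c}\tilde f(w)\), \(\tilde f(w+\gamma\cdot\tau)=\sigma_1^{\,b}\sigma_2^{\,a}\tilde f(w)\), are precisely the omitted ``easy'' arguments, and they are correct.

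Three small remarks, none of which is a gap relative to the paper's standard of rigor. First, continuity of the action on \(\mathbb{D}_n\), which you rightly flag as the one non-formal point, is asserted by the paper with no more justification than you give. Second, the homogeneity of \(p_n\) you invoke is of weight \(n+1\) for the weights \((1,2,3)\) on \((B,g_2,g_3)\); the paper's Theorem \ref{BH-poly}(a) says weight \(n\), which is inconsistent with \(p_n\) being monic of degree \(n+1\) in \(B\) and is evidently a typo, so your reading is the right one. Third, the cocycle computation you attribute to Definition \ref{typeII-SL2action} is actually displayed in the proof of Proposition \ref{prop:transf_4pi}(a); the definition itself only asserts the equivariance of \(p_n\).
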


\begin{proof}
That the action of \(\,\textup{SL}_2(\ZZ)\) on \(\mathbb{D}_n\)
and \(\mathbb{M}_n\) is continuous can be verified without
difficulty, from which (1) follows. 
The proofs of statements (2)--(4) are easy and omitted.
\end{proof}

\begin{corollaryss}\label{cor:gen-psi}
%Let \(\,\displaystyle \mathbb{M}_n=\sqcup_{j\in J}\mathbb{M}_n^j\,\)
%be the decomposition of \(\,\mathbb{M}_n\,\) into connected 
%components.
Suppose that \(\,\mathbb{M}_n\,\) is \emph{connected}.
\begin{itemize}
\item[\textup{(a)}] There exists a group homomorphism
\[
\psi_n: \textup{SL}_2(\ZZ)\to \textup{N}(K_4)
\]
such that
\[
\phi_n\big(\gamma\cdot (f(z),\tau)\big)
= \psi_n(\gamma)\cdot \phi_n\big((f(z),\tau)\big)
\]
for all \(\gamma\in \textup{SL}_2(\ZZ)\) and all elements
\((f(z),\tau)\in \mathbb{M}_n\).

\item[\textup{(b)}] The homomorphism \(\,\psi_n\,\) in \textup{(a)}
satisfies
\[\,\Gamma(4)\subseteq\textup{Ker}(\psi_n)\quad
\textup{and}\quad
\psi_n(\Gamma(2))\subseteq K_4.
\]
\end{itemize}
\end{corollaryss}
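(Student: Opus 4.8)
The plan is to realize $\psi_n$ as a locally constant comparison cocycle between the two canonical lifts of the $\textup{SL}_2(\ZZ)$-action to $\mathbb{D}_n$, and then to pin down its values on $\Gamma(2)$ and $\Gamma(4)$ by tracking a few torsion points. Throughout I abbreviate a typical element of $\mathbb{M}_n$ by $m$. First I would set up the comparison element. Fix $\gamma\in\textup{SL}_2(\ZZ)$ and $m\in\mathbb{M}_n$. By Lemma \ref{lemma:modular-action-genTeich}(2) the map $p_n$ is $\textup{SL}_2(\ZZ)$-equivariant, so both $\phi_n(\gamma\cdot m)$ and $\gamma\cdot\phi_n(m)$ are points of $\mathbb{D}_n$ lying in the single fibre $p_n^{-1}(\gamma\cdot m)$; here $\gamma\cdot\phi_n(m)\in\mathbb{D}_n$ because the action of \ref{typeII-SL2action} merely reparametrizes the developing map by $z\mapsto j(\gamma,\tau)z$ and, since $j(\gamma,\tau)\Lambda_{\gamma\tau}=\Lambda_\tau$, leaves its monodromy image equal to the standard $K_4$. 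As $\textup{N}(K_4)$ acts simply transitively on the fibres of $p_n$ (Lemma \ref{lemma:ram_covers}(2)), there is a unique $c(\gamma,m)\in\textup{N}(K_4)$ with
\[
\phi_n(\gamma\cdot m)=c(\gamma,m)\cdot\big(\gamma\cdot\phi_n(m)\big),
\]
which is the precise meaning of the displayed transformation law, with $\psi_n(\gamma):=c(\gamma,m)$.

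The key observation is that $c(\gamma,\cdot)$ is continuous in $m$: the section $\phi_n$ is holomorphic (Lemma \ref{lemma:ram_covers}(3)) and both group actions are holomorphic (Lemmas \ref{lemma:ram_covers}(2) and \ref{lemma:modular-action-genTeich}(1)), while $\textup{N}(K_4)$ is finite and discrete. Hence $m\mapsto c(\gamma,m)$ is locally constant, and the hypothesis that $\mathbb{M}_n$ is \emph{connected} forces it to be a constant $\psi_n(\gamma)$ depending only on $\gamma$. This yields (a) once the homomorphism property is checked, which I would verify by iterating the defining relation and using that the $\textup{SL}_2(\ZZ)$- and $\textup{N}(K_4)$-actions on $\mathbb{D}_n$ \emph{commute} (Lemma \ref{lemma:modular-action-genTeich}(3)):
\[
\phi_n(\gamma_1\gamma_2 m)=\psi_n(\gamma_1)\cdot\big(\gamma_1\cdot(\psi_n(\gamma_2)\cdot(\gamma_2\cdot\phi_n(m)))\big)=\psi_n(\gamma_1)\psi_n(\gamma_2)\cdot\big(\gamma_1\gamma_2\cdot\phi_n(m)\big),
\]
so that comparison with $\phi_n(\gamma_1\gamma_2 m)=\psi_n(\gamma_1\gamma_2)\cdot(\gamma_1\gamma_2\cdot\phi_n(m))$ and simple transitivity give $\psi_n(\gamma_1\gamma_2)=\psi_n(\gamma_1)\psi_n(\gamma_2)$.

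For part (b), the inclusion $\psi_n(\Gamma(2))\subseteq K_4$ is immediate: for $\gamma\in\Gamma(2)$ the submanifold $\mathbb{D}'_n$ is stable (Lemma \ref{lemma:modular-action-genTeich}(4)), so $\gamma\cdot\phi_n(m)$ and $\phi_n(\gamma m)$ both lie in $\mathbb{D}'_n$, hence are related by the Galois group $K_4$ of $p'_n\colon\mathbb{D}'_n\to\mathbb{M}_n$ (Lemma \ref{lemma:ram_covers}(2)). For $\Gamma(4)\subseteq\textup{Ker}(\psi_n)$ I would argue exactly as in Proposition \ref{prop:transf_4pi}(b). Writing $f=\phi_n(m)$ and $g=\gamma\cdot f$ for $\gamma=\left(\begin{smallmatrix}a&b\\c&d\end{smallmatrix}\right)\in\Gamma(4)$, the normalized type I relations make $f$ periodic under $2\Lambda_\tau$, and $g$ is again a normalized type I developing map over $E_{\gamma\tau}$ by the $\Gamma(2)$-case. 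Since $j(\gamma,\tau)\Lambda_{\gamma\tau}=\Lambda_\tau$, one has $g(\tfrac12)=f(\tfrac{c\tau+d}{2})$ and $g(\tfrac{\gamma\tau}{2})=f(\tfrac{a\tau+b}{2})$; the congruences $a\equiv d\equiv1$, $b\equiv c\equiv0\pmod 4$ give $\tfrac{c\tau+d}{2}\equiv\tfrac12$ and $\tfrac{a\tau+b}{2}\equiv\tfrac{\tau}{2}\pmod{2\Lambda_\tau}$, whence $g(\tfrac12)=f(\tfrac12)=0$ and $g(\tfrac{\gamma\tau}{2})=f(\tfrac{\tau}{2})=1$. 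Thus $g$ satisfies the normalization defining $\phi_n(\gamma m)$, so $\psi_n(\gamma)=\textup{id}$.

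The one point demanding genuine care — and the reason $\Gamma(4)$ rather than $\Gamma(2)$ appears in the kernel — is that this last step needs the quarter-period values to match modulo $2\Lambda_\tau$, not merely modulo $\Lambda_\tau$; the main structural obstacle, already neutralized by the standing hypothesis, is the connectedness of $\mathbb{M}_n$, without which $c(\gamma,m)$ could jump between components and $\psi_n$ would fail to be well defined. (This is exactly the irreducibility question \ref{question:irred} flagged in the text, which is why the corollary is stated conditionally.)
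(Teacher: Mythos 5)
Your proof is correct and is precisely the argument the paper intends: Corollary \ref{cor:gen-psi} is stated without an explicit proof, as a direct assembly of Lemma \ref{lemma:ram_covers} (simple transitivity of \(\textup{N}(K_4)\) on fibres of \(p_n\), holomorphy of \(\phi_n\)) and Lemma \ref{lemma:modular-action-genTeich} (equivariance, commuting actions, \(\Gamma(2)\)-stability of \(\mathbb{D}'_n\)) together with the torsion-point normalization argument modeled on Proposition \ref{prop:transf_4pi}\,(b), which is exactly what you carry out, including the use of connectedness to make the locally constant cocycle \(c(\gamma,m)\) constant. You also correctly read the displayed transformation law as \(\phi_n(\gamma\cdot m)=\psi_n(\gamma)\cdot\big(\gamma\cdot\phi_n(m)\big)\) — the paper's right-hand side suppresses the reparametrization \(\gamma\,\cdot\) — which is the reading consistent with the \(n=0\) formula (\ref{transf-law-N}).
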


\subsubsection{\bf Questions.}\label{question:irred}
(a) \emph{Is \(\,\mathbb{M}_n\) connected?}\footnote{We think the
  answer is very likely ``yes'', but we don't have a proof.}
\smallbreak

\noindent
(b) \emph{Suppose that \(\,\mathbb{M}_n\,\) is connected.  
What is the Galois group of the ramified cover
\(\,\pi_n:\mathbb{M}_n\to \HH_n\)?
Is it the symmetric group \(S_n\)?}
%\bigbreak

\section{Type II solutions: Evenness and Green's functions} \label{II-even}
\setcounter{equation}{0}
  
In this section we give the proof of Theorem \ref{thm-type II}
concerning type II solutions. By Proposition \ref{non-exist}, we may
assume that $\rho = 8n \pi$ ($l = 2n$). Let $u$ be a solution of
(\ref{Liouville-eq}), 
and $f$ be a developing map of $u$. 
We recall that $u_\lambda$ in (\ref{u-lambda}) 
is a one parameter family of solutions of (\ref{Liouville-eq}).

\subsection{Evenness of solutions for $\rho = 8n\pi$}

%Here is the main result:
\begin{theorem}\label{thm:g_even}
There is a unique even solution within each normalized type II family of
solutions of the singular Liouville equation 
$\triangle u + e^u = 8n\pi\, \delta_0$ on a torus
$E$, %for $\rho = 8n\pi$, 
$n$, where \(n\) is a positive integer. %$\in \mathbb{N}_{>0}$.
In other words for any
normalized type II developing map \(f\) of a solution \(u\) of the
above equation, there exists a
unique \(\lambda\in \RR\) such that
the solution
\[
u_{\lambda}(z)= \log\frac{e^{2\lambda}\vert
  f'(z)\vert^2}{(1+e^{2\lambda} \vert f(z)\vert^2)^2}
\]
of the same equation satisfies
\(\,u_{\lambda}(-z)=u_{\lambda}(z)\) \(\,\forall\,z\in \CC\).
\end{theorem}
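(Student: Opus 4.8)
The plan is to exploit the reflection $z\mapsto -z$ of $E$, which fixes the singular point $0$ and therefore carries solutions of $\triangle u+e^u=8\pi n\,\delta_0$ to solutions, and to show that the induced involution on the one-parameter scaling family $\{u_\lambda\}$ attached to $f$ has exactly one fixed point. Since the scaling $f\mapsto e^\lambda f$ leaves $g=f'/f$ unchanged, the family is intrinsic, and evenness of $u_\lambda$ will amount to a single normalization of $\lambda$.

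First I would record the symmetry forced on $f$ itself. As $\eta=n$ is a positive integer, the Schwarzian $S(f)=-2(n(n+1)\wp(z;\Lambda)+B)$ is an \emph{even} function of $z$, so by the cocycle rule $S(f\circ(-1))=S(f)$; hence $f(-z)$ and $f(z)$ have the same Schwarzian. By the basic property of the Schwarzian used in Lemma \ref{lemma:psu2}, there is $M\in\mathrm{PSL}_2(\mathbb{C})$ with $f(-z)=Mf(z)$ for all $z$, and iterating gives $M^2=\mathrm{id}$. Two inputs then constrain $M$: (a) by Lemma \ref{lemma_zero-pole_g}(1), $f(0)\in\mathbb{C}^\times$, and evaluating $f(-z)=Mf(z)$ at $z=0$ exhibits $f(0)$ as a fixed point of $M$ in $\mathbb{C}\setminus\{0,\infty\}$; (b) substituting $z\mapsto -z$ into the normalized type II relations $f(z+\omega_i)=e^{2i\theta_i}f(z)$ of \eqref{case-type-II} yields $MT_iM^{-1}=T_i^{-1}$, where $T_i$ is the rotation $w\mapsto e^{2i\theta_i}w$ in the diagonal maximal torus $\mathbb{T}\subset\mathrm{PSU}(2)$.

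The key step is to deduce from (a) and (b) that $M$ is the anti-diagonal involution $w\mapsto c/w$ for some $c\in\mathbb{C}^\times$, equivalently $f(z)f(-z)\equiv c$; in particular $g=f'/f$ is then even. Relation (b) shows $M\in N(\mathbb{T})$: if some $T_i$ has order $>2$ then $MT_iM^{-1}=T_i^{-1}\neq T_i$ forces $M$ into the nontrivial coset of $N(\mathbb{T})/\mathbb{T}$, while if both $T_i$ have order $\le 2$ then $M$ centralizes the order-two element of $\mathbb{T}$, which again gives $M\in N(\mathbb{T})$ (trivial monodromy being impossible, since Riemann--Hurwitz would force the multiplicity $2n+1$ of $f$ at $0$ to exceed its degree $n$). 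An involution of $N(\mathbb{T})$ is either the diagonal involution, excluded by (a) since its fixed points are $0,\infty$; or the identity, excluded because $M=\mathrm{id}$ would make $g$ odd and hence, matching the residues $+1$ at zeros and $-1$ at poles of $f$, force the zeros and poles of $f$ to coincide; or else an anti-diagonal map $w\mapsto c/w$. This identification of $M$ is the part I expect to require the most care, owing precisely to these degenerate monodromy configurations.

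Finally I would convert this into the statement about $\lambda$. Writing $R_\lambda\in\mathrm{PSL}_2(\mathbb{C})$ for the map $w\mapsto e^\lambda w$ attached to the scaling $f\mapsto e^\lambda f$, the developing map $\hat f=e^\lambda f$ of $u_\lambda$ satisfies $\hat f(-z)=(R_\lambda M R_\lambda^{-1})\hat f(z)$. A direct computation with $M=(w\mapsto c/w)$ gives $R_\lambda M R_\lambda^{-1}=(w\mapsto ce^{2\lambda}/w)$, and an anti-diagonal map lies in $\mathrm{PSU}(2)$ iff $|ce^{2\lambda}|=1$, i.e. iff $\lambda=-\tfrac12\log|c|$, which singles out a unique $\lambda$. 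For this $\lambda$ we have $\hat f(-z)=U\hat f(z)$ with $U\in\mathrm{PSU}(2)$, so $z\mapsto\hat f(-z)$ is simultaneously a developing map of $u_\lambda(-\,\cdot\,)$ and, by Lemma \ref{lemma:psu2}(2), of $u_\lambda$; by Lemma \ref{order}(2) the two solutions coincide, giving $u_\lambda(-z)=u_\lambda(z)$. Conversely, if $u_{\lambda'}$ is even, the same comparison forces $R_{\lambda'}MR_{\lambda'}^{-1}\in\mathrm{PSU}(2)$ and hence $\lambda'=-\tfrac12\log|c|$, which yields uniqueness.
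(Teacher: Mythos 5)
Your overall strategy---reflect by $z\mapsto -z$, use evenness of the Schwarzian to get $f(-z)=Mf(z)$ with $M^2=\mathrm{id}$, pin $M$ down to an anti-diagonal involution $w\mapsto c/w$ via the monodromy relation $MT_iM^{-1}=T_i^{-1}$, and then choose $\lambda$ so that $ce^{2\lambda}$ has modulus one---is sound and genuinely different from the paper's proof, which instead writes $g=f'/f$ as a combination of Weierstrass $\zeta$-functions with poles at the zeros and poles of $f$, and uses the vanishing of the odd-order derivatives $g^{(2j+1)}(0)$ together with symmetric-function theory (Lemma \ref{lemma:symm_wp}) to conclude that negation swaps the zero set and the pole set, whence $g$ is even. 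Your scaling endgame and the converse (uniqueness of $\lambda$) are correct.

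However, one step fails as written: the exclusion of $M=\mathrm{id}$ in the case where both $T_i$ have order at most $2$. You argue that $M=\mathrm{id}$ would make $g$ odd and that oddness, ``matching the residues $+1$ at zeros and $-1$ at poles,'' forces zeros and poles of $f$ to coincide. The residue parity is backwards: if $g(-z)=-g(z)$ and $g$ has a simple pole at $p$ with residue $r$, then for $w$ near $-p$ one has $g(w)=-g(-w)=\frac{r}{w+p}+O(1)$, so the residue at $-p$ is again $r$, not $-r$ (it is \emph{even} meromorphic functions whose residues at $\pm p$ are opposite). Hence $f$ even, $g$ odd is perfectly consistent with the residue data---it only says that the zero set and the pole set of $f$ are each stable under negation---and no contradiction arises from residues alone. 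The gap is real but fixable with a tool you already deploy for the trivial-monodromy subcase: if $M=\mathrm{id}$, then $f(-z)=f(z)$ combined with $f(z+\omega_i)=e^{2i\theta_i}f(z)$ forces $e^{4i\theta_i}=1$ for $i=1,2$, so $f$ is elliptic with respect to $\Lambda$ itself or to the index-two sublattice $\ker(\epsilon)$, where $\epsilon(\omega_i)=e^{2i\theta_i}\in\{\pm1\}$; its degree as a map to $\PP^1(\CC)$ is then $n$ (resp.\ $2n$), the number of its simple poles modulo that lattice, while by Lemma \ref{order}(3) the fiber through $0$ already carries multiplicity $2n+1$, a contradiction. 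With this repair your proof goes through.
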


\begin{proof}
%\marginpar{What is the meaning of ``may assume'' here? If 
%\(f(z)\) is not a local unit at \(z=0\), what does one do?}
Let \(f\) be a normalized type II developing map of a solution
\(u\) of (\ref{Liouville-eq}). 
It is enough to show that 
there exists a unique 
\(\lambda\in\RR\) such that
\(f_{\lambda}(z) := e^{\lambda}\cdot f(z)\) satisfies
\( f_{\lambda}(-z) = c/ f_{\lambda}(z)\) for a constant $c$ with $|c| = 1$.
\smallbreak

Let \(g:=f'/f\), the logarithmic derivative of \(f\); it is
a meromorphic function on \(E=\CC/\Lambda\) because
\(f\) is normalized of type II.
It suffice to show that \(g\) is even, for then
\begin{equation*}
f(-z) = f(0) \exp \int_0^{-z} g(w)\, dw = \frac{f(0)^2}{f(z)},
\end{equation*}
and the unique solution of $\lambda$ is given by $\lambda = -\log |f(0)|$.
%i.e.\ \(f(z)\cdot f(-z)\) is a non-zero constant.
\medbreak

We know from Lemma \ref{lemma_zero-pole_g} that
\(f\) is a local unit at points of \(\Lambda\) and 
\(g\) is a meromorphic function on \(E\) which has a zero
of order \(2n\) at \(0\in E\), no other zeros 
and \(2n\) simple poles on \(E\).
Moreover the residue of \(g\) is equal to \(1\) at
\(n\) of the simple poles of \(g\), and equal to \(-1\) 
at the other \(n\) simple poles.

Denote by \(P_1,\ldots, P_n\) the \(n\) simples poles of \(g\)
with residue \(1\) on \(E=\CC/\Lambda\), corresponding to zeros of the developing
map \(f\), and let \(Q_1,\ldots, Q_n\) be the \(n\)-simple
poles of \(g\) with residue \(-1\), corresponding to 
simple poles of \(f\).
Let \(p_1,\ldots, p_n\in \CC\) be representatives of \(P_1,\ldots, P_n
\in\CC/\Lambda\); similarly let 
\(q_1,\ldots, q_n\in \CC\) be representatives of \(Q_1,\ldots, Q_n\in 
\CC/\Lambda\).
%
%We may assume that the developing map $f(z)$ of $u(z)$ 
%has simple zeros
%at $p_1, \ldots, p_n$ and simple poles at $q_1, \ldots, q_n$
%outside $z = 0$. Moreover $f(z) = f(0) + a_{2n + 1} z^{2n + 1} +
%\cdots$ with $f(0) \ne 0$ and $a_{2n + 1} \ne 0$.
%
%Then $g = (\log f)' = f'/f$ is elliptic on $T$ with simple poles
%at $p_i$'s with residue $1$ and at $q_i$'s with residue $-1$, and
%with zero at $z = 0$ of order $2n$. 
The condition on the poles of \(g\) allows
us to express $g$ in terms of the Weierstrass \(\zeta\)-function:
\begin{equation}\label{formula:g}
g(z) = \sum_{i = 1}^n \zeta(z - p_i) - \sum_{i = 1}^n \zeta(z -
q_i) + \sum_{i=1}\zeta(p_i) - \sum_{i=1}\zeta(q_i)
\end{equation}
for a unique constant $c$,
because \(g(z)- \sum_{i = 1}^n \zeta(z - p_i) + \sum_{i = 1}^n \zeta(z -
q_i)\) is a meromorphic holomorphic function on \(E\).
Of course the constant \(c\) is completely determined by
the elements \(p_1,\ldots, p_n; q_1,\ldots,q_n\in\CC\):
\[c=\sum_{i=1}\zeta(p_i) - \sum_{i=1}\zeta(q_i).\]
It remains to analyze the condition that 
\(g(z)\) has a zero of order \(2n\) at \(0\in E\),
i.e. 
\begin{equation} \label{relation}
0= - g^{(r)}(0)= \left. -\frac{d^r g}{dz^r}\right\vert_{z=0}
=\sum_{i = 1}^n \wp^{(r-1)}(p_i) - \sum_{i = 1}^n \wp^{(r-1)}(q_i)
\end{equation}
%\(\,g^{(r)}(0)=\frac{d^r g}{dz^r}\vert_{z=0}=0\) 
for \(r=1,\ldots, 2n-1\) because
\begin{equation*}
-g^{(r)}(z) = \sum_{i = 1}^n \wp^{(r - 1)}(z - p_i) - \sum_{i =
1}^n \wp^{(r - 1)}(z - q_i).
\end{equation*}
%the conditions
%such that $g(0) = 0$.
Only the conditions that \(g^{(2s+1)}(0)=0\) for \(s=0,1,\ldots,n-1\)
will be used for the proof of Theorem \ref{thm:g_even}.  
The vanishing of the even-order derivatives of \(g\) will be
explored in the proof of Theorem \ref{thm:hecke-system}.
\medbreak

%The derivatives of $g$ is easily computed as
%\begin{equation*}
%-g^{(r)}(z) = \sum_{i = 1}^n \wp^{(r - 1)}(z - p_i) - \sum_{i =
%1}^n \wp^{(r - 1)}(z - q_i)
%\end{equation*}
%where $h^{(j)}$ denotes the derivative of $h$ of order $j$. Then
%the constraints on zero of $g$ at $z = 0$, $g^{(r)}(0) = 0$ for $r
%= 1, \ldots, 2n - 1$, give rise to
%\begin{equation} \label{relation}
%\sum_{i = 1}^n \wp^{(j)}(p_i) = \sum_{i = 1}^n \wp^{(j)}(q_i)
%\end{equation}
%for $j = 0, \ldots, 2n - 2$. We will use the $n$ equations for $j
%= 0, 2, 4, \ldots, 2(n - 1)$ to prove the theorem. 

By Lemma \ref{lemma:symm_wp} below, relations
(\ref{relation}) for \(r=1, 3, 5,\ldots, 2n-1\) impies that
the sets
\(\{\wp(p_1),\ldots,\wp(p_n)\}\) and \(\{\wp(q_1),\ldots,\wp(q_n)\}\)
are equal as sets with multiplicities.
Because \(P_1,\ldots, P_n;\,Q_1,\ldots, Q_n\) are \(2n\) distinct
points on \(E\), it follows that 
\(\wp(p_i)\neq \wp(p_j)\) whenever \(i\neq j\)
and 
\(\{Q_1,\ldots, Q_n\}=\{-P_1,\ldots, -P_n\}\) as
subsets of \(E\smallsetminus\{0\}\) with \(n\) elements.
From the expression \eqref{formula:g} of \(g\) and the fact that
\(\zeta(z)\) is an even function on \(\CC\) one sees that 
\(g(z)\) is even. Theorem \ref{thm:g_even} is proved modulo the
elementary Lemma \ref{lemma:symm_wp}.
\end{proof}

We record the following statements from the proof of Theorem \ref{thm:g_even}.
\begin{corollary}\label{cor:typeIIzeropole}
Let $f$ be a normalize type II developing map of a solution $u$ of
the equation $\triangle u + e^u = 8n\pi\, \delta_0$ for 
a positive integer \(n\).
Then the zeros $p_i$'s of $f$ modulo $\Lambda$ correspond to \(n\) elements 
$P_1, \ldots, P_n \in E\smallsetminus\{0\}$ and the
poles $q_i$'s of \(f\) modulo $\Lambda$ correspond to
\(n\) elements
$Q_1, \ldots, Q_n \in E\smallsetminus\{0\}$.
%Furthermore, the zeros and poles are simple and satisfies 
Moreover the following statements hold.
\begin{itemize}
\item[(a)]
\(\,\displaystyle \{Q_1, \ldots, Q_n\} = \{-P_1, \ldots, -P_n\}\,\)
as subsets of \(E\smallsetminus\{0\}\) with \(n\) elements.

\item[(b)] \(\wp'(p_i)\neq 0\),
or equivalently \(P_i\) is not a \(2\)-torsion point of \(E\),
for \(i=1,\ldots, n\).

\item[(c)] \(\wp(p_i)\neq \wp(p_j)\) for any \(i, j=1,\ldots, n\)
such that  \(i\neq j\).

\end{itemize}
\end{corollary}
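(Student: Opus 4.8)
The plan is to read off all three assertions from the structure already assembled in the proof of Theorem~\ref{thm:g_even}, reusing its one genuinely analytic input. First I would recall the setup: by Lemma~\ref{lemma_zero-pole_g} a normalized type II developing map $f$ is a local unit at the points of $\Lambda$, so $g=f'/f$ descends to a meromorphic function on $E=\CC/\Lambda$ with a zero of order $2n$ at the origin, no other zeros, and exactly $2n$ simple poles. These poles split into the $n$ points $P_1,\dots,P_n$ (residue $+1$, the images of the zeros of $f$) and the $n$ points $Q_1,\dots,Q_n$ (residue $-1$, the images of the poles of $f$); all $2n$ of them are pairwise distinct and lie in $E\smallsetminus\{0\}$. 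This already supplies the preliminary clause of the corollary identifying the $P_i$ and $Q_i$.

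The key input I would import verbatim from the proof of Theorem~\ref{thm:g_even} is the multiset identity
\[
\{\wp(p_1),\dots,\wp(p_n)\}=\{\wp(q_1),\dots,\wp(q_n)\},
\]
counted with multiplicity. It arises from the vanishing of the odd-order derivatives $g^{(2s+1)}(0)=0$ for $s=0,\dots,n-1$, which by (\ref{relation}) are precisely the power-sum relations $\sum_i\wp^{(2s)}(p_i)=\sum_i\wp^{(2s)}(q_i)$; Lemma~\ref{lemma:symm_wp} converts this family of identities into the asserted equality of multisets. This conversion is the one substantive step, and I regard it (through Lemma~\ref{lemma:symm_wp}) as the main obstacle; everything afterward is elementary bookkeeping with the two-to-one nature of $\wp$.

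Granting the multiset identity, I would finish as follows, using repeatedly that for $a,b\in\CC\smallsetminus\Lambda$ one has $\wp(a)=\wp(b)$ if and only if $a\equiv\pm b\pmod{\Lambda}$. For part (a): each $\wp(p_i)$ equals $\wp(q_j)$ for a matching index $j$, so $P_i=\pm Q_j$; distinctness of the $2n$ points rules out $P_i=Q_j$, forcing $Q_j=-P_i$, and running through the matching produces a bijection, whence $\{Q_1,\dots,Q_n\}=\{-P_1,\dots,-P_n\}$. For part (c): if $\wp(p_i)=\wp(p_j)$ with $i\neq j$, then $P_j=-P_i$ (the alternative $P_j=P_i$ is excluded by distinctness), but then by (a) $P_j=-P_i\in\{Q_1,\dots,Q_n\}$, contradicting the disjointness of the $P$'s and $Q$'s; hence $\wp(p_i)\neq\wp(p_j)$. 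For part (b): $\wp'(p_i)=0$ is equivalent to $P_i$ being a nonzero two-torsion point, i.e.\ $-P_i=P_i$; by (a) this would again place $P_i$ in $\{Q_1,\dots,Q_n\}$, contradicting disjointness, so $\wp'(p_i)\neq0$ and $P_i\notin E[2]$. I would present the parts in the order (a), (c), (b), since both (c) and (b) reduce cleanly to the disjointness of $\{P_i\}$ and $\{Q_j\}$ once (a) is available.
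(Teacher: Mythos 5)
Your proposal is correct and follows essentially the same route as the paper: the corollary is there simply ``recorded'' from the proof of Theorem~\ref{thm:g_even}, whose argument is exactly yours---Lemma~\ref{lemma_zero-pole_g} for the pole structure of $g=f'/f$, the vanishing of the odd-order derivatives $g^{(2s+1)}(0)$ converted by Lemma~\ref{lemma:symm_wp} into the multiset identity $\{\wp(p_i)\}=\{\wp(q_j)\}$, and then the two-to-one property of $\wp$ together with the distinctness of the $2n$ points. Your explicit ordering (a), (c), (b) just spells out the bookkeeping that the paper compresses into one sentence.
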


%For $p = (p_1,\ldots, p_n)$ we denote by
%\begin{equation*}
%w_j(p) := \sum_{i = 1}^n \wp^{(2(j - 1))}(p_i).
%\end{equation*}
%
%We claim that $\{q_1, \ldots, q_n\} = \{-p_1, \ldots, -p_n\}$. To
%prove this, it is enough to show that $\{ \wp(p_i) \} = \{
%\wp(q_i) \}$ since $\wp(x) = \wp(y)$ implies that $x = \pm y$ and
%the points $p_i's$ and $q_i's$ are all distinct.

\begin{lemma}\label{lemma:symm_wp} 
\textup{(a)} For each positive integer
\(j\), there exists a polynomial \(h_j(X)\in\CC[X]\)
of degree \(j+1\) such that
\[
\wp^{(2j)}(z):=\Big(\frac{d}{dz}\Big)^{\!2j}\wp(z)
=h_j(\wp(z))
\]
as meromorphic functions on \(\CC\).
\smallbreak

\textup{(b)}
For every symmetric polynomial \(P(X_1,\ldots,
X_n)\in\CC[X_1,\ldots,X_n]\), 
there exists a polynomial
\(Q(W_1,\ldots,W_n)\in \CC[W_1,\ldots,W_n]\) such that
\begingroup\makeatletter\def\f@size{10}\check@mathfonts
\[
P(\wp(z_1),\ldots, \wp(z_n))
=Q\Big(\sum_{i = 1}^n \wp(z_i), \sum_{i = 1}^n \wp^{(2)}(z_i), 
\sum_{i = 1}^n \wp^{(4)}(z_i), \ldots,
\sum_{i = 1}^n \wp^{(2n-2)}(z_i)\Big)
\]
\endgroup
as meromorphic functions on \(\CC^n\).
%For $j \le n$, the symmetric functions $w_1(p), \cdots, w_j(p)$
%form a basis for symmetric polynomials in $\wp(p_1), \ldots,
%\wp(p_n)$ up to degree $j$.
\end{lemma}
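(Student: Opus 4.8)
The plan is to prove the two parts of Lemma \ref{lemma:symm_wp} in sequence, since part (b) will depend on part (a) together with the symmetric-function machinery.

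For part (a), I would argue by induction on $j$. The base case $j=1$ is the classical identity $\wp''=6\wp^2-\tfrac{1}{2}g_2$, which exhibits $\wp''$ as a polynomial of degree $2$ in $\wp$; this is exactly the statement that $h_1(X)=6X^2-\tfrac{1}{2}g_2$ has degree $2=j+1$. For the inductive step, I would differentiate the relation $\wp^{(2j)}=h_j(\wp)$ twice. The first differentiation gives $\wp^{(2j+1)}=h_j'(\wp)\,\wp'$, and a second gives
\[
\wp^{(2j+2)}=h_j''(\wp)\,(\wp')^2+h_j'(\wp)\,\wp''.
\]
Now I substitute the cubic relation $(\wp')^2=4\wp^3-g_2\wp-g_3$ and the identity $\wp''=6\wp^2-\tfrac{1}{2}g_2$, both polynomials in $\wp$. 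Since $h_j$ has degree $j+1$, the term $h_j''(\wp)(\wp')^2$ has degree $(j-1)+3=j+2$ in $\wp$, and the term $h_j'(\wp)\wp''$ has degree $j+2$ as well; the leading terms combine to give a genuinely degree-$(j+2)$ polynomial (one checks the leading coefficient does not cancel, as $h_j''(\wp)(\wp')^2$ contributes $4(j+1)j$ times the top coefficient while $h_j'(\wp)\wp''$ contributes $6(j+1)$ times it, and their sum is positive). Setting $h_{j+1}(X)$ equal to the resulting polynomial completes the induction. This is essentially the same computation already recorded in Lemma \ref{P-der} of the excerpt, and I could simply cite that lemma, which gives the explicit formula.

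For part (b), the key observation is that the ring of symmetric polynomials $\CC[X_1,\ldots,X_n]^{S_n}$ is generated by the power sums $p_r=\sum_{i=1}^n X_i^r$ for $r=1,\ldots,n$ (Newton's identities, valid since we are in characteristic zero). Therefore it suffices to express each power sum $\sum_{i=1}^n \wp(z_i)^r$, for $1\le r\le n$, as a polynomial in the $n$ quantities $\sum_{i=1}^n\wp^{(2k-2)}(z_i)$ for $k=1,\ldots,n$. By part (a), for each $k$ we have $\sum_i\wp^{(2k-2)}(z_i)=\sum_i h_{k-1}(\wp(z_i))$, where $h_{k-1}$ is a polynomial of degree $k$ with leading coefficient a nonzero constant (setting $h_0(X)=X$). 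Thus the vector of ``trace functions'' $\big(\sum_i\wp(z_i),\,\sum_i\wp(z_i)^2+\cdots,\,\ldots\big)$ is obtained from the vector of power sums $(p_1,\ldots,p_n)$ of the $\wp(z_i)$ by an upper-triangular transformation with nonzero diagonal entries. Explicitly, $\sum_i h_{k-1}(\wp(z_i)) = c_k\,p_k + (\text{polynomial in }p_1,\ldots,p_{k-1})$ with $c_k\neq 0$. This triangular system is invertible over $\CC$, so each $p_k$ can be solved for as a polynomial in $\sum_i\wp(z_i),\ldots,\sum_i\wp^{(2k-2)}(z_i)$. Substituting these expressions into the representation of the given symmetric polynomial $P$ in terms of power sums yields the desired polynomial $Q$.

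The main obstacle, such as it is, lies in verifying the nonvanishing of the diagonal entries $c_k$, i.e.\ that $h_{k-1}$ genuinely has degree $k$ and not lower; this is precisely controlled by the leading-coefficient bookkeeping in part (a), and the explicit formula from Lemma \ref{P-der} makes it transparent (the top coefficient of $\wp^{(2j)}$ as a polynomial in $\wp$ is $(2j+1)!/1$, or more precisely the positive integer exhibited there). Once that triangularity is in hand, the argument is purely formal linear algebra over $\CC$. I would therefore organize the write-up so that the quantitative content is isolated in part (a) (or deferred to Lemma \ref{P-der}), leaving part (b) as a clean application of Newton's identities together with the invertibility of a triangular change of variables.
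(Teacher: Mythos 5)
Your proposal is correct and follows essentially the same route as the paper: part (a) by induction from the Weierstrass relation (the paper's ``easy induction,'' whose inductive step is indeed Lemma \ref{P-der}), and part (b) by reducing to Newton power sums and inverting the triangular change of variables, which the paper leaves implicit but you rightly spell out. The only cosmetic difference is that the paper pins down the leading coefficient $a_j=(2j+1)!$ by comparing Laurent expansions at $z=0$, whereas you track positivity of the leading coefficients through the induction; both suffice for the nondegeneracy needed in part (b).
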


\begin{proof}
Taking the derivative of the Weierstrass equation 
\[\wp'(z)^2=4\wp(z)^3- g_2\wp(z) -g_3,\]
and divide both sides by \(2\wp'(z)\), we get
\[
\wp^{(2)}(z)=6 \wp(z)^2 - \frac{1}{2}\,g_2.
\]
An easy induction shows that 
\(\wp^{(2j)}(z)\) is equal to \(h_j(\wp(z))\) for a polynomial
\(h_j(X)\in \CC[X]\) of degree \(j+1\) and the coefficient
\(a_j\) of \(X^{j+1}\) is positive.
In fact one sees that \(\,\displaystyle{a_j=(2j+1)!}\,\) when one 
compares the coefficient of \(z^{-2j-2}\) in the Laurent series
expansion of \(\wp^{(2j)}(z)\) and \(h_j(\wp(z))\).
We have proved the statement (a).

The statement (b) follows from the fact that every
symmetric polynomial in \(\CC[X_1,\ldots,X_n]\) is a
polynomial of the Newton polynomials
\(p_1(X_1,\ldots, X_n)$, $\ldots$, $p_n(X_1,\ldots,X_n)\),
where \(\,p_j(X_1,\ldots,X_n):=\sum_{i=1}^n X_i^j\,\) for
\(j=1,\ldots, n\).
\end{proof}

\subsection{Green/algebraic system for $\rho = 8n\pi$}
\label{subsec:green-alg}

Let $u$ be a type II even solution of the singular Liouville equation
$\triangle u + e^u = 8n\pi\, \delta_0$ on a torus
$E=\CC/\Lambda$, where \(n\) is a positive integer.
%$\rho = 8n\pi$, 
%$n \in \mathbb{N}$. 
As before let $f$ be a normalized developing map of $u$ and let $g =
(\log f)' = f'/f$. 
Let $p_1, \ldots, p_n\in \CC$ be the simple zeros of
$f$ modulo \(\Lambda\) 
and let $-p_1, \ldots, -p_n$ be the simple poles of \(f\)
modulo \(\Lambda\) as in Corollary \ref{cor:typeIIzeropole}.
Let \(\,P_1=p_1\,\textup{mod}\,\Lambda,\ldots, 
P_n=p_n\,\textup{mod}\,\Lambda\,\); they are
exactly the blow-up points of the scaling family \(\,u_{\lambda}\,\)
of solutions of $\triangle u + e^u = 8n\pi\, \delta_0$.
%Note that the points \(P_1,\ldots, P_n\) are

\subsubsection{Two approaches to the configuration of blowup-up points}\enspace
\label{subsubsec:approaches}

%In \ref{subsec:green-alg} 
We will investigate the constraints on the
configuration of the points \(P_1, \ldots, P_n\)
with two approaches outlined below.
The plans are executed in \S \ref{thm:hecke-system}
%The second approach will be elaborated in 
and \S \ref{analytic-aspect} respectively.
\medbreak

\noindent
{\bf A.} In the first approach
we have the relations \eqref{relation}
for \(r=2, 4, \ldots, 2n-2\) and also the
condition that the period integrals of the meromorphic
differential \(\,g(z)\,dz\,\) on \(E\) are purely imaginary;
the latter comes directly from the assumption that 
the image of the monodromy of the type II developing map
lies in the diagonal maximal torus of \(\textup{PSU}(2)\).
The first \(n-1\) conditions translates into a system of polynomial
equations in the coordinates 
\(\wp(p_1),\ldots,\wp(p_n),\wp'(p_1),\ldots,\wp'(p_n)\)
of the \(n\) points \(P_1,\ldots, P_n\):
\begin{equation} \label{Alg-eqn-1}
\wp'(p_1) \wp^r(p_1) + \cdots + \wp'(p_n) \wp^r(p_n) = 0, \quad \forall\,r
= 0,\ldots, n - 2,
\end{equation}
while the monodromy constraint becomes
\begin{equation} \label{G-eqn}
\sum_{i=1}^n\,\frac{\partial G}{\partial z}(p_i) =0
\end{equation}
where \(G\) is the Green's function on \(E\) as in 
\eqref{eqn-green}.
\smallbreak

The method used in this approach also shows that the
\(n\) equations (\ref{Alg-eqn-1}) and (\ref{G-eqn}) are
also sufficient: if \(P_1=p_1\,\textup{mod}\,\Lambda,\ldots,
P_n=p_n\,\textup{mod}\,\Lambda\)
are \(n\) elements in \(E\) with distinct \(x\)-coordinates
\(\wp_1(p_1),\ldots, \wp(p_n)\) 
satisfying equations (\ref{Alg-eqn-1}) and (\ref{G-eqn})
and none of \(P_1,\ldots, P_n\) is a \(2\)-torsion point of \(E\),
then there exists a normalized type II developing map
\(f\) for a solution of $\triangle u + e^u = 8n\pi\, \delta_0$
such that \(p_1,\ldots, p_n\) is a set of representatives
of the zeros of \(f\) modulo \(\Lambda\).
\smallbreak

\noindent
{\bf B. } In the second approach, 
%to be elaborated in \ref{analytic-aspect}, 
results on blow-up solutions of 
a mean field equation on a Riemann surface provides the
following constraints 
\begin{equation} \label{nGz-1}
n \frac{\partial G}{\partial z}(p_i) 
= \sum_{1\leq j \leq n,\, j \ne i}\, \frac{\partial G}{\partial z}(p_i- p_j), 
\qquad\textup{for}\
\ i = 1, 2, \ldots, n,
\end{equation}
on the blow-up points \(P_1,\ldots,P_n\).
%because \(P_1,\ldots, P_n\) are exactly the blow-up points
%of the scaling family \(u_{\lambda}\) of solutions of the
%Liouville equation
%$\triangle u + e^u = 8n\pi\, \delta_0$.
%
%The family $u_\lambda$ blows up at $p_1, \cdots, p_n$ as $\lambda
%\to \infty$. The goal in this section is to derive the equations
%which characterize these blow-up points $p_i$'s.

\subsubsection{}
We will see in \ref{analytic-aspect} that 
the system of equations \eqref{nGz-1} 
%for the configuration
%of blow-up points \(P_1,\ldots, P_n\) on \(E\) 
is equivalent to 
the combination of (\ref{G-eqn}) and the following system 
of equations
\begin{equation} \label{ratio-eq-1}
\sum_{j \ne i} \frac{\wp'(p_i) + \wp'(p_j)}{\wp(p_i) - \wp(p_j)} = 0, 
\qquad \textup{for}\  i = 1, \ldots, n. 
\end{equation}
Moreover for elements \((P_1,\ldots, P_n)\!\in\! (E\!\smallsetminus\! E[2])^n\) 
with \emph{distinct} \(\,x\)-coordinates 
\(\wp(p_1),\ldots,\wp(p_n)\), 
the two systems of equations (\ref{Alg-eqn-1}) and
(\ref{ratio-eq-1}) are equivalent.\footnote{For every \(m\in \NN\),
  \(E[m]\) denote the subgroup of \(m\)-torsion points on \(E\).} 
This means that among elements of the subset 
\(\textup{Bl}_n\subset E^n\) consisting of all
\(n\)-tuples \((P_1,\ldots, P_n)\in (E\!\smallsetminus\!\{0\})^n\) satisfying the 
constraints (\ref{G-eqn}) for blow-up points,
those satisfying the non-degeneracy condition
\begin{equation}\label{x-nondegen}
\wp'(p_i)\neq 0\ 
%\ \forall\, i=1,\ldots, n
\quad\textup{and}\quad
\wp(p_i)\neq \wp(p_j)\ \ \textup{whenever}\ i\neq j%,\ 
\quad \forall 1\leq i,j\leq n
\end{equation}
are indeed blow-up points of the scaling family
\(u_{\lambda}(z)\) of a type II solution of
the singular Liouville equation
\(\triangle u + e^u = 8n\pi\, \delta_0\) on \(E\).

%equivalent to the combination of (\ref{Alg-eqn-1}) and
%(\ref{G-eqn-1}).
\bigbreak

We recall some properties about period integrals
and Green's functions in lemmas \ref{lemma:period_int}--\ref{lemmass:green-zeta}
below, before returning to the first approach
outlined in \ref{subsubsec:approaches}.

\begin{lemmass}\label{lemma:period_int}
For any \(y\in\CC\) and any \(\omega\in \Lambda=\textup{H}_1(E;\ZZ)\), 
the \(\omega\)-period of the meromorphic differential 
\(\,\displaystyle{\frac{\wp'(y)\,dz}{\wp(z)-\wp(y)}}\,\) on \(E=\CC/\Lambda\)
is given by 
%the following formula
\begin{equation}\label{period-of-wp}
\int_{L_{\omega}} \frac{\wp'(y)}{\wp(z)-\wp(y)}dz
\equiv 2\omega\cdot \zeta(y) - 2\eta(\omega)\cdot y
\pmod{2\pi\sqrt{-1}\ZZ}.
\end{equation}
Here \(L_{\omega}:[0,1]\to \CC\) is any piecewise smooth path on
\(\CC\) such that \(L_{\omega}(1)-L_{\omega}(0)=\omega\)
and \(\wp(z)\neq \wp(y)\) for all \(z\in L_{\omega}([0,1])\).
\end{lemmass}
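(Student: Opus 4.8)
\textbf{Proof proposal for Lemma \ref{lemma:period_int}.}

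The plan is to integrate the given meromorphic differential over a path realizing the homology class $\omega$, and to identify the result modulo $2\pi\sqrt{-1}\ZZ$ by exhibiting an explicit antiderivative of $\frac{\wp'(y)}{\wp(z)-\wp(y)}$ in terms of the Weierstrass $\zeta$-function. First I would invoke the classical partial-fraction/addition formula
\[
\frac{\wp'(y)}{\wp(z)-\wp(y)} = \zeta(z-y) - \zeta(z+y) + 2\zeta(y),
\]
which is a standard consequence of comparing the principal parts and the elliptic behavior of both sides: the left-hand side, as a function of $z$, is elliptic with simple poles exactly at $z\equiv \pm y \pmod{\Lambda}$, with residues $+1$ at $z=y$ and $-1$ at $z=-y$ (since $\wp'(-y)=-\wp'(y)$), and the right-hand side has precisely the same poles and residues; the difference is then a holomorphic elliptic function, hence constant, and evaluating at a convenient point (or matching the Taylor expansion at $z=0$, using that $\zeta$ is odd) pins the additive constant to $2\zeta(y)$.

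With this identity in hand, the antiderivative in the $z$-variable is immediate:
\[
\int \Big(\zeta(z-y) - \zeta(z+y) + 2\zeta(y)\Big)\,dz
= -\log\sigma(z-y) + \log\sigma(z+y) + 2\zeta(y)\,z,
\]
using $\zeta = (\log\sigma)'$. Thus along the path $L_\omega$ from a basepoint to basepoint $+\,\omega$, the integral equals
\[
\Big[\log\frac{\sigma(z+y)}{\sigma(z-y)} + 2\zeta(y)\,z\Big]_{z=L_\omega(0)}^{z=L_\omega(1)}.
\]
Next I would apply the transformation law \eqref{sigma-law} for the $\sigma$-function, $\sigma(w+\omega) = \epsilon(\omega)\,e^{\eta(\omega)(w+\omega/2)}\sigma(w)$, to both $\sigma(z+y)$ and $\sigma(z-y)$ as $z$ advances by $\omega$. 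The sign factors $\epsilon(\omega)$ cancel in the ratio $\sigma(z+y)/\sigma(z-y)$, and the exponential factors combine: the shift produces
\[
\log\frac{\sigma(L_\omega(1)+y)}{\sigma(L_\omega(1)-y)} - \log\frac{\sigma(L_\omega(0)+y)}{\sigma(L_\omega(0)-y)}
= \eta(\omega)\big((z+y+\tfrac{\omega}{2}) - (z-y+\tfrac{\omega}{2})\big) = 2\eta(\omega)\,y,
\]
while the linear term $2\zeta(y)z$ contributes $2\zeta(y)\,\omega$. Collecting terms gives $2\omega\,\zeta(y) - 2\eta(\omega)\,y$, as claimed.

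The only genuine subtlety—and the point I expect to be the main obstacle to state cleanly—is the branch ambiguity of the logarithm, which is exactly why the asserted equality holds only modulo $2\pi\sqrt{-1}\ZZ$. The quantity $\log\frac{\sigma(z+y)}{\sigma(z-y)}$ is multivalued, and the path $L_\omega$ may wind around the zeros $z\equiv \mp y$ of $\sigma(z\pm y)$; each such winding alters the integral by an integer multiple of $2\pi\sqrt{-1}$ coming from the residues $\pm 1$ at $z=\pm y$. I would handle this by noting that the hypothesis $\wp(z)\neq\wp(y)$ along $L_\omega$ guarantees the integrand is holomorphic on the path so the integral is well-defined, and that changing the path (or basepoint) within this constraint changes the value only by periods of the residues, i.e.\ by $2\pi\sqrt{-1}\ZZ$. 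Hence the closed-form expression $2\omega\,\zeta(y)-2\eta(\omega)\,y$ is the value of the integral up to this lattice of ambiguities, which is precisely the content of \eqref{period-of-wp}.
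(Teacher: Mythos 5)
Your strategy is essentially the one the paper itself follows: use the addition formula to express \(\wp'(y)/(\wp(z)-\wp(y))\) through Weierstrass zeta functions, integrate via \(\log\sigma\), and invoke the transformation law (\ref{sigma-law}). The paper's only difference is cosmetic — it first rewrites the integrand as \(\wp'(z)/(\wp(z)-\wp(y)) - 2\zeta(z+y)+2\zeta(z)+2\zeta(y)\), so that one term is \(d\log(\wp(z)-\wp(y))\) with elliptic (hence single-valued) argument, and it also sketches an alternative proof by differentiating the period in \(y\). Your identification of the identity via principal parts, and your handling of the \(2\pi\sqrt{-1}\ZZ\) ambiguity through the integer residues \(\pm1\), are both fine.

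However, there is a sign slip that makes your computation internally inconsistent. Since \(\zeta=(\log\sigma)'\), the antiderivative of \(\zeta(z-y)-\zeta(z+y)+2\zeta(y)\) is
\[
\log\sigma(z-y)-\log\sigma(z+y)+2\zeta(y)\,z
= \log\frac{\sigma(z-y)}{\sigma(z+y)}+2\zeta(y)\,z,
\]
not \(\log\frac{\sigma(z+y)}{\sigma(z-y)}+2\zeta(y)\,z\) as you wrote. With your sign-reversed antiderivative, the \(\sigma\)-transformation law contributes \(+2\eta(\omega)y\) — which is exactly what your middle display computes — and then "collecting terms" would yield \(2\omega\zeta(y)+2\eta(\omega)y\), contradicting both your final sentence and the lemma. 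With the correct antiderivative, the logarithmic part contributes \(\eta(\omega)\bigl((z-y+\tfrac{\omega}{2})-(z+y+\tfrac{\omega}{2})\bigr)=-2\eta(\omega)y\), and the total is \(2\omega\zeta(y)-2\eta(\omega)y\) as required. The repair is purely mechanical, but as written your displayed computation and your stated conclusion cannot both hold.
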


\begin{proof}
This is a reformulation of \cite[Lemma\,2.4]{LW2}.
Note that meromorphic differential \({\frac{\wp'(y)\,dz}{\wp(z)-\wp(y)}}\)
on \(E\) has poles at \(0\) and \(\pm y\pmod \Lambda\), with
residues \(0\) and \(\pm 1\) respectively,
therefore the period integral
\(\,I_{\omega}(y):=\int_{L_{\omega}} \frac{\wp'(y)\,dz}{\wp(z)-\wp(y)}\,\)
is well-defined modulo \(\,2\pi\sqrt{-1}\ZZ\).
The addition formula for \(\wp(z)\) %\cite[p.\,451]{Whittaker} 
gives
\[{\frac{\wp'(y)\,dz}{\wp(z)-\wp(y)}}
={\frac{\wp'(z)\,dz}{\wp(z)-\wp(y)}}
-2\zeta(z+y;\Lambda)dz+ 2\zeta(z;\Lambda)dz + 2\zeta(y;\Lambda)dz
\]
%\(\frac{1}{2}\frac{\wp'(y)-\wp'(z)}{\wp(y)-\wp(z)}
%=\zeta(z+y)-\zeta(y)-\zeta(z)\) 
The lemma follows after an easy calculation, using 
the functional equation for \(\zeta(z;\Lambda)\)
and the fact that
\(\frac{d}{dz}\log\sigma(z) =\zeta(z)\) and; 
see \cite[Lemma\,2.4]{LW2} for details.

Alternatively, one computes 
\[\frac{d}{dy}I_{\omega}(y)= \int_{L_{\omega}} (2 \wp(z+y)-2\wp(y))dz
=-2\wp(y)+2\eta(\omega)
\]
and determine the constant of integration
up to \(2\pi\sqrt{-1}\ZZ\)  by evaluation at
\(2\)-torsion points of \(E\).
\end{proof}

\begin{lemmass}\label{lemmass:green-zeta}
Let \(G\) be the Green's function on \(E=\CC/\Lambda\) as in 
\textup{(\ref{eqn-green})}.
The following formulas hold.
%\(\displaystyle{
\begin{equation}\label{green-sigma}
G(z)=
-\frac{1}{2\pi}\,
\log \left\vert   \Delta(\Lambda)^{\frac{1}{12}} \cdot
e^{-z\,\eta(z;\Lambda)/2}\cdot
\sigma(z;\Lambda)
\right\vert \qquad \textup{on}\ \ E,
\end{equation}
% }\,
%\) 
%on \(E\), 
%\(\displaystyle{
\begin{equation}\label{green-zeta}
-4\pi\,\frac{\partial G}{\partial z}(z)
= \zeta(z;\Lambda) - \eta(z;\Lambda)
\qquad \forall\,z\in\CC.
\end{equation}
% }\,\) on \(E\).
In the first formula \textup{(\ref{green-sigma})}, 
\(\eta(z;\Lambda)\) is the quasi-period and
\(\Delta(\Lambda)\) is the non-zero cusp
form of weight \(12\) for \(\textup{SL}_2(\ZZ)\) given by the 
formula
\[
\Delta(\Lambda)= g_2(\Lambda)^3-27g_3(\Lambda)^2
=\frac{(2\pi\sqrt{-1})^{12}}{(\omega_2)^{12}}\cdot
 q_{\tau}\cdot\prod_{m=1}^{\infty}(1-q_{\tau}^n)^{24},
\]
where \(q_{\tau}=e^{2\pi\sqrt{-1} \tau}\), \(\tau =
\omega_2/\omega_1\) with
\(\,\textup{Im}(\tau)>0\).

%\item[(b)] 

%\end{itemize}
\end{lemmass}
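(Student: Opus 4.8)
The plan is to prove the derivative formula (\ref{green-zeta}) first, by a $\bar\partial$-argument combined with the evenness of $G$, and then to read off (\ref{green-sigma}) by integration, so that the only genuine input is the value of a single additive constant. Throughout, $\eta(z;\Lambda)$ for $z\in\CC$ denotes the $\RR$-linear extension of the quasi-period homomorphism $\omega\mapsto\eta(\omega;\Lambda)$ on $\Lambda$; writing $\eta(z)=\alpha z+\beta\bar z$ and imposing $\eta(\omega_i)=\eta_i$, one solves the two linear conditions and uses the Legendre relation $\eta_1\omega_2-\eta_2\omega_1=2\pi\sqrt{-1}$ to find that $\beta=\pi/|E|$ is \emph{real}, where $|E|=\textup{Im}(\bar\omega_1\omega_2)$ is the area. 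In particular $\partial_{\bar z}\eta(z)=\beta=\pi/|E|$.

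To prove (\ref{green-zeta}), set $A(z):=\partial_z G(z)+\tfrac{1}{4\pi}\bigl(\zeta(z;\Lambda)-\eta(z;\Lambda)\bigr)$. Note that $\zeta(z)-\eta(z)$ is $\Lambda$-periodic (both terms increase by $\eta(\omega)$ under $z\mapsto z+\omega$), so, $\partial_z G$ being $\Lambda$-periodic, $A$ descends to $E$. Since $\partial_{\bar z}(1/z)=\pi\delta_0$ we have $\partial_{\bar z}\zeta=\pi\delta_0$, hence $\partial_{\bar z}(\zeta-\eta)=\pi\delta_0-\pi/|E|$; combining this with the defining equation $-4\,\partial_z\partial_{\bar z}G=\delta_0-1/|E|$ from (\ref{eqn-green}) gives $\partial_{\bar z}A=0$. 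Near $z=0$ the singular parts $\partial_z G\sim -1/(4\pi z)$ and $\tfrac{1}{4\pi}\zeta\sim 1/(4\pi z)$ cancel, so $A$ extends holomorphically across the lattice points; being holomorphic on the compact curve $E$, it is constant. Finally $G$ is even, because $z\mapsto -z$ is an automorphism of $E$ fixing $0$ under which the data of (\ref{eqn-green}) are invariant, so uniqueness forces $G(-z)=G(z)$; thus $\partial_z G$ is odd, and as $\zeta$ and $\eta(\cdot)$ are also odd, $A$ is odd. A constant odd function is $0$, which is precisely (\ref{green-zeta}).

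For (\ref{green-sigma}), let $\tilde G(z)$ denote its right-hand side. First, $\tilde G$ descends to $E$: by the transformation law (\ref{sigma-law}) and $|\epsilon(\omega)|=1$, the increment of $\log|\sigma(z)|-\tfrac12\textup{Re}\bigl(z\,\eta(z)\bigr)$ under $z\mapsto z+\omega$ collapses to $\tfrac12\bigl[\textup{Re}(z\,\eta(\omega))-\textup{Re}(\omega\,\eta(z))\bigr]$, which vanishes identically in $z$ by the Legendre relation (the same symmetry that made $\beta$ real). Next, since $\log\bigl|e^{-z\eta(z)/2}\bigr|=-\tfrac12\textup{Re}(z\eta(z))$, $\partial_z\log|\sigma|=\tfrac12\zeta$, and $\partial_z\textup{Re}(z\eta(z))=\eta(z)$, one gets $\partial_z\tilde G=-\tfrac{1}{4\pi}(\zeta-\eta)=\partial_z G$ by (\ref{green-zeta}). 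As $G$ and $\tilde G$ are both real, their $\partial_{\bar z}$-derivatives agree too, so $G-\tilde G=c$ is a real constant.

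It remains to show $c=0$. Using $\int_E G=0$, this is equivalent to $\int_E\tilde G\,dA=0$, i.e.\ to the identity $\tfrac{|E|}{12}\log|\Delta(\Lambda)|=-\int_E\bigl(\log|\sigma(z)|-\tfrac12\textup{Re}(z\eta(z))\bigr)\,dA$. The term $\int_E\textup{Re}(z\eta(z))\,dA$ is an elementary polynomial integral over a fundamental parallelogram, so the whole matter reduces to evaluating $\int_E\log|\sigma(z;\Lambda)|\,dA$ and matching it with $\tfrac{1}{12}\log|\Delta|$. This is exactly the first Kronecker limit formula, expressed through the relation between $\sigma$, the Dedekind $\eta$-function, and the discriminant $\Delta=g_2^3-27g_3^2$; I would either cite it or reprove it as the $s\to 1$ constant term of the real-analytic Eisenstein series. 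This determination of the additive constant --- the source of the modular factor $\Delta^{1/12}$ --- is the one non-formal step and the expected main obstacle; the periodicity and differential identities above are pure bookkeeping. As a consistency check one verifies that both sides transform correctly under $\Lambda\mapsto t\Lambda$ (using $\sigma(tz;t\Lambda)=t\,\sigma(z;\Lambda)$, $\Delta(t\Lambda)=t^{-12}\Delta(\Lambda)$, and the scale-invariance of the torus Green's function), which already forces $c$ to depend on $\tau$ alone and reduces the problem to a single $\tau$-indexed identity.
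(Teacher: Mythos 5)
Your proposal is correct, but it runs in the opposite direction from the paper's own (very short) proof, and it is genuinely more self-contained. The paper simply cites Lang \cite[II,\,\S5]{Lang-Arakelov} for (\ref{green-sigma}) (and Faltings \cite[p.\,417--418]{Faltings:cal} for the equivalent theta-function form in Remark \ref{rem:green-formulas}\,(a)), and then obtains (\ref{green-zeta}) from (\ref{green-sigma}) by differentiation --- the ``easy computation'' direction. You reverse this: you prove (\ref{green-zeta}) first by an elementary argument (the function $A(z)=\partial_z G+\tfrac{1}{4\pi}(\zeta-\eta)$ is $\Lambda$-periodic, $\bar\partial$-closed as a distribution because the $\delta_0$ and $1/|E|$ terms cancel, bounded at $0$ because the $1/z$ singularities cancel, hence constant by Liouville, hence zero by oddness of $G_z$, $\zeta$ and $\eta$), and only then integrate to get (\ref{green-sigma}) up to a real additive constant, which $\int_E G=0$ pins down via the first Kronecker limit formula. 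Your computations check out: the coefficient $\beta=\pi/|E|$ of $\bar z$ in $\eta(z)$ is indeed real by the Legendre relation, which is exactly what makes both $\partial_{\bar z}\eta$ constant and the increment $\tfrac12\mathrm{Re}\bigl(z\eta(\omega)-\omega\eta(z)\bigr)=\tfrac12\mathrm{Re}\bigl(\beta(z\bar\omega-\omega\bar z)\bigr)$ vanish, and $\partial_z\mathrm{Re}(z\eta(z))=\eta(z)$ is right. The trade-off: your route makes the derivative formula (\ref{green-zeta}) --- which is the one the paper actually uses later, e.g.\ in Theorem \ref{thm:hecke-system} --- completely independent of any modular or Arakelov-theoretic input, and isolates all the transcendental content (the $\Delta^{1/12}$ normalization) into a single scalar identity equivalent to the Kronecker limit formula, which you cite just as the paper cites Lang and Faltings for the whole of (\ref{green-sigma}); the paper's route is shorter but leaves both formulas resting on external references.
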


\begin{remarkss} \label{rem:green-formulas}
(a) An equivalent form of 
\ref{lemmass:green-zeta}\,(a) is
\[
G(z; \Lambda_{\tau})= -\frac{1}{2\pi}\,
\log\left\vert
e^{-\frac{\pi\,\textup{Im}(z)^2}{\textup{Im}(\tau)}}
\cdot 
\Delta(\CC/\Lambda_{\tau})^{-\frac{1}{12}}
\cdot \theta[{\scriptscriptstyle \myatop{1/2}{1/2}}](z;\tau)
\right\vert
\]
for the Green's function \(G(z;\Lambda_{\tau})\) on the elliptic curve
\(\CC/\Lambda_{\tau}\), where \(\tau\) is an
element of the upper-half plane and 
\(\Lambda_{\tau}=\ZZ+\ZZ\!\cdot\!\tau))\). 
Here we have used the general
notation for theta functions with characteristics
\[\theta[{\scriptscriptstyle \myatop{a}{b}}](z;\tau)
:=\sum_{m\in\ZZ}
e^{\pi\sqrt{-1}\,\tau (m+a)^2}\cdot 
e^{2\pi\sqrt{-1}\,(m+a)(z+b)}
.\]
The equivalence of the two formulas follows from the formulas
\[
\sigma(z,\Lambda_{\tau})=-\frac{1}{\pi}\,e^{\eta(1,\Lambda_{\tau})}\,
\cdot\frac{\theta[{\scriptscriptstyle \myatop{1/2}{1/2}}](z;\tau)}{
\theta[{\scriptscriptstyle \myatop{0}{0}}](z;\tau)
\theta[{\scriptscriptstyle \myatop{0}{1/2}}](z;\tau)
\theta[{\scriptscriptstyle \myatop{1/2}{0}}](z;\tau)}
\]
and
\[
\theta[{\scriptscriptstyle \myatop{0}{0}}](z;\tau)
\theta[{\scriptscriptstyle \myatop{0}{1/2}}](z;\tau)
\theta[{\scriptscriptstyle \myatop{1/2}{0}}](z;\tau)
= 2\,\eta_{_\textup{Dedekind}}(\tau)^3
= 2\,\Big(q_{\tau}^{\frac{1}{24}}\prod_{m=1}^{\infty}(1-q_{\tau}^m)\Big)^3.
\]
(b) The function 
\(Z(z;\Lambda):=\zeta(z;\Lambda)-\eta(z;\Lambda)
=-4\pi\frac{\partial G}{\partial z}\,\) 
appeared in \cite[p.\,452]{Hecke};
we will call it the \emph{Hecke form}.
%\(z_0=\frac{a}{N}\tau+ \frac{b}{N}\tau\in \frac{1}{N}\Lambda\)
For any integers \(a,b\) and \(N\geq 1\) such that 
\(\textup{gcd}(a, b, N)=1\) and \((a,b)\not\equiv (0,0)\pmod N\),
\(Z(\frac{a}{N}\!\!+\!\! \frac{b}{N}\tau;\,\ZZ+\ZZ\tau)\) 
is a modular form of weight one 
and level \(N\),
equal to the Eisenstein series
%\begingroup\makeatletter\def\f@size{9}\check@mathfonts
\begin{equation*}
\begin{split}
&-\left. N\!\cdot E_1^N(\tau,s;a,b)\right\vert_{s=0} \\
&\quad =-\left. N\!\cdot \textup{Im}(\tau)^s\cdot 
\sum'_{(m,n)\equiv (a,b)\,\textup{mod}\, N}
(m\tau+n)^{-1}\cdot |m\tau+n|^{-2s}\right\vert_{s=0}.
\end{split}
\end{equation*}
%\endgroup
\noindent
See \cite[p.\,475]{Hecke}.
%It is known that \(\,Z(z;\Omega)= 
%-\frac{1}{4\pi} \frac{\partial G(z;\Lambda)}{\partial z}\),
%where \(G(z)\) is the Green's function on \(E=\CC/\Lambda\)
%which satisfies
%\(\displaystyle{\frac{\sqrt{-1}}{2\pi}\partial \bar{\partial} G
%= \delta_0 - \frac{dz\wedge d\bar{z}}{\int_E dz\wedge d\bar{z}}}
%\) and \({\displaystyle\int_E G(z) dz\wedge d\bar{z}=0}\).
\end{remarkss}

\noindent
\emph{Proof of Lemma} \ref{lemmass:green-zeta}.\enspace
The formula (\ref{green-sigma}) is proved in \cite[II,\,\S5]{Lang-Arakelov}.
The equivalent formula in \ref{rem:green-formulas}\,(a) is proved
in \cite[p.\,417--418]{Faltings:cal}.
See also \cite[\S7]{LW} and \cite[\S2]{LW2}.
The formula (\ref{green-zeta}) follows from (a) by an easy computation.
\qed

\begin{theorem}\label{thm:hecke-system}
Let \(n\) be a positive integer.
Let \(\,P_i=p_i\,\textup{mod}\,\Lambda\,\), \(i=1,\ldots,n\)
%P_n=p_n\,\textup{mod}\,\Lambda\,\)
be \(n\) distinct points on \(E=\CC/\Lambda\) such that
\(\{P_1,\ldots, P_n\}\cap \{-P_1,\ldots, -P_n\}=\emptyset\).
In other words 
\(\wp(p_1),\ldots, \wp(p_n)\) are mutually \emph{distinct}
and none of the \(P_i\)'s is a \(2\)-torsion point of \(E\).
There exists a normalized type II developing map \(f\) for a solution \(u\) of
\(\,\triangle u + e^u = 8\pi n\delta_0\,\) on \(E\)
such that \(f(p_1)=\cdots=f(p_n)=0\) 
if and only if
\begin{equation} \label{partial-G-eqn}
\sum_{i=1}^n\,\frac{\partial G}{\partial z}(p_i) =0
\end{equation}
and
\begin{equation} \label{Alg-eqn}
\wp'(p_1) \wp^r(p_1) + \cdots + \wp'(p_n) \wp^r(p_n) = 0 \quad 
\textup{for}\ \ r = 0,\ldots, n - 2.
\end{equation}
\end{theorem}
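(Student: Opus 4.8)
The plan is to prove both implications by analyzing the single even elliptic function $g=f'/f$, whose zero/pole data is already pinned down by Theorem \ref{thm:g_even} and Corollary \ref{cor:typeIIzeropole}: $g$ is even, has a zero of order $2n$ at $0$, no other zeros, and $2n$ simple poles, namely $P_1,\dots,P_n$ with residue $+1$ and $-P_1,\dots,-P_n$ with residue $-1$. Hence $g$ is forced to equal $g(z)=\sum_{i=1}^n \wp'(p_i)/(\wp(z)-\wp(p_i))$ as in \eqref{g-expr}, and the whole statement reduces to translating the two defining features of a normalized type II developing map---(i) $g$ vanishes to order $2n$ at $0$, and (ii) the monodromy lies in the diagonal maximal torus---into \eqref{Alg-eqn} and \eqref{partial-G-eqn} respectively.

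For (i) I would first record the parity bookkeeping. Since $g$ is even, the odd-order conditions $g^{(2s+1)}(0)=0$ are automatic, and evaluating \eqref{g-expr} at $z=0$ shows $g(0)=0$ outright; so the only genuine constraints are the $n-1$ even-order relations $g^{(2s)}(0)=0$ for $s=1,\dots,n-1$, which by \eqref{relation} read $\sum_i \wp^{(2s-1)}(p_i)=0$. By Lemma \ref{lemma:symm_wp}\,(a), $\wp^{(2s-2)}$ is a polynomial in $\wp$, so $\wp^{(2s-1)}=\frac{d}{dz}\wp^{(2s-2)}$ equals $\wp'$ times a polynomial in $\wp$ of degree $s-1$ with nonzero leading coefficient. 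As $s$ runs over $1,\dots,n-1$ these polynomials form a triangular basis of the polynomials of degree $\le n-2$, so the relations $\sum_i \wp'(p_i)\,q(\wp(p_i))=0$ are equivalent to the monomial system \eqref{Alg-eqn}; the hypothesis that the $\wp(p_i)$ are distinct is what makes this change of basis genuine. A degree count (an elliptic function with $2n$ poles vanishes to order at most $2n$ at a single point) then shows that \eqref{Alg-eqn} forces the order of vanishing at $0$ to be exactly $2n$ with no stray zeros.

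For (ii) I would invoke the lemma in \S\ref{Liouville-thm} characterizing \eqref{case-type-II}, namely that the type II relation is equivalent to the period integrals $\int_{L_\omega} g\,dz$ being purely imaginary for $\omega=\omega_1,\omega_2$. Applying the period formula \eqref{period-of-wp} of Lemma \ref{lemma:period_int} term by term to \eqref{g-expr} gives $\int_{L_\omega}g\,dz \equiv 2\omega\,S - 2\eta(\omega)\,P \pmod{2\pi\sqrt{-1}\ZZ}$, where $S=\sum_i\zeta(p_i)$ and $P=\sum_i p_i$. The key is then a clean linear-algebra step: writing $T=S-\eta(P)$ with $\eta$ the $\RR$-linear quasi-period extension, the Legendre relation $\eta_1\omega_2-\eta_2\omega_1=2\pi\sqrt{-1}$ shows $\textup{Re}(\omega_j S-\eta_j P)=\textup{Re}(\omega_j T)$, so both periods are purely imaginary iff $\textup{Re}(\omega_1 T)=\textup{Re}(\omega_2 T)=0$, which---because $\omega_1,\omega_2$ form an $\RR$-basis of $\CC$---forces $T=0$. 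Finally the Hecke-form identity $-4\pi\,\partial G/\partial z=\zeta-\eta$ from \eqref{green-zeta} identifies $S-\eta(P)$ with $-4\pi\sum_i \partial G/\partial z(p_i)$, so $T=0$ is exactly the Green equation \eqref{partial-G-eqn}. This period-to-Green translation, together with the bookkeeping of the $2\pi\sqrt{-1}\ZZ$ ambiguity (harmless, since it contributes nothing to real parts), is where I expect the main care to be needed.

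For the converse I would run these equivalences backwards. Starting from distinct non-$2$-torsion $P_i$ with distinct $\wp(p_i)$ satisfying \eqref{Alg-eqn} and \eqref{partial-G-eqn}, define $g$ by \eqref{g-expr}; then \eqref{Alg-eqn} makes $g$ vanish to order $2n$ at $0$ while the degree count rules out other zeros, and since all residues of $g$ are $\pm1$ the function $f(z)=f(0)\exp\int_0^z g$ is single-valued and meromorphic on $\CC$. The Green equation \eqref{partial-G-eqn} makes the periods of $g\,dz$ purely imaginary, so $f$ satisfies \eqref{case-type-II}; by Lemma \ref{lemma:n-s-cond-develop} (multiplicity $2n+1$ at $\Lambda$, \'etale elsewhere, torus monodromy) $f$ is a normalized type II developing map with zeros at the $p_i$, and Theorem \ref{thm:g_even} selects the even solution. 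The only point to double-check here is that the non-degeneracy hypotheses $\wp(p_i)\ne\wp(p_j)$ and $\wp'(p_i)\ne0$ are precisely what guarantee that $g$ has $2n$ genuinely simple poles and that the reconstruction lands in the non-degenerate stratum.
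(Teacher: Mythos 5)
Your proposal is correct and takes essentially the same route as the paper's proof: both reduce to the even function $g=f'/f$ of the form \eqref{g-sum}, convert the order-$2n$ vanishing at $z=0$ into \eqref{Alg-eqn}, convert the purely-imaginary-period condition into \eqref{partial-G-eqn} using Lemma \ref{lemma:period_int}, the $\RR$-bilinear Legendre argument, and \eqref{green-zeta}, and then run the construction backwards for the converse. The only local deviation is that the paper obtains \eqref{Alg-eqn} by expanding $g$ as a geometric series in $\wp(z)^{-1}$ and reading off the coefficients $\sum_j\wp'(p_j)\,\wp(p_j)^m$ directly, whereas you go through the even-order derivative conditions $g^{(2s)}(0)=0$, Lemma \ref{lemma:symm_wp}, and a triangular change of basis---equally valid, though note that the distinctness of the $\wp(p_i)$ plays no role in that linear-algebra step (it is needed only to ensure the $2n$ poles of $g$ are simple).
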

\noindent
Notice that (\ref{partial-G-eqn}) is the same as (\ref{G-eqn})
and (\ref{Alg-eqn}) is the same as (\ref{Alg-eqn-1}).
%\bigbreak

\begin{proof}
%\noindent
%\emph{Proof.}\enspace 
We use the notation in the proof of Theorem \ref{thm:g_even}
and continue with the argument there.
The logarithmic derivative \(\,g' = {f'}/{f}\,\) 
of a normalized type II developing map 
%Since $g$ is an even elliptic function on $E$ and has residue one
%at each $p_i$, we may write
has simple poles at the \(2n\) points \(\pm P_1,\ldots, \pm P_n\)
and is holomorphic elsewhere on \(E\).
Moreover the residue of \(g\) is \(1\) \(P_i\) and
is \(-1\) at \(-P_i\) for each \(i\).  Therefore
\begin{equation} \label{g-sum}
g(z) = \frac{\wp'(p_1)}{\wp(z) - \wp(p_1)} + \cdots +
\frac{\wp'(p_n)}{\wp(z) - \wp(p_n)}.
\end{equation}
because \(g(0)=0\).  
We know two more properties of \(g\): (a) \(g(z)\) has a zero of
order \(2n\) at \(z=0\), and (b) for any \(\omega\in \Lambda\) 
and any piecewise smooth path
\(L_{\omega}:[0,1]\to \CC\) such that \(L(1)-L(0)=\omega\) and
\[L([0,1])\cap \left[\big({\textstyle \bigcup_{i=1}^n p_i+\Lambda}\big) \cup
\big({\textstyle \bigcup_{i=1}^n -p_i+\Lambda}\big)\right]=\emptyset ,\]
%\(L([0,1]\cap \big(\bigcup_{i=1}^n p_i+\Lambda \cup
%\bigcup_{i=1}^n -p_i+\Lambda\big)=\emptyset\),
we have
\[\int_{L_{\omega}} g\,dz\in \sqrt{-1}\RR
.\]
To see what property (a) means, we expand \(g(z)\) at \(z=0\)
as a power series in \(\wp(z)\):
%Under the notations $(w, x_j, y_j) = (\wp(z), \wp(p_j),
%\wp'(p_j))$ we compute from (\ref{g-sum}) that
\begin{equation*}
%\begin{split}
g(z) = \sum_{j = 1}^n \frac{\wp'(p_j)}{\wp(z)\,(1 - \wp(p_j)/\wp(z))} \\
%&
= \sum_{m=0}^{\infty} \left(\sum_{j=1}^n
  \wp'(p_j)\,\wp(p_j)^m \right)\cdot
\wp(z)^{-m-1}
%\sum_{j = 1}^n \frac{y_j}{w} + \sum_{j = 1}^n \frac{y_j
%x_j}{w^2} + \cdots + \sum_{j = 1}^n \frac{y_j x_j^r}{w^{r + 1}} +
%\cdots.
%\end{split}
\end{equation*}
Because  \(g\) has exactly \(2n\) simple poles and is holomorphic
elsewhere on \(E\), we see that 
\(\textup{order}_{z=0}\, g(z)= 2n\) if and only if
all \(n-1\) equations in (\ref{Alg-eqn}) hold.
\smallbreak

We know that 
\(\,\displaystyle{\eta(z;\Lambda) y - z \eta(y;\Lambda)\equiv 0
\pmod{\sqrt{-1}\RR}}\,\)
for all  \(y, z\in \CC\)
because the left-hand side is \(\RR\)-bilinear and we know
from the Legendre relation that 
the statement holds when \(y, z\) are both in \(\Lambda\).
By Lemma \ref{lemma:period_int}, 
%Since $u$ is of type II, we require that
%\begin{equation} \label{pure-i}
%\int_{L_i} g(z)\,dz \in \sqrt{-1}\mathbb{R}, \quad i = 1, 2.
%\end{equation}
\begin{equation*}
\begin{split}
\int_{L_{\omega}} g(z)\,dz\
&\equiv 2 \sum_{j=1}^n \big(\omega \zeta(p_j) -  \eta(\omega) p_j\big)
\pmod{2\pi\sqrt{-1}\,\ZZ}
\\
&\equiv 2 \omega \cdot \sum_{j=1}^n \big(\zeta(p_j) - \eta(p_j)\big)
\pmod{\sqrt{-1}\RR}
\end{split}
\end{equation*}
for all \(\omega\in \Lambda\).
Therefore property (b) holds for \(g\) given by (\ref{g-sum})
if and only (\ref{partial-G-eqn}) holds.
We have proved the ``only if'' part of Theorem \ref{thm:hecke-system}.
%\medbreak

Conversely suppose that equations (\ref{Alg-eqn}) and (\ref{partial-G-eqn})
hold. We have seen that the meromorphic function 
\(g(z)\) given by (\ref{g-sum}) has a zero of order \(2n\) at \(z=0\)
and the period integrals of \(\,g\,dz\,\) are all purely imaginary.
Therefore \(\,\displaystyle{f(z)=\exp\int_0^z g(w)\,dw}\,\) is
a type II developing map for a solution of the singular
Liouville equation \(\,\triangle u + e^u = 8\pi n\,\delta_0\).
\end{proof}
%\qed

\begin{remarkss}
The property (a) that the order of the meromorphic function
\[\frac{\wp'(p_1)}{\wp(z) - \wp(p_1)} + \cdots +
\frac{\wp'(p_n)}{\wp(z) - \wp(p_n)}\] on \(E\) at \(z=0\) is equal to 
\(\,2n\,\)
is also equivalent to: \(\exists\, C\in\CC^{\times}\)
such that
\begin{equation} \label{poly=const}
\sum_{j = 1}^n \wp'(p_j) \prod_{i \ne j} (\wp(z) - \wp(p_i)) = C.
\end{equation}
\end{remarkss}
\subsection{Analytic approach to the configuration of the blow-up set}
\label{analytic-aspect}

\subsubsection{}
We may also study the set $\{p_1, \ldots, p_n\}$ from the analytic
point of view. As we have already seen, $\{p_i\}$ also represents the
blow-up set of 
the family of solutions $u_\lambda$ as $\lambda \to \infty$. 
The equations to determine the position of blow-up points 
are fundamentally important in the study of 
\emph{bubbling solutions of semi-linear equations} such as mean field equations, 
Chern--Simons--Higgs equation, Toda system in two dimension, 
or scalar curvature equation in higher dimensions. 
Hence we will derive these equations from the analytic aspect.
\medbreak

We recall the definition of blow-up points for a sequence of solutions 
$u_k$, $k \in \mathbb N$, to the mean field equation 
\begin{equation}\label{Liouville-varible-strength}
\triangle u_k + e^{u_k} = \rho_k\, \delta_0 \qquad \mbox{on $E$}
\end{equation}
with possibly varying singular strength $\rho_k$ such that 
$\rho_k \to \rho = 8\pi n$ for some $n \in \mathbb N$. 
If $\rho_k = 8\pi n$ for all $k$, this goes back to the situation
$u_\lambda$ in (\ref{u-lambda}) as has been discussed. 
In general it is also important to consider blow-up phenomenon 
from a sequence of solutions $u_k$ with $\rho_k \to \rho$. 
(It is known that if $\rho \not\in 8\pi \mathbb N$ then there is 
no blow-up phenomenon \cite{CL0}.) 

\begin{definitionss} \label{bp-pt}
A subset
$S = \{P_1, \ldots, P_m\} \subset E=\CC/\Lambda$ is called the 
\emph{blow-up set} of the sequence of solutions $(u_k)_{n\in\NN}$ 
of (\ref{Liouville-varible-strength}) with \(\rho_k\to 8\pi n\)
if for all $i$
\begin{equation*}
u_k(P_i) \to +\infty \quad \mbox{as} \quad
k \to \infty,
\end{equation*}
while if $P \not\in S$ then
\begin{equation*}
u_k(P) \to -\infty \quad \mbox{as} \quad k \to \infty.
\end{equation*}
Points $P_i$ in the blow-up set are called \emph{blow-up points} of
the sequence of solutions \((u_k)\). 
\end{definitionss}

%\subsubsection{}
It is also shown in \cite{CL0} that $m = n$ and the configuration of 
the blow-up points $\{P_1, \ldots, P_n\}$ 
%can be determined by 
satisfies the following equations:
\begin{equation} \label{nGz}
n G_z(P_i) = \sum_{j = 1, \ne i}^n G_z(P_i - P_j), \qquad i = 1, 2, \ldots, n,
\end{equation}
where \(z\) is the coordinate for \(\CC\) and
\(\,G_z=\frac{\partial G}{\partial z}\).
Notice that the system of equations (\ref{nGz}) 
is the same as the equations (\ref{nGz-1}).
Summing the \(n\) equations in (\ref{nGz}) from $i = 1, \ldots, n$, we get 
\begin{equation}\label{G-eqn-2}
\sum_{i = 1}^n G_z(P_i) = 0,
\end{equation}
since $\frac{\partial G}{\partial z}$ is an odd function. 
The last equation (\ref{G-eqn-2}) is the same as the Green equation (\ref{G-eqn})
and (\ref{partial-G-eqn}).

\begin{lemmass}
Let \(\{P_1,\ldots, P_n\}\) be a set of \(n\) mutually distinct points
in \(E\smallsetminus \{0\}=\CC/\Lambda \smallsetminus \{[0]\}\),
and let \(p_1,\ldots, p_n\) be elements of \(\CC\) such that
\(\,P_i=[p_i]:=p_i\,\textup{mod}\,\Lambda\,\) for \(i=1,\ldots, n\).
The system of equations \textup{(\ref{nGz})} for the set \(\{P_1,\ldots,
P_n\}\) is equivalent to the combination of the 
Green equation \textup{(\ref{G-eqn-2})}
%\[\sum_{i = 1}^n G_z(P_i) = 0\]
and the following system equations
\begin{equation} \label{zeta-eq}
\sum_{1\leq j\leq n,\ j \ne i} 
\big(\zeta(p_i - p_j;\Lambda) + \zeta(p_j;\Lambda) 
- \zeta(p_i;\Lambda)\big) = 0, \qquad i = 1, \ldots, n.
\end{equation}
\end{lemmass}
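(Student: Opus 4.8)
The plan is to use the key formula \eqref{green-zeta} from Lemma \ref{lemmass:green-zeta}, which identifies \(-4\pi\,\frac{\partial G}{\partial z}(z)\) with the Hecke form \(Z(z;\Lambda) = \zeta(z;\Lambda) - \eta(z;\Lambda)\). Substituting this into the system \eqref{nGz}, multiplying through by \(-4\pi\), and using the additivity of \(z\mapsto \eta(z;\Lambda)\), the \(i\)-th equation becomes
\begin{equation*}
n\big(\zeta(p_i) - \eta(p_i)\big)
= \sum_{j\neq i}\big(\zeta(p_i - p_j) - \eta(p_i - p_j)\big)
= \sum_{j\neq i}\zeta(p_i - p_j) - (n-1)\eta(p_i) + \sum_{j\neq i}\eta(p_j),
\end{equation*}
where I have used \(\eta(p_i - p_j) = \eta(p_i) - \eta(p_j)\). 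The quasi-period terms must be collected carefully: the right-hand side contributes \(-(n-1)\eta(p_i) + \sum_{j\neq i}\eta(p_j)\), so moving everything to one side, the \(\eta(p_i)\) terms combine to \(-n\eta(p_i) + (n-1)\eta(p_i) = -\eta(p_i)\) on the left after cancellation, and one is left with \(\eta\)-terms \(\eta(p_i) - \sum_{j\neq i}\eta(p_j)\) that do \emph{not} cancel equation-by-equation.

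This is exactly the main obstacle, and the resolution is the point of the lemma. First I would observe that the \(\eta\)-terms alone do not vanish in each individual equation \eqref{nGz}, so the system \eqref{nGz} is genuinely \emph{not} equivalent to \eqref{zeta-eq} alone; the Green equation \eqref{G-eqn-2} is needed to absorb the residual quasi-period contributions. The cleanest route is to show the two implications separately. For the forward direction, assuming \eqref{nGz}, I would first sum all \(n\) equations: since \(\frac{\partial G}{\partial z}\) is odd the cross terms \(G_z(P_i - P_j)\) pair up and cancel, yielding \eqref{G-eqn-2} directly (this is already noted in the text preceding the lemma). Then, subtracting an appropriate multiple of \eqref{G-eqn-2} from each equation \eqref{nGz} to eliminate the global \(\sum_j \eta(p_j)\) contribution, the per-equation quasi-period terms reorganize into the purely \(\zeta\)-type combination \(\sum_{j\neq i}(\zeta(p_i-p_j) + \zeta(p_j) - \zeta(p_i))\), giving \eqref{zeta-eq}.

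Concretely, I would verify the algebraic identity that, modulo the single scalar relation \eqref{G-eqn-2}, the left-hand side of the \(i\)-th equation in \eqref{zeta-eq} equals a fixed linear combination of the \(i\)-th equation in \eqref{nGz} and \eqref{G-eqn-2}. The crucial cancellation is that \(\sum_{j\neq i}\eta(p_j) - (n-1)\eta(p_i) = \big(\sum_{j=1}^n \eta(p_j)\big) - n\,\eta(p_i)\), and the term \(\sum_{j=1}^n \eta(p_j)\) is governed precisely by the Green equation \eqref{G-eqn-2} via \eqref{green-zeta} applied at each \(p_i\) and summed. For the converse, assuming both \eqref{G-eqn-2} and \eqref{zeta-eq}, I would reverse these manipulations: add back the correct multiple of \eqref{G-eqn-2} to each equation in \eqref{zeta-eq} to recover \eqref{nGz}. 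Since all operations are invertible linear combinations over \(\CC\) (the transition being triangular once \eqref{G-eqn-2} is fixed), the equivalence follows. I expect the entire argument to be a short bookkeeping computation once \eqref{green-zeta} is invoked; the only genuine subtlety is tracking the non-cancelling \(\eta\)-terms and recognizing that their obstruction is exactly the content of the Green equation, which is why the lemma asserts equivalence with the \emph{combination} of \eqref{G-eqn-2} and \eqref{zeta-eq} rather than with \eqref{zeta-eq} alone.
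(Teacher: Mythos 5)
Your proposal is correct and takes essentially the same route as the paper's proof: both substitute the identity \eqref{green-zeta} into \eqref{nGz}, use the additivity of the quasi-period \(\eta(\cdot\,;\Lambda)\), and observe that after the per-equation cancellation of the \(\eta(p_i)\)-terms the only residual quasi-period contribution is the global sum \(\sum_{j=1}^n \eta(p_j)\), which the Green equation \eqref{G-eqn-2} identifies with \(\sum_{j=1}^n \zeta(p_j)\), so that under \eqref{G-eqn-2} each equation of \eqref{nGz} is equivalent (reversibly) to the corresponding equation of \eqref{zeta-eq}. The only blemish is a sign slip in your intermediate bookkeeping --- the leftover terms are \(-\eta(p_i)-\sum_{j\neq i}\eta(p_j)=-\sum_{j=1}^n\eta(p_j)\), not \(\eta(p_i)-\sum_{j\neq i}\eta(p_j)\) --- but your subsequent ``crucial cancellation'' identity is the correct one and the argument goes through.
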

\noindent Notice that for each \(i\), the sum
%\(\sum_{1\leq j\leq n,\ j \ne i} 
%\big(\zeta(p_i - p_j) + \zeta(p_j) - \zeta(p_i)\big)\) 
in the left-hand side of (\ref{zeta-eq})
is independent
of the choice of representatives \(p_1,\ldots, p_n\in \CC\) of
%the points 
\(P_1,\ldots, P_n\in \CC/\Lambda\).

\begin{proof}
We have seen that the \(n\) equations in (\ref{nGz}) implies the Green equation
(\ref{G-eqn}). It suffices to show that under (\ref{G-eqn-2}),
the system of equations (\ref{nGz}) is equivalent
to the system of equations (\ref{zeta-eq}).

We know from \eqref{green-zeta} that 
\(\,G_z(P_i)=\zeta(p_i;\Lambda)-\eta(p_i;\Lambda)\,\)
for each \(i\). So the Green equation (\ref{G-eqn-2})
means that \(\sum_{i=1}^n\,\zeta(p_i;\Lambda) = 
\sum_{i=1}^n\,\eta(p_i;\Lambda)\). For each \(i\)
the \(i\)-th equation in (\ref{nGz}) becomes
\begin{equation*}
n\cdot\big[\zeta(p_i;\Lambda) - \eta(p_i;\Lambda)\lambda]
= \sum_{1\leq j\leq i,\ j\neq i}
\big(\zeta(p_i-p_j;\Lambda) - \eta(p_i;\Lambda) + \eta(p_j;\Lambda)
\big)
\end{equation*}
which is equivalent to the \(i\)-th equation 
in (\ref{zeta-eq}) because
\(\sum_{i=1}^n\,\zeta(p_i;\Lambda) = 
\sum_{i=1}^n\,\eta(p_i;\Lambda)\).
\end{proof}

%By the Legendre relation, (\ref{green-zeta}) can be rewritten as 
%\begin{equation} \label{Gz}
%G_z(z) = -\frac{1}{4\pi} \Big(\zeta(z) - \eta_1 z + 2\pi i\frac{v}{b}\Big),
%\end{equation} 
%where $v = {\rm Im}\,z$ and $b = {\rm Im}\,\tau$. Denote $v_i = {\rm Im}\, p_i$, 
%this implies that
%\begin{equation} \label{sum-zeta}
%\sum_{i = 1}^n \zeta(p_i) = \eta_1 \sum_{i = 1}^n p_i - \frac{2\pi i}{b} \sum_{i = 1}^n v_i.
%\end{equation}
%Rewriting (\ref{nGz}) via (\ref{Gz}) gives
%\begin{equation*}
%\begin{split}
%& n\Big(\zeta(p_i) - \eta_1 p_i + 2\pi i \frac{v_i}{b}\Big) \\
%&= \sum_{j \ne i} \zeta(p_i - p_j) - \eta_1 \sum_{j = 1}^n (p_i -
%p_j) + \frac{2\pi i}{b} \sum_{j = 1}^n (v_i - v_j) \\
%&= \sum_{j \ne i} \zeta(p_i - p_j) - n\Big(\eta_1 p_i - 2\pi i\frac{v_i}{b}\Big) 
%+ \sum_{j = 1}^n \zeta(p_j),
%\end{split}
%\end{equation*}
%where the last equality is due to (\ref{sum-zeta}). This leads to
%\begin{equation} \label{zeta-eq}
%\sum_{j \ne i} (\zeta(p_i - p_j) + \zeta(p_j) - \zeta(p_i)) = 0, \qquad i = 1, \cdots, n.
%\end{equation}
%
%Conversely, if $\{p_i\}_{i = 1}^n$ satisfies (\ref{G-eqn}) and
%(\ref{zeta-eq}) then (\ref{nGz}) holds. 
%Thus the system (\ref{nGz}) is equivalent to (\ref{G-eqn}) and (\ref{zeta-eq}).

\begin{remarkss}
Part of the condition for the blow-up set $\{P_1, \ldots, P_n\}$ 
of a sequence of solutions \((u_k)\) as in Definition \ref{bp-pt}
is that
\begin{equation}
P_i \ne P_j \quad \mbox{for} \quad i \ne j,
\end{equation}
instead of the stronger property
\begin{equation} \label{p-p}
\{P_1, \ldots, P_n\} \cap \{-P_1, \ldots, -P_n\} = \emptyset
\end{equation}
which is satisfied when \(\{p_1,\ldots, p_n\}\) are zeros of a 
normalized developing
map of a solution of \(\,\triangle u + e^{u}=8\pi n\,\delta_0\).
\end{remarkss}

\subsection{}{\bf Equivalence of algebraic systems (\ref{Alg-eqn}) and
  (\ref{zeta-eq}) under (\ref{p-p})}
%\smallbreak

\subsubsection{}
In light of Theorem \ref{thm-type II}, the analytic discussion in \S
\ref{analytic-aspect} suggests 
that
%the equivalence of 
the %Green/Algebraic 
system of equations (\ref{G-eqn}) + (\ref{Alg-eqn}) 
may be \emph{equivalent} to the system of equations
%the Green/Zeta system 
(\ref{G-eqn}) + (\ref{zeta-eq})
%are \emph{equivalent}, 
under the constraint that  $P_i \ne  P_j$ whenever $i \ne j$ 
and $P_i \ne -P_j$ for all $i, j$. 
\smallbreak

Since the Green equation (\ref{G-eqn}) is the only non-holomorphic
equation shared by both systems, one might optimistically ask
\begin{quotation}
\emph{Are the two holomorphic systems of $n - 1$ equations} (\ref{Alg-eqn}) 
and (\ref{zeta-eq}) \emph{equivalent?}
\end{quotation}
Note that the sum of equations in (\ref{zeta-eq}) is zero,
hence we may remove one equation from it.
%\bigbreak

\subsubsection{}
To answer this question, we recall the addition formula
\begin{equation*} 
\frac{1}{2} \frac{\wp'(z) - \wp'(u)}{\wp(z) - \wp(u)} = \zeta(z +
u) - \zeta(z) - \zeta(u).
\end{equation*}
Thus (\ref{zeta-eq}) with additional constraint $p_i \ne \pm p_j$ for $i \ne j$ is equivalent to
\begin{equation} \label{ratio-eq}
\sum_{j \ne i} \frac{\wp'(p_i) + \wp'(p_j)}{\wp(p_i) - \wp(p_j)} = 0, 
\qquad i = 1, \ldots, n. 
\end{equation}

Denote by $(x_i, y_i) = (\wp(p_i), \wp'(p_i))$. As points on $E$ they are related by the defining cubic curve equation $y_i^2 = p(x_i) = 4x_i^3 - g_2 x_i - g_3$. Then (\ref{ratio-eq}) and (\ref{Alg-eqn}) can be written as the following systems respectively:
\begin{equation} \label{(I)}
\sum_{j = 1,\, \ne i}^n \frac{y_i + y_j}{x_i - x_j} = 0, \quad i = 1, \ldots, n,
\end{equation} 
where $x_i \ne x_j$ for $i \ne j$ is imposed, and
\begin{equation} \label{(II)}
\sum_{i = 1}^n x_i^l y_i = 0, \quad l = 0, \ldots, n - 2.
\end{equation}
Both systems appear to be linear in $y_i$'s, 
and in fact we can prove their equivalence even without the elliptic curve equations:

\begin{propositionss} \label{eq-2-sys}
For a given set of mutually distinct elements 
$x_1, \ldots,x_n\in\CC$, the linear systems of equations 
%(\ref{(I)}) and (\ref{(II)}) 
\begin{equation}\label{(I)-1}
\sum_{1\leq j \leq n,\ j\ne i} \frac{Y_i + Y_j}{x_i - x_j} = 0, 
\quad \forall\,i = 1, \ldots, n
\end{equation}
and
\begin{equation} \label{(II)-1}
\sum_{i = 1}^n x_i^l\cdot Y_i = 0, \quad \forall\,l = 0, \ldots, n - 2
\end{equation}
in variables $Y_1, \ldots, Y_n$ are equivalent.
\end{propositionss}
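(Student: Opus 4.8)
The plan is to prove the two linear systems \eqref{(I)-1} and \eqref{(II)-1} are equivalent by showing they define the same linear subspace of $\CC^n$. Both systems are homogeneous linear in $Y_1,\ldots,Y_n$ and each consists of $n-1$ equations (note that summing all $n$ equations in \eqref{(I)-1} gives $0=0$ identically, since the $(i,j)$ and $(j,i)$ terms cancel, so \eqref{(I)-1} has at most $n-1$ independent equations; and \eqref{(II)-1} is literally $n-1$ equations, for $l=0,\ldots,n-2$). The cleanest route is to show both systems have the \emph{same solution space}, and since the solution space of \eqref{(II)-1} is easy to compute explicitly, it suffices to verify that space is contained in the solution space of \eqref{(I)-1} and that \eqref{(I)-1} cuts out a space of the same dimension.

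First I would analyze \eqref{(II)-1} directly. The equations $\sum_{i=1}^n x_i^l Y_i = 0$ for $l=0,\ldots,n-2$ form a Vandermonde-type linear system: the coefficient matrix is the $(n-1)\times n$ matrix $(x_i^l)_{0\le l\le n-2,\,1\le i\le n}$, which has full rank $n-1$ because the $x_i$ are distinct. Hence the solution space is one-dimensional, and one can write down a generator explicitly by Cramer's rule / cofactor expansion: the solution is $Y_i = c\cdot\prod_{j\ne i}(x_i-x_j)^{-1}$ up to scalar, equivalently $Y_i = c/p'(x_i)$ where $p(x)=\prod_{k=1}^n (x-x_k)$. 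This is the standard fact that the kernel of the truncated Vandermonde is spanned by the partial-fraction coefficients.

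Next I would verify that this explicit vector solves \eqref{(I)-1}. Substituting $Y_i = 1/\prod_{k\ne i}(x_i-x_k)$ into the $i$-th equation of \eqref{(I)-1}, the claim becomes
\[
\sum_{j\ne i}\frac{1}{x_i-x_j}\left(\frac{1}{\prod_{k\ne i}(x_i-x_k)}+\frac{1}{\prod_{k\ne j}(x_j-x_k)}\right)=0 .
\]
This is a pure partial-fractions identity, provable by expanding $1/\bigl(p'(x_i)\,p'(x_j)\bigr)$ type terms, or more slickly by the residue theorem: the sum is the total residue of a rational function that vanishes at infinity. A convenient device is to consider $\sum_{j}\operatorname{Res}_{x=x_j}\frac{dx}{(x-x_i)\,p(x)}$ for fixed $i$, together with the analogous contour-integral bookkeeping; since $\deg p = n \ge 2$, the integrand decays fast enough that the sum of all residues (including at $x_i$) is zero, and unwinding this gives exactly the $i$-th equation of \eqref{(I)-1}. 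I would carry this out as the core computation.

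Finally I would close the dimension count. Having shown the $1$-dimensional solution space of \eqref{(II)-1} is contained in that of \eqref{(I)-1}, it remains to prove \eqref{(I)-1} has solution space of dimension \emph{at most} $1$, i.e.\ its coefficient matrix has rank $n-1$. The main obstacle is this rank bound for \eqref{(I)-1}, since its matrix is less transparent than the Vandermonde one; I expect the cleanest argument is to show directly that any solution of \eqref{(I)-1} must also satisfy \eqref{(II)-1}, giving the reverse inclusion and hence equality of the two spaces (which automatically forces equal dimension and equivalence of the systems). To get \eqref{(II)-1} from \eqref{(I)-1}, I would form suitable linear combinations: multiply the $i$-th equation of \eqref{(I)-1} by an appropriate polynomial in $x_i$ and sum over $i$, exploiting the antisymmetry $\frac{Y_i+Y_j}{x_i-x_j}$ so that the cross terms telescope into the power sums $\sum_i x_i^l Y_i$. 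Concretely, summing $\sum_i x_i^{\,l}\cdot(\text{$i$-th equation of \eqref{(I)-1}})$ and using that $\frac{x_i^l - x_j^l}{x_i-x_j}$ is a symmetric polynomial in $x_i,x_j$ of degree $l-1$ should, after reorganizing, recover \eqref{(II)-1} by downward induction on $l$. This symmetrization step is the delicate part of the argument and is where I would spend the most care.
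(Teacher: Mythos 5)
Your proof is correct, but it takes a genuinely different route from the paper's. The paper works with the $n\times n$ coefficient matrix $A_n$ of \eqref{(I)-1} head-on: it computes the $(n-1)\times(n-1)$ cofactors of $A_n$ in closed form, $c_i=(-1)^{n+i}(n-1)!\,/\prod_{k\ne i}(x_k-x_i)$, which simultaneously establishes $\mathrm{rank}\,A_n=n-1$ and exhibits its kernel, and then observes that this vector is proportional to the Vandermonde-cofactor vector spanning the kernel of \eqref{(II)-1}. All of the difficulty there is concentrated in one determinant evaluation, the constant $(n-1)!$ of Lemma \ref{dn}, for which the paper gives two proofs (a limit argument sending $x_1\to\infty$, and an algebraic specialization of the $x_i$ to roots of unity with a diagonalization over the group ring). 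You avoid any determinant evaluation and instead prove the two inclusions of solution spaces separately. Your first direction is fully correct as stated: with $p(x)=\prod_{k=1}^n(x-x_k)$, the kernel of \eqref{(II)-1} is spanned by $Y_i=1/p'(x_i)$, and plugging this into \eqref{(I)-1} reduces, via the vanishing of the total residue of $1/\bigl((x-x_i)\,p(x)\bigr)$ and the identity $\sum_{j\ne i}(x_i-x_j)^{-1}=p''(x_i)/(2p'(x_i))$, to the cancellation $\frac{p''(x_i)}{2p'(x_i)^2}-\frac{p''(x_i)}{2p'(x_i)^2}=0$. Your second direction, which you only sketch and flag as delicate, does in fact close exactly as you envision: writing $T_m=\sum_i x_i^m Y_i$ and $\pi_m=\sum_i x_i^m$, multiplying the $i$-th equation of \eqref{(I)-1} by $x_i^l$, summing over $i$, and pairing the $(i,j)$ and $(j,i)$ terms gives
\[
0=\sum_{a=0}^{l-1}\pi_{l-1-a}\,T_a-l\,T_{l-1},
\]
and then induction on $l$ (upward from $l=1$, not downward) yields $(n-l)\,T_{l-1}=0$, hence $T_{l-1}=0$, for $l=1,\ldots,n-1$, which is precisely \eqref{(II)-1}; it is a pleasant accident that the coefficient $n-l$ is nonzero on exactly the range needed. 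In terms of trade-offs: the paper's route produces the explicit cofactor formula and the identity $d_n=(n-1)!$, which have independent interest (the paper notes this lemma appears in work of Rees and Zarhin), whereas your route is shorter and more elementary, replacing the determinant computation by a residue identity plus a few lines of symmetric-function bookkeeping.
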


\begin{proof}
The system (\ref{(I)-1}) corresponds to the $n \times n$ matrix
\begin{equation*}
A_n = \begin{pmatrix}
\sum\limits_{k = 2}^n \frac{1}{x_1 - x_k} & \frac{1}{x_1 - x_2} 
& \frac{1}{x_1 - x_3} & \cdots & \frac{1}{x_1 - x_n} 
\\ \frac{1}{x_2 - x_1} & \sum\limits_{k = 1, \ne 2}^n \frac{1}{x_2 -  x_k} 
& \frac{1}{x_2 - x_3} & \cdots & \frac{1}{x_2 - x_n} 
\\ \vdots & \vdots & \ddots & & \vdots 
\\ \frac{1}{x_n - x_1} & \frac{1}{x_n - x_2} 
& \cdots & \cdots & \sum\limits_{k = 1}^{n - 1} \frac{1}{x_n - x_k}
\end{pmatrix},
\end{equation*}
that is, $A_n = (a_{ij}) \in M_n(\mathbb Q(x_1, \ldots, x_n))$, 
where $a_{ij} = \frac{1}{x_i - x_j}$ if $j \ne i$, 
and $a_{ij} = \sum_{k = 1, \ne i}^n \frac{1}{x_i -x_k}$ if $j = i$, 
which is the sum of all the other entries in the same row. 
Note that the sum of all rows in $A_n$ is the zero row vector. 
In particular, $\det A_n = 0$.

The system (\ref{(II)-1}) corresponds to the $(n - 1) \times n$ matrix
\begin{equation*}
B_n := \begin{pmatrix}
1 & \cdots & 1 \\ x_1 & \cdots & x_n \\\vdots & \ddots & \vdots 
\\ x_1^{n - 2} & \cdots & x_n^{n - 2}
\end{pmatrix}.
\end{equation*}
Let $b = (b_1, \ldots, b_n)$ where $b_i$ is the determinant of the 
$(n - 1) \times (n - 1)$ minor of $B_n$ without the $i$-th column. 
Then $b_i$'s are given by the Vandermonde determinant:
\begin{equation*} 
b_i = \prod_{1 \le l < k \le n;\, l, k \ne i} (x_k - x_l).
\end{equation*} 
Let $C_n$ be the $(n - 1)\times n$ matrix consisting of the 
first $n - 1$ rows of $A_n$, and define $c = (c_1, \ldots, c_n)$ similarly. 

We want to prove $c \ne 0$, which implies that ${\rm rank}\,A_n = n -1$ 
and the kernel of $A_n$ is spanned by $c$. Then the equivalence of 
these two linear systems simply means that $b$ and $c$ 
are proportional to each other. 

We claim that
\begin{equation} \label{c_i}
c_i = \frac{(-1)^{n + i} (n - 1)!}{\prod_{k \ne i} (x_k - x_i)}, \qquad i = 1, \ldots, n.
\end{equation}

Due to symmetry, it is enough to consider the case $i = n$. 
We will show that the order of $c_n$ along the divisor $x_k - x_l$ 
is non-negative for all $k, l \ne n$. This will imply that $c_n$ 
is a constant times $\prod_{k = 1}^{n - 1} (x_k - x_n)^{-1}$. 

Again by symmetry, it is enough to check the case $k = 1$, $l = 2$. 
The only terms which may contribute poles along $x_k - x_l$ 
are $a_{11}$, $a_{12}$, $a_{21}$ and $a_{22}$. 
If we subtract the second row by the first row, 
and then add the resulting first column into the second column, 
we get the following $(n - 1) \times (n - 1)$ matrix
\begin{equation*}
\begin{pmatrix}
\frac{1}{x_1 - x_2} + r & -r & * & \cdots & * \\
r & (x_1 - x_2) * & * & \cdots & * \\
* & (x_1 - x_2) * & * & \cdots  & * \\
\vdots & \vdots & \vdots & \ddots & \vdots \\
* & (x_1 - x_2) * & * & \cdots & *
\end{pmatrix},
\end{equation*}
where $r = \sum_{j = 3}^n \frac{1}{x_1 - x_j}$, as well as 
all entries labeled by $*$, does not have pole along the divisor $x_1- x_2$. 
This shows that $\det\, (a_{ij})_{1 \le i, j  \le n - 1}$ has
non-negative order 
along $x_1 - x_2$. So there exists an element \(d_n\in \CC\) such that
\begin{equation} \label{d_n}
c_n = \frac{d_n}{\prod_{k < n} (x_k - x_n)}. 
\end{equation} 

By Lemma \ref{dn} below we have  $d_n = (n - 1)! \ne 0$. 
Then (\ref{c_i}) holds and we have $c \ne 0$. Now we note that
$$
b_i = \frac{(-1)^{n - i} \prod_{1 \le l < k \le n} (x_k -x_l)}{\prod_{k \ne i} (x_k - x_i)} 
= \frac{1}{(n - 1)!} \prod_{1 \le l < k \le n} (x_k - x_l) c_i,
$$
i.e.~$b$ is parallel to $c$. Hence the equivalence is proved.
\end{proof}

\begin{lemmass} \label{dn}
The constant $d_n$ in \textup{(\ref{d_n})} is $(n - 1)!$.
\end{lemmass}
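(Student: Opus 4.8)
The statement to prove is that the constant $d_n$ appearing in formula~(\ref{d_n}) equals $(n-1)!$. Recall from the proof of Proposition~\ref{eq-2-sys} that $c_n$ is, up to sign, the determinant of the $(n-1)\times(n-1)$ minor $C_n'$ obtained from the first $n-1$ rows of $A_n$ by deleting the $n$-th column, and that we have already localized the computation: $c_n = d_n/\prod_{k<n}(x_k-x_n)$ where $d_n$ is holomorphic (indeed a constant, once one checks it has no poles along any divisor $x_k-x_l$). So the task reduces to evaluating this constant.

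The plan is to determine $d_n$ by a limiting argument rather than by a direct Vandermonde-style expansion. Since $d_n$ is a constant (independent of the $x_i$), I am free to send the variables to any convenient configuration and read off the value. The cleanest choice is to let $x_n \to \infty$ while keeping $x_1,\ldots,x_{n-1}$ fixed and generic. In the minor $C_n'$, the entries $a_{in}=1/(x_i-x_n)$ for $i<n$ tend to $0$, and each diagonal entry $a_{ii}=\sum_{k\ne i}1/(x_i-x_k)$ loses its $1/(x_i-x_n)$ summand in the same limit, so $C_n'$ converges to the $(n-1)\times(n-1)$ matrix $A_{n-1}'$ built from $x_1,\ldots,x_{n-1}$ alone but with the diagonal entries being the sums $\sum_{k\le n-1,\,k\ne i}1/(x_i-x_k)$ — that is, precisely the matrix $A_{n-1}$ of the same shape one level down. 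Meanwhile $\prod_{k<n}(x_k-x_n)\sim (-1)^{n-1}x_n^{\,n-1}$, and one checks the leading behaviour of $\det C_n'$ against this to extract the relation between $d_n$ and the analogous constant $d_{n-1}$.

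First I would make the limit quantitative: expanding $a_{in}$ and the relevant diagonal terms to first order in $1/x_n$ shows $\det C_n' = \det A_{n-1} + (\text{lower order in }1/x_n\text{ corrections})$, and since $\det A_{n-1}=0$ (the rows of the full $A_{n-1}$ sum to zero, as noted in the proof of Proposition~\ref{eq-2-sys}), the leading surviving contribution comes from the $1/x_n$ term in the cofactor expansion along the last column. Carrying this out produces a recursion $d_n = (n-1)\,d_{n-1}$, which together with the trivial base case $d_1=1$ (or $d_2=1$, checked by a one-line $2\times 2$ computation) yields $d_n=(n-1)!$ by induction.

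The main obstacle will be bookkeeping the first-order-in-$1/x_n$ correction cleanly, since both the off-diagonal column entries and the diagonal entries depend on $x_n$ simultaneously, so a naive term-by-term differentiation of the determinant must be organized carefully (e.g.\ via the cofactor/Jacobi formula $\frac{d}{dt}\det M(t)=\operatorname{tr}(\operatorname{adj}(M)\,M'(t))$ applied at $t=1/x_n=0$) to avoid miscounting contributions. An alternative that sidesteps the asymptotics entirely, and which I would keep in reserve as a cross-check, is to specialize the $x_i$ to roots of unity or to an arithmetic progression where $A_n$ becomes a circulant-like or otherwise diagonalizable matrix whose nonzero eigenvalues can be computed explicitly; the product of the nonzero eigenvalues divided by the appropriate Vandermonde factor then gives $d_n$ directly. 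Either route reduces the claim to an elementary finite computation, so the content is entirely in setting up the recursion correctly.
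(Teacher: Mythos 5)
Your primary route has a genuine gap, and it is not the bookkeeping issue you anticipate but a structural failure of the chosen degeneration. In the limit $x_n \to \infty$, only the \emph{diagonal} entries of the minor $C_n'$ depend on $x_n$ (the entries $a_{in}$ you mention were deleted together with the $n$-th column), so writing $t = 1/x_n$ one has $C_n'(t) = A_{n-1} - t\,I + O(t^2)$, where $A_{n-1}$ is the full singular matrix built on $x_1,\ldots,x_{n-1}$. The Jacobi formula you invoke therefore gives first-order coefficient $-{\rm tr}\,({\rm adj}(A_{n-1}))$. But the identity (\ref{d_n}) that was already established forces $\det C_n'(t) = d_n(-1)^{n-1}t^{\,n-1}\big/\prod_{k<n}(1-t x_k)$, which vanishes to order $n-1$ in $t$; hence for every $n\ge 3$ the first-order term is identically zero, i.e.\ ${\rm tr}\,{\rm adj}(A_{n-1}) = 0$ (check $n=3$: ${\rm adj}(A_2)$ has trace $\frac{1}{x_2-x_1}+\frac{1}{x_1-x_2}=0$). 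So your expansion yields $0=0$, not the recursion $d_n=(n-1)d_{n-1}$; extracting $d_n$ along these lines would require the coefficient of $t^{\,n-1}$, a far harder computation in which it is moreover unclear how $d_{n-1}$ — a single cofactor of the smaller matrix, not its determinant — would ever appear, since the limiting object is the whole singular matrix $A_{n-1}$.

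The paper's first proof shows how the limit must be set up to work at first order: send a \emph{retained} variable, $x_1$, to infinity and rescale, i.e.\ consider $c_n x_1$. Since exactly one factor of $\prod_{k<n}(x_k-x_n)$ involves $x_1$, the quantity $c_n x_1$ has the finite nonzero limit $d_n\big/\prod_{2\le k<n}(x_k-x_n)$; on the matrix side the rescaled first row tends to $(n-1,1,\ldots,1)$ while the rest of the first column tends to $0$, so the limiting determinant is block-triangular and equals $(n-1)$ times the analogous cofactor for the $n-1$ variables $x_2,\ldots,x_n$, giving $d_n=(n-1)d_{n-1}$ immediately. Your reserve plan (specializing to $n$-th roots of unity and diagonalizing) is in essence the paper's second, algebraic proof and does work, but it needs more than dividing a product of nonzero eigenvalues by a Vandermonde factor: because $d_n$ sits in one entry of an adjugate rather than in a determinant, the paper passes to the row-rescaled Hermitian matrix $A''$, diagonalizes it by characters of $\ZZ/n\ZZ$ (eigenvalues $0,1,\ldots,n-1$), and then extracts $(\Lambda^{n-1}A'')_{ni}$ through the covariance of $\Lambda^{n-1}$ — only then does $(n-1)!$ emerge as $d_n$.
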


\medbreak
\noindent
We offer two proofs.
\smallbreak
\noindent{\bf The first/analytic proof.}\enspace
%The evaluation of $d_n$ turns out to be a bit tricky. 
%We will give two proofs of it, one analytic and one purely algebraic.
%
%{\it Analytic Proof.} 
It is easy to see that $d_1 = 1$. 
If we may show that $d_n = (n - 1) d_{n - 1}$ for $n \ge 2$ then we
are done. We observe that
\begin{equation*}
\frac{d_n}{\prod_{2 \le k < n} (x_k - x_n)} 
= \lim_{x_1 \to \infty} \frac{d_n x_1}{\prod_{k < n} (x_k - x_n)}
\end{equation*}
which, by the definition of $c_n$ and (\ref{d_n}), is equal to 
\begin{equation*}
\lim_{x_1 \to \infty}
\begin{vmatrix}
\sum\limits_{k = 2}^n \frac{x_1}{x_1 - x_k} & \frac{x_1}{x_1 - x_2} 
& \frac{x_1}{x_1 - x_3} & \cdots & \frac{x_1}{x_1 - x_{n - 1}} 
\\ \frac{1}{x_2 - x_1} & \sum\limits_{k = 1, \ne 2}^n \frac{1}{x_2 -  x_k} 
& \frac{1}{x_2 - x_3} & \cdots & \frac{1}{x_2 - x_{n - 1}} 
\\ \vdots & \vdots & \ddots & & \vdots 
\\ \frac{1}{x_{n - 1} - x_1} & \frac{1}{x_{n - 1} - x_2} 
& \cdots & \cdots & \sum\limits_{k = 1, \ne n - 1}^{n} \frac{1}{x_{n - 1} - x_k}
\end{vmatrix}.
\end{equation*}
Since 
$$
\lim_{x_1 \to \infty} \sum_{k = 2}^n \frac{x_1}{x_1 - x_k} = n - 1,
$$
by evaluating the limit, the determinant becomes 
\begin{equation*}
\begin{split}
&(n - 1) \begin{vmatrix}
\sum\limits_{k = 3}^n \frac{1}{x_2 - x_k} & \frac{1}{x_2 - x_3} 
& \frac{1}{x_2 - x_4} & \cdots & \frac{1}{x_2 - x_{n - 1}} 
\\ \frac{1}{x_3 - x_2} & \sum\limits_{k = 2, \ne 3}^n \frac{1}{x_3 -  x_k} 
& \frac{1}{x_3 - x_4} & \cdots & \frac{1}{x_3 - x_{n - 1}} 
\\ \vdots & \vdots & \ddots & & \vdots 
\\ \frac{1}{x_{n - 1} - x_2} & \frac{1}{x_{n - 1} - x_3} 
& \cdots & \cdots & \sum\limits_{k = 2, \ne n - 1}^{n} \frac{1}{x_{n - 1} - x_k}
\end{vmatrix} \\
&= (n - 1)\frac{d_{n - 1}}{\prod_{2 \le k < n} (x_k - x_n)}.
\end{split}
\end{equation*}
Thus $d_n = (n - 1) d_{n - 1}$ as expected. \qed
\medbreak

\noindent
{\bf The second/algebraic proof.}\enspace
It is enough to consider the specialization 
$x_i = \zeta^i$ for $i = 1, \ldots, n$, where $\zeta = e^{2\pi i/n}$ 
is the $n$-th primitive root of unity. 
Let $A' = (a_{ij}')$ be the specialized matrix and $A'' = (a_{ij}'')$ 
the Hermitian matrix with
$$
a_{ij}'' := \zeta^i a_{ij}' = \begin{cases} 
\displaystyle \frac{1}{1 - \zeta^{j - i}} &\mbox{if $i \ne j$}, 
\\ \displaystyle \sum_{k = 1}^{n - 1} \frac{1}{1 - \zeta^k} 
= \frac{n - 1}{2}&\mbox{if $i = j$}. \end{cases}
$$
Here the diagonal entries $a_{ii}'' = \tfrac{1}{2}(n - 1)$ follows from the fact that 
$$
\frac{1}{1 - \zeta^k} + \frac{1}{1 - \zeta^{n - k}} 
= \frac{1 - \zeta^{n - k} + 1 - \zeta^k}{1 - \zeta^k - \zeta^{n - k} + 1}= 1.
$$ 

Let $V$ be the underlying vector space of the group ring 
$\bar {\mathbb Q}[\mathbb Z/n\mathbb Z] 
= \bigoplus_{j \in \mathbb Z/n\mathbb Z} \bar{\mathbb Q}\cdot [j]$. 
Then $(A'')^t$ is the matrix representation of the following operator 
${\bf T}$ on $V$ with respect to the basis $[\bar 1], [\bar 2], \cdots [\bar n]$:
$$
{\bf T} = \frac{n - 1}{2} + \sum_{j = 1}^{n - 1} \frac{1}{1 - \zeta^j} [j]. 
$$
We put the Hermitian inner product on $V$ so that $[i]$'s are
orthonormal. 
It is easy to diagonalize $A''$. Indeed, for $a \in \mathbb Z/n \mathbb Z$, let
$$
x_a := \sum_{i \in \mathbb Z/n \mathbb Z} \zeta^{-ia} [i] 
\in \bar {\mathbb Q}[\mathbb Z/n\mathbb Z].
$$
Then $V$ is also the orthogonal direct sum of the one dimensional
subspaces $V_a := \bar{\mathbb Q} \cdot x_a$. 
It is easily seen that $[j] \cdot x_a = \zeta^{ja} x_a$. 
Hence $x_a$'s are eigenvectors of ${\bf T}$ with eigenvalues
$$
\lambda_a = \frac{n - 1}{2} + \sum_{j = 1}^{n - 1} \frac{\zeta^{ja}}{1 - \zeta^j}.
$$

In fact $\lambda_a = a - 1$ for $a = 1, \ldots, n$. 
To see this, we rewrite $\lambda_a$ as
$$
\lambda_a = (n - 1) - \sum_{j = 1}^{n - 1} \frac{1 - \zeta^{ja}}{1 - \zeta^j} 
= (n - 1) - \sum_{j = 1}^{n - 1}\sum_{k = 0}^{a - 1} \zeta^{jk}.
$$
By changing the order of summation, for $k = 0$ we get $n - 1$, 
while for $k = 1, \ldots, a - 1$ we get $\sum_{j = 1}^{n - 1}\zeta^{jk} = -1$. 
Hence $\lambda_a = a - 1$ as expected.

%\marginpar{Used scalebox to shrink the size of the matrix.}
The diagonalization in terms of matrices reads as
$$
CA'' = %\textcolor{red}
{\scalebox{0.7}{
\(\begin{pmatrix} 
0 \\ &1 \\ &&\ddots \\ &&& n - 1\end{pmatrix}\) 
}}C,
$$
where the $a$-th row vector of $C = (z_{ij})_{1 \le i, j \le n}$,
$z_{ij} := \zeta^{-ij}$ corresponds to $x_a$.

Now we work on $\Lambda^{n - 1}V$ and $\Lambda^{n - 1}{\bf T} 
\in {\rm End}(\Lambda^{n - 1}V)$. For a square matrix $B$, 
$\Lambda^{n - 1}B = {\rm adj}(B)^t$ is the "non-transposed" cofactor matrix. 
It has the covariant property that $\Lambda^{n - 1}B_1 B_2 
= (\Lambda^{n - 1} B_1) (\Lambda^{n - 1} B_2)$. We find
$$
\Lambda^{n - 1}A'' = \Lambda^{n - 1}C^{-1} 
%\textcolor{red}
{\scalebox{0.7}{\(\begin{pmatrix} (n - 1)! 
\\ &0 \\ &&\ddots \\ &&& 0\end{pmatrix} \)}}
\Lambda^{n - 1}C.
$$
Hence
\begin{equation} \label{Ani}
(\Lambda^{n - 1}A'')_{ni} = (n - 1)! (\Lambda^{n - 1}C^{-1})_{n1}(\Lambda^{n - 1}C)_{1i}.
\end{equation}

To compute the right hand side, from $C.\bar C^t = nI_n$ 
and $C.(\Lambda^{n - 1}C)^t = (\det C) I_n$, we get 
$$
\Lambda^{n - 1}C = n^{-1}(\det C) \bar C.
$$ 
Also $C^{-1} = n^{-1} \bar C^t$. The same reasoning implies that
$$
\Lambda^{n - 1}C^{-1} = n^{-(n - 1)} \Lambda^{n - 1}\bar C^t 
= n^{-n} (\det\bar C^t) C^t = (\det C)^{-1} C^t.
$$
In particular, (\ref{Ani}) becomes
$$
(\Lambda^{n - 1}A'')_{ni} = \frac{(n - 1)!}{n} \zeta^i.
$$

By definition of $c_i$, the equation (\ref{d_n}) for $d_n$ 
specialized to $x_i = \zeta^i$ reads as 
(notice that $\prod_{j = 1}^{n - 1}(1 - \zeta^j) = n$)
$$
(-1)^{n + i}(\Lambda^{n - 1}A')_{ni} 
= \frac{(-1)^{n + i}d_n}{\prod_{k \ne i}(\zeta^k - \zeta^i)} 
= \frac{(-1)^{n + i} d_n \zeta^i (-1)^{n - 1}}{n}.
$$
Since $(-1)^{n + i}(\Lambda^{n - 1}A')_{ni} 
= (-1)^{n + i}(-1)^{n - 1} (\Lambda^{n - 1}A'')_{ni}$, 
the above two expressions lead to $d_n = (n - 1)!$.
\qed

\begin{remark*}
We found the algebraic proof first which gives the value $d_n = (n -1)!$. 
The shorter and more elementary analytic proof came much later which
was inspired by the factorial nature of $d_n$. Then we were informed
by Y.~Zarhin that Lemma \ref{dn} appeared in \cite[\S1]{Rees},
with a different proof.
\end{remark*}

\begin{corollaryss} \label{Xn-in-Yn}
For $P_1, \ldots, P_n \in E$ satisfying $P_i \ne P_j$ for $i \ne j$
and \(P_i\neq - P_j\) for all \(i, j\), 
the system of equations \textup{(\ref{Alg-eqn})} is equivalent to 
\textup{(\ref{ratio-eq})}, 
hence also equivalent to \textup{(\ref{zeta-eq})}.
\end{corollaryss}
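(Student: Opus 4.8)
The plan is to deduce Corollary \ref{Xn-in-Yn} directly from the key algebraic result Proposition \ref{eq-2-sys}, together with the addition-formula identity already recorded in the excerpt. Recall that Proposition \ref{eq-2-sys} asserts that for any fixed set of \emph{distinct} complex numbers $x_1,\ldots,x_n$, the two linear systems (\ref{(I)-1}) and (\ref{(II)-1}) in the unknowns $Y_1,\ldots,Y_n$ have the same solution space. The corollary is essentially the specialization of this abstract equivalence to the geometric setting where $(x_i,Y_i)=(\wp(p_i),\wp'(p_i))$ lie on the cubic, so the main work is to check that the hypotheses of Proposition \ref{eq-2-sys} are met and that the two systems occurring in the corollary really are the instances (\ref{(I)-1}), (\ref{(II)-1}) after this substitution.

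First I would set $x_i=\wp(p_i;\Lambda)$ and $y_i=\wp'(p_i;\Lambda)$ for $i=1,\ldots,n$, and observe that the standing hypotheses $P_i\neq P_j$ and $P_i\neq -P_j$ for $i\neq j$ guarantee $\wp(p_i)\neq\wp(p_j)$ whenever $i\neq j$; indeed $\wp(p_i)=\wp(p_j)$ forces $p_i\equiv\pm p_j\pmod{\Lambda}$, i.e.\ $P_i=P_j$ or $P_i=-P_j$, both of which are excluded. Thus the $x_i$ are mutually distinct and Proposition \ref{eq-2-sys} applies with this choice of nodes. Next I would recall the addition formula quoted just before the statement,
\[
\tfrac{1}{2}\,\frac{\wp'(z)-\wp'(u)}{\wp(z)-\wp(u)}
=\zeta(z+u)-\zeta(z)-\zeta(u),
\]
which shows (as already noted in the excerpt) that under the constraint $p_i\neq\pm p_j$ for $i\neq j$, the zeta-system (\ref{zeta-eq}) is equivalent to the ratio-system (\ref{ratio-eq}). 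Writing (\ref{ratio-eq}) in terms of $(x_i,y_i)$ yields exactly (\ref{(I)}), namely $\sum_{j\neq i}(y_i+y_j)/(x_i-x_j)=0$ for all $i$, which is the instance of (\ref{(I)-1}) with $Y_i=y_i$. Likewise (\ref{Alg-eqn}), written as $\sum_i x_i^{\,r} y_i=0$ for $r=0,\ldots,n-2$, is precisely the instance (\ref{(II)-1}) of the second system with the same $Y_i=y_i$.

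Having matched the two geometric systems with the two abstract linear systems, the equivalence is immediate: by Proposition \ref{eq-2-sys} the vector $(y_1,\ldots,y_n)$ satisfies (\ref{(I)-1}) if and only if it satisfies (\ref{(II)-1}), which is exactly the statement that (\ref{ratio-eq}) holds for $(x_i,y_i)$ if and only if (\ref{Alg-eqn}) holds. Combining this with the addition-formula equivalence between (\ref{zeta-eq}) and (\ref{ratio-eq}) gives that (\ref{Alg-eqn}), (\ref{ratio-eq}) and (\ref{zeta-eq}) are all equivalent under the stated hypotheses, as claimed. I do not anticipate a genuine obstacle here, since the substantive content is entirely contained in Proposition \ref{eq-2-sys} and Lemma \ref{dn}; the only point requiring minor care is the bookkeeping that the non-degeneracy hypothesis $P_i\neq\pm P_j$ is exactly what is needed both to invoke the distinctness of the $x_i$ (so that the rational expressions in (\ref{(I)-1}) are defined and Proposition \ref{eq-2-sys} applies) and to pass between the zeta-form (\ref{zeta-eq}) and the ratio-form (\ref{ratio-eq}) via the addition formula without dividing by zero.
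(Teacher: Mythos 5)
Your proposal is correct and follows essentially the same route as the paper: the paper states Corollary \ref{Xn-in-Yn} without a separate proof precisely because, as you argue, it is the specialization $(x_i,Y_i)=(\wp(p_i),\wp'(p_i))$ of Proposition \ref{eq-2-sys} (the hypothesis $P_i\neq\pm P_j$ for $i\neq j$ giving distinctness of the $x_i$), combined with the addition-formula equivalence between (\ref{zeta-eq}) and (\ref{ratio-eq}) recorded just before that proposition. Your explicit verification that $\wp(p_i)=\wp(p_j)$ forces $P_i=\pm P_j$ is exactly the bookkeeping the paper leaves implicit.
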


\begin{remarkss} \label{rem:deg-solutions}
We record two easy observations about the 
system of linear equations
\begin{equation}\label{sys-pow-eq}
  \sum_{i=1}^n s_i^l\cdot Y_i=0, \qquad l=0,\ldots, n-2
\end{equation}
with \(s_1,\ldots, s_n\) in \(\CC\).
\smallbreak
\noindent
(a) If \(s_1,\ldots,s_n\) are mutually distinct, and
\((y_1,\ldots, y_n)\) is a solution of (\ref{sys-pow-eq})
in which one \(y_i\) is \(0\), then 
all \(y_j\)'s are equal to zero.
\smallbreak
\noindent
(b) If \(s_1,\ldots, s_n\) are not mutually distinct,
then (\ref{sys-pow-eq}) only has trivial solutions
in the following sense:
We have a set
\(\{t_1,\ldots, t_m\}\) consisting of mutually distinct numbers
such that \(\{s_1,\ldots, s_n\}=\{t_1,\ldots,t_m\}\).
Suppose that \((y_1,\ldots, y_n)\) is a solution of (\ref{sys-pow-eq}),
let \(z_j:=\sum_{\,\textup{all}\ i\ \textup{s.t.}\ s_i=t_j}y_i\) for \(i=1,\ldots, m\).
Then the system of linear equations 
for \(y_1,\ldots, y_n\) becomes 
\[\sum_{j=1}^m s_j^l\cdot z_j=0\qquad \textup{for}\ l=0,\ldots, n-2,\]
and 
\(z_1=\cdots=z_m=0\) by the non-vanishing of the Vandermonde 
determinant.

\end{remarkss}

\section{Lam\'e for type II: Characterizations of $X_n$ and $Y_n$} 
\label{X_n}
\setcounter{equation}{0}

\subsection{An overview for this section}

\subsubsection{}
In \S \ref{II-even}, we have proved that for each positive integer
\(n\),
for every solution \(u\) of the mean field equation
\begin{equation}\label{eq-liouville-sec6}
\triangle u+e^u=8\pi n\cdot \delta_0\qquad \textup{on}\ \ \CC/\Lambda
\end{equation}
there exists a set $a = \{a_1, \ldots, a_n\}$
of \(n\) complex numbers which satisfies (\ref{G-eqn}), 
(\ref{Alg-eqn}) and (\ref{p-p}) such that
%the developing map $f$, 
%after normalization, could be written as 
\begin{equation} \label{fa}
f(z) = f_a(z) := \prod_{i = 1}^n \exp \int_0^z
\frac{\wp'(a_i)}{\wp(w) - \wp(a_i)}\,dw,
\end{equation}
%where $a = \{a_1, \cdots, a_n\}$ satisfies (\ref{G-eqn}), 
%(\ref{Alg-eqn}) and (\ref{p-p}). 
is a normalized type II developing map of \(u\).
Moreover every set $a = \{a_1, \ldots, a_n\}$ of complex numbers
satisfying conditions (\ref{partial-G-eqn}), (\ref{Alg-eqn}) and (\ref{p-p}) gives rise to a solution 
of the above mean field equation.
%\medbreak

\subsubsection{}
In this section we will leave the Green equation (\ref{partial-G-eqn}) alone
and consider those $a = \{a_1, \ldots, a_n\}$ satisfy only the
equations (\ref{Alg-eqn}) under the constraint (\ref{p-p}), 
that is, we consider $a$ in the set $X_n$ defined in (\ref{X-n}) 
in the introduction. We would like to characterize $a \in X_n$ 
in terms of certain Lam\'e equations.

\subsubsection{}
We will make use of the following addition formulas freely:
\begin{equation} \label{add-1}
\frac{\wp'(z)}{\wp(z) - \wp(u)} = \zeta(z - u) + \zeta(z + u) -
2\zeta(z),
\end{equation}
\begin{equation} \label{add-1b}
\frac{\wp'(u)}{\wp(z) - \wp(u)} = \zeta(z - u) - \zeta(z + u) +
2\zeta(u),
\end{equation}
\begin{equation} \label{add-2}
\frac{1}{2} \frac{\wp'(z) - \wp'(u)}{\wp(z) - \wp(u)} = \zeta(z +
u) - \zeta(z) - \zeta(u),
\end{equation}
\begin{equation} \label{add-3}
\frac{1}{4} \left(\frac{\wp'(z) - \wp'(u)}{\wp(z) -
\wp(u)}\right)^2 = \wp(z + u) + \wp(z) + \wp(u).
\end{equation}

\begin{definitionss}\label{def:f_a}
%Recall that \(\zeta(z;\Lambda) = \tfrac{d}{dz}\log(z;\Lambda)\). 
Let \(\Lambda\) be a cocompact lattice in \(\CC\).
Let \(n\geq 1\) be a positive integer.
Let \(\,[a]=\{[a_1],\ldots,[a_n]\}\,\) be an unordered list of \(n\)
elements in \(\,(\CC/\Lambda)\smallsetminus \{[0]\}\),
possibly with multiplicity.
%such that \(a_i\) for \(i=1,\ldots,n\),
Define a meromorphic function \(f_{[a]}(z)\) on \(\CC\) by
\begin{equation} \label{f_a}
f_{[a]}(z)=f_{[a]}(z;\Lambda):=\prod_{i = 1}^n \exp \int_0^z (\zeta(w - a_i) 
- \zeta(w + a_i) + 2\zeta(a_i))\,dw.
\end{equation}
where \(a_i\) is a representative in \(\CC\) of \([a_i]\) for
each \(\,i=1,\ldots,n\).

Note that \(\,f_{[a]}\,\) 
depends only on 
the element \(\,\{[a_1],\ldots, [a_n]\}\,\) of 
the symmetric product \(\,\textup{Sym}^n\big(\CC/\Lambda \smallsetminus
\{[0]\}\big)\,\) and not on the choice of
representatives \(a_i\in [a_i]\).
% represented by the element  
%\(\{a_1,\ldots, a_n\} \in \textup{Sym}^n\CC\),
%where we have used the notation \([a_i]\) for 
%the element \(\,a_i\ \textup{mod}\,\Lambda\,\) in \(\CC/\Lambda\).
Because \(\zeta(z;\Lambda) = \tfrac{d}{dz}\log \sigma(z;\Lambda)\),
we get from (\ref{add-1b}) an equivalent definition 
\begin{equation} \label{f-sigma}
%\begin{split}
%f(z) &= \prod_{i = 1}^n \exp \int_0^z (\zeta(w - a_i) 
%- \zeta(w + a_i) + 2\zeta(a_i))\,dw \\
f_{[a]}(z):= 
%\textcolor{red}
{(-1)^n}\cdot e^{2z\sum_{i = 1}^n \zeta(a_i)} \cdot
\prod_{i = 1}^n \frac{\sigma(z - a_i)}{\sigma(z + a_i)}\,.
%\end{split}
\end{equation}
Note also that \(\,f_{[a]}(0)=1\,\) 
and \(\,f_{[a]}(-z)\!\cdot\!f_{[a]}(z)=1\) for
all \(z\). 
\end{definitionss}

\begin{definitionss}\label{def:w_a}
Let \(a=\{a_1,\ldots,a_n\}\) be an unordered list of elements
of \(\CC\smallsetminus \Lambda\).
The Hermite-Halphen ansatz function \(w_a(z)\) attached
to the list \(a\) is the meromorphic function on \(\CC\) defined by
\begin{equation} \label{w_a}
w_a(z)=w_a(z;\Lambda)
: = e^{z \sum \zeta(a_i)} \prod_{i = 1}^n \frac{\sigma(z - a_i)}{\sigma(z)}\,.
\end{equation}
\end{definitionss}

\begin{remark*} 
(a) In classical literature the functions \(w_a(z)\) arise as explicit 
solutions of the Lam\'e equation 
\begin{equation} \label{Lame-n}
w'' = \big(n(n + 1) \wp(z) + B\big) w;
\end{equation}
see \cite[I--VII]{Hermite}, \cite[p.\,495--497]{Halphen} and also 
\cite[\S23.7]{Whittaker}.

(b) Clearly we have
\[
f_{[a]}(z)=\frac{w_a(z)}{w_{-a}(z)}\,,
\]
where \(-a\) is the list \(\{-a_1,\ldots, -a_n\}\) and
\([a]\) is the list \(\{[a_1],\ldots,[a_n]\}\).

(c) If \(b=\{b_1,\ldots, b_n\}\) is a list such that \(b_i-a_i\in \Lambda\)
for all \(i=1,\ldots, n\), then
\(\,\frac{w_b}{w_{a}}\in \CC^{\times}\), a non-zero constant.
\end{remark*}

\begin{lemmass}\label{lemma:f_a}
If a list \(\,[a]=\{[a_1],\ldots, [a_n]\}\,\) of
\(n\) elements of \((\CC/\Lambda)\smallsetminus\{[0]\}\) 
satisfies \textup{(\ref{partial-G-eqn})}, 
\textup{(\ref{Alg-eqn})} and the non-degeneracy condition
\textup{(\ref{p-p})}, then
%\(f_a\) is a normalized developing map for the
%mean field equation \textup{(\ref{eq-liouville-sec6})}, 
%so by \textup{(\ref{S(f)})} 
there exists a constant \(B=B_{[a]}\) such that
the Schwarzian derivative of $f_{[a]}$ satisfies
$$
S(f_{[a]}) = -2\big(n(n + 1) \wp(z;\Lambda) + B_{[a]}\big).
$$
%for some $B$. 
\end{lemmass}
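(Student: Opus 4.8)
The plan is to show that the Schwarzian derivative $S(f_{[a]})$ is an elliptic function with respect to $\Lambda$ whose only possible singularities are double poles at $[0]$ and at the points $[\pm a_i]$, and then to prove that conditions (\ref{Alg-eqn}) and (\ref{p-p}) force the apparent singularities at the $[\pm a_i]$ to disappear, leaving only a double pole at $[0]$ with the prescribed leading coefficient. Once that is established, a function on $E=\CC/\Lambda$ with at worst a double pole at a single point must be a $\CC$-linear combination of $\wp(z;\Lambda)$ and the constant function $1$, which gives the desired form $-2(n(n+1)\wp(z;\Lambda)+B_{[a]})$ for some constant $B_{[a]}$.

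First I would verify the $\Lambda$-periodicity of $S(f_{[a]})$. From the transformation law (\ref{sigma-law}) for $\sigma(z;\Lambda)$ applied to the product formula (\ref{f-sigma}), together with the constraint that the exponential prefactor $e^{2z\sum\zeta(a_i)}$ has the matching quasi-period, one checks that $f_{[a]}(z+\omega)=\chi(\omega)\,f_{[a]}(z)$ for each $\omega\in\Lambda$, where $\chi(\omega)\in\CC^\times$ is a constant (a character of $\Lambda$). Since the Schwarzian derivative is invariant under postcomposition by M\"obius transformations, and multiplication by a nonzero constant is such a transformation, $S(f_{[a]})(z+\omega)=S(f_{[a]})(z)$; hence $S(f_{[a]})$ descends to a meromorphic function on $E$. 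I should note here that this step uses only the group-law constraint encoded in (\ref{f_a}); it does not yet use (\ref{Alg-eqn}).

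Next I would carry out the local analysis at the candidate singular points. Near $z=0$, the factor $\prod_i \sigma(z-a_i)/\sigma(z+a_i)$ is holomorphic and nonzero (since $[a_i]\neq[0]$), while the exponential prefactor is entire and nonvanishing; thus $f_{[a]}$ is holomorphic and nonzero at $0$, and its logarithmic derivative $g=f_{[a]}'/f_{[a]}=\sum_i \wp'(a_i)/(\wp(z)-\wp(a_i))$ is the object controlling the local behavior. The key computation is to expand $S(f_{[a]})=g'+\tfrac12 g^2 - \cdots$ — more precisely to use $S(f)=(f''/f')'-\tfrac12(f''/f')^2$ with $f''/f'=g+g'/g$ — and to read off, via the relations (\ref{relation}) derived in the proof of Theorem~\ref{thm:g_even}, that the vanishing of $g,g',\ldots,g^{(2n-1)}$ at $0$ (equivalently the equations (\ref{Alg-eqn}), using Proposition~\ref{eq-2-sys} and the $\zeta$-form) makes $z^2 S(f_{[a]})$ tend to the constant $-n(n+1)$ as $z\to 0$. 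At each point $[a_i]$, where $f_{[a]}$ has a simple zero and $g$ a simple pole with residue $\wp'(a_i)/\!\,$ normalized to $\pm1$ (by (\ref{p-p}) the points $\pm a_i$ are distinct modulo $\Lambda$, so these are genuine simple poles), the standard fact that the Schwarzian of a function with a simple zero or simple pole is holomorphic there shows that $S(f_{[a]})$ has no pole at $[\pm a_i]$; this is precisely where the \'etale/non-critical property guaranteed by the developing-map setup is invoked.

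The main obstacle I anticipate is the bookkeeping at $z=0$: one must confirm that the odd-order conditions among (\ref{relation}) (equivalently (\ref{Alg-eqn})) are exactly the conditions needed to cancel the higher-order polar terms of $S(f_{[a]})$ beyond the leading $-n(n+1)/z^2$, so that no pole of order greater than two survives and no odd-power terms obstruct the identification with a multiple of $\wp$. Here I would lean on Lemma~\ref{lemma:symm_wp} to express the relevant symmetric combinations of $\wp^{(k)}(a_i)$ through the power sums that appear in (\ref{Alg-eqn}), and on the evenness of $g$ (established in Theorem~\ref{thm:g_even}) to kill the odd-order contributions automatically. Once the local expansion at $0$ is pinned down to $-2\big(n(n+1)\wp(z;\Lambda)+B_{[a]}\big)+o(1)$ and regularity at all other points is confirmed, Liouville's theorem for elliptic functions finishes the argument, and the constant $B_{[a]}$ is read off as the finite part of the expansion — matching $B_{[a]}=(2n-1)\sum_i\wp(a_i)$ in view of the Hermite--Halphen normalization recalled in \ref{subsec:ansatz}.
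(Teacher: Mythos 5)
Your proposal is correct in substance but follows a genuinely different route from the paper's. The paper's own proof is a two-line reduction: by Theorem \ref{thm:hecke-system}, the hypotheses (\ref{partial-G-eqn}), (\ref{Alg-eqn}) and (\ref{p-p}) make \(f_{[a]}\) a normalized type II developing map of an actual solution \(u\) of \(\triangle u + e^u = 8\pi n\,\delta_0\), and the Schwarzian formula is then read off from (\ref{S(f)}), which was derived from the PDE itself (\(S(f)=u_{zz}-\tfrac12 u_z^2\) is meromorphic on \(E\), holomorphic away from \(0\), with polar part dictated by \(u\sim 4n\log|z|\)). You instead argue purely function-theoretically: \(f_{[a]}\) transforms under \(\Lambda\) by a character, so \(S(f_{[a]})\) descends to \(E\); the equations (\ref{Alg-eqn}) force \(g=f_{[a]}'/f_{[a]}=\sum_i \wp'(a_i)/(\wp(z)-\wp(a_i))\) to vanish to order \(2n\) at \(0\), producing the double pole there; the simple zeros and poles at \([\pm a_i]\) are unramified points of the map to \(\PP^1(\CC)\), where the Schwarzian is regular; and Liouville's theorem for elliptic functions then yields \(S(f_{[a]})=-2(n(n+1)\wp+B_{[a]})\). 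This buys something real: your argument never uses the Green equation (\ref{partial-G-eqn}), so it in fact proves the stronger statement that the paper only obtains later as Corollary \ref{thm-S(f)} — and there by yet a third route, through Lam\'e ODE theory (theorems \ref{lame} and \ref{poly-eqn}). The paper's route buys economy, since Theorem \ref{thm:hecke-system} and (\ref{S(f)}) are already in place at this point of the text.

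Two small repairs are needed. First, \(z^2S(f_{[a]})\to -2n(n+1)\), not \(-n(n+1)\): with \(g=cz^{2n}+\cdots\) one has \(f''/f'=g'/g+g=2n/z+O(z)\), hence \(S=(f''/f')'-\tfrac12(f''/f')^2=-2n/z^2-2n^2/z^2+O(1)\); the slip is harmless since your target formula is the correct one. Second, you must exclude poles of \(S(f_{[a]})\) at critical points of \(f_{[a]}\) other than the lattice points, i.e.\ at possible zeros of \(g\) away from \([0]\); appealing to "the non-critical property guaranteed by the developing-map setup" is circular in a direct argument, because that property is exactly what the paper's route extracts from Theorem \ref{thm:hecke-system}. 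The fix is a degree count: by (\ref{p-p}), (\ref{Alg-eqn}) and Theorem \ref{poly-eqn}\,(2), the points \([\pm a_i]\) are \(2n\) distinct non-\(2\)-torsion points, so \(g\) is elliptic with exactly \(2n\) simple poles and hence exactly \(2n\) zeros on \(E\); since \(\textup{ord}_{[0]}\,g\geq 2n\), all zeros sit at \([0]\), so \(f_{[a]}\) has no critical point outside \(\Lambda\) and \(S(f_{[a]})\) is indeed holomorphic on \(E\smallsetminus\{[0]\}\).
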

\begin{proof}
By Theorem \ref{thm:hecke-system}, \(f_{[a]}\) 
is a normalized developing map for the
mean field equation \textup{(\ref{eq-liouville-sec6})}, 
and the assertion follows from \textup{(\ref{S(f)})}.
\end{proof}

\subsubsection{}
The constant \(B_{[a]}\) in Lemma \ref{lemma:f_a} can be evaluated by
a straightforward computation; the answer is
\begin{equation} \label{B-value}
B_{[a]} = (2n - 1) \sum_{i = 1}^n \wp(a_i;\Lambda).
\end{equation}
On the other hand there is a proof of the formula (\ref{B-value}) 
for \(B_{[a]}\) without resorting to messy computations,  
via Lam\'e's differential
equation (\ref{Lame-n})
because $f_{[a]}$ can be written as the ratio of two 
linearly independent solutions 
of (\ref{Lame-n}). The idea is this:
use the Hermite--Halphen ansatz functions \(w_a(z)\)
to find solutions to Lam\'e equations,
and the constant $B$ can be computed from 
the ansatz solutions $w_a$. 
Then $f_{[a]}a = w_a/w_{-a}$ has the expected 
Schwarzian derivative %$S(f_a)$ 
by ODE theory. 

We take this approach since %it is independent of the mean field equation. 
it requires less computation to prove the formula (\ref{B-value})
for \(B\), and
it leads to a characterization of the set
$Y_n$ defined in (\ref{Yn1}) 
as the set of all unordered lists  
\(\,a=\{[a_1],\ldots, [a_n]\}\) of \(n\) elements
in \((\CC/\Lambda)\smallsetminus\{[0]\}\)
such that $w_a(z;\Lambda) $ satisfies a Lam\'e equation
(\ref{Lame-n}) for some \(B\in\CC\),
see Theorem \ref{lame}. 
We then move back to characterize the set 
\(X_n\) defined in (\ref{X-n})
as the set of all \(a\)'s such that
%$a \in X_n$ by 
${\rm ord}_{z = 0} f'_{[a]}(z) = 2n$, which is the highest possible value
of \(\,\textup{ord}_{z=0}f_{[a]}(z)\); see
Theorem \ref{poly-eqn}. This leads to the important consequence that 
for $a \in Y_n$, $a \not\in X_n$ if and only if $a = -a$, 
and a characterization of $X_n$ via the Schwarzian
derivative. 
%$S(f)$. 
%(Hereafter we use $-a$ to denote $\{-a_1, \cdots, -a_n\}$ 
%if no confusion will arise.)

%\subsection{Characterization of $Y_n$}

The following result is known in the literature, see e.g.~\cite{Halphen}. 
We reproduce it here for the sake of completeness.

\begin{theorem} [Characterization of $Y_n$] \label{lame}
Let \(n\geq 1\) be a positive integer.
Let $a = \{a_1, \ldots, a_n\}$ be an unordered list 
of \(n\) elements in \(\CC\smallsetminus \Lambda\).
%$a_i \ne a_j$ for $i \ne j$. 
Let $w_a$ be defined as in \textup{(\ref{w_a})}. 
Let \([a]\) be the unordered list \(\{[a_1],\ldots,[a_n]\}\), where
\([a_i]:=a_i\,\textup{mod}\,\Lambda \in \CC/\Lambda\) for each \(i\).
\begin{itemize}
\item[\textup{(1)}] There exists a constant \(B\in \CC\)
such that the
meromorphic function $w_a$ on \(\CC\)
satisfies the Lam\'e equation \textup{(\ref{Lame-n})}
if and only if the following conditions hold.
\begin{itemize}
\item \([a_i]\neq [a_j]\) whenever \(i\neq j\), and

\item the $a_i$'s satisfy 
\begin{equation} \label{zeta-eqn}
\sum_{j \ne i} \big(\zeta (a_i - a_j;\Lambda) 
- \zeta(a_i;\Lambda) + \zeta (a_j;\Lambda)\big) = 0, 
\qquad i = 1, \ldots, n.
\end{equation}
\end{itemize}
In other words the necessary and sufficient
condition is that $a$ is a point of the variety 
\(Y_n\) in the notation of  
\textup{(\ref{X-n})}.

\item[\textup{(2)}] If the system of equations 
\textup{(\ref{zeta-eqn})} holds
for \(a\), then \(w_a\) satisfies the Lam\'e equation
\textup{(\ref{Lame-n})}
whose accessary parameter \(B\) of the
equation is 
$$
B = B_{[a]}= (2n - 1) \sum_{i = 1}^n \wp(a_i;\Lambda).
$$
\end{itemize}
\end{theorem}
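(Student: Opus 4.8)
The plan is to prove the characterization of $Y_n$ by directly substituting the Hermite--Halphen ansatz function $w_a(z)$ into the Lam\'e operator and analyzing when the result vanishes identically. First I would compute the logarithmic derivative
\[
\frac{w_a'(z)}{w_a(z)} = \sum_{i=1}^n \zeta(a_i) + \sum_{i=1}^n \big(\zeta(z-a_i) - \zeta(z)\big),
\]
from which $w_a''/w_a = (w_a'/w_a)' + (w_a'/w_a)^2$ can be written out using $\zeta' = -\wp$. The quantity $w_a''/w_a - n(n+1)\wp(z)$ is then a meromorphic function on $\CC$ which is manifestly $\Lambda$-periodic (since $w_a(z+\omega)/w_a(z)$ is a constant exponential factor, its second logarithmic derivative is periodic), so it descends to an elliptic function $F(z)$ on $E = \CC/\Lambda$. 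The Lam\'e equation holds for some $B$ precisely when $F(z)$ is a constant, and that constant is then $-B$.

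The key step is therefore to show that $F(z)$ is constant if and only if the system \eqref{zeta-eqn} holds together with the distinctness of the $[a_i]$. To do this I would examine the poles of $F$. The function $w_a$ has a zero of order one at each $[a_i]$ (assuming the $[a_i]$ are distinct) and a pole of order $n$ at $[0]$; the candidate potential $n(n+1)\wp(z)$ has a double pole of coefficient $n(n+1)$ at $[0]$. A local computation at $z=0$, using $w_a(z) = z^{-n}(1 + \cdots)$, shows that the principal part of $w_a''/w_a$ at $0$ matches $n(n+1)/z^2$ automatically, so $F$ has at worst a simple pole at $0$; evenness considerations (or the residue theorem, since an elliptic function has no single simple pole) then force $F$ to be holomorphic at $0$. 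The remaining potential poles of $F$ are at the points $[a_i]$, where $w_a$ vanishes simply: near $z=a_i$ one finds that $w_a''/w_a$ has a double pole with vanishing leading coefficient but a possible simple-pole term whose residue is, after an addition-formula computation, a constant multiple of $\sum_{j\ne i}\big(\zeta(a_i-a_j)-\zeta(a_i)+\zeta(a_j)\big)$. Thus $F$ is holomorphic at every $[a_i]$ exactly when \eqref{zeta-eqn} holds at index $i$, and an elliptic function holomorphic everywhere is constant; this establishes both directions of part (1).

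For part (2), once we know $F(z) \equiv -B$ is constant, I would evaluate the constant by extracting the next term in the Laurent expansion of $F$ at $z=0$. Writing $w_a(z) = z^{-n}\big(1 + c_1 z + c_2 z^2 + \cdots\big)$ and using the expansions $\zeta(z-a_i) = \zeta(-a_i) + \wp(a_i)z + \tfrac{1}{2}\wp'(-a_i)z^2 + \cdots$, one computes $c_1 = \sum_i \zeta(a_i)$ (which cancels against the $\sum\zeta(a_i)$ in $w_a'/w_a$) and the $z^0$ coefficient of $w_a''/w_a - n(n+1)\wp(z)$ then reads off as $(2n-1)\sum_{i=1}^n \wp(a_i)$. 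I expect the main obstacle to be bookkeeping in this Laurent-expansion step: one must carefully track the constant terms coming from the cross terms in $(w_a'/w_a)^2$ and from the subleading part of $n(n+1)\wp(z) = n(n+1)z^{-2} + O(z^2)$, and verify that the odd-order contributions cancel by the evenness already guaranteed once \eqref{zeta-eqn} holds. Since the excerpt already supplies the addition formulas \eqref{add-1}--\eqref{add-3} and Theorem~\ref{thm:hecke-system} relating $f_{[a]}$ to the Schwarzian derivative, the alternative noted in the text---deducing $S(f_{[a]}) = S(w_a/w_{-a})$ from ODE theory---gives an independent check on the value of $B$, and I would use it to confirm the constant $(2n-1)\sum_i\wp(a_i)$ rather than relying solely on the direct expansion.
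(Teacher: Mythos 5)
Your overall strategy coincides with the paper's proof: compute the logarithmic derivative $w_a'/w_a = \sum_i\big(\zeta(a_i)+\zeta(z-a_i)-\zeta(z)\big)$, observe that $w_a''/w_a = (w_a'/w_a)'+(w_a'/w_a)^2$ is elliptic, detect the system \eqref{zeta-eqn} as the vanishing of the residues of this function at the points $[a_i]$, and obtain $B=(2n-1)\sum_i\wp(a_i)$ from the Laurent expansion at $z=0$. The residue you predict at $a_i$ is exactly the one the paper computes (namely $2\sum_{j\neq i}\big(\zeta(a_i-a_j)-\zeta(a_i)+\zeta(a_j)\big)$, each unordered pair contributing twice), and your residue-theorem argument at $z=0$ is a legitimate variant of the paper's treatment, which instead checks directly that the constant term of $w_a'/w_a$ at $z=0$ vanishes. (Your alternative appeal to ``evenness'' is not available, since $w_a''/w_a$ is not even in general; but the residue-theorem version suffices.)

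There is, however, one genuine gap. Your pole analysis is carried out under the standing assumption that the $[a_i]$ are distinct (``$w_a$ has a zero of order one at each $[a_i]$''), so what you actually prove is the equivalence ``Lam\'e $\Leftrightarrow$ \eqref{zeta-eqn}'' \emph{conditional} on distinctness; the sentence ``this establishes both directions of part (1)'' overstates this, because part (1) asserts that distinctness is itself \emph{necessary}, and your sketch never rules out repeated points. The repair stays inside your framework: if $[a_{i_0}]$ occurs with multiplicity $r\geq 2$, then near that point $w_a$ vanishes to order $r$ and $w_a''/w_a$ has a double pole with leading coefficient $r(r-1)\neq 0$, so $F$ cannot be constant. (The paper argues instead that $w_a(a_{i_0})=w_a'(a_{i_0})=0$ forces $w_a\equiv 0$ by uniqueness for second-order linear ODEs, since $\wp$ is holomorphic at $a_{i_0}\notin\Lambda$.) Separately, two sign slips should be fixed in the write-up: the constant value of $F$ is $+B$, not $-B$ (the Lam\'e equation reads $w''=(n(n+1)\wp+B)w$), and the expansion at $z=0$ is $\zeta(z-a_i)=-\zeta(a_i)-\wp(a_i)z+\tfrac12\wp'(a_i)z^2+\cdots$; your linear term has the wrong sign, and carried through literally it would yield the wrong value of $B$, which your Schwarzian cross-check would then flag.
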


\begin{proof}
%Let $w = w_a$. 
If there are two indices \(i_1\neq i_2\) such that
\([a_{i_1}]=[a_{i_2}]\), then \(w_{a}(a_{i_1})=
w_a'(a_{i_1})=0\).
If \(w_a(z)\) is a solution of the second order linear ODE
\textup{(\ref{Lame-n})}, then all 
higher derivatives of \(w_a(z)\) vanish at
\(z=a_{i_1}\), so \(w_a(z)\) is identically zero, a 
contradiction. We have shown that the \([a_i]\)'s must be
mutually distinct if \(w_a(z)\) is a solution of (\ref{Lame-n}).
\smallbreak

The logarithmic derivative
\begin{equation*}
\frac{w_a'(z;\Lambda)}{w_a(z;\Lambda)} 
= \sum_i \big(\zeta(a_i;\Lambda) 
+ \zeta(z - a_i;\Lambda) - \zeta(z\Lambda) \big)
\end{equation*}
of \(w_a\) is an elliptic function on \(\CC/\Lambda\). 
Applying \(\,\frac{d}{dz}\) again, we get
\begin{equation} \label{w''/w}
\begin{split}
\frac{w_a''}{w_a} &= \Big(\frac{w_a'}{w_a}\Big)' +
\Big(\frac{w_a'}{w_a}\Big)^2 \\
&= \sum \big(\wp(z) - \wp(z - a_i)\big) + \sum \big(\zeta(a_i) + \zeta(z - a_i)
- \zeta(z)\big)^2 \\
&\quad + \sum_{i \ne j} \big(\zeta(a_i) + \zeta(z - a_i) -
\zeta(z)\big)\big(\zeta(a_j) + \zeta(z - a_j) - \zeta(z)\big)
\\
&= 2n \wp(z) + \sum_i \wp(a_i)\\
&\quad + \sum_{i \ne j} \big(\zeta(a_i) + \zeta(z - a_i) -
\zeta(z)\big)\big(\zeta(a_j) + \zeta(z - a_j) - \zeta(z)\big)
\end{split}
\end{equation}
where we have used the consequence
\begin{equation*}
\big(\zeta(a_i) + \zeta(z - a_i) - \zeta(z)\big)^2 = \wp(z) + \wp(a_i) +
\wp(z - a_i).
\end{equation*}
of (\ref{add-2}) and (\ref{add-3}) to add up 
the first two sums after the second equality sign 
in (\ref{w''/w}) to get the last expression of 
\(\,\frac{w_a''}{w_a}\) in (\ref{w''/w}).
%\begin{equation} \label{2sums}
%2n \wp(z) + \sum_i \wp(a_i).
%\end{equation}

The sum in the last line of (\ref{w''/w}) is an elliptic function on
\(\CC/\Lambda\)
with a double pole 
at $z = 0$ with Laurent expansion \(\,\frac{n^2}{z^2}+O(1)\);
denote this function by \(F_{a}(z)\).
Therefore \(\,w_a\,\) satisfies a Lam\'e equation
\textup{(\ref{Lame-n})} for some \(B\in\CC\) if and only if
%the sum in the last line of (\ref{w''/w})  
\(F_a(z)\) has no pole outside
of \([0]\in \CC/\Lambda\).

Suppose \([a_{i_0}]\) appears in the list \(\,[a]=\{[a_1],\ldots,[a_n]\}\,\)
\(r\) times with \(r\geq 2\)
for some \(i_0\in\{1,\ldots,n\}\).  Then \(F_a(z)\) has a double pole
at \(z=a_{i_0}\), where it has a Laurent expansion
\[\,F_a(z)={r(r-1)}(z-a_{i_0})^{-2} +
O\big((z-a_{i_0})^{-1}\big).\]
We have shown that if \(F_a(z)\) is holomorphic outside \(\Lambda\),
then \([a_i]\neq [a_j]\) whenever \(i\neq j\).

%Since $a$ satisfies the non-degenerate conditions, 
%it is non-singular at $z = a_i$ (which is at worst a simple pole) 
Under the assumption that \([a_1],\ldots, [a_n]\) are mutually
distinct, 
the function \(F_a(z)\) 
is holomorphic on \((\CC/\Lambda)\smallsetminus \{[z_1],\ldots,[a_n]\}\)
and has at most simple poles at \([z_1],\ldots,[z_n]\). 
Therefore \(F_a(z)\) is holomorphic outside \(\Lambda\) 
if and only if its residue at $z = a_i$ is zero for \(i=1,\ldots, n\),
which means that
\begin{equation*}
\sum_{j \ne i} \big(\zeta(a_j;\Lambda) 
+ \zeta(a_i - a_j;\Lambda) - \zeta(a_i;\Lambda)\big) = 0,
\quad \forall\, 1 \le i \le n.
\end{equation*}
This proves the statement (1) of Theorem \ref{lame}. 
\medbreak
%To compute the value of $B$,

We know that there a constants \(B_1\in\CC\) such that
\begin{equation} \label{mixed-term}
%\begin{split}
%&\sum_{i \ne j} \big(\zeta(a_i) + \zeta(z - a_i) - \zeta(z)\big)\big(\zeta(a_j)
%+ \zeta(z - a_j) - \zeta(z)\big) \\
%&= A_1\wp(z) + B_1
F_a(z)=n(n-1)\wp(z;\Lambda) + B_1,
%\end{split}
\end{equation}
because 
\(F_a(z)\) is holomorphic on \(\CC/\Lambda \smallsetminus\{[0]\}\) and
its Laurent expansion at \(z=0\) is 
\(\,n(n-1)\cdot z^{-2}+ O(1)\).
%
%It is then clear that $A_1 = n(n -1)$. 
To determine $B_1$, we need to compute its Laurent expansion
at \(z=0\) modulo \(O(z)\).
From
%\[\wp(z) = z^{-2} + O(z^2),\quad
%\zeta(z) = z^{-1} + O(z^3)
%\]
%and %by Taylor expansion at $z = 0$, 
\[\zeta(z - a_i;\Lambda) 
= -\zeta(a_i;\Lambda) - \wp(a_i;\Lambda)z + O(z^2),
\]
%Hence %(\ref{mixed-term}) 
we get
%near $z = 0$ is given by
\begin{equation*}
\begin{split}
F_{a}(z)
&=\sum_{i \ne j} \Big(-\frac{1}{z} - \wp(a_i;\Lambda)z + O(z^2)\Big)
\Big(-\frac{1}{z} - \wp(a_j;\Lambda)z + O(z^2)\Big)\\
&= n(n - 1)\frac{1}{z^2} + 2(n - 1)\sum_i \wp(a_i;\Lambda) + O(z).
\end{split}
\end{equation*}
In particular $B_1 = 2(n - 1)$. %Together with (\ref{2sums}), 
From (\ref{w''/w})
we get
\begin{equation*}
\frac{w_a''}{w_a} = n(n + 1)\wp(z;\Lambda) 
+ (2n - 1)\sum_{i = 1}^n \wp(a_i;\Lambda).
\end{equation*}
We have proved the statement (2).
%The proof is complete.
\end{proof}

\begin{remarkss}
(a) Clearly that the necessary and sufficient condition in
Theorem \ref{lame}\,(1), which defines the variety \(Y_n\),
depends only on the list \([a]=\{[a_1],\ldots,[a_n]\}\) of
elements in \((\CC/\Lambda)\smallsetminus\{[0]\}\) determined by \(a\).

(b) It is also clear that a list \(a=\{a_1,\ldots, a_n\}\) 
satisfies the condition in \ref{lame}\,(1) if and only
if the list \(-a=\{-a_1,\ldots, -a_n\}\) does.

\end{remarkss}

\begin{proposition}\label{prop:spectral_Yn}
%An easy calculation shows that 
Let \(a=\{a_1,\ldots,a_n\}\) be an unordered list
of elements in
\(\CC\smallsetminus \Lambda\), \(n\geq 1\).
\begin{itemize}
\item[\textup{(1)}] The function \(w_a(z)\) is a
common
eigenvector for the translation action by 
elements of \(\Lambda\):
\begin{equation*}
\frac{w_a(z+\omega)}{w_a(z)}
=e^{\omega\cdot \sum_{i=1}^n\zeta(a_i;\Lambda) - 
\eta(\omega;\Lambda)\cdot \sum_{i=1}^n a_i}\qquad
\forall \omega\in\Lambda.
\end{equation*}
%Note that by the Legendre relation, 
This ``eigenvalue package'' attached to \(w_a\) is the homomorphism
\[
\chi_{{a}}:\Lambda\to \CC^{\times},
\quad 
\omega\mapsto
e^{\omega\cdot \sum_{i=1}^n\zeta(a_i;\Lambda) - 
\eta(\omega;\Lambda)\cdot \sum_{i=1}^n a_i}\quad
\forall\omega\in \Lambda,\]
which depends only on the list \([a]=\{[a_1],\ldots,[a_n]\}\). 
\smallbreak 

%of elements of
%in \((\CC/\Lambda)\smallsetminus\{[0]\}\) determined by \(a\).
\item[\textup{(2)}]
If \(w_a\) satisfies a Lam\'e equation
\textup{(\ref{Lame-n})}, then so does
\(w_{-a}\)

\item[(3)] For any unordered list
\(b=\{b_1,\ldots,b_n\}\) 
of elements in
\(\CC\smallsetminus \Lambda\), the functions 
\(w_a\) and \(w_{b}\) are linearly dependent
if and only if 
either \([b]=[a]\) or
\([b]=[-a]\), where \([-a]\) is the unordered list
\(\{[-a_1],\ldots, [-a_n]\}\) of elements
in \(\CC/\Lambda\).

\item[(4)] The %eigenvalue packages for 
homomorphisms \(\chi_{{a}}\) and \(\chi_{-a}\)
%\(w_a\) and \(w_{-a}\) 
are equal if and only if
%which has the following eigenvalue pacakage
%\[
%\omega\mapsto
%e^{-\omega\cdot \sum_{i=1}^n\zeta(a_i;\Lambda) 
%+\eta(\omega;\Lambda)\cdot \sum_{i=1}^n a_i}\qquad
%\omega\in \Lambda\,
%\]
%with respect to \(\Lambda\). 
%
%By the Legendre relation, the eigenvalue packages
%for \(w_a\) and \(w_{-a}\) 
%are different unless 
there exists an element
\(\omega\in \Lambda\) such that
\begin{equation}\label{cond-central-mono}
\sum_{i=1}^n\zeta(a_i;\Lambda)=\frac{\eta(\omega;\Lambda)}{2}
\quad\textup{and}\quad
\sum_{i=1}^n a_i=\frac{\omega}{2},
\end{equation}
in which case \(\,\textup{Im}(\chi_a)\subseteq
\{\pm 1\}\).
%\(\,\textup{Im}(\chi_a)\subset
%\{\pm \textup{I}_2\}\subset \textup{SL}_2(\CC)\).

\item[(5)] Suppose that \(w_a\) and \(w_{-a}\) are two solutions
of a Lam\'e equation \textup{(\ref{Lame-n})}, and
\([a]\neq [-a]\).  Then 
\(\,\chi_a\neq \chi_{-a}\).  Moreover
\(\CC\cdot w_a\) and  \(\CC\cdot w_{-a}\) are characterized
by the monodromy representation of \textup{(\ref{Lame-n})}
as the two one-dimensional subspaces of solutions which are
stable under the monodromy.
%\(\rho:\Lambda\to \textup{GL}_2(\CC)\) be 
%the monodromy representation 
%of the Lam\'e equation 
%\textup{(\ref{Lame-n})} defined by the basis \(w_a, w_{-a}\)
%of solutions.  
%the image of \(\rho\) is a nontrivial subgroup
%contained in the diagonal torus of \(\textup{SL}_2(\CC)\),
%and \(\CC\cdot w_a, \CC\cdot w_{-a}\) are the 
%one-dimensional subspaces of simultaneous
%eigenvectors of \(\textup{Im}(\rho)\).
%either there exists an element \(\omega\in\Lambda\)
%satisfying (\ref{cond-central-mono}), in which case 
%the monodromy group of this
%Lam\'e equation \textup{(\ref{Lame-n})} is contained in
%
%
%\item or

\end{itemize}

\end{proposition}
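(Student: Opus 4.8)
\textbf{Proof plan for \textup{(5)}.} The plan is to read everything off from the fact that, in the basis $\{w_a,w_{-a}\}$, the monodromy is diagonal, so that the only genuinely nonformal point is the strict inequality $\chi_a\neq\chi_{-a}$. First I would record that $w_a$ and $w_{-a}$ are linearly independent: since $[a]\neq[-a]$ the two functions vanish along the distinct sets $\bigcup_i(a_i+\Lambda)$ and $\bigcup_i(-a_i+\Lambda)$, so they have different divisors on $\CC$, and part \textup{(3)} (or this divisor comparison directly) shows they are independent. As both solve the same equation $L_{n,B}\,w=0$ with $B=(2n-1)\sum_i\wp(a_i)$, they form a basis of its two-dimensional solution space. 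Because $w_a$ is meromorphic on $\CC$ with no logarithmic term and integer local exponents $n+1,-n$ at $0$, the local monodromy at the puncture is trivial and the monodromy representation factors through $\Lambda=\pi_1(E)$; by part \textup{(1)} it is the diagonal homomorphism $M\colon\Lambda\to\mathrm{GL}_2(\CC)$, $M(\omega)=\mathrm{diag}(\chi_a(\omega),\chi_{-a}(\omega))$.

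The crux is to prove $\chi_a\neq\chi_{-a}$. From part \textup{(2)} one has $w_{-a}(z)=w_a(-z)$, whence $\chi_{-a}=\chi_a^{-1}$; so $\chi_a=\chi_{-a}$ would force $\chi_a^2\equiv 1$, i.e.\ $\chi_a(\Lambda)\subseteq\{\pm1\}$ and $M(\omega)=\pm\mathrm{I}_2$ for every $\omega$ (the situation of part \textup{(4)}). In that case the ratio $f_{[a]}=w_a/w_{-a}$ is invariant under $\Lambda$ and descends to an elliptic function on $E$, of degree $d\le n$, since its poles lie among the $n$ points $-a_i\bmod\Lambda$. I would then contradict this by a Wronskian computation: writing $W=w_a'w_{-a}-w_aw_{-a}'$ for the Wronskian, which is a nonzero constant because $L_{n,B}$ has no first-order term and $w_a,w_{-a}$ are independent, we get $f_{[a]}'=W/w_{-a}^2$. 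Since $w_{-a}$ has a pole of order exactly $n$ at $0$, the function $f_{[a]}'$ vanishes there to order exactly $2n$; together with $f_{[a]}(0)=1$ this means $f_{[a]}-1$ vanishes to order $2n+1$ at $z=0$. Thus the elliptic function $f_{[a]}$ assumes the value $1$ with multiplicity $2n+1$ at a single point, which is impossible for a function of degree $d\le n<2n+1$. Hence $\chi_a^2\not\equiv 1$ and $\chi_a\neq\chi_{-a}$.

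With $\chi_a\neq\chi_{-a}$ in hand, the remaining assertion is immediate. Choosing $\omega_\ast\in\Lambda$ with $\chi_a(\omega_\ast)\neq\chi_{-a}(\omega_\ast)$, the diagonal matrix $M(\omega_\ast)$ has two distinct eigenvalues, so its only eigenlines are $\CC\,w_a$ and $\CC\,w_{-a}$. Any monodromy-stable line is in particular $M(\omega_\ast)$-stable, hence equals one of these two; conversely both are stable under the entire (diagonal) representation $M$. Therefore $\CC\,w_a$ and $\CC\,w_{-a}$ are exactly the two one-dimensional monodromy-stable subspaces, as claimed.

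I expect the second paragraph to be the main obstacle. The one point needing care there is that cancellations between zeros and poles of $f_{[a]}$ — which occur precisely when some $a_i$ is a $2$-torsion point or $a_i+a_j\in\Lambda$ for $i\neq j$ — only \emph{lower} the degree $d$ of $f_{[a]}$ and hence strengthen the contradiction $d<2n+1$; so the degree argument is robust under the hypothesis $[a]\neq[-a]$ alone, without assuming the stronger disjointness \textup{(\ref{p-p})}. An alternative route to $\chi_a\neq\chi_{-a}$ would run through part \textup{(4)} together with the classical identification, of the fibers of $\pi\colon Y_n\to\CC$ on which $w_a$ is a Lam\'e function, with the ramification of $\pi$; but the self-contained Wronskian/degree argument above avoids any circularity with the hyperelliptic curve theorems proved later.
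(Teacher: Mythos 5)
Your proof of part (5) is correct, but at the crux --- showing $\chi_a\neq\chi_{-a}$ --- it takes a genuinely different route from the paper. The paper argues by contradiction through group theory plus an external input: if $\chi_a=\chi_{-a}$, then by (3) and (4) the monodromy of the Lam\'e equation on the torus is scalar of order at most $2$, so the monodromy group of the algebraic form of the equation on $\PP^1(\CC)$ is finite abelian of order dividing $4$, hence completely reducible; this contradicts \cite[Thm.\,4.4.1]{Waall} or \cite[Thm.\,3.1]{BW}, which say the algebraic Lam\'e monodromy with integer index is never completely reducible. You instead give a self-contained argument: since $\chi_{-a}=\chi_a^{-1}$, equality of the characters would make $f=w_a/w_{-a}$ descend to an elliptic function on $E$ of degree $d\le n$, while the nonvanishing constant Wronskian $W$ gives $f'=W/w_{-a}^{2}$, which vanishes to order exactly $2n$ at $z=0$ because $w_{-a}$ has a pole of order exactly $n$ there; hence $f-f(0)$ vanishes to order $2n+1$ at a single point of $E$, impossible for a nonconstant elliptic function of degree $d\le n$. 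This is sound: your observation that zero/pole cancellations only lower $d$ is exactly the right check, nonconstancy follows from the linear independence supplied by the divisor comparison, and the argument needs only that $f(0)$ is a finite nonzero constant (whether one works with $w_a/w_{-a}$ or with the normalized $f_{[a]}$ of (\ref{f-sigma})). Moreover it stays within tools the paper develops anyway --- the Wronskian of (\ref{WronskianC}) and the order computation of Theorem \ref{poly-eqn} --- and the second-paragraph characterization of the stable lines via a single $M(\omega_\ast)$ with distinct eigenvalues matches the paper's (omitted) argument. What each route buys: the paper's is shorter given the citation, which it reuses elsewhere (e.g.\ in Proposition \ref{cor:Ba_and_a}); yours is elementary, avoids the Beukers--van der Waall theorem entirely, and, as you note, removes any appearance of circularity with later results. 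Two minor points: the identity $w_{-a}(z)=w_a(-z)$ is the computation underlying part (2) rather than its literal statement (it also follows at once from the formula in (1)); and you address only part (5), which is reasonable since the paper disposes of (1)--(4) in a line each.
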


\begin{proof}
The statements (1) is immediate from the transformation formula
for the Weierstrass \(\sigma\) function.
The statements (2) and (3) are obvious and easy respectively. 
The statement (4) is a consequence of the Legendre relation
for the quasi-periods.

Suppose that \([a]\neq [-a]\)
 and \(\chi_a=\chi_{-a}\).
By (3) the monodromy representation 
of the Lam\'e equation (\ref{Lame-n})
is isomorphic to 
the direct sum \(\chi_a\oplus\chi_{b}\), and
the character \(\chi_a\) has order at most \(2\)
by (4). Consider the algebraic form 
\begin{equation}\label{eq-lame-alg-n}
p(x) \frac{d^2y}{dx^2}+\frac{1}{2}p'(x) \frac{dy}{dx}
-\big(n(n+1)x+B\big)y=0
\end{equation}
of the Lam\'e equation (\ref{Lame-n}).
The monodromy group \(M\) of (\ref{eq-lame-alg-n})
contains the monodromy group (\ref{Lame-n})
as a normal subgroup of index at most \(2\),
therefore \(M\) is a finite abelian group of
order dividing \(4\). In particular the
monodromy representation of the algebraic Lam\'e equation
(\ref{eq-lame-alg-n}) is completely reducible.
However one knows from \cite[Thm.\,4.4.1]{Waall} or  
\cite[Thm.\,3.1]{BW} that 
the monodromy representation of (\ref{eq-lame-alg-n})
is not completely reducible, a contradiction.
We have proved the first part of (5).
The second part of (5) follows from the
first part of (5).
\end{proof}

\begin{proposition}\label{cor:Ba_and_a}
%Let \(n\geq 1\) be a positive integer.
Suppose that \([a]=\{[a_1],\ldots, [a_n]\}\)
and \([b]=\{[b_1],\ldots, [b_n]\}\) are two
points of \(\,Y_n\), \(n\geq 1\).
If 
\(\sum_{i=1}^n \wp(a_i;\Lambda)
=\sum_{i=1}^n \wp(b_i;\Lambda),
\)
the either \([a]=[b]\) or 
\([a]=[-b]\).
%Let \(a=\{a_1,\ldots,a_n\}\) be an unordered list
%of elements in
%\(\CC\smallsetminus \Lambda\), \(n\geq 1\).
%Suppose that \(w_a(z)\) is a solution of the 
%Lam\'e differential equation
%\[
%\frac{d^2w}{d^2z}- 
%\big(n(n+1) \wp(z;\Lambda)+B_{[a]}\big)w=0
%\]

\end{proposition}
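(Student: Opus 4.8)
The plan is to reduce the hypothesis to a statement about a single Lamé equation and then read off the conclusion from the eigenvector dictionary already assembled in Theorem \ref{lame} and Proposition \ref{prop:spectral_Yn}. First I would set $B := (2n-1)\sum_{i=1}^n \wp(a_i;\Lambda)$; by the hypothesis $\sum_i\wp(a_i)=\sum_i\wp(b_i)$ and the value of the accessary parameter \eqref{B-value} (Theorem \ref{lame}(2)), the four Hermite--Halphen functions $w_a, w_{-a}, w_b, w_{-b}$ are all solutions of the \emph{same} Lamé equation \eqref{Lame-n} with this common $B$ (Theorem \ref{lame}(1) applies since $[a],[b]\in Y_n$, and Proposition \ref{prop:spectral_Yn}(2) handles the sign-flipped lists). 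Consequently all four lie in the two-dimensional solution space $V$ of \eqref{Lame-n}, and by Proposition \ref{prop:spectral_Yn}(1) each spans a line in $V$ that is stable under the monodromy representation $\rho\colon\Lambda\to\mathrm{GL}(V)$, the associated characters satisfying $\chi_{-a}=\chi_a^{-1}$ and $\chi_{-b}=\chi_b^{-1}$.

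Next I would split according to whether $[a]=[-a]$. In the case $[a]\neq[-a]$, Proposition \ref{prop:spectral_Yn}(5) gives $\chi_a\neq\chi_a^{-1}$, so $\rho$ is diagonalizable with \emph{exactly} the two stable lines $\CC\cdot w_a$ and $\CC\cdot w_{-a}$. Since $w_b$ spans a monodromy-stable line, it must coincide with one of these two, whereupon Proposition \ref{prop:spectral_Yn}(3) forces $[b]\in\{[a],[-a]\}$, that is, $[a]=[b]$ or $[a]=[-b]$. This case is immediate once the dictionary is in place.

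The delicate case is $[a]=[-a]$, where $w_a$ and $w_{-a}$ are proportional and the conclusion collapses to $[b]=[a]$. Here I would use reduction of order: since $\chi_a^2=1$, the function $w_a^2$ is $\Lambda$-periodic, and in the basis $(w_a,\tilde w)$ with $\tilde w = w_a\int dz/w_a^2$ the monodromy is unipotent-by-scalar, $\rho(\omega)=\chi_a(\omega)\left(\begin{smallmatrix}1&P_\omega\\0&1\end{smallmatrix}\right)$, where $P_\omega$ is the $\omega$-period of $dz/w_a^2$. Thus $\rho$ is either a single nontrivial Jordan block, with the \emph{unique} stable line $\CC\cdot w_a$, or it is scalar (all $P_\omega=0$); in particular two distinct characters cannot occur. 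In the Jordan case $w_b$ must again lie on $\CC\cdot w_a$, giving $[b]=[a]$. The scalar case must be excluded, and this is the main obstacle. I would rule it out exactly as in the proof of Proposition \ref{prop:spectral_Yn}(5): if $\rho$ were scalar with $\chi_a^2=1$, then $\mathrm{Im}(\rho)\subseteq\{\pm\mathrm{Id}\}$, so the monodromy group of the algebraic Lamé equation \eqref{eq-lame-alg-n} would contain it as a normal subgroup of index at most $2$, would therefore be finite, and would thus be completely reducible by Maschke's theorem, contradicting the non-complete-reducibility of the reducible algebraic Lamé equation established in \cite[Thm.\,3.1]{BW} and \cite[Thm.\,4.4.1]{Waall}. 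This contradiction leaves only the Jordan case and completes the argument.
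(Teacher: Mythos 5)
Your proof is correct, but it is organized differently from the paper's. The paper's own proof is a single reductio: assuming \([b]\neq[a]\) and \([b]\neq[-a]\), it notes that \(w_a\) and \(w_b\) are linearly independent solutions of the same Lam\'e equation, invokes \cite[Thm.\,4.4.1]{Waall}/\cite[Thm.\,3.1]{BW} once to rule out scalar monodromy (so that \(\CC\cdot w_a\) and \(\CC\cdot w_b\) are \emph{the} two eigenlines of the abelian monodromy), deduces from the eigenline dictionary that \(\CC\cdot w_{-a}=\CC\cdot w_a\) and \(\CC\cdot w_{-b}=\CC\cdot w_b\), i.e.\ \([a]=[-a]\) and \([b]=[-b]\), and then invokes the same theorem a second time: both characters would be \(\pm1\)-valued, making the toral monodromy group of order dividing \(4\) and the algebraic one of order dividing \(8\), hence finite and completely reducible, a contradiction. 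You instead split on whether \([a]=[-a]\): the generic case is an immediate black-box application of Proposition \ref{prop:spectral_Yn}\,(5), while the degenerate case \([a]=[-a]\) is handled by reduction of order, exhibiting the monodromy as \(\chi_a(\omega)\left(\begin{smallmatrix}1&P_\omega\\0&1\end{smallmatrix}\right)\) and excluding the scalar alternative by the same Beukers--van der Waall input. Both arguments rest on exactly the same two ingredients (the Hermite--Halphen eigenline dictionary and non-complete reducibility of reducible algebraic Lam\'e monodromy), so the difference is one of decomposition rather than of method. What your route buys is an explicit proof of the Jordan-block structure in the Lam\'e-function case --- the non-diagonalizability that the paper only records later, without separate proof, in Remark \ref{rem:dicho} --- and modularity, since Proposition \ref{prop:spectral_Yn}\,(5) is reused verbatim; the small price is the extra reduction-of-order apparatus, including the (standard, but worth stating) fact that \(dz/w_a^2\) has vanishing residues at the zeros of \(w_a\), so that the periods \(P_\omega\) are well defined. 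The paper's route avoids any case analysis and any period computation by treating \(a\) and \(b\) symmetrically, at the cost of invoking the Beukers--van der Waall theorem twice inside one contradiction.
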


\begin{proof}
Pick representatives \(a_i\in [a_i]\) and \(b_i\in [b_i]\)
for each \(i=1,\ldots,n\).
Suppose that \([b]\neq [a]\) and \([b]\neq [-a]\).
The functions \(w_a(z)\) and \(w_b(z)\) are linearly independent
by Proposition \ref{prop:spectral_Yn}\,(3) because
\([b]\neq [a]\), and they satisfy
the same Lam\'e differential equation because
\(B_{[a]}=B_{[b]}\). 
%The proof of proposition \ref{prop:spectral_Yn}\,(3) 
By either \cite[Thm.\,4.4.1]{Waall} or  
\cite[Thm.\,3.1]{BW}, 
that image of the monodromy representation of the Lam\'e equation
\(\frac{d^2w}{d^2z}- 
\big(n(n+1) \wp(z;\Lambda)+B_{[a]}\big)w=0
\) is not contained in \(\CC^{\times}\textup{I}_2\), for 
otherwise the monodromy group of the algebraic form of
the above Lam\'e equation on \(\PP^1(\CC)\) is 
contained in the product of \(\CC^{\times}\textup{I}_2\)
with a subgroup of order two in \(\textup{GL}_2(\CC)\).
So \(\CC\cdot w_a\) and \(\CC\cdot w_b\) are the two 
distinct common eigenspaces of the monodromy representation
of the above Lam\'e equation on \(\CC/\Lambda\).
If follows that \(\CC\cdot w_{-a}=\CC\cdot w_{a}\) and 
\(\CC\cdot w_{-b}=\CC\cdot w_{-b}\), i.e. \([a]=[-a]\) 
and \([b]=[-b]\).  Therefore the monodromy group 
of the above Lam\'e equation divides \(4\),
and the monodromy of the algebraic form of the same
Lam\'e equation divides \(8\), which again contradicts
\cite[Thm.\,4.4.1]{Waall} and   
\cite[Thm.\,3.1]{BW}.
\end{proof}

%\subsection{Characterizations of $X_n$}

%For any $a_1, \cdots a_n \not\in \Lambda$, we define 
%\begin{equation} \label{f_a}
%f = f_a = \exp \int \sum_{i = 1}^n
%\frac{\wp'(a_i)}{\wp(z) - \wp(a_i)}\,dz.
%\end{equation}
%By (\ref{f-sigma}), if $f$ is not a constant then $f$ is 
%a meromorphic function on $\mathbb C$ with at most zeros 
%at $z \equiv a_i \pmod\Lambda$ and poles at $z \equiv -a_i \pmod\Lambda$.

\begin{theorem} [Characterization of $X_n$ by 
$\textup{ord}_{z=0}\,f_{[a]}'(z)$] 
\label{poly-eqn} %{\ }
Let \(n\geq 1\) be a positive integer.
Let $a = \{[a_1], \ldots, [a_n]\}$ be an unordered list 
of \(n\) non-zero points on the elliptic curve \(\CC/\Lambda\).
%\((\CC/\Lambda)\smallsetminus \{[0]\}\).
Let \(a_1,\ldots, a_n\) be representatives of 
\([a_1],\ldots, [a_n]\) in \(\CC\smallsetminus \Lambda\).
\begin{itemize}
\item[\textup{(0)}] \(f_{[a]}\) is a constant if and only if 
\([a]=[-a]\), where \([-a]\) is  the unordered list
\(\{[-a_1],\ldots, [-a_n]\}\) of \(n\) non-zero elements 
in the elliptic curve \(\CC/\Lambda\).
%\(\CC/\Lambda\).

\item[\textup{(1)}]
If %$f$ is not a constant, 
\([a]\neq [-a]\), 
then ${\rm ord}_{z = 0}\,f_{[a]}'(z) \le 2n$.

\item[\textup{(2)}]
%The set of all $\{a_i\}$ such that ${\rm ord}_{z = 0}\,f'(z) = 2n$ 
%is characterized by the following system of $n - 1$ algebraic
%equations:
Assume that \([a]\neq [-a]\). Then
\(\textup{ord}_{z=0}f_{[a]}'(z)=2n\) if and only if 
%the following three
%conditions hold.
%\begin{itemize}
%\item \(\wp'(a_i;\Lambda) \ne 0\) for all \(i=1,\ldots, n\),
%\item \(\wp(a_i;\Lambda) \ne \wp(a_j;\Lambda)\) whenever \(i\neq j\),
%
%\item 
the Weierstrass coordinates 
\((\wp(a_i;\Lambda), \wp'(a_i;\Lambda))\)
of %the points 
\([a_1],\ldots,[a_n]\) in
\(\CC/\Lambda\) satisfy the following
system of polynomial equations.
\begin{equation} \label{Xn}
\sum_{i = 1}^n \wp'(a_i;\Lambda)\cdot 
\wp^{k}(a_i;\Lambda) = 0,\quad\textup{for}\ 
\ k = 0, \ldots, n - 2.
\end{equation}
% with the non-degenerate conditions $\wp'(a_i) \ne 0$ 
%and $\wp(a_i) \ne \wp(a_j)$ for $ i \ne j$. 
%That is, it coincides with the set $X_n$.
Moreover if the above equivalent conditions hold, then
\begin{itemize}
\item \(\wp'(a_i;\Lambda) \ne 0\) for all \(i=1,\ldots, n\), and
\item \(\wp(a_i;\Lambda) \ne \wp(a_j;\Lambda)\) whenever \(i\neq j\).
\end{itemize}
In other words \([a]\) is a point of the variety \(X_n\)
defined in \textup{(\ref{X-n})}.
\end{itemize}
\end{theorem}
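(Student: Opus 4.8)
The plan is to analyze the meromorphic function $f_{[a]}$ through its logarithmic derivative $g_{[a]} := f_{[a]}'/f_{[a]}$, which by \eqref{add-1b} and Definition \ref{def:f_a} equals $\sum_{i=1}^n \bigl(\zeta(z-a_i) - \zeta(z+a_i) + 2\zeta(a_i)\bigr)$. The crucial structural fact, following the computation already carried out in the proof of Theorem \ref{lame} and in \S\ref{subsec:green-alg}, is that $g_{[a]}$ is an odd elliptic function with a pole at $z=0$. First I would establish part (0): by \eqref{f-sigma}, $f_{[a]}$ is constant precisely when its divisor is trivial, i.e.\ when $\sum_i[a_i]_{E} - \sum_i[-a_i]_E = 0$ in $\mathrm{Div}(E)$; since the $[a_i]$ are nonzero, this holds exactly when the unordered lists $[a]$ and $[-a]$ coincide. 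This is the clean ``degenerate'' case and disposes of the hypothesis $[a]\neq[-a]$ used in (1) and (2).

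For part (1), assuming $[a]\neq[-a]$ so that $f_{[a]}$ is nonconstant, I would expand $g_{[a]}(z)$ as a Laurent series at $z=0$. Since $\zeta(z-a_i)-\zeta(z+a_i)+2\zeta(a_i)$ is holomorphic at $0$ with value $0$ and is an odd function of $z$ (after using the oddness of $\zeta$ and regrouping), $g_{[a]}$ is an odd holomorphic function near $0$ vanishing at $0$; expanding each summand via $\zeta(z\mp a_i) = \mp\zeta(a_i) - \wp(a_i)z \mp \tfrac12\wp'(a_i)z^2 - \cdots$ shows that the coefficient of $z^{2k+1}$ in $g_{[a]}$ is, up to a nonzero constant, $\sum_{i=1}^n \wp^{(2k)}(a_i)$ — an \emph{even}-order derivative — while the even powers of $z$ vanish identically by oddness. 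Here $\mathrm{ord}_{z=0}f_{[a]}'(z) = \mathrm{ord}_{z=0}g_{[a]}(z)$ since $f_{[a]}(0)=1\neq 0$. The key point is that $g_{[a]}$ is a genuinely nonzero odd elliptic function (because $[a]\neq[-a]$ forces a nontrivial pole), hence cannot vanish to infinite order; a clean way to bound the order is that an elliptic function with exactly $2n$ simple poles (counted on $E$) has at most $2n$ zeros, and $g_{[a]}$ has a single zero at $0$ of the order in question, giving $\mathrm{ord}_{z=0}g_{[a]}\le 2n$. I would spell out that the poles of $g_{[a]}$ are at $\pm a_i$, which are $2n$ distinct points under the running hypothesis, yielding the degree-$2n$ bound.

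For part (2), the condition $\mathrm{ord}_{z=0}f_{[a]}'(z)=2n$ means $g_{[a]}$ vanishes at $0$ to order exactly $2n$, i.e.\ the Taylor coefficients of $z, z^3, \ldots, z^{2n-3}$ all vanish (the even coefficients being automatically zero). By the Laurent expansion above, these vanishing conditions read $\sum_{i=1}^n \wp^{(2k)}(a_i)=0$ for $k=0,\ldots,n-2$. The final step is to convert these derivative-power-sum equations into the polynomial system \eqref{Xn}. I would invoke Lemma \ref{lemma:symm_wp}: writing $x_i=\wp(a_i)$ and $y_i=\wp'(a_i)$, differentiating once more relates $\sum_i\wp^{(2k)}(a_i)$ to $\sum_i y_i\,\wp^{(2k-1)}(a_i)$, and more directly the conditions $\sum_i\wp^{(2k)}(a_i)=0$ for $k=0,\dots,n-2$ are equivalent, via the polynomial identities of Lemma \ref{lemma:symm_wp}(a), to $\sum_{i=1}^n y_i\,x_i^k=0$ for $k=0,\dots,n-2$, which is exactly \eqref{Xn}. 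The nondegeneracy consequences follow from Remark \ref{rem:deg-solutions}: if some $y_{i_0}=\wp'(a_{i_0})=0$, or if $x_i=x_j$ for some $i\neq j$, then the linear system \eqref{Xn} in the $y_i$ would force the solution to collapse, contradicting that $g_{[a]}$ genuinely has order exactly $2n$ at $0$ (equivalently, contradicting $[a]\neq[-a]$ together with the pole structure).

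The main obstacle I anticipate is the precise bookkeeping in translating ``$\mathrm{ord}_{z=0}g_{[a]}=2n$'' into the clean power-sum system: one must show that the even-order Taylor coefficients contribute nothing new (pure oddness) and that the map from $\{\sum_i\wp^{(2k)}(a_i)\}_{k}$ to $\{\sum_i y_i x_i^k\}_k$ is a triangular invertible change of equations, so that vanishing of the former for $k=0,\dots,n-2$ is genuinely equivalent to \eqref{Xn} and not merely implied in one direction. Lemma \ref{lemma:symm_wp}(a) supplies exactly the triangular polynomial relations $\wp^{(2j)}=h_j(\wp)$ with $\deg h_j=j+1$ and known leading coefficient $(2j+1)!$, so the inversion is effective; the care needed is to match the index ranges (order $2n$ versus $n-2$ equations) correctly, accounting for the single ``free'' even coefficient at the top. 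The nondegeneracy clauses then require the short argument via Remark \ref{rem:deg-solutions} rather than being automatic, and I would present that as the closing paragraph of the proof.
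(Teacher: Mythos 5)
Your part (0) is the paper's argument, and your overall strategy---pass to the logarithmic derivative $g_{[a]}=f_{[a]}'/f_{[a]}$ and read \eqref{Xn} off from its vanishing order at $z=0$---is sound, but there is a genuine error at the center of your part (2): the parity of $g_{[a]}$ is backwards. From $f_{[a]}(z)\cdot f_{[a]}(-z)=1$ (Definition \ref{def:f_a}), or directly from $g_{[a]}(z)=\sum_i \wp'(a_i)/(\wp(z)-\wp(a_i))$, the function $g_{[a]}$ is \emph{even}, not odd (compare Theorem \ref{thm-type II}(6)). The correct expansion is $\zeta(z\mp a_i)=\mp\zeta(a_i)-\wp(a_i)z\pm\tfrac12\wp'(a_i)z^2-\tfrac16\wp''(a_i)z^3\pm\cdots$, so the odd powers of $z$ cancel in each summand and $g_{[a]}(z)=\sum_{k\ge1}\tfrac{2}{(2k)!}\bigl(\sum_i\wp^{(2k-1)}(a_i)\bigr)z^{2k}$: the surviving coefficients are the \emph{odd}-order derivative sums. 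This is not a cosmetic slip, because your claimed equivalence ``$\sum_i\wp^{(2k)}(a_i)=0$ for $k=0,\dots,n-2$ is equivalent to \eqref{Xn}'' is false as stated: by Lemma \ref{lemma:symm_wp}(a), $\sum_i\wp^{(2k)}(a_i)=\sum_i h_k(\wp(a_i))$ is a symmetric polynomial in the $x_i=\wp(a_i)$ alone, containing no $y_i$, so it cannot be equivalent to a system linear in the $y_i=\wp'(a_i)$. The repair is to use the odd-order derivatives: $\wp^{(2k+1)}=h_k'(\wp)\cdot\wp'$ with $\deg h_k'=k$ and nonzero leading coefficient, so $\sum_i\wp^{(2k+1)}(a_i)=\sum_i h_k'(x_i)\,y_i$, and the triangular change of basis then does give the equivalence of the vanishing of the $z^2,\dots,z^{2n-2}$ coefficients with \eqref{Xn}; combined with part (1) (to force the $z^{2n}$ coefficient to be nonzero), part (2) closes. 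The nondegeneracy clause then needs the paper's ``removing pairs'' argument behind Remark \ref{rem:deg-solutions}(b) (balanced $\pm$-groups force $[a]=[-a]$), which is what your phrase ``force the solution to collapse'' must be expanded into.

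Two further points. In part (1), your supporting claim that the poles of $g_{[a]}$ ``are at $\pm a_i$, which are $2n$ distinct points under the running hypothesis'' is false: $[a]\neq[-a]$ does not prevent internal cancellation or repetition; e.g.\ $[a]=\{[a_1],[-a_1],[a_3]\}$ with $[a_3]$ not $2$-torsion satisfies $[a]\neq[-a]$, yet $g_{[a]}=\wp'(a_3)/(\wp(z)-\wp(a_3))$ has only two poles. Your pole-counting bound survives because the polar divisor of $g_{[a]}$ has degree \emph{at most} $2n$ in all cases, which, together with $g_{[a]}\not\equiv0$ (i.e.\ part (0)), is all the zero--pole count needs; but the proof must be stated that way. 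Finally, note that the paper's own route avoids both the parity analysis and the triangular inversion: it expands $g_{[a]}=\sum_i y_i/(\wp(z)-x_i)=\sum_{k\ge0}\bigl(\sum_i y_ix_i^k\bigr)\wp(z)^{-k-1}$ in powers of $\wp(z)^{-1}$, whose order of vanishing at $z=0$ is exactly $2(k+1)$, so the power sums of \eqref{Xn} appear directly; and it proves part (1) not by pole counting but by showing that vanishing of $\sum_i y_ix_i^k$ for $k=0,\dots,n-1$ forces $[a]=[-a]$ (Vandermonde when the $x_i$ are distinct, Remark \ref{rem:deg-solutions}(b) plus the pairing argument otherwise). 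Your pole-counting idea for (1), once weakened to ``at most $2n$ poles,'' is a legitimate and arguably cleaner alternative; part (2) as written, however, does not go through without the parity correction.
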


\begin{proof}
The divisor \(\textup{div}(f_{[a]})\) 
of the meromorphic function \(f_{[a]}\) on \(\CC\) is 
stable under translation by \(\Lambda\), and
\(\,\textup{div}(f_{[a]})\,\textup{mod}\,\Lambda\,\) is the formal sum
(or \(0\)-cycle)
\[\sum_{i=1}^n [a_n] - \sum_{i=1}^n [-a_n]\] of points of \(\CC/\Lambda\).
So if \(f_{[a]}\) is a constant, the above formal sum is \(0\),
meaning that \([a]=[-a]\).  Conversely if \([a]=[-a]\), then
\[\sum_{i=1}^n\zeta(a_i;\Lambda)= -\sum_{i=1}^n\zeta(a_i;\Lambda)=0\]
and \(f_{[a]}\) is equal to the constant function \(1\).
We have proved statement (0).
\medbreak

Let $(x_i, y_i) := (\wp(a_i;\Lambda), \wp'(a_i;\Lambda))$. We have
\begin{equation} \label{log-der}
f_{[a]} ' = f_{[a]}\cdot\sum_{i = 1}^n
\frac{\wp'(a_i;\Lambda)}{\wp(z;\Lambda) - \wp(a_i;\Lambda)}
\end{equation}
%$f_{[a]} (0)=1 \ne 0$, 
and
%${\rm ord}_{z = 0}\,f'(z)$ is calculated from
%we have \(\,f_a(z)\) at \(z=0\) is %given by
\begin{equation*}
\sum_{i = 1}^n \frac{y_i}{\wp(z;\Lambda) - x_i} 
= \sum_{i = 1}^n \frac{y_i\, \wp(z;\Lambda)^{-1}}{1 - x_i \,\wp(z;\Lambda)^{-1}} 
= \sum_{k = 1}^\infty \Big( \sum_{i = 1}^n y_i x_i^k \Big)\, 
\wp(z;\Lambda)^{-k-1}.
\end{equation*}
We conclude that
\begin{itemize}
\item \(\textup{ord}_{z=0}\, f_{[a]} (z) > 2n \) if and only if
\(\sum_{i=1}^n y_i x_i^k=0\) for \(k=0, \ldots, n-1\),

\item \(\textup{ord}_{z=0}\, f_{[a]} (z) = 2n \)
if and only if $\sum_{i = 1}^n y_i x_i^k = 0$ 
for $0 \le k \le n - 2$ while $\sum_{i = 1}^n y_i x_i^{n - 1} \ne 0$. 
\end{itemize}
\bigbreak

Suppose that \(\,\textup{ord}_{z=0}\,f_{[a]} '(z)>2n\), i.e.\
\(\sum_{i=1}^n y_i x_i^k=0\) for \(k=0, \ldots, n-1\).
If \(x_1,\ldots, x_n\) are distinct, we get from the non-vanishing
of the Vandermonde determinant that \(y_1=\cdots=y_n=0\),
meaning that \([a_1],\ldots, [a_n]\) are all \(2\)-torsion points.
That contradicts the assumption that \([a]\neq [-a]\).
So we know that \(x_1,\ldots, x_n\) are not all distinct.
Apply the argument in Remark \ref{rem:deg-solutions}\,(b):
let \(\{s_1,\ldots, s_m\}=\{x_1,\ldots,x_n\}\) as sets without
multiplicity, let 
\[z_j:=\sum_{\,\textup{all}\ i\ \textup{s.t.}\ s_i=t_j}y_i
\qquad \textup{for}\ \ i=1,\ldots m,\]
%for \(i=1,\ldots, m\), 
and we have \(z_1=\cdots =z_m=0\).
Note that for each \(j\), the \(y_i\)'s which appear in the
sum defining \(z_j\) differ from each other at most by a sign
\(\pm 1\), so that the sum \(z_j\) is either \(0\) or 
a non-zero multiple of a \(y_i\).
Note that we have cancelled a number of pairs \(([a_{i_1}], [-a_{i_2}])\)
in forming the reduced system of equations
\[\sum_{j=1}^m z_j s_j^k=0\qquad \textup{for}\ \ k=0, \ldots, n-1.\]
That \(z_1=\cdots=z_m=0\) means that, after removing 
a number of pairs \(([a_{i_1}], [-a_{i_2}])\) from the
unordered list \([a]\), we are left with another
unordered list \([b]\) with \([b]=[-b]\).
So again we have \([a]=[-a]\), a contradiction with proves 
the statement (1). The first part of statement (2) follows.
\medbreak

%We will show that, under the $n - 1$ polynomial system, 
%the last inequality is equivalent to the non-degenerate conditions. 
%Suppose now that \([a]\neq [-a]\) and
It remains to prove the second part of (2).
We are assuming that \([a]\neq [-a]\) and
\(\sum_{i=1}^n y_i x_i^k=0\) for \(k=0, \ldots, n-2\).
If there exists \(i_1, i_2\) between \(1\) and \(n\)
such that \(\,x_{i_1}=x_{i_2}\), the same argument in the previous
paragraph produces a contradiction that \([a]=[-a]\).
Therefore \(x_1,\ldots, x_n\) are mutually distinct.
If there is an \(i_0\) such that \(y_{i_0}=0\), then
\(y_1=\cdots=y_n=0\) by Remark \ref{rem:deg-solutions}\,(a),
contradicting the assumption that \([a]\neq [-a]\).
\end{proof}
%
%
%Indeed, if $y_i \ne 0$ and $x_i \ne x_j$, 
%then $\sum_{i = 1}^n y_i x_i^{n - 1} \ne 0$ 
%since the Vandermonde determinant for $x_1, \cdots, x_n$ is non-zero. 
%
%On the other hand, if $y_i = 0$ for some $i$ or $x_i = x_j$ for some 
%$i \ne j$ (so $y_i = \pm y_j$), then after reordering the 
%$n - 1$ polynomial system reduces to a system with strictly smaller 
%set of variables $(x_i, y_i)$'s, $i = 1, \ldots, m$, $0 \le m < n$:
%\begin{equation*}
%\sum_{i = 1}^m d_i y_i x_i^{k} = 0, \qquad 0 \le k \le n - 2,
%\end{equation*}
%satisfying the non-degenerate condition $y_i \ne 0$ and $x_i \ne x_j$, 
%where $d_i > 0$. If $m \ge 1$, $k = m - 1 \le n - 2$ is included. 
%The non-vanishing of Vandermonde determinant then leads to 
%$y_1 = \cdots = y_m = 0$, which a contradiction, hence $m = 0$. 
%
%That is, in the original set of variables either $y_i = 0$ or 
%$y_i = -y_j$ ($x_i = x_j$) which cancel in pairs, 
%hence $\sum_{i = 1}^n y_i x_i^{n - 1} = 0$. 
%In fact $\sum_{i = 1}^n y_i x_i^k = 0$ for all $k \in \mathbb N$.
%\end{proof}

We have seen in Proposition \ref{Xn-in-Yn} that
\(X_n\subset Y_n\), where 
\(X_n\) is defined in (\ref{X-n}) and \(Y_n\) is defined in
(\ref{Yn1}).
%Theorem \ref{poly-eqn} has the following important consequence:
The following proposition, which is a consequence of
Theorem \ref{poly-eqn}, describes the complement of \(X_n\) in \(Y_n\).

\begin{proposition} \label{XnYn}
%We have $X_n \subset Y_n$ with $Y_n \backslash X_n$ consisting of 
Let \([a]=\{[a_1],\ldots,[a_n]\}\) be a point of \(Y_n\), i.e.\
\([a_i]\neq [0]\) for each \(i\), 
\([a_i]\neq [a_j]\) whenever \(i\neq j\) and the equations
\textup{(\ref{zeta-eqn})} hold.
Then \([a]\in X_n\) if and only if \([a]\neq [-a]\).
%lies outside the
%subset \(X_n\subset Y_n\) if and only if \([a]=[-a]\).
%those $a \in Y_n$ such that $a = -a$.
\end{proposition}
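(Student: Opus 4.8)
The plan is to deduce the proposition from the characterization of $X_n$ in Theorem \ref{poly-eqn}, which says that for a list $[a]=\{[a_1],\ldots,[a_n]\}$ of non-zero points with $[a]\neq[-a]$, membership $[a]\in X_n$ is equivalent to $\textup{ord}_{z=0}\,f'_{[a]}(z)=2n$. Since $X_n\subseteq Y_n$ by Corollary \ref{Xn-in-Yn}, both implications may be argued for our fixed point $[a]$ of $Y_n$, and the two directions will be handled separately.

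First I would dispose of the easy implication $[a]\in X_n\Rightarrow[a]\neq[-a]$, which only uses the non-degeneracy built into the definition (\ref{X-n}): the representatives satisfy $\wp'(a_i)\neq 0$ for all $i$ (no $[a_i]$ is a $2$-torsion point) and $\wp(a_i)\neq\wp(a_j)$ whenever $i\neq j$. If one had $[a]=[-a]$ as unordered lists, then $[-a_i]$ would occur in $[a]$ for each $i$, say $[-a_i]=[a_j]$; applying $\wp$ gives $\wp(a_i)=\wp(a_j)$, and distinctness of the $x$-coordinates forces $j=i$, hence $[a_i]=[-a_i]\in E[2]$ and $\wp'(a_i)=0$, a contradiction. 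Therefore $[a]\neq[-a]$.

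The substance is the reverse implication. Assume $[a]\in Y_n$ and $[a]\neq[-a]$; it suffices to show $\textup{ord}_{z=0}\,f'_{[a]}(z)=2n$ and then invoke Theorem \ref{poly-eqn}(2). By Theorem \ref{lame}(1) applied to $[a]$ and to $[-a]$ — the latter also lying in $Y_n$, since the defining equations (\ref{zeta-eqn}) are preserved under $a_i\mapsto -a_i$ by the oddness of $\zeta$ — both $w_a$ and $w_{-a}$ solve Lamé equations, and by Theorem \ref{lame}(2) the accessory parameters agree, $B_{[-a]}=(2n-1)\sum_i\wp(-a_i)=(2n-1)\sum_i\wp(a_i)=B_{[a]}$, so $w_a$ and $w_{-a}$ solve the \emph{same} equation $w''=(n(n+1)\wp(z)+B_{[a]})w$. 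Because $[a]\neq[-a]$, Theorem \ref{poly-eqn}(0) guarantees that $f_{[a]}=w_a/w_{-a}$ is non-constant, hence $w_a,w_{-a}$ are linearly independent. As the Lamé equation has no first-order term, the Wronskian $W=w_a'w_{-a}-w_aw_{-a}'$ is a nonzero constant, so that
\[
f'_{[a]}=\frac{w_a'w_{-a}-w_aw_{-a}'}{w_{-a}^2}=\frac{W}{w_{-a}^2}.
\]
Now the order of vanishing at the origin reduces to the local behaviour of $w_{-a}$: from $w_{-a}(z)=e^{-z\sum_i\zeta(a_i)}\prod_i \sigma(z+a_i)/\sigma(z)$ one reads off that, since each $a_i\notin\Lambda$, the factors $\sigma(z+a_i)$ are holomorphic and nonzero at $z=0$ while $\sigma(z)^{-n}$ contributes a pole of order exactly $n$; thus $w_{-a}$ has a pole of order exactly $n$, $w_{-a}^2$ a pole of order $2n$, and $f'_{[a]}=W/w_{-a}^2$ a zero of order exactly $2n$ at $z=0$. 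Theorem \ref{poly-eqn}(2) then yields both the defining polynomial equations and the non-degeneracy conditions, i.e.\ $[a]\in X_n$.

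The only delicate point I expect is the constancy and non-vanishing of the Wronskian: constancy relies on the absence of a first-derivative term in the Lamé equation (Abel's identity), and non-vanishing is precisely the linear independence of $w_a$ and $w_{-a}$, which is supplied cleanly by Theorem \ref{poly-eqn}(0) rather than by any direct computation. Everything else is routine bookkeeping with the order of vanishing of $\sigma$ at the origin.
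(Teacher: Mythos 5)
Your proof is correct, but it takes a genuinely different route from the paper's. Both arguments begin the same way: the ``only if'' direction is definitional, and for the ``if'' direction both use Theorem \ref{lame} (applied to $[a]$ and $[-a]$, which lies in $Y_n$ by oddness of $\zeta$, with equal accessory parameters by evenness of $\wp$) together with Theorem \ref{poly-eqn}\,(0) to conclude that $w_a$ and $w_{-a}$ are linearly independent solutions of the \emph{same} Lam\'e equation. From there the paper argues by contradiction: assuming $[a]\notin X_n$, the lists $[a]$ and $[-a]$ share members, and a non-canonical reduction process produces a shorter list $[b]$ with $m<n$ elements and $f_{[b]}=C\cdot f_{[a]}$; Theorem \ref{poly-eqn}\,(1) then bounds $\textup{ord}_{z=0}f'_{[a]}\leq 2m<2n$, contradicting the Schwarzian identity $S(f_{[a]})=-2\bigl(n(n+1)\wp(z)+B_{[a]}\bigr)$, whose double pole forces order exactly $2n$. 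You instead compute the order \emph{directly}: since the Lam\'e equation has no first-order term, the Wronskian $W$ of $w_a,w_{-a}$ is a nonzero constant, so $f'_{[a]}$ equals a nonzero constant times $W/w_{-a}^2$, and since $w_{-a}$ has a pole of order exactly $n$ at $z=0$ (each $\sigma(z+a_i)$ is a unit there, while $\sigma(z)^{-n}$ contributes the pole), you get $\textup{ord}_{z=0}f'_{[a]}=2n$ on the nose and invoke Theorem \ref{poly-eqn}\,(2). Your argument is shorter and avoids both the reduction process and the Schwarzian asymptotics; it is essentially the Wronskian mechanism of \S\ref{subsec:review-2ndsymm} (cf.\ (\ref{WronskianC}) and (\ref{g=-C/X})), which the paper deploys for Theorems \ref{hyper} and \ref{alg-hyper} but, somewhat curiously, not here. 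What the paper's longer route buys is that its reduction step and the Schwarzian formula (\ref{Sfn}) are reused elsewhere (e.g.\ in the proof of Theorem \ref{hyper}\,(ii) and in Corollary \ref{thm-S(f)}). One cosmetic point: by (\ref{f-sigma}) one has $f_{[a]}=(-1)^n\,w_a/w_{-a}$, so your displayed identity carries a harmless factor $(-1)^n$; this affects nothing about orders of vanishing (and the paper's own Remark following Definition \ref{def:w_a} elides the same sign).
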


\begin{proof}
%The inclusion $X_n \subset Y_n$ follows from Proposition
%\ref{Xn-in-Yn}.
The ``only if'' part is  part of the definition of \(X_n\).
%Since equations (\ref{zeta-eqn}) are invariant under $a \mapsto -a$, 
%Theorem \ref{lame} implies that $w_a$ is a solution to 
%the Lam\'e equation (\ref{Lame-n}) if and only $w_{-a}$ is a solution. 
%Suppose $a \in Y_n$. 
Assume that $[a] \ne [-a]$.  We mush show that $a \in X_n$. 
We know from Theorem \ref{lame} and \ref{poly-eqn}\,(0)
that $w_a$ and $w_{-a}$ 
are linearly independent solutions of the Lam\'e equation (\ref{Lame-n}). 
If $a \not\in X_n$, then the lists
\([a]\) and \([-a]\) have common members.
%we have 
%$\{a_i\}_{i = 1}^n \cap \{-a_i\}_{i = 1}^n \ne \emptyset$. 
So either (A) there exists two indices \(i_1, i_2\) such that
\(i_1\neq i_2\) and \([a_{i_1}]=[-a_{i_2}]\), or (B) there exists
an index \(i_3\) such that \([a_{i_3}]=[-a_{i_3}]\).
We start with a non-canonical process to reduce the length of the list
\([a]\) while keeping the associated functions \(w_a\) and \(w_{-a}\)
unchanged up to some non-zero constants:
First remove all \(a_i\)'s such that \([a_i]=[-a_i]\) from the list
\(a\).  In the resulting reduced list, remove
both \(a_{i_1}\) and \(a_{i_2}\) from the list if \(i_1\neq i_2\) and
\([a_{i_1}]=[-a_{i_2}]\). Keep doing so until we get a sublist
\(b=\{b_1,\ldots, b_m\}\) of \(a\) such that 
\([b]\) and \([-b]\) have no common members, \(m<n\),
and there exists a non-zero constant \(C\in\CC^{\times}\) such that
\[
f_{[b]}=\frac{w_b}{w_{-b}}= C\cdot \frac{w_a}{w_{-a}} = C\cdot f_{[a]}.
\]
%
%There is a strictly smaller non-empty subset 
%$\{a_1', \cdots, a_m'\}$ of $\{a_1, \cdots, a_n\}$ 
%such that $\{a_i'\} \cap \{-a_i'\} = \emptyset$, and then 
%\[
%f = \frac{w_a}{w_{-a}} = \frac{w_{a'}}{w_{-a'}}. 
%\]
%From $f = w_a/w_{-a}$ 
The Schwarzian derivative \(S(f_{[a]})\) satisfies
\begin{equation} \label{Sfn}
S(f_{[a]}) = -2(n(n + 1) \wp(z) + B_{[a]}).
\end{equation}
%However from $f = w_{a'}/w_{-a'}$, 
%we may rewrite $f = f_{a'}$ in the
%form of (\ref{f_a}) but with length of $a'$ being $m < n$. 
Let 
\[2\eta := {\rm ord}_{z = 0}\,f_{[a]} '(z)
.\]
Theorem \ref{poly-eqn}\,(1) tells us that
%\[2\eta := {\rm ord}_{z = 0}\,f'(z) \le 2m < 2n.\] 
\[2\eta = {\rm ord}_{z = 0}\,f_{[b]}'(z)\leq 2m<2n.\]
But then
$$
S(f_{[a]}) = \frac{f_{[a]} '''}{f_{[a]} '} 
- \frac{3}{2} \Big( \frac{f_{[a]} ''}{f_{[a]} '} \Big)^2 
= -2\eta(\eta + 1) \frac{1}{z^2} + O(1),
$$
which contradicts \eqref{Sfn}. 
Therefore $[a] \in X_n$.
\end{proof}

The characterization of $X_n$ in terms of 
the Schwarzian derivative follows similarly:

\begin{corollary} [Characterization of $X_n$ by $S(f)$] \label{thm-S(f)}
Let \(n\geq 1\) be a positive integer. Let $a_1, \ldots, a_n$ 
be complex numbers in \(\CC\smallsetminus \Lambda\),
let $[a]$ be the unordered list \(\{[a_1], \ldots, [a_n]\}\)
and let \([-a]\)  be the unordered list \(\{[-a_1], \ldots, [-a_n]\}\).
Then $[a] \in X_n$ if and only if 
%$f_{[a]}$ is not a constant and
\([a]\neq [-a]\) and
\begin{equation} \label{S(f)-formula}
S(f_{[a]}) = -2(n(n + 1)\wp(z;\Lambda) 
+ (2n - 1)\sum_{i = 1}^n \wp(a_i;\Lambda)).
\end{equation}
\end{corollary}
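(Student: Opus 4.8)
The plan is to deduce Corollary \ref{thm-S(f)} by combining the characterization of $X_n$ via the order of vanishing of $f_{[a]}'$ in Theorem \ref{poly-eqn} with the formula for the accessary parameter $B_{[a]}$ recorded in \eqref{B-value} and established through the Lam\'e-equation analysis of Theorem \ref{lame}. The logical skeleton is an ``if and only if'' split into two implications, and the key observation is that the Schwarzian derivative $S(f_{[a]})$ is entirely governed by two pieces of local data at $z=0$: the coefficient of $\wp(z;\Lambda)$, which measures $\textup{ord}_{z=0}f_{[a]}'$, and the constant term $B_{[a]}$.

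First I would prove the forward direction. Suppose $[a]\in X_n$. By definition of $X_n$ in \eqref{X-n} the points $[a_1],\ldots,[a_n]$ are non-zero, non-$2$-torsion, have distinct $x$-coordinates, and satisfy the polynomial system \eqref{Xn}; in particular $[a]\neq[-a]$ by Theorem \ref{poly-eqn}\,(0) since $f_{[a]}$ is non-constant. Since $X_n\subset Y_n$ (Proposition \ref{Xn-in-Yn}), the list $[a]$ lies in $Y_n$, so by Theorem \ref{lame}\,(2) the Hermite--Halphen function $w_a$ satisfies the Lam\'e equation \eqref{Lame-n} with $B=B_{[a]}=(2n-1)\sum_{i=1}^n\wp(a_i;\Lambda)$, and $w_{-a}$ does too by Proposition \ref{prop:spectral_Yn}\,(2). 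Because $f_{[a]}=w_a/w_{-a}$ is a ratio of two linearly independent solutions (linear independence holds as $[a]\neq[-a]$ by Proposition \ref{prop:spectral_Yn}\,(3)), the standard ODE fact recalled in \S\ref{monodromy-I} gives $S(f_{[a]})=-2(n(n+1)\wp(z;\Lambda)+B_{[a]})$, which is exactly \eqref{S(f)-formula}.

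For the converse, assume $[a]\neq[-a]$ and that \eqref{S(f)-formula} holds. The coefficient $-2n(n+1)$ of $\wp(z;\Lambda)$ in the prescribed Schwarzian forces a local condition at $z=0$: writing $2\eta:=\textup{ord}_{z=0}f_{[a]}'(z)$, the Laurent expansion of a Schwarzian derivative gives $S(f_{[a]})=-2\eta(\eta+1)z^{-2}+O(1)$, while the right-hand side of \eqref{S(f)-formula} has leading polar term $-2n(n+1)z^{-2}$ because $\wp(z;\Lambda)=z^{-2}+O(z^2)$. Comparing double-pole coefficients yields $\eta(\eta+1)=n(n+1)$, hence $\eta=n$, i.e.\ $\textup{ord}_{z=0}f_{[a]}'(z)=2n$. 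By Theorem \ref{poly-eqn}\,(2) this is precisely equivalent to the system \eqref{Xn} together with the non-degeneracy conclusions $\wp'(a_i;\Lambda)\neq 0$ for all $i$ and $\wp(a_i;\Lambda)\neq\wp(a_j;\Lambda)$ for $i\neq j$, which is exactly the assertion $[a]\in X_n$.

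The only step requiring a little care, and the one I expect to be the main obstacle, is justifying the comparison of leading polar coefficients at $z=0$ cleanly, since $f_{[a]}'$ may vanish to order $2n$ rather than $f_{[a]}$ itself and one must track whether $f_{[a]}$ is regular or has a pole at $0$; however this is already handled inside the proof of Theorem \ref{poly-eqn}, where $\textup{ord}_{z=0}f_{[a]}'$ is computed directly from \eqref{log-der}, so I can simply invoke that computation rather than redo it. Once $\eta=n$ is extracted, the matching of the \emph{constant} term to $-2B_{[a]}$ is automatic and need not even be checked separately for the converse, since membership in $X_n$ is determined by the order of vanishing alone. Thus the corollary follows almost formally from Theorems \ref{lame} and \ref{poly-eqn}, with \eqref{S(f)-formula} serving as a convenient repackaging of the order-of-vanishing criterion.
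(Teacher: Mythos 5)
Your proposal is correct and follows essentially the same route as the paper: the forward direction via $X_n\subset Y_n$, Theorem \ref{lame} and the standard ODE fact that a ratio of independent solutions of $w''=Iw$ has Schwarzian $-2I$, and the converse by extracting $\textup{ord}_{z=0}f_{[a]}'(z)=2n$ from the double-pole coefficient and invoking Theorem \ref{poly-eqn}; your explicit Laurent-expansion comparison merely fills in what the paper leaves implicit. One cosmetic remark: in the forward direction, $[a]\neq[-a]$ follows directly from the definition of $X_n$ (non-$2$-torsion points with distinct $x$-coordinates), so you need not route through Theorem \ref{poly-eqn}\,(0), whose invocation as you phrase it presupposes the non-constancy of $f_{[a]}$ rather than deriving it.
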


\begin{proof}
If $[a] \in X_n$ then \([a]\neq [-a]\) by definition,
%$\wp(a_i) \ne \wp(a_i)$ for $i \ne j$. 
%In particular $f_a$ is not a constant. 
and $a \in Y_n$ because \(X_n\subset Y_n\). 
It follows that $f_{[a]} = w_a/w_{-a}$ is a quotient of two linearly
independent solutions of a Lam\'e equation
(\ref{Lame-n}) and the formula 
for $S(f_a)$ follows from Theorem \ref{lame} and the standard ODE theory.

Conversely, if (\ref{S(f)-formula}) holds then 
${\rm ord}_{z = 0}\,f_a'(z) = 2n$. 
Hence $a \in X_n$ by Theorem \ref{poly-eqn}.
\end{proof}

\begin{remark}
We would like to point out the striking similarity between the 
solution $w_a$ to the Lam\'e equation and the defining power series of 
\emph{complex elliptic genera} in the Weierstrass form 
studied in \cite{Wang}. 
In a certain context of \emph{topological} field theory, 
complex elliptic genera serves as the \emph{genus one partition function}. 
In contrast to it, the mean field equation studies local yet 
very precise \emph{analytic} behavior of a genus one curve. 
%Further investigations on the possible underlying reasons 
%seem to be very interesting. 
It would be very interesting to uncover a good reason that will 
account for the similarity between these two theories.
\end{remark}

\section{Hyperelliptic geometry on $\bar X_n$} \label{hyper-ell-str}
\setcounter{equation}{0}

We have seen in Proposition \ref{cor:Ba_and_a} that
the fibers of the map \(\pi:Y_n\to \CC\) which sends
a typical point of \(Y_n\) represented by an 
unordered list \([a]= \{[a_1],\ldots, [a_n]\}\) of \(n\) elements in
\(\CC/\Lambda\smallsetminus \{[0]\}\) 
to \[\,\pi([a])=B_{[a]}=(2n-1)\cdot\sum_{i=1}^n\wp(a_i;\Lambda)\]
are exactly the orbits of the involution
\(\,\iota:[a]\mapsto [-a]\,\) on \(Y_n\).
We have also seen the complement \(Y_n\smallsetminus X_n\)
of \(X_n\subset Y_n\) is the set of all points of \(Y_n\) fixed
by the involution \(\iota\).
In turn the fact that both \(X_n\) and \(Y_n\) are locally the locus
of common zeros of \(n-1\) holomorphic functions on 
\(n\)-dimensional complex manifolds suggest that 
\(X_n\) and \(Y_n\) are both one-dimensional.
The fact that there exists a two-to-one map
from \(X_n\to \CC\) suggest that \(X_n\) is the unramified 
locus of a possibly singular hyperelliptic curve,
\(Y_n\) is a partial compactification of \(X_n\), 
\(\,\iota\) is the hyperelliptic involution on \(Y_n\),
and \(\pi:Y\to \CC\) is the hyperelliptic projection.
%By Theorem \ref{lame}, 
%if \(a=\{[a_i]\}_{i=1}^n\) and
%\(a=\{[a_i]\}_{i=1}^n\) are two unordered lists \(n\) elements
%in \(\CC/\Lambda\) with 
%if $\sum \wp(a_i) = \sum \wp(a_i')$ then
%they give rise to the same Lam\'e equation. This suggests the
%question whether $[a] = \{[a_i]\}$ is determined, up to an uniform sign
%and permutations, by $\sum \wp(a_i)$. 
%The answer is affirmative. 
The entire section is devoted to the proof of Theorem
\ref{hyp-ell-thm}, which asserts that the above guesses are indeed
true, and provides more detailed information about this hyperelliptic curve.
Due to its fundamental importance, we shall give two different
treatments of this result, one based on the theory of ordinary 
differential equations 
and another one based on purely algebraic method. 
%Proposition \ref{cor:Ba_and_a} will be reproved, twice, in
%the two treatments.

The analytic method continues the train of ideas 
in \S\ref{X_n}, that points of \(Y_n\) corresponds to
the ansatz solutions of Lam\'e equations with fixed index \(n\)
but varying accessory parameters.
%makes it easier to conclude that
With the analytic method it is easier to show that
the closure \(\bar{X}_n\) of \(X_n\) in
the \(n\)-th symmetric product ${\rm Sym}^n E = E^n/S_n$
of \(E=\CC/\Lambda\) is \(Y_n \cup \{\infty\}\),
%$\bar X_n = Y_n \cup \{\infty\}$ 
where ``$\infty$'' stands for the point
\(\{0, \ldots, 0\}\) of \(\textup{Sym}^n E\).
Moreover one gets a recursively defined
polynomial \(\ell_n(B)\) of degree 
\(2n+1\) in \(B\), whose roots are the image of 
the ramification points of \(\pi:Y_n\to \CC\),
i.e.\ fixed points of the involution \(\iota\) on \(Y_n\).
%to $a = \{0, \cdots, 0\}$, 
%and $-a = a$ on the ramified points. 
%Here $\bar X_n$ denotes the closure of $X_n$ under 
%the topology of ${\rm Sym}^n E = E^n/S_n$. 
%In contrast to this, 
With the algebraic method one gets not only the same polynomial
\(\ell_n(B)\),
%the explicit polynomial equation 
but also an explicit regular function \(C\) on \(\bar{X}_n\) such
that $C^2 = \ell_n(B)$. 
%of degree $2n + 1$ for $\bar X_n$. 
In particular, $\bar X_n$ has arithmetic genus $n$. 
The algebraic method also allows us to analyze 
the limiting equations at $\infty$ and prove that  
\(\infty\) is a smooth point of $\bar X_n$.

%\marginpar{\tiny 
%\textcolor{red}
%{There is a gap: the arguments only show that
%the morphism from \(\bar{X}_n\) to the hyperelliptic model
%is a homeomorphism.  But just being a homeomorphism does not
%imply that it is an isomorphism.  
%Need to do something, for instance deform the parameter \(\tau\)
%and use purity, or show flatness and therefore that 
%the arithmetic genus of \(\bar{X}_n\) is also \(n\).}}

We emphasize that a priori there is no definite reason that 
the compactification of $Y_n$ in ${\rm Sym}^n E$ should agree 
with the \emph{projective hyperelliptic model} of $X_n$
defined in
\ref{subsubsec:compare-setup}.e.
%whose definition will be recalled in the next paragraph. 
Such an identification is one of the main statements we will
establish; see Proposition \ref{prop:use-purity}.

%For an affine algebraic curve in $\mathbb C^2$ 
%defined by $y^2 = \prod_{i = 1}^{2n + 1} (x - x_i)$, 
%its projectivization in $\mathbb P^2$ has equation 
%$y^2 z^{2n - 1} = \prod_{i = 1}^{2n + 1} (x - x_i z)$. 
%At the infinity point $\infty = [0: 1: 0]$ 
%the equation becomes $z^{2n - 1} = \prod_{i = 1}^{2n + 1} (x - x_i z)$ 
%which is singular at $\infty$ for $n \ge 2$. 
%Instead, the projective hyperelliptic model uses the chart 
%at infinity to be $(x', y')$ with coordinate transformations 
%given by $x= 1/x'$ and $y = y'/x'^{n + 1}$. 
%Then the equation becomes
%$$
%y'^2 = x' \prod_{i = 1}^{2n + 1}(1 - x_i x'),
%%$$
%which is always nonsingular at $(x', y') = (0, 0)$. 
%Thus the essential point to establish the 
%hyperelliptic structure on $\bar X_n$, as stated 
%in Theorem \ref{hyp-ell-thm}, consists of 
%(1) The two to one structure on $X_n \to \mathbb C$, 
%(2) the structure on the finite branch points 
%$Y_n \backslash X_n$, and (3) the smoothness of $\bar X_n$ at $\infty$. 

%XXXXXXX revise later

\subsection{Review of linear second order ODE}
\label{subsec:review-2ndsymm}

The %underlying 
starting point of this section 
is the following simple 
well-known observation on a second order ODE 
\begin{equation}\label{gen-2ndorder}
w'' = I w  
\end{equation}
Recall that the Wronskian
\begin{equation} \label{WronskianC}
C := \begin{vmatrix} w_1 & w_2 
\\ w_1' & w_2' \end{vmatrix} = w_1 w_2' - w_2 w_1'
=w_1 w_2\cdot \frac{d}{dz}\log \frac{w_1}{w_2}
\end{equation}
of two linearly independent solutions $w_1, w_2$ 
is obviously a non-zero constant since $C' = 0$. 
If the product $X = w_1 w_2$ is easier to get hold of, 
then we may express the solutions
$w_1, w_2$ in terms of $C$ and $X$: we have
$$
\frac{X'}{X} = \frac{w_1'}{w_1} + \frac{w_2'}{w_2}, \qquad \frac{C}{X} 
= \frac{w_2'}{w_2} - \frac{w_1'}{w_1},
$$
hence
$$
\frac{w_1'}{w_1} = \frac{X' - C}{2X}, 
\qquad \frac{w_2'}{w_2} = \frac{X' + C}{2X}.
$$
In particular, 
\begin{equation} \label{ODE-sol}
w_1 = X^{1/2} \exp \Big(-\frac{C}{2} \int \frac{dz}{X}\Big), \qquad 
w_2 = X^{1/2} \exp \Big(\frac{C}{2} \int \frac{dz}{X}\Big).
\end{equation}

From 
$$
\Big(\frac{X' + C}{2X}\Big)' = \Big(\frac{w_2'}{w_2}\Big)' 
= \frac{w_2''}{w_2} - \Big(\frac{w_2'}{w_2}\Big)^2 
= I - \frac{(X' + C)^2}{4X^2},
$$
one finds easily that
\begin{equation} \label{C2}
C^2 = X'^2 - 2X'' X + 4I X^2.
\end{equation}
Differentiating (\ref{C2}) we see that the product 
\(X=w_1\cdot w_2\) of any two solutions 
\(w_1, w_2\) of the equation (\ref{gen-2ndorder})
%\(w''= Iw\) 
satisfies
\begin{equation} \label{sym-2}
X''' - 4I X' - 2I' X = 0.
\end{equation}
This third order ODE is known as the 
\emph{second symmetric power} of 
the equation (\ref{gen-2ndorder})
%$w'' = I w$, 
%which in fact 
and 
can easily be derived directly.
%from $X = w_1 w_2$. 
In this way, (\ref{C2}) is simply the first integral of (\ref{sym-2}) 
with integration factor $-2X$.

Conversely suppose we have a non-trivial solution $X$ of the
second symmetric square (\ref{sym-2}) of (\ref{gen-2ndorder}).
%such that \(X'^2 - 2X'' X + 4I X^2\) is 
%\emph{not identically zero}\footnote{or equivalently, 
%a non-zero constant}, 
Then \(\,X'^2 - 2X'' X + 4I X^2\,\) 
is a constant\footnote{Of course if this constant
is zero, then the functions
\(w_1, w_2\) defined by (\ref{ODE-sol}) are linearly dependent.}
and the constant $C$ is defined up to sign by (\ref{C2}), 
so we get a pair of functions $w_1, w_2$ 
defined by (\ref{ODE-sol}). 
It can be checked easily using (\ref{C2})
that $w_1$ and $w_2$ are indeed solutions of the
equation (\ref{gen-2ndorder}).
%$w'' = I w$.%\footnote{Of course if this constant
%is zero, then 
%\(w_1\) and \(w_2\) are linearly dependent.}

%This will be the guide of our study on hyperelliptic geometry 
%for Lam\'e equations (associated to type II solutions 
%to the mean field equation). 

\begin{remark}
The product $\,X = w_1\!\cdot\! w_2\,$ has appeared implicitly 
in our previous discussions, in the sense that
there exists a developing map for a solution of
the mean field equation \(\,\triangle u+e^u=8\pi n\,\delta_0\,\)
whose logarithmic derivative is equal to \(-C/X\)
for some non-zero constant \(C\). 
To see this, recall that for any given solution of
the above mean field equation on \(\CC/\Lambda\), there
exist two 
independent solutions of a Lam\'e equation
\[\,\frac{d^2 w}{dz^2}- (n(n+1) \wp(z;\Lambda)+B)w=0\,\]
such that \(\,w_1/w_2=:\!f\,\) is a developing map of \(u\).
%$g = f'/f$ with $f = w_1/w_2$ 
%for some independent solutions $w_i$'s. 
Then
\begin{equation} \label{g=-C/X}
g := \frac{f'}{f} 
= \frac{(w_1'w_2 - w_1 w_2')/w_2^2}{w_1/w_2} 
= \frac{-C}{w_1w_2} = \frac{-C}{X}.
\end{equation}
\end{remark}
\bigbreak
%\subsection{Hyperelliptic structure on $Y_n$}

We start with the analytic approach of Hermite-Halphen;
%which is based on 
%a variant of
%the classical 
%Hermite-Halphen's product ansatz. 
c.f.\
\cite[p.\,499]{Halphen} and
{\cite[\S 23.7]{Whittaker}}.
%and also Remark \ref{HH-ansatz}.

\begin{theorem} \label{hyper}
Let \(n\geq 1\) be a positive integer.
\begin{itemize}
\item[(i)] There exist polynomials
\(s_1(B), s_2(B), s_3(B),\ldots, s_n(B)\) in \(B\)
with coefficients in \(\QQ[g_2(\Lambda), g_3(\Lambda)]\)
with the following properties:
\begin{itemize}
\item for every element \([a]=\{[a_1],\ldots,[a_n]\}\in Y_n\)
and any \(i=1,\ldots, n\), the
\(i\)-th elementary polynomial of
\(\,\{\wp(a_1;\Lambda),\ldots, \wp(a_n;\Lambda)\}\,\)
is equal to \(s_i(B_{[a]})\), 
where \(B_{[a]}:=(2n-1)\sum_{i=1}^n \wp(a_i;\Lambda)\).

\item \(s_1(B)= (2n-1)^{-1} B\), and
\(s_i(B)\) is of degree \(i\) with leading coefficients
in \(\QQ^{\times}\) for \(i=2,\ldots, n\).

\item \(s_i(B)\) is homogenous of weight \(\,i\,\) for
\(i=2,\ldots, n\) if 
\(B, g_2, g_3\) are given weights \(1, 2, 3\)
respectively.

\end{itemize}

\item[(ii)] The fibers of the map
$\pi: Y_n \to \mathbb C$ defined by 
$$\pi:\{(x_i, y_i)\}_{i = 1}^n \mapsto
B_{[a]}=(2n-1)\,\sum_{i = 1}^n x_i$$ 
are orbits of the involution \(\,\iota: [a]\mapsto [-a]\,\) on 
\(Y_n\). In other words
\[\pi^{-1}(B_{[a]})=\{[a], [-a]\}\quad \forall\,[a]\in Y_n.\]
The subset \(X_n\subset Y_n\) is the complement in \(Y_n\) of the
fixed point set \((Y_n)^{\iota}\) of the involution \(\iota\);
it is also equal to the subset of \(Y_n\) consisting of all
elements \([a]=\{[a_1],\ldots,[a_n]\}\in Y_n\)
such that
\(\,\wp'(a_i;\Lambda)\neq 0\) for all \(i\)
and \(\wp(a_i;\Lambda)\neq \wp(a_j;\Lambda)\) for all
pairs \((i,j)\) with \(i\neq j\) and \(1\leq i,j\leq n\).
Moreover \(X_n\) is a locally closed
smooth one-dimensional complex submanifold 
of \(\,\textup{Sym}^n(\CC/\Lambda)\).
%is a branched double cover with (possibly singular) branch points 
%at the complement $Y_n \smallsetminus X_n$ of \(X_n\). 
%In particular, $X_n$ is non-singular. 
%Moreover, $\#(Y_n \!\smallsetminus\! X_n) \le 2n + 1$. 

\item[(iii)] The set \((Y_n)^{\iota}=Y_n\!\smallsetminus\! X_n\) 
is a finite subset of
\(Y_n\) with at most \(2n+1\) elements.
Up to \(\CC^{\times}\) the set of all ansatz functions 
\(\,w_a(z)\,\) with \([a]\in Y_n^{\iota}\)
coincides 
with the set of all Lam\'e functions 
of index \(n\).
In other words \(Y_n^{\iota}=Y_n\smallsetminus X_n\) is in natural
bijection with the set of all Lam\'e functions of index \(n\)
up to non-zero constants.

\item[(iv)] The closure \(\bar X_n\) of \(X_n\) 
in \(\textup{Sym}^n(\CC/\Lambda)\) consists of
\(Y_n\) and a single ``point at infinity''
\([0]^n:= \{[0], \ldots, [0]\}\):
$\bar X_n = Y_n \cup \{[0]^n\}$. 
%where \(\bar X_n\) is the closure of 
%\(X_n\) in \(\textup{Sym}^n(\CC/\Lambda)\).

\item[(v)] %The induced map $\bar \pi: \bar X_n \to \mathbb P^1$ 
%is surjective with $\pi(Y_n) = \mathbb C$. 
The map \(\pi:Y_n\to \CC\) extends to a surjective continuous map
\(\bar\pi: \bar X_n\to \PP^1(\CC)\) which sends
the point \([0]^n\) to \(\infty\in\PP^1(\CC)\).
\end{itemize}
\end{theorem}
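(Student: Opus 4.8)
\textbf{Plan of proof for Theorem \ref{hyper}.}
The plan is to extract the polynomials $s_i(B)$ from the condition that $w_a$ solves a Lam\'e equation, and then to leverage the already-established spectral picture (Proposition \ref{cor:Ba_and_a}, Theorem \ref{poly-eqn}, Proposition \ref{XnYn}) to read off statements (ii)--(v). First I would establish (i). The key observation is that for $[a]\in Y_n$ the product $X=w_a\cdot w_{-a}$ is, by Theorem \ref{lame}, a product of two solutions of the fixed Lam\'e equation $w''=(n(n+1)\wp(z)+B_{[a]})w$, hence by \eqref{sym-2} it is a solution of the second symmetric power of that equation. On the other hand, from the explicit form \eqref{f-sigma}--\eqref{w_a} one checks that $X$ descends to an \emph{elliptic} function on $E=\CC/\Lambda$ with a pole of order exactly $2n$ at $[0]$ and no other poles. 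Therefore $X$ lies in the $(n+1)$-dimensional space $\CC\langle 1,\wp,\wp^2,\dots,\wp^n\rangle$ (using Lemma \ref{lemma:symm_wp}(a) to reduce all even derivatives of $\wp$ to polynomials in $\wp$), and the normalization $X=\wp^n+\cdots$ is forced by the leading coefficient. Substituting $X=\wp^n+c_1(B)\wp^{n-1}+\cdots+c_n(B)$ into the linear third-order ODE \eqref{sym-2} with $I=n(n+1)\wp+B$ yields, upon collecting coefficients of the independent functions $\wp^k$ and $\wp^k\wp'$, a \emph{triangular recursion} determining each $c_k$ as a polynomial in $B,g_2,g_3$ of weight $k$. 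Since $X=\prod_{i=1}^n(\wp(z)-\wp(a_i))$ up to the lower-order elliptic terms, matching coefficients identifies $c_k(B)=\pm s_k(B)$ with the elementary symmetric functions of $\{\wp(a_i)\}$; the weight and degree claims, and $s_1(B)=(2n-1)^{-1}B$, follow directly from $B_{[a]}=(2n-1)\sum_i\wp(a_i)$.

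Granting (i), statement (ii) is essentially a repackaging of results already proved. That $\pi^{-1}(B_{[a]})=\{[a],[-a]\}$ is precisely Proposition \ref{cor:Ba_and_a} together with $B_{[-a]}=B_{[a]}$; that $X_n=Y_n\smallsetminus(Y_n)^\iota$ is Proposition \ref{XnYn}; and the reformulation of $X_n$ via $\wp'(a_i)\neq0$ and $\wp(a_i)\neq\wp(a_j)$ is the content of Theorem \ref{poly-eqn}(2). For the smoothness and one-dimensionality of $X_n$ I would argue that on $X_n$ the map $\pi|_{X_n}:X_n\to\CC$ is a local biholomorphism onto its image: the $x_i=\wp(a_i)$ are distinct roots of $\prod_i(T-x_i)=T^n-s_1(B)T^{n-1}+\cdots\pm s_n(B)$ (a polynomial in $T$ with coefficients in $B$ by part (i)), so $B$ is a local coordinate on $X_n$ away from the discriminant locus, and the non-degeneracy conditions $\wp'(a_i)\neq0$ guarantee that each point $[a_i]\in E$ is recovered smoothly from $x_i$ via the hyperelliptic structure of $E$.

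For (iii), the fixed points $[a]=[-a]$ are exactly those $[a]\in Y_n$ for which $w_a$ and $w_{-a}$ are proportional, i.e.\ $\CC\cdot w_a$ is a \emph{single} monodromy-invariant line; by Proposition \ref{prop:spectral_Yn}(3),(5) and the definition recalled in \ref{subsub:properties_ansatz}(iii), this is precisely the condition that $w_a$ be a Lam\'e function. The bound $\#(Y_n)^\iota\le 2n+1$ then follows from Theorem B: the accessary parameters $B$ admitting a Lam\'e function are the $2n+1$ roots of $\ell_n(B)$, and $\pi$ is injective on $(Y_n)^\iota$ since each such fiber is a singleton. Statement (iv) is where the analytic approach does real work: I would show $\bar X_n\cap(\textup{Sym}^nE\smallsetminus Y_n)=\{[0]^n\}$ by analyzing a sequence $[a^{(m)}]\in X_n$ with $B_{[a^{(m)}]}\to\infty$. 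Using the explicit dependence $s_k(B)\sim(\text{const})B^k$ from (i), the roots $x_i=\wp(a_i)$ of the degree-$n$ polynomial all tend to $\infty$, forcing every $[a_i]\to[0]$ in $E$; conversely $[0]^n$ is a genuine limit since one can exhibit points of $X_n$ with arbitrarily large $B$. Finally (v) follows by setting $\bar\pi([0]^n)=\infty$ and checking continuity at $[0]^n$ via the same asymptotics, surjectivity being the surjectivity of $\pi$ onto $\CC$ (Theorem B / \ref{subsub:properties_ansatz}(ii)) extended by the single point at infinity. I expect the main obstacle to be part (iv): controlling the limit uniformly—that \emph{every} limit point of $X_n$ outside $Y_n$ is $[0]^n$ and that the convergence in $\textup{Sym}^nE$ really is to the single fat point $\{[0],\dots,[0]\}$ rather than some other degenerate configuration—requires the precise leading-order asymptotics of the $s_k(B)$ and a careful argument that no proper subset of the $a_i$ can stay away from $[0]$ while the elementary symmetric functions blow up at the rates dictated by (i).
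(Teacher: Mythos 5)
Your treatment of parts (i)--(iii) is essentially the paper's own argument: the product \(w_a\cdot w_{-a}\) is a degree-\(n\) polynomial in \(\wp\) solving the second symmetric power of the Lam\'e equation, the resulting triangular recursion defines the \(s_i(B)\), and (ii)--(iii) are then assembled from Proposition \ref{cor:Ba_and_a}, Proposition \ref{XnYn}, Theorem \ref{poly-eqn} and the monodromy dichotomy (Proposition \ref{prop:spectral_Yn}), with Theorem B supplying the bound \(2n+1\). Those parts are fine.

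The genuine gap is in (iv), and it propagates into (v). First, you analyze only sequences \([a]_k\in X_n\) with \(B_k\to\infty\). To prove \(\bar X_n\subseteq Y_n\cup\{[0]^n\}\) you must also handle the complementary case: after passing to a subsequence with \(B_k\to B\in\CC\), you need to show the limit configuration lies in \(Y_n\). Boundedness of the \(s_j(B_k)\) does give bounded roots, so no component of the limit is \([0]\); but nothing in your proposal rules out two components of the limit \emph{coinciding}, which would also throw the limit out of \(Y_n\). The paper excludes this by an ODE argument: \(w_{a_k}\to w_a\), a \emph{nontrivial} solution of the limiting Lam\'e equation, and a nonzero solution of a second-order linear ODE cannot vanish to order \(\geq 2\); Theorem \ref{lame} then places the limit in \(Y_n\). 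This bounded-\(B\) case is not a technicality: without it the continuity of \(\bar\pi\) at \([0]^n\) asserted in your (v) is unjustified, since one cannot otherwise exclude a sequence with all components tending to \([0]\) while \(\sum_i\wp(a_{k,i})\) stays bounded by cancellation --- the only way to rule that out is the dichotomy ``bounded \(B\) forces the limit into \(Y_n\), hence away from \([0]^n\).''

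Second, you never prove the reverse inclusion \(Y_n\subseteq\bar X_n\), i.e.\ that the finitely many fixed points \((Y_n)^{\iota}=Y_n\smallsetminus X_n\) are actually limits of points of \(X_n\). This is a genuine half of the equality \(\bar X_n=Y_n\cup\{[0]^n\}\) and is not automatic, since \(X_n\) omits exactly these points. The paper's argument is a compactness bootstrap: \(\bar X_n\) is compact (closed in \(\textup{Sym}^n(\CC/\Lambda)\)), so \(\bar\pi(\bar X_n)\) is closed and contains the cofinite set \(\pi(X_n)\subset\CC\), whence \(\bar\pi\) is surjective onto \(\PP^1(\CC)\); given \([a]\in(Y_n)^{\iota}\), any \([b]\in\bar X_n\) with \(\bar\pi([b])=B_{[a]}\) lies in \(Y_n\) by the bounded-\(B\) case, and part (ii) forces \([b]\in\{[a],[-a]\}=\{[a]\}\), so \([a]\in\bar X_n\). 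Your proposal needs both of these steps added before (iv) and (v) can be considered proved.
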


%\begin{proof}
\subsubsection{}
{\scshape Proof of Theorem} \ref{hyper}\,(i).\enspace
Consider the Weierstrass equation 
$y^2 = p(x) = 4 x^3 - g_2 x - g_3$, 
where $(x, y) = (\wp(z), \wp'(z))$, and we set 
$(x_i, y_i) = (\wp(a_i), \wp'(a_i))$ for 
$[a]=\{[a_1],\ldots,[a_n]\} \in Y_n$. 
Pick \(a_i\in [a_i]\) for \(i=1,\ldots, n\).
Consider the following pair of ansatz solutions  
$\Lambda_a (z)$, $\Lambda_{-a} (z)$
the Lam\'e equation 
\begin{equation}\label{lame-sec7}
\frac{d^2w}{dz^2}- \big(n(n+1)\wp(z;\Lambda)+B\big)w=0,
\end{equation}
where
\begin{equation} \label{solu-lambda}
\Lambda_a (z) := \frac{w_a (z)}{\prod_{i = 1}^n \sigma(a_i)} 
= e^{z\sum \zeta(a_i)} \prod_{i = 1}^n 
\frac{\sigma(z - a_i)}{\sigma(z) \sigma(a_i)}.
\end{equation}
Let $\tilde{X}_{[a]}(z) 
= \Lambda_a(z) \Lambda_{-a}(z)$ be the product of this pair. 
Note that if $[a] \not\in X_n$ then $[a] = [-a]$ by 
Proposition \ref{XnYn} and $\Lambda_a = \Lambda_{-a}$. 
%In any case, 
From the addition theorem we have 
\begin{equation} \label{X-poly}
\begin{split}
\tilde{X}_{a}(z) &= (-1)^n \prod_{i = 1}^n \frac{\sigma(z + a_i;\Lambda) 
\sigma (z - a_i;\Lambda)}{\sigma(z;\Lambda)^2 \sigma(a_i;\Lambda)^2} 
\\
&= (-1)^n \prod_{i = 1}^n (\wp(z;\Lambda) - \wp(a_i;\Lambda))
= X(\wp(z;\Lambda))
\end{split}
\end{equation}
That is \(\tilde{X}_{[a]}(z)=X_{[a]}(\wp(z;\Lambda))\), 
where 
\[X_{[a]}(x) = (-1)^n \prod_{i = 1}^n (x - \wp(a_i;\Lambda)).\]
a polynomial of degree \(n\) in the variable \(x\).
%is a polynomial in $x$. 

We know that $\tilde{X}_{[a]}(z)$ 
satisfies the second symmetric power of the Lam\'e equation
(\ref{lame-sec7})
\begin{equation} \label{sym2-z}
\frac{d^3 \tilde{X}}{dz^3} - 4 (n (n + 1) \wp(z;\Lambda) + B) 
\frac{d \tilde{X}}{dz} -2n(n + 1) \wp'(z,\Lambda) \tilde{X}(z) = 0,
\end{equation}
it is thus a \emph{polynomial solution} in the variable $x$, to 
the algebraic form
%the following equation
\begin{equation} \label{sym2-lame}
p(x) \frac{d^3X}{dx^3} 
+ \frac{3}{2} \frac{dp}{dx}\cdot \frac{d^2 X}{dx^2}
- 4((n^2 + n - 3)x + B) \frac{d X}{dx} - 2n (n + 1) X = 0,
\end{equation}
of (\ref{sym2-z}),
where \(p(x)\) is the cubic polynomial
\[p(x)=4 x^3- g_2(\Lambda)x-g_3(\Lambda)\] in the Weierstrass equation for
\(\CC/\Lambda\).
As a result, $X_{[a]}(x) $ will be determined by $B$ 
and certain initial conditions.

Indeed, write 
$X_{[a]}(x) = (-1)^n (x^n - s_1 x^{n - 1} + \cdots + (-1)^n s_n)$, 
then (\ref{sym2-lame}) translates into a 
linear recursive relation for each $\mu = n - 1, \ldots, 0$,
where we set $s_0 = 1$ by convention:
\begin{equation} \label{recursive}
\begin{split}
0 &= 2(n - \mu)(2\mu + 1)(n + \mu + 1) 
s_{n - \mu} - 4(\mu + 1) B s_{n - \mu - 1} 
\\
&\quad + \tfrac{1}{2} g_2 (\mu + 1) (\mu + 2) (2\mu + 3) s_{n - \mu - 2} \\
&\qquad - g_3 (\mu + 1) (\mu + 2) (\mu + 3) s_{n - \mu - 3}.
\end{split}
\end{equation}
Since $B = (2n - 1) s_1$, the initial relation for $\mu = n - 1$ is
automatic. 
Let \(s_1(B):=(2n-1)^{-1} B\).  
The recursive relations (\ref{recursive}) 
with \(s_i\) substituted by \(s_i(B)\) define 
polynomials \(s_2(B),\ldots, s_n(B)\in \QQ[g_2, g_3]\)
which satisfy the first condition listed in Theorem
\ref{hyper}\,(i).
%Thus all coefficients $s_j$'s of the polynomial
%\(X_{[a]}(x)\) are determined by $B$, as polynomials in 
%\(B\) with coefficients in \(\QQ[g_2, g_3]\).
Moreover we see from the recursive relations that
$s_j$ is a polynomial of degree $j$ in $B$, 
and it is homogenous of weight \(n\) if \(B, g_2, g_3\) 
are given weights \(1, 2, 3\) respectively, for
\(j=1,\ldots, n\),
We have proved Theorem \ref{hyper}\,(i). \qed
%\bigbreak

\subsubsection{}
{\scshape Proof of Theorem }\ref{hyper}\,(ii).\enspace
The first sentence of Theorem \ref{hyper}\,(ii) is a restatement of
Proposition \ref{cor:Ba_and_a}.  
We give another proof here more in line with the proof of (i).
Suppose now we have two element
\[[a]=\{[a_1],\ldots,[a_n]\},\  %\quad \textup{and}\quad
[b]=\{[b_1],\ldots,[b_n]\} \in Y_n
\] such that 
\[\sum_{i=1}^n \wp(a_i;\Lambda) = \sum_{i=1}^n \wp(a_i';\Lambda).\]
Let \(X_{[a]}(x)\) be the polynomial in \(x\) of degree \(n\) 
such that 
\(X_{[a]}(\wp(z;\Lambda)) = \Lambda_a(z)
\Lambda_{-a}(z)\); similarly let \(X_{[b]}\) be the polynomial 
of degree \(n\) such that
$X_{[b]}(\wp(z;\Lambda)) = \Lambda_{b}(z) \Lambda_{-b}(z)$.
Then $X_{[a]}$ and $X_{[b]}$ both satisfy the same equation 
(\ref{sym2-lame}), 
and we get from the recursive relations (\ref{recursive}) that
$X_{[a]} = X_{[b]}$, i.e.\ 
\begin{equation*}
\prod_{i = 1}^n (x - \wp(a_i;\Lambda)) 
= \prod_{i = 1}^n (x - \wp(b_i;\Lambda)).
\end{equation*}
Therefore \(\{\wp(a_1;\Lambda),\ldots, \wp(a_n;\Lambda)\}
= \{\wp(b_1;\Lambda),\ldots, \wp(b_n;\Lambda)\}\) as unorder\-ed lists.
%After reordering we may assume that $\wp(a_i) = \wp(a_i')$ 
%for all $i$.
\medbreak

We claim that either \([a]=[b]\) or \([a]=[-b]\) as unordered lists. 
Otherwise after renumbering the \(a_i\)'s and the \(b_i\)'s,
there exist integers \(r, s\geq 1\) with \(r+s\leq n\) such that
the following hold:
\begin{itemize}

\item[(i)] \([a_i]=[b_i]\in \tfrac{1}{2}\Lambda/\Lambda\) for 
all \(i\) such that \(r+s+1\leq i\leq n\),

\item[(ii)] \([a_i]\not\in \tfrac{1}{2}\Lambda/\Lambda\) and 
\([b_i]\not\in \tfrac{1}{2}\Lambda/\Lambda\), for 
all \(i\) such that \(1\leq i\leq r+s\),

\item[(iii)] \([a_i]=[-b_i]\) for \(i=1,\ldots, r\),

\item[(iv)] \([a_i]=[b_i]\) for \(i=r+1,\ldots, r+s\).

\item[(v)] \([a_i]\neq [-a_j]\) if \(i\neq j\) 
and \(1\leq i,j\leq r\).

\end{itemize}
We know from Theorem \ref{poly-eqn}\,(0) that
\(w_a(z)\) and \(w_b(z)\) are linearly independent
because \([a]\neq [\pm b]\), and they satisfy the same
Lam\'e equation with index \(n\) because \(B_{[a]}=B_{[b]}\).
So the Schwarzian derivative of 
\(\,w_a/w_b\,\) is 
\[S(w_a/w_b)=2\big(n(n+1)\wp(z;\Lambda)+B_{[a]}\big).\]
On the other hand conditions (i)--(iv) 
tells us that 
\(w_a/w_b\) is equal to a non-zero constant times the function
\(f_{[c]}= w_{c}/w_{-c}\), where \([c]=\{c_1,\ldots, c_r\}\),
so \(S(f_{[c]})=2\big(n(n+1)\wp(z;\Lambda)+B_{[a]}\big)\).
The condition (v) above tells us that \([c]\neq [-c]\), 
so we get from Proposition \ref{poly-eqn}\,(1)
%\ref{prop:spectral_Yn}\,(1) 
that
\(\,\textup{ord}_{z=0}\,f_{[c]}\leq 2r\leq 2n-2\),
which implies that 
\(S(f_{[c]})\neq 2\big(n(n+1)\wp(z;\Lambda)+B_{[a]}\big)\).
We have proved the first sentence of (ii):
if \(B_{[a]}=B_{[b]}\), \([a], [b]\in Y_n\), then
either \([a]=[b]\) or \([a]=[-b]\).

The second sentence of Theorem \ref{hyper}\,(ii) is the content of 
Proposition \ref{XnYn}.
%we will not repeat its proof here.  
The argument below provides a different proof and
also the rest of the statement (ii) at the same time.
Suppose that \([a]=\{[a_1],\ldots, [a_n]\) is a given
point of \(Y_n\).
As in \S \ref{subsec:review-2ndsymm}, we know that
\begin{equation}\label{C22}
\begin{split}
\big(\tfrac{d}{dz} X_{[a]}(\wp(z;\Lambda))\big)^2
-2\,  X_{[a]}(\wp(z;\Lambda))\, \tfrac{d^2 }{dz^2}X_{[a]}(\wp(z;\Lambda))
\\
+4\left(n(n+1)\wp(z;\Lambda)+B_{[a]}\right)
X_{[a]}(\wp(z;\Lambda))^2
\end{split}
\end{equation}
is a constant because its derivative vanishes identically;
write this constant as \(C^2\).
This constant \(C^2\) can be evaluated by plugging in \(z=a_i\) 
in equation (\ref{C22}),
for any
\(i\) with \(1\leq i\leq n\): 
\[
C^2=
\Big(\frac{dX_{[a]}}{dx}(\wp(a_i; \Lambda))\cdot \wp'(a_i;\Lambda)\Big)^2
\]
for each \(i=1,\ldots, n\).

Suppose that \(C^2=0\).
We know from \S \ref{subsec:review-2ndsymm}
that \(w_a\) and \(w_{-a}\) are linearly dependently, therefore
\([a]=[-a]\).  The above argument also tells us that 
\(\pi^{-1}(B_{[a]})\) is the singleton 
\(\{[a]\}\).
In this case \(w_a(z)\) is a Lam\'e function of 
index \(n\): up to \(\CC^{\times}\) it is a square root of 
\(X_{[a]}(\wp(z;\Lambda))\), a polynomial of degree \(n\) 
in \(\wp(z;\Lambda)\). 
We also see that \([a]\not\in X_n\), because for each \(i\)
we know that either \(\wp'(a_i;\Lambda)=0\) or
\(\wp(a_i;\Lambda)\) is a multiple root of \(X_{[a]}(x)\).

On the other hand, suppose that  \(C^2\neq 0\). 
Then \[\wp'(a_i;\Lambda)\neq 0\quad
\textup{and}\quad \frac{dX_{[a]}}{dx}(\wp(a_i; \Lambda))\neq 0
\quad\textup{for}\ \ i=1,\ldots,n.
\]
Therefore \([a]\neq [-a]\), and \([a], [-a]\in X_n\).
After making a choice of a square root \(C\) of
\(C^2\), one can ``pick out'' \([a]\) from the pair \(\{[a], [-a]\}\) 
using
\(C\) and \(\{\wp(a_1;\Lambda),\ldots, \wp(a_n;\Lambda)\}\), by
\begin{equation}\label{chooseC}
C=\frac{dX_{[a]}}{dx}(\wp(a_i; \Lambda))\cdot \wp'(a_i;\Lambda)
\end{equation}
The above formula shows that the map \(\pi:Y_n\to \CC\) is 
a local isomorphism near \([a]\) and \([-a]\).
The procedure reviewed in \S \ref{subsec:review-2ndsymm} tells us
that the pair of ansatz functions
\(w_a\), \(w_{-a}\) are determined up to \(\CC^{\times}\) by
\(X_{[a]}(x)\), so \(\pi^{-1}(B_{[a]}) =\{[a],[-a]\}\) in this case.
We have proved Theorem \ref{hyper}\,(ii).
\qed
%
%
%
%
%either $a_i = a_i'$ for all $i$, 
%or $a_i = -a_i'$ for all $i$. 
%Let $I = \{i \mid a_i \not\in E[2],\, a_i = a_i' \}$ 
%and $J = \{i \mid a_i \not\in E[2],\, a_i = -a_i' \}$. 
%$I \cap J = \emptyset$ since there is no half period. 
%
%Suppose that $I \ne \emptyset$ and $J \ne \emptyset$. Then
%\begin{equation}
%w_{a'} = c_1 w_a + c_2 w_{-a}
%\end{equation}
%for some constants $c_1$ and $c_2$. Let $z = a_k' = a_k$ 
%for some $k \in I$, then by its very definition 
%(c.f.~(\ref{solu-lambda})) we have $w_{a'}(a_k') = 0 = w_a(a_k)$ 
%and $w_{-a}(a_k) \ne 0$, hence $c_2 = 0$. 
%Similarly let $z = a_k' = -a_k$ for some $k \in J$, 
%then $w_{a'}(a_k') = 0 = w_{-a}(-a_k)$ and $w_{a}(-a_k) \ne 0$, 
%hence $c_1 = 0$. But then $w_{a'} \equiv 0$, 
%which is a contradiction. 
%Hence $I = \emptyset$ or $J = \emptyset$. This proves the claim.
%
%Now suppose that $B = (2n - 1)\sum_{i = 1}^n \wp(a_i)$ 
%for some $a \in X_n$. By the claim we have 
%$\pi^{-1}(B) = \{a, -a\}$, i.e.~$X_n \to \mathbb C$ 
%is a two to one map (onto its image). 
%If $a \in Y_n \backslash X_n$, 
%then $a = -a$ (Proposition \ref{XnYn}) and 
%$\pi^{-1}(B) = \{a\}$ where $B = \pi(a)$. 
%Hence any $a \not\in X_n$ 
%is a branch (possibly singular) point of $\pi$. 

\begin{remarkss}\label{rem:dicho}
The proofs of  Theorem \ref{hyper}\,(i) and (ii) employed the 
method in \cite[pp.\,498--500]{Halphen} and 
\cite[pp.\,570--572]{Whittaker} which gives
a recursive formula for the product of a pair of
ansatz solutions \(w_a\) and \(w_{-a}\) in terms of
the auxiliary parameter \(B_{[a]}\), then bootstrap
to find the ansatz pair \(w_a, w_{-a}\). 

The ansatz solutions parametrized by \(Y_n\)
are eigenfunctions for the 
translation action of the lattice \(\Lambda\),
%among all solutions of Lam\'e equations with
%index \(n\in\NN_{\geq 1}\). 
and they are also eigenfunction for the 
differential operator
\(\,\frac{d^2}{dz^2}-n(n+1)\wp(z;\Lambda)\).
In this sense \(Y_n\) can be regarded as the
\emph{spectral curve} of this
second order differential operator.
\smallbreak

Theorem \ref{hyper}\,(v) asserts that for every 
\(B\in \CC\) there exist an element \([a]\in Y_n\) 
such that \(B_{[a]}=B\). We discuss the dichotomy
whether \(\pi:Y_n\to \CC\) is ramified above \(B\)
from the perspective of the translation action of 
\(\Lambda\) on the solution space of the 
Lam\'e equation \(L_{n,B_{[a]}}\).
\smallbreak
\noindent
{\bf 1.} {\scshape Case}  \([a]\in X_n\), 
equivalently \([a]\neq [-a]\).
\enspace In this case \(\CC\!\cdot\! w_a\) and
\(\CC\!\cdot\! w_{-a}\) are one-dimensional spaces of 
solutions of the same Lam\'e equation \(\,L_{n,B_{[a]}}\,\) 
but their
eigenvalue packages for the 
translation action by \(\Lambda\) are different,
hence the ansatz solutions \(\CC\!\cdot\! w_a\) and
\(\CC\!\cdot\! w_{-a}\) are \emph{intrinsic} to
the Lam\'e equation \(\,L_{n,B_{[a]}}\).
\smallbreak
\noindent
{\bf 2.} {\scshape Case} \([a]\not\in X_n\), equivalently
\([a]=[-a]\). \enspace
The assumption that \([a]\not\in X_n\) is equivalent
to \([a]=[-a]\).  We have seen in the proof of 
\ref{hyper}\,(ii) that up to a non-zero constant
\(w_a(z)\) is a square root of a polynomial
of \(\wp(z;\Lambda)\); in other words
\(w_a\) is a \emph{Lam\'e function}.
In this case the action of \(\Lambda\)
on the space of solutions of the Lam\'e 
equation \(\,L_{n,B_{[a]}}\,\)
%of index \(n\) and accessary parameter
%\(B_{[a]}\) 
is not diagonalizable, and
the Lam\'e functions \(\CC\!\cdot\! w_a\) are 
the only \(\Lambda\)-eigenfunctions 
among the space of solutions of the Lam\'e equation
\(\,L_{n,B_{[a]}}\).
\end{remarkss}

\subsubsection{}\label{rem:disc_ell_n}
{\scshape Proof of Theorem \ref{hyper}}\,(iii).\enspace
We have seen in the last paragraph of Remark \ref{rem:dicho}
that \((Y_n)^{\iota}\) is in natural bijection with
the set of all Lam\'e functions of index \(n\) up
to \(\CC^{\times}\).
One knows from classical literature that
there exists a polynomial 
\(\ell_n(B)\in \QQ[g_2(\Lambda),g_3(\Lambda)]\) of degree \(2n+1\)
in the variable \(B\), explicitly defined by
recursion, whose roots are precisely the
\(B_{[a]}\)'s with \([a]\in (Y_n)^{\iota}\);
see Theorem B in \S\ref{intro}. 
Theorem \ref{hyper}\,(iii) follows.
\smallbreak

The definition of this polynomial 
\(\ell_n(B)\) will be reviewed in the proof of
Theorem \ref{alg-hyper}.
It is known that 
\(\ell_n(B)\) has \(2n+1\) distinct real roots when
the lattice has the form 
\(\Lambda= \ZZ+ \sqrt{-1}t\ZZ\) for some \(t\in\RR_{>0}\).
This fact is stated on line 13,  page 221 in Liouville's letter \cite{liouville_1846}, where 
Liouville said that one can use Sturm's method 
to prove that the polynomial \(\ell_n(B)\),
written as \(R(B)\) in \emph{loc.\ cit.}, 
has \(2n+1\) (real) roots and therefore there are
\(2n+1\) Lam\'e functions. 
The proof in \cite[pp.\,471--476]{Halphen}
goes through a change of variable
\(u=2v\), \(y=\wp'(v;\Lambda)\), which has the advantage that
every Lam\'e function is rationally expressible in terms of 
\(\wp(v,\Lambda)\)
and \(\wp'(v;\Lambda)\);
this proof is sketched in \cite[p.\,163]{Poole}.
In \cite[\S23.41]{Whittaker} Lam\'e functions ``of the third kind''
(in the case when \(n\) is even) is discussed, 
with the other three cases left as exercises.
Sturm's method, in the form of Corollary \ref{cor:sturm},
was used in all references above.
\qed

%For $a \in Y_n \backslash X_n$, i.e.~$a = -a$, 
%such a solution $w_a(z)$ is traditionally known as a 
%\emph{Lam\'e function} and there are exactly $2n + 1$ 
%such values of $B \in \mathbb C$ 
%for general tori (c.f.~\S \ref{intro} Theorem B). 
%In general there is a universal polynomial $\ell_n(B, g_2, g_3)$ 
%of degree $2n + 1$ that defines those $B$'s. 
%(The explicit polynomial $\ell_n(B, g_2, g_3)$ 
%will later be obtained in Theorem \ref{alg-hyper}). 
%For each such $B$, the corresponding $a$ is 
%unique by the above argument. 
%Hence $\#(Y_n \backslash X_n) = 2n + 1$ by counting with multiplicities. 
%
%
%By our construction above, the coefficients $s_j(B)$ 
%are polynomials in $B$. In particular it is defined 
%for all $B \in \mathbb C$. Now for any $a^0 \in X_n$, 
%the polynomial $X$ has distinct roots 
%for all $B$ close to $B_{a^0}$. 
%Since the determination $B \mapsto a = \{a_i(B)\}$ 
%is clearly analytic in $B$, hence $B$ serves as 
%the local coordinate of $X_n$ and $X_n$ is non-singular. This proves (i).

\subsubsection{}
{\scshape Proof of Theorem \ref{hyper}}\,(iv)--(v).
\enspace

Suppose that \([a] = \{[a_1], \ldots, [a_n]\}\) is a
point of \(\bar X_n\).
By definition there exists a sequence 
\([a]_k = \{[a_{k, 1}], \ldots, [a_{k, n}]\}\in X_n\)
which converges
as $k \to \infty$. %with $a^k \in X_n$. 
Let $B_k = B_{[a]_k}= (2n - 1) \sum_{i = 1}^n \wp(a_{k, i};\Lambda)$. 
Then 
\(B := \lim_{k \to \infty} B_k\)
exists as an element of  \(\mathbb C \cup\{\infty\}=\PP^1(\CC)\).
Let $X_k(x)=X_{[a]_k}(x)$ be the polynomial of degree
\(n\) in \(x\) determined by $B_k$ 
through the recursive relation (\ref{recursive}). 
We discuss the two cases (a) \(B\in \CC\) 
and (b) \(B=\infty\in\PP^1(\CC)\).
\smallbreak

\noindent
(a) Suppose that $B \ne \infty$.  By (\ref{recursive}) 
the coefficients $s_j(B_k)$ of $X_k(x)$ are 
uniformly bounded for $j = 1, \ldots, n$. 
Thus the roots $x_{k, i} = \wp(a_{k, i};\Lambda)\in \CC$ of $X_k(x)$ 
are uniformly bounded as well. This implies that 
\(a_i\not\in \Lambda\) for \(i=1,\ldots, n\),
%$a_{k, i} \to a_i \ne 0$ for all 
%$i = 1, \ldots, n$, 
$w_{a_k} \to w_a$, 
\([a]_k\to [a]\) in \(\textup{Sym}^n(\CC/\Lambda)\),
and $w_a$ is a non-trivial solution of the Lam\'e
equation  $w'' = (n (n + 1) \wp(z;\Lambda) + B) w$. 
Notice that %as a solution to Lam\'e equation, 
we must have $a_i \ne a_j$ whenever $i \ne j$.
For otherwise \(w_a(z)\) has multiplicity
at least \(2\) at \(z=a_i\), which implies that
\(w_a\) is identically zero, a contradiction.
%$w_a(a_i) = w_a'(a_i) = 0$, 
%which is a contradiction. 
%Now $a_i \ne 0$, $a_i \ne a_j$ for $i \ne j$, and $w_a(z)$ 
%is a solution to $w'' = (n(n + 1) \wp + B)w$. 
%By Theorem \ref{lame} we conclude that $a \in Y_n$.
We conclude that \([a]\in Y_n\).
\smallbreak

\noindent
(b) Suppose that $B = \infty$. 
We claim that $[a_{k, i}] \to [0]$ for all $i=1,\ldots,n$. 
%We first notice that the recursive relation in (\ref{recursive}) 
%implies that $s_j(B)$ is a polynomial of degree $j$ in $B$. 
%By dividing out $B^n$ throughout the polynomial $X(x)$, we get
Change the variable to \(t=x^{-1}\). Look at the polynomial
\begin{equation} \label{NB}
\begin{split}
Y_k(y)&:=s_n(B_{k})^{-1}\cdot y^n\cdot  X_{[a]_k}(y^{-1})
\\ 
&=y^n-\tfrac{s_{n-1}(B_k)}{s_n(B_k)} y^{n-1}+\cdots
 + (-1)^{n-1} \tfrac{s_1(B_k)}{s_n(B_k)}
+(-1)^n\tfrac{1}{s_n(B_k)}
\end{split}
% \frac{1}{B_{[a]_k}^n} x^n - \frac{s_1(B_{[a]_k})}{B_{[a]_k}^n} x^{n - 1} 
%+ \cdots + (-1)^n \frac{s_n(B_{[a]_k})}{B_{[a]_k}^n}.
\end{equation}
whose roots are 
\(\{\wp(a_{k,1};\Lambda)^{-1},\ldots,\wp(a_{k,n};\Lambda)^{-1}\}\).
%When $B_k \to \infty$, the constant term tends to a non-zero
%constant, 
The assumption that \(B_k\to \infty\) as \(k\to \infty\) tells us that 
\(Y_k(y)\to y^n\) as \(k\to \infty\),
%while all the coefficients tends to $0$. 
%This 
which implies that all the roots $\wp(a_{k, i})^{-1}$ of $Y_k(y)$
go to \(0\) as \(k\to\infty\).
%are unbounded as $k \to \infty$. 
%That is, 
Therefore $a_{k, i} \to 0$ as \(k\to \infty\) for all \(i=1,\ldots,n\).
%Hence $a = \{0, \cdots, 0\}$. This proves (ii).
\smallbreak

Combining the cases (a) and (b), we draw the following conclusions.
\begin{itemize}
\item The map
\(\pi_n\vert_{X_n}:X_n\to \CC\) extends to a continuous map
\(\bar{\pi}_n: \bar X_n\to \PP^1(\CC)\). 
\item The inverse image \(\bar{\pi}_n^{-1}(\infty)\)
of the point \(\infty\in\PP^1(\CC)\) under \(\bar\pi\)
consists of a single point \([0]^n=\{[0],\ldots,[0]\}\).

\item The inverse image \(\bar{\pi}^{-1}(\CC)\) of
\(\CC\) under \(\bar{\pi}\) is contained in \(Y_n\).
In other words \(\bar{X}_n\smallsetminus\{[0]^n\}
\subseteq Y_n\).
\item Because \(\bar{\pi}_n\) is compact by definition, and we already
know that \(\pi_n(X_n)\) contains the completment of
a finite subset of \(\CC\), therefore
\(\bar{\pi}_n\) is surjective. 
%and \(\bar{\pi}_n^{-1}(\infty)\)
%consists of a single point \([0]^n\).
\end{itemize}
We have proved Theorem \ref{hyper}\,(v) and half of (iv).
\smallbreak

To complete the proof of Theorem \ref{hyper}\,(iv), we need to 
show that \((Y_n)^{\iota} \subset \bar{X}_n\).
Let \([a]\in (Y_n)^{\iota}\) be a given element of 
\(Y_n\smallsetminus X_n\).
We know that there exists an element \([b]\in \bar{X}_n\)
such that \(B_{[b]}=B_{[a]}\), and have seen \([b]\in Y_n\).
Theorem \ref{hyper}\,(ii) tells us that either \([b]=[a]\)
or \([b]=[-a]\).  In either case we
conclude that \([a]=[-a]=[b]\in \bar{X}_n\).
We have proved Theorem \ref{hyper}.
\qed
%must show that every any non-empty open
%neighborhood \(U\) of \([a]\) in \(Y_n\) contains elements of 
%\(X_n\). We know from \ref{hyper}\,(iii) 
%and
%the proof of \ref{hyper}\,(ii)
%that there exists an open neighborhood
%\(V\) of \(B_{[a]}\) 
%such that \(V\smallsetminus \{B_{[a]}\}\subset \pi_n(X_n)\).

%(iii) Since $X_n$ is a open Riemann surface and the double cover $\pi$ 
%is an open mapping, $\pi(X_n)$ is open in $\mathbb C$. 
%The open mapping property is not affected by adding a finite number 
%of (possibly singular) branch points. 
%Hence by (i), $\pi(Y_n)$ is also open in $\mathbb C$. 
%However, $\bar\pi(\bar Y_n) = \pi(Y_n) \cup \{\infty\}$ is compact 
%in $\mathbb P^1$ by (ii). Hence $\bar\pi(\bar Y_n)$ is 
%a one point compactification of $\pi(Y_n)$ in $\mathbb P^1 \cong S^2$. 
%This is possible only if $\p\pi(Y_n)$ consists of only one point
%$\infty$. 
%That is, $\pi(Y_n) = \mathbb C$. The proof is complete.
%\end{proof}

\begin{corollaryss}
Let \(\left([a]_k\right)_{k\in \NN} = 
\left(\{[a_{k,1}],\ldots,[a_{k,n}]\}\right)_{k\in \NN}\)  be a sequence
of elements in \(X_n\) indexed by \(\NN\).
If there exists an \(i\) between \(1\) and \(n\) such that
\([a_{k,i}] \to [0]\) in \(\CC/\Lambda\),
then $[a_{k,i}] \to [0]$ in \(\CC/\Lambda\) for all \(i=1,\ldots,n\).
\end{corollaryss}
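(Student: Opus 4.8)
The plan is to reduce the claim to the behavior of the accessory parameter $B_{[a]_k}$ along the sequence, and then invoke the analysis already carried out in the proof of Theorem \ref{hyper}\,(iv). The key observation is that the hypothesis ``$[a_{k,i}]\to[0]$ for some fixed index $i$'' forces $\wp(a_{k,i};\Lambda)\to\infty$, which in turn controls the limit of $B_{[a]_k}=(2n-1)\sum_{j=1}^n\wp(a_{k,j};\Lambda)$, provided we can rule out cancellation among the $\wp(a_{k,j};\Lambda)$.

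First I would pass to a subsequence so that $B_{[a]_k}$ converges to some value $B\in\PP^1(\CC)=\CC\cup\{\infty\}$; since the conclusion is about convergence in the compact space $\CC/\Lambda$, it suffices to prove the statement for every such convergent subsequence. The dichotomy is exactly the two cases (a) and (b) in the proof of Theorem \ref{hyper}\,(iv). In case (a), where $B\in\CC$, that proof shows the coefficients $s_j(B_k)$ of the polynomial $X_{[a]_k}(x)=(-1)^n\prod_{i=1}^n(x-\wp(a_{k,i};\Lambda))$ stay uniformly bounded, hence all roots $\wp(a_{k,i};\Lambda)$ stay bounded; but this contradicts $\wp(a_{k,i};\Lambda)\to\infty$ coming from $[a_{k,i}]\to[0]$. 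Thus along our subsequence we must be in case (b), $B=\infty$, and the proof of Theorem \ref{hyper}\,(iv)(b) shows precisely that $[a_{k,j}]\to[0]$ for \emph{all} $j=1,\ldots,n$. Since every convergent subsequence of $(B_{[a]_k})$ must land in case (b) and yields $[a_{k,j}]\to[0]$ for all $j$, the full sequence satisfies $[a_{k,j}]\to[0]$ for all $j$.

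The one point requiring care — and the main obstacle — is the passage from ``$\wp(a_{k,i};\Lambda)\to\infty$ for one $i$'' to ``$B_{[a]_k}\to\infty$,'' because the sum defining $B$ could in principle stay bounded through cancellation of a large positive $\wp(a_{k,i})$ against large negative values at other indices. I would circumvent this by not arguing directly on the sum but rather by the contrapositive structure just described: assume for contradiction that the subsequential limit $B$ lies in $\CC$; then the boundedness of all roots of $X_{[a]_k}$ established in case (a) of Theorem \ref{hyper}\,(iv) immediately bounds $\wp(a_{k,i};\Lambda)$, contradicting $\wp(a_{k,i};\Lambda)\to\infty$. This sidesteps any delicate estimate on cancellations in the sum, transferring all the work to the already-proven uniform bound on the symmetric functions $s_j(B)$. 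The only elementary input needed is that $[a_{k,i}]\to[0]$ in $\CC/\Lambda$ is equivalent to $\wp(a_{k,i};\Lambda)\to\infty$, which is immediate from the double pole of $\wp$ at the origin and its holomorphy elsewhere.
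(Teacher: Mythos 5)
Your proposal is correct and follows essentially the same route the paper intends: the corollary is stated as an immediate consequence of the dichotomy (a) $B\in\CC$ versus (b) $B=\infty$ in the proof of Theorem \ref{hyper}\,(iv)--(v), which is exactly what you exploit — case (a) is ruled out because bounded $s_j(B_k)$ force bounded roots $\wp(a_{k,i};\Lambda)$, and case (b) yields the conclusion for all indices. The only addition is the standard compactness/subsequence bookkeeping in $\PP^1(\CC)$ and $\CC/\Lambda$, which is a correct (and harmless) way to pass from subsequential limits to the full sequence.
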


%\begin{remark}
%In fact, for any $B \in \mathbb C$, the existence 
%of a solution to $w'' = (n (n + 1) \wp + B) w$ of the form 
%$w_a$ with $a = \{a_i\}$, $a_i \not\in \Lambda$ 
%was known classically (e.g.~{\cite[\S 23.7]{Whittaker}}, 
%{\cite[Ch.IX \S 39]{Poole}}). 
%\end{remark}

\begin{remarkss} \label{HH-ansatz}
The proof of Theorem \ref{hyper}\,(i) has appeared in 
\cite[p.\,499-500]{Halphen}.
The proof in  
{\cite[\S 23.7]{Whittaker}} is essentially the same,
except that \(X_{[a]}(x)\) is expressed as a polynomial in
\(x-e_2\) and recursive formula was given for the
coefficients of powers of \(x-e_2\).
%and also Remark \ref{HH-ansatz}.
We may compare our argument with {\cite[\S 23.7]{Whittaker}} 
on such a polynomial solution $X$ to (\ref{sym2-lame}), 
which is indeed the origin where $X(x)$ and $\Lambda_a$ 
were first found during our study. 
Let $X = \sum_{r = 0}^\infty c_r (x - e_2)^{n - r}$ 
be a solution of (\ref{sym2-lame}) in descending power. 
Since $p(x) = 4(x - e_1) (x - e_2) (x - e_3)$, 
there is a recursive formula for $c_r$:
\begin{equation*}
\begin{split}
&4r (n + \tfrac{1}{2} - r) (2n + 1 - r) c_r \\
&\quad = (n + 1 - r) 
\big(12 e_2 (n - r) (n - 2 - r) - 4e_2 (n^2 + n - 3) - 4B\big) c_{r -  1} 
\\
& \qquad - 4(n + 1 - r) (n + 2 - r) 
(n + \tfrac{3}{2} - r) (e_1 - e_2) (e_2 - e_3) c_{r - 2}.
\end{split}
\end{equation*}
The above recursive formula is slightly different 
from (\ref{recursive}). 
Given $c_0 = 1$ and $c_1$, we can solve 
$c_2, \ldots, c_n$ and express them as polynomials in \(c_1\)
and \(B\).
The recursive formula forces 
$c_{n + 1} = \cdots = c_{2n} = 0$. 
The next coefficient $c_{2n + 1}$ appears as another 
``free parameter'',
and the coefficients of higher order terms are expressed 
as polynomials of \(c_1\).
The condition that \(X(x)\) is a polynomial is that 
$c_l = 0$ for all $l \ge n$. 
Thus $X(x)$ is a polynomial solution, 
which is determined by $c_1$ and $B$.

From  $(-1)^n \Lambda_a \Lambda_{-a} 
= \prod_{i = 1}^n ((x - e_2) + (e_2 - x_i))
= \sum_{r = 0}^n c_r (x - e_2)^{n - r}$, 
%which shows that $c_0 = 1$ and 
we see that
$c_1 = \sum_{i = 1}^n (e_2 - x_i) 
= ne_2 - \sum_{i = 1}^n x_i = ne_2 - B$. 
Hence $X$ is a polynomial in $B$.
\end{remarkss}

\begin{remarkss}
%Part (i) of Theorem \ref{hyper} was known in the literature, 
%though the proof might be different (see \cite{Halphen, Dahmen}). 
The statement of Theorem \ref{hyper}\,(iv), that 
$\bar X_n = Y_n \cup \{\infty\}$, 
does not seem to have appeared in the literature,
but this fact must be known as it follows quickly from
the method of recovering the ansatz pair \(w_a, w_{-a}\)
from their product. 
The behavior of $X_n$ or $Y_n$ at $B = \infty$ is important, 
which will be discussed in
%the hyperelliptic structure on 
%$\bar X_n$ (c.f.~
Proposition \ref{B=infty}). 
\end{remarkss}
%\bigbreak

%\subsection{Algebraic aspects on $X_n$ and the hyperellipticity} \label{alg-Xn}
%
We would like to rephrase Theorem \ref{hyper} in purely algebraic terms 
without appealing to solutions of Lam\'e equations. It is given below, 
whose proof uses system (\ref{Alg-eqn}) instead of (\ref{ratio-eq}). 
%In this subsection, the projective hyperelliptic model of $X_n$ is 
%denoted by $\hat X_n$. 
%The identification $\bar X_n = \hat X_n$ 
%will be established in \S \ref{limit-eqn}.
\begin{theorem} \label{alg-hyper}
Let \(n\geq 1\) be a positive integer.
Let \(s_1,\ldots,s_n\in \QQ[g_2, g_3][B]\) be the polynomials 
in Theorem \textup{\ref{hyper}\,(i)} defined recursively by
the relation
\textup{(\ref{recursive})}.
\begin{itemize}
\item[(1)] The space $X_n$ admits a natural projective compactification 
$\hat X_n$ as a possibly singular, 
hyperelliptic curve defined by the following equation in $(B, C)$:
\begin{equation*}
\begin{split}
C^2 &= \ell_n(B, g_2(\Lambda), g_3(\Lambda)) \\
&= 4B\, s_n^2 + 4g_3(\Lambda)\, s_{n - 2}\, s_n 
- g_2(\Lambda)\, s_{n - 1}\, s_n - g_3(\Lambda)\, s_{n - 1}^2.
\end{split}
\end{equation*}

\item[(2)] The discriminant \(\textup{disc}_B(\ell_n(B))
\in\QQ[g_2,g_3]\)
in the variable \(B\)
of the polynomial \(\ell_n(B)\) is
a non-zero polynomial in two variables \(g_2, g_3\);
it is homogeneous of weight \(2n(2n+1)\) if
\(g_2, g_3\) are given weights \(2\) and \(3\)
respectively.
In other words \(\textup{disc}_B(\ell_n(B))
\in \QQ[g_2(\Lambda), g_3(\Lambda)]\) is
a non-zero 
modular form of weight \(4n(2n+1)\)
for the full modular group
\(\textup{SL}_2(\ZZ)\), holomorphic
on \(\HH\) and also on the cusps.
%which is holomorphic at the cusps.
%There exists a finite subset 
%\(D_n\subset \textup{SL}_2(\ZZ)\backslash \HH\) such that
%the polynomial
%\(\ell_n(B, g_2(\Lambda_{\tau}), g_3(\Lambda_{\tau}))
%\) in \(B\) has \(2n+1\) distinct roots
%whenever \(\,\textup{SL}_2(\ZZ)\tau\not\in D__n\).

\end{itemize}
\end{theorem}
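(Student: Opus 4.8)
The plan is to establish Theorem \ref{alg-hyper} in two stages, corresponding to its two parts. For part (1), the strategy is to recognize the formula for $C^2$ as the specialization to the Lam\'e setting of the first-integral identity \eqref{C2} derived in \S\ref{subsec:review-2ndsymm}. Recall that for the second symmetric power of a second order ODE $w'' = Iw$, any solution $X$ of \eqref{sym-2} satisfies $C^2 = X'^2 - 2X''X + 4IX^2$ for a constant $C$. Here I would take $I = n(n+1)\wp(z;\Lambda)+B$ and $X = X_{[a]}(\wp(z;\Lambda))$, the degree-$n$ polynomial solution constructed in the proof of Theorem \ref{hyper}\,(i) via the recursion \eqref{recursive}. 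First I would substitute the Weierstrass relations $\tfrac{d}{dz}\wp = \wp'$ and $(\wp')^2 = p(\wp) = 4\wp^3 - g_2\wp - g_3$ into the expression \eqref{C22}, converting everything into a polynomial identity in the single variable $x = \wp(z;\Lambda)$. Since the left-hand side is a constant (its $z$-derivative vanishes), this constant must equal the value of the resulting polynomial in $x$, and I would read off $C^2$ as that constant. The key computation is to express $C^2$ in terms of the coefficients $s_1,\ldots,s_n$ of $X_{[a]}$; writing $X_{[a]}(x) = (-1)^n(x^n - s_1 x^{n-1} + \cdots + (-1)^n s_n)$ and tracking the top-degree cancellations in $X'^2 - 2X''X + 4IX^2$ (now interpreted algebraically using $p(x)$ for $(\wp')^2$), the highest powers of $x$ must cancel precisely because $X$ solves \eqref{sym2-lame}, leaving a constant. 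Matching coefficients yields the stated formula $\ell_n(B,g_2,g_3) = 4B\,s_n^2 + 4g_3\,s_{n-2}s_n - g_2\,s_{n-1}s_n - g_3\,s_{n-1}^2$. That $\hat X_n$ compactifies $X_n$ and has the hyperelliptic structure then follows from Theorem \ref{hyper}: the regular function $C$ on $X_n$ defined by \eqref{chooseC} satisfies exactly this relation, the involution $\iota$ is $C \mapsto -C$, and the branch points $C = 0$ correspond to $(Y_n)^\iota$, i.e.\ the $2n+1$ Lam\'e functions.

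For part (2), the plan is to deduce non-vanishing of the discriminant from facts already available, rather than computing it directly. The weight and homogeneity claims are formal: by Theorem \ref{hyper}\,(i) each $s_i(B)$ is homogeneous of weight $i$ when $B,g_2,g_3$ carry weights $1,2,3$, so inspection of the four terms of $\ell_n$ shows $\ell_n$ is homogeneous of weight $2n+1$ in $(B,g_2,g_3)$; it is monic of degree $2n+1$ in $B$ (matching Theorem B and the polynomial $\ell_n(B)$ of \ref{rem:disc_ell_n}). The discriminant $\mathrm{disc}_B(\ell_n)$ is then a polynomial in $g_2,g_3$ alone, and a standard weight count gives its weight as $2(2n+1)(2n) = 2n(2n+1) \cdot 2$; being a polynomial in $g_2,g_3$ homogeneous under the $(2,3)$-grading, it transforms as a modular form of weight $4n(2n+1)$ for $\mathrm{SL}_2(\ZZ)$, holomorphic on $\HH$ and at the cusp. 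The crucial point is that $\mathrm{disc}_B(\ell_n)$ is not identically zero, which amounts to the assertion that $\ell_n(B)$ has $2n+1$ distinct roots for some lattice.

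The hard part will be this last non-vanishing statement, and here I would invoke the result already established in Theorem \ref{BH-poly}\,(b) together with the classical Sturm-sequence argument recorded in \S\ref{rem:sturm-seq}. Specifically, for a rectangular lattice $\Lambda = \ZZ + \sqrt{-1}\,t\,\ZZ$ with $t \in \RR_{>0}$, the discussion in \ref{rem:disc_ell_n} (following Liouville, Halphen, and Crawford, using Corollary \ref{cor:sturm}) shows that $\ell_n(B)$ has $2n+1$ distinct real roots, hence its discriminant is non-zero at that lattice. Since $\mathrm{disc}_B(\ell_n)$ is a polynomial in $g_2,g_3$ that is non-zero at one point of the moduli space, it is not identically zero as a modular form. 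I would then conclude that $\hat X_n$ is smooth for all but the finitely many homothety classes of lattices where the modular form $\mathrm{disc}_B(\ell_n)$ vanishes, which recovers the exceptional set $S_n$ of Theorem \ref{BH-poly}\,(b). The only genuine subtlety to verify carefully is the bookkeeping in the weight-count for the discriminant — that the $(2,3)$-weighted homogeneity of a degree-$(2n+1)$ monic polynomial in $B$ forces its $B$-discriminant to have the asserted weight — but this is a routine consequence of the homogeneity of the resultant/discriminant under scaling of the grading variables, and presents no real obstacle once the homogeneity of $\ell_n$ from part (1) is in hand.
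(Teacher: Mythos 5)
Your proposal is correct, but for part (1) it takes a genuinely different route from the paper's own proof. You obtain $C^2=\ell_n(B)$ by algebraizing the analytic first-integral identity (\ref{C2})/(\ref{C22}): you feed the polynomial solution $X_{[a]}$ of Theorem \ref{hyper}\,(i) into the ODE formalism of \S\ref{subsec:review-2ndsymm}, convert $z$-derivatives into $x$-derivatives via $(\wp')^2=p(\wp)$ and $\wp''=\tfrac{1}{2}p'(\wp)$, and read off the constant term; the compactification and bijectivity statements are then inherited from Theorem \ref{hyper}\,(ii),(iii),(v) together with (\ref{chooseC}). The paper instead gives a proof that never invokes Lam\'e solutions or the ansatz functions (this is precisely the point of the sentence preceding the theorem): starting from the defining equations of $X_n$, Cramer's rule applied to the linear system $\sum_i x_i^k y_i=0$ produces the constant $C$ with $y_i=C/\prod_{j\neq i}(x_i-x_j)$; the divisibility $q(x)\mid p(x)q'(x)^2-C^2$ and a partial-fraction computation yield the identity (\ref{int-eq}), whose differentiation re-derives the recursion (\ref{recursive}) and whose constant term gives $\ell_n$; and surjectivity onto $\{(B,C): C^2=\ell_n(B),\ C\neq 0\}$ is proved by reconstructing $(x_i,y_i)$ directly from a given pair $(B,C)$. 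The two computations converge: your algebraized form of (\ref{C22}) is literally (\ref{int-eq}) (with $B=(2n-1)s_1$), so the constant-term extraction—which indeed gives $4Bs_n^2+4g_3s_{n-2}s_n-g_2s_{n-1}s_n-g_3s_{n-1}^2$—is the same in both treatments. Your route buys brevity, since Theorem \ref{hyper} is already available; the paper's route buys logical independence from the analytic theory, an intrinsic algebraic construction of the coordinate $C$ (c.f.\ Remark \ref{MeaningC}), and a self-contained converse direction. For part (2) you argue exactly as the paper does: the homogeneity claims are formal, and non-vanishing of $\textup{disc}_B(\ell_n)$ follows from the $2n+1$ distinct real roots of $\ell_n(B)$ on rectangular tori via Sturm's method (\ref{rem:disc_ell_n}, Corollary \ref{cor:sturm}). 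One bookkeeping correction: under the $(1,2,3)$-grading of $(B,g_2,g_3)$, the discriminant $\prod_{i<j}(r_i-r_j)^2$ of the monic degree-$(2n+1)$ polynomial $\ell_n$ has weight $(2n+1)(2n)=2n(2n+1)$, and the modular weight is twice the grading weight, namely $4n(2n+1)$; your intermediate figure $2(2n+1)(2n)$ equals the modular weight, so your conclusion is right, but it should not be presented as the grading weight, which the theorem correctly states as $2n(2n+1)$.
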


\begin{remark*}
The polynomial \(\ell_n(B,g_2,g_3)\) has degree \(2n+1\) 
in the variable \(B\); it is homogeneous of weight \(2n+1\)
in \(B, g_2\) and \(g_3\) when \(B, g_2, g_3\) are given
weights \(1, 2, 3\) respectively.
The projective curve $\hat X_n$ has arithmetic genus \(n\);
%has arithmetic genus $p_a(\hat X_n) = n$.
%The curve $\hat X_n$ 
it is smooth unless \(\textup{disc}_B(\ell_n(B))(\Lambda)=0\).
%except for possibly 
%a finite number of values of $\tau$, 
%namely the discriminant loci of $\ell_n(B, g_2, g_3)$, 
%so that $\ell_n(B)$ has multiple roots. 
\end{remark*}

\begin{proof}
Let $p(x) = 4x^3 - g_2 x - g_3$
and let  $q(x) = \prod_{j = 1}^n (x - x_j)$. 
The set $X_n$ is defined by 
\(2n-1\) polynomial equations
\[y_i^2 = p(x_i)\ \ \forall\,i=1,\ldots,n
\quad\textup{and}\quad
\sum_{i = 1}^n x_i^k\, y_i = 0
\ \ \forall\,k=0,1,\ldots, n-2
\] 
%under the non-degenerate conditions
in the \(2n\) variables \(x_1,\ldots, x_n; y_1,\ldots, y_n\)
and \(\,n(n+1)/2\,\) inequalities
\[x_i \ne x_j \ \ \forall i\neq j,\ 1\le i, j\leq 1 \quad
\textup{and}\quad
y_i \ne 0\ \
\forall i=1,\ldots, n.
\]

Applying Cramer's rule to the $n - 1$ linear equations 
$\sum_{i = 1}^n x_i^k\, y_i = 0$ in $y_i$'s, 
we conclude that there is a constant\footnote{This ``constant''
\(C\) depends on \(n\) and the lattice \(\Lambda\).}
$C \in \mathbb C^\times$ such that
\begin{equation} \label{(0)}
y_i = \frac{C}{\prod_{j \ne i} (x_i - x_j)}, \qquad i = 1, \ldots, n.
\end{equation}
Since $q'(x_i) = \prod_{j \ne i} (x_i - x_j)$, we get
\begin{equation} \label{C^2}
p(x_i) q'(x_i)^2 = C^2, \quad i = 1, \ldots, n,
\end{equation}
and so $q(x) | (p(x) q'(x)^2 -C^2$). 
This implies that there are $h_1, \ldots, h_n, a, b \in \mathbb C$ such that
\begin{equation} \label{(1)}
\frac{p(x) q'(x)^2 - C^2}{q(x)^2} = \sum_{i = 1}^n \frac{h_i}{(x - x_i)} + ax + b.
\end{equation}
It is easy to compute (e.g.~using power series expansion of $p, q$ at $x_i$)
\begin{equation} \label{(2)}
\begin{split}
h_i &= {\rm Res}_{x = x_i} p(x) \frac{q'(x)^2}{q(x)^2} 
- {\rm Res}_{x = x_i} \frac{C^2}{q(x)^2} \\
&= p'(x_i) + p(x_i) \frac{q''(x_i)}{q'(x_i)} 
+ C^2 \frac{q''(x_i)}{q'(x_i)^3} \\
&= p'(x_i) + 2p(x_i) \frac{q''(x_i)}{q'(x_i)}.
\end{split}
\end{equation}
From (\ref{(1)}) we get
\begin{equation} \label{(3)}
p(x) q'(x)^2 - C^2 = \sum_{i = 1}^n h_i \frac{q(x)^2}{(x - x_i)} + (ax + b) q^2(x). 
\end{equation}
Comparing coefficients of $x^{2n + 1}$ and $x^{2n}$ on both sides, we get
\begin{equation} \label{(4)}
a = 4n^2, \qquad b = 8n \sum_{i = 1}^n x_i = 8n\, s_1
\end{equation}
(recall that $q(x) = x^n - s_1\, x^{n - 1} + \cdots + (-1)^n s_n$ 
and so $s_1 = \sum_{i = 1}^n x_i$). 

Now, in a similar and easier manner, we write
\begin{equation} \label{(5)}
\begin{split}
\frac{p'(x) q'(x)}{q(x)} 
&= \sum_{i = 1}^n \frac{p'(x_i)}{(x - x_i)} + (12n x + 12 s_1), \\
\frac{p(x) q''(x)}{q(x)} 
&= \sum_{i = 1}^n \frac{p(x_i)}{(x - x_i)} 
\frac{q''(x_i)}{q'(x_i)} + (4n(n - 1) x + 8(n - 1)s_1).
\end{split}
\end{equation}
Then (\ref{(2)}), (\ref{(3)}), (\ref{(4)}) and (\ref{(5)}) lead to 
\begin{equation} \label{int-eq}
p q'^2 - p' q' q -2 p q'' q + 4\big(n(n + 1) x + (2n -1) s_1\big) q^2 - C^2 = 0.
\end{equation}

One more differentiation gives
\begin{equation*}
\begin{split}
0&=p' q'^2 + 2p q' q'' - p'' q' q 
- p' q'' q - p' q'^2 - 2p' q'' q - 2p q''' q - 2p q'' q' \\
&\qquad + 4n(n + 1) q^2 + 8 (n(n + 1) x + (2n - 1) s_1) q q' \\
& = -2q\big(p q''' + 
\tfrac{3}{2} p' q''  - 4((n^2 + n - 3)x + B)q' - 2n(n + 1) q\big),
\end{split}
\end{equation*}
which is $(-2q)$ times the linear ODE (\ref{sym2-lame}), 
and so the same recursive relation (\ref{recursive}) 
shows that $q$ is determined by $s_1$.

Suppose we have two different points 
\(\underline{x}=\{(x_1, y_1),\ldots, (x_n,y_n)\}\) 
\(\underline{x}'=\{(x_1', y_1'), \ldots, (x_n', y_n') \}\) in 
$X_n$ such that 
$\pi_n(\underline{x})=\sum_{i=1}^n x_i 
= \sum_{i=1}^n x_i'=\pi_n(\underline{y})$, by rearrangement 
we have $x_i = x_i'$ for all $i$ 
and then $y_i' = \pm y_i$ for all $i$. 
If $y_i = y_i'$ for some $i$, then by (\ref{(0)}), 
$$
\frac{C'}{\prod_{j \ne i}(x_i' - x_j')} 
= \frac{C}{\prod_{j \ne i}(x_i - x_j)},
$$
which implies that $C = C'$ and $y_j = y_j'$ for all $j$, 
a contradiction. Hence $y_i = -y_i'$ for all $i$.
%This proves the two to one statement.
We have shown that if two different points 
\(\underline{x}, \underline{x}'\)
of \(X_n\) have the same
image in \(\CC\) under the map \(\pi_n\), then 
\(\underline{x}'=\iota(\underline{x})\), where
\(\iota\) is the involution on \(X_n\) defined by
``multiplication by \(-1\)'' on \(\CC/\Lambda\).

The constant terms in formula (\ref{int-eq}) leads to
\begin{equation} \label{ln-poly}
C^2 = \ell_n(B) = 4B\, s_n^2 + 4g_3\, s_{n - 2}\, s_n 
- g_2\, s_{n - 1}\, s_n - g_3\, s_{n - 1}^2,
\end{equation}
where $s_k = s_k(B)$ is a polynomial of degree $k$ 
and $B = (2n - 1) s_1$. Thus $\deg \ell_n = 2n + 1$. 
Equation (\ref{ln-poly}) 
provides a natural algebraic (hyperelliptic) compactification 
$\hat X_n$ of $X_n$.

To make this precise, we show that $X_n$ is mapped 
onto those $B \in \mathbb C$ with $C^2 = \ell_n(B) \ne 0$. 
Indeed we define $s_k$ by $s_k(B)$ and $x_i$'s 
by $q(x) = x^n - s_1 x^{n - 1} + \cdots + (-1)^n s_n 
= \prod_{i = 1}^n (x - x_i)$. 
Then (\ref{int-eq}) holds and by substituting $x = x_i$
 we get $p(x_i) q'(x_i)^2 = C^2$ as in (\ref{C^2}). 

If $C \ne 0$, we get $p(x_i) \ne 0$ and $q'(x_i) \ne 0$ 
which give the non-degenerate conditions. Now we define 
$$
y_i := \frac{C}{q'(x_i)} 
= \frac{C}{\prod_{j \ne i} (x_i - x_j)} \ne 0, \qquad i = 1, \ldots, n.
$$
Then $y_i^2 = p(x_i)$ and $\{(x_i, y_i)\}$ solves 
the system of equations 
\[\sum_{i = 1}^n x_i^k\, y_i = 0 \qquad k = 0, \ldots, n - 2,\] 
hence gives rise to a point in $X_n$. 

If $C= 0$, we have either $p(x_i) = 0$ or $q'(x_i) = 0$
for all \(i=1,\ldots, n\).
Let $x_i = \wp(a_i)$. In the former case $a_i = -a_i$ 
is a half period and $y_i = 0$. In the latter case $x_i = x_j$ 
for some $j \ne i$. 
Notice that $\{(x_i, y_i)\}$ still satisfies the equations 
$\sum_{i = 1}^n x_i^k y_i = 0$ for $k = 0, \ldots, n - 2$ 
since they define a closed set. 
Now the same argument in the proof of Theorem \ref{poly-eqn} 
shows that 
%either $y_i = 0$ or $y_i = -y_j \ne 0$ 
%for some $j \ne i$, namely $a_i = -a_j \not \in \tfrac{1}{2}
%\Lambda$. 
%In particular, $a = -a$.
\([a]=[-a]\), where \([a]=\{[a_1],\ldots,[a_n]\}\).

If $B \to \infty$, 
then the first \(n\) elementary 
symmetric polynomials for the unordered list 
\(x_1^{-1},\ldots, x_n^{-1}\)
all go to \(0\), because
the \(i\)-th elementary symmetric polynomial
in \(x_1^{-1},\ldots, x_n^{-1}\) is
\(\,\frac{s_{n-i}}{s_n}\) for \(i=1,\ldots, n\).
Since $x_i = \wp(a_i)$, we get $a_i \to 0$ for all $i$. 
That is, $\bar\pi^{-1}(\infty) = (0, \ldots, 0)$.

We have proved Theorem \ref{alg-hyper}\,(1) at this point.
The statement of Theorem \ref{alg-hyper}\,(2) is a consequence
of the second paragraph of \ref{rem:disc_ell_n}
in the proof of Theorem \ref{hyper}\,(iii).
There we recalled that 
for a rectangular lattice 
%\(\ZZ+2\pi\sqrt{-1}\,t\ZZ\)
\(\Lambda_{\tau}\) with
\(\tau\!\in\! \sqrt{-1}\,\RR_{>0}\), 
the polynomial \(\ell_n(B;\Lambda_{\tau})\) in \(B\) has \(2n+1\)
distinct real roots, and gave references for this fact.
Clearly this fact implies that the
discriminant of \(\,\ell_n(B)\,\) is not identically zero.
Theorem \ref{alg-hyper}\,(2) follows.
%Thus there is only a finite number of tori with 
%$\hat X_n$ singular which correspond to the cases that 
%$\ell_n(B, g_2, g_3)$ has multiple roots. 
%In all cases the curve $\hat X_n$ is defined by 
%$C^2 = \ell_n(B)$, with the hyperelliptic compactification 
%at infinity, which has arithmetic genus $p_a(\hat X_n) = n$.
\end{proof}

\begin{exampless}
For $n = 1$, $s_0 = 1$, $s_1 = B$ and then
$$
C^2 = \ell_1(B) = 4 B^3 - g_2 B - g_3
$$
which is exactly the equation for $E$, since $\bar X_1 \cong E$.

For $n = 2$, $s_0 = 1$, $s_1 = \tfrac{1}{3} B$, 
$s_2 = \tfrac{1}{9} B^2 - \tfrac{1}{4} g_2$, and then
\begin{equation*}
\begin{split} 
C^2 = \ell_2(B) &= \tfrac{4}{81} B^5 - \tfrac{7}{27} g_2 B^3 
+ \tfrac{1}{3} g_3 B^2 + \tfrac{1}{3} g_2^2 B - g_2 g_3 \\
&= \tfrac{1}{81} (B^2 - 3g_2) (4 B^3 - 9 g_2 B + 27 g_3).
\end{split}
\end{equation*}
In terms of $s_1$, it is 
$C^2 = \ell_2(3s_1) = (3 s_1^2 - g_2)(4s_1^3 - g_2 s_1 + g_3)$.

For $n = 3$, $s_0 = 1$, $s_1 = \tfrac{1}{5} B$, $s_2 
= \tfrac{2}{75} B^2 - \tfrac{1}{4} g_2$, $s_3 
= \tfrac{1}{3^2 5^2} B^3 - \tfrac{1}{3\cdot 5} g_2 B 
+ \tfrac{1}{4} g_3$, and then
\begin{equation*}
\begin{split}
&C^2 = \ell_3(B) \\
&= \tfrac{1}{2^2 3^4 5^4} B(16B^6 - 504 g_2 B^4 + 2376 g_3 B^3 \\
&\qquad + 4185 g_2^2 B^2 - 36450 g_2 g_3 B + 91125 g_3^2 - 3375 g_2^3) \\
&=  s_1(\tfrac{500}{81} s_1^6 
- \tfrac{70}{9}g_2 s_1^4 + \tfrac{22}{3} g_3 s_1^3 
+ \tfrac{31}{12}g_2^2 s_1^2 - \tfrac{9}{2} g_2 g_3 s_1 
+ \tfrac{9}{4} g_3^2 - \tfrac{1}{12} g_2^3).
\end{split}
\end{equation*}
\end{exampless}

\begin{remark*} The referee has kindly informed us that
the curve \(\,C^2=\ell_2(B)\,\) appeared in 
\cite[p.\,63]{enolski-its} as a hyperelliptic curve 
\(\hat{C}\) whose affine coordinates \((z, w)\) are related
to \((B, C)\) here by
\(z=-B\) and \(w=\sqrt{-1}\frac{9\,C}{2}\).

The paper \cite{enolski-its} is based on the general construction
of spectral curves \(\Gamma_n\) 
introduced in \cite[\S1]{Krichever:ellip_KP}.
Note that the factors \(\,e^{\zeta(a_i)z}\frac{\sigma(z-a_i)}{\sigma(z)}\,\)
of the ansatz function \(\,w_{a}(z)\,\) in Definition \ref{def:w_a}
appeared in \cite[p.\,284]{Krichever:ellip_KP} up to a factor
\(-\sigma(a_i)\): the function \(\Phi(x,\alpha)\) in \cite{Krichever:ellip_KP} 
is \(\,-e^{\zeta(\alpha)z}\frac{\sigma(z-\alpha)}{\sigma(z)\sigma(\alpha)}\).

Explicit examples of Riemann surfaces associated to (finite gap) Lam\'e
potentials and Treibich-Verdier potentials can be found in 
\cite{Treibich-Verdier}.
\end{remark*}

\begin{remarkss}[Meaning of the parameter $C$] \label{MeaningC}
We have introduced the same notation $C$ in various places. 
Indeed they are all equivalent: 
The constant $C$ in (\ref{(0)}) coincides with the constant $C$ 
in (\ref{poly=const}) by setting 
$w = x_i = \wp(a_i)$ in (\ref{poly=const}). 
It also coincide with the Wronskian $C$ defined in (\ref{WronskianC}) 
up to sign by comparing (\ref{g=-C/X}) with the expression 
of $g(z)$ in (\ref{g-sum}) (using (\ref{poly=const}) and
(\ref{X-poly})). 
These equivalences allow us to study the 
hyperelliptic curve $Y_n$ from both the analytic 
and algebraic point of views at the same time. 
\end{remarkss}

\begin{remarkss}[Relation to KdV theory] \label{KdV}
There are several methods for computing compute $\ell_n(B)$ in the literature, 
e.g.~\cite{Dahmen, Waall}. 
%Our computation for $\ell_n(B)$ seems to be easier and more direct. 
It is also interesting to note that the hyperelliptic curve $\hat X_n$ 
also appears in the study of KdV equations, where it is known as 
the \emph{spectral curve}.

Indeed in KdV theory,  a differential operator $P_{2n + 1}$ 
of order $2n + 1$ is constructed by 
$$
P_{2n + 1} = \sum_{l = 0}^n (f'_{n - l}(z) - \tfrac{1}{2} f_{n - l}(z)) L^l, 
$$
where $L = -d^2/dz^2 + u(z)$, $f_0(z) = 1$ and $f_k(z)$ 
satisfies the recursive relation
\begin{equation} \label{(*)}
f'_{k + 1} = -\tfrac{1}{4} f'''_k + u f'_k + \tfrac{1}{2} u' f_k, 
\qquad k = 0, 1, 2, \cdots.
\end{equation}
Using the recursion (\ref{(*)}), we have
$$
[P_{2n + 1}, L] = 2f_{n + 1}'.
$$
A potential $u(z)$ is called a stationary solution to an $n$-th 
KdV hierarchy equation if $f'_{n + 1} = 0$. Let 
$$
F(z; E) = \sum_{l = 0}^n f_{n - l}(z) E^l.
$$
Then $F(z; E)$ satisfies 
\begin{equation} \label{(**)}
F''' - 4(u - E) F' - 2u' F = 0.
\end{equation}
Conversely, if $F(z; E)$ is a monic polynomial in $E$ of degree $n$ 
and satisfies (\ref{(**)}), write $F(z; E) = \sum_{l = 0}^n f_{n -  l}(z) E^l$. 
Then $f_k(z)$ satisfies (\ref{(*)}) with $f_k = 0$ for $k \ge n + 1$. 
By integrating (\ref{(**)}), we obtain
\begin{equation} \label{(***)}
\tfrac{1}{2} F'' F - \tfrac{1}{4} (F")^2 - (u - E) F^2 = R_{2n + 1}(E),
\end{equation}
where $R_{2n + 1}(E)$ is independent of $z$ and is a monic polynomial 
in $E$ of degree $2n + 1$. The spectral curve for the potential $u$,
if \(u\) is a stationary solution of the \(n\)-th KdV hierarchy, 
is by definition the hyperelliptic curve 
$$
y^2 = R_{2n + 1}(E);
$$
it parametrizes one-dimensional eigenspaces of the
commutator subring of the differential operator \(L\)
in the space of ordinary differential operators.

If $u(z) = n(n + 1) \wp(z)$ is the Lam\'e potential and $B = -E$, 
then (\ref{(**)}) is identical to (\ref{sym2-z}) with $F(z; E) =X(z)$. 
As we have seen already, $X(z)$ is also a polynomial in $\wp(z)$. 
By using $x = \wp(z)$, (\ref{int-eq}) is identical to (\ref{(***)}) 
(with $C^2 = 4 R_{2n + 1}(E)$ and $E = -B$). By this adjustment, 
the curve (\ref{ln-poly}) is identical to the spectral curve in KdV theory. 
For more details see \cite[Ch.1\,\S2]{GH}.
\medbreak

The Lam\'e potential is a very special type of
\emph{finite gap potentials}. 
There is an extensive literature. The readers may consult 
\cite{Ince, Dubrovin, IM, Novikov, MM, Krichever:alggeom_nonlineareq, Krichever:ellip_KP, Krichever:nonlinear_eqn_ellip_curve, Shiota, Treibich-Verdier, Krichever:ellip1994, Smirnov1994, Smirnov2002, Smirnov2006}.

The Lam\'e potential is also a special case of
\emph{Picard potential} \cite{GW2};
%From the viewpoint of Picard potentials, 
the system of equations (\ref{zeta-eq}) (i.e.~equations for $Y_n$ (\ref{Yn})) 
appeared in {\cite[(3.8) in p.82]{GW}}. 
According to \cite[Rmk.\,3.3]{GW}, that was the first time 
after \cite{Burkhardt} when (\ref{zeta-eq}) 
reappeared in mathematical publications. 
However in a comment in {\cite[p.83]{GW}} the authors 
said that \emph{the conditions} (\ref{zeta-eq}) 
\emph{appear to be too difficult to be handled directly}, 
so they turned to develop another method to compute the spectral curve.
\end{remarkss}
%\bigbreak
%\begin{remark}
%This expression (\ref{int-eq}), which lies in the heart of our study, 
%had also appeared in the classical literature 
%{\cite[p.501]{Halphen}}, {\cite[p.572]{Whittaker}} in different contexts.
%\end{remark}

%\begin{remark} \label{sturm}
%For rectangular tori, a proof for the classical result that $\ell_n(B)$ 
%has distinct real roots can be found in the old classics 
%like {\cite[p.163]{Poole}} or {\cite[p.557]{Whittaker}}. 
%However, all the proofs requires a version of Sturm's sequence 
%which may not be easily found in the current algebra books. 
%The interested readers may consult {\cite[p.149, exercise  30]{Uspenski}}, 
%for the version needed.
%\end{remark}

%\subsection{The limiting equations at $B = \infty$ and the completion 
%of the proof of the hyperelliptic structure on $\bar X_n = \hat X_n$} 
%\label{limit-eqn}

The following proposition arises from the study of the process $B \to
\infty$. 
When $x_i \to \infty$, we have $y_i \to \infty$ too. 
Asymptotically $(x_i, y_i) \sim (t_i^2, 2 t_i^3)$ 
hence $\sum x_i^k y_i \sim 2\sum t_i^{3 + 2k}$. 
The uniqueness of $\bar\pi^{-1}(\infty)$ suggests the uniqueness 
of solutions of the limiting equations up to permutations. 
It turns out to be true and can be proved along the similar reasoning as above.

\begin{proposition} \label{B=infty}
Consider the following system of $n - 1$ homogeneous equations 
in $\mathbb P^{n - 1}(\CC)$ ($n \ge 2$) with coordinates $t_1, \ldots, t_n$:
\begin{equation}\label{eqn-at-infty}
\sum_{i = 1}^n t_i^{2k + 1} = 0, \qquad k = 1, 2, \ldots, n - 1,
\end{equation}
subject to the non-degenerate conditions $\prod_{i = 1}^n t_i \ne 0$ 
and $\prod_{i < j} (t_i + t_j) \ne 0$. 
Then the solution exists uniquely up to permutations.
\end{proposition}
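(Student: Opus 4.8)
$\textbf{Proof proposal.}$ The plan is to characterize the solutions of \eqref{eqn-at-infty} as the roots of a single monic polynomial that is forced to be $t^n \pm(\text{constant})$, so that the unordered set $\{t_1,\ldots,t_n\}$ is determined up to a scalar (hence up to the projective equivalence and permutation). First I would set $q(t)=\prod_{i=1}^n(t-t_i)=t^n-\sigma_1 t^{n-1}+\cdots+(-1)^n\sigma_n$ and work with the power sums $p_k=\sum_i t_i^k$. The equations \eqref{eqn-at-infty} say exactly that the odd power sums $p_3,p_5,\ldots,p_{2n-1}$ all vanish. The non-degeneracy conditions $\prod_i t_i\neq 0$ and $\prod_{i<j}(t_i+t_j)\neq 0$ say that no $t_i$ is zero and no two $t_i$ are negatives of each other; in particular the $t_i^2$ are pairwise distinct except possibly for coincidences that I will have to track.

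The key mechanism I would exploit is the symmetry $t\mapsto -t$. Consider the ``reflected'' set $\{-t_1,\ldots,-t_n\}$, with elementary symmetric functions $(-1)^i\sigma_i$ and power sums $(-1)^k p_k$. Newton's identities express $\sigma_i$ recursively in terms of $p_1,\ldots,p_i$. The cleaner route is to compare $q(t)$ with $(-1)^nq(-t)=\prod_i(t+t_i)$: the two polynomials $q(t)$ and $(-1)^n q(-t)$ have power sums differing only in the odd-index entries, and since all odd power sums from $p_3$ up to $p_{2n-1}$ vanish, I would argue by induction via Newton's identities that $q(t)$ and $(-1)^n q(-t)$ share the same coefficients in all degrees $\geq 1$ except possibly the degree-$(n-1)$ coefficient $\sigma_1$ and the constant term. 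The vanishing of $p_3,\ldots,p_{2n-1}$ pins down $\sigma_2,\sigma_3,\ldots$ in terms of $\sigma_1$, and I expect to conclude that $q(t)=t^n-\sigma_1 t^{n-1}+c$ reduces, after using the homogeneity to normalize (say) $\sigma_1$, to the assertion that $q(t)$ is an even or odd polynomial in $t$ up to the leading adjustment. Concretely, I anticipate showing $q(t)=t^n + (\text{lower})$ is forced to equal $t^n - \sigma_1 t^{n-1}$ plus a constant with all intermediate coefficients zero, whence $\{t_i^2\}$ is the root set of a single polynomial and is therefore uniquely determined.

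The cleanest formulation, which I would aim for, is to pass to $u_i=t_i^2$ and observe that the odd power sum conditions translate into linear conditions on the $y_i:=t_i$ weighted by powers of $u_i$, exactly of the shape $\sum_i u_i^k y_i=0$ for $k=1,\ldots,n-1$ — precisely the homogeneous analogue of system \eqref{Alg-eqn} studied in Theorem \ref{poly-eqn} and Proposition \ref{eq-2-sys}. I would then invoke Proposition \ref{eq-2-sys} together with the Vandermonde/Cramer argument (as in the proof of Theorem \ref{alg-hyper}, equation \eqref{(0)}) to deduce $y_i=\dfrac{C}{\prod_{j\neq i}(u_i-u_j)}$ for a common constant $C$, provided the $u_i$ are distinct; this is where the non-degeneracy $t_i+t_j\neq 0$ enters, since it guarantees $u_i\neq u_j$ for $i\neq j$. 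From $y_i^2=u_i$ I get $u_i\prod_{j\neq i}(u_i-u_j)^2=C^2$ for all $i$, which says all $u_i$ are roots of the single equation $u\,(q_u'(u))^2=C^2$, where $q_u(u)=\prod_i(u-u_i)$; matching this against the recursive/closure argument forces $q_u$ to be determined up to scaling, giving uniqueness up to permutation and the projective scaling.

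The main obstacle I foresee is the degenerate case where the $u_i=t_i^2$ are $\emph{not}$ all distinct, i.e.\ when $t_i=-t_j$ is excluded but $t_i=t_j$ could in principle occur, or when the linear system $\sum u_i^k y_i=0$ becomes rank-deficient. I would handle this exactly as in Remark \ref{rem:deg-solutions}: collapsing equal $u_i$-values into aggregated unknowns $z_j$ and using the nonvanishing of the Vandermonde determinant in the distinct values $t_j$ to force all aggregated sums to vanish, which then contradicts the non-degeneracy $\prod_i t_i\neq 0$ unless the $u_i$ were distinct to begin with. Establishing that this collapse is incompatible with $\prod_{i<j}(t_i+t_j)\neq 0$ — and thereby ruling out all degenerate configurations — is the delicate bookkeeping step; once it is in place, the Vandermonde/Cramer inversion and the resulting single-polynomial characterization deliver uniqueness up to permutation immediately.
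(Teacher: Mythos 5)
Your ``cleanest formulation'' is in fact the skeleton of the paper's own uniqueness argument (pass to $u_i=t_i^2$, solve the linear system by a Cramer/Lagrange kernel formula, square to get a common constant, then pin down the polynomial by a recursion), and your treatment of the degenerate case by grouping repeated values and invoking a Vandermonde determinant is sound. But there are genuine gaps. First, you never prove \emph{existence}, which is half of the statement. In the paper existence is not obtained algebraically at all: it comes from letting $B\to\infty$ along $X_n$, writing $(x_i,y_i)=(\wp(a_i),\wp'(a_i))\sim(t_i^2,2t_i^3)$, and using the identity (\ref{C-expr}) together with the magnitude estimates $|C|\sim|B|^{n+1/2}$, $|\wp'(a_i)|\sim|B|^{3/2}$ to exclude collisions $\wp(a_i)-\wp(a_j)=o(1)|B|$; the paper explicitly notes that proving the limiting polynomial has distinct roots directly ``does not seem to be obvious.'' Nothing in your proposal replaces this. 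Second, your decisive uniqueness step --- ``matching this against the recursive/closure argument forces $q_u$ to be determined up to scaling'' --- is precisely the part that requires proof: knowing that all $u_i$ are roots of the single degree-$(2n+1)$ polynomial $u^3G'(u)^2-C^2$ does not determine $G$. The paper must redo the division/residue computation in the variable $u$, obtain the identity $u^3G'^2-3u^2G'G-2u^2G''G-(au+b)G^2=C^2$ with $a=-n(n+1)$, $b=-(2n+1)\tau_1$, differentiate it to get a third-order linear ODE for $G$, and observe that, because of the irregular singularity at $u=0$, its coefficient recursion degenerates to the two-term relation $(i-n)(2i+1)(i+n+1)\tau_{n-i}=-2(i+1)(2n-1)\tau_1\tau_{n-i-1}$, which is what determines every $\tau_k$ from $\tau_1$. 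This is \emph{not} the four-term recursion (\ref{recursive}) of Theorem \ref{alg-hyper}; it has to be derived afresh, and your proposal does not do so.

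There are also two concrete errors. After the substitution $u_i=t_i^2$, $y_i=t_i$, the equations (\ref{eqn-at-infty}) become $\sum_i u_i^k y_i=0$ for $k=1,\ldots,n-1$, \emph{not} $k=0,\ldots,n-2$ (the missing $k=0$ equation would force $p_1=\sum_i t_i=0$, which is false for the actual solutions --- indeed $\sum_i t_i\ne 0$ is exactly what makes the proof of Corollary \ref{cor:smooth_at_infty} work). Hence Proposition \ref{eq-2-sys} and formula (\ref{(0)}) do not apply verbatim: the kernel of the correct system is spanned by $y_i=C/\bigl(u_iG'(u_i)\bigr)$, and squaring gives $u_i^3G'(u_i)^2=C^2$, not your $u_i\prod_{j\ne i}(u_i-u_j)^2=C^2$. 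Separately, the Newton's-identities claim in your opening paragraph is false: since $p_3=\cdots=p_{2n-1}=0$ but $p_1\ne 0$, Newton's identities only say that each odd-index $\sigma_k$ is an \emph{odd function} of $p_1$, which is no constraint at all; and the actual solutions have nonvanishing intermediate coefficients (for $n=3$, normalizing $\sum_i t_i^2=3$, one computes $G(u)=u^3-3u^2+6u-15$, $C^2=3375$, $p_1^2=15$, hence $\sigma_2=(p_1^2-p_2)/2=6\ne 0$, so $q(t)$ is not of your proposed three-term shape). Since you abandon that route anyway, the fatal issues remain the missing existence proof and the unproven recursion step.
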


\begin{proof}
When $B \to \infty$, by either (\ref{NB}) or (\ref{C^2}) 
we see that all $t_i$'s have the same order $|B|^{1/2}$. 
Since the polynomial system in $t_i$'s comes from the leading order
terms of the original system $\sum x_i^k y_i = 0$, 
by passing to a subsequence if necessary, in the limit $B \to \infty$ 
we get a point $[t_1: \cdots: t_n] \in \mathbb P^{n - 1}$ 
solving the limiting equations. 
In fact $[t] \in \mathbb P(T_0(\bar X_n)) 
\subset \mathbb P(T_0 ({\rm Sym}^n E))$.

However a more careful argument is needed to verify the 
nondegeneracy
conditions. 
We recall that for $a \in X_n$, $\wp(a_i)$'s are the roots 
of the polynomial $X(x)$ where the coefficients $s_j(B)$'s 
satisfy the recursive relation (\ref{recursive}). 
Thus $\wp(a_i)/B$ tends to the roots of the limiting polynomial $X_\infty$:
\begin{equation*}
X_\infty(x) = x^n - \bar s_1 x^{n - 1} + \bar s_2 x^{n - 2} + \cdots + (-1)^n \bar s_n,
\end{equation*}
where we set $\bar s_0 = 1$ and
\begin{equation} \label{limit-s}
\bar s_k = \frac{2(n - k + 1)}{k(2n - 2k + 1)(2n - k + 1)} \bar s_{k -  1}, 
\qquad k = 1, \ldots, n.
\end{equation}
To prove $(\wp(a_i) - \wp(a_j))/B \not\to 0$ as $B \to \infty$ is
equivalent to 
showing that $X_\infty$ has $n$ distinct roots, a statement which 
does not seem to be obvious. Instead, we use (\ref{(0)}) in its analytic form
\begin{equation} \label{C-expr}
C = \wp'(a_i) \prod_{j \ne i} (\wp(a_i) - \wp(a_j)).
\end{equation} 
Obviously $|C| \sim |B|^{n + 1/2}$ and 
$|\wp'(a_i)| \sim |\wp(a_i)|^{3/2} \sim |B|^{3/2}$. 
Thus if there is some $j$ such that $|\wp(a_i) - \wp(a_j)| = o(1) |B|$ 
as $|B| \to \infty$ then (\ref{C-expr}) yields 
$$
|B|^{n + 1/2} \sim |C| \le o(1) |B|^{n + 1/2},
$$
which is a contradiction. Therefore we have 
$$
\lim_{B \to \infty} \frac{\wp(a_i)}{B} \ne \lim_{B \to \infty} \frac{\wp(a_j)}{B}
$$
for $i \ne j$. Now we write $(\wp(a_i), \wp'(a_i)) = (x_i, y_i) \sim
(t_i^2, 2t_i^3)$. 
Then the leading term of $\sum_i x_i^k y_i$ is $2\sum_i t_i^{2k + 3}$ 
for $k = 0, \ldots, n - 2$. 
By passing to $B \to \infty$, 
the limit of $t_i/|B|^{1/2}$ (still denoted by $t_i$) then satisfies
$$
\sum_{i = 1}^n t_i^{2k + 1} = 0, \qquad 1 \le k \le n - 1,
$$
and $t_i + t_j \ne 0$ for any $i, j$. This proves the existence of solutions.

The remaining task is to prove the uniqueness. 
While it may be possible to prove this by working harder on the
asymptotic equations, 
however, owing to its elementary nature, 
we will offer a purely elementary argument 
using only basic algebra and divisions. 

Before we proceed, notice that the loci $\prod t_i = 0$ 
or $\prod_{i <  j} (t_i + t_j) = 0$ 
provide positive dimensional solutions to the system. 
Thus it is crucial to analyze the non-degenerate conditions. 
By a Vandermonde-like determinant argument, 
it is easy to see that under the assumption $t_i \ne 0$ for all $i$, 
we have $t_i \ne -t_j$ for all $i \ne j$ 
if and only if $t_i^2 \ne t_j^2$ for all $i \ne j$.

Let $q(t) = \prod_{j = 1}^n (t - t_j) = \sum_{i = 0}^n (-1)^i s_{i}
t^{n - i}$, 
where $s_i$ is the $i$-th elementary symmetric polynomial in $t_j$'s, 
and $p_l = \sum_{i = 1}^n t_i^l$ being the 
Newton symmetric polynomial for all $l \ge 0$. Then
\begin{equation*}
\frac{q'(t)}{q(t)} + \frac{q'(-t)}{q(-t)} 
= \sum_{i = 1}^n \frac{1}{t - t_i} - \frac{1}{t + t_i} 
= 2 \sum_{m \ge 1} p_{2m - 1} t^{-2m}.
\end{equation*}
The conditions $p_3  = p_5 = \cdots = p_{2n - 1} = 0$ 
imply that (comparing degrees)
\begin{equation*}
\frac{q'(t)}{q(t)} + \frac{q'(-t)}{q(-t)} 
= \frac{2p_1}{t^2} + \frac{(-1)^n 2p_{2n + 1}}{t^2 q(t) q(-t)}.
\end{equation*} 
Denote by $u = t^2$, $u_i = t_i^2$, $G(u) = q(t) q(-t) = \prod_{i =
  1}^n (u - u_i)$, 
this then could be regarded as an equality in $\mathbb C(u)$ as
\begin{equation*}
\sum_{i = 1}^n \frac{t_i}{u - u_i} 
= \frac{p_1}{u} + \frac{(-1)^n p_{2n + 1}}{u G(u)}.
\end{equation*}
From now on we denote $' = d/du$, then 
\begin{equation*}
t_i = {\rm Res}_{u = u_i} = \frac{(-1)^n p_{2n + 1}}{u_i G'(u_i)}.
\end{equation*}
In particular, $u_i^3 G'(u_i)^2 = C^2$ is independent of $i$, 
where $C = (-1)^n p_{2n + 1}$. So $G(u) \mid u^3 G'(u)^2 - C^2$ 
and we may perform division to write
\begin{equation*}
\frac{u^3 G'(u)^2 - C^2}{G(u)^2} 
= \sum_{i = 1}^n \frac{h_i}{u - u_i} + n^2 u + 2n \tau_1
\end{equation*}
for some $h_i \in \mathbb C$ and $\tau_1 = \sum_{i = 1}^n u_i$. 
Using series expansion in $u - u_i$ we calculate
\begin{equation*}
{\rm Res}_{u = u_i} \frac{u^3 G'(u)^2 - C^2}{G(u)^2} 
= 3u_i^2 + \frac{2u_i^3 G''(u_i)}{G'(u_i)^2},
\end{equation*}
hence there are $a, b \in \mathbb C$ such that
\begin{equation*}
\frac{u^3 G'(u)^2 - C^2}{G(u)^2} 
= \frac{3u^2 G'(u)}{G(u)} + \frac{2u^2 G''(u)}{G(u)} + a u + b.
\end{equation*}
By division again, it is clear that 
$$
2u^3 G''(u) = 2(n(n - 1) u + 2(n - 1) \tau_1) G(u) + \cdots
$$ 
and $3u^2 G'(u) = 3(n u + \tau_1) G(u) + \cdots$. Hence
\begin{equation*}
a = -n(n + 1), \qquad b = -(2n + 1) \tau_1,
\end{equation*}
\begin{equation*}
u^3 G'^2 - 3u^2 G' G - 2u^2 G'' G - (au + b)G^2 = C^2.
\end{equation*}
Differentiation and simplification lead to $-G(u)$ times the equation
\begin{equation*}
2u^3 G''' + 9u^2 G'' - 2((n^2 + n - 3) u + (2n - 1)\tau_1) G' -n(n + 1) G = 0.
\end{equation*}

Write $G(u) = \sum_{i = 0}^n (-1)^{n - i} \tau_{n - i} u^i$ (so
$\tau_0 = 1$ 
and $\tau_k = 0$ if $k < 0 $ or $k > n$ by convention), 
the above linear third order ODE translates into the recursive relation:
\begin{equation*}
(i - n) (2i + 1) (i + n + 1) \tau_{n - i} 
= -2 (i + 1) (2n - 1) \tau_1 \tau_{n - i - 1}.
\end{equation*}
This rather short recursion (instead of four terms) is due to the fact 
that the ODE has an irregular singularity at $u = 0$. 
It is consistent for $i = n$ ($0 = \tau_0 \tau_{-1}$) 
and for $i = n - 1$ ($\tau_1 = \tau_1$), and then all 
$\tau_k$, $k \ge 2$, are completely determined by $\tau_1$. 
This proves the uniqueness of solution up to permutations.
\end{proof}

\begin{remark*}
The non-degeneracy conditions in Proposition \ref{B=infty} are essential: 
when \(n\geq 4\) the set of all degenerate solutions
has a natural structure as a positive dimensional algebraic variety.
%which are indeed positive dimensional if $n \ge 4$.
\end{remark*}

\subsubsection{\bf Question.}\enspace\label{ques:arith_irred}
Let \((b_1:\ldots:b_n)\in \PP^{n-1}(\CC)\) be a non-degenerate
solution of equation (\ref{eqn-at-infty}).
Let \(K_n\) be the smallest subfield of \(\CC\) which contains
\({b_2}/{b_1},\ldots, {b_n}/{b_1}\).
Is \(\,[K_n:\QQ]=n!\,\)?\footnote{The answer is
likely ``yes'', but we don't have a proof.}

%The non-degenerate conditions are essential. 
%There are always degenerate solutions, 
%which are indeed positive dimensional if $n \ge 4$.

\begin{corollaryss}\label{cor:smooth_at_infty}
The curve $\bar X_n$ is smooth at the infinity point $[0]^n$.
%hence it coincides with the projective hyperelliptic model $\hat X_n$ of $Y_n$.
\end{corollaryss}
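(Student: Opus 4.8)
The plan is to show that $\bar X_n$ is smooth at the point $[0]^n$ by producing an explicit local analytic coordinate near infinity and verifying that $\bar\pi$ is unramified there, which by Theorem \ref{hyper}\,(v) and the hyperelliptic structure will force smoothness. The natural coordinate to use is $t^{-1}$ where $t \sim |B|^{1/2}$ controls the rate at which the points $a_i$ approach $[0]$, as identified in the proof of Theorem \ref{hyper}\,(iv) and in Proposition \ref{B=infty}. More precisely, I would introduce the uniformizing parameter at $[0] \in E = \CC/\Lambda$ given by the local coordinate $z$ itself, and use the asymptotics $(\wp(a_i), \wp'(a_i)) = (x_i, y_i) \sim (a_i^{-2}, -2a_i^{-3})$ as $a_i \to 0$. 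The key observation is that near $[0]^n$ the $n$ points $[a_1], \ldots, [a_n]$ cluster at the origin, and the leading-order shape of the cluster is governed by the limiting homogeneous system (\ref{eqn-at-infty}) whose solution is unique up to permutation by Proposition \ref{B=infty}.

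First I would set up the local picture: near $[0]^n$ in $\textup{Sym}^n E$, choose the $n$ sheets $a_1, \ldots, a_n$ small and write $a_i = \lambda\, b_i$ where $\lambda \to 0$ is a scaling parameter and $(b_1 : \cdots : b_n)$ ranges over solutions of the limiting system. Since $x_i = \wp(a_i) \sim a_i^{-2}$ and the recursion (\ref{recursive}) shows $B = (2n-1)s_1 = (2n-1)\sum_i x_i$ blows up like $\lambda^{-2}$, I would take $\lambda^2$ (equivalently a suitable power of $B^{-1}$) as the candidate local parameter on $\bar X_n$. The substance is to check that the map sending a point of $\bar X_n$ near $[0]^n$ to this parameter is a local biholomorphism. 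For this I would use the coordinate $C$ from Theorem \ref{alg-hyper}: on the hyperelliptic model $C^2 = \ell_n(B)$ with $\deg_B \ell_n = 2n+1$ odd, so in the chart $t = B^{-1}$ near $B = \infty$ the curve reads $(C\,t^{n+1})^2 = t^{2n+1}\ell_n(t^{-1})$, whose right-hand side is a unit times $t$ at $t = 0$ because the leading coefficient of $\ell_n$ is nonzero (it is $4B\,s_n^2$ contributing the top degree, with $s_n$ monic up to a nonzero rational constant by Theorem \ref{hyper}\,(i)). This exhibits $\hat X_n$ as analytically smooth at its unique point over $B = \infty$, with $C\,t^{n+1}$ (or equivalently $\lambda$) a local uniformizer.

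The remaining and genuinely delicate step is to identify this smooth point of the abstract hyperelliptic model $\hat X_n$ with the point $[0]^n$ of the closure $\bar X_n \subset \textup{Sym}^n E$, i.e.\ to show that the continuous extension $\bar\pi : \bar X_n \to \PP^1(\CC)$ of Theorem \ref{hyper}\,(v) realizes $\bar X_n$ near $[0]^n$ isomorphically as the chart just analyzed, rather than as some branched or singular quotient of it. This is exactly where Proposition \ref{B=infty} is indispensable: the uniqueness up to permutation of the non-degenerate solution of (\ref{eqn-at-infty}) guarantees that as $B \to \infty$ the cluster configuration $(a_1 : \cdots : a_n)$ has a \emph{single} limiting direction in $\PP(T_{[0]^n}\textup{Sym}^n E)$, so that $\bar\pi^{-1}(\infty) = \{[0]^n\}$ is approached along one analytic branch. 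Concretely I would parametrize that branch by $\lambda$, express $\wp(a_i)$ and $\wp'(a_i)$ as convergent power series in $\lambda$ using (\ref{NB})--(\ref{limit-s}), and read off that the natural coordinates on $\textup{Sym}^n E$ (symmetric functions of the $[a_i]$) extend holomorphically across $\lambda = 0$ and give a local embedding. This matches $\bar X_n$ with the smooth chart of $\hat X_n$ near $B = \infty$.

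The main obstacle will be the last identification: a priori $\bar X_n$ (the topological/Zariski closure in $\textup{Sym}^n E$) could differ from the projective hyperelliptic model $\hat X_n$ precisely at infinity, and one must rule out that $[0]^n$ is a singular (e.g.\ cuspidal or nodal) point arising from several analytic branches colliding. This is the content flagged in the introduction (the comparison with the \emph{projective hyperelliptic model}) and is what Proposition \ref{prop:use-purity} is designed to settle; I expect the clean route is to invoke Proposition \ref{B=infty} to get a single branch, combine it with the explicit unit computation of $t^{2n+1}\ell_n(t^{-1})$ at $t = 0$ to get smoothness of that branch, and conclude that $[0]^n$ is a smooth point of $\bar X_n$ for every lattice $\Lambda$, since the leading coefficient of $\ell_n$ and the uniqueness in Proposition \ref{B=infty} are both lattice-independent.
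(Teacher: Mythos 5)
Your reduction to the hyperelliptic model identifies the right issue but does not close it, and the two tools you lean on cannot close it in the order you propose. First, invoking Proposition \ref{prop:use-purity} is circular: in the paper's development that proposition rests on Lemma \ref{lemma:mor-bar-to-hat}, whose proof that $\phi$ extends to a morphism at $\bar\infty$ is itself deduced from Corollary \ref{cor:smooth_at_infty}. A rational map from a curve to a projective variety is only guaranteed to extend across smooth (equivalently, normal) points, so one cannot assert that $\phi$, or the inverse identification, is a morphism at $[0]^n$ before smoothness there is known.

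Second, and more fundamentally, ``single branch (Proposition \ref{B=infty}) $+$ smoothness of the abstract model $\hat X_n$ over $B=\infty$'' does not imply smoothness of the closure $\bar X_n\subset \textup{Sym}^n E$ at $[0]^n$: the closure is the image of a smooth germ under a map that could fail to be an immersion, exactly as $t\mapsto (t^2,t^3)$ maps a smooth germ bijectively onto a cusp; a unique limiting tangent direction is perfectly compatible with a cuspidal point. What is needed, and what constitutes the paper's actual proof, is a verification that some coordinate function on $\textup{Sym}^n E$ at $[0]^n$ restricts to a uniformizer along the branch. Concretely, the paper takes a non-degenerate solution $(r_1,\ldots,r_n)$ of (\ref{eqn-at-infty}), uses Hensel's lemma to produce a formal arc through $[0]^n$ with $x_i/y_i \mapsto r_i t + O(t^2)$, and then observes that the first symmetric function $\sum_i x_i/y_i$ pulls back to $(\sum_i r_i)\,t + O(t^2)$ with $\sum_i r_i \neq 0$; this non-vanishing follows from the Vandermonde determinant in the $r_i^2$ (if $\sum_i r_i=0$ held, then all odd power sums $p_1,p_3,\ldots,p_{2n-1}$ of the $r_i$ would vanish, forcing every $r_i=0$, contradicting non-degeneracy). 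It is precisely the inequality $\sum_i r_i\neq 0$ that rules out the cusp and shows the completed local ring of $\bar X_n$ at $[0]^n$ is $\CC[[t]]$. Your proposal asserts the ``local embedding'' at exactly this point without argument; supplying this Vandermonde step, and dropping the appeal to Proposition \ref{prop:use-purity}, would essentially turn your sketch into the paper's proof. (A minor slip besides: in the chart $t=B^{-1}$ the equation reads $(Ct^{n+1})^2 = t^{2n+2}\ell_n(t^{-1}) = t\cdot(\textup{unit})$, not $t^{2n+1}\ell_n(t^{-1})$, though the conclusion you draw from it is correct.)
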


\begin{proof}
The idea is that the solutions sought in Proposition \ref{B=infty} describe 
the tangent directions of $\bar X_n$ at $0^n$, in the
sense that 
%(the projectivized
%tangent cone). 
the projectivized
tangent cone of $\bar X_n$ at $[0]^n$ is the affine open subset
projective spectrum of the ring
\[
\mathscr{R}=
\CC\left[t_1,\ldots,t_n \right]
\left/\left({\textstyle \sum_{i=1}^n} t_i^{2k+1}\right)_{1\leq k\leq n-1}
\right. ;
\]
associated by localization to the homogeneous element
\[\prod_{i=1}^nt_i\cdot
\prod_{1\leq i<j\leq n} (t_i+t_j)
\] of \(\mathscr{R}\).
Once we know this 
then the existence and
uniqueness statement in Proposition \ref{B=infty}
is equivalent the smoothness of $\bar X_n$ at $0^n$. 
However the above description of the projectivized
tangent cone of \(\bar{X}_n\) at \([0]^n\) is not 
self-evident from the definition of \(\bar{X}_n\) as the closure
of \(X_n\) in \(\textup{Sym}^n(\CC/\Lambda)\). So we 
proceed slightly differently.

Let \((r_1,\ldots, r_n)\) be a non-degenerate solution of the
system of equations in  Proposition \ref{B=infty}.
From the non-vanishing of the Vandermonde determinant one
sees that \(\,\sum_{i=1}^n r_i\neq 0\).
From Hensel's lemma one sees that there exists a morphism \(\alpha\)
from the spectrum of a formal power series ring
\(\CC[[t]]\) to the inverse image in \(\textup{Sym}^n(\CC/\Lambda)\)
of \(\bar{X}_n\) which sends the closed point
of \(\textup{Spec}\,\CC[[t]]\) to \([0]^n\), such that
%\[x_i{-1}\mapsto r_i^2 t^2+O(t^3) \quad\textup{and}\quad
%y_i^{-1}\mapsto (r_i t)^3 t^3+O(t^4)\]
\[
\frac{x_i}{y_i}\ \mapsto\  r_i\, t + O(t^2)\quad \forall\,i=1,\ldots,n.
\]
%for \(i=1,\ldots, n\).
The condition that \(\,r_1+\cdots+r_n\neq 0\,\) tells us
that \(\alpha\) induces an isomorphism between
\(\CC[[t]]\) and the completed local ring of \(\bar{X}_n\)
at the point \([0]^n\).
\end{proof}

\subsection{Comparing the compactifications
\(\bar{X}_n\) and \(\hat{X}_n\) of \(X_n\)}
\label{subsec:compare}

\subsubsection{}\label{subsubsec:compare-setup}
At this point we have two compactifications of the smooth
algebraic curve \(X_n\). We summarize the situation.
\medbreak

\noindent
{\bf \ref{subsubsec:compare-setup}.a.}\enspace
By definition \(X_n\) is a locally closed algebraic subvariety 
of the symmetric product \(\textup{Sym}^n(\CC/\Lambda)\). 
The first compactification \(\bar{X}_n\) of \(X_n\) 
is the closure of \(X_n\) in 
\(\textup{Sym}^n(\CC/\Lambda)\).
We have seen that \(\bar{X}_n\) contains the closed subvariety
\(Y_n\) of \(\textup{Sym}^n(\CC/\Lambda\!\!\smallsetminus\{[0]\})\).
The latter variety \(Y_n\) classifies 
all ansatz solutions modulo \(\CC^{\times}\) to Lam\'e equations
of index \(n\in \NN_{>0}\). 
\medbreak

\noindent
{\bf \ref{subsubsec:compare-setup}.b.}\enspace
The map ``multiplication by \(-1\)'' on \(\CC/\Lambda\)
defines an involution \(\tilde\iota\) on \(\textup{Sym}^n(\CC/\Lambda)\).
The subvarieties \(X_n, Y_n, \bar{X}_n\) of
\(\textup{Sym}^n(\CC/\Lambda)\)
are stable under the involution \(\tilde\iota\).
The restriction of \(\tilde\iota\) to \(\bar{X}_n\) is an involution
\(\bar\iota\) on \(\bar{X}_n\).
It turned out that \(X_n\) is the complement in \(\bar{X}_n\)
of the fixed point set \((\bar{X}_n)^{\bar\iota}\) of the involution
\(\bar\iota\). 
One of the fixed points of \(\bar\iota\) is the
point \([0]^n=\{[0],\ldots, [0]\}\) of \(\textup{Sym}^n(\CC/\Lambda)\);
the rest are all in \(Y_n\).
In particular \(\bar{X}_n\smallsetminus Y_n=\{[0]^n\}\).
It is known that 
\(\,\#(\bar{X}_n)^{\bar\iota}\leq
2n+2\),
and the equality \(\,\# (\bar{X}_n)^{\bar\iota}=
2n+2\) holds for all \(\Lambda\) outside of a finite
number of homothety classes of lattices in \(\CC\).
\medbreak

\noindent
{\bf \ref{subsubsec:compare-setup}.c.}\enspace
The map \(\pi_n: Y_n\to \CC\) which sends 
%the ansatz functions
%\(\CC\!\cdot\!w_a\) corresponding to 
a point \([a]\in Y_n\) to the accessary parameter \(B_{[a]}\)
of the Lam\'e equation satisfied by 
the ansatz function \(\,w_a\,\) is an 
algebraic morphism from \(Y_n\) to the affine line \(\mathbb{A}^1\)
over \(\CC\).
The morphism \(\pi_n: Y_n\to \mathbb{A}^1\) extends to
a morphism \(\bar{\pi}_n: \bar{X}_n\to \PP^1\). 
This projection morphism \(\bar{\pi}_n\) is compatible with the involution
\(\iota\) in the sense that \(\bar{\pi}_n=\bar{\pi}_n\circ\bar\iota\),
and \(\bar{\pi}_n(\underline{x})=\bar{\pi}_n(\underline{x}')\) for two points
\(\underline{x}, \underline{x}'\in \bar{X}_n\) if and only if
either \(\underline{x}=\underline{x}'\) or 
\(\,\bar\iota(\underline{x})=\underline{x}'\).

In particular \(\bar{\pi}_n\) induces a bijection
from the fixed point set \((\bar{X}_n)^{\bar\iota}\) to
a finite subset \(\,\bar\Sigma_n\subset \PP^1(\CC)\).
This ramification locus \(\bar\Sigma_n\) for \(\bar{\pi}_n\) 
is the disjoint union
of \(\{\infty\}\) with a finite subset \(\Sigma_n\subset \CC\).
The restriction \(\,\left.\pi_n\right\vert_{X_n}\,\) of \(\pi_n\)
to \(X_n\) makes \(X_n\) an unramified double cover
of the complement 
\(\,\mathbb{A}^1\!\smallsetminus\! \Sigma_n\)
of \(\,\Sigma_n\) in \(\mathbb{A}^1\).
\medbreak

\noindent
{\bf \ref{subsubsec:compare-setup}.d.}\enspace
%There is a polynomial 
The ramification locus \(\,\Sigma_n\,\) is
the set of roots of a polynomial \(\ell_n(B)\)
\[\ell_n(B)=
4B s_n^2 + 4g_3(\Lambda) s_{n - 2} s_n 
- g_2(\Lambda) s_{n - 1} s_n - g_3(\Lambda) s_{n - 1}^2
%\in \QQ[g_2, g_3][B]
\]
of degree \(2n+1\) in the variable \(B\) 
with coefficients in \(\QQ[g_2, g_3]\),
%such that the ramification locus \(\,\Sigma_n\,\) is
%the set of roots of \(\ell_n(B)\).
where the polynomials \(s_n, s_{n-1}, s_{n-2}\in \QQ[g_2,g_3][B]\) are
defined recursively by equations
(\ref{recursive}), starting with \(s_0=1\) and
\(s_1=(2n-1)^{-1}B\).
The recursive relation (\ref{recursive}) implies that
\(\ell_n(B, g_2, g_3)\) is homogenous of weight \(2n+1\)
if \(g_2, g_3\) are given weights \(1, 2, 3\) respectively;
the coefficient of \(B^{2n+1}\) in \(\ell_n(B)\)
is a positive rational number.\footnote{The coefficient of 
\(B^i\) in \(s_i(B)\) is \\
\(\frac{2^i}{2n-1}\cdot 
\frac{n(n-1)(n-2)\cdots(n-i+1)}{[(2n)(2n-1)(2n-2)\cdots (2n-i+1)]
\cdot[(2n-1)(2n-3)(2n-5)\cdots(2n-2i+1)]}
\,\) for \(i=1,\ldots, n\).}

\medbreak

\noindent
{\bf \ref{subsubsec:compare-setup}.e.}
The polynomial \(\,\ell_n(B)\,\) gives rise to another
compactification \(\,\hat{X}_n\,\) of \(X_n\).
Let \(X_n^{\ast}\) be the 
zero locus of the homogeneous polynomial
\[
F_n(\hat{A},\hat{B},\hat{C})
:= \hat{C}^2\hat{A}^{n-1} - 
\hat{A}^{2n+1}\ell_n({\hat{B}}/{\hat{A}})
\]
in the projective plane $\PP^2$ with projective coordinates
\((\hat{A}:\hat{B}:\hat{C})\).
By definition \(\hat{X}_n\) is the partial desingularization of
\(X_n^{\ast}\), changing the local structure near
the singular point \((0:0:1)\) by replacing the 
structure sheaf near \((0:0:1)\) by its normal closure
in the field of fractions.
More explicitly we replace a small Zariski open neighborhood
of the point \((0:0:1)\) in \(X_n^{\ast}\)
by the corresponding open neighborhood of the curve
\[
v^2= u \cdot (u^{2n+1}\ell_n(1/u))
\]                                                                  
near \((u,v)=(0,0)\); the coordinates are related by
\[
\frac{\hat{B}}{\hat{A}}=B=\frac{1}{u},\ \
\frac{\hat{C}}{\hat{A}}=C=\frac{v}{u^{n+1}}.
\]
The natural morphism \(\hat{X}_n\to X_n^{\ast}\) is 
a homeomorphism, and is a local isomorphism outside the 
point \(\hat\infty\) which maps to the point \((0:0:1)\in 
X_n^{\ast}\).
The projective curve \(\hat{X}_n\) 
is reduced, irreducible
and has arithmetic genus
\(n\); we call it the
``hyperelliptic model'' of \(X_n\). 

We have a ``hyperelliptic involution'' \(\hat\iota\)
on \(\hat{X}_n\), given by
\[
\hat\iota\colon
\,(\hat{A}:\hat{B}:\hat{C})\,
\mapsto (\hat{A}:\hat{B}:-\hat{C})
\]
in projective coordinates.
The \(X_n\) is the complement in \(\hat{X}_n\) of the fixed point set
\(\,(\hat{X}_n)^{\hat\iota}\,\) of the hyperelliptic involution.
We also have a morphism
\(\hat\pi_n:\hat{X}_n\to \PP^1\), defined by
\(\hat\pi_n \colon (\hat{A}:\hat{B}:\hat{C})\mapsto
(\hat{A}:\hat{B})\)
over the open subset of \(\hat{X}_n\) where \(\hat{A}\)
is invertible, and 
\(\hat\pi_n\colon 
(\hat{A}:\hat{B}:\hat{C}) \mapsto
\big(\tfrac{\hat{A}}{\hat{C}}:\tfrac{\hat{B}}{\hat{C}}\big)\)
over the open subset of \(\hat{X}_n\) where \(\hat{C}\) is invertible.
One of the fixed points \(\hat\iota\) is \(\hat\infty\).
%the point with projective coordinates 
%\((0:0:1)\).  
The map \(\hat\pi_n\) induces a bijection
from \((\hat{X}_n)^{\hat\iota}\) to \(\bar\Sigma_n\).

The hyperelliptic projection \(\hat\pi_n\) is compatible with
the hyperelliptic involution \(\hat\iota\) on \(\hat{X}_n\), 
in the sense that 
\(\hat\pi_n(P)=\hat\pi_n(P')\) if and only 
either \(P'=P\) or \(P'=\hat\iota(P)\),
for any two points \(P, P'\in \hat{X}_n\).
The restriction of the triple
\((\hat{X}_n,\hat\pi_n, \hat\iota)\) to the open subset
\(X_n\subset\hat{X}_n\)
is naturally identified with the 
restriction to \(X_n\) of the
triple \((\bar{X}_n,\bar\pi_n, \bar\iota)\).

\subsubsection{} \label{subsub:related-two-comp}
A natural and inevitable question is: 
\begin{quote}
\emph{Is there an isomorphism between the two triples 
\((\bar{X}_n,\bar\pi_n, \bar\iota)\)
and
\((\hat{X}_n,\hat\pi_n, \hat\iota)\) 
which extends the natural isomorphism 
between the complements of the fixed point sets 
of \(\bar\iota\) and \(\hat\iota\)?}
\end{quote}
The parallel properties of the two compactifications reviewed in
\ref{subsubsec:compare-setup} suggest that the answer is 
likely ``yes''.  
Since both \(\bar{X}_n\) and \(\hat{X}_n\) are reduced and
irreducible, to answer this question affirmatively,
we need to show that the natural identification of the
``common open'' dense subset \(X_n\) of both sides
extends to an isomorphism. 
\medbreak

We will see in Lemma \ref{lemma:mor-bar-to-hat} 
that methods in the previous
part of this section already shows that 
the identity map on \(X_n\) extends to a \emph{morphism}
\(\,\phi: \bar{X}_n\to \hat{X}_n\) of
algebraic varieties. 
That \(\phi\) is a morphism at \(\bar\infty=[0]^n\)
is a consequence of (and equivalent to) 
Corollary \ref{cor:smooth_at_infty}.
\medbreak

The following properties of the morphism
\(\phi:\bar{X}_n\to \hat{X}_n\) 
between reduced irreducible complete algebraic curves
are easily deduced from previous arguments:
\begin{itemize}
\item[(a)] \(\phi\) is bijective on points,
i.e.\ \(\phi\) is a \emph{homeomorphism}.

\item[(b)] \(\phi\) is an isomorphism over
\(X_n=\bar{X}_n\smallsetminus (\bar{X}_n)^{\bar\iota}\).

\item[(c)] \(\phi\) is an isomorphism near 
the point \(\bar{\infty}=[0]^n\).
This is a rather trivial case of Zariski's Main Theorem and
is easily verified directly.
%because
%\(\hat{X}_n\) is smooth at the point \(\hat{\infty}=(0:0:1)\),
%which is immediate from the definition of \(\hat{X}_n\),
%and \(\bar{X}_n\) is smooth at the point \(\bar{\infty}=[0]^n\)
%according to Corollary \ref{cor:smooth_at_infty}.
%If follows that \(\phi\) is a local isomorphism at \(\bar{\infty}\);
\end{itemize}
So we are left with showing that 
\(\phi\) is a local isomorphism at each point of
\((Y_n)^{\iota}\), ramification points ``at finite distance''.

\subsubsection{}\label{subsub:use_purity}
The properties in \ref{subsub:related-two-comp}\,(a)--(c) 
of the morphism \(\,\phi:\bar{X}_n\to \hat{X}_n\,\) 
do \emph{not} formally imply that \(\,\phi\,\) is an isomorphism:
it may happen that there exists a point 
\(P\in Y_n\subset \bar{X}_n\) such that the
\emph{injection}
\[\,\phi^{\ast}:\ringO_{\hat{X}_n, \phi(P)} \to \ringO_{\bar{X}_n, P}\,\]
induced by \(\phi\)
between the stalks of the structure sheaves of 
\(\bar{X}_n\) and \(\hat{X}_n\) at \(P\) and \(\phi(P)\)
is not an isomorphism.
If this ``undesirable'' phenomenon happens at one 
ramification point \(P\in Y_n\), the arithmetic genus
of \(\bar{X}_n\) will be strictly smaller than
\(n\), the arithmetic genus of \(\hat{X}_n\).
To put it differently, the fact that 
\(\phi:\bar{X}_n\to \hat{X}_n\) is a bijective morphism
tells us that the hyperelliptic model \(\hat{X}_n\) 
can ``only be more singular'' than \(\bar{X}_n\).
\medbreak

If we can show that the arithmetic genus of \(\bar{X}_n\) is $n$,
it will follow that \(\phi\) is an isomorphism.
This approach may well be possible, but we will take
an easier route: 
We have seen that the discriminant
\(\textup{disc}(\ell_n(B))\) of \(\ell_n(B)\)
is a non-zero holomorphic modular form for the full modular group
\(\textup{SL}_2(\ZZ)\).
If the discriminant
\(\textup{disc}(\ell_n(B))\) does not vanish 
when evaluated at the lattice \(\Lambda\subset \CC\)
in question, then the polynomial \(\ell_n(B;\Lambda)\) has 
\(2n+1\) distinct roots in \(\CC\) and 
\(\hat{X}_n\) is smooth, which forces the 
morphism \(\phi:\bar{X}_n\to \hat{X}_n\) to be an isomorphism.
\smallbreak
%Pick any positive integer \(N\geq 3\), let 
%\(\,X(N)=\Gamma(N)\backslash (\HH\cup\PP^1(\QQ))\,\)
%be the completed modular curve of full level \(N\).
%It is known that A
%modular curve 

Suppose now that the elliptic curve we are given
is \(\Lambda_{\tau_0}=\ZZ+\ZZ\tau_0\)
for some \(\tau_0\in\HH\) such that 
\(\textup{disc}(\ell_n(B))(\Lambda_{\tau_0})=0\).
The idea now is embed the given situation in a
one-parameter family such that the morphism
\(\phi\) is an isomorphism outside the
central fiber, then use purity (Hartog's theorem):
\begin{quote}
\emph{Let \(\tau\) vary in a small open disk \(D\subset \HH\)
containing \(\tau_0\) such that 
\(\textup{disc}(\ell_n(B))(\Lambda_{\tau})\neq 0\) for all 
\(\tau\in D\) and get a family of maps 
\(\left(\phi_{\tau}: \bar{X}_{n,\tau}\to
  \hat{X}_{n,\tau}\right)_{\tau\in D}\)
parametrized by \(D\).
Use the fact that \(\phi_{\tau}\) is an isomorphism
for all \(\tau\) in the punctured disk 
\(D^{\ast}=D\smallsetminus\{\tau_0\}\)
and \(\phi_{\tau_0}\) is an isomorphism outside
a finite subset of \(\bar{X}_{n,\tau_0}\) to show that
\(\phi_{\tau_0}\) itself is an isomorphism.}
\end{quote}
%The result is a family \(\mathscr{X}_n\) of smooth algebraic
%curves over \(\HH\), two compactifications 
%\(\bar{\mathscr{X}}_n\) and \(\hat{\mathscr{X}}_n\) over \(\HH\),
%and a morphism 
%\({\Phi}:\bar{\mathscr{X}}_n \to \hat{\mathscr{X}}_n\)
%over \(\HH\).
%This idea will be carried out in 
Details will be carried out in Proposition \ref{prop:use-purity} 

\begin{lemmass}\label{lemma:mor-bar-to-hat}
The identity map \(\,\textup{id}_{X_n}: X_n\to X_n\,\)
on \(X_n\) extends uniquely to a \emph{morphism}
\(\phi: \bar{X}_n\to \hat{X}_n\).
\end{lemmass}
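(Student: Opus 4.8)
The statement asserts that $\mathrm{id}_{X_n}$ extends to a \emph{morphism} $\phi\colon\bar X_n\to\hat X_n$. The key point is that a rational map between reduced irreducible curves is a morphism precisely at the points where the target-coordinate functions, pulled back, are regular (have no pole); so the entire content is to check regularity of the pullbacks of the two homogeneous coordinate functions $B$ and $C$ of $\hat X_n$ at the finitely many points of $\bar X_n\smallsetminus X_n=(Y_n)^\iota\cup\{[0]^n\}$. On $X_n$ the identification is already given by the formulas
\[
B_{[a]}=(2n-1)\sum_{i=1}^n\wp(a_i;\Lambda),\qquad
C=\frac{dX_{[a]}}{dx}(\wp(a_i;\Lambda))\cdot\wp'(a_i;\Lambda),
\]
so the rational map $\phi$ sends a point $[a]\in X_n$ to $(B_{[a]},C)$ in the affine chart of $\hat X_n$, and $C^2=\ell_n(B)$ holds there by Theorem \ref{alg-hyper}\,(1). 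First I would record that $\phi$ is defined and agrees with the tautological identification on the dense open $X_n$, and that it is automatically unique because $\bar X_n$ is reduced and irreducible (a morphism from a reduced scheme agreeing with $\phi$ on a dense open is determined).

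\textbf{Regularity at the ramification points $(Y_n)^\iota$.} At a point $P=[a]\in(Y_n)^\iota=Y_n\smallsetminus X_n$ the function $B$ pulls back to the regular function $(2n-1)\sum_i\wp(a_i;\Lambda)$ on $\mathrm{Sym}^n(\CC/\Lambda)$, which is manifestly regular near $P$ (each $\wp(a_i)$ stays finite since $[a_i]\ne[0]$ by the definition of $Y_n$). For the coordinate $C$ I would use $C^2=\ell_n(B)$: since $B$ is regular at $P$, the function $C^2$ is regular there, so $C$ is a bounded holomorphic function on $X_n$ near $P$; as $\bar X_n$ is a (reduced, and at finite distance one-dimensional) curve and $C$ is holomorphic on a punctured neighborhood with a finite limit, the Riemann removable-singularity / normality argument forces $C$ to extend holomorphically across $P$. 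Concretely, the value of the extension is a square root of $\ell_n(B_{[a]})=0$, namely $C=0$ at $P$, which is consistent with $P$ being a fixed point of $\bar\iota$. Thus both coordinate functions of $\hat X_n$ pull back to regular functions at each $P\in(Y_n)^\iota$, so $\phi$ is a morphism there, landing in the locus $\{C=0\}$ of $\hat X_n$.

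\textbf{Regularity at infinity $[0]^n$.} The remaining point is $\bar\infty=[0]^n$, where the relevant chart of $\hat X_n$ is the normalized chart with local coordinates $(u,v)$, $u=1/B$, $v=C/B^{n+1}$, and curve equation $v^2=u\cdot(u^{2n+1}\ell_n(1/u))$. Here I would invoke the detailed analysis already carried out: by Theorem \ref{hyper}\,(iv)--(v) the map $\bar\pi_n\colon\bar X_n\to\PP^1$ sends $[0]^n$ to $\infty$, so $u=1/B$ pulls back to a function vanishing at $[0]^n$; and Corollary \ref{cor:smooth_at_infty} provides an explicit formal/analytic parametrization of $\bar X_n$ near $[0]^n$ via $x_i/y_i\mapsto r_i t+O(t^2)$ with $\sum r_i\ne 0$, from which the asymptotics $|C|\sim|B|^{n+1/2}$ (equivalently $v=C/B^{n+1}$ is regular, of order $\tfrac12$ in $B^{-1}$, matching the local branch) show that $u$ and $v$ both pull back to regular functions at $[0]^n$. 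Hence $\phi$ is a morphism at $\bar\infty$ as well.

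\textbf{Main obstacle.} The genuinely nontrivial step is the behavior at $[0]^n$: unlike the finite ramification points, here $B$ itself blows up, so one cannot simply cite removable singularities, and one must control the precise order of growth of $C$ relative to $B^{n+1}$. This is exactly the content of Proposition \ref{B=infty} and Corollary \ref{cor:smooth_at_infty}, which I would cite rather than reprove; once the local coordinate $t$ and the leading asymptotics $(\wp(a_i),\wp'(a_i))\sim(t_i^2,2t_i^3)$ are available, the regularity of $v=C/B^{n+1}$ follows from $C=\wp'(a_i)\prod_{j\ne i}(\wp(a_i)-\wp(a_j))$ together with the non-degeneracy $t_i+t_j\ne 0$. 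With regularity of both target coordinates established at every point of $\bar X_n\smallsetminus X_n$, the rational map $\phi$ is everywhere defined, and uniqueness is immediate from density of $X_n$ in the reduced irreducible curve $\bar X_n$.
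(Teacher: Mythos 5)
Your treatment of the point at infinity is sound and matches the paper's: once Corollary \ref{cor:smooth_at_infty} gives smoothness of \(\bar X_n\) at \([0]^n\), the rational map into the projective curve \(\hat X_n\) automatically extends there, and your uniqueness remark (density of \(X_n\) in the reduced irreducible \(\bar X_n\)) is also correct. The gap is at the finite ramification points \(P\in (Y_n)^{\iota}\). There you deduce that \(C\) extends regularly across \(P\) from the boundedness of \(C\) (via \(C^2=\ell_n(B)\) being regular) together with a ``Riemann removable-singularity / normality argument''. That inference requires \(\bar X_n\) to be \emph{normal} at \(P\), which is not known at this stage of the argument---indeed the local structure of \(\bar X_n\) at the ramification points is exactly what the comparison with \(\hat X_n\) (see \ref{subsub:use_purity} and Proposition \ref{prop:use-purity}) is designed to determine, and normality genuinely fails for the exceptional tori where \(\textup{disc}_B(\ell_n)\) vanishes (for instance \(n=2\), \(\tau=e^{\pi\sqrt{-1}/3}\), where \(\ell_2(B)=\tfrac{1}{81}B^2(4B^3+27g_3)\) has a double root, so the curve is singular at the corresponding ramification point). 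On a non-normal curve point, bounded and holomorphic on a punctured neighborhood does not imply membership in the local ring: on the cuspidal cubic \(v^2=u^3\) the function \(v/u\) is bounded near the cusp and its square \(u\) is regular, yet \(v/u\) is not a regular function there. Since ``\(\phi\) is a morphism at \(P\)'' means precisely that the pullbacks of \(B\) and \(C\) lie in \(\ringO_{\bar X_n,P}\), your argument does not establish the lemma at such points, even though the conclusion is true.

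The repair---and this is what the paper's proof does---is to avoid removable singularities entirely and exhibit \(C\) as the restriction of an \emph{ambient} regular function: by equation (\ref{(0)}), on \(X_n\) one has \(C=y_i\prod_{j\ne i}(x_i-x_j)\) for \emph{every} \(i\), so the symmetrization \(\tfrac{1}{n}\sum_{i}y_i\prod_{j\ne i}(x_i-x_j)\) is a regular function on \(\textup{Sym}^n\) of the affine part of \(E\) restricting to \(C\) on \(X_n\); its restriction to the locally closed subvariety \(\bar X_n\smallsetminus\{\bar\infty\}=Y_n\) is then a regular extension of \(C\) through every point of \((Y_n)^{\iota}\), no matter how singular \(\bar X_n\) may be there. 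The function \(B=(2n-1)\sum_i x_i\) is handled the same way, as you already do. With this replacement of your removable-singularity step, the rest of your proof goes through.
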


\begin{proof}
Let \((x_1, y_1),\ldots, (x_n, y_n)\) be the Weierstrass
coordinates of the product \(E^n=E\times\cdots \times E\). 
The affine coordinate \(B\) of \(X_n^{\ast}\) is
given by \(B=(2n-1)\sum_{i=1}^n x_i\).
From the proofs of Theorems \ref{hyper} and \ref{alg-hyper}
we see that the other affine coordinate \(C\) of \(X_n^{\ast}\) 
can be expressed by polynomials in the \(x_i\)'s and \(y_i\)'s:
\(\,C=y_i\cdot \prod_{j\neq i} (x_i-x_j)\,\)
for all \(i=1,\ldots,n\).
It follows that the birational map \(\phi\) from
\(\bar{X}_n\) to \(\hat{X}_n\) is 
a morphism at every point of
\(\bar{X}_n\smallsetminus \{\bar\infty\}\).
Corollary \ref{cor:smooth_at_infty} implies that
\(\phi\) is a morphism at \(\bar\infty\) as well.
\end{proof}

\begin{proposition}\label{prop:use-purity}
The morphism \(\,\phi:\bar{X}_n\to \hat{X}_n\,\) is an isomorphism.
\end{proposition}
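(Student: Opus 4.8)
The plan is to deduce that the finite birational morphism $\phi$ of Lemma \ref{lemma:mor-bar-to-hat} is an isomorphism from the elementary fact that \emph{a finite birational morphism of integral schemes whose target is normal is an isomorphism}: if $f\colon Y\to Z$ is such a morphism then $\mathscr{O}_Z\subseteq f_\ast\mathscr{O}_Y\subseteq K(Z)$ with $f_\ast\mathscr{O}_Y$ integral over $\mathscr{O}_Z$, and normality of $Z$ forces $f_\ast\mathscr{O}_Y=\mathscr{O}_Z$. By the discussion in \ref{subsub:related-two-comp}, $\phi$ is a bijective (hence finite) birational morphism of reduced irreducible projective curves which is already an isomorphism over $X_n$ and, by Corollary \ref{cor:smooth_at_infty}, near the point $\bar\infty=[0]^n$; so the only points where $\phi$ could fail to be a local isomorphism lie in the finite set $(Y_n)^{\iota}=Y_n\smallsetminus X_n$.

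First I would dispose of the generic case. If $\operatorname{disc}_B\!\big(\ell_n(B)\big)(\Lambda)\neq 0$, then by Theorem \ref{alg-hyper}\,(1) the polynomial $\ell_n(B;\Lambda)$ has $2n+1$ distinct roots, so $\hat X_n$ is a \emph{smooth}, hence normal, curve; the displayed fact applies verbatim and shows $\phi$ is an isomorphism.

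The remaining case is $\operatorname{disc}_B(\ell_n)(\Lambda)=0$, and here I would deform in the moduli parameter as anticipated in \ref{subsub:use_purity}. Write $\Lambda=\Lambda_{\tau_0}$ and let $\tau$ range over a small disk $D\ni\tau_0$; forming the closures of the fibres $X_{n,\tau}$ inside $\operatorname{Sym}^n$ of the universal elliptic curve, and the hypersurface family $\hat{\mathcal X}_n=\{C^2=\ell_n(B;g_2(\tau),g_3(\tau))\}$ (glued with the chart at infinity), yields proper families $\bar{\mathcal X}_n\to D$ and $\hat{\mathcal X}_n\to D$ and a family morphism $\Phi$ restricting to $\phi_\tau$ on each fibre. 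The key point is that $\hat{\mathcal X}_n$ is a \emph{normal surface}: by Theorem \ref{alg-hyper}\,(2) the discriminant $\operatorname{disc}_B(\ell_n)$ is a nonzero holomorphic function of $\tau$, so its zeros are isolated, and after shrinking $D$ the locus $\{\ell_n=\partial_B\ell_n=0\}$ of multiple roots — which contains $\operatorname{Sing}(\hat{\mathcal X}_n)$ — becomes a finite set concentrated in the central fibre; a hypersurface (hence $S_2$) with only isolated singularities satisfies $R_1$ and is therefore normal.

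With this in hand the proof concludes quickly. For $\tau\in D^\ast:=D\smallsetminus\{\tau_0\}$ the fibre $\hat X_{n,\tau}$ is smooth, so $\phi_\tau$ is an isomorphism by the generic case; hence $\Phi$ is an isomorphism over the dense open $\hat{\mathcal X}_n|_{D^\ast}$, in particular birational, and it is finite, being proper and quasi-finite (each $\phi_\tau$ is bijective). Applying the displayed fact to the \emph{finite birational morphism $\Phi$ onto the normal surface $\hat{\mathcal X}_n$} shows $\Phi$ is an isomorphism — this is the promised appeal to purity, the missing inverse being recovered across the finite ($=$ codimension-two) exceptional locus by algebraic Hartogs; restricting $\Phi$ to $\tau=\tau_0$ gives that $\phi$ itself is an isomorphism. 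The main obstacle, and the step deserving the most care, is the verification that $\hat{\mathcal X}_n$ is normal: everything hinges on promoting the codimension-one singularities of a single bad fibre to isolated singularities of the surface, which rests essentially on the non-vanishing of the modular form $\operatorname{disc}_B(\ell_n)$ from Theorem \ref{alg-hyper}\,(2). Secondary points to check are the integrality of the two total spaces and the finiteness of $\Phi$.
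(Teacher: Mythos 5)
Your strategy is viable, and it is in fact the variant that the paper itself sketches in the unlabeled remark at the end of \S\ref{subsec:compare}: deform over a disk in $\tau$, prove the two-dimensional hyperelliptic family is normal by Serre's criterion (hypersurface, hence $S_2$, with only isolated singularities because $\operatorname{disc}_B(\ell_n)$ is a not-identically-zero holomorphic function of $\tau$ by Theorem \ref{alg-hyper}\,(2)), and then invoke a ZMT/normality statement. Note, though, that this is \emph{not} the paper's actual proof: the paper never applies the finite-birational-onto-normal lemma to the family morphism $\Phi$. Instead it applies purity to \emph{functions} -- it shows that each $S_n$-symmetrized monomial in the Weierstrass coordinates, regarded as a regular function over the good locus $\mathscr{U}$ of the normal surface $\mathscr{X}_n^{\ast}$, extends across the finite bad set, and then restricts that identity of functions to the fiber over $\tau_0$; this proves surjectivity of $\phi^{\ast}$ on coordinate rings fiberwise, and the crucial verification takes place on $X_{n,\tau_0}$ itself, which lies over $\mathscr{U}$ by construction.

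The genuine gap in your proposal is precisely the point you defer as ``secondary'': the integrality of $\bar{\mathcal{X}}_n$, equivalently the fact that taking the closure commutes with passing to the central fiber -- which is exactly the caution stated in the paper's closing remark. Let $F$ be the fiber of $\bar{\mathcal{X}}_n$ over $\tau_0$. A priori $F$ only \emph{contains} $\bar X_{n,\tau_0}$; the limit of the curves $X_{n,\tau}$ inside $\operatorname{Sym}^n$ could contribute an extra irreducible component. If it did, then (i) your quasi-finiteness argument ``each $\phi_\tau$ is bijective'' is beside the point, since the fibers of $\Phi$ over points of $\hat X_{n,\tau_0}$ are fibers of $\Phi|_F$, not of $\phi_{\tau_0}$, and $\Phi$ could even contract a component of $F$; (ii) $\Phi$ fails to be birational in the sense your lemma requires, because an extra component of $F$ does not dominate $D$, so the preimage of your dense open set is not dense in $\bar{\mathcal{X}}_n$ and $\Phi_\ast\ringO_{\bar{\mathcal{X}}_n}$ does not embed into the function field of $\hat{\mathcal{X}}_n$; and (iii) even granting that $\Phi$ is an isomorphism, restricting to $\tau=\tau_0$ yields $F\xrightarrow{\sim}\hat X_{n,\tau_0}$, which is the desired statement about $\phi$ only once you know $F=\bar X_{n,\tau_0}$. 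Filling this requires a real argument, not bookkeeping: show that every point of $X_{n,\tau_0}$ is a limit of points of $X_{n,\tau}$ as $\tau\to\tau_0$, e.g.\ via the recursion (\ref{recursive}) -- for fixed $B$ the coefficients $s_j(B;g_2(\tau),g_3(\tau))$ vary holomorphically, the $x_i$ are the roots of the resulting degree-$n$ polynomial, and $y_i=C\big/\prod_{j\ne i}(x_i-x_j)$ with $C$ a continuously chosen square root of $\ell_n(B;\tau)\ne 0$ -- whence $\bar X_{n,\tau_0}\subseteq\overline{\mathcal{X}_n|_{D^{\ast}}}$, so $\bar{\mathcal{X}}_n$ is irreducible and $F=\bar X_{n,\tau_0}$. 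With that supplied (and a one-line remark that along the infinity section the glued family is smooth, since in the chart $v^2=u^{2n+2}\ell_n(1/u;\tau)$ the right-hand side is $c\,u+O(u^2)$ with $c$ the constant positive leading coefficient of $\ell_n$, so your normality claim extends beyond the affine chart), your argument does go through.
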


\begin{proof}
We may and do assume that the given lattice \(\Lambda\)
is \(\Lambda_{\tau_0}\) for an element \(\tau_0\in \HH\).
Let \((x_1, y_1),\ldots, (x_n, y_n)\) be the Weierstrass
coordinates of \(E^n\) as in the proof of 
Lemma \ref{lemma:mor-bar-to-hat}, where \(E=\CC/\Lambda\).
It suffices to prove the following Claim: for every monomial 
\(\,h(\underline{x},\underline{y})\)
in \(x_1,\ldots,x_n, y_1,\ldots, y_n\),
the restriction to %\(Y_n=\bar{X}_n\smallsetminus \{\bar\infty\}\) 
\(X_n\)
of its \(\textup{S}_n\)-symmetrization
\[
\Pi_{\textup{S}_n}(h)=(n!)^{-1} 
\sum_{\sigma\in\textup{S}_n} h(\sigma(\underline{x}),
\sigma(\underline{y}))
\]
is the pull-back under \(\phi\) of a 
polynomial in \(B\) and \(C\).
\smallbreak

We have seen in the proofs of Theorems \ref{hyper} and \ref{alg-hyper}
that the restriction to \(X_n\) of 
every symmetric polynomial in \(x_1,\ldots, x_n\) can
be expressed as a polynomial in \(B\).
Because 
\[
\CC[x_1,\ldots, x_n,y_1,\ldots,y_n]
\left/ \left(
y_i^2 - 4x_i^3 + g_2 x_i + g_3\right)_{1\leq i\leq n}
\right.
\]
is a free module over 
the ring of symmetric polynomials 
\(\CC[x_1,\ldots, x_n]^{\textup{S}_n}\)
with basis given by monomials  of the form
\[
\underline{x}^{\underline{r}}\,\underline{y}^{s}
=\prod_{i=1}^n x_i^{r_i} \cdot
\prod_{i=1}^n y^{s_i}
%s_i=0\ \textup{or}\ 1\ \ 
%\forall\,i=1,\ldots,n,
\]
with 
\(0\leq r_i\leq n-i\) and \(0\leq s_i\leq 1\) for
all \(i=1,\ldots, n\), 
and the symmetrization operator \(\Pi_{\textup{S}_n}\)
is linear over the ring of symmetric polynomials in
\(x_1,\ldots, x_n\),
it suffices to prove the claim in the case when
\(\,h(\underline{x},\underline{y})\,\) is 
one of the above basis elements 
\(\underline{x}^{\underline{r}}\,\underline{y}^{s}\).
In principle one should be able to
show directly that the symmetrization
\(\,\Pi_{\textup{S}_n}(\underline{x}^{\underline{r}}\,
\underline{y}^{s})\,\) of 
\(\underline{x}^{\underline{r}}\,\underline{y}^{s}\)
is a polynomial in \(B\) and \(C\).
Here we take an easier way out using purity
as indicated in \ref{subsub:use_purity}.

Let \(\tau\) vary over \(\HH\), and 
consider the fiber product over \(\HH\)
of \(n\) copies of the universal elliptic curve
whose fiber over \(\tau\in \HH\) is
\(E_{\tau}= (\CC/\Lambda_{\tau})\).
Over the open subset of the \(n\)-time
fiber product of \(E\smallsetminus \{[0]\}\), 
we have affine coordinates
\(x_1, y_1,\ldots, x_n, y_n\) as before.
Let
\(\,\mathscr{X}_n\,\) 
be the relative affine spectrum over \(\HH\)
of
\begin{equation*}
%\begin{split}
\ringO_{_{\HH}}\!\!\left[\!\!
\begin{array}{c}x_1,\ldots, x_n, y_1,\ldots, y_n,
\\
(\textstyle \prod_{1\leq i<j} (x_i-x_j))^{-1}
\end{array}
\!\!\right]
%\\
\left/ \left(\!\!
\begin{array}{c}
{\textstyle \sum_{j=1}^n x_j^l\,y_j,}\ \ \
{\scriptstyle l=0,1,\ldots, n-2},
\\
y_i^2- 4 x_i^3+g_2(\Lambda_{\tau}) x_i + g_3(\Lambda_{\tau}),\ \ 
{\scriptstyle 1\leq i\leq n}
\end{array}
\!\!\right)
\right.
%\end{split}
\end{equation*}
Let \(\,\mathscr{X}_n^{\ast}\,\) be the relative projective
curve over \(\ringO_{\HH}\) defined by the 
homogeneous polynomial
\[F_n(\hat{A},\hat{B},\hat{C})
= \hat{C}^2\hat{A}^{n-1} - 
\hat{A}^{2n+1}\ell_n({\hat{B}}/{\hat{A}})
\]
as in \ref{subsubsec:compare-setup}.e, where
\(g_2=g_2(\Lambda_{\tau})\) and
\(g_3=g_3(\Lambda_{\tau})\) in the definition of \(\ell_n(B)\).
Let \(\Phi:\bar{\mathscr{X}}_n\to \mathscr{X}_n^{\ast}\)
be the morphism extending the identity map on \(\mathscr{X}_n\).
Let \(\mathscr{U}\) be the complement in \(\mathscr{X}_n^{\ast}\)
of the set of all ramification points over
those \(\tau\)'s where the discriminant
\(\textup{disc}(\ell_n(B))\) vanishes,
so that \(\mathscr{U}\) is the complement of
the union of the section \(\bar{\infty}\) and 
a discrete set of points in \(\mathscr{X}_n^{\ast}\).
We know that \(\Phi\) is an isomorphism over \(\mathscr{U}\).
Notice that the two-dimensional variety
\(\mathscr{X}_n^{\ast}\) is \emph{normal} outside the
zero locus of \(\hat{A}\), because it is regular
in codimension one and Cohen--Macaulay (in codimension two).

For any symmetrized monomial
\(\,\Pi_{\textup{S}_n}(\underline{x}^{\underline{r}}\,
\underline{y}^{s})\,\) of 
\(\underline{x}^{\underline{r}}\,\underline{y}^{s}\)
considered earlier,
we know that its restriction to \(\mathscr{U}\)
is equal the pull-back of the restriction 
to the a regular function 
on the open subset \(\Phi(\mathscr{U})\)
of codimension at least \(2\).
By purity (or Hartog's theorem for normal
analytic spaces)
this regular function on \(\Phi(\mathscr{U})\)
extends to a regular function 
\(\,h_{\underline{r},\underline{s}}\,\) on
the complement in \(\mathscr{X}_n^{\ast}\)
of the section
\((0:0:1)\) ``at infinity''.
Restricting to the fiber over \(\tau_0\) and the Claim
follows. We have proved the \(\phi\) is an isomorphism
for every elliptic curve of the form
\(E_{\tau}=\CC/\Lambda_{\tau}\), for any element
\(\tau\) in the upper-half plane \(\HH\).
\end{proof}

\begin{remarkss}
There is a variant of the proof following the same idea,
but uses Zariski's Main Theorem instead of purity:
take the closure \(\bar{\mathscr{X}}_n\) of
\(\mathscr{X}_n\) in the \(n\)-th symmetric product
of the universal elliptic curve.
Apply Zariski's Main Theorem to the
map from the normalization of \(\bar{\mathscr{X}}_n\)
to \(\hat{\mathscr{X}}_n^{\ast}\). 
One needs to be careful when it comes to the
operation of taking the closure, for in general
this operation does not commute with the operation
of passing to a fiber.  
Details are left to the interested reader.
\end{remarkss}

\section{Deformations via blow-up sequences} \label{deformation}
\setcounter{equation}{0}

\subsection{}\label{subsec:begin_sec8}
In this section we will prove Theorem \ref{blow-up-set} 
concerning the \emph{blow-up set} of a \emph{blow-up sequence} $u_k$ 
to the mean field equation $\triangle u_k + e^{u_k} = \rho_k \delta_0$ 
with $\rho_k \to 8\pi n$ on \(E=\CC/\Lambda\).  Recall that 
the assumption that
\(u_k\) is a blow-up sequence means that
%there exists a element \(P\in E\) such that
%\(\,\lim_{k\to \infty}u_k(P)=\infty\,\);
the subset \(\,S\subset E\,\) consisting of all elements \(P\in E\) such
that \(\,\lim_{k\to \infty}u_k(P)=\infty\,\) is a non-empty finite
subset of \(E\);
this subset \(\,S\,\) is called the
blow-up set of the blow-up sequence \((u_k)\).

\subsubsection{}
The following facts are known; see for instance 
\cite[Thm.\,3,\,p.\,1237]{BM}, 
\cite[p.\,1256]{LS}.
\begin{itemize}
%\item[(i)] The blow-up set \(S\) of such a blow-up sequence
%\((u_k)\) is a finite subset of \(E\).

\item[(i)] \(\lim_{k\to\infty}u_k(x)=-\infty\) for all \(x\in
  E\!\smallsetminus \!S\), uniformly on compact subsets
of \(E\!\smallsetminus \!S\), 

\item[(ii)] There exists an \(\,\big(8\pi\NN_{\geq 1}\big)\)-valued function
\(P\mapsto \alpha_P\) on \(S\) such that the limit
\(\,\lim_{k\to \infty} \left.e^{u_k}\right\vert_{E}\,\) 
converges to the measure \(\,\sum_{P\in S}\alpha_{P}\,\delta_P\,\)
on \(E\), where 
\(\delta_{P}\) denotes the delta-measure at \(P\) for all \(P\in S\).

\end{itemize}
Note that \(\,\sum_{P\in S} \alpha_P=8\pi n\)
because \(\,\int_E e^{u_k}=\rho_k\,\) for all \(k\), and this
sequence converges to \(8\pi n\) by assumption. 

Clearly the blow-up set of a blow-up sequence does not change
if we pass to a subsequence, therefore we may assume
either (1) \(\rho_k\neq 8\pi n\) for all \(k\), or
(2) \(\rho_k=8\pi n\) for all \(k\).
Theorem \ref{blow-up-set} asserts that the blow-up set \(S\) 
is an element of \(Y_n\) and \(\alpha_P=8\pi \) for all \(P\in S\);
moreover \(S\in X_n\) in case (1) and \(S\in
Y_n\!\smallsetminus\!X_n\)
in case (2).

\subsubsection{} Part (2) of the Theorem \ref{blow-up-set}, 
namely the case $\rho_k = 8\pi n$, 
follows easily from results in \S\ref{II-even}, \ref{X_n} and
\ref{hyper-ell-str}.  Suppose that \(n\) is a positive integer
\((u_k)_{k\in\NN}\) is a blow-up sequence of solutions of 
\(\,\triangle u+e^u=8\pi\delta_0\,\) on \(E=\CC/\Lambda\).
By Theorems \ref{thm:g_even}, \ref{thm:hecke-system},
Proposition \ref{eq-2-sys} and Theorem \ref{poly-eqn}, for each \(k\)
there exists an element \([a^{(k)}]\in X_n\) and 
a real number \(\lambda_k\in\RR\) such that
\[
u_k(z)=\log\frac{8\, e^{{\scalebox{0.6}{$2\lambda_k$}}}\,
\vert\, f'_{[{\scalebox{0.5}{$a^{(k)}$}}]}\,\vert^2}{
(1+e^{{\scalebox{0.6}{$2\lambda_k$}}}
\vert f_{[{\scalebox{0.5}{$a^{(k)}$}}]}(z)\vert^2)^2}
\qquad \forall\,k,
\]
where 
\(f_{[{\scalebox{0.5}{$a^{(k)}$}}]}(z)=w_{[{\scalebox{0.5}{$a^{(k)}$}}]}(z)/
w_{{[{\scalebox{0.5}{$-a^{(k)}$}}]}}(z)\) is the quotient of the 
ansatz functions as in Definitions \ref{def:f_a} and \ref{def:w_a}.
The curve \(\bar{X}_n\) being projective, after passing to 
a subsequence we may and do assume that 
the sequence \([a^{(k)}]\in X_n\) converges to a point 
\([x_0]\in \bar{X}_n\).  We claim that \([x_0]\in X_n\), i.e.\ 
\(x_0\) is not a ramification point of 
\(\bar{\pi}_n:\bar{X}_n\to \PP^1(\CC)\).
For otherwise the sequence of functions 
\(f_{[{\scalebox{0.5}{$a^{(k)}$}}]}(z)\) converges to the
constant function \(1\) uniformly on compact subsets of \(E\),
which implies that \(u_k\) cannot be a blow-up sequence, a
contradiction which proves the claim.

From the fact that \([x_0]\in X_n\) it follows that 
the sequence \(f_{[{\scalebox{0.5}{$a^{(k)}$}}]}(z)\) converges to
\(f_{[x_0]}\). Therefore the sequence \(\lambda_k\) goes to \(\infty\),
and \(S=[x_0]\in X_n\).
We have proved Theorem \ref{blow-up-set}\,(2). 
The remaining case\,(1) when $\rho_k \ne 8\pi n$ for all
$k$ will be  proved in Theorem \ref{blow-up thm} and Corollary \ref{branch-pt}.
\qed
%\bigbreak

\subsection{The setup}

Let \(n\) be a positive integer.
Write $\rho = 8\pi \eta$, $\eta \in \mathbb R^+$. 
Consider a solution $u$ be of
\begin{equation} \label{MFE-eta}
\triangle u + e^u = 8\pi \eta\,\delta_0
\end{equation}
on $E = \mathbb C/\Lambda$, $\Lambda = \mathbb Z \omega_1 + \mathbb Z \omega_2$,
where the parameter \(\eta\!\in\!\RR_{>0}\) satisfies
$|\eta - n| < \tfrac{1}{2}$. 
When the parameter \(\eta\) satisfies \(n-\tfrac{1}{2}<\eta< n+\tfrac{1}{2}\),
the topological
Leray-Schauder degree of the equation (\ref{MFE-eta}) is non-zero,
%(\ref{MFE-eta}) with parameter \(\eta\) satisfying 
%\(n-\tfrac{1}{2}<\eta< n+\tfrac{1}{2}\) 
hence it has solution.
Let \((u_k)_{k\in\NN}\) be a sequence of solutions of ({MFE-eta})
with parameters \(\eta_k\) 
in the above range, and assume that \(\,\lim_{k\to\infty} \eta_k= n\).
We are interested in knowing the behavior of this sequence \((u_k)\).

%By Liouville's theorem on simply connected domains outside $0$ 
%and asymptotic analysis at $0$, locally around $z = 0$ 
\subsubsection{}
%We know that ``the'' universal covering of \(E\!\smallsetminus\!\{[0]\}\) is
%isomorphic to the open unit disk, therefore also to the upper-half
%plane \(\HH\). 
The natural map \(\CC\!\smallsetminus\!\Lambda \to
E\!\smallsetminus\!\{[0]\}\) 
is a Galois covering space with group \(\Lambda\).
We know that \(\CC\!\smallsetminus\!\Lambda \) has a universal
covering isomorphic to the upper-half plane \(\HH\).
Let \(\,z:\HH\to \CC\!\smallsetminus\!\Lambda\) be a universal
covering map; here we have abused the notation and use
the same symbol ``\(z\)'' for both the coordinate function on \(\CC\)
and this covering map.
Denote by \(\,[z]\,\) the composition of 
\(z\) with the natural projection map 
\(\CC\!\smallsetminus\!\Lambda \to
E\!\smallsetminus\!\{[0]\}=:E^{\times}\), so that 
\([z]: \HH\to E^{\times}\) is a universal covering map
of \(E^{\times}\).
%\(E\!\smallsetminus\!\{[0]\}\).
%\smallbreak

\subsubsection{}
Let \(\Gamma\subset \textup{PSL}_2(\RR)\) be the discrete 
subgroup of \(\textup{PSL}_2(\RR)\) consisting of all
deck transformations of the covering map 
\([z]:\HH\to E^{\times}\) , so that
%E\!\smallsetminus\!\{[0]\}\)
\(\Gamma\) is naturally isomorphic to the fundamental group
of \(E^{\times}\)
%\(E\!\smallsetminus\!\{[0]\}\).
Let \(\Delta\subset \Gamma\) be the group of all
deck transformations of the covering map
\(z:\HH\to \CC\!\smallsetminus\!\Lambda\).
We know that \(\Delta\) is a normal subgroup of 
\(\Gamma\), and the quotient 
\(\Gamma/\Delta\) is naturally isomorphic to \(\Lambda\),
the Galois group of the Galois cover 
\(\CC\!\smallsetminus\!\Lambda\to E^{\times}\),
therefore \(\Delta\) is equal to the 
subgroup \([\Gamma,\Gamma]\) of \(\Gamma\) generated
by all commutators.
%E\!\smallsetminus\!\{[0]\}\).
%Let \[\,\mathbf{z}:\HH\to E\!\smallsetminus\!\{[0]\}\] 
%be a universal covering
%of \(E\!\smallsetminus\!\{[0]\}\) and let 
%\(\Gamma\subset \textup{PSL}_2(\RR)\) be the discrete 
%subgroup of \(\textup{PSL}_2(\RR)\) such that 
%\(\mathbf{z}\) induces an isomorphism from \(\Gamma\backslash \HH\)
%to \(E\!\smallsetminus\!\{[0]\}\).
\smallbreak

The fundamental group of \(E\!\smallsetminus\!\{[0]\}\) is
a free group in two generators, so
\(\Gamma\) is a finitely generated Fuchsian subgroup
of \(\textup{PSL}_2(\RR)\).
The Fuchsian subgroup \(\Delta\subseteq \Gamma\) is not finitely
generated; it is a free group with a set of free generators
indexed by \(\Lambda\).
\smallbreak

Let \(\HH^{\ast}\) be the union of \(\HH\) and the set of all 
cusps\footnote{Recall that a \emph{cusp} with respect to \(\Gamma\)
is an element of \(\PP^1(\RR)=\RR\cup\{\infty\}\) which is 
fixed by a parabolic element in \(\Gamma\).}
with respect to \(\Gamma\), with the usual topology 
as defined in \cite[pp.\,8--10]{Shimura-redbook}
which is compatible with the \(\Gamma\)-action.
Note that \(\HH^{\ast}\) is contractible.

\begin{lemmass}
\textup{(1)} The map \(z:\HH\to \CC\!\smallsetminus\!\Lambda\) extends uniquely
to a continuous \(\Delta\)-invariant map \(z^{\ast}:\HH^{\ast}\to \CC\), 
which lifts the continuous \(\Gamma\)-invariant map
\([z]^{\ast}:\HH^{\ast}\to E\) extending
\([z]:\HH\to E^{\times}\).  
\smallbreak
\noindent
\textup{(2)} The maps \(\,z^{\ast}\) and \([z]^{\ast}\) induce homeomorphisms
\(\,\Delta\backslash \HH^{\ast}\xrightarrow{\sim} \CC\)
and \(\,\Gamma\backslash \HH^{\ast}\xrightarrow{\sim} E\) respectively.
\end{lemmass}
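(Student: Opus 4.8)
The statement asserts two things: that the covering map $z\colon\HH\to\CC\smallsetminus\Lambda$ extends to a continuous $\Delta$-invariant map $z^{\ast}\colon\HH^{\ast}\to\CC$ lifting the $\Gamma$-invariant extension $[z]^{\ast}\colon\HH^{\ast}\to E$, and that the induced maps on the quotients $\Delta\backslash\HH^{\ast}$ and $\Gamma\backslash\HH^{\ast}$ are homeomorphisms onto $\CC$ and $E$ respectively. The plan is to treat everything as statements about adding a single orbit of cusps and checking the local model near one cusp, then passing to quotients.

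\emph{Extending the maps over the cusps.}
First I would recall that the cusps of $\Gamma$ (with respect to the covering $[z]\colon\HH\to E^{\times}$) map, under $\Gamma$, to a single point, which should be identified with $[0]\in E$: the puncture at $[0]$ corresponds to the parabolic conjugacy classes in $\pi_1(E^{\times})\cong\Gamma$, so every cusp sits ``over'' the missing point $[0]$. The key local statement is that in a standard coordinate near a cusp $s$, where a generator of the parabolic stabilizer $\Gamma_s\subset\Gamma$ acts by $w\mapsto w+1$ on an upper half disk and $[z]$ is expressed through the local uniformizer $q=e^{2\pi\sqrt{-1}w}$, the map $[z]$ extends continuously by sending the cusp to $[0]$, because a punctured-disk neighborhood of the puncture $[0]\in E$ is covered by the $q$-disk with the cusp going to $q=0$. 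This is exactly the construction of the topology on $\HH^{\ast}$ in \cite[pp.\,8--10]{Shimura-redbook}: that topology is designed precisely so that the projection to the compactified/filled-in base is continuous. For the lift $z^{\ast}$, I would observe that $\Delta=[\Gamma,\Gamma]$ acts trivially on the set of cusps modulo the parabolic directions in a way that makes the many sheets of $\CC\smallsetminus\Lambda$ over $E^{\times}$ fill in consistently: each cusp of $\Gamma$ gives rise to a $\Delta$-orbit of cusps, and $z$ sends such an orbit to a single lattice point $\omega\in\Lambda$. Concretely, I would fix one cusp, extend $z$ there by sending it to the appropriate $\omega\in\Lambda$, and then propagate by $\Delta$-equivariance; continuity at every cusp follows from the same $q$-disk local model, since near $\omega$ the punctured disk in $\CC\smallsetminus\Lambda$ is again uniformized by a $q$-coordinate. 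Uniqueness of both extensions is automatic because $\HH$ is dense in $\HH^{\ast}$ and the targets $\CC$, $E$ are Hausdorff.

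\emph{Passing to quotients.}
Once $z^{\ast}$ and $[z]^{\ast}$ are continuous, surjective and equivariant, I would show the induced maps $\bar{z}\colon\Delta\backslash\HH^{\ast}\to\CC$ and $\overline{[z]}\colon\Gamma\backslash\HH^{\ast}\to E$ are bijective and then upgrade to homeomorphisms. Bijectivity on the open part $\Delta\backslash\HH\cong\CC\smallsetminus\Lambda$ and $\Gamma\backslash\HH\cong E^{\times}$ is the defining property of the covering maps. On the added points, the cusps of $\Gamma$ form a single $\Gamma$-orbit (one puncture at $[0]$), so they contribute exactly one point to $\Gamma\backslash\HH^{\ast}$, matching $[0]\in E$; for $\Delta$, the cusps fall into $\Lambda$-many $\Delta$-orbits indexed by $\Lambda$, matching the lattice points added back to recover $\CC$. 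For the topological upgrade I would use the standard fact that $\Gamma\backslash\HH^{\ast}$ (hence also $\Delta\backslash\HH^{\ast}$, which is a covering of the former away from cusps and a compatible filling at cusps) is a Hausdorff space, so that a continuous bijection from it is a homeomorphism once one checks it is open or proper; here properness near the cusps follows from the $q$-disk local model, which identifies a neighborhood of the cusp image in the quotient with a disk mapping homeomorphically to a punctured-disk-plus-origin in the target.

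\emph{Main obstacle.}
The genuinely delicate point is the bookkeeping of cusp orbits, i.e.\ verifying that $\Delta=[\Gamma,\Gamma]$ partitions the cusps of $\Gamma$ into exactly one orbit for each element of $\Lambda=\Gamma/\Delta$, with $z^{\ast}$ sending the orbit indexed by $\omega$ to the lattice point $\omega$. This is what makes the filled-in quotient $\Delta\backslash\HH^{\ast}$ reproduce all of $\CC$ rather than $\CC$ with some points identified or missing, and it is the step where the specific structure of $\Gamma$ as $\pi_1(E^{\times})$ and of $\Delta$ as its commutator subgroup is essential; the continuity and homeomorphism claims are then formal consequences of the Shimura topology on $\HH^{\ast}$ together with the universal $q$-disk local model.
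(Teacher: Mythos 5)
Your proposal is correct, but it takes a genuinely different route from the paper. The paper's own proof is a two-line lifting argument: it quotes the well-known fact that $[z]\colon\HH\to E^{\times}$ extends to a continuous $\Gamma$-invariant map $[z]^{\ast}\colon\HH^{\ast}\to E$, and then obtains $z^{\ast}$ \emph{formally} by lifting $[z]^{\ast}$ through the covering $\CC\to E$, the lift existing because $\HH^{\ast}$ is contractible (in particular simply connected); uniqueness of lifts agreeing with $z$ on the dense subset $\HH$ gives both the compatibility $z^{\ast}|_{\HH}=z$ and the $\Delta$-invariance, and statement (2) is then read off from (1). You instead build $z^{\ast}$ by hand: you show the cusps of $\Gamma$ decompose into $\Delta$-orbits indexed by $\Delta\backslash\Gamma/\Gamma_{\mathfrak{c}_0}=\Gamma/\Delta\cong\Lambda$ (using that the parabolic stabilizer $\Gamma_{\mathfrak{c}_0}$ lies in $\Delta=[\Gamma,\Gamma]$), send the orbit indexed by $\omega$ to the lattice point $\omega$, and verify continuity via the horodisk/$q$-disk local model; for (2) you check that the continuous bijections on the quotients are open maps near the cusps. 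Both arguments are sound. What each buys: the paper's lifting trick completely circumvents the cusp bookkeeping that you correctly single out as the main obstacle — contractibility of $\HH^{\ast}$ does all the work, at the price of invoking the standard theory for $[z]^{\ast}$ as a black box. Your construction is more self-contained and makes explicit the correspondence between $\Delta$-orbits of cusps and lattice points, which is in fact the structure the paper uses immediately afterwards (the normalization $z^{\ast}(\mathfrak{c}_0)=0$ with $\mathrm{Stab}_{\Gamma}(\mathfrak{c}_0)=\langle[\tilde\gamma_1,\tilde\gamma_2]\rangle$, and the identification $(z^{\ast})^{-1}(0)=[\Gamma,\Gamma]\cdot\mathfrak{c}_0$); it also yields the local homeomorphism statement at the cusps directly rather than as a corollary.
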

\begin{proof} The existence of the latter map
\([z]^{\ast}:\HH^{\ast}\to E\) is well-known, and  
the existence of the
former map \(z^{\ast}:\HH^{\ast}\to \CC\) follows from the
existence of the latter because \(H^{\ast}\) is contractible.
We have proved (1). The statement (2) follows from the
statement (1).
\end{proof}

%which induces a homeomorphism 
%from \(\Gamma\backslash \HH^{\ast}\).
\subsubsection{}
Choose and fix free generators \(\tilde\gamma_1, \tilde\gamma_2\) of \(\Gamma\)
such that the image of \(\tilde\gamma_i\) in \(\Gamma/[\Gamma,\Gamma]\cong 
\Lambda\) is \(\omega_i\) for \(i=1, 2\).
Let \(\mathfrak{c}_0\in \HH^{\ast}\) be the unique cusp such that
\(\,z^{\ast}(\mathfrak{c}_0)=0\) and the 
stabilizer subgroup \(\,\textup{Stab}_{\Gamma}(\mathfrak{c}_0)\)
of \(\mathfrak{c}_0\) in \(\Gamma\) is equal to the
cyclic subgroup generated by the commutator 
\([\tilde\gamma_1,\tilde\gamma_2]
:=\,\tilde\gamma_1 \tilde\gamma_2 \tilde\gamma_1^{-1}\tilde\gamma_2^{-1}\).
It follows that the inverse image 
\(\,z^{\ast}(0)\,\) of \(\,0\in \CC\,\) under 
\(\,z^{\ast}\!:\!\HH^{\ast}\to \CC\,\) is equal to
\([\Gamma,\Gamma]\!\cdot\!\mathfrak{c}_0\).

%\subsubsection{}
%Let \(D_{\epsilon}\) be an open disk in 
%\(\CC\) centered about \(0\) such that \(D_{\epsilon}\cap
%\Lambda=\{0\}\);
%let 
%\(D_{\epsilon}^{\times}=D_{\epsilon}\!\smallsetminus\!\{0\}\)
%be the corresponding open punctured disk.

\subsubsection{}
According to Proposition \ref{prop:liouville}, for each \(k\)
%we can express $u$ by a multi-valued meromorphic developing map $f$
%through
there exists a meromorphic function
\(\mathbf{f}_k(\xi)\) on \(\HH\) such that
\begin{equation} \label{dev.map}
u_k\circ{z} = \log \frac{8 |\tfrac{d}{dz}
{\mathbf{f}_k}|^2}{(1 + |{\mathbf{f}_k}|^2)^2}.
\end{equation}
The Schwarzian derivative 
\[S(\mathbf{f}_k)=\frac{\tfrac{d^3}{dz^3}{\mathbf{f}_k}}{\tfrac{d}{dz}\mathbf{f}_k}-
\frac{3}{2}\left(\frac{\tfrac{d^2}{dz^2}\mathbf{f}_k}{
\tfrac{d}{dz}\mathbf{f}_k}\right)^2
\]
of $\Phi$ is equal to
\begin{equation*}
S(\mathbf{f}_k) = \frac{d^2u_k}{dz^2}\circ{z} - 
\frac{1}{2} \Big(\frac{du_k}{dz}\Big)^2 = -2(\eta_k (\eta_k + 1) \wp(z;\Lambda) + B_k)
\end{equation*}
for some constant $B_k$. 
Thus $\mathbf{f}_k$ can be written as a ratio of two independent solutions 
of the Lam\'e equation
\begin{equation*}
\frac{d^2w}{dz^2} = \big(\eta_k (\eta_k + 1) \wp(z;\Lambda) + B_k\big) w.
\end{equation*}

\subsubsection{}\label{subsub:branch-of-sol}
Choose and fix a branch of \(\log z\) on \(\HH\), i.e.\ 
a holomorphic function on \(\HH\) whose exponential is equal
to the function \(\,z:\HH\to \CC\!\smallsetminus\! \Lambda\);
we abuse the notation and denote this function again by \(\log z\).

The indicial equation of the Lam\'e equation above is given by
\begin{equation*}
\lambda^2 - \lambda - \eta_k(\eta_k + 1) 
= (\lambda - (\eta_k + 1))(\lambda + \eta_k) = 0.
\end{equation*}
The difference $\eta_k + 1 - (-\eta_k) = 2\eta_k + 1$ 
of the two roots of the above indicial equation is \emph{not} an integer 
%if and only if $\eta \in \tfrac{1}{2} \mathbb Z$. 
because of the assumption that \(\vert\eta_k-n\vert<\tfrac{1}{2}\).
%The case of half integers is excluded from our assumption. 
%(The case $\eta = n$ indeed admits logarithmic free fundamental
%solutions.) 
Hence there exist two linearly independent solutions $w_{k,1}, w_{k,2}$ on \(\HH\)
which near \(\mathfrak{c}_0\) are of the form 
%can always be written as
\begin{equation*}
%w_{k,1}(z) = z^{\eta_k + 1} h_{k,1}(z), 
w_{k,1}= e^{(\eta_k + 1)\log z}\!\cdot\! (h_{k,1}\circ z),
\qquad 
%w_{k,2} (z) = z^{-\eta_k} h_{k,2}(z),
w_{k,2}= e^{-\eta_k\log z}\!\cdot\! (h_{k,2}\circ  z),
\end{equation*}
where $h_{k,1}(z)$ and $h_{k,2}(z)$ are 
holomorphic functions
in an open neighborhood of \(z=0\) 
with the property that  $h_{k,1}(0) = 1= h_{k,2}(0)$. 
%Set $H(z) = h_1(z)/h_2(z)$ and
%$w_{k,1}, w_{k,2}$ on \(\HH\)
The quotient 
%\(\mathfrak{f}_k := \frac{w_{k,1}}{w_{k,2}}\)
%extends by analytic continuation to 
\begin{equation} \label{f1}
\mathfrak{f}_k:= \frac{w_{k,1}}{w_{k,2}} 
%= e^{(2\eta_k+1)\log{z}} 
%H(\mathbf{z}(\xi)) 
%= e^{(2\eta + 1) \log z} H(z),
\end{equation}
is a meromorphic function \(\mathfrak{f}_k\)
on \(\HH\) such that
% with the following property:
%for any \(\xi_0\in \HH\) such that \(\mathbf{z}(\xi_0)=[0]\)
%and for all \(\xi\) sufficiently close to \(\xi_0\), we have 
%\begin{equation} \label{f1}
%f_k(\xi) = \frac{w_{k,1}(\xi)}{w_{k,2}(\xi)} 
%= e^{(2\eta_k+1)\log(\mathbf{z}(\xi))} H(\mathbf{z}(\xi)) 
%%= e^{(2\eta + 1) \log z} H(z),
%\end{equation}
%where \(H(z) := h_{k,1}(z)/h_{k2,}(z)\) near \(z=0\).
%Here a particular branch of $f_*$ is selected near $z = 0$ as above, 
%and all the branches of $f_*$ constitute a function defined on
%$\mathbb H$, 
%the universal cover of the \emph{once punctured (cuspidal) torus} 
%$E^\times = E \backslash \{0\}$. 
%We denote the corresponding \emph{global analytic function} by
%$$
%\f_*(\xi), \quad \xi \in \mathbb H,
%$$
%where $0$ is a cusp which is mapped to $z = 0 \in E$, with 
\begin{equation}\label{fk-lim}
\mathfrak{f}_k(\mathfrak{c}_0) := \lim_{\xi \to \mathfrak{c}_0} 
\mathfrak{f}_k(\xi) = 0
\end{equation}
and 
\begin{equation}\label{fk-monodromy}
\mathfrak{f}_k(\tilde\gamma_2^{-1}
\tilde\gamma_1^{-1}\tilde\gamma_2\tilde\gamma_1\cdot \xi)
= e^{4\pi\sqrt{-1}\eta_k}\mathfrak{f}_k(\xi)
\qquad \forall\xi\in\HH.
\end{equation}
% in writing the developing map $f$ as a global analytic
%function 
%$\f(\xi)$, it is also understood that a particular branch is selected. 
%
%To be precise, the fundamental group of $E^\times$ 
%with respect to a fixed base point $x_0 \ne 0$ is free of rank two: 
%$$
%\pi_1(E^\times; x_0) \cong \mathbb Z*\mathbb Z
%$$
%since $E^\times$ is homotopically equivalent to "the figure 8". 

\begin{lemmass}\label{lemma:Tk}
Let \(T_k\in \textup{PSL}_2(\CC)\) be the linear fractional
transformation such that 
\(\,\mathbf{f}_k= T_k\cdot \mathfrak{f}_k\).
The limit \[\,\lim_{\xi\to \mathfrak{c}_0}\mathbf{f}_k(\xi)
=:\mathbf{f}_k(\mathfrak{c}_0)\,\]
exists in \(\PP^1(\CC)\)
and is equal to \(T_k\!\cdot\! 0\),
the image of \(0\in\PP^1(\CC)\) under the 
linear fractional transformation \(T_k\).

\end{lemmass}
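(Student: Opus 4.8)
The plan is to obtain the statement as an immediate consequence of the continuity of $T_k$ on $\PP^1(\CC)$ together with the boundary value already recorded in \eqref{fk-lim}. The point is that passing from $\mathfrak{f}_k$ to $\mathbf{f}_k$ only composes with a fixed M\"obius transformation, and M\"obius transformations are homeomorphisms of the Riemann sphere, hence commute with taking limits; the mild care needed is to work throughout on $\PP^1(\CC)$ so that the possibility $T_k\cdot 0=\infty$ causes no difficulty.

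First I would confirm the existence and nature of $T_k$. Both $\mathbf{f}_k$ and $\mathfrak{f}_k=w_{k,1}/w_{k,2}$ are ratios of two linearly independent solutions of one and the same Lam\'e equation $\tfrac{d^2 w}{dz^2}=(\eta_k(\eta_k+1)\wp(z;\Lambda)+B_k)w$: the former because it is a developing map whose Schwarzian derivative is $-2(\eta_k(\eta_k+1)\wp(z;\Lambda)+B_k)$, the latter by its construction in \ref{subsub:branch-of-sol}. Hence $S(\mathbf{f}_k)=S(\mathfrak{f}_k)$, and by the basic property of the Schwarzian derivative there is a unique element $T_k\in\textup{PSL}_2(\CC)$ with $\mathbf{f}_k=T_k\cdot\mathfrak{f}_k$; this is exactly the $T_k$ of the statement.

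Next I would invoke \eqref{fk-lim}, namely $\lim_{\xi\to\mathfrak{c}_0}\mathfrak{f}_k(\xi)=0$ in $\PP^1(\CC)$. Viewing $T_k$ as a homeomorphism of the whole Riemann sphere $\PP^1(\CC)$ and writing $\mathbf{f}_k(\xi)=T_k(\mathfrak{f}_k(\xi))$, continuity of $T_k$ at the point $0$ yields
\[
\lim_{\xi\to\mathfrak{c}_0}\mathbf{f}_k(\xi)=T_k\Big(\lim_{\xi\to\mathfrak{c}_0}\mathfrak{f}_k(\xi)\Big)=T_k(0)=T_k\cdot 0,
\]
which is the assertion; in particular the limit exists in $\PP^1(\CC)$ even in the case $T_k\cdot 0=\infty$.

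The only genuine content lies hidden in \eqref{fk-lim}, so the main point I would make sure is on firm ground—and it is already dispatched in \ref{subsub:branch-of-sol}—is that the limit at the cusp be path-independent. This rests on the Frobenius analysis of the regular singular point $z=0$: since $|\eta_k-n|<\tfrac12$ the exponent difference $2\eta_k+1$ is not an integer, so the logarithm-free local solutions $w_{k,1}=e^{(\eta_k+1)\log z}(h_{k,1}\circ z)$ and $w_{k,2}=e^{-\eta_k\log z}(h_{k,2}\circ z)$ with $h_{k,1}(0)=1=h_{k,2}(0)$ exist, whence $\mathfrak{f}_k=e^{(2\eta_k+1)\log z}\,(h_{k,1}/h_{k,2})\circ z$. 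Because $2\eta_k+1>0$ one has $|e^{(2\eta_k+1)\log z}|=|z|^{2\eta_k+1}\to 0$ while $(h_{k,1}/h_{k,2})(0)=1$ as $z=z^{\ast}(\xi)\to 0$, uniformly over approaches to $\mathfrak{c}_0$ inside a cusp neighborhood in $\HH^{\ast}$; this is precisely what makes the limit well defined and equal to $0$, and the lemma then follows formally as above.
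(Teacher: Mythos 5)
Your proof is correct and is exactly the argument the paper intends: the lemma is stated without proof precisely because it follows from the equality of Schwarzians (giving the existence of $T_k$), the boundary value \eqref{fk-lim} established in \ref{subsub:branch-of-sol}, and the continuity of $T_k$ as a homeomorphism of $\PP^1(\CC)$. Your additional care about path-independence of the limit at the cusp (via $|\mathfrak{f}_k|=|z|^{2\eta_k+1}\,|h_{k,1}/h_{k,2}|\to 0$ and the unimodular monodromy factor $e^{4\pi\sqrt{-1}\eta_k}$) is a worthwhile check, consistent with the paper's setup.
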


\begin{lemmass} \label{f(0)}
\textup{(a)} If $\mathbf{f}_k(0) = 0$, then there exists a 
constant \(A_k\in \CC^{\times}\) such that 
$\mathbf{f}_k = A_k\cdot \mathfrak{f}_k$. 
\smallbreak
\noindent
\textup{(b)}
If $\mathbf{f}_k(0) = \infty$, then there exists a 
constant \(A_k\in \CC^{\times}\) such that 
then $\mathbf{f}_k = A_k/\mathfrak{f}_k$.
\end{lemmass}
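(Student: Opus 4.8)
The plan is to prove Lemma \ref{f(0)} by combining Lemma \ref{lemma:Tk} with the uniqueness-up-to-$\textup{PSU}(2)$ structure of developing maps established in Lemma \ref{lemma:psu2} and the monodromy normalization already set up in \ref{subsub:branch-of-sol}. First I would recall the basic relationship: by construction $\mathbf{f}_k$ is a developing map of $u_k$ lifted to $\HH$, while $\mathfrak{f}_k$ is the quotient \eqref{f1} of two particular Lam\'e solutions, and these two meromorphic functions have the \emph{same} Schwarzian derivative, namely $-2(\eta_k(\eta_k+1)\wp(z;\Lambda)+B_k)$. Since $\HH$ is connected, the cocycle property of the Schwarzian derivative (as used in the footnote to Lemma \ref{lemma:psu2}\,(3)) forces $\mathbf{f}_k = T_k\cdot \mathfrak{f}_k$ for a unique $T_k\in \textup{PSL}_2(\CC)$; this is exactly the element named in Lemma \ref{lemma:Tk}.

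Next I would extract what the hypotheses on $\mathbf{f}_k(0)$ say about $T_k$. By Lemma \ref{lemma:Tk}, $\mathbf{f}_k(\mathfrak{c}_0)=T_k\cdot 0$, and since $z^{\ast}(\mathfrak{c}_0)=0$ this is precisely the value I have been writing as $\mathbf{f}_k(0)$. Thus the assumption $\mathbf{f}_k(0)=0$ in part (a) translates into $T_k\cdot 0 = 0$, i.e.\ $T_k$ fixes the point $0\in\PP^1(\CC)$; writing $T_k$ as the image of $\begin{pmatrix}a&b\\c&d\end{pmatrix}$, fixing $0$ means $b=0$, so $T_k$ has the form $w\mapsto \frac{a\,w}{c\,w+d}$. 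Similarly, in part (b) the assumption $\mathbf{f}_k(0)=\infty$ gives $T_k\cdot 0 = \infty$, which means $d=0$, so $T_k$ has the form $w\mapsto \frac{a\,w+b}{c\,w}$.

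The crux is then to upgrade ``$T_k$ fixes $0$ (resp.\ sends $0$ to $\infty$)'' to the much stronger conclusion that $T_k$ is a diagonal (resp.\ anti-diagonal) transformation, i.e.\ that $c=0$ as well. For this I would use the monodromy normalization \eqref{fk-monodromy}: around the commutator loop $\mathfrak{f}_k$ is multiplied by the scalar $e^{4\pi\sqrt{-1}\eta_k}$, which is \emph{not} equal to $1$ because $\vert\eta_k-n\vert<\tfrac12$ forces $2\eta_k\notin\ZZ$, hence $4\pi\sqrt{-1}\eta_k\notin 2\pi\sqrt{-1}\ZZ$. Conjugating this local monodromy by $T_k$, the function $\mathbf{f}_k$ must pick up the monodromy $T_k\,(\lambda\,\mathrm{id})\,T_k^{-1}$ around the same loop, where $\lambda=e^{4\pi\sqrt{-1}\eta_k}$. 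But the developing map $\mathbf{f}_k$ of a genuine solution $u_k$ on $E$ has $\textup{PSU}(2)$-valued monodromy by Lemma \ref{lemma:n-s-cond-develop}; in particular its monodromy around the commutator loop fixes the two points $\mathbf{f}_k(0)$ and (by evenness/the analysis near the puncture) is again a scalar multiplication $w\mapsto \lambda w$ or $w\mapsto \lambda^{-1}w$ in the coordinate centered at these fixed points. Comparing the two fixed points of $T_k\,(\lambda\,\mathrm{id})\,T_k^{-1}$, namely $T_k\cdot 0$ and $T_k\cdot\infty$, with the fixed points $\{0,\infty\}$ of $\lambda\,\mathrm{id}$ that the monodromy of $\mathbf{f}_k$ must also have, I conclude $\{T_k\cdot 0, T_k\cdot\infty\}=\{0,\infty\}$. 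In case (a) we already know $T_k\cdot 0=0$, so necessarily $T_k\cdot\infty=\infty$, giving $c=0$ and hence $\mathbf{f}_k=A_k\cdot\mathfrak{f}_k$ with $A_k=a/d\in\CC^{\times}$; in case (b) we know $T_k\cdot 0=\infty$, so $T_k\cdot\infty=0$, giving $a=0$ and hence $\mathbf{f}_k=A_k/\mathfrak{f}_k$ with $A_k=b/c\in\CC^{\times}$.

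The step I expect to be the main obstacle is the matching of fixed points in the last paragraph: I must argue rigorously that the commutator monodromy of $\mathbf{f}_k$ is a \emph{non-trivial} transformation whose fixed-point set is exactly $\{\mathbf{f}_k(0),\mathbf{f}_k(\infty)\}$, and that this forces the unordered pair $\{T_k\cdot 0, T_k\cdot\infty\}$ to equal $\{0,\infty\}$ rather than merely to contain $\mathbf{f}_k(0)$. The cleanest way is probably to observe that both $\mathfrak{f}_k$ and $\mathbf{f}_k$ have the \emph{same} commutator monodromy as abstract elements of $\textup{PSL}_2(\CC)$ only up to the conjugation by $T_k$, so the local model \eqref{fk-monodromy} pins down that this monodromy is a loxodromic (in fact diagonalizable, non-parabolic) element with multiplier $\lambda\neq 1$, whose two fixed points are therefore well-defined and are carried by $T_k$ from $\{0,\infty\}$; the non-integrality of $2\eta_k$ is exactly what guarantees $\lambda\neq 1$ and hence that there are precisely two distinct fixed points to match. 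Once this is in place, parts (a) and (b) follow immediately with the explicit constants $A_k$ exhibited above.
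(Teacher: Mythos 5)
Your overall strategy is the right one, and it is in fact the honest completion of the paper's own (very terse) proof: write $\mathbf{f}_k=T_k\cdot\mathfrak{f}_k$, identify $\mathbf{f}_k(\mathfrak{c}_0)=T_k\cdot 0$, and then use the $\textup{PSU}(2)$-valued monodromy of $\mathbf{f}_k$ together with the non-trivial commutator monodromy \eqref{fk-monodromy} of $\mathfrak{f}_k$ to force $T_k$ to be diagonal (resp.\ anti-diagonal). But the decisive step is missing. In case (a) you correctly establish that the commutator monodromy of $\mathbf{f}_k$ is $M=T_k\circ(\lambda\,\mathrm{id})\circ T_k^{-1}$ with $\lambda=e^{4\pi\sqrt{-1}\eta_k}\neq 1$, that $M\in\textup{PSU}(2)$ by Lemma \ref{lemma:n-s-cond-develop}, and that the fixed-point set of $M$ is $\{T_k\cdot 0,\,T_k\cdot\infty\}=\{0,\,T_k\cdot\infty\}$. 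At that point, however, you assert that the monodromy of $\mathbf{f}_k$ ``must also have'' fixed points $\{0,\infty\}$ --- which is precisely what has to be proved (it is equivalent to $T_k$ being diagonal) --- and your attempted repair in the last paragraph only re-derives that the fixed points are $T_k\cdot 0$ and $T_k\cdot\infty$; it never shows that this pair equals $\{0,\infty\}$. As written, the crux of the argument is circular.

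The missing ingredient is the \emph{unitarity} of $M$, used as follows: a non-identity element of $\textup{PSU}(2)$ that fixes $0$ automatically fixes $\infty$. Indeed, write $M$ as the image of a special unitary matrix $\begin{pmatrix}\alpha&\beta\\ -\bar\beta&\bar\alpha\end{pmatrix}$; then $M\cdot 0=0$ gives $\beta=0$, so $M$ is diagonal (equivalently: non-identity elements of $\textup{PSU}(2)\cong\textup{SO}(3)$ are rotations of $S^2$, whose fixed points form an antipodal pair, and the antipode of $0$ is $\infty$). Since $M$ is conjugate to multiplication by $\lambda\neq 1$, it is not the identity and has exactly the two fixed points $\{0,\,T_k\cdot\infty\}$; the above then forces $T_k\cdot\infty=\infty$, so $T_k$ fixes both $0$ and $\infty$, is diagonal, and $\mathbf{f}_k=A_k\,\mathfrak{f}_k$. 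Case (b) is symmetric: $M$ fixes $\infty=T_k\cdot 0$, hence also $0$, giving $T_k\cdot\infty=0$ and $T_k$ anti-diagonal. Alternatively one can finish by brute force: with $T_k=\begin{pmatrix}a&0\\ c&d\end{pmatrix}$ one computes that $M$ is represented by $\begin{pmatrix}\lambda&0\\ c(\lambda-1)/a&1\end{pmatrix}$, and proportionality to a unitary matrix forces $c(\lambda-1)=0$, i.e.\ $c=0$. Two smaller points: the element $\lambda\,\mathrm{id}$ with $|\lambda|=1$ is elliptic, not loxodromic; and $|\eta_k-n|<\tfrac12$ alone does not give $2\eta_k\notin\ZZ$ --- you also need the standing assumption $\eta_k\neq n$ in force in this part of \S\ref{deformation}, the case $\rho_k=8\pi n$ having been disposed of separately in \ref{subsec:begin_sec8}.
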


\begin{proof}
Let \(T_k\) be the linear fractional transformation
such that \(\,\mathbf{f}_k= T_k\cdot \mathfrak{f}_k\)
as in the proof of Lemma \ref{lemma:Tk}.

%From (\ref{f1}), we compute 
%$f_*(e^{2\pi i} z) = e^{4\pi i \eta} f_*(z) = \gamma f_*(z)$ with
%\begin{equation} \label{matrix-gamma}
%\gamma := \begin{pmatrix} e^{2\pi i \eta} & 0 \\ 0 & e^{-2\pi i \eta} \end{pmatrix}.
%\end{equation}
%More precisely, this means that 
%$$
%\f_* (g_2^{-1} g_1^{-1} g_2 g_1\xi) = S_2^{-1} S_1^{-2} S_2 S_1
%\f_*(\xi) 
%= \gamma \f_*(\xi) = e^{4\pi i \eta} \f(\xi)
%$$
%where $\xi \mapsto z$ and $g_2^{-1} g_1^{-1} g_2 g_1\xi$ is 
%the corresponding lifting of $e^{2\pi i} z$ in $\mathbb H$. 
%For simplicity we denote $g_0 = g_2^{-1} g_1^{-1} g_2 g_1 \in \Gamma$.

If $\mathbf{f}_k(\mathfrak{c}_0) = 0=\mathfrak{f}_k(0)$, then
there exists a unique element \(A_k\in\CC^{\times}\) such that
\(T_k\) is the image of \(\begin{pmatrix}A_k&0\\0&1
\end{pmatrix}\); the statement (a) follows.
If $\mathbf{f}_k(\mathfrak{c}_0)=\infty$, then there exist
a unique element \(A_k\in\CC^{\times}\) 
such that
\(T_k\) is the image of \(\begin{pmatrix}0&A_k\\1&0
\end{pmatrix}\). We have proved (b).
\end{proof}
%
%\begin{equation*}
%\f = \frac{a \w_1 + b \w_2}{c \w_1 + d \w_2} 
%= \frac{a \f_*}{c \f_* + d} =: \gamma_0 \f_*
%\end{equation*}
%with $b = 0$ and $ad = 1$ ($\gamma_0 \in {\rm SL}(2, \mathbb C)$). Now
%\begin{equation*}
%\f(g_0\xi) = \gamma_0 \f_* (g_0\xi) = \gamma_0 \gamma \f_* (\xi) 
%= \gamma_0 \gamma \gamma_0^{-1} \f(\xi)
%\end{equation*}
%is also a developing map for $u$, hence
%\begin{equation*}
%\gamma_0 \gamma \gamma_0^{-1} =
%\begin{pmatrix}
%e^{2\pi i \eta} & 0 \\ (e^{2\pi i \eta} - e^{-2\pi i \eta}) cd & e^{-2\pi i \eta}
%\end{pmatrix} \in {\rm PSU}(2).
%\end{equation*}
%Since $\eta \not\in \tfrac{1}{2}\mathbb Z$, this forces that $c = 0$ 
%and then $\f = a^2 \f_*$.
%
%The proof for the case $\f(0) = \infty$ is analogous and is thus omitted.
%\end{proof}
%
%
%$S_2 = r(g_2)$ in ${\rm PSU}(2)$ with
%$$
%\f(g\xi) = r(g) \f(\xi). 
%$$ 
%
%Since $f = (a f_* + b)/(c f_* + d)$, it is easy to see that 
%
%\begin{lemma}
%Either $\lim_{\xi \to 0} \f(\xi) = \infty$ 
%or $\lim_{\xi \to 0} \f(\xi)$ exists as a finite value. 
%\end{lemma}
%We denote the limit by $\f(0)$ no matter it is finite or infinite. 

\subsection{Normalizing \(\mathbf{f}_k\)'s through monodromy}
%We continue the discussion on the full monodromy effect on $\mathbf{f}_k$. 
\subsubsection{}
Let \(\rho_{\mathbf{f}_k}:\Gamma\to \textup{PSU}(2)\) be the 
monodromy representation attached to the developing map
\(\,\mathbf{f}_k\,\) of the solution \(u_k\), defined by
\[
\mathbf{f}_k(\gamma\cdot\xi)=
\rho_{\mathbf{f}_k}(\gamma)\cdot\mathbf{f}_k(\xi)\qquad
\forall \gamma\in \Gamma,\ \forall\xi\in \HH.
\]
Note that \[\rho_{\mathbf{f}_k}(\gamma_1\cdot \gamma_2)
=\rho_{\mathbf{f}_k}(\gamma_1)\cdot \rho_{\mathbf{f}_k}(\gamma_2)
\qquad \forall\,\gamma_1,\gamma_2\in \Gamma.\]
%Let $g_i \in {\rm SL}(2, \mathbb R)$ be the covering transformations
%of $\mathbb H \to E^\times$ corresponding to the two generators of 
%$\pi_1(E^\times, x_0)$, and $\Gamma$ be the rank two free subgroup 
%in ${\rm SL}(2, \mathbb R)$ generated by $g_1$ and $g_2$. 
%Then any two branches $\f_1$, $\f_2$ of $f$ are related by $\f_1 = S\f_2$ 
%for some $S \in {\rm PSU}(2)$. In particular, a solution $u$ 
%is equivalent to an ${\rm PSU}(2)$ representation $r = r_u$ 
%of $\Gamma \cong \pi_1(E^\times; x_0)$ determined by two matrices 
Let 
\(S_{k,i} = \rho_{\mathbf{f}_k}(\tilde\gamma_i)\in 
\textup{PSU}(2)\) for \(i=1,2\).
Then we have
\begin{equation} \label{monodromy}
\mathbf{f}_k(\tilde\gamma_i\cdot \xi)= S_{k,i}\cdot \mathbf{f}_k(\xi)
\qquad \textup{for}\ i=1, 2,\ \forall\,\xi\in \HH.
\end{equation}
%The representation $r: \Gamma \to {\rm PSU}(2)$ is determined by
%\begin{equation} \label{monodromy}
%\begin{split}
%\f(g_1 \xi) &= S_1 \f(\xi), \\
%\f(g_2 \xi) &= S_2 \f(\xi),
%\end{split}
%\end{equation}
%for some $S_i \in {\rm SU}(2)$. 
%Denote $g_0 = g_2^{-1} g_1^{-1} g_2
%g_1$ 
%and represent $r(g_0)$ by a matrix $\beta \in {\rm SU}(2)$. 
%Then $S_2^{-1} S_1^{-1} S_2 S_1 = \pm \beta$, or
%\begin{equation} \label{pi1-relation}
%S_2 S_1 = \pm S_1 S_2 \beta.
%\end{equation}
Let \begin{equation} \label{pi1-relation}
\beta_k:=S_{k,2}\cdot S_{k,1}\cdot S_{k,2}^{-1}\cdot S_{k,1}^{-1}
=\rho_{\mathbf{f}_k}([\tilde\gamma_2,\tilde\gamma_1])
\in \textup{PSU}(2).
\end{equation}

\subsubsection{}\label{subsubsec:impose-cond}
So far we have not imposed any restriction on the 
developing map \(\mathbf{f}_k\) of the solution \(u_k\)
of \(\triangle u+ e^u= \rho_k\cdot \delta_0\). 
%By choosing $f$, and hence $\f$, suitably we may assume that
Modifying \(\,\mathbf{f}_k\,\) by a suitable element of 
\(\textup{PSU}(2)\), we may and do assume that
\(S_{k,1}=\rho_{\mathbf{f}_k}(\tilde\gamma_1)\) lies in the
diagonal maximal torus of \(\textup{PSU}(2)\), i.e.\
there exists \(\theta_k\in \RR/2\pi\sqrt{-1}\ZZ\)
and \(a_k, b_k\in \CC\) 
with \(|a_k|^2 + |b_k|^2 = 1\) such that
\begin{equation}\label{def:Sk}
S_{k,1} = \begin{pmatrix} e^{\sqrt{-1} \cdot\theta_k} & 0 \\ 0 & e^{-\sqrt{-1}
 \cdot   \theta_k} \end{pmatrix} \quad \textup{and}
\quad S_{k,2} = \begin{pmatrix} a_k & - b_k 
\\ \overline{b_k} & \overline{a_k} \end{pmatrix},
\end{equation}
in matricial notation.
%with $|a|^2 + |b|^2 = 1$.
Note that we have
\begin{equation}\label{betak}
\beta_k:=[S_{k,2},S_{k,1}]=\begin{pmatrix}
a_k\overline{a_k}+e^{-2\sqrt{-1}\theta_k}b_k\overline{b_k}&
a_kb_k (-1+e^{2\sqrt{-1}\theta_k})
\\
\overline{a_k}\overline{b_k}(1-e^{-2\sqrt{-1}\theta_k})&
b_k\overline{b_k}e^{2\sqrt{-1}\theta_k}+a_k\overline{a_k}
\end{pmatrix}
\end{equation}
in \(\textup{PSU}(2)\).

\begin{lemmass}\label{lemma:fk-nonzero}
Recall that we have assumed that \(S_{k,1} \mathbf{f}_k = e^{2i\theta_k}
\mathbf{f}_k\), 
\(n-\tfrac{1}{2}<\eta_k:=\rho_k/8\pi
<n+\tfrac{1}{2}\) and \(\eta_k\neq n\).
Suppose in addition that \(n-\tfrac{1}{4}<\eta_k
<n+\tfrac{1}{4}\), then
\(\mathbf{f}_k(\mathfrak{c}_0) \in \CC^{\times}\).
\end{lemmass}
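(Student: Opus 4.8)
The plan is to realize $\mathbf{f}_k(\mathfrak{c}_0)$ as a fixed point of the local (cusp) monodromy $\beta_k$ of \eqref{pi1-relation} and to show this fixed point is neither $0$ nor $\infty$. By Lemma \ref{lemma:Tk} we have $\mathbf{f}_k(\mathfrak{c}_0)=T_k\cdot 0$ with $\mathbf{f}_k=T_k\cdot\mathfrak{f}_k$, while by \eqref{fk-lim}--\eqref{fk-monodromy} the function $\mathfrak{f}_k$ has value $0$ at $\mathfrak{c}_0$ and local monodromy at $\mathfrak{c}_0$ equal to multiplication by $e^{4\pi\sqrt{-1}\eta_k}$, i.e.\ to the element $\textup{diag}(e^{2\pi\sqrt{-1}\eta_k},e^{-2\pi\sqrt{-1}\eta_k})\in\textup{PSL}_2(\CC)$ whose fixed points are $0$ and $\infty$. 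Consequently $\mathbf{f}_k(\mathfrak{c}_0)=T_k\cdot 0$ is one of the two fixed points of $\beta_k=\rho_{\mathbf{f}_k}([\tilde\gamma_2,\tilde\gamma_1])$ on $\PP^1(\CC)$; equivalently, by Lemma \ref{f(0)}, the condition $\mathbf{f}_k(\mathfrak{c}_0)\in\{0,\infty\}$ means exactly that $\mathbf{f}_k=A_k\cdot\mathfrak{f}_k^{\pm1}$ for some $A_k\in\CC^\times$, i.e.\ that $T_k$ is diagonal or anti-diagonal.

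First I would pin down the conjugacy class of $\beta_k$. Since $\beta_k$ is $\textup{PSL}_2(\CC)$-conjugate to $\textup{diag}(e^{2\pi\sqrt{-1}\eta_k},e^{-2\pi\sqrt{-1}\eta_k})$ and lies in $\textup{PSU}(2)$, it is a rotation whose angle is $4\pi(\eta_k-n)\bmod 2\pi$. Here the hypothesis $n-\tfrac14<\eta_k<n+\tfrac14$ with $\eta_k\neq n$ is decisive: it forces this angle into $(-\pi,\pi)\setminus\{0\}$, so that $\beta_k$ is a \emph{nontrivial} rotation which is \emph{not} a half-turn. Because $\beta_k\in\textup{PSU}(2)$ its two fixed points on $\PP^1(\CC)=S^2$ are antipodal, as are $0$ and $\infty$; hence $\mathbf{f}_k(\mathfrak{c}_0)\in\{0,\infty\}$ would force $\beta_k$ to fix both $0$ and $\infty$, i.e.\ $\beta_k$ to be diagonal. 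By the explicit formula \eqref{betak}, $\beta_k$ is diagonal precisely when its off-diagonal entries vanish, which happens exactly when $a_kb_k(e^{2\sqrt{-1}\theta_k}-1)=0$, that is when $b_k=0$, or $e^{2\sqrt{-1}\theta_k}=1$, or $a_k=0$.

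The first two alternatives are immediate to discard. If $b_k=0$ then by \eqref{def:Sk} both $S_{k,1}$ and $S_{k,2}$ are diagonal and commute, whereas if $e^{2\sqrt{-1}\theta_k}=1$ then $S_{k,1}=\textup{Id}$ in $\textup{PSU}(2)$; in either case $\beta_k=[S_{k,2},S_{k,1}]=\textup{Id}$, contradicting the nontriviality of $\beta_k$ just established (this only needs $\eta_k\notin\tfrac12\ZZ$). There remains the single case $a_k=0$, which is the main obstacle.

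The hard part is to exclude $a_k=0$. In that case $S_{k,2}=\bigl(\begin{smallmatrix}0&-b_k\\ \overline{b_k}&0\end{smallmatrix}\bigr)$ is a half-turn, a short computation gives $S_{k,2}S_{k,1}S_{k,2}^{-1}=S_{k,1}^{-1}$ and hence $\beta_k=S_{k,1}^{-2}$ is again diagonal; thus the image of $\rho_{\mathbf{f}_k}$ lies in the normalizer $\textup{N}(T)$ of the diagonal maximal torus $T$ but not in $T$ itself, i.e.\ the projective monodromy of the Lam\'e equation $L_{\eta_k,B_k}$ attached to $\mathbf{f}_k$ is dihedral. To rule this out I would translate the dihedral condition into the algebraic form \eqref{Lame-eq-alg-form} on $\PP^1(\CC)$: for a dihedral projective monodromy every local monodromy must lie in $\textup{N}(T)$, and at the point $\infty=\wp(0)$ (exponent difference $\eta_k+\tfrac12$) this forces either $\eta_k\in\ZZ$ or the corresponding rotation $\beta_k$ to be a half-turn; the former is excluded because the only integer in $(n-\tfrac14,n+\tfrac14)$ is $n$ and $\eta_k\neq n$, and the latter is exactly what the $\tfrac14$-bound forbids. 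I expect this dihedral exclusion to be the delicate step, because the trace data alone are insufficient: in this case one computes $\textup{tr}\,\beta_k=2\cos 2\theta_k$, which is entirely consistent with $\textup{tr}\,\beta_k=\pm2\cos 2\pi\eta_k$, so the contradiction must come from the finer reducibility/exponent structure of the Lam\'e equation (as in \cite{BW,Waall}) rather than from the once-punctured-torus representation in isolation.
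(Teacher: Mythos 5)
Your reduction to the case \(a_k=0\) is correct and is essentially the paper's own first step: from \(\mathbf{f}_k(\mathfrak{c}_0)\in\{0,\infty\}\) one forces \(\beta_k\) to fix both \(0\) and \(\infty\) (the paper does this via Lemma \ref{f(0)}, you via the antipodal fixed points of a nontrivial rotation; both are fine), and then \eqref{betak} together with the non-commutativity of \(S_{k,1},S_{k,2}\) rules out \(b_k=0\) and \(e^{2\sqrt{-1}\theta_k}=1\). Note that this part needs only \(\eta_k\notin\tfrac12\ZZ\); the \(\tfrac14\)-bound plays no genuine role there.

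The genuine gap is your exclusion of the remaining case \(a_k=0\). Your dichotomy at \(x=\infty\) does not yield a contradiction: since the covering \(E\to\PP^1\) is doubly ramified over \(\infty=\wp(0)\), the puncture monodromy on the torus is \(\beta_k=\mu_\infty^2\), so the branch "\(\mu_\infty\) is an involution outside \(T\)" is instantly impossible (it would give \(\beta_k=\mathrm{Id}\)); and the other branch, \(\mu_\infty\in T\), is completely unconstrained because elements of \(T\) have arbitrary rotation angle — then \(\beta_k=\mu_\infty^2\in T\), which is exactly what happens when \(a_k=0\) (indeed \(\beta_k=S_{k,1}^{-2}\)). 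No half-turn condition on \(\beta_k\) ever arises, so the \(\tfrac14\)-hypothesis is never engaged. Nor can the fallback to \cite{BW,Waall} close the gap: those theorems exclude \emph{completely reducible} monodromy, whereas \(a_k=0\) produces an \emph{irreducible imprimitive} (induced, dihedral) representation, which they do not forbid — for integer index the algebraic Lam\'e equation generically \emph{has} dihedral projective monodromy (the monodromy permutes the two ansatz lines \(\CC w_a\), \(\CC w_{-a}\)), so no argument using only "the monodromy is dihedral" can succeed. Moreover the index-two subgroup realizing your dihedral structure corresponds to the double cover \(E'=\CC/(\ZZ\omega_1+\ZZ 2\omega_2)\), and the pulled-back equation on \(E'\) has \emph{two} singular points, so it is not a Lam\'e equation to which the cited reducibility results apply. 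What closes the case, and what the paper does, is an elementary argument on \(E'\): when \(a_k=0\) the monodromy reads \(\mathbf{f}_k(\tilde\gamma_1\xi)=e^{2i\theta_k}\mathbf{f}_k(\xi)\), \(\mathbf{f}_k(\tilde\gamma_2\xi)=-b_k^2/\mathbf{f}_k(\xi)\), so \(g_k=\bigl(\tfrac{d}{dz}\mathbf{f}_k\bigr)/\mathbf{f}_k\) descends to the compact curve \(E'\); the case hypothesis \(\mathbf{f}_k(\mathfrak{c}_0)\in\{0,\infty\}\) is precisely what makes \(g_k\) meromorphic at the two points of \(\Lambda/\Lambda'\), with simple poles there of residues \(\pm(2\eta_k+1)\) (for \(\mathbf{f}_k(\mathfrak{c}_0)\in\CC^\times\) one would instead get the non-meromorphic behavior \(z^{2\eta_k}\)); and the developing-map property (\'etaleness off \(\Lambda\), Remark \ref{rmk_etale}) shows \(g_k\) has no zeros on \(E'\) and only simple poles elsewhere. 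A non-constant meromorphic function on a compact Riemann surface has as many zeros as poles, and \(g_k\) has at least two poles and no zeros — contradiction. Without this step, or some substitute for it, your proof is incomplete.
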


\begin{proof}
We need to show that \(\mathbf{f}_k(\mathfrak{c}_0)\) is not equal
to \(0\) nor to \(\infty\).
Suppose first that $\mathbf{f}_k(\mathfrak{c}_0) = 0$. 
By Lemma \ref{f(0)}\,(a) %$f(z) = z^{2\eta + 1} f_0(z)$
%where $f_0$ is holomorphic in a neighborhood of $0$. 
there exists a constant \(A_k\in\CC^{\times}\) such that
\(\mathbf{f}_k=A_k\cdot \mathfrak{f}_k\).
The monodromy relation (\ref{fk-monodromy}) for \(\mathfrak{f}_k\)
implies that
\[
[S_{k,2},S_{k,1}]=
\rho_{\mathbf{f}_k}([\tilde\gamma_2,\tilde\gamma_1])
=\begin{pmatrix}
e^{2\pi\sqrt{-1}\eta_k}&0\\0&e^{2\pi\sqrt{-1}\eta_k}
\end{pmatrix}
\]
in \(\textup{PSU}(2)\).
Comparing with (\ref{betak}), 
we get \(a_k\cdot b_k\cdot (e^{2\sqrt{-1}\theta_k}-1)=0\).
But we know that \(b_k\neq 0\) and \(e^{2\sqrt{-1}\theta_k}\neq 1\),
for otherwise \(S_{k,1}\) would commute with \(S_{k,2}\), 
contradicting the assumption on \(\eta_k\).
We conclude that \(a_k=0\).  In other words
%In this case,
%$\beta = \gamma$ as is defined in (\ref{matrix-gamma}). 
%Relation
%(\ref{pi1-relation}) says that $a = \pm a e^{2\pi i \eta}$ and $b e^{-2
%i \theta} = \pm b e^{-2\pi i \eta}$. Since $e^{2\pi i \eta} \ne \pm
%1$, we must have $a = 0$, hence $|b| = 1$ and $\theta = (\eta +
%\tfrac{k}{2}) \pi$ for some $k \in \mathbb Z$. 
%Then $\f(g_0 \xi) = e^{4\pi i \eta} \f(\xi)$ and (\ref{monodromy})
becomes
\begin{equation}\label{normalized-mono-again}
\mathbf{f}_k(\tilde\gamma_1 \xi) = e^{2 i \theta_k} \f_k(\xi), 
\quad \f_k(\tilde\gamma_2 \xi) = - \frac{b_k^2}{\f_k(\xi)}
\quad\forall\,\xi\in\HH.
\end{equation}
Therefore the logarithmic derivative
\begin{equation*}
\mathbf{g}_k := \frac{d}{dz}(\log \mathbf{f}_k) 
= \frac{\tfrac{d\mathbf{f}_k}{dz}}{\mathbf{f}_k} 
%= \frac{2\eta + 1}{z} + \frac{f_0'(z)}{f_0(z)} \quad\mbox{(a pole near $z = 0$)}
\end{equation*}
of \(\mathbf{f}_k\) descends to a 
%is a (single-valued) elliptic function on the doubled torus 
meromorphic function \(g_k\) on the elliptic curve
$E' := \mathbb C/\Lambda'$, 
where $\Lambda' = \mathbb Z \omega_1 + \mathbb Z 2\omega_2$. 
%We will show that this leads to a contradiction.
%that this is not possible.
Moreover we know that \(g_k\) has a simple pole at 
\(0\,\textup{mod}\,\Lambda'\) with residue \(\,2\eta_k+1\),
and the equation \eqref{normalized-mono-again} tells us that
\(g_k\) has a simple at \(\omega_2\,\textup{mod}\,\Lambda'\) as well.
On the other hand because \(\mathbf{f}_k\) is a developing map
of a solution \(u_k\) to the equation
\(\triangle u +e^u=8\pi\eta_k\,\delta_0\), the meromorphic function
\(\mathbf{f}_k\) has multiplicity \(1\) at all points
above \([0]=0\,\textup{mod}\,\Lambda\).  
So the meromorphic function \(g_k\) on \(E'\) has two simple
poles but no zero, a contradiction.
We have proved that \(\mathbf{f}_k(\mathfrak{c}_0)\neq 0\).
%
%
%Indeed, if $g$ has a zero at some $z = z_0$ then $f$ must be smooth at
%$z_0$ with $f(z_0) \ne 0$ and $f'(z_0) = 0$. From (\ref{dev.map}),
%$u$ must be singular at $z_0$. From the mean field equation
%(\ref{MFE-eta}) this forces that $z_0 \equiv 0 \pmod \Lambda$. But $f$
%has a zero at $0$ and a pole at $\omega_2$. So we get a
%contradiction.
\medbreak

Suppose that $\f_k(\mathfrak{c}_0) = \infty$. By Lemma
(\ref{f(0)})\,(b)
there exists a constant \(A_k\in\CC^{\times}\) such that
\(\mathbf{f}_k=A_k/\mathfrak{f}_k\).
The same argument as in the previous case shows that
the logarithmic derivative of \(\mathbf{f}_k\) descends
to a meromorphic function on \(\CC/\Lambda'\) which has
at least two simple poles but no zero, a contradiction again.
%$\mathbf{f}_k = z^{-(2\eta + 1)} f_0(z)$ with $f_0$ holomorphic near $0$ 
%and $f_0(0) \ne 0$. In this case $\beta = \gamma^{-1}$ 
%and we still conclude that $a = 0$. 
%The remaining argument is the same except that 
%now we use the fact that $f$ has a pole at $0$ and a zero at 
%$\omega_2$ to derive a contradiction.
\end{proof}

%\begin{lemma}
%Up to two choices, the matrices 
%$S_1 = \begin{pmatrix} e^{i \theta} & 0 \\ 0 & e^{-i
%    \theta} \end{pmatrix}$ 
%and $\beta$ are determined by $\f(0)$ and $\eta$. 
%The matrix $S_2 = \begin{pmatrix} a & - b \\ \bar b & \bar
%  a \end{pmatrix}$ 
%is also determined up to a phase angle.
%\end{lemma}

\begin{lemmass}
Notation and assumptions as in \textup{\ref{subsubsec:impose-cond}}
and Lemma \textup{\ref{lemma:fk-nonzero}}.
Let \(\gamma_k\in\textup{PSU}(2)\) be the element
\begin{equation}\label{matrix-gamma}
\gamma_k:=
\begin{pmatrix}
e^{2\pi\sqrt{-1}\eta_k}&0\\0&e^{2\pi\sqrt{-1}\eta_k}
\end{pmatrix}
\end{equation}
in \(\textup{PSU}(2)\).
Let \(p_k, q_k\) be elements of \(\CC^{\times}\)
such that \[\vert p_k\vert^2+\vert q_k\vert ^2=1\quad
\textup{and}\quad
\mathbf{f}_k(\mathfrak{c}_0)=\frac{q_k}{p_k}.
\]
Let \begin{equation}\label{Tk}
T_k:=\begin{pmatrix}p_k&-q_k\\
\overline{q_k}&\overline{p_k}
\end{pmatrix}
\end{equation}
\begin{itemize}
\item[\textup{(1)}] The equality 
\begin{equation}\label{beta-matrix}
[S_{k,2}, S_{k,1}]= T_k^{-1}\cdot \gamma_k\cdot T_k
\end{equation}
holds in \(\textup{PSU}(2)\).

\item[\textup{(2)}]  Explicitly, the equality in \textup{(1)} 
means that either 
\begin{equation}\label{determine1}
\begin{split}
[1-(\vert p_k\vert^2 e^{\scalebox{0.5}{$-2\sqrt{-1}$}\eta_k}
+ \vert q_k\vert^2 e^{\scalebox{0.5}{$2\sqrt{-1}$}\eta_k})] a_k
&= \overline{p_k}q_k 
(e^{\scalebox{0.5}{$2\sqrt{-1}$}\eta_k}-e^{\scalebox{0.5}{$-2\sqrt{-1}$}\eta_k}) 
\overline{b_k}
\\
[e^{\scalebox{0.5}{$2\sqrt{-1}$}\theta_k}
-(\vert p_k\vert^2 e^{\scalebox{0.5}{$-2\sqrt{-1}$}\eta_k}
\!+\! \vert q_k\vert^2 e^{\scalebox{0.5}{$2\sqrt{-1}$}\eta_k})] b_k
&=-\overline{p_k}q_k 
(e^{\scalebox{0.5}{$2\sqrt{-1}$}\eta_k}\!-\!e^{\scalebox{0.5}{$-2\sqrt{-1}$}\eta_k})
\overline{a_k}
\end{split}
\end{equation}
or
\begin{equation}\label{determine2}
\begin{split}
[1+(\vert p_k\vert^2 e^{\scalebox{0.5}{$-2\sqrt{-1}$}\eta_k}
+ \vert q_k\vert^2 e^{\scalebox{0.5}{$2\sqrt{-1}$}\eta_k})] a_k
&= -\overline{p_k}q_k 
(e^{\scalebox{0.5}{$2\sqrt{-1}$}\eta_k}-e^{\scalebox{0.5}{$-2\sqrt{-1}$}\eta_k}) 
\overline{b_k}
\\
[e^{\scalebox{0.5}{$2\sqrt{-1}$}\theta_k}
+(\vert p_k\vert^2 e^{\scalebox{0.5}{$-2\sqrt{-1}$}\eta_k}
\!+\! \vert q_k\vert^2 e^{\scalebox{0.5}{$2\sqrt{-1}$}\eta_k})] b_k
&=\overline{p_k}q_k 
(e^{\scalebox{0.5}{$2\sqrt{-1}$}\eta_k}\!-\!e^{\scalebox{0.5}{$-2\sqrt{-1}$}\eta_k})
\overline{a_k}
\end{split}
\end{equation}
In both cases we have \(\,a_k\cdot b_k\neq 0\), for all \(k\).

\item[\textup{(3)}] If \textup{(\ref{determine1})} holds, then
\[\vert b_k\vert^2=\frac{
\vert\! \left( 1-\vert p_k\vert^2 e^{\scalebox{0.5}{$-2\sqrt{-1}$}\eta_k}
- \vert q_k\vert^2 e^{\scalebox{0.5}{$2\sqrt{-1}$}\eta_k}\right)\!\vert^2}{
\vert{p_k}\vert^2 \vert q_k \vert^2\,
\vert(e^{\scalebox{0.5}{$2\sqrt{-1}$}\eta_k}\!-\!e^{\scalebox{0.5}{$-2\sqrt{-1}$}\eta_k})
\vert^2
+\vert\! \left( 1-\vert p_k\vert^2 e^{\scalebox{0.5}{$-2\sqrt{-1}$}\eta_k}
- \vert q_k\vert^2 e^{\scalebox{0.5}{$2\sqrt{-1}$}\eta_k}\right)\!\vert^2
}\,,
\]
\[\vert a_k\vert^2=\frac{
\vert{p_k}\vert^2 \vert q_k \vert^2\,
\vert(e^{\scalebox{0.5}{$2\sqrt{-1}$}\eta_k}\!-\!e^{\scalebox{0.5}{$-2\sqrt{-1}$}\eta_k})
\vert^2
}{
\vert{p_k}\vert^2 \vert q_k \vert^2\,
\vert(e^{\scalebox{0.5}{$2\sqrt{-1}$}\eta_k}\!-\!e^{\scalebox{0.5}{$-2\sqrt{-1}$}\eta_k})
\vert^2
+\vert\! \left( 1-\vert p_k\vert^2 e^{\scalebox{0.5}{$-2\sqrt{-1}$}\eta_k}
- \vert q_k\vert^2 e^{\scalebox{0.5}{$2\sqrt{-1}$}\eta_k}\right)\!\vert^2
}\,,
\]
and 
\[
e^{\scalebox{0.5}{$2\sqrt{-1}$}\theta_k}
=-\frac{1-\vert p_k\vert^2  e^{\scalebox{0.5}{$-2\sqrt{-1}$}\eta_k}
- \vert q_k\vert^2  e^{\scalebox{0.5}{$2\sqrt{-1}$}\eta_k}}{1
-\vert p_k\vert^2  e^{\scalebox{0.5}{$2\sqrt{-1}$}\eta_k}
- \vert q_k\vert^2  e^{\scalebox{0.5}{$-2\sqrt{-1}$}\eta_k}}\,,
\]
so \(\,\vert a_k\vert^2, \vert b_k\vert^2\,\)
and \(\,e^{\scalebox{0.5}{$2\sqrt{-1}$}\theta_k}\,\)
are all determined by
\(e^{2\sqrt{-1}\eta_k}\) and \(\mathbf{f}_k(\mathfrak{c}_0)\).
In addition  \(\,a_k\!\cdot {b_k}\) is
also determined by
\(e^{2\sqrt{-1}\eta_k}\) and \(\mathbf{f}_k(\mathfrak{c}_0)\).
%are all determined by
%\(e^{2\sqrt{-1}\eta_k}\) and \(\mathbf{f}_k(\mathfrak{c}_0)\).

\item[\textup{(4)}] If \textup{(\ref{determine2})} holds, then
\[\vert b_k\vert^2=\frac{
\vert\! \left( 1+\vert p_k\vert^2 e^{\scalebox{0.5}{$-2\sqrt{-1}$}\eta_k}
+ \vert q_k\vert^2 e^{\scalebox{0.5}{$2\sqrt{-1}$}\eta_k}\right)\!\vert^2}{
\vert{p_k}\vert^2 \vert q_k \vert^2\,
\vert(e^{\scalebox{0.5}{$2\sqrt{-1}$}\eta_k}\!-\!e^{\scalebox{0.5}{$-2\sqrt{-1}$}\eta_k})
\vert^2
+\vert\! \left( 1+\vert p_k\vert^2 e^{\scalebox{0.5}{$-2\sqrt{-1}$}\eta_k}
+ \vert q_k\vert^2 e^{\scalebox{0.5}{$2\sqrt{-1}$}\eta_k}\right)\!\vert^2
}\,,
\]
\[\vert a_k\vert^2=\frac{
\vert{p_k}\vert^2 \vert q_k \vert^2\,
\vert(e^{\scalebox{0.5}{$2\sqrt{-1}$}\eta_k}\!-\!e^{\scalebox{0.5}{$-2\sqrt{-1}$}\eta_k})
\vert^2
}{
\vert{p_k}\vert^2 \vert q_k \vert^2\,
\vert(e^{\scalebox{0.5}{$2\sqrt{-1}$}\eta_k}\!-\!e^{\scalebox{0.5}{$-2\sqrt{-1}$}\eta_k})
\vert^2
+\vert\! \left( 1+\vert p_k\vert^2 e^{\scalebox{0.5}{$-2\sqrt{-1}$}\eta_k}
+ \vert q_k\vert^2 e^{\scalebox{0.5}{$2\sqrt{-1}$}\eta_k}\right)\!\vert^2
}\,,
\]
and 
\[
e^{\scalebox{0.5}{$2\sqrt{-1}$}\theta_k}
=-\frac{1+\vert p_k\vert^2  e^{\scalebox{0.5}{$-2\sqrt{-1}$}\eta_k}
+ \vert q_k\vert^2  e^{\scalebox{0.5}{$2\sqrt{-1}$}\eta_k}}{1
+\vert p_k\vert^2  e^{\scalebox{0.5}{$2\sqrt{-1}$}\eta_k}
+ \vert q_k\vert^2  e^{\scalebox{0.5}{$-2\sqrt{-1}$}\eta_k}}\,,
\]
so \(\,\vert a_k\vert^2, \vert b_k\vert^2\,\)
and \(\,e^{\scalebox{0.5}{$2\sqrt{-1}$}\theta_k}\,\)
are all determined by
\(e^{2\sqrt{-1}\eta_k}\) and \(\mathbf{f}_k(\mathfrak{c}_0)\).
In addition 
\(\,a_k\!\cdot {b_k}\) 
is also determined by
\(e^{2\sqrt{-1}\eta_k}\) and \(\mathbf{f}_k(\mathfrak{c}_0)\).

%If we consider \textup{(\ref{determine1})} as
%an equation with \(a_k, b_k, e^{\sqrt{-1}\eta_k}\) as unknowns subject
%to 

\end{itemize}

\end{lemmass}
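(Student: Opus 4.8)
The plan is to derive all four assertions from the single conjugacy identity (\ref{beta-matrix}), after which parts (2)--(4) reduce to explicit $2\times 2$ matrix algebra. First I would pin down the transformation relating the two developing maps $\mathbf{f}_k$ and $\mathfrak{f}_k$ of $u_k$ on $\HH$. By Lemma \ref{lemma:psu2} they differ by an element of $\textup{PSU}(2)$; since $\mathfrak{f}_k(\mathfrak{c}_0)=0$ by (\ref{fk-lim}) while Lemma \ref{lemma:fk-nonzero} guarantees $\mathbf{f}_k(\mathfrak{c}_0)=q_k/p_k\in\CC^{\times}$, the connecting transformation is exactly the element $T_k$ of (\ref{Tk}) carrying $0$ to $q_k/p_k$ (in particular $p_k,q_k\neq 0$, so $T_k$ is genuinely off-diagonal). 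The local solution $\mathfrak{f}_k$ is an eigenvector of the cusp monodromy with eigenvalue $e^{4\pi\sqrt{-1}\eta_k}$ by (\ref{fk-monodromy}), i.e.\ that monodromy is realized by the diagonal element $\gamma_k$ of (\ref{matrix-gamma}). Conjugating the cusp monodromy of $\mathfrak{f}_k$ by $T_k$ and comparing with the definition (\ref{pi1-relation}) of $\beta_k=[S_{k,2},S_{k,1}]$ then yields (\ref{beta-matrix}); this is part (1).

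For part (2), I would substitute the explicit matrices: the left-hand side $\beta_k$ is already computed in (\ref{betak}), and the right-hand side $T_k^{-1}\gamma_k T_k$ is obtained by a direct multiplication using (\ref{Tk}) and (\ref{matrix-gamma}). The identity (\ref{beta-matrix}) holds in $\textup{PSU}(2)=\textup{SU}(2)/\{\pm I_2\}$, so lifting it to $\textup{SU}(2)$ introduces an overall sign, and the two choices of sign produce precisely the two alternatives (\ref{determine1}) and (\ref{determine2}); matching entries and rearranging gives the stated linear relations among $a_k,b_k,\overline{a_k},\overline{b_k}$. The non-degeneracy $a_kb_k\neq 0$ follows as in Lemma \ref{lemma:fk-nonzero}: if $b_k=0$ then $S_{k,2}$ is diagonal and commutes with $S_{k,1}$, forcing $\gamma_k=I$ in $\textup{PSU}(2)$ and hence $\eta_k\in\tfrac{1}{2}\ZZ$, against $\eta_k\neq n$ in the given range; and if $a_k=0$ then $\beta_k$ is diagonal, so its fixed point $\mathbf{f}_k(\mathfrak{c}_0)$ lies in $\{0,\infty\}$, contradicting Lemma \ref{lemma:fk-nonzero}.

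Parts (3) and (4) are then solved identically for the two systems. Writing $M=|p_k|^2e^{-2\sqrt{-1}\eta_k}+|q_k|^2e^{2\sqrt{-1}\eta_k}$ and $N=\overline{p_k}q_k(e^{2\sqrt{-1}\eta_k}-e^{-2\sqrt{-1}\eta_k})$, the first equation of each system expresses $a_k$ as a scalar multiple of $\overline{b_k}$; taking absolute values yields the ratio $|a_k|^2/|b_k|^2$, and combining with the normalization $|a_k|^2+|b_k|^2=1$ gives the displayed closed formulas for $|a_k|^2$ and $|b_k|^2$. Substituting $\overline{a_k}$ back into the second equation isolates $e^{2\sqrt{-1}\theta_k}$, which collapses to the stated quotient after a routine simplification using $|p_k|^2+|q_k|^2=1$; multiplying the two relations then produces $a_k b_k$. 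Every output depends only on $e^{2\sqrt{-1}\eta_k}$ and $\mathbf{f}_k(\mathfrak{c}_0)=q_k/p_k$, as claimed. The hypothesis $n-\tfrac{1}{4}<\eta_k<n+\tfrac{1}{4}$ with $\eta_k\neq n$ keeps the coefficients $1\mp M$ nonzero, so each system is non-degenerate and uniquely solvable.

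The conceptual heart of the argument, and the step requiring the most care, is part (1) together with the sign analysis in part (2): one must correctly identify the connecting transformation $T_k$ and the direction of conjugation, and verify that the $\textup{PSU}(2)$-versus-$\textup{SU}(2)$ ambiguity yields exactly the two cases and no others. Once (\ref{beta-matrix}) and its two explicit incarnations are in hand, parts (3)--(4) are purely mechanical linear algebra, the only nontrivial point being the algebraic identity that rewrites $e^{2\sqrt{-1}\theta_k}$ as the displayed ratio.
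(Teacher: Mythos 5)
There is a genuine gap in your part (1), which is the step everything else rests on. You assert that \(\mathbf{f}_k\) and \(\mathfrak{f}_k\) ``differ by an element of \(\textup{PSU}(2)\)'' by Lemma \ref{lemma:psu2}, and that this element is ``exactly \(T_k\)'' because of where it sends the single point \(0\). Both claims fail. Lemma \ref{lemma:psu2} compares two \emph{developing maps of the same solution}, but \(\mathfrak{f}_k=w_{k,1}/w_{k,2}\) is merely a ratio of Lam\'e solutions normalized at the cusp; it is not a developing map of \(u_k\) in general (its monodromy lies in a \(\textup{PSL}_2(\CC)\)-conjugate of \(\textup{PSU}(2)\), not in \(\textup{PSU}(2)\) itself). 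The correct statement, which is Lemma \ref{lemma:Tk} of the paper, is only that \(\mathbf{f}_k\) and \(\mathfrak{f}_k\) have equal Schwarzian derivatives and hence differ by an element of \(\textup{PSL}_2(\CC)\). Moreover, prescribing the image of one point can never pin down a M\"obius transformation: the stabilizer of \(0\) is a full one-parameter (in fact, in \(\textup{PSL}_2(\CC)\), two-parameter) subgroup. The actual relation, obtained from Lemma \ref{f(0)} applied to the developing map \(T_k\cdot\mathbf{f}_k\) (which vanishes at \(\mathfrak{c}_0\)), is
\[
T_k\cdot \mathbf{f}_k \;=\; c_k\,\mathfrak{f}_k \qquad\text{for some constant } c_k\in\CC^{\times},
\]
and in general \(\vert c_k\vert\neq 1\) (indeed, in the blow-up analysis of Theorem \ref{blow-up thm} one shows \(c_k\to 0\) or \(\infty\)), so the connecting transformation is genuinely not \(T_k\) and not even unitary. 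Your conclusion (\ref{beta-matrix}) is nevertheless true, but for a reason your argument never supplies: scaling by \(c_k\) is a \emph{diagonal} M\"obius transformation and therefore commutes with the diagonal \(\gamma_k\), so it drops out of the conjugation:
\[
T_k\,[S_{k,2},S_{k,1}]\,\mathbf{f}_k
= T_k\,\mathbf{f}_k\circ[\tilde\gamma_2,\tilde\gamma_1]
= c_k\,\gamma_k\cdot\mathfrak{f}_k
= \gamma_k\cdot (c_k\mathfrak{f}_k)
= \gamma_k\cdot T_k\cdot \mathbf{f}_k ,
\]
whence \([S_{k,2},S_{k,1}]=T_k^{-1}\gamma_k T_k\) in \(\textup{PSU}(2)\). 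Without introducing \(c_k\) and this commutation step, part (1) is not proved.

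Your parts (2)--(4) are fine and essentially coincide with the paper's treatment: writing both sides of (\ref{beta-matrix}) as explicit \(2\times 2\) matrices, attributing the two systems (\ref{determine1}) and (\ref{determine2}) to the sign ambiguity in lifting from \(\textup{PSU}(2)\) to \(\textup{SU}(2)\), and then solving by taking absolute values in the first equation, using \(\vert a_k\vert^2+\vert b_k\vert^2=1\), and eliminating \(\overline{a_k}b_k\) to isolate \(e^{2\sqrt{-1}\theta_k}\). Your non-degeneracy argument (if \(b_k=0\) then \(S_{k,1}\) and \(S_{k,2}\) commute, forcing the commutator to be central, impossible since \(2\eta_k\notin\ZZ\); if \(a_k=0\) then \(\beta_k\) is diagonal non-central, so its fixed point \(\mathbf{f}_k(\mathfrak{c}_0)\) would lie in \(\{0,\infty\}\), contradicting Lemma \ref{lemma:fk-nonzero}) is, if anything, cleaner than the paper's. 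So the only repair needed is the insertion of the constant \(c_k\) and its commutation with \(\gamma_k\) in part (1).
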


\begin{proof} (1)
Clearly \(\,\lim_{\xi\to \mathfrak{c}_0}(T_k\cdot
\mathbf{f}_k)(\xi)=0\).  By Lemma \ref{f(0)} there exists a constant
\(c_k\in \CC^{\times}\) such that 
\(\,T_k\cdot \mathbf{f}_k= c_k\,\mathfrak{f}_k\).
We have
\[
T_k [S_{k,2}, S_{k,1}]\mathbf{f}_k(\xi)
= T_k\mathbf{f}_k([\tilde\gamma_2,\tilde\gamma_1]\,\xi)
= c_k\,\mathfrak{f}_k([\tilde\gamma_2,\tilde\gamma_1]\,\xi)
=c_k\,\gamma_k\cdot \mathfrak{f}_k(\xi)
\]
for all \(\xi\in \HH\), therefore
\[
T_k [S_{k,2}, S_{k,1}]\mathbf{f}_k=c_k\,\gamma_k\cdot \mathfrak{f}_k
= \gamma_k\cdot (c_k \mathfrak{f}_k)
= \gamma_k\cdot T_k\cdot \mathbf{f}_k.
\]
So \(T_k [S_{k,2}, S_{k,1}]=\gamma_k\cdot T_k\) in
\(\textup{PSU}(2)\).  We have proved the statement (1).
\medbreak

\noindent
(2) The equality
\([S_{k,2}, S_{k,1}]= T_k^{-1}\cdot \gamma_k\cdot T_k\)
in \(\textup{PSU}(2)\) is equivalent to the equality
\[
S_{k,1}\cdot S_{k,2}\cdot S_{k,1}^{-1}
=\pm S_{k,2}^{-1}\cdot T_k^{-1}\cdot \gamma_k\cdot T_k
\]
when both sides are regarded as elements of \(\textup{SU}(2)\)
with \(S_{k,1}, S_{k,2}\) given by (\ref{def:Sk}) and
\(T_k\) given by (\ref{Tk}).
A straightforward calculation shows that
\[
T_k^{-1}\cdot \gamma_k\cdot T_k =
\begin{pmatrix}
|p_k|^2 \, e^{\scalebox{0.5}{$2\sqrt{-1}$}\eta_k} 
+ |q_k|^2\, e^{\scalebox{0.5}{$-2\sqrt{-1}$}\eta_k} & 
-\overline{p_k}\, q_k (e^{\scalebox{0.5}{$2\sqrt{-1}$}\eta_k} 
- e^{\scalebox{0.5}{$-2\sqrt{-1}$}\eta_k}) \\
p_k \overline{q_k }( e^{\scalebox{0.5}{$-2\sqrt{-1}$}\eta_k} 
-  e^{\scalebox{0.5}{$2\sqrt{-1}$}\eta_k}) 
& |p_k|^2  e^{\scalebox{0.5}{$-2\sqrt{-1}$}\eta_k} 
+ |q_k|^2  e^{\scalebox{0.5}{$2\sqrt{-1}$}\eta_k}
\end{pmatrix}
\]
and
\[
S_{k,1}\cdot S_{k,2}\cdot S_{k,1}^{-1}
=\begin{pmatrix}
\overline{a_k}&b_k\,e^{\scalebox{0.5}{$2\sqrt{-1}\theta_k$}}
\\
-\overline{b_k}\,e^{\scalebox{0.5}{$-2\sqrt{-1}\theta_k$}}&a_k
\end{pmatrix}
\]
The statement (2) follows: the equation (\ref{determine1})
corresponds to the case \(S_{k,1}\cdot S_{k,2}\cdot S_{k,1}^{-1}
= S_{k,2}^{-1}\cdot T_k^{-1}\cdot \gamma_k\cdot T_k\),
while the equation (\ref{determine2})
corresponds to the case \(S_{k,1}\cdot S_{k,2}\cdot S_{k,1}^{-1}
= -S_{k,2}^{-1}\cdot T_k^{-1}\cdot \gamma_k\cdot T_k\).
\medbreak

The formulas in (3) and (4) for \(\,\vert a_k\vert\,\) 
and \(\,\vert b_k\vert\,\) follow from
(\ref{determine1}) and (\ref{determine2}) respectively,
through routine calculations which are omitted here.
Notice that these formulas imply that \(\cdot b_k\neq 0\).
Recall that we already know that \(a_k\neq 0\), for otherwise
\(\,S_{k,2}\,\) would commute with \(\,S_{k,1}\). 
We also know that \(p_k\cdot q_k\neq 0\), 
for \(\mathbf{f}_k(\mathfrak{c}_0)\neq 0, \infty\).
The assumptions on \(\eta_k\) implies that
\(
\left\vert \vert p_k\vert^2 e^{\scalebox{0.5}{$-2\sqrt{-1}$}\eta_k}
+ \vert q_k\vert^2 e^{\scalebox{0.5}{$2\sqrt{-1}$}\eta_k}
\right\vert <1\),
so we know from (\ref{determine1}) and (\ref{determine2})
that \(a_k\neq 0\) in both cases.
\medbreak

The formulas in (3) and (4) for \(\,e^{2\sqrt{-1}\theta_k}\,\)
follows from (\ref{determine1}) and (\ref{determine2}) respectively.
For instance if we multiply the second equation in (\ref{determine1})
by the complex conjugate of the first equation in (\ref{determine1}),
cancel out the non-zero factor \(\,\overline{a_k}b_k\,\)
on both sides, then we get the formula for 
\(\,e^{2\sqrt{-1}\theta_k}\,\) in (3).  
\medbreak

These formulas clearly show that in both cases
\(|a_k|, |b_k|\) and \(e^{2\pi\sqrt{-1}\theta_k}\,\) are determined by
\(e^{2\pi\sqrt{-1}\eta_k}\) and \(\mathfrak{f}_k(\mathfrak{c}_0)\)
and not on the choices of \(p_k\) and \(q_k\).
That %\(e^{2\sqrt{-1}\theta_k}\) and 
\(a_k\cdot b_k\) is
also determined by \(e^{2\pi\eta_k}\) 
and \(\mathfrak{f}_k(\mathfrak{c}_0)\) in each 
of the two cases follows quickly from
(\ref{determine1}) and (\ref{determine2}).
\end{proof}

\bigbreak

We are ready to prove the remaining case (1) of Theorem \ref{blow-up-set}.
Let's recall the situation. 
%\ref{subsec:begin_sec8}.
We are given a $\,(u_k)_k $ is a blow-up sequence of solutions such that
\(\,\triangle u_k+e^u=8\pi \eta_k\,\delta_0\,\) on \(E=\CC/\Lambda\) for
each \(k\), 
$\,\lim_{k\to\infty}\eta_k=n$ with 
\(\{P_1,\ldots,P_m\}\) as the blow-up set of this 
blow-up sequence.
We may and do assume that \(n-\tfrac{1}{4}<\eta_k<n+\tfrac{1}{4}\)
and \(\eta_k\neq n\) for all \(k\).
%$p_1, \cdots, p_m$ as $k \to \infty$. 
%By the PDE method \cite{CL0}, 
We know from \cite{CL0} that
%we have that 
\(m = n\), $P_i \ne [0]$ for each \(i=1\),  
and $\lim_{k\to\infty}u_k(P) \to -\infty$ 
$P \in E\! \smallsetminus \!\{P_1,\ldots, P_n\}$, 
uniformly on compact subsets of \(\smallsetminus \!\{P_1,\ldots,
P_n\}\).

%Then we have the following theorem:

\begin{theorem}\label{blow-up thm}
There is a constant $A \in \CC^{\times}$ such that 
\(\lim_{k\to\infty}\mathbf{f}_k(\xi) \to A\)
uniformly on compact subsets of the inverse image 
\(\HH \smallsetminus [z]^{-1}(\{[0], P_1, \ldots, P_n\})\)
of \(E\!\smallsetminus\!\{[0], P_1, \ldots, P_n\}\).
Furthermore we have 
$\{P_1, \ldots, P_n\} = \{-P_1, \ldots, -P_n\}$.
\end{theorem}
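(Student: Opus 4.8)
The plan is to prove the two assertions almost independently: the uniform convergence of $\mathbf{f}_k$ comes from a normal-families argument driven by the spherical derivative, while the symmetry $S=-S$ is extracted from the \emph{evenness} of each $u_k$, which is forced by the evenness of the Lamé potential. I would begin with the convergence framework. By (\ref{dev.map}) the spherical derivative of $\mathbf{f}_k$ is $\mathbf{f}_k^{\#}=|\tfrac{d}{dz}\mathbf{f}_k|/(1+|\mathbf{f}_k|^2)=\tfrac{1}{\sqrt 8}\,e^{(u_k\circ z)/2}$. Since $[0]\notin S$ and $u_k\to-\infty$ uniformly on compact subsets of $E\smallsetminus(S\cup\{[0]\})$ (the facts recalled in \ref{subsec:begin_sec8}), we get $\mathbf{f}_k^{\#}\to 0$ locally uniformly on the connected open set $\Omega:=\HH\smallsetminus[z]^{-1}(\{[0],P_1,\dots,P_n\})$. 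By Marty's criterion $\{\mathbf{f}_k\}$ is normal on $\Omega$ as a family of holomorphic maps to $\PP^1(\CC)$, and any subsequential spherical limit $\mathbf{f}_\infty$ satisfies $\mathbf{f}_\infty^{\#}\equiv 0$, hence is a constant $c\in\PP^1(\CC)$.

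Next I would establish evenness. Because $\wp$ is even, the Lamé operator $L_{\eta_k,B_k}$ is invariant under $z\mapsto -z$, so the distinguished solutions $w_{k,1}\sim z^{\eta_k+1}$, $w_{k,2}\sim z^{-\eta_k}$ of \ref{subsub:branch-of-sol} satisfy $w_{k,i}(-z)=c_{k,i}\,w_{k,i}(z)$ with $|c_{k,i}|=1$, the unimodularity coming precisely from the \emph{reality} of the exponents $\eta_k+1$ and $-\eta_k$. Hence $\mathfrak{f}_k(-z)=\lambda_k\,\mathfrak{f}_k(z)$ with $\lambda_k=c_{k,1}/c_{k,2}$ and $|\lambda_k|=1$. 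Writing $T_k\mathbf{f}_k=c_k\mathfrak{f}_k$ with $T_k\in\mathrm{SU}(2)$ as in (\ref{Tk}) (using Lemma \ref{f(0)}), and observing that scaling by $c_k$ commutes with scaling by $\lambda_k$, one finds
\[
\mathbf{f}_k(-z)=M_k\,\mathbf{f}_k(z),\qquad M_k:=T_k^{-1}\Lambda_k T_k\in\mathrm{PSU}(2),
\]
where $\Lambda_k$ is scaling by $\lambda_k$. Thus $z\mapsto\mathbf{f}_k(-z)$, which is a developing map of $u_k(-z)$, differs from $\mathbf{f}_k$ by an element of $\mathrm{PSU}(2)$, so by Lemma \ref{lemma:psu2} (cf. Lemma \ref{order}) it is a developing map of $u_k$ itself; therefore $u_k(-z)=u_k(z)$, i.e. each $u_k$ is even. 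Consequently $e^{u_k}$ is even, its limiting measure $\sum_i\alpha_{P_i}\delta_{P_i}$ is invariant under $P\mapsto -P$, and hence $S=-S$. Moreover, since $S_{k,1}$ is diagonal and nontrivial, the relation $M_kS_{k,1}M_k^{-1}=S_{k,1}^{-1}$ forces $M_k$ to be anti-diagonal, i.e. $\mathbf{f}_k(z)\,\mathbf{f}_k(-z)=\nu_k$ with $|\nu_k|=1$; letting $z\to 0$ gives $\mathbf{f}_k(\mathfrak{c}_0)^2=\nu_k$, so $|\mathbf{f}_k(\mathfrak{c}_0)|=1$ (consistent with Lemma \ref{lemma:fk-nonzero}).

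It remains to pin the constant $c$ and show $c\in\CC^\times$. I would localize at $[0]$: because $[0]\notin S$, no mass concentrates there, so $\int_{D^\times}e^{u_k}\to 0$ on a small punctured disk $D^\times$ about $[0]$. Thus the spherical area swept by $\mathbf{f}_k$ over $D^\times$ tends to $0$, which forces $\mathbf{f}_k$ to be nearly constant on $D^\times$ with value tending to the cusp value $\mathbf{f}_k(\mathfrak{c}_0)$; comparing with the limit on $\Omega$ yields $c=\lim_k\mathbf{f}_k(\mathfrak{c}_0)$ along the subsequence. Since $|\mathbf{f}_k(\mathfrak{c}_0)|=1$ by the previous step, $|c|=1$, so $A:=c\in\CC^\times$; passing to the limit of $\mathbf{f}_k(z)\mathbf{f}_k(-z)=\nu_k$ on $\Omega\cap(-\Omega)$ gives $A^2=\lim_k\nu_k$, reconfirming $|A|=1$.

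The genuinely delicate points are the two places where the bare estimate $\mathbf{f}_k^{\#}\to 0$ does not directly apply. First, at the conical point $[0]$ the developing map carries the fractional local exponent $2\eta_k+1$, so converting ``$u_k\to-\infty$'' into uniform-in-$k$ near-constancy of $\mathbf{f}_k$ on a \emph{full} punctured disk — and thereby identifying $c$ with $\lim_k\mathbf{f}_k(\mathfrak{c}_0)$ — must be done through the integral area bound $\int_{D^\times}e^{u_k}\to0$ rather than a pointwise bound. Second, and this I expect to be the main obstacle, is the uniqueness of the limit constant for the whole sequence, which amounts to showing that the unit-circle sequence $\mathbf{f}_k(\mathfrak{c}_0)$ actually converges. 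I would resolve this either by passing to a subsequence (which suffices for the application in Corollary \ref{branch-pt}, as $S$ is a fixed finite set) or by invoking the explicit dependence of the normalized monodromy data $(|a_k|,|b_k|,e^{2\sqrt{-1}\theta_k})$ on $\eta_k$ and $\mathbf{f}_k(\mathfrak{c}_0)$ together with $\eta_k\to n$ to force convergence of the cusp value.
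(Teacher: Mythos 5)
Your first two steps are sound, and the second is genuinely nice: the Marty/normal-families reduction correctly gives constant subsequential limits in \(\PP^1(\CC)\), and your evenness argument is valid --- the unimodularity of \(\lambda_k\) (coming from the reality of the exponents \(\eta_k+1,\,-\eta_k\) and \(2\eta_k+1\notin\ZZ\)), together with the observation that the non-unitary scaling \(c_k\) commutes with \(\Lambda_k\), makes \(M_k=T_k^{-1}\Lambda_kT_k\) unitary, so \(\mathbf{f}_k(-z)\) is a developing map of \(u_k\) itself, \(u_k\) is even, and \(S=-S\) follows at once. This is a genuinely different route to the symmetry statement than the paper's (which applies its first assertion to \(\mathbf{f}_k(-z)\) after a Schwarzian computation), and your unitarity argument in fact supplies exactly what that Schwarzian step leaves implicit. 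The genuine gap is in your third paragraph, and it sits at the crux of the theorem: excluding the values \(0\) and \(\infty\) for the limit constant.

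The relation \(M_kS_{k,1}M_k^{-1}=S_{k,1}^{-1}\) is not valid. The involution \(z\mapsto -z\) lifts to \(\sigma:\HH\to\HH\) satisfying \(\sigma\tilde\gamma_1\sigma^{-1}=\delta\,\tilde\gamma_1^{-1}\) for some \(\delta\) in the commutator subgroup \([\Gamma,\Gamma]\), and the monodromy \(\rho_{\mathbf{f}_k}\) is \emph{nontrivial} on \([\Gamma,\Gamma]\) --- this is precisely the content of \(2\eta_k\notin\ZZ\), since \(\rho_{\mathbf{f}_k}([\tilde\gamma_2,\tilde\gamma_1])=\beta_k\) is conjugate to the scaling \(w\mapsto e^{4\pi\sqrt{-1}\,\eta_k}w\neq\mathrm{id}\). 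Hence the correct relation is \(M_kS_{k,1}M_k^{-1}=\rho_{\mathbf{f}_k}(\delta)\,S_{k,1}^{-1}\), and anti-diagonality of \(M_k\) does not follow. In fact your own identity \(M_k=T_k^{-1}\Lambda_kT_k\) (computed with the lift \(\sigma\) fixing \(\mathfrak{c}_0\)) shows that the fixed points of \(M_k\) are \(\mathbf{f}_k(\mathfrak{c}_0)\) and its antipode, and a unitary M\"obius transformation of this shape is anti-diagonal exactly when \(\vert\mathbf{f}_k(\mathfrak{c}_0)\vert=1\); so your deduction of \(\vert\mathbf{f}_k(\mathfrak{c}_0)\vert=1\) is circular. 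Nothing in the normalization forces this modulus to be \(1\): in (\ref{determine1})--(\ref{determine2}) the resulting expression for \(e^{2\sqrt{-1}\theta_k}\) has complex-conjugate numerator and denominator, so those relations are solvable for \emph{every} value of \(\vert q_k/p_k\vert\). Without \(\vert\mathbf{f}_k(\mathfrak{c}_0)\vert=1\) you have no argument that the limit constant lies in \(\CC^{\times}\); your area bound near \([0]\) only identifies the limit as \(\lim_k\mathbf{f}_k(\mathfrak{c}_0)\) or its antipode and cannot rule out \(0\) or \(\infty\). This exclusion is exactly where the paper works hardest: assuming \(A\in\{0,\infty\}\) it first extracts \(a_k\to0\), \(\vert b_k\vert\to1\) from (\ref{determine1})--(\ref{determine2}), then derives a contradiction by playing the monodromy \(\mathbf{f}_k(\tilde\gamma_2\xi)=S_{k,2}\mathbf{f}_k(\xi)\) against the convergence \(\mathfrak{f}_k\to\mathfrak{f}\), and separately uses the divergence of the regular part of \(u_k\) at \(z=0\) to pin \(c_k\to0\). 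Some argument of this kind must be added before your proposal proves the theorem.
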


\subsection{Proof of Theorem \ref{blow-up thm}} \enspace

\subsubsection{}
We first note that there exists a constant \(B\in \CC\) such that 
the Lam\'e equations
\begin{equation} \label{Lame-k}
w'' = \big(\eta_k (\eta_k + 1) \wp + B_k\big) w
\end{equation}
converge to
\begin{equation}\label{Lame-lim}
w'' = \big(n (n + 1) \wp + B_k\big) w
\end{equation}
because $\lim_{k\to\infty}\eta_k =n$ and 
$\lim_{k\to n}B_k = B$.  This is a consequence of 
the fact that $u_{k, z}$ and $u_{k, zz}$ converge uniformly 
on compact sets in $E \!\smallsetminus\! \{p_1, \ldots, p_n\}$.

\subsubsection{}
For each \(k\) choose a normalized developing map $\f_k$ of \(u_k\) as in 
\ref{subsubsec:impose-cond}, and choose
 \(p_k, q_k\in \CC^{\times}\) such that \(\,\vert p_k\vert^2 +
\vert q_k\vert^2=1\) and
\(\mathbf{f}_k(\mathfrak{c}_0)={q_k}/{p_k}\).
%Now let $\f_k(0) = q_k/p_k$ with $|p_k|^2 + |q_k|^2 = 1$. 
By Lemma \ref{f(0)} there exists for each \(k\) a constant
\(c_k\in\CC^{\times}\) such that
\begin{equation*}
\frac{p_k \f_k - q_k}{\bar q_k \f_k + \bar p_k} = c_k \mathfrak{f}_{k},
\end{equation*}
%with $c_k \ne 0$ and
where \(\mathfrak{f}_k={w_{k,1}}/{w_{k,2}}\) is a solution of the Lam\'e equation
(\ref{Lame-k}) on the universal covering \(\HH\) of 
\(E\!\smallsetminus\!\{[0]\}\)
defined in \ref {subsub:branch-of-sol}.
Equivalently,
\begin{equation} \label{limit}
\f_k = \frac{\overline{p_k} c_k \mathfrak{f}_{k} + q_k}{
-\overline{q_k} c_k \mathfrak{f}_{k} + p_k}.
\end{equation}
%\begin{equation*}
%f_{k, *}(z) = \frac{w_{k, 1}(z)}{w_{k, 2}(z)} = z^{2\eta_k + 1} H_k(z).
%\end{equation*}
%Here $w_{k, 1}$ and $w_{k, 2}$ are fundamental solutions to (\ref{Lame-k}),
%$$w_{k, 1}(z) = z^{2\eta_k + 1} h_{k, 1}(z),
%\quad
%w_{k, 2}(z) = z^{-\eta_k} h_{k, 2}(z),$$ 
%where the $h_{k, i}$'s are holomorphic near $z = 0$, $h_{k, i}(0) = 1$ 
%and $H_k := h_{k, 1}/h_{k, 2}$. 
The convergence of the Lam\'e equations (\ref{Lame-k})
implies that after passing to a subsequence if necessary, 
there exists solutions \(w_1, w_2\) of (\ref{Lame-lim}) on 
\(\HH\) such that
$\lim_{k\to \infty}w_{k, i} = w_i$ for \(i=1, 2\) and 
$\lim_{k\to \infty}\mathfrak{f}_{k} \to \mathfrak{f} := w_1/w_2$,
uniformly on compact set in $E$  away from the discrete set of
poles and zeros 
of $w_1$ and $w_2$. Clearly 
$$
w_i'' = (n(n + 1) \wp + B) w_i,
$$ 
and locally near $\mathfrak{c}_0$ the \(w_i\)'s can be written
in the form
\[w_1(\xi) = e^{(n+1)\log z}\cdot (h_1\circ z),
\quad
w_2(\xi) = e^{-n\log z}\cdot  (h_2\cdot \circ z)
\]
where \(h_1, h_2\) holomorphic functions in 
a neighborhood of \(\mathfrak{c_0}\) such that
\[\lim_{\xi\to\mathfrak{c}_0}  h_i(\xi) = 1\qquad\textup{for}\ i=1, 2.
\]
%The above statements can also be understood in terms of 
%the corresponding global analytic functions $\f_k$, $\w_{k, i}$, 
%$\f_{k, *}$, $\f_*$ etc.\ on $\mathbb H$.
Most of our analysis will be based on the limiting behavior of 
(\ref{limit}) as \(k\to\infty\).

%\begin{equation} \label{limit}
%\f_k = \frac{\bar p_k c_k \f_{k, *} + q_k}{-\bar q_k c_k \f_{k, *} + p_k}.
%\end{equation}
Again passing to a subsequence if necessary, we may and do assume that
there exist \(p,q\in \CC\) with $|p|^2 + |q|^2 = 1$ such that
$p_k \to p$ and $q_k \to q$ as \(k\to \infty\).
Similarly we may and do assume that 
there exists \(a, b\in\CC\) with \(\vert a\vert^2+\vert b\vert^2=1\)
such that \(a_k\to a\) and \(b_k\to b\) as 
\(k\to\infty\).
Let \(A:={q}/{p}\in\PP^{1}(\CC)\). Clearly 
\[\lim_{k\to\infty} \mathbf{f}_k(0)=\lim_{k\to\infty} q_k/p_k= A.\]
We may and do assume more over that 
the limit \(\lim_{k\to \infty}\) exists in \(\PP^1(\CC)\);
let \[c:=\lim_{k\to\infty}c_k.\]

\subsubsection{}
Our first claim is that \(c\) is either \(0\) or \(\infty\).
Suppose that $c\in\CC^{\times}$.
%\begin{equation*} 
%\f_k(\xi) \longrightarrow \f(\xi) 
%:= \frac{\bar p c \f_* (\xi) + q}{-\bar q c \f_*(\xi) + p}
%\end{equation*}
%except for some discrete set. 
Let 
\begin{equation*} 
%\f_k(\xi) \longrightarrow 
\mathbf{f}
:= \frac{\bar p c \mathfrak{f} + q}{-\bar q c \mathfrak{f} + p}.
\end{equation*}
Then
\(\,
\mathbf{f}_k(\xi) \longrightarrow \mathbf{f}(\xi)
\,
\)
for all \(\xi\) outside some discrete subset \(\Sigma\) of \(\HH\).
Hence
\begin{equation*}
u_k\circ z \to u\circ z = \log \frac{8 |f'(z)|^2}{(1 + |f(z)|^2)^2}
\end{equation*}
uniformly on compact sets outside some discrete set of \(\HH\).
This contradicts to our assumption that $u_k$ blows up as $k \to
\infty$, 
We have proved that either $c = 0$ or $c = \infty$.

\subsubsection{}\label{subsub:claim2}
Next we claim that
\[%\mathbf{f}_k(\mathfrak{c}_0) = q_k/p_k \to A \in 
A\in \CC^{\times}
\qquad  \textup{and}\qquad  c=0.
\]
This claim and the equation (\ref{limit}) imply that  
$\lim_{n\to\infty}\mathbf{f}_k(\xi) = A$ 
for all \(\xi\) outside of some discrete subset of \(\HH\),
and the first statement of Theorem \ref{blow-up thm} follows.

\subsubsection{}\label{subsubsec:Aneqinfty}
We will show that \(A\neq \infty\).
Suppose to the contrary that 
$A = \infty$, i.e.\ $p = 0$ and $|q| = 1$. 
\smallbreak

Our first step is to show that \(a:=\lim_{k\to\infty}a_k=0\).
%Since $\lim_{k\to\infty}e^{2\pi \sqrt{-1}\eta_k} \to 1$,
Write \(\alpha_k:=e^{2\pi\sqrt{-1}\eta_k}\); clearly
\(\lim_{k\to\infty}\alpha_k=1\).
Passing to a subsequence if necessary, we may and do assume
that either (\ref{determine1}) holds for all \(k\)
or (\ref{determine2}) holds for all \(k\).
In the case when (\ref{determine2}) holds for all \(k\), taking the limit
of both sides of the first equation in
(\ref{determine2}), we see immediately that
\(\lim_{k\to\infty}a_k=0\).
In the case when (\ref{determine1}) holds for all \(k\), 
substitute \(\,|p_k|^2\,\) by \(1-|q_k|^2\) in 
first equation of (\ref{determine1}), then
divide both sides 
%of the first equation in (\ref{determine1})
by
\(\,\alpha_k-\alpha_k^{-1}\,\), we get
\[
\left(\frac{1}{\alpha_k+1} + \vert q_k\vert^2\right) a_k
= \overline{p_k}\,q_k\,\overline{b_k}.
\]
Taking the limit of the above equality as \(k\to\infty\),
again we conclude that \(\,a=\lim_{k\to\infty}a_k=0\),
so \(\,|b|=1\).
%
%
%substituting $|p_k|^2 = 1 - |q_k|^2$ in (\ref{determine}) 
%and divided by $\alpha - \bar\alpha$ throughout leads to
%\begin{equation} \label{ab}
%\Big( \frac{\alpha}{1 \pm \alpha} + |q_k|^2 \Big) a_k = p_k \bar q_k b_k,
%\end{equation}
%which implies that $a_k \to a = 0$ and $b_k \to b$ with $|b| = 1$. 
\medbreak

We will analyse the two possibilities of \(c\) separately and show that
both lead to contradiction.
Suppose first that $c = 0$.  From (\ref{limit}) it is clear that
\begin{equation*}
\mathfrak{f}(\xi) \ne \infty \quad \Longrightarrow 
\quad |\f_k(\xi)| \longrightarrow \infty.
\end{equation*}
However, if we select $\xi$ so that $\mathfrak{f}(\xi) \ne \infty$ 
and $\mathfrak{f}(g_2 \xi) \ne \infty$ (such $\xi$ certainly exists),
then 
%this implies that
\begin{equation} \label{S2-eq}
\f_k(\tilde\gamma_2 \xi) = S_{k, 2} \f_k(\xi) 
= \frac{a_k \f_k(\xi) - b_k}{\overline{b_k} \f_k(\xi) 
+ \overline{a_k}} \longrightarrow 0,
\end{equation}
which contradicts the previous conclusion
that $|\f_k(\tilde\gamma_2 \xi)| \to \infty$.  So \(c\neq 0\).
\smallbreak

Suppose next that $c = \infty$. Again by (\ref{limit}) we have
\begin{equation*}
\mathfrak{f}(\xi) \ne 0 \quad 
\Longrightarrow \quad \f_k(\xi) \longrightarrow 0,
\end{equation*}
and convergence is uniform on compact sets outside the zeros of
$\mathfrak{f}$. But then $\f_k(\tilde\gamma_2 \xi) \to \infty$, which contradicts 
$\f_k(\tilde\gamma_2 \xi) \to 0$ 
provided that $\mathfrak{f}(\tilde\gamma_2 \xi) \ne 0$. 
We have shown that the assumption \(A=\infty\) leads to
assumption for both possible values of \(c\).
We have proved that $A \ne \infty$, i.e. \(A\in\CC\).

\subsubsection{}
Now we know that $\f_k(0) \to A$ with \(A\in\CC\).
%by working on a branch 
Substituting 
$\mathfrak{f}_k\circ z = z^{2\eta_k + 1} + O(|z|^{2\eta_k +2})$ 
near \(\mathfrak{c}_0\)
into (\ref{limit}), we get
\begin{equation*}
\mathbf{f}_k\circ z 
= \frac{q_k}{p_k} + \frac{c_k}{p_k^2}\, z^{2\eta_k + 1} (1 + O(|z|)).
\end{equation*}
On the other hand we know from general facts about
blow-up sequences that 
the regular part of $u_k(z)$ at $z = 0$ tends to $-\infty$, i.e.
\begin{equation*}
\begin{split}
&\big(u_k\circ z - 4\eta_k \log |z|\big) \Big|_{z = 0} \\
&= \log \frac{8|\tfrac{d}{dz}\mathbf{f}_k|^2 |z|^{-4\eta_k}}{
(1 + |\mathbf{f}_k\circ z|^2)^2}
\Big|_{z = 0} 
= \log \frac{8 |c_k|^2 (2\eta_k + 1)^2}{|p_k|^2 (1 + |\mathbf{f}_k(0)|^2)^2} 
\longrightarrow -\infty,
\end{split}
\end{equation*}
which implies that $c_k \to c = 0$.

\subsubsection{}
We still need to exclude the possibility that $A = 0$. 
Suppose to the contrary that $A = 0$, or equivalently $|p| = 1$ 
and $q = 0$. 
%Again (\ref{ab}) implies that 
The same argument used at the beginning of \ref{subsubsec:Aneqinfty}
shows that
$a = 0$ and $|b| = 1$. Since $c = 0$, by (\ref{limit}),
\begin{equation*}
\mathfrak{f}(\xi) \ne \infty \quad \Longrightarrow 
\quad \f_k(\xi) \longrightarrow 0.
\end{equation*}
Then by the expression (\ref{S2-eq}) we have $\f_k(\gamma_2 \xi) \to \infty$ 
whenever $\mathfrak{f}(\xi) \ne \infty$,
hence $\mathfrak{f}(\gamma_2 \xi) = \infty$ 
by the above implication. But as before we may select $\xi$ 
so that both $\mathfrak{f}(\xi) \ne \infty$ 
and $\mathfrak{f}(\gamma_2 \xi) \ne \infty$. 
This is a contradiction.  We have proved that \(A\in\CC^{\times}\).

\subsubsection{}
It remains to show that the blow-up set is stable under \([-1]_E\), i.e.
$\{P_1, \ldots, P_n\} = \{-P_1, \ldots, -P_n\}$. 
From (\ref{S-der}), we see that 
\[S(\mathbf{f}_k(-z)) 
= S(\mathbf{f}_k)(-z) = -2(\eta(\eta + 1) \wp(-z) + B_k) 
= S(\mathbf{f}_k(z)),\] 
hence $\mathbf{f}_k(-z)$ is also a developing map for $u_k$. 
Since $\mathbf{f}_k(z)$ is normalized
in the sense of \ref{subsubsec:impose-cond}, 
$\mathbf{f}_k(-z) =\mathbf{f}_k\circ (-z)$ is also normalized,
so our results  applies to
\(\mathbf{f}_k(-z)\) as well.
We conclude that $f_k(-z) \to A$ outside $\{P_1, \ldots, P_n\}$. 
This implies that this set coincides with $\{-P_1, \ldots, -P_n\}$. 
We have prove Theorem \ref{blow-up thm}. \qed
%\end{proof}

\begin{corollary} \label{branch-pt}
Let $u_k$ be a blow-up sequence of solutions to 
\textup{(\ref{Liouville-eq})} 
with $\rho_k = 8\pi \eta_k \to 8\pi n$, $\eta_k \ne n$ for all \(k\).
Let $S = \{P_1, \ldots, P_n\}$ be the blow-up set. 
Then $S$ is a finite branch point of the hyperelliptic curve $C^2 = \ell_n(B)$.
\end{corollary}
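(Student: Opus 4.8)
The plan is to deduce the corollary by combining the symmetry of the blow-up set established in Theorem \ref{blow-up thm} with the hyperelliptic structure theory developed in \S\ref{hyper-ell-str}. The substantive analytic input has already been extracted in Theorem \ref{blow-up thm}, whose final assertion $\{P_1,\ldots,P_n\}=\{-P_1,\ldots,-P_n\}$ says exactly that $S$, regarded as a point of $\textup{Sym}^n E$, is fixed by the involution $\iota\colon\{P_1,\ldots,P_n\}\mapsto\{-P_1,\ldots,-P_n\}$. What remains is to locate $S$ on the curve and to identify the fixed locus of $\iota$ with the finite ramification points of the hyperelliptic projection.

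First I would verify that $S\in Y_n$. Since $S$ is the blow-up set of $(u_k)$, the points $P_1,\ldots,P_n$ are mutually distinct with $P_i\ne[0]$, and by the result of \cite{CL0} recalled in \S\ref{analytic-aspect} they satisfy the system \textup{(\ref{nGz})}. By the lemma of \S\ref{analytic-aspect}, \textup{(\ref{nGz})} is equivalent to the combination of the Green equation \textup{(\ref{G-eqn-2})} and the system \textup{(\ref{zeta-eq})}; in particular \textup{(\ref{zeta-eq})} holds. But \textup{(\ref{zeta-eq})} is precisely the defining system \textup{(\ref{zeta-eqn})} of $Y_n$, together with the distinctness and nonvanishing conditions already noted, so $S\in Y_n$.

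Next I would assemble the two facts. By Theorem \ref{hyper}(ii) the subvariety $X_n$ is the complement in $Y_n$ of the fixed point set $(Y_n)^{\iota}$; since $S$ lies in $Y_n$ and is fixed by $\iota$, it follows that $S\in Y_n\smallsetminus X_n=(Y_n)^{\iota}$. Finally, under the isomorphism $\bar X_n\xrightarrow{\sim}\hat X_n$ of Proposition \ref{prop:use-purity}, which identifies the closure of $X_n$ in $\textup{Sym}^n E$ with the hyperelliptic model $C^2=\ell_n(B)$, the points of $Y_n\smallsetminus X_n$ are exactly the finite ramification points of $\bar\pi_n$ (Theorem \ref{hyp-ell-thm}(4),(6),(7) and Theorem \ref{alg-hyper}), and over such a point one has $C=0$, so that $B_S:=(2n-1)\sum_{i=1}^n\wp(P_i)$ is a root of $\ell_n(B)$. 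Because $P_i\ne[0]$ for every $i$, the point $S$ is distinct from the point $[0]^n$ lying over $B=\infty$; hence $S$ is a \emph{finite} branch point of $C^2=\ell_n(B)$.

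Given that Theorem \ref{blow-up thm} is in hand, this argument is largely a matter of bookkeeping, and I do not anticipate a genuine obstacle. The only step deserving care is the inclusion $S\in Y_n$, which relies on the external statement \textup{(\ref{nGz})} from \cite{CL0} and on the equivalence proved in \S\ref{analytic-aspect}; once this is secured, the identification of $(Y_n)^{\iota}$ with the finite branch locus of $\ell_n$ is immediate from \S\ref{hyper-ell-str}.
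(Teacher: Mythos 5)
Your proposal is correct and follows essentially the same route as the paper's own proof: establish $S\in Y_n$ via the blow-up equations \textup{(\ref{nGz})} and their equivalence with \textup{(\ref{zeta-eq})}, then use Theorem \ref{blow-up thm} to conclude $S$ is fixed by the involution, hence lies in $Y_n\smallsetminus X_n$, the finite branch locus. The paper compresses this into three lines, while you spell out the supporting references (Theorem \ref{hyper}(ii), the identification $\bar X_n\cong\hat X_n$, and the exclusion of the point at infinity), but the logical content is identical.
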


\begin{proof}
The blow-up set $S$ satisfies equations (\ref{nGz}), 
or equivalently (\ref{zeta-eq}), thus $S \in Y_n$. 
Now Theorem \ref{blow-up thm} implies that $\{P_i\}_{i = 1}^n =
\{-P_i\}_{i = 1}^n$, 
hence $S \in Y_n \!\smallsetminus\! X_n$, 
i.e.~it is a branch point of $\bar\pi:\bar X_n\to \PP^1(\CC)$.
\end{proof}

\subsection{Further remarks}

\subsubsection{}
Consider the regular part of the Green function
$$
\tilde G(z, q) := G(z - q) + \frac{1}{2\pi} \log |z - q|.
$$

Let $S = \{P_1, \ldots, P_n\}\subset E$ be a set of \(n\) distinct
points on \(E=\CC/\Lambda\); pick representatives
\(p_1,\ldots,p_n\in\CC\) of \(P_1,\ldots,P_n\). 
For $i = 1, \ldots, n$ we define
\begin{equation*}
\begin{split}
f_{p_i}(z) &:= 8\pi (\tilde G(z, p_i) - \tilde G(p_i, p_i)) \\
&\quad + \sum_{j \ne i} (G(z - p_j) - G(p_i - p_j)) 
- 8\pi n(G(z) - G(p_i)), \\
\mu_i &:= \exp (8\pi (\tilde G(p_i, p_i) 
+ \sum_{j \ne i} G(p_j - p_i)) - 8\pi nG(p_i)).
\end{split}
\end{equation*}
Then we have the important global quantity associated to the set $S$:
$$
D(S) := \lim_{r \to 0} \sum_{i = 1}^n \mu_i 
\left( \int_{\Omega_i \backslash B_r(p_i)} 
\frac{e^{f_{p_i}(z)} - 1}{|z - p_i|^4} 
- \int_{\mathbb R^2 \backslash \Omega_i} \frac{1}{|z - p_i|^4} \right),
$$
where $\Omega_i$ is any open neighborhood of $p_i$ in $E$ such that 
$\Omega_i \cap \Omega_j = \emptyset$ for $i \ne j$, 
and $\bigcup_{i = 1}^n \overline{\Omega}_i = E$.

Under the hypothesis of Corollary \ref{branch-pt}, it was shown 
in \cite{ChenLW} for $n = 1$ and in \cite{LinYan} for general $n \in
\mathbb N$, but in a slightly different context, that
$$
\rho_k - 8\pi n = (D(S) + o(1)) \exp (-\max_T u_k).
$$

In general it is difficult to compute $D(S)$ even for $n = 1$. 
In the case $n = 1$, the hyperelliptic curve is the torus $E$ 
and the branch points consist of the three half-periods. 
In the very special case that $T$ is a rectangular torus, 
the sign of $D(\tfrac{1}{2}\omega_i)$ has been calculated: 
$D(\tfrac{1}{2} \omega_3) < 0$ and $D(\tfrac{1}{2}\omega_i) > 0$ for
$i = 1, 2$. 
Furthermore $D(\tfrac{1}{2} \omega_i) < 0$ if and only if
$\tfrac{1}{2} \omega_i$ 
is a minimal point \cite{ChenLW}.   

It is clear that when $D(S) \ne 0$ its sign provides important
information when we study bubbling solutions (blow-up sequence) 
with $\rho \ne 8\pi n$ (e.g.~if $D(S) > 0$ then the bubbling 
may only occur from the right hand side). 
Also in the case $\rho_k = 8\pi n$ for all $k$, 
if the blow-up family $u_\lambda$ exists then $D(S) = 0$ 
trivially for $S$ begin the blow-up set.

In particular, we pose the following

\begin{conjecturess}\label{last-conj}
For rectangular tori, there are $n$ branch points, 
on the associated hyperelliptic curve, with $D(S) < 0$ 
and $n + 1$ branch points with $D(S) > 0$.
\end{conjecturess}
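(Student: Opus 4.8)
The plan is to reduce Conjecture \ref{last-conj} to a signed count of the $2n+1$ finite branch points and then to evaluate that count by a global index argument, using the base case $n=1$ of \cite{ChenLW} as a guide. By Theorem \ref{alg-hyper}(2), for a rectangular torus $\Lambda_\tau$ with $\tau \in \sqrt{-1}\,\mathbb{R}_{>0}$ the polynomial $\ell_n(B)$ has $2n+1$ distinct real roots $B_1 < \cdots < B_{2n+1}$, and by Theorem \ref{hyper}(iii) these are in natural bijection with the branch points $S_1, \ldots, S_{2n+1} \in Y_n \smallsetminus X_n$, each realized as a blow-up set by Corollary \ref{branch-pt}. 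Since these $2n+1$ points exhaust the branch points at finite distance (the point at infinity $[0]^n$ is not a blow-up set), the conjecture asserts precisely that $\#\{j : D(S_j) < 0\} = n$ and $\#\{j : D(S_j) > 0\} = n+1$. The first step is therefore to show $D(S_j) \ne 0$ for every $j$, so that each branch point carries a well-defined sign.

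First I would recast the blow-up system (\ref{nGz}) as the critical point equation of a Kirchhoff--Routh type functional $\mathcal{W}_n$ on the open subset of $\mathrm{Sym}^n(E \smallsetminus \{0\})$ lying off the collision diagonal, so that the branch points are exactly those critical points fixed by the involution $S \mapsto -S$. The crucial analytic input is the expansion $\rho_k - 8\pi n = (D(S)+o(1))\,e^{-\max u_k}$ recalled after Corollary \ref{branch-pt}: together with the second-variation analysis of the linearized Liouville operator at the bubbling profile, it should identify $\operatorname{sgn} D(S)$ with $(-1)^{m(S)}$, where $m(S)$ is the Morse index of $S$ as a critical point of $\mathcal{W}_n$ (equivalently, with the local Leray--Schauder degree contribution of $S$). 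Here the reflection symmetries of the rectangular torus, namely $S = -S$ for a branch point and invariance under complex conjugation, are what I would exploit to force the Hessian of $\mathcal{W}_n$ to be non-degenerate and to split into manageable blocks; this is the mechanism behind ``$D < 0 \Leftrightarrow$ minimal point'' in the $n=1$ case.

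With the sign of each branch point identified with a Morse index, the count itself should follow from a global Poincar\'e--Hopf computation for the gradient field of $\mathcal{W}_n$ on the compactified configuration space, carefully accounting for the index contributions of the two removed strata, namely the approach to the puncture $0$ and the collision diagonal $P_i = P_j$. For $n=1$ this is exactly the statement that $\nabla G$ has index $+1$ at $0$ and $\chi(E) = 0$, forcing one minimum and two saddles, i.e.\ one negative and two positive signs; the proposal is to run the analogous count for $\mathcal{W}_n$ and extract the numbers $n$ and $n+1$. As consistency checks I would verify the scheme against $n=1$ via \cite{ChenLW} and against $n=2$ using the explicit factorization $\ell_2(B) = \tfrac{1}{81}(B^2-3g_2)(4B^3-9g_2 B + 27 g_3)$ displayed in the examples after Theorem \ref{alg-hyper}.

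The hard part will be twofold, and for $n \ge 2$ neither half is available in the literature. First, the non-degeneracy $D(S_j) \ne 0$ must be proved at all $2n+1$ branch points; since $D(S)$ is defined by an intricate renormalized integral over $E$, controlling its vanishing, let alone its sign, for general $n$ is genuinely delicate, and the only rigorous computation, in \cite{ChenLW, LinYan}, is essentially for $n=1$. Second, the identification of $\operatorname{sgn} D(S)$ with the topological index, and the subsequent global count, require a precise understanding of the degeneration of blow-up configurations as points collide or tend to $0$, i.e.\ of the boundary of the configuration space; this is where excess-intersection phenomena of the kind already flagged for the polynomial system (\ref{system-I}) are likely to re-enter. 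I would expect the non-degeneracy of the symmetric critical points, proved uniformly in $n$, to be the true bottleneck.
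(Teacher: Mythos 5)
The statement you are trying to prove is not a theorem of the paper at all: it is stated as Conjecture \ref{last-conj}, and the authors explicitly remark that it ``is known only when $n=1$.'' There is consequently no proof in the paper to compare against, and your proposal, by your own admission in its final paragraph, is a research program rather than a proof: the two steps you single out as bottlenecks --- the non-vanishing $D(S_j)\neq 0$ at all $2n+1$ finite branch points, and the identification of $\operatorname{sgn}D(S)$ with a Morse index of a Kirchhoff--Routh type functional --- are precisely the content of the conjecture, and nothing in the paper (or in the cited literature beyond $n=1$) supplies them. So the proposal has a genuine gap; indeed it has the same gap the conjecture itself represents.

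Two concrete points in your outline also need repair. First, you invoke Corollary \ref{branch-pt} to claim that each branch point $S_j$ ``is realized as a blow-up set''; the corollary states the reverse implication (every blow-up set of a sequence with $\rho_k\neq 8\pi n$ is a branch point), and the surjectivity you want --- that every branch point actually arises from some bubbling family --- is not established in the paper and is closely tied to the sign of $D(S)$ itself, via the expansion $\rho_k-8\pi n=(D(S)+o(1))e^{-\max u_k}$. Second, your Poincar\'e--Hopf count is taken over \emph{all} critical points of $\mathcal{W}_n$, but by the equivalence of (\ref{nGz}) with the Green equation (\ref{G-eqn-2}) plus (\ref{zeta-eq}), these critical points comprise not only the symmetric configurations $S=-S$ (the branch points) but also the non-symmetric points of $X_n$ satisfying the Green equation, i.e.\ the configurations attached to genuine type II solutions, whose number and indices vary with the torus. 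A global index identity therefore mixes the two families, and extracting the signed count $(n,\,n+1)$ over the symmetric ones alone requires exactly the kind of control over the non-symmetric solution set (equivalently over $\Omega_5$-type loci for general $n$) that is deferred to Part II of this series. Your consistency checks for $n=1$ and the factorization of $\ell_2(B)$ are fine as sanity tests, but they do not close either gap.
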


\noindent
Conjecture \ref{last-conj} is known only when $n = 1$ as mentioned above.

\subsubsection{}
Theorem \ref{blow-up thm} provides some connection between 
mean field equations and the branch points of the associated 
hyperelliptic curve. We expect that it should hold true universally. 
For example we might ask the following question on Chern--Simons-Higgs equation:

Suppose that $u_\epsilon$ is a sequence of bubbling solutions of the Chern--Simons-Higgs equation
$$
\triangle u_\epsilon + \frac{1}{\epsilon^2} e^{u_\epsilon} (1 -
e^{u_\epsilon}) 
= 8\pi n \delta_0 \quad \mbox{in $E$}.
$$ 
Is the bubbling set $\{p_1, \ldots, p_n\}$, as $\epsilon \to 0$,
a branch point of the hyper elliptic curve $C^2 = \ell_n(B)$?

This has recently been answered affirmatively for $n = 1$ and 
for $E$ a rectangular domain \cite{LinYan, LinYan2}.


\begin{thebibliography}{10}


\bibitem{Ahlfors} L.\ Alhfors;
{\it Complex Analysis}, 2nd edition, McGraw-Hill Book Co., New 
York, 1966.

\bibitem{AMM}
H. Airault, H.\,P. McKean, and J. Moser;
                \emph{Rational and elliptic solutions of the Korteweg-de Vries 
equation and a related many- body problem}, 
Comm. Pure Appl. Math.\ {\bf 30} (1977), 95--148.

\bibitem{Baldassari} F.\ Baldassari;
\emph{On algebraic solutions of Lam\'e's differential
equation};
J.\ Diff.\ Equations {\bf 41} (1981), 44--58.

\bibitem{BW} F.\ Beukers and A.V.D.\ Waall;
{\it Lam'e equations with algebraic solutions}, 
J.\ Diff.\ Equations {\bf 197} (2004), 1--25.

\bibitem{BT} D.\ Bartolucci and G.\ Tarantello;
{\it Liouville type equations with singular data and their
applications to periodic multi-vortices for the electro-weak
theory}, Comm.\ Math.\ Phys.\ {\bf 229} (2002), 3--47.

\bibitem{BM} H.\ Brezis and F.\ Merle;
\emph{Univorm estimates and blow-up behavior for
solutions of \(-\triangle u=V(x) e^{u}\) in two dimensioins};
Commun. Partial Differential Equations {\bf 16} (1991), 1223--1253.

\bibitem{Brioschi} F.\ Brioschi;
\emph{Sur l'\'equation de Lam\'e}, 
Comptes Rendus {\bf 86} (1878), 313--315.

\bibitem{Burkhardt}
H. Burkhardt;
\emph{Elliptische Funktionen}, Verlag von Veit, Leipzig, 1899

\bibitem{CY} L.\,A.\ Caffarelli and Y.\ Yang;
{\it Vortex condensation in the Chern--Simons--Higgs model: An
existence theorem}, Comm.\ Math.\ Phys.\ {\bf 168} (1995),
321--336.

\bibitem{CLW2} C.-L.\ Chai, C.-S.\ Lin and C.-L.\ Wang;
{\it Algebraic aspects of mean field equations 
with multiple singular sources}, preprint 2014.

\bibitem{CL0} C.-C.\ Chen and C.-S.\ Lin;
{\it Sharp estimates for solutions of multi-bubbles 
in compact Riemann surfaces}, 
Comm.\ Pure  Appl.\ Math.\ {\bf 55} (2002), 728--771. 

\bibitem{CL} C.-C.\ Chen and C.-S.\ Lin;
{\it Topological degree for a mean field equation on Riemann
surfaces}, Comm.\ Pure Appl.\ Math.\ {\bf 56} (2003), 1667--1727.

\bibitem{CL1} C.-C.\ Chen and C.-S.\ Lin;
{\it Mean field equation of Liuoville type with singular data: Sharper
estimates}, 
Discrete Contin.\ Dyn.\ Syst.\ {\bf 28} (2010), no.\ 3, 1237--1272.

\bibitem{CL2} C.-C.\ Chen and C.-S.\ Lin;
{\it Mean field equation of Liouville type with singular data: 
Topological degree}, 
Comm.\ Pure Appl.\ Math., 
{\tt DOI:10.1002/cpa.21532},
first published online July 2014.

\bibitem{ChenLW} C.-C.\ Chen, C.-S.\ Lin and G.\ Wang;
{\it Concentration phenomenon of two-vortex solutions in a
Chern--Simons model}, Ann.\ Scuola Norm.\ Sup.\ Pisa CI.\ Sci.\ (5)
Vol.\ III (2004), 367--379.

\bibitem{Choe} K.\ Choe;
{\it Asymptotic behavior of condensate solutions in the
Chern--Simons--Higgs theory}, J.\ of Math.\ Phys.\ {\bf 48} (2007).

\bibitem{CW} K.\,S.\ Chou and T.Y.M.\ Wan;
{\it Asymptotic radial symmetry for solutions of $\Delta u + e^u =
0$ in a punctured disk}, Pacific J.\ Math.\ {\bf 163} (1994),
269--276.

\bibitem{Crawford} L.\ Crawford;
\emph{On the solutions of Lam\'e's equation
\(\frac{d^2U}{du^2}=U\{n(n+1)\wp u +B\}\) in finite terms
when \(2n\) is an odd integer};
Quarterly J.\ Pure Appl.\ Math.\ {\bf 27)} (1895), 93--98.

\bibitem{Dahmen} S.\ Dahmen;
{\it Counting integral Lam\'e equations with finite monodromy 
by means of modular forms}, Master Thesis, Utrecht University 2003.

\bibitem{Dubrovin} B.\,A.\ Dubrovin;
{\it Periodic problems for the Korteweg-de Vries equation in the class
  of finite band potentials}, Functional Anal.\ Appl.\ {\bf 9} (1975),
215--223.

\bibitem{enolski-its}
V.\,Z. Enol’skii and A.\,R. Its; 
\emph{On the Dynamic of the Calogero-Moser System 
and Reduction of the Hyperelliptic Integrals to the Elliptic Ones}, 
Functional Anal.\ Appl. {\bf 20} (1986), 62--64.

\bibitem{Faltings:cal} G.\ Faltings;
\emph{Calculus on arithmetic surfaces}, 
Ann.\ Math.\ {\bf 119} (1984), 387--424.

\bibitem{GH} F.\ Gesztesy and H.\ Holden;
{\it Soliton Equations and their Algebro-Geometric Solutions. 
Vol.\  I.\ $(1 + 1)$-dimensional comtinuous models}, 
Cambridge Stud.\ in Adv.\ Math.\ {\bf 79}, Cambridge U.\ Press 2003.

\bibitem{GW} F.\ Gesztesy and R.\ Weikard;
{\it Lam\'e potentials and the stationary (m)KdV hierarchy}, 
Math.\ Nachr.\ {\bf 176} (1995), 73--91.

\bibitem{GW2} F.\ Gesztesy and R.\ Weikard;
{\it Picard potentials and Hill's equation on a torus}, 
Acta Math.\ {\bf 175} (1996), 73--107.

\bibitem{Halphen} G.\,H.\ Halphen;
{\it Trait\'e des Fonctions Elliptiques et de leurs
Applications, tome II}, Gauthier-Villars et Fils, Paris, 
1888.

\bibitem{Hecke}
E.~Hecke;
\emph{Mathematische Werke},
Vandenhoeck \& Ruprecht, G\"ottingen, 1959.

\bibitem{Hermite}
C.\,H.~Hermite; \emph{Sur quelques applications des fonctions elliptiques},
Gauthier-Villars, Paris, 1885;
memoir first published as 26 articles in 6 volumes of 
\emph{Compte Rendus Acad. Sci. Paris}
(7 in t. 85 (1877), 5 in t. 86 (1878), 2 in t. 89 (1879),
5 in t. 90 (1880), 2 in t. 93 (1881) and 5 in t. 94 (1882)).

\bibitem{Husemoller} D.\ Husem\"oller;
{\it Elliptic Curves}, 2nd ed., Springer-Verlag 2004.

\bibitem{Ince} E.\,L.\ Ince;
{\it Further investigations into the periodic Lam\'e functions}, 
Proc.\ Roy.\ Soc.\ Edinb.\  Sect.\ A {\bf 60} (1940), 83--99.

\bibitem{IM} A.\,R.\ Its and V.B.\ Matveev;
{\it Schrodinger operators with finite-gap spectrum 
and $N$-soliton solutions of the Korteweg-de Vries equation}, 
Theoret.\ Math.\ Phys.\ {\bf 23} (1975), 343--355.

\bibitem{Jacobson} N.~Jacobson;
\emph{Basic Algebra, vol.\ I}, Freeman, 1985.

\bibitem{Krichever:alggeom_nonlineareq}
I.\,M. Krichever; \emph{Methods of algebraic geometry in
the theory of non-linear equations},
Russian Math. Surveys {\bf 32} (1977), 141--169.

\bibitem{Krichever:ellip_KP}
I.\,M. Krichever; \emph{Elliptic solutions of the
  Kadomtsev-Petviashvili 
equation and integrable systems of particles}; 
Functional Anal. Appl. {\bf 14} (1980), 282--290.

\bibitem{Krichever:nonlinear_eqn_ellip_curve}
I.\,M.~Krichever; \emph{Nonlinear equations and elliptic curves};
Revs.\ Sci.\ Tech.\ {\bf 23} (1983), 51--90.
Translated from Itogi Nauki i Tekhniki, Seriya Sovremennye
Problemy Mathematiki {\bf 23} (1983), 79--136.

\bibitem{Krichever:ellip1994}
I.\,M.~Krichever; \emph{Elliptic solutions of nonlinear 
integrable equations and related topics};
Acta Appl.\ Math.\ {\bf 36} (1994), 7--25.


\bibitem{K-Lang} D.\ Kubert and S.\ Lang;
{\it Units in the modular function field. I}, Math.\ Ann.\ {\bf 218},
67--96 (1975).

\bibitem{Lang-Ell} S.\ Lang;
{\it Elliptic Functions}, Addison-Wesley, 1973.

\bibitem{Lang-Arakelov} S.\ Lang;
\emph{Introduction to Arakelov Theory},
Springer, 1988.

\bibitem{LS} Y.\,Y.\ Li and I.\ Shafri;
{\it Blow-up analysis for solutions of $-\triangle u = V e^u$ in
  dimension two}, 
Indiana U.\ Math.\ J. {\bf 43} (1994), 1255--1270.

\bibitem{KL} K.-L.\ Lin;
{\it Counting solutions to the mean field equations via algebraic geometry},
PhD thesis, National Central University 2010.

\bibitem{LW} C.-S.\ Lin and C.-L.\ Wang;
{\it Elliptic functions, Green functions and the mean field
equations on tori}, Annals of Math.\ {\bf 172} (2010), no.2, 911--954.

\bibitem{LW2} C.-S.\ Lin and C.-L.\ Wang;
{\it A function theoretic view of the mean field equations on
tori}, in ``Recent advances in geometric
analysis'', Adv.\ Lect.\ Math.\ {\bf 11}, Intl.\ Press, 
Somerville MA, 2010,  173--193.

\bibitem{LW-II}  C.-S.\ Lin and C.-L.\ Wang;
{\it Mean field equations, hyperelliptic 
curves and modular forms:  II}, 
preprint 2014.

\bibitem{LinYan} C.-S.\ Lin and S.\ Yan;
{\it Existence of bubbling solutions for Chern--Simons model on a
  torus}, 
Arch.\ Ration.\ Mech.\ Anal.\ {\bf 207} (2013), no.\ 2, 353--392.

\bibitem{LinYan2} C.-S.\ Lin and S.\ Yan;
{\it Exact number of solutions to Chern--Simons-Higgs equation with
  double vortex points}, preprint 2013.

\bibitem{liouville_1846}
J.~Liouville;
\emph{Lettres sur diverses questions d'analyse et de physique 
math\'ematique concernant  l'ellipso\"ide, addres\'ees a 
M.~P.-H.~Blanchet},
J. Math. Pures et Appl. {\bf 11} (1846) 217--236.

\bibitem{MM} H.\,P.~McKean and P.\ van Moerbeke;
{\it The spectrum of Hill's equation}, Invent.\ Math.\ {\bf 30} (1975), 217--274. 

\bibitem{Mum} D.\ Mumford;
{\it Tata Lectures on Theta I}, Progress in Mathematics Vol.~28, 
Birkh\"auser Boston, 1983. 

\bibitem{Novikov} S.\,P.\ Novikov;
{The periodic problem for the Korteweg-de Vries equation}, 
Funct.\ Anal.\ Appl.\ {\bf 8} (1974), 236--246.

\bibitem{NT} M.\ Nolasco and G.\ Tarantello;
{\it Double vortex condensates in the Chern--Simons--Higgs theory},
Calc.\ Var.\ and PDE {\bf 9} (1999), no.\ 1, 31--94.

\bibitem{NT1} M.\ Nolasco and G.\ Tarantello;
{\it Double vortex condensates in the Chern};
{\it Vortex condensates for the ${\rm SU}(3)$ Chern-Simons
theory}, Comm.\ Math.\ Phys.\ {\bf 213} (2000), no.3, 599--639.

\bibitem{Poole} E.\,G.\,C.\ Poole;
{\it Introduction to the Theory of Linear Differential Equations},
Oxford University Press 1936.

\bibitem{PT} J.\,V.\ Prajapat and G.\ Tarantello;
{\it On a class of elliptic problems in $\mathbb R^2$: 
Symmetry and uniqueness results}, 
Proc.\ Royal Soc.\ Edinb.\ Sect.\ A {\bf 131} (2001), 967--985.

\bibitem{Rees} E.\ Rees;
\emph{On a paper by Y.\ G.\ Zarhin}, 
{\tt arXiv:1105.4156v1 [Math.AG] }.


%\bibitem{Serre} J.P.\ Serre;
%{\it Complex multiplications}. In "Algebraic Number Theory",
%J.W.S.~Cassels and A.~Fr\"ohlich, eds., Academic Press, 1967,
%292--296.

\bibitem{Shimura-redbook} G.\ Shimura;
\emph{Introduction to the Arithmetic Theory of Automorphic 
Functions}, Iwanami Shoten and Princeton Univ.\ Press, 1971.

\bibitem{Shiota} T.\ Shiota;
\emph{Characterization of Jacobian varieties in terms
of soliton equations}, 
Inv.\ Math.\ {\bf 83} (1986), 333--382.

\bibitem{Schoeneberg} B.\ Schoeneberg;
\emph{Elliptic Modular Functions: An Introduction}, 
Springer-Verlag, 1774.

\bibitem{Smirnov1994}
A.\,O. Smirnov; \emph{Finite-gap elliptic solutions 
of the KdV equation}, Acta Appl. Math. {\bf 36} (1994), 125--166.

\bibitem{Smirnov2002}
A.\,O. Smirnov; \emph{Elliptic Solitons and Heun's Equation}, 
CRM Proceedings and Lecture Notes, vol.\ 32,
Centre de Recherches Mathematiques, 
Amer.\ Math.\ Soc.\ 2002.

\bibitem{Smirnov2006}
A.\,O. Smirnov; 
\emph{Finite-gap Solutions of the Fuchsian Equations}, 
Letters Math. Phys. {\bf 76} (2006), 297--316.


\bibitem{Treibich-Verdier}
A. Treibich and J.-L. Verdier; 
\emph{Solitons elliptiques}, With an appendix by J. Oesterl\'e,
In \emph{Grothendieck Festschrift}, vol.\ III, 
Birkh\"auser, 1990, pp.\,437--480. 


\bibitem{Uspenski} J.V.\ Uspensky; 
{\it Theory of Equations}, New Tork, McGraw--Hill Book Co., 1948.


\bibitem{vdWaerden} van der Waerden;
\emph{Algebra, vol I}; Ungar, 1949; first German edition 1930.

\bibitem{Waall} A.\,V.\,D.\ Waall;
{\it Lam\'e equations with finite monodromy},
PhD Thesis, Utrecht University 2002. 

\bibitem{Wang} C.-L.\ Wang;
{\it $K$-equivalence in birational geometry and characterizations of
  complex 
elliptic genera}, J. Algebraic Geometry {\bf 12} (2003), no.\ 2, 285--306.

\bibitem{Whittaker} E.\,T.\ Whittaker and G.N.\ Watson;
{\it A Course of Modern Analysis}, 4th edition, Cambridge
University Press, 1927.

\bibitem{Yang} Y.\ Yang;
{\it Solutions in field theory and non-linear analysis},
Springer-Verlag, New York, 2001.

\end{thebibliography}
\end{document}